\newcommand{\Implies}[2]{$\text{\ref{#1}}\implies\text{\ref{#2}}$}
\newcommand{\Iff}[2]{$\text{\ref{#1}}\iff\text{\ref{#2}}$}
\newcommand{\Ifff}[3]{$\text{\ref{#1}}\iff\text{\ref{#2}}\iff\text{\ref{#3}}$}
\DeclareRobustCommand*{\bfseries}{%
  \not@math@alphabet\bfseries\mathbf
  \fontseries\bfdefault\selectfont
  \boldmath
}
\newtheorem{theo}{Theorem}[section]
\newtheorem{lemma}[theo]{Lemma}
\newtheorem{defi}[theo]{Definition}
\newtheorem{prop}[theo]{Proposition}
\newtheorem{cor}[theo]{Corollary}
\newtheorem{remark}[theo]{Remark}
\newtheorem{example}[theo]{Example}
\numberwithin{equation}{section}
\mathchardef\mhyphen="2D
\def\N{\mathbb{N}}
\def\bL{\mathbb{L}}
\def\R{\mathbb{R}}
\def\bS{\mathbb{S}}
\def\Z{\mathbb{Z}}
\def\Q{\mathbb{Q}}
\def\coh{\operatorname{coh}}
\def\wt{\widetilde}
\def\bR{{\mathbf R}}
\def\bL{{\mathbf L}}
\def\FF{{\mathbb F}}
\def\pre-tr{\operatorname{pre-tr}}
\def\h{\operatorname{h}}
\def\Hom{\operatorname{Hom}}
\def\Map{\operatorname{Map}}
\def\End{\operatorname{End}}
\DeclareMathOperator*{\colim}{colim}
\newcommand{\tens}[1]{%
  \mathbin{\mathop{\otimes}\displaylimits_{#1}}%
}
\newcommand{\Ltens}[1]{%
  \mathbin{\mathop{\otimes}\displaylimits^{\bL}_{#1}}%
}
\newcommand{\hy}{\mhyphen}
\newcommand{\indlim}[1][]{\mathop{\varinjlim}\limits_{#1}}
\newcommand{\inddlim}[1][]{{``{\indlim[#1]}"}}
\newcommand{\prolim}[1][]{\mathop{\varprojlim}\limits_{#1}}
\newcommand{\proolim}[1][]{{``{\prolim[#1]}"}}
\newcommand{\biggplus}[1][]{\mathop{\bigoplus}\limits_{#1}}
\newcommand{\prodd}[1][]{\mathop{\prod}\limits_{#1}}
\newcommand{\bbar}{\overline}
\newcommand{\hhat}{\widehat}
\newcommand{\toto}{\rightrightarrows}
\newcommand{\xto}{\xrightarrow}
\newcommand{\hto}{\hookrightarrow}
\newcommand{\leftto}{\leftarrow}
\newcommand{\bnd}{\operatorname{bnd}}
\newcommand{\rex}{\operatorname{rex}}
\newcommand{\rig}{\operatorname{rig}}
\newcommand{\oplax}{\operatorname{oplax}}
\newcommand{\lax}{\operatorname{lax}}
\newcommand{\Mot}{\operatorname{Mot}}
\newcommand{\Nuc}{\operatorname{Nuc}}
\newcommand{\Solid}{\operatorname{Solid}}
\newcommand{\IndCoh}{\operatorname{IndCoh}}
\newcommand{\Proj}{\operatorname{Proj}}
\newcommand{\Cat}{\operatorname{Cat}}
\newcommand{\Cone}{\operatorname{Cone}}
\newcommand{\Fiber}{\operatorname{Fiber}}
\newcommand{\loc}{\operatorname{loc}}
\newcommand{\cont}{\operatorname{cont}}
\newcommand{\acc}{\operatorname{acc}}
\newcommand{\st}{\operatorname{st}}
\newcommand{\cg}{\operatorname{cg}}
\newcommand{\Prr}{\operatorname{Pr}}
\newcommand{\dual}{\operatorname{dual}}
\newcommand{\deff}{\operatorname{def}}
\newcommand{\Fil}{\operatorname{Fil}}
\newcommand{\bbD}{{\mathbb D}}
\newcommand{\bE}{{\mathbb E}}
\newcommand{\mk}{\mathrm k}
\newcommand{\ev}{\mathrm ev}
\newcommand{\cJ}{{\mathcal J}}
\newcommand{\cF}{{\mathcal F}}
\newcommand{\cG}{{\mathcal G}}
\newcommand{\cO}{{\mathcal O}}
\newcommand{\cD}{{\mathcal D}}
\newcommand{\cA}{{\mathcal A}}
\newcommand{\cB}{{\mathcal B}}
\newcommand{\cI}{{\mathcal I}}
\newcommand{\cC}{{\mathcal C}}
\newcommand{\cE}{{\mathcal E}}
\newcommand{\cY}{{\mathcal Y}}
\newcommand{\cU}{{\mathcal U}}
\newcommand{\cS}{{\mathcal S}}
\newcommand{\cT}{{\mathcal T}}
\newcommand{\veps}{\varepsilon}
\newcommand{\un}{\underline}
\newcommand{\Eq}{\operatorname{Eq}}
\newcommand{\LEq}{\operatorname{LEq}}
\newcommand{\THH}{\operatorname{THH}}
\newcommand{\HH}{\operatorname{HH}}
\newcommand{\THC}{\operatorname{THC}}
\newcommand{\CAlg}{\operatorname{CAlg}}
\newcommand{\Fun}{\operatorname{Fun}}
\newcommand{\topp}{\operatorname{top}}
\newcommand{\Ab}{\operatorname{Ab}}
\newcommand{\Perf}{\operatorname{Perf}}
\newcommand{\perf}{\operatorname{perf}}
\newcommand{\Kos}{\operatorname{Kos}}
\newcommand{\Cosh}{\operatorname{Cosh}}
\newcommand{\Shv}{\operatorname{Shv}}
\newcommand{\im}{\operatorname{Im}}
\newcommand{\cha}{\operatorname{char}}
\newcommand{\Rep}{\operatorname{Rep}}
\newcommand{\Tor}{\operatorname{Tor}}
\newcommand{\tr}{\operatorname{tr}}
\newcommand{\Sp}{\operatorname{Sp}}
\newcommand{\Ind}{\operatorname{Ind}}
\newcommand{\Pro}{\operatorname{Pro}}
\newcommand{\Calk}{\operatorname{Calk}}
\newcommand{\Spf}{\operatorname{Spf}}
\newcommand{\Stab}{\rm Stab}
\newcommand{\Alg}{\operatorname{Alg}}
\newcommand{\id}{\operatorname{id}}
\newcommand{\coev}{\operatorname{coev}}
\newcommand{\tors}{\operatorname{tors}}
\newcommand{\compl}{\operatorname{compl}}
\newcommand{\pt}{\operatorname{pt}}
\newcommand{\Mod}{\operatorname{Mod}}
\newcommand{\mult}{\operatorname{mult}}
\newcommand{\trdeg}{\operatorname{trdeg}}
\newcommand{\bs}{\blacksquare}
\title[Localizing invariants of inverse limits]
{Localizing invariants of inverse limits}
\author{Alexander I. Efimov}
\address{The Hebrew University of Jerusalem}
\email{efimov@mccme.ru}
\thanks{The author was partially supported by the European Research Council (ERC, CurveArithmetic, 101078157).}
\begin{document}

\begin{abstract} In this paper we study the category of nuclear modules on an affine formal scheme as defined by Clausen and Scholze \cite{CS20}. We also study related constructions in the framework of dualizable and rigid monoidal categories. We prove that the $K$-theory (in the sense of \cite{E24}) of the category of nuclear modules on $\Spf(R^{\wedge}_I)$ is isomorphic to the classical continuous $K$-theory, which in the noetherian case is given by the limit $\prolim[n]K(R/I^n).$ This isomorphism was conjectured previously by Clausen and Scholze.

More precisely, we study two versions of the category of nuclear modules: the original one defined in \cite{CS20} and a different version, which contains the original one as a full subcategory. For our category $\Nuc(R^{\wedge}_I)$ we give three equivalent definitions. The first definition is by taking the internal $\Hom$ in the category $\Cat_R^{\dual}$ of $R$-linear dualizable categories. The second definition is by taking the rigidification of the usual $I$-complete derived category of $R.$ The third definition is by taking an inverse limit in $\Cat_R^{\dual}.$ For each of the three approaches we prove that the corresponding construction is well-behaved in a certain sense. 

Moreover, we prove that the two versions of the category of nuclear modules have the same $K$-theory, and in fact the same finitary localizing invariants.
\end{abstract}


\maketitle

\tableofcontents

\section{Introduction}
\label{sec:intro}

 Consider an affine noetherian formal scheme $\Spf(R^{\wedge}_I).$ It is classically understood that the ``naive'' $K$-theory spectrum $K(R^{\wedge}_I)=K(\Perf(R^{\wedge}_I))$ is not a reasonable invariant from the point of view of formal geometry. The reason is that the completion does not commute with the localization. More precisely, for an element $f\in R$ the functor between the categories of perfect complexes $\Perf(R^{\wedge}_I)\to \Perf(R[f^{-1}]^{\wedge}_I)$ is in general not a Verdier quotient functor (even up to idempotent completion). 
 
 A much better notion is the so-called continuous $K$-theory of $\Spf(R^{\wedge}_I),$ which by definition is the inverse limit $\prolim[n]K(R/I^n)$ (in fact, with some modification it can be defined in the non-noetherian case as well, see below). It was studied by many authors, in particular we mention the papers \cite{Wag76, BEK14a, BEK14b, Bei14, Mor18, KST19, KST23}. 
 
 However, the above definition of continuous $K$-theory of $\Spf(R^{\wedge}_I)$ is in some sense ad hoc: it is not clear if this inverse limit is the $K$-theory of some natural category associated with the formal scheme. It was generally assumed that such a (natural) categorification does not exist.
 
 A candidate for such a category was constructed recently by Clausen and Scholze in the framework of condensed mathematics \cite{CS20}. In loc. cit. the authors define the category of the so-called nuclear solid modules, which is a presentable stable ($R$-linear) category. We will denote it by $\Nuc^{CS}(R^{\wedge}_I).$ They proved that this category is in fact dualizable, and its full subcategory of compact objects is equivalent to $\Perf(R^{\wedge}_I).$ In particular, we can take the continuous $K$-theory $K^{\cont}$ (in the sense of \cite{E24}) of the category $\Nuc^{CS}(R^{\wedge}_I)$ . Here $K^{\cont}:\Cat_{\st}^{\dual}\to \Sp$ is a functor defined on the category of dualizable categories, where the $1$-morphisms are the functors which have a colimit-preserving right adjoint. More generally, this approach leads to a reasonable definition of $K$-theory of adic spaces, which was studied in \cite{And23}. We also refer to \cite{AM24} for a nice exposition of the general theory of nuclear solid modules. 
 
 One of the main results of the present paper is the following theorem, which was conjectured by Clausen and Scholze.
 
\begin{theo}\label{th:K_theory_of_Nuc^CS_intro}
Within the above notation we have an isomorphism 
\begin{equation}
 	K^{\cont}(\Nuc^{CS}(R^{\wedge}_I))\cong \prolim[n] K(R/I^n).
\end{equation}
\end{theo}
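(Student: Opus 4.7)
The plan is to leverage two of the structural results already advertised in the introduction: (a) the two flavors $\Nuc^{CS}(R^{\wedge}_I)$ and $\Nuc(R^{\wedge}_I)$ agree on every finitary localizing invariant, and (b) the author's $\Nuc(R^{\wedge}_I)$ admits a description as an inverse limit in $\Cat_R^{\dual}$. Since $K^{\cont}$ is a finitary localizing invariant on $\Cat_{\st}^{\dual}$, (a) immediately gives
\[
K^{\cont}\bigl(\Nuc^{CS}(R^{\wedge}_I)\bigr)\;\cong\;K^{\cont}\bigl(\Nuc(R^{\wedge}_I)\bigr),
\]
so the whole problem is to compute the right-hand side.

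Next, invoke the third equivalent definition of $\Nuc(R^{\wedge}_I)$, namely
\[
\Nuc(R^{\wedge}_I)\;\simeq\;\prolim[n]\;\Mod(R/I^n)\qquad\text{in }\Cat_R^{\dual},
\]
with transition maps the (continuous) derived base change $-\Ltens{R/I^{n+1}}R/I^n$. Each category in the tower is compactly generated with compacts $\Perf(R/I^n)$, so by Efimov's construction of $K^{\cont}$ we have $K^{\cont}(\Mod(R/I^n))\simeq K(\Perf(R/I^n))=K(R/I^n)$. If $K^{\cont}$ can be moved inside the limit, we are done:
\[
K^{\cont}\bigl(\Nuc(R^{\wedge}_I)\bigr)\;\simeq\;\prolim[n]\,K^{\cont}\bigl(\Mod(R/I^n)\bigr)\;\simeq\;\prolim[n]\,K(R/I^n).
\]

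The main obstacle, and what I expect to be the heart of the argument, is precisely this commutation of $K^{\cont}$ with the above inverse limit in $\Cat_R^{\dual}$. Unlike colimits, general limits in $\Cat_R^{\dual}$ are not automatically respected by finitary localizing invariants, so one has to exploit the specific structure of this tower: the transition functors have continuous right adjoints (restriction of scalars), and the tower is pro-nilpotent in the sense that the kernels of $R/I^n\to R/I^{n-1}$ are square-zero (or eventually nilpotent) square-zero extensions compatible with the rigid/dualizable structure. The expected route is to use the rigidification viewpoint (the second equivalent definition) to identify the limit with the rigidification of the $I$-complete derived category, and then appeal to a general principle that $K^{\cont}$, viewed as a functor out of $\Cat_R^{\dual}$, sends such "pro-nilpotent" inverse systems to the corresponding limit of spectra. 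Concretely, this should reduce to showing that the cofiber of
\[
K^{\cont}\bigl(\prolim[n]\Mod(R/I^n)\bigr)\;\longrightarrow\;\prolim[n] K(R/I^n)
\]
is zero, which follows once one knows that the natural comparison functor is fully faithful on compacts and a Verdier quotient on the dualizable pieces, a package of facts that the introduction promises under the slogan "well-behaved" for each of the three equivalent descriptions.

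Finally, once the commutation step is in hand, one unwinds the identifications: the limit on the right is the classical continuous $K$-theory $\prolim[n] K(R/I^n)$, and composing with the comparison isomorphism in (a) yields the theorem. The noetherian assumption on $R$ is used only at the very last step, to identify the limit of $K(R/I^n)$ with the classically studied continuous $K$-theory spectrum.
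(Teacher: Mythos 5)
Your high-level picture is coherent, but the reduction step (a) you rely on is unearned, and in the paper the dependency actually runs the other way. You propose to derive $K^{\cont}(\Nuc^{CS}(R^{\wedge}_I))\cong K^{\cont}(\Nuc(R^{\wedge}_I))$ from the advertised fact that the two categories have the same finitary localizing invariants, then finish by the inverse-limit description of $\Nuc(R^{\wedge}_I)$. But in the paper that comparison (Corollary \ref{cor:U_loc_of_Nuc^CS_and_Nuc}) is itself \emph{deduced from} Theorem \ref{th:loc_invar_of_Nuc_CS}, the theorem computing the localizing invariants of $\Nuc^{CS}(R^{\wedge}_I)$ directly — precisely the hard input you were hoping to avoid by passing to $\Nuc$. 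Both $\cU_{\loc}^{\cont}(\Nuc^{CS})$ and $\cU_{\loc}^{\cont}(\Nuc)$ are computed separately (Theorems \ref{th:loc_invar_of_Nuc_CS} and \ref{th:local_invar_of_inverse_limits}) and then observed to agree via C\'ordova Fedeli's Proposition 2.10; the comparison is a byproduct, not an ingredient, and there is no independent argument for it in the paper. So as a proof of Theorem \ref{th:K_theory_of_Nuc^CS_intro}, your route is circular.

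The paper's actual argument for Theorem \ref{th:K_theory_of_Nuc^CS_intro} (via Corollary \ref{cor:K_theory_of_Nuc^CS}) attacks $\Nuc^{CS}(R^{\wedge}_I)$ directly. The key resolution is Corollary \ref{cor:Nuc^CS_resolution}: with $P=(\bigoplus_{\N}R)^{\wedge}_I$, there is a short exact sequence
\[
0\to \Nuc^{CS}(R^{\wedge}_I)\to D(\End_R(P))\to D(\Calk_R^{\topp}(P))\to 0,
\]
where the middle term has countable coproducts of any object and hence has vanishing additive invariants, reducing the problem to $D(\Calk_R^{\topp}(P))$. This is then analyzed via a ``bounded'' variant of the oplax-limit vs.\ product argument (Propositions \ref{prop:ses_of_bounded_products}–\ref{prop:double_quotient_K_equiv_bounded}), leading to the cofiber sequence in Theorem \ref{th:loc_invar_of_Nuc_CS} whose middle term is $\Phi(\Stab(\prod_n \Proj^{\omega}\hy R_n))$. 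The final step uses that $K$-theory commutes with infinite products of additive $\infty$-categories (C\'ordova Fedeli), giving $K^{\cont}(\Nuc^{CS}(R^{\wedge}_I))\cong\varprojlim_n K(R_n)$ with $R_n=\Kos(R;a_1^n,\dots,a_m^n)$; the noetherian pro-equivalence (Lemma \ref{lem:pro_equivalence_noetherian}) then lets you replace $R_n$ by $R/I^n$. None of this goes through $\Nuc(R^{\wedge}_I)$ at all: your step (b) and the strongly Mittag-Leffler machinery are used to prove Theorem \ref{th:K_theory_of_Nuc_intro}, not Theorem \ref{th:K_theory_of_Nuc^CS_intro}.

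One more technical caveat: even in the $\Nuc$ computation, the inverse system to use in $\Cat_R^{\dual}$ is $(D(R_n))_{n}$ with $R_n$ the Koszul algebras; the sequence $(D(R/I^n))_n$ is shown to be pro-equivalent to it only in the noetherian case, and it is not verified that $(D(R/I^n))_n$ itself is strongly Mittag-Leffler without that detour. Your choice to write $\Mod(R/I^n)$ throughout glosses over this.
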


In fact, we prove a more general statement (Theorem \ref{th:loc_invar_of_Nuc_CS}), which describes an arbitrary localizing invariant of the category $\Nuc^{CS}(R^{\wedge}_I).$ Moreover, the result holds for a non-noetherian commutative ring $R$ with a finitely generated ideal $I=(a_1,\dots,a_m)\subset R.$ Here the quotients $R/I^n$ are replaced with the DG $R$-algebras $R_n=\Kos(R;a_1^n,\dots,a_m^n),$ given by the Koszul complexes. This is indeed a generalization: if $R$ is noetherian, then the inverse system $(R/I^n)_{n\geq 1}$ is pro-equivalent to the inverse system $(R_n)_{n\geq 1}$ in the $\infty$-category of (associative) DG $R$-algebras, see Lemma \ref{lem:pro_equivalence_noetherian}.
 
We introduce a closely related, but different version of the category of nuclear modules, which we denote by $\Nuc(R^{\wedge}_I).$ It is also dualizable, and we have a fully faithful inclusion $\Nuc^{CS}(R^{\wedge}_I)\to \Nuc(R^{\wedge}_I).$ Our version is in some sense more natural from the categorical perspective: it has three different descriptions via a universal property in the world of dualizable categories, which are studied respectively in Sections \ref{sec:dualizable_Hom}, \ref{sec:rig_of_locally_rigid} and \ref{sec:Mittag-Leffler}. The latter description is probably the most natural from the $K$-theoretic point of view:
\begin{equation*}
 	\Nuc(R^{\wedge}_I)=\prolim[n]^{\dual}D(R/I^n).
\end{equation*}
Here the limit is taken in the category $\Cat_{\st}^{\dual}$ (or equivalently in the category $\Cat_R^{\dual}$ of $R$-linear dualizable categories). This limit is much bigger (and much more complicated) than the usual limit $\prolim[n]D(R/I^n),$ which is equivalent to the $I$-complete derived category $D_{I\hy\compl}(R).$ Note that the full subcategory of compact objects in the category $\Nuc(R^{\wedge}_I)$ is given by the usual limit of the categories of perfect complexes, so we have
\begin{equation}\label{eq:Nuc_as_limit_intro}
\Nuc(R^{\wedge}_I)^{\omega}\simeq \prolim[n]\Perf(R/I^n)\simeq \Perf(R^{\wedge}_I).
\end{equation}

An ``explicit'' way to describe the category $\Nuc(R^{\wedge}_I),$ which is discussed in Section \ref{sec:rig_of_locally_rigid}, is the following. The category $\Nuc(R^{\wedge}_I)\subset\Ind(D_{I\hy\compl}(R)^{\omega_1})$ is the full subcategory generated via colimits by the formal sequential colimits $\inddlim[n]M_n,$ where the transition maps $M_n\to M_{n+1}$ are trace-class in $D_{I\hy\compl}(R)$ in the sense of \cite[Definition 13.11]{CS20}. 

We prove the following result, which is closely related with Theorem \ref{th:K_theory_of_Nuc^CS_intro}.

\begin{theo}\label{th:K_theory_of_Nuc_intro}
Within the above notation we have an isomorphism
\begin{equation*}
K^{\cont}(\Nuc(R^{\wedge}_I))\cong K^{\cont}(\prolim[n]^{\dual}D(R/I^n))\xto{\sim} \prolim[n] K^{\cont}(D(R/I^n))\cong \prolim[n]K(R/I^n).
\end{equation*}
\end{theo}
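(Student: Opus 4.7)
The statement chains three isomorphisms, so my plan is to treat them separately: the first and third are essentially formal, while the middle one is where the real work lies.

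The leftmost isomorphism $K^{\cont}(\Nuc(R^{\wedge}_I))\cong K^{\cont}(\prolim[n]^{\dual}D(R/I^n))$ is tautological once one invokes the identification $\Nuc(R^{\wedge}_I)\simeq \prolim[n]^{\dual}D(R/I^n)$ advertised as the third description of $\Nuc(R^{\wedge}_I)$ in the introduction and to be established in Section \ref{sec:Mittag-Leffler}. The rightmost isomorphism $\prolim[n]K^{\cont}(D(R/I^n))\cong \prolim[n]K(R/I^n)$ reduces pointwise to $K^{\cont}(D(R/I^n))\cong K(R/I^n)$; this is standard because $D(R/I^n)$ is compactly generated by $\Perf(R/I^n)$, so its continuous $K$-theory coincides with the ordinary $K$-theory of its subcategory of compact objects. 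The same argument applies in the non-noetherian setting with the Koszul DG-algebras $R_n$ in place of $R/I^n$.

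The substantive step is the middle arrow $K^{\cont}(\prolim[n]^{\dual}D(R/I^n))\xto{\sim} \prolim[n]K^{\cont}(D(R/I^n))$. I would prove it in the stronger form that every finitary localizing invariant $E:\Cat_{\st}^{\dual}\to \Sp$ commutes with this inverse limit. The strategy exploits the special structure of the system: each base change functor $D(R/I^{n+1})\to D(R/I^n)$ has a colimit-preserving right adjoint (forgetful functor) that is itself well-behaved, and the compact objects of the limit already recover $\Perf(R^{\wedge}_I)$ by (\ref{eq:Nuc_as_limit_intro}).

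The main obstacle is that localizing invariants are designed to commute with filtered colimits, not with limits, so $E(\prolim^{\dual})\simeq \prolim E$ requires nontrivial structural input. I would present $\prolim[n]^{\dual}D(R/I^n)$ via the shear/rigidification picture of Sections \ref{sec:rig_of_locally_rigid}--\ref{sec:Mittag-Leffler} and deduce a fiber sequence of spectra
\[E(\prolim[n]^{\dual}D(R/I^n))\to \prod_n E(D(R/I^n))\xrightarrow{\id-\text{shift}} \prod_n E(D(R/I^n)),\]
which exhibits $E(\prolim^{\dual})$ as the homotopy (equivalently plain) inverse limit of the $E(D(R/I^n))$. Setting up this fiber sequence requires a Mittag-Leffler / almost-surjectivity property at the categorical level for the pro-system $(D(R/I^n))$, reflecting the trace-class / compactness structure of the transition maps; this is the main technical content and I would defer it to Section \ref{sec:Mittag-Leffler}.
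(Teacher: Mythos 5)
Your decomposition into three isomorphisms matches the paper's route exactly: the first is Proposition \ref{prop:Nuc_as_limit}, the third is the standard $K^{\cont}(\Ind(\cA))\cong K(\cA)$ applied pointwise, and the middle is the combination of the strongly Mittag-Leffler property of $(D(R_n))_n$ with Theorem \ref{th:local_invar_of_inverse_limits}, which is indeed proved by setting up exactly the fiber sequence you describe, and whose input is precisely the homological epimorphism $\prolim_n\cC_n^{\kappa}\to\prolim_n\Calk_\kappa^{\cont}(\cC_n)$ established in Theorem \ref{th:hom_epi_for_ML}. So the approach is the right one.

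One point you elide that the paper treats as a genuine step: the paper's fiber sequence in Theorem \ref{th:local_invar_of_inverse_limits}\ref{Phi^cont_of_lim^dual_exact_triangle} has $\Phi(\prodd_{n}\cC_n^{\omega})$ in the middle and right terms, i.e.\ the localizing invariant applied to the \emph{product category}, not the product of the values $\prodd_n\Phi(\cC_n^{\omega})$ that you wrote. Passing from one to the other is where the commutation of $K$-theory with infinite products of stable categories enters (\cite{KW19}, reproved as \cite[Theorem 4.28]{E24}); this is a real ingredient and should be cited explicitly. In the non-noetherian generalization you also want to replace $R/I^n$ by the Koszul algebras $R_n$, and the passage back to $R/I^n$ in the noetherian case uses the pro-equivalence of Lemma \ref{lem:pro_equivalence_noetherian}. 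With those two points made explicit, your sketch is a faithful account of the paper's proof.
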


In fact, we prove a much more general result (Theorem \ref{th:local_invar_of_inverse_limits}, Corollary \ref{cor:K_theory_of_limit_of_ML}) which describes the localizing invariants of inverse limits of sufficiently nice sequences in $\Cat_{\st}^{\dual}.$ This class of inverse sequences includes the so-called strongly Mittag-Leffler sequences which we define and study in Section \ref{sec:Mittag-Leffler}. In particular, for an arbitrary commutative ring $R$ and a collection of elements $a_1,\dots,a_m\in R$ the inverse system $(D(R_n))_{n\geq 1}$ as above is strongly Mittag-Leffler. 
 
Note that by Theorems \ref{th:K_theory_of_Nuc^CS_intro} and \ref{th:K_theory_of_Nuc_intro} the categories $\Nuc^{CS}(R^{\wedge}_I)$ and $\Nuc(R^{\wedge}_I)$ have the same continuous $K$-theory. In fact, using the results of \cite{Cor} we can prove that these categories have the same finitary localizing invariants, see Corollary \ref{cor:U_loc_of_Nuc^CS_and_Nuc} (``finitary'' means ``commuting with filtered colimits''). 

We now explain another description of the category $\Nuc(R^{\wedge}_I),$ which is the most fundamental from our perspective. It is given in Section \ref{sec:dualizable_Hom}. We consider the full subcategory $D_{I\hy\tors}(R)\subset D(R)$ of objects with locally $I$-torsion homology. This category is well-known to be equivalent to the category $D_{I\hy\compl}(R)$ above. It is generated by a single compact object $\Kos(R;a_1,\dots,a_n),$ where $I=(a_1,\dots,a_m).$ We have an equivalence
\begin{equation*}
\Nuc(R^{\wedge}_I)\simeq\un{\Hom}_R^{\dual}(D_{I\hy\tors}(R),D(R)),
\end{equation*}
where the internal $\Hom$ is taken in the category $\Cat_R^{\dual}$ of $R$-linear dualizable categories. Again, this internal $\Hom$ is much bigger (and much more complicated) than the internal $\Hom$ in $\Pr^L_R$ (the category of $R$-linear presentable stable categories) which in this case is simply the category $D_{I\hy\tors}(R)\simeq D_{I\hy\compl}(R).$ 

A special case of the main result of Section \ref{sec:dualizable_Hom} (Theorem \ref{th:internal_projectivity}) states that the
functor
\begin{equation*}
\un{\Hom}_R^{\dual}(D_{I\hy\tors}(R),-):\Cat_R^{\dual}\to \Cat_R^{\dual}
\end{equation*}
takes short exact sequences to short exact sequences. In other words, the category $D_{I\hy\tors}(R)$ is internally projective in $\Cat_R^{\dual}.$ More generally, we can replace the category $D_{I\hy\tors}(R)$ with an arbitrary proper and $\omega_1$-compact $R$-linear dualizable category. Moreover, the base category $D(R)$ can be replaced with an arbitrary rigid $\bE_1$-monoidal category in the sense of Gaitsgory and Rozenblyum (see Definition \ref{def:rigid_category}). In this generality (if the base is only $\bE_1$-monoidal) Theorem \ref{th:internal_projectivity} states the relative internal projectivity over $\Cat_{\st}^{\dual}$ (if the base $\cE$ is rigid symmetric monoidal, this is equivalent to the internal projectivity in $\Cat_{\cE}^{\dual}$). In Subsection \ref{ssec:analogy_Raynaud_Gruson} we explain a surprising analogy between Theorem \ref{th:internal_projectivity} and the Raynaud-Gruson criterion of projectivity of modules over (usual) associative rings \cite{RayGru}.

This internal projectivity of the category $D_{I\hy\tors}(R)$ is essentially the main reason for the possibility of computation of localizing invariants of the category $\Nuc(R^{\wedge}_I).$  For a more precise statement, see Corollary \ref{cor:internal_Hom_in_Mot^loc_kappa} and the discussion after its proof.

We mention a surprising application of our results to Hochschild homology of the categories of nuclear modules, which uses the results of \cite{Cor}. To fix the ideas, we consider the following special case. For a prime $p,$ the categories $\Nuc(\Z_p)$ and $\Nuc^{CS}(\Z_p)$ are $\Z$-linear dualizable categories (equivalently, dualizable DG categories). We have the following isomorphisms:
\begin{equation}\label{eq:HH_Nuc_Z_p_intro}
\HH(\Nuc(\Z_p)/\Z)\cong\HH(\Nuc^{CS}(\Z_p)/\Z)\cong \Z_p,
\end{equation}
see Corollary \ref{cor:HH_Nuc_examples}. Note that in contrast the Hochschild homology $\HH(\Z_p/\Z)$ is much larger than $\Z_p:$ the cotangent complex $L_{\Z_p/\Z}\cong \Omega^1_{\Q_p/\Q}$ is a very large $\Q_p$-vector space (of dimension $2^{\aleph_0}$). We remark that \eqref{eq:HH_Nuc_Z_p_intro} implies the following: for the non-unital algebra $J$ of compact operators on the separable Banach $\Q_p$-vector space $(\biggplus[\N]\Z)^{\wedge}_p [p^{-1}]$ we have
\begin{equation*}
\HH(J/\Q)\cong \Q_p.
\end{equation*}

The paper is organized as follows.

In Section \ref{sec:preliminaries_rigid_and_dualizable} we recall some notions and results about (presentable stable) rigid monoidal categories in the sense of \cite[Definition 9.1.2]{GaRo17}, and dualizable modules over such categories. Most of the material in this section is already known, but some results seem to be new. In particular, we prove that for a rigid $\bE_1$-monoidal category $\cE$ the category $\Cat_{\cE}^{\cg}$ of relatively compactly generated left $\cE$-modules is compactly assembled. We also discuss in some detail the rigidification of a compactly generated symmetric monoidal category and its equivalence with the rigidification of the dual category. This is relevant for understanding the two versions of the category of nuclear modules discussed above. We also discuss the smoothness and properness for dualizable categories over a rigid base category, and their characterizations via right trace-class morphisms of functors.

In Section \ref{sec:lim_colim_and_filtered} we formulate certain non-trivial statements about the combinations of limits and colimits, and about filtered $\infty$-categories. An especially difficult Proposition \ref{prop:quadrofunctors}, involving tricky permutations of limits and colimits, is proved in Appendix \ref{app:limits_colimits} as a consequence of a more general result (Theorem \ref{th:pullback_square_quadrofunctors}).  A quite technical but intuitive statement about lax equalizers of functors between infinite products of filtered $\infty$-categories (Lemma \ref{lem:lax_equalizer_of_filtered_properties}) is proved in Appendix \ref{app:proof_of_lemma_on_filtered}.

In Section \ref{sec:dualizable_Hom} we study the (relative) internal $\Hom$ in the category of dualizable modules over some rigid base category. We prove Theorem \ref{th:internal_projectivity} on the internal projectivity of proper $\omega_1$-compact dualizable modules. The proof is quite difficult, and it eventually reduces to Proposition \ref{prop:quadrofunctors}. We discuss some examples and give the definition of the category $\Nuc(R^{\wedge}_I)$ as a dualizable internal $\Hom.$ Among other things, we explain the analogy between Theorem \ref{th:internal_projectivity} and the Raynaud-Gruson criterion of projectivity for modules over an associative ring. 

In Section \ref{sec:rig_of_locally_rigid} we use Theorem \ref{th:internal_projectivity} and its proof to prove certain good properties of the rigidification $\cE^{\rig}$ of a locally rigid  symmetric monoidal category $\cE,$ assuming that the unit object $1\in\cE$ is $\omega_1$-compact. Loosely speaking, we prove that any trace-class map in $\cE$ is a composition of two trace-class maps. A more precise statement is Theorem \ref{th:rigidification_of_locally_rigid_countable}. Among other things, it states that the natural functor $\cE^{\rig}\to\cE$ has a symmetric monoidal right adjoint (a priori it is only lax monoidal), and the theorem also gives a description of this right adjoint. We obtain an equivalent description of the category $\Nuc(R^{\wedge}_I)$ as a rigidification of the (locally rigid) $I$-complete derived category $D_{I\hy\compl}(R).$
 
In Section \ref{sec:Mittag-Leffler} we define the notion of a strongly Mittag-Leffler sequence of dualizable categories over some rigid base. The main result is Theorem \ref{th:hom_epi_for_ML} which informally speaking states the ``vanishing of $\bR^1\lim$'' for strongly Mittag-Leffler sequences. More precisely, for such an inverse sequence $(\cC_n)_{n\geq 0}$ we prove that for any uncountable regular cardinal $\kappa$ the functor
\begin{equation}\label{eq:hom_epi_intro}
\prolim[n]\cC_n^{\kappa}\to\prolim[n]\Calk_{\kappa}^{\cont}(\cC_n),
\end{equation} 
is a homological epimorphism, i.e. the induced functor on the ind-completions is a quotient functor. Here $\Calk_{\kappa}^{\cont}(\cC_n)$ is the Calkin category defined in \cite{E24}. This is a surprising statement, which in particular implies the essential surjectivity (up to retracts) of the functor \eqref{eq:hom_epi_intro}. It is closely related with Theorem \ref{th:internal_projectivity} on the internal projectivity, and we explain this relation. We obtain the description of the category $\Nuc(R^{\wedge}_I)$ as a dualizable inverse limit \eqref{eq:Nuc_as_limit_intro}, and a more general version for non-noetherian rings.

In Section \ref{sec:loc_invar_inverse_limits} we prove Theorem \ref{th:local_invar_of_inverse_limits} about localizing invariants of dualizable inverse limits of sequences $(\cC_n)_{n\geq 0}$ such that the functor \eqref{eq:hom_epi_intro} is a homological epimorphism at least for some uncountable regular $\kappa$ (in practice, we can restrict to $\kappa=\omega_1$). As a corollary, we prove Theorem \ref{th:K_theory_of_Nuc_intro}.

In Section \ref{sec:original_Nuc} we study the original version of the category of nuclear (solid) modules and prove Theorem \ref{th:loc_invar_of_Nuc_CS} about its localizing invariants. As a corollary, we obtain Theorem \ref{th:K_theory_of_Nuc^CS_intro}. We also explain the application to Hochschild homology both for $\Nuc(R^{\wedge}_I)$ and $\Nuc^{CS}(R^{\wedge}_I),$ using the results of \cite{Cor}. 

{\noindent {\bf Acknowledgements.}} I am grateful to Ko Aoki, Alexander Beilinson, Dustin Clausen, Adriano C\'ordova Fedeli, Vladimir Drinfeld, Dennis Gaitsgory, Dmitry Kaledin, David Kazhdan, Akhil Mathew, Thomas Nikolaus, Maxime Ramzi, Peter Scholze and Georg Tamme for useful discussions. Part of this work was done while I was a visitor in the Max Planck Institute for Mathematics in Bonn from December 2022 till February 2024, and I am grateful to the institute for their hospitality and support.  

\section{Preliminaries on rigid monoidal categories and dualizable modules}
\label{sec:preliminaries_rigid_and_dualizable}

\subsection{Notation and terminology}

We will freely use the theory of $\infty$-categories, as developed in \cite{Lur09, Lur17}. We will only deal with $(\infty,1)$-categories, unless otherwise stated. Moreover, we will simply say ``category'' instead of ``$\infty$-category'' if the meaning is clear from the context. We denote by $\cS$ the $\infty$-category of spaces, which is freely generated by one object via colimits. We denote by $\Sp$ the category of spectra.

We will mostly use the same notation as in \cite{E24} for various categories of stable categories. In particular, we denote by $\Cat^{\perf}$ the category of idempotent-complete stable categories and exact functors between them. We denote by $\Pr^L_{\st}$ the category of presentable stable categories and continuous (i.e. colimit-preserving) functors between them. We will say that a continuous functor $F:\cC\to\cD$ is strongly continuous if its right adjoint commutes with colimits. Given $\cC,\cD\in\Pr^L_{\st},$ we denote by $\Fun^L(\cC,\cD)$ resp. $\Fun^{LL}(\cC,\cD)$ the category of continuous resp. strongly continuous functors. For a regular cardinal $\kappa,$ we denote by $\Pr^L_{\st,\kappa}\subset \Pr^L_{\st}$ the non-full subcategory of $\kappa$-presentable stable categories, with $1$-morphisms being continuous functors which preserve $\kappa$-compact objects. The assignment $\cC\mapsto \cC^{\kappa}$ defines an equivalence $\Pr^L_{\st,\kappa}\xto{\sim}\Cat^{\kappa\hy\rex}_{\st},$ where $\Cat^{\kappa\hy\rex}_{\st}$ is the category of small stable categories with $\kappa$-small colimits, and exact functors which commute with $\kappa$-small colimits (in the case $\kappa=\omega$ we also require the idempotent-completeness).

We will say that an exact functor $F:\cA\to\cB$ between idempotent-complete small stable categories is a homological epimorphism if the functor $\Ind(F):\Ind(\cA)\to\Ind(\cB)$ is a quotient functor, or equivalently if its right adjoint is fully faithful. Equivalently, this means that $F$ is an epimorphism in $\Cat^{\perf}.$  

For a functor $F:\cC\to\cD$ we will denote by $F^R:\cD\to\cC$ the right adjoint functor, if it exists. Similarly, we denote by $F^L:\cD\to\cC$ the left adjoint functor.

We denote by $\Cat_{\st}^{\dual}\subset \Pr^L_{\st}$ the (non-full) subcategory of dualizable categories and strongly continuous functors between them. Recall that the ind-completion functor $\Ind:\Cat^{\perf}\to \Cat_{\\st}^{\dual}$ is fully faithful. Its essential image is the category $\Cat^{\cg}_{\st}\simeq \Pr^L_{\st,\omega}\subset\Cat_{\st}^{\dual}$ of compactly generated categories. We will consider the duality functor $(-)^{\vee}:\Cat_{\st}^{\dual}\to\Cat_{\st}^{\dual}$ as a (symmetric monoidal) covariant involution. It sends $\cC$ to the dual category $\cC^{\vee},$ and a strongly continuous functor $F:\cC\to\cD$ is sent to $F^{\vee,L}\cong (F^R)^{\vee}.$ 

Recall that any dualizable category is $\omega_1$-presentable \cite[Corollary 1.21]{E24}. For $\cC\in\Cat_{\st}^{\dual}$ we will denote by $\hat{\cY}_{\cC}:\cC\to\Ind(\cC^{\omega_1})$ (or simply $\hat{\cY}$) the left adjoint to the colimit functor. The same applies to more general compactly assembled categories (not necessarily stable).

We will use the following convention: a functor $p:I\to J$ between small $\infty$-categories is cofinal if for any $j\in J$ the category $I_{j/}=I\times_J J_{j/}$ is weakly contractible. Equivalently, for any $\infty$-category $\cC$ and for any functor $F:J\to\cC$ we have
\begin{equation*}
\indlim(J\xto{F}\cC)\cong \indlim(I\xto{p}J\xto{F}\cC),
\end{equation*}
assuming that one of the colimits exists.

We will mostly use directed posets instead of general filtered $\infty$-categories. Recall that by \cite[Proposition 5.3.1.18]{Lur09} for any $\kappa$-filtered $\infty$-category $I$ there exists a $\kappa$-directed poset $J$ with a cofinal functor $J\to I.$ However, we will need certain statements about filtered $\infty$-categories which do not formally reduce to the case of directed posets, see Subsection \ref{ssec:lemmas_about_filtered}. For an $\infty$-category $\cA$ we will typically write the objects of $\Ind(\cA)$ resp. $\Pro(\cA)$ as $\inddlim[i\in I]x_i$ resp. $\proolim[i\in I]x_i,$ where $I$ is directed resp. codirected, and we have a functor $I\to\cA,$ $i\mapsto x_i.$

By default, all the functors between stable categories will be assumed to be exact. A stable category $\cA$ is enriched over $\Sp,$ and we will denote by $\Hom_{\cA}(x,y)$ or $\cA(x,y)$ the spectrum of morphisms from $x$ to $y.$ We will frequently use the identification $\Ind(\cA)\simeq\Fun(\cA^{op},\Sp)$ in the case when $\cA$ is small stable. We denote the evaluation functor for $\Ind(\cA)$ by
\begin{equation*}
-\tens{\cA}-:\Ind(\cA)\otimes\Ind(\cA^{op})\to\Sp,
\end{equation*}
so we have
\begin{equation*}
(\inddlim[i]x_i)\tens{\cA}(\inddlim[j]y_j)\cong \indlim[i,j]\cA(y_j,x_i).
\end{equation*}

If $\cE$ is an $\bE_1$-monoidal category, we have in general two different $\bE_1$-monoidal categories $\cE^{op}$ and $\cE^{mop}.$ Namely, $\cE^{op}$ has the opposite underlying category, but the order of multiplication is the same. On the other hand, $\cE^{mop}$ has the same underlying category, but the order of multiplication is reversed. In particular, left $\cE$-modules are the same as right $\cE^{mop}$-modules.

Given a left $\cE$-module $\cC$ and two objects $x,y\in\cC,$ we denote by $\un{\Hom}_{\cC/\cE}(x,y)\in\cE$ the relative internal $\Hom$-object. If it exists, it is uniquely determined by the universal property
\begin{equation*}
\Map_{\cE}(z,\un{\Hom}_{\cC/\cE}(x,y))\simeq \Map_{\cC}(z\otimes x,y).
\end{equation*}

For an $\bE_1$-ring $A,$ we will denote by $\Mod\hy A$ the category of right $A$-modules. If $A$ is connective, we will sometimes write $D(A)$ instead of $\Mod\hy A.$ When we discuss bounded below or bounded above modules, we will usually consider the homological grading. In particular, if $A$ is connective, then we denote by $D_{\geq 0}(A)$ the category of connective right $A$-modules.

If $A$ is an algebra in an $\bE_1$-monoidal category $\cE,$ then we also denote by $\Mod\hy A$ the category of right $A$-modules in $\cE.$ This should not lead to confusion, because the corresponding monoidal category will be clear from the context.

Finally, we recall certain Grothendieck's axioms for abelian categories \cite{Gro} which make sense for a presentable $\infty$-category $\cC.$ First, we say that $\cC$ satisfies strong (AB5) if filtered colimits in $\cC$ commute with finite limits. We say that $\cC$ satisfies (AB6) if for any set $I,$ for any collection of directed posets $(J_i)_{i\in I}$ and for any family of functors $J_i\to \cC,$ $j_i\mapsto x_{j_i},$ the following natural map is an isomorphism:
\begin{equation*}
\indlim[(j_i)_i\in\prod\limits_{i\in I}J_i] \prodd[i]x_{j_i}\xto{\sim} \prodd[i]\indlim[j_i\in J_i]x_{j_i}.
\end{equation*}  
If $\cC$ is stable, then it automatically satisfies strong (AB5), and the axiom (AB6) is equivalent to the dualizability of $\cC$ \cite[Proposition 1.53]{E24}. In general, $\cC$ is compactly assembled if and only if it satisfies both strong (AB5) and (AB6).

\subsection{Rigid monoidal categories}
\label{ssec:rigid_cats}

We recall the following definition due to Gaitsgory and Rozenblyum.

\begin{defi}\cite[Definition 9.1.2]{GaRo17}\label{def:rigid_category}
Let $\cE\in\Alg_{\bE_1}(\Pr^L_{\st})$ be a presentable stable $\bE_1$-monoidal category. Then $\cE$ is called rigid if the following conditions are satisfied.

\begin{enumerate}[label=(\roman*), ref=(\roman*)]
	\item The unit object $1_{\cE}$ is compact.
	
	\item The multiplication functor $\mult:\cE\otimes\cE\to\cE$ has a colimit-preserving right adjoint $\mult^R:\cE\to\cE\otimes\cE,$ which is $\cE\hy\cE$-linear (i.e. $\mult^R$ is a bimodule functor).
\end{enumerate}
\end{defi}

The relation with the classical notion of rigidity for small monoidal categories is given by the following basic statement.

\begin{prop}\cite{GaRo17}
Let $\cE$ be a presentable stable $\bE_1$-monoidal category which is compactly generated. Then the following are equivalent.
\begin{enumerate}[label=(\roman*), ref=(\roman*)]
	\item $\cE$ is rigid.
	
	\item The unit object $1_{\cE}$ is compact, and every compact object is left and right dualizable.
	
	\item An object of $\cE$ is right dualizable iff it is left dualizable iff it is compact.
	
	\item We have a monoidal equivalence $\cE\simeq\Ind(\cA),$ where $\cA$ is a small monoidal stable category, such that each object of $\cA$ is left and right dualizable in $\cA.$
\end{enumerate}
\end{prop}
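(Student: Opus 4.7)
The plan is to treat (ii), (iii), (iv) as equivalent by elementary reductions and then close the cycle by proving (i) $\Leftrightarrow$ (ii), with the implication (ii) $\Rightarrow$ (i) carrying the real content.

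First I would show (ii) $\Rightarrow$ (iii): if $x\in\cE$ has a left dual $x^L,$ then $\Hom_\cE(x,-)\simeq\Hom_\cE(1_\cE,x^L\otimes -)$ is continuous because $1_\cE$ is compact and $x^L\otimes -$ preserves colimits, so $x$ is compact; the right-dualizable case is symmetric. The converse direction inside (iii) is just (ii) itself, and (iii) $\Rightarrow$ (ii) is trivial since $1_\cE$ is self-dual. For (ii) $\Leftrightarrow$ (iv) one sets $\cA := \cE^{\omega}$ with its induced monoidal structure: compactness of $1_\cE$ places it in $\cA,$ and left/right duals of compacts are themselves compact (by the argument just given), so $\cA$ is closed under dualization; conversely, in $\Ind(\cA)$ every compact object is a retract of one from $\cA,$ and both compactness and left/right dualizability pass to retracts.

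For (i) $\Rightarrow$ (ii), compactness of $1_\cE$ is part of the definition of rigidity. To produce a dual of a compact $c,$ one uses the unit and counit of $\mult\dashv\mult^R$ together with the $\cE$-bilinearity of $\mult^R$: the object $\mult^R(1_\cE)\in\cE\otimes\cE$ combined with the counit $\mult\circ\mult^R\to\id_\cE$ assembles, after pairing with $c,$ into evaluation and coevaluation maps witnessing $c$ as dualizable. This is the standard Gaitsgory-Rozenblyum manipulation in the rigid monoidal setting.

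The substantive direction is (ii) $\Rightarrow$ (i). Continuity of $\mult^R$ is equivalent to $\mult$ preserving compact objects; the compact generators $c_1\boxtimes c_2\in\cE\otimes\cE$ map to $c_1\otimes c_2,$ a tensor of dualizables, hence dualizable, hence compact by (ii) $\Rightarrow$ (iii). The main obstacle is then to upgrade the tautological lax $\cE$-bimodule structure on $\mult^R$ (coming from $\mult$ being an $\cE$-bimodule morphism) to a strict one. Concretely, one must show that the projection-formula transformation $a\otimes \mult^R(-)\otimes b\to \mult^R(a\otimes -\otimes b)$ is an equivalence for all $a,b\in\cE.$ Both sides are continuous in each slot (using continuity of $\mult^R$ just established), so writing $a$ and $b$ as colimits of compacts reduces to compact $a$ and $b.$ For such $a,b,$ dualizability provides biadjoints for $a\otimes(-)$ and $(-)\otimes b,$ and the transformation becomes a Beck-Chevalley map between two matching adjunctions; at the level of mapping spectra it reduces to comparing $\Hom_\cE(\mult(a^L\otimes u\otimes b^L),y)$-type expressions on both sides. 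Packaging these pointwise equivalences coherently into an actual $\infty$-categorical bimodule-functor structure on $\mult^R,$ rather than merely a pointwise isomorphism of bifunctors, is the technically delicate point I would expect to spend the most care on.
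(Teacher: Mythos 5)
Your proposal tracks essentially the same route as the paper, which simply cites \cite[Lemma 9.1.5, Corollary 9.1.7]{GaRo17} for (i) $\Leftrightarrow$ (ii) and notes that (iii), (iv) are reformulations; the elementary reductions you give among (ii), (iii), (iv) (dualizable $\Rightarrow$ compact via $\Hom(1,-)$, retract-closure of dualizability in $\Ind(\cA)$, etc.) are the intended ones. The one place you overestimate the difficulty is the final coherence worry in (ii) $\Rightarrow$ (i): because $\mult$ is by hypothesis an $\cE\hy\cE$-bimodule functor, its right adjoint $\mult^R$ automatically carries a canonical \emph{lax} $\cE\hy\cE$-bimodule structure (right adjoints of module functors are lax module functors, a general $(\infty,2)$-categorical fact used elsewhere in the paper, cf.\ Proposition \ref{prop:dualizable_over_E_same_as_modules}\ref{lax_is_strict}), and a lax module functor is strict exactly when its structure maps are equivalences. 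So the only substance is the pointwise projection-formula equivalence $a\otimes\mult^R(-)\otimes b\xto{\sim}\mult^R(a\otimes -\otimes b)$ that you already describe how to check on compact $a,b$ using dualizability and then extend by continuity; there is no separate ``packaging'' step.
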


\begin{proof}
This is essentially a reformulation of \cite[Lemma 9.1.5, Corollary 9.1.7]{GaRo17}. Note that if the unit object is compact, then any (left or right) dualizable object is compact. 
\end{proof}

We recall the natural self-duality for rigid categories.

\begin{prop}\label{prop:rigid_categories_basics}\cite[Section 9.2.1]{GaRo17}
Any rigid $\bE_1$-monoidal category $\cE$ is dualizable. Moreover, we have an equivalence $\cE^{\vee}\simeq\cE,$ such that the evaluation and coevaluation functors are given by
\begin{equation*}
\ev_{\cE}:\cE\otimes\cE^{\vee}\simeq\cE\otimes\cE\xto{\mult}\cE\xto{\Hom(1,-)}\Sp,
\end{equation*}
\begin{equation*}
\coev_{\cE}:\Sp\xto{1}\cE\xto{\mult^R}\cE\otimes\cE\simeq\cE^{\vee}\otimes\cE.
\end{equation*} 
\end{prop}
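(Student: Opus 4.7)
My plan is to verify directly the two triangle identities (snake relations) for the candidate unit $\coev_{\cE}$ and counit $\ev_{\cE}$ described in the statement. Once both identities hold in $\Pr^L_{\st}$, they simultaneously exhibit $\cE$ as dualizable and identify $\cE^{\vee}$ with $\cE$ via the indicated pairing, so the full statement follows at once. By the obvious left/right symmetry, I will focus on
\[ (\id_{\cE} \otimes \ev_{\cE}) \circ (\coev_{\cE} \otimes \id_{\cE}) \simeq \id_{\cE}, \]
and the other identity goes through by the mirror argument using left rather than right $\cE$-linearity of $\mult^R$.

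The only non-formal input is condition (ii) of Definition \ref{def:rigid_category}, which says that $\mult^R$ is a morphism of $\cE$-bimodules. Unpacking this against the outer $\cE$-bimodule structure on $\cE\otimes\cE$, I obtain the projection formulas
\[ \mult^R(a\cdot x)\simeq (a\otimes 1_{\cE})\cdot\mult^R(x),\qquad \mult^R(x\cdot b)\simeq\mult^R(x)\cdot(1_{\cE}\otimes b), \]
for all $a,b,x\in\cE$; here "$\cdot$" denotes the $\cE$-action on $\cE$ itself and on $\cE\otimes\cE$ through the appropriate outer factor. Together with the adjunction $(\mult,\mult^R)$ and the compactness of $1_{\cE}$ (condition (i) of Definition \ref{def:rigid_category}), this is all the input the computation will require.

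The verification proceeds in two steps. First, I chase $(\id_{\cE}\otimes\mult)(\mult^R(1_{\cE})\otimes x)$: this is $\mult^R(1_{\cE})$ acted on by $x$ through the second tensor factor, and right $\cE$-linearity of $\mult^R$ turns it into $\mult^R(x)$. Second, I apply $\id_{\cE}\otimes\Hom_{\cE}(1_{\cE},-)$; using compactness of $1_{\cE}$, I recognize this as the right adjoint to the functor $y\mapsto y\otimes 1_{\cE}\colon\cE\to\cE\otimes\cE$. Chaining this adjunction with $(\mult,\mult^R)$ gives, for any $y\in\cE$,
\[ \Map_{\cE}\bigl(y,(\id_{\cE}\otimes\Hom(1_{\cE},-))(\mult^R(x))\bigr)\simeq\Map_{\cE\otimes\cE}(y\otimes 1_{\cE},\mult^R(x))\simeq\Map_{\cE}(\mult(y\otimes 1_{\cE}),x)\simeq\Map_{\cE}(y,x), \]
naturally in $y$ and $x$, and the Yoneda lemma closes the argument.

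The potential obstacle here is really only bookkeeping: keeping track of which factor of $\cE\otimes\cE$ is acted upon at each step, and ensuring that the bimodule linearity of $\mult^R$ is applied on the correct side. Once that is sorted, the argument is entirely formal from the two adjunctions and the compactness of the unit, with no further structural input; in particular, no condition on individual objects of $\cE$ beyond those of Definition \ref{def:rigid_category} appears, which is consistent with rigidity being a categorical rather than object-wise notion.
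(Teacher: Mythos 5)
Your argument is correct and is essentially the standard verification from [GaRo17, Section 9.2.1], which the paper cites rather than reproduces: you write down the two triangle identities and check them directly, using bimodule $\cE$-linearity of $\mult^R$ to collapse $(\id\otimes\mult)(\mult^R(1)\boxtimes x)$ to $\mult^R(x)$, and compactness of $1_\cE$ plus the two adjunctions $(1_\cE)\dashv\Hom(1,-)$ and $\mult\dashv\mult^R$ plus Yoneda to finish. The only spot that deserves a word more than "compactness of $1_\cE$" is the claim that $\id_\cE\otimes\Hom(1,-)$ is right adjoint to $y\mapsto y\boxtimes 1_\cE$: compactness makes $\Hom(1,-)$ a morphism in $\Pr^L_{\st}$, so that $(1_\cE)\dashv\Hom(1,-)$ is an adjunction internal to $\Pr^L_{\st}$, and one then invokes that $\cE\otimes-$ is a $2$-functor on $\Pr^L_{\st}$ and hence preserves adjunctions; this is standard but worth stating, since without compactness the right adjoint of $\id_\cE\otimes(1_\cE)$ would not be the naive tensor.
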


We recall some basic examples.

\begin{example}
\begin{enumerate}[label=(\roman*), ref=(\roman*)]
	\item Let $X$ be a compact Hausdorff space. Then the category $\Shv(X;\Sp)$ is rigid symmetric monoidal.
	\item Let $\mk$ be am $\bE_2$-ring. Then the category $\Mod\hy \mk$ is rigid $\bE_1$-monoidal.
	\item Let $X$ be a quasi-compact quasi-separated scheme. Then the category $D_{qc}(X)$ is rigid symmetric monoidal.
\end{enumerate}
\end{example}

We recall the following notion of left and right trace-class morphisms, due to Clausen and Scholze \cite[Definition 13.11]{CS20} (more precisely, we consider the straightforward generalization to $\bE_1$-monoidal categories). Recall that for two objects $x,y$ in an $\bE_1$-monoidal category $\cE,$ the left resp. right internal $\Hom$ (if it exists) is defined by the following universal property:
\begin{equation*}
\Map_{\cE}(z,\un{\Hom}^l(x,y))\simeq \Map_{\cE}(z\otimes x,y)\quad \Map_{\cE}(z,\un{\Hom}^r(x,y))\simeq \Map_{\cE}(x\otimes z,y).
\end{equation*}

\begin{defi}\label{def:trace_class} Let $\cE$ be a (left and right) closed $\bE_1$-monoidal category. A morphism $x\to y$ in $\cE$ is said to be left (resp. right) trace-class if the corresponding morphism $1\to\un{\Hom}^l(x,y)$ (resp. $1\to\un{\Hom}^r(x,y)$) factors through $y\otimes \un{\Hom}^l(x,1)$ (resp. $\un{\Hom}^r(x,1)\otimes y$).
\end{defi}

For brevity we will use the notation $\bbD^l(x)=x^{l\vee}=\un{\Hom}^l(x,1),$ $\bbD^r(x)=x^{r\vee}=\un{\Hom}^r(x,1)$ for $x\in\cE,$ and call the object $x^{l\vee}$ resp. $x^{r\vee}$ the left resp. right predual of $x.$ The following proposition gives an alternative characterization of rigid $\bE_1$-monoidal categories, which will be very useful in the proof of rigidity of the category of localizing motives \cite{E}.

\begin{prop}\label{prop:rigidity_criterion}\cite{Ram24b}
Let $\cE$ be a presentable stable $\bE_1$-monoidal category. Then $\cE$ is rigid if and only if the following two conditions hold.
\begin{enumerate}[label=(\roman*), ref=(\roman*)]
\item The unit object of $\cE$ is compact.
\item $\cE$ is generated via colimits by the objects of the form $x=\indlim[n\in\N]x_n,$ where each map $x_n\to x_{n+1}$ is both left and right trace-class. 
\end{enumerate} 
\end{prop}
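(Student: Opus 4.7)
The plan is to dispatch the forward implication by pure unwinding and then to concentrate on the backward implication, where the real content lies. For ``rigid $\Rightarrow$ (i), (ii)'', condition (i) is built into Definition \ref{def:rigid_category}, while for (ii) one recalls that in a rigid $\bE_1$-monoidal category the compact objects coincide with the left (and right) dualizable objects and generate $\cE$ under colimits. For any such dualizable $x$, the identity map $\id_x$ is both left and right trace-class: the required factorization $1\to x\otimes x^{l\vee}$ of the map $1\to\un{\Hom}^l(x,x)$ adjoint to $\id_x$ is literally the left coevaluation, and symmetrically on the right. Thus every compact generator $x$ is the colimit of the eventually constant sequence $x\xto{\id}x\xto{\id}\cdots$ along trace-class maps, and (ii) holds.

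For the reverse direction I assume (i) and (ii), and the task is to build a colimit-preserving $\cE$-bilinear right adjoint to $\mult:\cE\otimes\cE\to\cE$; the right adjoint $\mult^R$ exists by presentability, so the real content is continuity and bimodule linearity. My plan is to reduce both to properties of the generating family of (ii). Specifically, I will show that for each generator $x=\indlim[n]x_n$ the functor $x\otimes(-):\cE\to\cE$ admits a colimit-preserving right adjoint (``left internal projectivity of $x$''), and symmetrically for the right action, and then use these pointwise adjoints to assemble a continuous $\mult^R$ satisfying the bimodule identity. Along the way I will exhibit, for each such $x$, a candidate predual $x^{l\vee}$ together with a coevaluation map $1\to x\otimes x^{l\vee}$ so that the full duality data can be extracted.

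The central technical step --- and the main obstacle --- is to leverage the trace-class structure to extract this coevaluation. The witness factorizations $\tilde{f}_n:1\to x_{n+1}\otimes\bbD^l(x_n)$ provided by the trace-class hypothesis are the natural candidates: the predual should be the ``nuclear'' formal colimit $x^{l\vee}=\indlim[n]\bbD^l(x_n)$ formed via the duals of the structure maps, and the coevaluation should arise from assembling the $\tilde{f}_n$. The delicate point is to verify that these witnesses glue compatibly into a single morphism out of the compact unit and that the resulting datum actually corepresents $\un{\Hom}^l(x,-)$. This is where clause (i) does the essential work: compactness of $1_\cE$ forces $\Map_\cE(1,-)$ to commute with the sequential colimits at hand, so the pointwise duality on generators propagates through arbitrary colimits. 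Yoneda plus naturality in both $\cE$-module actions then identifies the glued pointwise right adjoints with the abstract $\mult^R$ and establishes bilinearity, and the rigidity of $\cE$ follows.
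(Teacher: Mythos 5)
The forward implication is fine, and you've correctly identified the ingredients (trace-class witnesses, compactness of the unit). However, the paper itself does not give a proof here---it simply cites \cite[Corollary 4.57]{Ram24b} and asserts the $\bE_1$-monoidal case is ``completely analogous''---so there is no argument in the text to compare yours against. The real question is whether your sketch for the converse would compile into a proof, and I don't think it would as written.

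There are two problems. The first is a concrete error in the construction of the predual. You write $x^{l\vee}=\indlim_n\bbD^l(x_n)$ ``formed via the duals of the structure maps,'' but the functor $\bbD^l$ is contravariant, so the structure maps $x_n\to x_{n+1}$ dualize to $\bbD^l(x_{n+1})\to\bbD^l(x_n)$, an \emph{inverse} system. The trace-class witnesses $\tilde f_n:1\to x_{n+1}\otimes\bbD^l(x_n)$ do not repair this: composing $\bbD^l(x_{n+1})\to \bbD^l(x_{n+1})\otimes x_{n+1}\otimes\bbD^l(x_n)\to\bbD^l(x_n)$ (using $\tilde f_n$ and evaluation) still produces a map in the direction $\bbD^l(x_{n+1})\to\bbD^l(x_n)$. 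This is exactly the phenomenon visible in the proof of Proposition~\ref{prop:autoequivalence_rigid_monoidal}: the objects $\bbD^r(x_i)$ for an ind-system $\inddlim_i x_i$ with compact transitions naturally assemble into a \emph{pro}-object. So the candidate predual you name does not exist; if you want a genuine predual object inside $\cE$ you need a different construction (e.g.\ pass to ind-objects in $\cE^{\kappa}$ and interpret the pro-system there, as in the rigidification machinery).

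The second problem is the assembly step, which is where the actual content of rigidity lies and where the sketch becomes hand-waving. Knowing, for each generator $x$, that $x\otimes(-)$ has a colimit-preserving right adjoint $\un{\Hom}^l(x,-)$ is \emph{not} the same as knowing that $\mult^R:\cE\to\cE\otimes\cE$ is continuous, because $\mult^R$ lands in $\cE\otimes\cE$ and the generators $x\boxtimes y$ of $\cE\otimes\cE$ need not be compact there; continuity of $\mult^R$ is a statement about $\Hom_{\cE\otimes\cE}(x\boxtimes y,\mult^R(-))$, which does not reduce to continuity of $\un{\Hom}^l(x,-)$ or $\un{\Hom}^r(y,-)$ alone. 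And even granted continuity, the $\cE\hy\cE$-bilinearity of $\mult^R$ --- i.e.\ that the canonical lax structure maps are equivalences --- is a separate and nontrivial condition, which your final sentence (``Yoneda plus naturality\ldots establishes bilinearity'') asserts rather than proves. A workable route is closer to Ramzi's: use (i) and (ii) to show that the fully faithful functor $\cE^{\rig}\to\cE$ (fully faithful by compactness of $1$, cf.\ Proposition~\ref{prop:rigidification}) is essentially surjective, then transport rigidity from $\cE^{\rig}$, which is rigid by construction; but that requires upgrading the $\N$-indexed colimits in (ii) to $\Q$-indexed ones, another step your sketch does not address.
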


\begin{proof}
This is completely analogous to the symmetric monoidal case, for which we refer to \cite[Corollary 4.57]{Ram24b}.
\end{proof}

We refer to \cite[Section 1.8]{E24} for the notion of a compact map in a presentable stable category. If $\cC$ is a dualizable category, then a map $f:x\to y$ is compact in $\cC$ if and only if $f$ is in the image of the map $\pi_0\Map(\cY(x),\hat{\cY}(y))\to\pi_0\Map(x,y).$

\begin{prop}\cite{Ram24b}\label{prop:trace_class_iff_compact}
Let $\cE\in\Alg_{\bE_1}(\Pr^L_{\st})$ be a presentable stable $\bE_1$-monoidal category, such that $\cE$ is dualizable and the unit object $1_{\cE}$ is compact (we do not require that the multiplication functor is strongly continuous). Then for $x,y\in\cE$ we have natural maps
\begin{equation}\label{eq:from_trace_class_to_compact}
\Hom_{\cE}(1,y\otimes x^{l\vee})\to \Hom_{\Ind(\cE)}(\cY(x),\hat{\cY}(y)),\quad \Hom_{\cE}(1,x^{r\vee}\otimes y)\to \Hom_{\Ind(\cE)}(\cY(x),\hat{\cY}(y)).
\end{equation}
If $\cE$ is rigid, then the maps \eqref{eq:from_trace_class_to_compact} are isomorphisms. In particular, a map $f:x\to y$ in $\cE$ is left trace-class iff it is right trace-class iff it is compact.
\end{prop}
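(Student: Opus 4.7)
The plan is to construct the natural maps \eqref{eq:from_trace_class_to_compact} under the dualizability hypothesis and then verify they are isomorphisms when $\cE$ is rigid, following the approach of Ramzi \cite{Ram24b}. The first step will be to set up an $\bE_1$-monoidal structure on $\Ind(\cE^{\omega_1})$ (using that $\cE^{\omega_1}$ is closed under the monoidal product, a consequence of compactness of $1_\cE$ together with dualizability of $\cE$, which guarantees $\omega_1$-presentability by \cite[Corollary 1.21]{E24}) making the colimit functor $L \colon \Ind(\cE^{\omega_1}) \to \cE$ monoidal, and hence giving a chain of adjunctions $\hat{\cY} \dashv L \dashv \cY$ with $\hat{\cY}(1) \simeq \cY(1) \simeq 1_{\Ind(\cE^{\omega_1})}$. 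A basic projection-formula identity $\hat{\cY}(z) \otimes \cY(x) \simeq \hat{\cY}(z \otimes x)$ for $z \in \cE$ and $x \in \cE^{\omega}$ then follows from presenting $\hat{\cY}(z)$ via compact approximations and invoking tensor-stability of compact morphisms together with the continuity of $\otimes$. Given these ingredients, a left trace-class datum $\phi\colon 1 \to y \otimes x^{l\vee}$ produces the desired morphism $\cY(x) \to \hat{\cY}(y)$ as the composition
\begin{equation*}
\cY(x) \simeq \hat{\cY}(1) \otimes \cY(x) \xto{\hat{\cY}(\phi) \otimes \id} \hat{\cY}(y \otimes x^{l\vee}) \otimes \cY(x) \simeq \hat{\cY}(y \otimes x^{l\vee} \otimes x) \xto{\hat{\cY}(\id \otimes \mathrm{ev})} \hat{\cY}(y),
\end{equation*}
where $\mathrm{ev}\colon x^{l\vee} \otimes x \to 1$ is the canonical evaluation. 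The right trace-class case is symmetric, and general $x \in \cE$ is handled by writing it as an appropriate filtered colimit of compact objects and propagating the construction.

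For the isomorphism when $\cE$ is rigid, I would apply Proposition \ref{prop:rigidity_criterion}: every object of $\cE$ is obtained via colimits from sequential colimits $\indlim[n] x_n$ whose transitions are both left and right trace-class, hence compact. Using the continuity of both sides of \eqref{eq:from_trace_class_to_compact} with respect to these presentations, one reduces to the case where $x$ itself is (left or right) dualizable. In that situation $x^{l\vee}$ is an honest dual, $y \otimes x^{l\vee} \simeq \un{\Hom}^l(x, y)$, and the left-hand side of \eqref{eq:from_trace_class_to_compact} is identified with $\Hom_\cE(x, y)$; on the other hand, since $\cY(x)$ is then compact in $\Ind(\cE^{\omega_1})$, the right-hand side becomes $\colim_\alpha \Hom_\cE(x, y_\alpha)$ along any compact-transition presentation $\hat{\cY}(y) = \colim_\alpha \cY(y_\alpha)$. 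The comparison map is then the canonical isomorphism $\Hom_\cE(x, y) \simeq \colim_\alpha \Hom_\cE(x, y_\alpha)$, valid because $\Hom_\cE(x, -)$ is continuous when $x$ is dualizable.

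The hard part will be the reduction from general $x$ to dualizable $x$: the predual functor $(-)^{l\vee}$ is a right adjoint and does not commute with filtered colimits, so naive continuity in the variable $x$ fails outright. Proposition \ref{prop:rigidity_criterion} is tailored to circumvent exactly this obstacle, since for a sequence $\indlim[n] x_n$ with two-sided trace-class transitions the inverse system $\{x_n^{l\vee}\}$ is controlled by the trace-class maps in the opposite direction, allowing one to track how any $\phi\colon 1 \to y \otimes x^{l\vee}$ factors through finite-stage data $1 \to y \otimes x_n^{l\vee}$. Establishing this factorization compatibility, together with the tensor-product closure of $\cE^{\omega_1}$ and all the continuity/monoidality statements in the general (non-compactly-generated) rigid setting, will be the technical core of the argument.
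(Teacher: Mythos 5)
The paper's own proof is a one-line deferral to \cite[Corollary 4.52]{Ram24b}, so you are reconstructing Ramzi's argument from scratch; the overall shape you propose is the right one, but there are two issues. First, a concrete flaw in the construction of the natural map: the ``projection-formula identity'' $\hat{\cY}(z) \otimes \cY(x) \simeq \hat{\cY}(z \otimes x)$ does not hold in the stated generality. Presenting $\hat{\cY}(z)$ by compact transition maps and tensoring with $x$ preserves compactness of those transitions only when $- \otimes x$ is strongly continuous, which the proposition deliberately does \emph{not} assume (it holds when $x$ is dualizable, but that is exactly what rigidity would supply, and the natural maps must exist before rigidity is imposed). In fact $\hat{\cY}$ is only oplax monoidal (left adjoint to the monoidal $\colim$), so the canonical comparison runs the opposite way: $\hat{\cY}(z \otimes x) \to \hat{\cY}(z) \otimes \cY(x)$. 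Your fallback of presenting a general $x$ as a filtered colimit of compacts is also unavailable, since $\cE$ is dualizable but not assumed compactly generated. The map can instead be built without a projection formula: using $\hat{\cY}(1) \simeq \cY(1)$ (compactness of $1$), the oplax structure $\hat{\cY}(y \otimes x^{l\vee}) \to \hat{\cY}(y) \otimes \hat{\cY}(x^{l\vee})$, the canonical arrow $\hat{\cY}(x^{l\vee}) \to \cY(x^{l\vee})$, strong monoidality of $\cY$, and evaluation $x^{l\vee}\otimes x \to 1$, one obtains
\begin{equation*}
\cY(x) \simeq \hat{\cY}(1)\otimes\cY(x) \to \hat{\cY}(y)\otimes\cY(x^{l\vee})\otimes\cY(x) \to \hat{\cY}(y)\otimes\cY(1)\simeq\hat{\cY}(y),
\end{equation*}
which is defined in full generality.

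Second, and more substantially, the rigid-case isomorphism is not actually established: you correctly diagnose the obstacle (the predual functor $(-)^{l\vee}$ turns colimits into limits) and the correct tool (two-sided trace-class sequential presentations from Proposition~\ref{prop:rigidity_criterion}, enabling a $\lim$--$\colim$ interchange of the kind controlled by Proposition~\ref{prop:seq_limits_of_filtered_colimits}), but you explicitly defer the reduction from general $x$ to dualizable $x$ as ``the technical core of the argument.'' That reduction \emph{is} the content of the proposition --- once $x$ is dualizable the comparison is a routine manipulation of units and counits --- so what you have is an accurate plan, with the load-bearing step left unproved.
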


\begin{proof}
Again, this is analogous to the symmetric monoidal case, for which we refer to \cite[Corollary 4.52]{Ram24b}.
\end{proof}

The canonical monoidal autoequivalence described in the next proposition is the inverse to the equivalence discussed in \cite[Section 9.2.7]{GaRo17}, see Remark \ref{rem:relative_and_absolute_duality} below. It $\cE$ is symmetric monoidal, then this autoequivalence is canonically isomorphic to the identity functor.

\begin{prop}\label{prop:autoequivalence_rigid_monoidal}
Let $\cE$ be a rigid $\bE_1$-monoidal category.
\begin{enumerate}[label=(\roman*), ref=(\roman*)]
\item The composition
\begin{equation}\label{eq:D^r_is_monoidal_functor}
\cE\xto{\hat{\cY}}\Ind(\cE^{\omega_1})\xto{\Ind(\bbD^r)}\Ind(\cE^{mop,op})
\end{equation}
(which is a priori oplax monoidal) is monoidal. \label{oplax_monoidal_is_monoidal}
\item The composition
\begin{equation*}
	\cE\xto{\hat{\cY}}\Ind(\cE^{\omega_1})\xto{\Ind((\bbD^r)^2)}\Ind(\cE)
\end{equation*}
(which because of \ref{oplax_monoidal_is_monoidal} is a priori lax monoidal) is monoidal.
\label{lax_monoidal_is_monoidal}
\item We obtain the monoidal endofunctor
\begin{equation*}
\Phi_{\cE}:\cE\xto{\hat{\cY}}\Ind(\cE^{\omega_1})\xto{\Ind((\bbD^r)^2)}\Ind(\cE)\xto{\colim}\cE.
\end{equation*}
The functor $\Phi_{\cE}$ is an autoequivalence, and its inverse has a similar description as a composition
\begin{equation*}
\Phi^{-1}_{\cE}:\cE\xto{\hat{\cY}}\Ind(\cE^{\omega_1})\xto{\Ind((\bbD^l)^2)}\Ind(\cE)\xto{\colim}\cE.
\end{equation*} \label{autoequivalence}
\end{enumerate}
\end{prop}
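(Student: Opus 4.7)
The plan is to prove the three parts in order, each leveraging the previous. The key technical input, used repeatedly, is the characterization of rigid monoidal categories via trace-class sequences (Proposition \ref{prop:rigidity_criterion}) together with the identification of trace-class morphisms with compact morphisms (Proposition \ref{prop:trace_class_iff_compact}).

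For part \ref{oplax_monoidal_is_monoidal}, I would first unpack the canonical oplax monoidal structure on the composition: $\hat{\cY}$ is oplax monoidal as the left adjoint of the monoidal colimit functor $\colim: \Ind(\cE^{\omega_1}) \to \cE$, and $\Ind(\bbD^r)$ is strong monoidal since $\bbD^r: \cE^{\omega_1} \to \cE^{mop,op}$ is monoidal via the rigid identity $(x\otimes y)^{r\vee} \simeq y^{r\vee}\otimes x^{r\vee}$. To upgrade the composition to strong monoidal I would reduce, by colimit-preservation in each variable, to the generators from Proposition \ref{prop:rigidity_criterion} --- objects $x = \indlim[n] x_n$ with each transition $x_n \to x_{n+1}$ both left- and right-trace-class. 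Since $\hat{\cY}$ preserves colimits and $\hat{\cY}(x_n) = \cY(x_n)$ for compact $x_n$, one gets $\hat{\cY}(x) = \inddlim[n] x_n$ in $\Ind(\cE^{\omega_1})$. For two such objects $x = \indlim[n] x_n$, $y = \indlim[n] y_n$, the tensor product $x\otimes y = \indlim[n](x_n\otimes y_n)$ again has trace-class transitions (trace-class morphisms are tensor-stable in the rigid case), so one verifies
\[
\Ind(\bbD^r)(\hat{\cY}(x\otimes y)) = \inddlim[n] (x_n\otimes y_n)^{r\vee} = \inddlim[n] y_n^{r\vee}\otimes x_n^{r\vee}
\]
coincides with $\Ind(\bbD^r)(\hat{\cY}(x))\otimes^{mop,op}\Ind(\bbD^r)(\hat{\cY}(y)) = \inddlim[m,n] y_n^{r\vee}\otimes x_m^{r\vee}$ via cofinality of the diagonal of $\N\times\N$.

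For part \ref{lax_monoidal_is_monoidal}, the functor $(\bbD^r)^2:\cE^{\omega_1}\to\cE$ is covariantly monoidal as a composition of two contravariant monoidal pieces, so its ind-extension carries a canonical monoidal structure. The resulting a priori lax structure on the composition with $\hat{\cY}$, obtained by iterating the construction of (i), is shown to be strong by the same reduction to trace-class generators, now using the biduality $(x_n^{r\vee})^{r\vee}$ for compact $x_n$ and the same cofinality argument.

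For part \ref{autoequivalence}, the endofunctor $\Phi_{\cE}$ is continuous and monoidal since each of $\hat{\cY}$, $\Ind((\bbD^r)^2)$, and $\colim$ preserves colimits (the first and third as adjoints, the second as an ind-extension). On a compact $x\in\cE^{\omega}$ one has $\hat{\cY}(x)=\cY(x)$, so $\Phi_{\cE}(x)=(x^{r\vee})^{r\vee}$; the proposed inverse similarly restricts to $(\bbD^l)^2$ on $\cE^{\omega}$. The biduality identities $(x^{l\vee})^{r\vee}\simeq x \simeq (x^{r\vee})^{l\vee}$ for compact (hence left- and right-dualizable) $x$ make $(\bbD^r)^2$ and $(\bbD^l)^2$ mutually inverse autoequivalences of $\cE^{\omega}$. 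Since $\cE$ is compactly generated by rigidity and both functors are continuous, they extend to mutually inverse autoequivalences of $\cE$. The main obstacle is the cofinality/continuity argument in (i) --- verifying that the oplax structure map, a priori only an equivalence on the trace-class generating system, extends globally relies on both factors preserving colimits in each variable together with the tensor-stability of trace-class morphisms in the rigid setting.
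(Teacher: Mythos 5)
Your overall strategy for \ref{oplax_monoidal_is_monoidal}, namely reducing to generators $x = \indlim_n x_n$ with trace-class transitions via Proposition \ref{prop:rigidity_criterion}, is a sound alternative to the paper's route, which instead works directly with the ind-systems of compact maps underlying an arbitrary $\hat{\cY}(x)$ and $\hat{\cY}(y)$. However, the heart of the argument is buried in your phrase ``one verifies.'' The middle identity in your displayed chain,
\begin{equation*}
\inddlim[n] (x_n\otimes y_n)^{r\vee} \;=\; \inddlim[n] y_n^{r\vee}\otimes x_n^{r\vee},
\end{equation*}
is not a formal consequence of anything you have stated. The canonical comparison $y_n^{r\vee}\otimes x_n^{r\vee}\to (x_n\otimes y_n)^{r\vee}$ is not an isomorphism at any fixed $n$ (the $x_n, y_n$ need not be dualizable); what makes the \emph{ind-objects} agree is that for each $n$ one can use the right trace-class witnesses $1\to x_n^{r\vee}\otimes x_{n+1}$ and $1\to y_n^{r\vee}\otimes y_{n+1}$ to build a map $(x_{n+1}\otimes y_{n+1})^{r\vee}\to y_n^{r\vee}\otimes x_n^{r\vee}$ filling in the appropriate commutative square. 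This ``diagonal arrow'' construction is precisely the paper's proof, and it is the substantive content; asserting the identification without it leaves a genuine gap. (A minor additional slip: in Proposition \ref{prop:rigidity_criterion} the $x_n$ are not required to be compact, so ``$\hat{\cY}(x_n)=\cY(x_n)$'' is not quite right --- what one actually uses is that the transition maps $x_n\to x_{n+1}$, being trace-class, are compact, so $\hat{\cY}(x)\cong\inddlim_n x_n$ after passing through $\omega_1$-compacts.)

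In part \ref{autoequivalence} there is a second, sharper gap: you assert ``Since $\cE$ is compactly generated by rigidity,'' but rigid $\bE_1$-monoidal categories in the sense of Definition \ref{def:rigid_category} need \emph{not} be compactly generated (e.g.\ $\Shv(X;\Sp)$ for a connected compact Hausdorff space $X$ has no nonzero compacts). The equivalence of the paper's alternative characterization is only stated under a compact-generation hypothesis. Consequently, knowing that $(\bbD^r)^2$ and $(\bbD^l)^2$ are mutually inverse on $\cE^{\omega}$ does not determine $\Phi_{\cE}$ on all of $\cE$. Here too one must argue at the level of ind-systems of compact maps: the trace-class witnesses of the transition maps let you construct, for $\hat{\cY}(x)=\inddlim_i x_i$, the comparison between $\inddlim_i ((x_i^{r\vee})^{r\vee})^{l\vee,l\vee}$ and $\inddlim_i x_i$, which is what the paper's ``the rest is analogous'' is invoking. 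Your part \ref{lax_monoidal_is_monoidal} has the same flavor of issue as \ref{oplax_monoidal_is_monoidal} but is otherwise fine once \ref{oplax_monoidal_is_monoidal} is fixed.
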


\begin{proof}
We prove \ref{oplax_monoidal_is_monoidal}, and the rest is analogous. Suppose that $f:x\to x',$ $g:y\to y'$ are compact maps in $\cE,$ which are in particular right trace-class by Proposition \ref{prop:trace_class_iff_compact}. The choice of right trace-class witnesses $\wt{f}:1\to x^{r\vee}\otimes x',$ $\wt{g}:1\to y^{r\vee}\otimes y'$ allows to define the composition map 
\begin{equation*}
(x'\otimes y')^{r\vee}\to y^{r\vee}\otimes y'\otimes (x'\otimes y')^{r\vee}\to y^{r\vee}\otimes x^{r\vee}\otimes x'\otimes y'\otimes (x'\otimes y')^{r\vee}\to y^{r\vee}\otimes x^{r\vee},  
\end{equation*}
such that the following diagram commutes
\begin{equation*}
\begin{tikzcd}
y^{'r\vee}\otimes x^{'r\vee}\ar[r]\ar[d] & y^{r\vee}\otimes x^{r\vee}\ar[d]\\
(x'\otimes y')^{r\vee}\ar[r]\ar[ru] & (x\otimes y)^{r\vee}.
\end{tikzcd}
\end{equation*}
Now if $\hat{\cY}(x)=\inddlim[i] x_i,$ $\hat{\cY}(y)=\indlim[j]y_j,$ then we conclude that the map
\begin{equation*}
\proolim[i,j](y_j^{r\vee}\otimes x_i^{r\vee})\to \proolim[i,j](x_i\otimes y_j)^{r\vee}
\end{equation*}
is an isomorphism in $\Pro(\cE),$ which means that the functor \eqref{eq:D^r_is_monoidal_functor} is monoidal.
\end{proof}

We mention the following statement on monoidal functors from monoidal rigid categories.

\begin{prop}
Let $\cE,\cE'\in\Alg_{\bE_1}(\Pr^L_{\st}),$ and suppose that $\cE$ is rigid. Suppose that the unit object of $\cE'$ is compact. Let $F:\cE\to\cE'$ be a continuous monoidal functor.
\begin{enumerate}[label=(\roman*),ref=(\roman*)]
\item The functor $F$ is strongly continuous. \label{strcont}
\item Denote by $\im(F)\subset \cE'$ the full subcategory generated by the essential image of $F$ by colimits. Then $\im(F)$ is a rigid $\bE_1$-monoidal category. \label{image_is_rigid}
\end{enumerate}
\end{prop}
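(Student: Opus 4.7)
For part (i), the goal is to show that the right adjoint $F^R:\cE'\to\cE$ is continuous, which in the stable setting reduces to commuting with filtered colimits. The starting point is the projection formula. For any dualizable $x\in\cE$ (and $\cE$ being rigid makes such $x$ abundant) and any $y\in\cE'$, a Yoneda computation yields an equivalence
\begin{equation*}
F^R(y)\otimes x \;\xrightarrow{\;\sim\;}\; F^R(y\otimes F(x)),
\end{equation*}
the key point being that a continuous monoidal functor preserves duals, $F(x^{r\vee})\cong F(x)^{r\vee}$. Combined with the fact that $F(x)$ is compact in $\cE'$ (being dualizable with compact unit), this gives that $\Hom_{\cE}(x,F^R(-))=\Hom_{\cE'}(F(x),-)$ preserves filtered colimits for all dualizable $x$. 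The final step is to argue that testing against such $x$ suffices: I invoke Proposition \ref{prop:rigidity_criterion} to see that $\cE$ is generated via colimits by sequential colimits $\indlim_n x_n$ with trace-class transitions, where one can arrange the $x_n$ to be compact (hence dualizable) in $\cE$; this generating family is enough to detect isomorphisms in $\cE$, so $F^R$ commutes with filtered colimits, hence with all small colimits.

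For part (ii), I apply Proposition \ref{prop:rigidity_criterion} to $\im(F)$. First, $\im(F)\subset\cE'$ is a presentable stable $\bE_1$-monoidal subcategory: colimit-closed by construction, and closed under tensor because $F(x)\otimes F(x')\cong F(x\otimes x')\in\im(F)$ together with the continuity of the tensor product in each variable. The unit $1_{\cE'}=F(1_{\cE})$ lies in $\im(F)$, and it is compact there since the inclusion $\im(F)\hookrightarrow\cE'$ preserves filtered colimits.

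For the generation condition, take the generating family for $\cE$ of the form $x=\indlim_n x_n$ with trace-class transitions and each $x_n$ compact (dualizable) in $\cE$, furnished by Proposition \ref{prop:rigidity_criterion}. Then $F(x)=\indlim_n F(x_n)$ generates $\im(F)$ via colimits by the definition of $\im(F)$. For each $n$, the trace-class witness $1\to x_{n+1}\otimes x_n^{r\vee}$ in $\cE$ maps under $F$ to a morphism $1\to F(x_{n+1})\otimes F(x_n^{r\vee})$ in $\cE'$. Since $x_n$ is dualizable in $\cE$, $F(x_n^{r\vee})\cong F(x_n)^{r\vee}$ sits inside $\im(F)$ and serves as the right predual of $F(x_n)$ within $\im(F)$ (because tensoring with it preserves $\im(F)$); consequently, this witness lives inside $\im(F)$ and certifies that $F(x_n)\to F(x_{n+1})$ is trace-class there. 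Proposition \ref{prop:rigidity_criterion} then yields that $\im(F)$ is rigid.

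The main obstacle is the final step of (i): bridging the projection formula with genuine colimit-preservation by $F^R$. Rigidity of $\cE$ does not a priori imply compact generation in the classical $\omega$-presentable sense (for instance, $\Nuc(R^{\wedge}_I)$ is rigid but not compactly generated), so one cannot blindly say ``check on compact generators''. One must appeal to the stronger structural fact that rigid $\cE$ is generated via colimits by sequential-colimit resolutions with trace-class (dualizable) pieces, as in Proposition \ref{prop:rigidity_criterion}. Once (i) is secured, (ii) is essentially a bookkeeping exercise using the same generators and the preservation of trace-class witnesses by monoidal functors.
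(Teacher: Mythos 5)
Your proof of part (ii) is close in spirit to the paper's, but your proof of part (i) follows a genuinely different route (projection formula rather than preservation of trace-class maps), and it contains a real gap.

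The gap is in your final step for (i): you claim that Proposition~\ref{prop:rigidity_criterion} lets one ``arrange the $x_n$ to be compact (hence dualizable) in $\cE$.'' That proposition says nothing of the kind: it only states that $\cE$ is generated by colimits $\indlim_n x_n$ with trace-class transitions, with the $x_n$ \emph{arbitrary} objects of $\cE$. In a rigid category the trace-class maps are the compact \emph{morphisms}, but the objects $x_n$ themselves are typically not compact; in fact, as you yourself remark, a rigid category like $\Nuc(R^\wedge_I)$ need not be compactly generated, so its dualizable objects do not form a generating family. Your projection-formula argument only shows that $\Hom_{\cE}(x, F^R(-))$ commutes with filtered colimits for \emph{dualizable} $x$, and you cannot bootstrap this to continuity of $F^R$ by testing against the generators $\indlim_n x_n$, because $\Hom_{\cE}(\indlim_n x_n, F^R(-)) = \prolim_n \Hom_{\cE}(x_n, F^R(-))$ is an inverse limit, which does not commute with filtered colimits. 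So the step labelled the ``main obstacle'' is indeed the obstacle, and your proposed resolution does not work.

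The paper's route to (i) sidesteps this entirely by working at the level of morphisms rather than objects: in the rigid source $\cE$, compact maps coincide with (left and right) trace-class maps by Proposition~\ref{prop:trace_class_iff_compact}; a monoidal functor sends a trace-class witness to a trace-class witness, so $F$ sends trace-class maps to trace-class maps; and in $\cE'$ with compact unit, trace-class maps are compact. Hence $F$ preserves compact maps, which is equivalent to strong continuity for a functor out of a dualizable category. This avoids any appeal to compact \emph{objects} and is immune to the failure of compact generation.

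For part (ii), your argument has the same unjustified claim, but it is fixable by dropping it. You do not need $x_n$ to be dualizable for $F(x_n) \to F(x_{n+1})$ to be trace-class in $\im(F)$. Use the ``trivial but useful'' characterization of trace-class maps from the remark after Definition~\ref{def:trace_class}: a map $x_n \to x_{n+1}$ being trace-class in $\cE$ means there exist $z\in\cE$ and maps $g:1\to z\otimes x_{n+1}$, $h:x_n\otimes z\to 1$ through which the map factors. Applying $F$ gives $F(z)\in\im(F)$, $F(g)$, $F(h)$, and the corresponding factorization in $\im(F)$, so $F(x_n)\to F(x_{n+1})$ is trace-class in $\im(F)$ with no dualizability assumption whatsoever. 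The rest of your part (ii) then goes through and agrees with the paper. In short: restructuring (i) along the paper's lines and deleting the dualizability claim in (ii) would make your proof correct and essentially identical to the paper's.
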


\begin{proof}
A version of Proposition \ref{prop:trace_class_iff_compact} (with the same proof) implies that any (left or right) trace-class map in $\cE'$ is compact. This implies part \ref{strcont}: $F$ sends compact maps in $\cE,$ which are the same as right trace-class maps, to right trace-class maps in $\cE',$ which are compact.

Part \ref{image_is_rigid} follows from the criterion of rigidity given by Proposition \ref{prop:rigidity_criterion}. Namely, by assumption the unit object of $\im(F)$ is compact. For an object $x\in\cE$ of the form $\indlim[n]x_n,$ where the maps $x_n\to x_{n+1}$ are both left and right trace-class, we see that the object $F(x)\cong\indlim[n]F(x_n)$ has the same property. Such objects $F(x)$ generate the category $\im(F)$ by colimits, hence it is rigid.  
\end{proof}

\subsection{Dualizable modules over rigid categories}

We recall the following notions for a general presentable stable $\bE_1$-monoidal category, but we will only use them in the rigid case.

\begin{defi}\label{def:dualizable_over_E_1_monoidal}
Let $\cE\in\Alg_{\bE_1}(\Pr^L_{\st})$ be a presentable stable $\bE_1$-monoidal category.
\begin{enumerate}[label=(\roman*), ref=(\roman*)]
\item We denote by $\Pr^L_{\cE}$ the category $\Mod_{\cE}(\Pr^L_{\st})$ of left $\cE$-modules in $\Pr^L_{st}.$ If $\kappa$ is a regular cardinal such that $\cE\in \Alg_{\bE_1}(\Pr^L_{\st,\kappa})$ (i.e. $\cE$ is $\kappa$-presentable, the unit object is $\kappa$-compact and the multiplication functor preserves $\kappa$-compact objects), then we denote by $\Pr^L_{\cE,\kappa}$ the category of left $\cE$-modules in $\Pr^L_{\st,\kappa}.$
\item A left $\cE$-module $\cC$ is dualizable over $\cE$ if there exists a right $\cE$-module $\cC^{\vee},$ together with the $\cE\hy\cE$-linear continuous functor
\begin{equation*}
\ev_{\cC/\cE}:\cC\otimes\cC^{\vee}\to \cE
\end{equation*}
and a continuous functor
\begin{equation*}
\coev_{\cC/\cE}:\Sp\to\cC^{\vee}\tens{\cE}\cC,
\end{equation*}
such that the compositions
\begin{equation*}
	\cC\simeq \cC\otimes\Sp\xto{\id\boxtimes\coev_{\cC/\cE}} \cC\otimes \cC^{\vee}\tens{\cE}\cC\xto{\ev_{\cC/\cE}\boxtimes\id}\cE\tens{\cE}\cC\simeq \cC,
\end{equation*}
\begin{equation*}
	\cC^{\vee}\simeq\Sp\otimes\cC^{\vee}\xto{\coev_{\cC/\cE}\boxtimes\id} \cC^{\vee}\tens{\cE}\cC\otimes \cC^{\vee}\xto{\id\boxtimes \ev_{\cC/\cE}}\cC^{\vee}\tens{\cE}\cE\simeq\cC^{\vee}.
\end{equation*}
are isomorphic to the identity functors (in the category of $\cE$-linear endofunctors of $\cC$ resp. $\cC^{\vee}$).
\item We denote by $\Cat_{\cE}^{\dual}\subset \Pr^L_{\cE}$ the non-full subcategory in which the objects are dualizable left $\cE$-modules and the morphisms are the $\cE$-linear functors with $\cE$-linear right adjoint.
\item For a presentable stable left $\cE$-module $\cC,$ an object $c\in\cC$ is relatively compact over $\cE$ if the (unique) $\cE$-linear functor $\cE\to\cC$ sending $1_{\cE}$ to $c$ has a continuous $\cE$-linear right adjoint. We say that $\cC$ is relatively compactly generated over $\cE$ if $\cC$ is generated via colimits by the objects $x\otimes c,$ where $c\in\cC$ is relatively compact and $x\in\cE.$ We denote by $\Cat_{\cE}^{\cg}\subset\Cat_{\cE}^{\dual}$ the full subcategory of relatively compactly generated categories.
\end{enumerate}
\end{defi}

\begin{remark}\label{rem:Cat^perf_Cat^cg}
Suppose that the category $\cE$ in Definition \ref{def:dualizable_over_E_1_monoidal} is compactly generated, the unit object is compact and the tensor product of compact objects is compact. Then we have an equivalence $\Ind(-):\Cat_{\cE^{\omega}}^{\perf}\xto{\sim}\Cat_{\cE}^{\cg},$ where $\Cat_{\cE^{\omega}}^{\perf}$ is the category of left $\cE^{\omega}$-modules in $\Cat^{\perf}.$
\end{remark}

In the case when $\cE$ is rigid, the above notions are directly related to the case of the absolute base. 

\begin{prop}\label{prop:dualizable_over_E_same_as_modules}\cite{GaRo17}
Let $\cE$ be a rigid $\bE_1$-monoidal category, and let $\cC$ be a presentable stable left $\cE$-module. 
\begin{enumerate}[label=(\roman*), ref=(\roman*)]
\item An object $c\in\cC$ is relatively compact over $\cE$ if and only if $c$ is compact. \label{rel_compact_abs_compact}
\item The $\cE$-action functor 
$$\mu_{\cC}:\cE\otimes\cC\to\cC$$ is strongly continuous. \label{action_strcont}
\item $\cC$ is dualizable over $\cE$ if and only if it is dualizable over $\Sp.$ \label{dualizable_relative_absolute}
\item Let $\cD$ be another presentable stable left $\cE$-module, and Let $F:\cC\to\cD$ be a continuous lax $\cE$-linear functor. Then $F$ is $\cE$-linear. In particular, if $F$ is a continuous right adjoint to an $\cE$-linear functor, then $F$ is $\cE$-linear. \label{lax_is_strict}
\item The induced functor $\Cat_{\cE}^{\dual}\to\Mod_{\cE}(\Cat_{\st}^{\dual})$ is an equivalence. \label{relative_Cat^dual_is_category_of_modules}
\end{enumerate}
\end{prop}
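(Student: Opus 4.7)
I plan to prove the five parts in the order (ii), (i), (iv), (iii), (v), with (ii) as the enabling technical step and (iv) as the main obstacle. For (ii), the identification $\cE \otimes \cC \simeq (\cE \otimes \cE) \tens{\cE} \cC$ lets us write $\mu_\cC \simeq \mult \tens{\cE} \id_\cC$; by rigidity of $\cE$, the functor $\mult^R : \cE \to \cE \otimes \cE$ is continuous and $\cE$-bilinear, so $\mult^R \tens{\cE} \id_\cC : \cC \to \cE \otimes \cC$ is a continuous functor, which a unit-counit verification identifies with the right adjoint of $\mu_\cC$.

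For (i), the converse direction is immediate: if $c$ is relatively compact, then $L_c : x \mapsto x \otimes c$ has continuous right adjoint and thus preserves compacts, so $c = L_c(1_\cE)$ is compact because $1_\cE$ is. For the forward direction, factor $L_c$ as $\cE \simeq \cE \otimes \Sp \xto{\id \otimes c} \cE \otimes \cC \xto{\mu_\cC} \cC$. Compactness of $c$ makes the right adjoint of the first functor continuous, and (ii) handles the second; the composition $L_c$ therefore has continuous right adjoint $R_c : \cC \to \cE$, and the $\cE$-linearity of $R_c$ follows from the $\cE$-bilinearity of $\mult^R$.

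Part (iv) is the main obstacle. Given a continuous lax $\cE$-linear $F : \cC \to \cD$ with structure map $\lambda_{x,c} : F(x \otimes c) \to x \otimes F(c)$, I must show $\lambda$ is invertible. For dualizable $x \in \cE$ with right dual $x^{r\vee}$, the standard zigzag identities produce an explicit inverse to $\lambda_{x,c}$ built from the evaluation and coevaluation of $x$. To extend to all $x$, I use that rigidity (via the criterion of Proposition \ref{prop:rigidity_criterion} and the factorization of trace-class maps through dualizable objects) lets us write every $x \in \cE$ as a colimit of dualizable objects; since both sides of $\lambda_{-, c}$ preserve colimits in the first slot (as $F$ and the action are continuous), invertibility propagates from the dualizable case.

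For (iii) and (v): the forward direction of (iii) composes relative duality data with the self-duality of $\cE$ (Proposition \ref{prop:rigid_categories_basics}) to produce absolute data; the converse uses the absolute data together with the coevaluation of $\cE$ and $\mult^R$ to reconstruct the relative evaluation and coevaluation, with the required $\cE$-linearity of the relative evaluation verified using (iv). Finally, (v) is formal from (ii)--(iv): objects of $\Mod_\cE(\Cat^\dual_\st)$ are $\cE$-modules in $\Pr^L_\st$ that are absolutely dualizable with strongly continuous action, which by (ii) and (iii) is equivalent to $\cE$-dualizability, while the morphism categories on both sides coincide by (iv).
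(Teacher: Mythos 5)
The paper's own ``proof'' of this proposition consists of citations to \cite{GaRo17} (Corollary 9.3.4, Lemma 9.3.2, Proposition 9.4.4, Lemma 9.3.6) for parts (i)--(iv), with (v) observed to follow formally, so your attempt to give a self-contained argument is working in a different register from the paper. Your treatment of (ii), (i), (iii), and (v) is plausible in outline, and your ordering of the parts is sensible.

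However, there is a genuine gap in your proof of (iv), which you correctly flag as the crux. You propose to establish invertibility of the lax structure map $\lambda_{x,c}$ for dualizable $x$ via the zigzag identities, and then extend by colimits, claiming that rigidity ``lets us write every $x\in\cE$ as a colimit of dualizable objects'' via ``the factorization of trace-class maps through dualizable objects.'' This factorization is \emph{not} available in a general rigid $\bE_1$-monoidal category. By Proposition~\ref{prop:trace_class_iff_compact}, trace-class maps in a rigid $\cE$ are exactly the compact maps, and a compact map in a dualizable category factors through an $\omega_1$-compact object (coming from $\hat{\cY}:\cE\to\Ind(\cE^{\omega_1})$), not through a compact (equivalently, since $1_\cE$ is compact, dualizable) object. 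The identification of compacts with dualizables requires $\cE$ to be compactly generated (Proposition~\ref{prop:rigid_categories_basics}'s compactly generated companion statement), and rigid categories need not be compactly generated --- e.g.\ $\Shv(X;\Sp)$ for $X$ a compact Hausdorff space that is not totally disconnected, an example the paper itself lists, or the category $\Mot^{\loc}$ discussed later. In such cases there is no reason for $\cE$ to be generated under colimits by dualizable objects, and the reduction step fails. To fix (iv), you should argue directly with the trace-class witness data along the sequential colimits $x=\indlim_n x_n$ with trace-class transition maps furnished by Proposition~\ref{prop:rigidity_criterion} --- building ``diagonal'' maps $x_n\otimes F(c)\to F(x_{n+1}\otimes c)$ from the witnesses $1\to x_n^{r\vee}\otimes x_{n+1}$ together with the lax structure, in the spirit of the paper's own computation in the proof of Proposition~\ref{prop:autoequivalence_rigid_monoidal} --- rather than passing through dualizable objects.
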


\begin{proof}
The statements \ref{rel_compact_abs_compact}, \ref{action_strcont}, \ref{dualizable_relative_absolute}, \ref{lax_is_strict} are respectively \cite[Corollary 9.3.4, Lemma 9.3.2, Proposition 9.4.4, Lemma 9.3.6]{GaRo17}.
The statement \ref{relative_Cat^dual_is_category_of_modules} follows directly from \ref{action_strcont}, \ref{dualizable_relative_absolute}, \ref{lax_is_strict}.
\end{proof}
We refer to Remark \ref{rem:relative_and_absolute_duality} for a further discussion. 

Recall that by \cite[Theorem A]{Ram24a} for any presentable symmetric monoidal stable category $\cE,$ the category $\Cat_{\cE}^{\dual}$ is presentable. This in fact holds when $\cE$ is only $\bE_1$-monoidal, and the proof is basically the same.
We formulate the presentability statement in the case when $\cE$ is rigid, together with an explicit generator and a description of $\kappa$-compact objects. As in \cite[Section 4.4]{E24}, we denote by $\Shv_{\geq 0}(\R;\Sp)$ the category of sheaves of spectra on the real line with singular support in $\R\times\R_{\geq 0}\subset T^*\R$ (we refer to \cite{KS, RS} for the notion of singular support).

\begin{theo}\label{th:presentability_of_Cat_E^dual}
Let $\cE$ be a rigid $\bE_1$-monoidal category. Then the category $\Cat_{\cE}^{\dual}$ is $\omega_1$-presentable, and it is generated via colimits by the single $\omega_1$-compact object $\Shv_{\geq 0}(\R;\cE)\simeq\cE\otimes\Shv_{\geq 0}(\R;\Sp).$ Moreover, for an object $\cC\in\Cat_{\cE}^{\dual}$ and for an uncountable regular cardinal $\kappa$ the following are equivalent.
\begin{enumerate}[label=(\roman*),ref=(\roman*)]
\item $\cC$ is $\kappa$-compact in $\Cat_{\cE}^{\dual}.$ \label{kappa_compact_in_Cat_E^dual}
\item The functors $\ev_{\cC/\cE}:\cC\otimes\cC^{\vee}\to\cE$ and $\coev_{\cC/\cE}:\Sp\to\cC^{\vee}\tens{\cE}\cC$ preserve $\kappa$-compact objects. \label{ev_coev_kappa_strcont}
\end{enumerate}
\end{theo}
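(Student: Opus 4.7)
The plan is to deduce the statement from the corresponding absolute result over $\Sp$, which is available in \cite{E24}, together with the identification $\Cat_\cE^{\dual}\simeq\Mod_\cE(\Cat_{\st}^{\dual})$ provided by Proposition \ref{prop:dualizable_over_E_same_as_modules}\ref{relative_Cat^dual_is_category_of_modules}. From \cite{E24} I take as given that $\Cat_{\st}^{\dual}$ is $\omega_1$-presentable with $\omega_1$-compact generator $\Shv_{\geq 0}(\R;\Sp)$, and that a dualizable category $\cC$ is $\kappa$-compact in $\Cat_{\st}^{\dual}$ iff both $\ev_{\cC/\Sp}$ and $\coev_{\cC/\Sp}$ preserve $\kappa$-compact objects.

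To transfer these results to the relative setting, I would exploit the free-forgetful adjunction $F:\Cat_{\st}^{\dual}\rightleftarrows\Cat_\cE^{\dual}:U$, $F(\cC)=\cE\otimes\cC,$ in which $U$ is the forgetful functor. Standard module theory over algebras in presentable $\bE_1$-monoidal categories gives that $U$ is conservative, monadic, and preserves all small colimits; in particular, every object of $\Cat_\cE^{\dual}$ is a colimit of free modules $F(\cC)$ with $\cC\in\Cat_{\st}^{\dual},$ and $\Cat_\cE^{\dual}$ is therefore generated under colimits by $F(\Shv_{\geq 0}(\R;\Sp))\simeq\Shv_{\geq 0}(\R;\cE).$

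The central task is then to prove the characterization \Iff{kappa_compact_in_Cat_E^dual}{ev_coev_kappa_strcont} and apply it to verify $\omega_1$-compactness of $\Shv_{\geq 0}(\R;\cE)$ in $\Cat_\cE^{\dual}.$ I would mimic the proof of the absolute version from \cite{E24}: using dualizability of $\cC$ over $\cE$ together with Proposition \ref{prop:dualizable_over_E_same_as_modules}\ref{rel_compact_abs_compact} (which identifies strongly continuous $\cE$-linear functors $\cE\to\cD$ with compact objects of $\cD$), one obtains
\begin{equation*}
\Map_{\Cat_\cE^{\dual}}(\cC,\cD)\simeq\Map_{\Cat_\cE^{\dual}}(\cE,\cC^{\vee}\tens{\cE}\cD)\simeq(\cC^{\vee}\tens{\cE}\cD)^{\omega}_{\simeq}.
\end{equation*}
The direction \Implies{kappa_compact_in_Cat_E^dual}{ev_coev_kappa_strcont} follows from the triangle identities, which couple $\ev_{\cC/\cE}$ and $\coev_{\cC/\cE}$ to the identities on $\cC$ and $\cC^{\vee}$. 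For the converse, one controls the compact objects of $\cC^{\vee}\tens{\cE}\cD$ via the preservation of $\kappa$-compacts by $\ev_{\cC/\cE}$ and $\coev_{\cC/\cE},$ which lets one commute $(-)^{\omega}_{\simeq}$ past the $\kappa$-filtered colimit in $\cD.$ Applied to $\cC=\Shv_{\geq 0}(\R;\cE)\simeq\cE\otimes\Shv_{\geq 0}(\R;\Sp),$ the relative $\ev_{\cC/\cE}$ and $\coev_{\cC/\cE}$ can be expressed in terms of $\mult,\mult^R$ on the $\cE$-factor and the absolute structure morphisms of $\Shv_{\geq 0}(\R;\Sp)$; since $\mult$ and $\mult^R$ are strongly continuous by rigidity (hence preserve $\omega_1$-compacts) and the absolute structure maps of $\Shv_{\geq 0}(\R;\Sp)$ preserve $\omega_1$-compacts by the absolute version, both $\ev_{\cC/\cE}$ and $\coev_{\cC/\cE}$ preserve $\omega_1$-compacts. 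Hence $\Shv_{\geq 0}(\R;\cE)$ is $\omega_1$-compact, and $\omega_1$-presentability of $\Cat_\cE^{\dual}$ follows by standard transfer along the monadic adjunction.

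The main obstacle, in my view, is the converse direction in the characterization: verifying that $\kappa$-compactness preservation by the relative evaluation and coevaluation forces $\cC$ itself to be $\kappa$-compact requires a careful analysis of the compact objects in relative tensor products $\cC^{\vee}\tens{\cE}\cD$ and of their behavior under $\kappa$-filtered colimits in $\cD.$ This adapts the corresponding argument from \cite{E24} to the relative setting, relying crucially on Proposition \ref{prop:dualizable_over_E_same_as_modules} (to identify relative and absolute compactness) and on the rigidity of $\cE$ to ensure that $\cE$-actions and tensor products over $\cE$ interact well with the notion of $\kappa$-compact objects.
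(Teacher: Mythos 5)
Your proposal takes essentially the same route as the paper. The paper's proof is: (a) cite the absolute statement for $\Cat_{\st}^{\dual}$ from \cite{E24}, (b) use the equivalence $\Cat_{\cE}^{\dual}\simeq\Mod_{\cE}(\Cat_{\st}^{\dual})$ of Proposition \ref{prop:dualizable_over_E_same_as_modules}\ref{relative_Cat^dual_is_category_of_modules} to transfer generation and $\omega_1$-presentability along the free--forgetful adjunction, and (c) remark that the equivalence \Iff{kappa_compact_in_Cat_E^dual}{ev_coev_kappa_strcont} is proved by adapting the argument of \cite[Theorem C.6]{E24}; you do all three. The one place you do slightly more than necessary is in verifying $\omega_1$-compactness of $\Shv_{\geq 0}(\R;\cE)$: since the forgetful functor $U:\Cat_{\cE}^{\dual}\to\Cat_{\st}^{\dual}$ preserves all colimits (hence $\omega_1$-filtered ones), the free functor $\cE\otimes(-)$ automatically preserves $\omega_1$-compacts, so there is no need to unwind the relative evaluation and coevaluation of $\Shv_{\geq 0}(\R;\cE)$ explicitly; your more explicit computation is correct (strong continuity of $\mult,\mult^R$ between dualizable categories does give preservation of $\omega_1$-compacts, as the right adjoints are colimit-preserving), but it is extra work that the adjunction handles directly.
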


\begin{proof}
By \cite[Theorem D.1]{E24}, the category $\Cat_{\st}^{\dual}$ is generated via colimits by the $\omega_1$-compact object $\Shv_{\geq 0}(\R;\Sp).$ The equivalence $\Cat_{\cE}^{\dual}\simeq \Mod_{\cE}(\Cat_{\st}^{\dual})$ implies that the category $\Cat_{\cE}^{\dual}$ is generated via colimits by the $\omega_1$-compact object $\Shv_{\geq 0}(\R;\cE).$

The proof of the equivalence \Iff{kappa_compact_in_Cat_E^dual}{ev_coev_kappa_strcont} is analogous to the proof of Theorem \cite[Theorem C.6]{E24}, equivalence between (i) and (iii). 
\end{proof}
In fact, one can show that all the statements of \cite[Theorem C.6]{E24} hold in the relative context over a rigid $\bE_1$-monoidal base category. 

For a future reference, we prove the following statements about the category $\Cat_{\cE}^{\cg}.$

\begin{theo}\label{th:Cat^cg_compactly_assembled}
Let $\cE$ be a rigid $\bE_1$-monoidal category.
\begin{enumerate}[label=(\roman*),ref=(\roman*)]
\item The categories $\Alg_{\bE_1}(\cE)$ and $\Cat_{\cE}^{\cg}$ are compactly assembled. \label{compass_over_rigid}
\item The functor
\begin{equation*}
\Alg_{\bE_1}(\cC)\to \Cat_{\cE}^{\cg},\quad A\mapsto\Mod\hy A,
\end{equation*}
is strongly continuous, i.e. the following square commutes:
\begin{equation*}
\begin{CD}
\Alg_{\bE_1}(\cC)@>>> \Cat_{\cE}^{\cg}\\
@V\hat{\cY}VV @V\hat{\cY}VV\\
\Ind(\Alg_{\bE_1}(\cC)) @>>> \Ind(\Cat_{\cE}^{\cg}).
\end{CD}
\end{equation*} \label{strcont_from_E_1_alg_to_Cat^cg}
\item The right adjoint to the inclusion functor $\iota:\Cat_{\cE}^{\cg}\to \Cat_{\cE}^{\dual}$ commutes with filtered colimits. In particular, this inclusion functor preserves and reflects $\kappa$-compactness for any regular cardinal $\kappa.$ \label{strcont_Cat^cg_to_Cat^dual}
\end{enumerate}
\end{theo}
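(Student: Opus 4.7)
The plan is to prove the three parts in sequence, leveraging that $\cE$ is rigid hence dualizable (Proposition \ref{prop:rigid_categories_basics}), and therefore compactly assembled: for a presentable stable category, the AB6 axiom is equivalent to dualizability, and strong AB5 is automatic in the stable setting. For part \ref{compass_over_rigid} on $\Alg_{\bE_1}(\cE)$: the forgetful functor $\Alg_{\bE_1}(\cE)\to\cE$ creates finite limits, arbitrary products, and sifted colimits, so the strong AB5 and AB6 axioms transfer from $\cE$ to $\Alg_{\bE_1}(\cE)$, yielding compact assembly. For $\Cat_{\cE}^{\cg}$ one would extend Remark \ref{rem:Cat^perf_Cat^cg} to identify $\Cat_{\cE}^{\cg}$ with a $1$-category of ``small $\cE$-linear idempotent-complete stable $\infty$-categories'' via an appropriate relative ind-completion, valid even when $\cE$ is not itself compactly generated (using the $\cE$-enriched framework rather than working with $\cE^{\omega}$-modules). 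In this description, an underlying forgetful functor again creates the relevant limits and sifted colimits, so the same analysis proves strong AB5 and AB6.

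For part \ref{strcont_from_E_1_alg_to_Cat^cg}: continuity of $A\mapsto \Mod\hy A$ is standard (modules over a filtered colimit of algebras form the filtered colimit of module categories in $\Cat_{\cE}^{\cg}$). Strong continuity amounts to sending compact maps to compact maps, i.e.\ to commutation of the $\hat{\cY}$ functors. By Proposition \ref{prop:trace_class_iff_compact}, a compact map $A\to B$ in $\Alg_{\bE_1}(\cE)$ is characterized by a trace-class-like factorization in $\cE$; I would verify that such a factorization induces a compact map $\Mod\hy A\to\Mod\hy B$ in $\Cat_{\cE}^{\cg}$ by constructing the required intermediate compactly generated $\cE$-module explicitly from the trace-class witness.

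For part \ref{strcont_Cat^cg_to_Cat^dual}: the inclusion $\iota$ preserves colimits (by part \ref{compass_over_rigid} and general nonsense), and I would describe the right adjoint explicitly as $\iota^R(\cD)=\Ind_{\cE}(\cD^{\mathrm{rc}})$, where $\cD^{\mathrm{rc}}\subset\cD$ is the full subcategory of relatively compact objects. The key observation is that strongly continuous functors preserve relative compactness: if $F:\cD\to\cD'$ has a continuous right adjoint, then $\Hom_{\cD'}(F(d),-)\simeq \Hom_{\cD}(d,F^R(-))$ preserves filtered colimits whenever $d$ is relatively compact. Combined with the compact assembly of $\Cat_{\cE}^{\cg}$ proved in part \ref{compass_over_rigid}, this allows one to identify $(\colim_i \cD_i)^{\mathrm{rc}}$ with a filtered colimit of the $\cD_i^{\mathrm{rc}}$, from which the commutation of $\iota^R$ with filtered colimits follows.

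The main obstacle is part \ref{compass_over_rigid} for $\Cat_{\cE}^{\cg}$: making rigorous the identification of $\Cat_{\cE}^{\cg}$ with small $\cE$-enriched stable idempotent-complete categories when $\cE$ itself is not compactly generated, and verifying that the relative ind-completion functor produces the correct category and is compatible with the axioms. A secondary difficulty is in part \ref{strcont_Cat^cg_to_Cat^dual}: filtered colimits in $\Cat_{\cE}^{\dual}$ do not admit a transparent description in general, so confirming that $\iota^R(\colim_i \cD_i)\simeq \colim_i \iota^R(\cD_i)$ (the latter computed in $\Cat_{\cE}^{\cg}$) requires lifting rel-compact objects through the diagram in a controlled way, most naturally by using the $\hat{\cY}$-description of compactness furnished by part \ref{compass_over_rigid}.
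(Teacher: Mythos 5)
Your treatment of $\Alg_{\bE_1}(\cE)$ in part \ref{compass_over_rigid} is a valid alternative: the forgetful functor $\Alg_{\bE_1}(\cE)\to\cE$ is conservative, monadic, and commutes with sifted colimits and all limits, so strong (AB5) and (AB6) indeed transfer from the rigid (hence dualizable, hence compactly assembled) category $\cE$. The paper instead constructs the partially defined left adjoint $\hat{\cY}$ explicitly on the objects $T_\cE(x)$ and shows it extends; your version is arguably slicker for this piece.

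The rest of the proposal has genuine gaps. For $\Cat_{\cE}^{\cg}$ in part \ref{compass_over_rigid}, you appeal to an identification of $\Cat_{\cE}^{\cg}$ with ``small $\cE$-enriched idempotent-complete stable categories'' together with a forgetful functor along which (AB5) and (AB6) would transfer, but neither is specified and neither is readily available. Remark \ref{rem:Cat^perf_Cat^cg} only supplies such an identification when $\cE$ is compactly generated with compact unit and tensor-closed compacts, and even in that case the relevant forgetful functor out of a $2$-categorical $\Cat^{\perf}_{\cE^\omega}$ is not monadic in a way that creates the limits you need. The paper avoids this entirely: it proves part \ref{strcont_Cat^cg_to_Cat^dual} \emph{first}, independently, by describing $\iota^R(\cC)$ as the cocomplete $\cE$-submodule generated by compact objects and invoking \cite[Propositions 1.67, 1.71]{E24}; it then establishes compact assembly of $\Cat_{\cE}^{\cg}$ by showing the partially defined $\hat{\cY}$ is defined on the generating objects $\Mod\hy A$, using the adjunction between $\Alg_{\bE_1}(\cE)$ and the slice $(\Cat_{\cE}^{\cg})_{\cE/}$ (whose right adjoint $(\cE\xto{F}\cC)\mapsto\un{\End}_{\cC/\cE}(F(1))$ visibly commutes with filtered colimits), transferring to $\Cat_{\cE}^{\cg}$ via Lemma \ref{lem:sufficient_condition_for_hat_Y}, and finally presenting any $\cC\in\Cat_{\cE}^{\cg}$ as $\indlim_S\cC_S$ over finite sets $S$ of compacts with $\cC_S\simeq\Mod\hy\un{\End}_{\cC/\cE}(\biggplus[x\in S]x)$. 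This also delivers part \ref{strcont_from_E_1_alg_to_Cat^cg} en route.

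For part \ref{strcont_from_E_1_alg_to_Cat^cg}, your appeal to Proposition \ref{prop:trace_class_iff_compact} is misplaced: that proposition concerns trace-class and compact maps \emph{in $\cE$}, not in $\Alg_{\bE_1}(\cE)$, and it does not give a ``trace-class factorization'' characterization of compact maps of algebras from which you could construct an intermediate compactly generated $\cE$-module by hand. Converting compactness of $A\to B$ in $\Alg_{\bE_1}(\cE)$ into compactness of $\Mod\hy A\to\Mod\hy B$ in $\Cat_{\cE}^{\cg}$ is exactly the content of the adjunction/Lemma \ref{lem:sufficient_condition_for_hat_Y} argument, and you would need something of that sort rather than a trace-class witness in $\cE$. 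Finally, your ordering (establish \ref{compass_over_rigid} for $\Cat_{\cE}^{\cg}$ first, then deduce \ref{strcont_Cat^cg_to_Cat^dual}) means that the missing step is load-bearing for the whole theorem, whereas the paper proves \ref{strcont_Cat^cg_to_Cat^dual} self-contained and only then tackles $\Cat_{\cE}^{\cg}$.
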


We need the following observation.

\begin{lemma}\label{lem:sufficient_condition_for_hat_Y}
Let $\cC$ be an accessible category with filtered colimits, and let $x_0\in\cC^{\omega}$ be a compact object. Suppose that an ind-object $\inddlim[i\in I](x_0\to y_i)\in\Ind(\cC_{x_0/})$ satisfies the universal property
\begin{equation*}
\Map_{\Ind(\cC_{x_0/})}(\inddlim[i]y_i,\inddlim[j]z_j)\xto{\sim}\Map_{\cC_{x_0/}}(\indlim[i]y_i,\indlim[j]z_j),\quad \inddlim[j\in J](x_0\to z_j)\in \Ind(\cC_{x_0/}). 
\end{equation*}
The the ind-object $\inddlim[i]y_i\in\Ind(\cC)$ satisfies the universal property
\begin{equation}\label{eq:univ_property_of_ind_system}
	\Map_{\Ind(\cC)}(\inddlim[i]y_i,\inddlim[j]z_j)\xto{\sim}\Map_{\cC}(\indlim[i]y_i,\indlim[j]z_j),\quad  \inddlim[j\in J]z_j\in \Ind(\cC).
\end{equation}
\end{lemma}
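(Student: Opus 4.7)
The plan is to reduce the desired universal property in $\Ind(\cC)$ to the hypothesis in $\Ind(\cC_{x_0/})$ by fibering over the space of maps out of $x_0$. Denote by $\eta_\infty: x_0 \to \indlim[i] y_i$ the map induced by the given structure maps $\eta_i: x_0 \to y_i$, and by $\Phi$ the natural map appearing in \eqref{eq:univ_property_of_ind_system}. I would first assemble the commutative square
\begin{equation*}
\begin{CD}
\Map_{\Ind(\cC)}(\inddlim[i] y_i, \inddlim[j] z_j) @>\Phi>> \Map_{\cC}(\indlim[i] y_i, \indlim[j] z_j) \\
@V\eta_\infty^*VV @V\eta_\infty^*VV \\
\Map_{\Ind(\cC)}(x_0, \inddlim[j] z_j) @>\sim>> \Map_{\cC}(x_0, \indlim[j] z_j),
\end{CD}
\end{equation*}
in which the bottom arrow is an equivalence because $x_0$ is compact. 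It then suffices to verify that $\Phi$ is a fiberwise equivalence over every point $\xi: x_0 \to \indlim[j] z_j$ of the base.

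The fiber of the right vertical map over $\xi$ is, by definition, the mapping space $\Map_{\cC_{x_0/}}((x_0 \xto{\eta_\infty} \indlim[i] y_i), (x_0 \xto{\xi} \indlim[j] z_j))$. For the left vertical fiber I would compute as follows: by compactness of $x_0$, the point $\xi$ lifts, cofinally, to a compatible system of maps $\xi_j: x_0 \to z_j$, giving an object $\inddlim[j](x_0 \to z_j) \in \Ind(\cC_{x_0/})$. Writing each $\Map_{\cC_{x_0/}}((x_0 \to y_i),(x_0 \to z_j))$ as $\mathrm{fib}_{\xi_j}(\Map_\cC(y_i, z_j) \xto{\eta_i^*} \Map_\cC(x_0, z_j))$, and using that filtered colimits of spaces commute with finite limits in $\cS$, the colimit over $j$ produces the fiber of $\Map_{\Ind(\cC)}(y_i, \inddlim[j] z_j) \to \Map_\cC(x_0, \indlim[j] z_j)$ over $\xi$. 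Taking the cofiltered limit over $i$ (noting that the base is constant in $i$ since $(y_i \to y_{i'}) \circ \eta_i \simeq \eta_{i'}$, so the induced self-maps of the base are the identity) then identifies the fiber of the left vertical map over $\xi$ with
\begin{equation*}
\Map_{\Ind(\cC_{x_0/})}(\inddlim[i](x_0 \to y_i), \inddlim[j](x_0 \to z_j)),
\end{equation*}
and the hypothesis matches this with the right fiber.

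The main subtlety is the bookkeeping of basepoints in the fiber manipulation: one must verify that the compatible lift $(\xi_j)$ of $\xi$ used in the colimit-over-$j$ step corresponds to $\xi$ itself under the equivalence $\indlim[j]\Map_\cC(x_0, z_j) \simeq \Map_\cC(x_0, \indlim[j] z_j)$ granted by compactness of $x_0$, and that the limit-over-$i$ step does not disturb this basepoint. No further obstacle is expected, as everything reduces to the unconditional compatibility of filtered colimits with finite limits in $\cS$ together with the compactness of $x_0$.
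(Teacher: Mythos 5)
Your proof is correct and takes essentially the same approach as the paper's one-line argument: map both sides of \eqref{eq:univ_property_of_ind_system} to $\Map(x_0,\indlim_j z_j)\simeq\indlim_j\Map(x_0,z_j)$ via precomposition with the structure maps and check that $\Phi$ is an equivalence on fibers, using the hypothesis in $\Ind(\cC_{x_0/})$. The paper leaves these fiberwise identifications implicit, whereas you spell them out, including the basepoint bookkeeping and the use of strong (AB5) in $\cS$; both are correct.
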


\begin{proof}
The assumption on the ind-system $(x_0\to y_j)_{i\in I}$ implies that the map \eqref{eq:univ_property_of_ind_system} induces a weak homotopy equivalence on the fibers over $\Map(x_0,\indlim[j]z_j)\simeq\indlim[j]\Map(x_0,z_j).$ It follows that the map \eqref{eq:univ_property_of_ind_system} itself is a weak homotopy equivalence.
\end{proof}

\begin{proof}[Proof of Theorem \ref{th:Cat^cg_compactly_assembled}]
We first prove \ref{strcont_Cat^cg_to_Cat^dual}. Denoting the right adjoint by $\iota^R:\Cat_{\cE}^{\dual}\to\Cat_{\cE}^{\cg},$ we observe that $\iota^R(\cC)\subset\cC$ is the cocomplete $\cE$-submodule, generated by the compact objects of $\cC.$ Given an ind-system $(\cC_i)_{i\in I}$ in $\Cat_{\cE}^{\dual},$ we deduce from \cite[Proposition 1.67]{E24} that the functor
\begin{equation}\label{eq:comparison_iota^R}
\indlim[i]^{\cont}\iota^R(\cC_i)\to\iota^R(\indlim[i]^{\cont}\cC_i)
\end{equation}
is fully faithful. By \cite[Proposition 1.71]{E24} any compact object in $\indlim[i]^{\cont}\cC_i$ is isomorphic to the image of a compact object in some $\cC_{i_0},$ hence the functor \eqref{eq:comparison_iota^R} is also essentially surjective.

Next, we prove that the category $\Alg_{\bE_1}(\cE)$ is compactly assembled. Note that the forgetful functor $\Alg_{\bE_1}(\cE)\to\cE$ is conservative and it commutes with filtered colimits. Hence, the image of its left adjoint $T_{\cE}(-):\cE\to\Alg_{\bE_1}(\cE)$ generates the target via colimits. It also follows that the (a priori partially defined) left adjoint to the colimit functor $\Ind(\Alg_{\bE_1}(\cE))\to \Alg_{\bE_1}(\cE)$ is defined on objects of the form $T_{\cE}(x),$ and it is given by
\begin{equation*}
\hat{\cY}_{\Alg_{\bE_1}(\cE)}(T_{\cE}(x))=\inddlim[i] T_{\cE}(x_i),\quad\text{where }\hat{\cY}_{\cE}(x)=\inddlim[i]x_i.
\end{equation*}
We conclude that this left adjoint is defined on the whole category $\Alg_{\bE_1}(\cE),$ which is therefore compactly assembled.

Now, for an $\bE_1$-algebra $A\in\Alg_{\bE_1}(\cE)$ we can consider the category $\Mod\hy A$ as an object of the category $(\Cat_{\cE}^{\cg})_{\cE/},$ where the functor $\cE\to\Mod\hy A$ sends $1$ to $A.$ Then the right adjoint to the functor $\Alg_{\bE_1}(\cE)\to (\Cat_{\cE}^{\cg})_{\cE/}$ is given by
\begin{equation*}
(\Cat_{\cE}^{\cg})_{\cE/}\to \Alg_{\bE_1}(\cE),\quad (\cE\xto{F}\cC)\mapsto \un{\End}_{\cC/\cE}(F(1)),
\end{equation*} 
hence it commutes with filtered colimits. It follows that the (a priori partially defined) left adjoint to the functor $\Ind((\Cat_{\cE}^{\cg})_{\cE/})\to (\Cat_{\cE}^{\cg})_{\cE/}$ is defined on objects of the form $\Mod\hy A,$ and it is given by
\begin{equation*}
\hat{\cY}_{(\Cat_{\cE}^{\cg})_{\cE/}}(\Mod\hy A)=\inddlim[i]\Mod\hy A_i,\quad\text{where }\hat{\cY}(A)=\inddlim[i]A_i.
\end{equation*}
By Lemma \ref{lem:sufficient_condition_for_hat_Y} we conclude that the same holds for the partially defined left adjoint to the colimit functor $\Ind(\Cat_{\cE}^{\cg})\to \Cat_{\cE}^{\cg}.$

It remains to observe that the category $\Cat_{\cE}^{\cg}$ is generated via colimits by the objects of the form $\Mod\hy A,$ $A\in\Alg_{\bE_1}^{\cg}(\cE).$ Indeed, take some category $\cC\in\Cat_{\cE}^{\cg},$ and for a finite collection $S$ of isomorphism classes of compact objects of $\cC$ denote by $\cC_S\subset\cC$ the cocomplete $\cE$-submodule generated by the objects in $S.$ Then $\cC\simeq\indlim[S]\cC_S,$ and we have
\begin{equation*}
\cC_S\simeq \Mod\hy \un{\End}_{\cC/\cE}(\biggplus[x\in S]x).
\end{equation*}
This proves \ref{compass_over_rigid} and \ref{strcont_from_E_1_alg_to_Cat^cg}.
\end{proof}

We recall the following adjunction.

\begin{prop}\label{prop:relative_nonstandard_adjunction} Let $\cE$ be a rigid $\bE_1$-monoidal category, and  choose an uncountable regular cardinal $\kappa.$ Then the inclusion functor $\Cat_{\cE}^{\dual}\to \Pr^L_{\cE,\kappa}$ has a right adjoint, given by $\cC\mapsto\Ind(\cC^{\kappa}).$\end{prop}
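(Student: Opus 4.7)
The plan is to construct the unit and counit of the adjunction explicitly, and then verify the triangle identities. The first thing to check is that the assignment $\cC\mapsto\Ind(\cC^{\kappa})$ actually lands in $\Cat_{\cE}^{\dual}$. For this I would observe that, by Proposition \ref{prop:dualizable_over_E_same_as_modules}\ref{action_strcont}, the action $\mu_{\cC}:\cE\otimes\cC\to\cC$ is strongly continuous, so it preserves $\kappa$-compact objects; this equips $\cC^{\kappa}$ with an $\cE^{\omega}$-module structure in $\Cat^{\perf}$ (idempotent-completeness is automatic since $\kappa$ is uncountable), and Remark \ref{rem:Cat^perf_Cat^cg} then places $\Ind(\cC^{\kappa})$ in $\Cat_{\cE}^{\cg}\subset\Cat_{\cE}^{\dual}$. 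I would also record the key identification $\Ind(\cC^{\kappa})^{\kappa}\simeq\cC^{\kappa}$ for uncountable $\kappa$, valid because $\cC^{\kappa}$ is closed under $\kappa$-small colimits inside $\Ind(\cC^{\kappa})$.

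I would take the counit $\varepsilon_{\cC}:\Ind(\cC^{\kappa})\to\cC$ to be the continuous $\cE$-linear extension of the inclusion $\cC^{\kappa}\hookrightarrow\cC$, and the unit $\eta_{\cD}:\cD\to\Ind(\cD^{\kappa})$ to be $\hat{\cY}_{\cD}$, defined as the left adjoint of $\colim_{\cD}:\Ind(\cD^{\kappa})\to\cD$. The unit exists because any dualizable category is $\omega_1$-compactly assembled and hence $\kappa$-compactly assembled. It is strongly continuous by construction; it is $\cE$-linear because $\colim_{\cD}$ is $\cE$-linear, so $\hat{\cY}_{\cD}$ is lax $\cE$-linear, and therefore strict by Proposition \ref{prop:dualizable_over_E_same_as_modules}\ref{lax_is_strict}. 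The counit lies in $\Pr^L_{\cE,\kappa}$ because it restricts to the identity on $\cC^{\kappa}\simeq\Ind(\cC^{\kappa})^{\kappa}$.

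To verify the adjunction I would check that the two transformations are mutually inverse. Starting from $F:\cD\to\cC$ in $\Pr^L_{\cE,\kappa}$, the adjoint is $\tilde{F}:=\Ind(F|_{\cD^{\kappa}})\circ\hat{\cY}_{\cD}$, and $\varepsilon_{\cC}\circ\tilde{F}=F$ follows from $\colim_{\cC}\circ\Ind(F|_{\cD^{\kappa}})=F\circ\colim_{\cD}$ (both are continuous functors agreeing on compact objects) combined with $\colim_{\cD}\circ\hat{\cY}_{\cD}=\id_{\cD}$. Conversely, given $G:\cD\to\Ind(\cC^{\kappa})$ strongly continuous and $\cE$-linear, $G$ preserves $\kappa$-compact objects, so $G|_{\cD^{\kappa}}$ lands in $\cC^{\kappa}$; the equality $\Ind(G|_{\cD^{\kappa}})\circ\hat{\cY}_{\cD}=G$ then follows because both sides are strongly continuous $\cE$-linear functors agreeing on the $\kappa$-compact generators of $\cD$.

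The main obstacle, in my view, is the interplay between the relative and absolute structures: one must ensure that $\Ind(F|_{\cD^{\kappa}})$ is genuinely $\cE$-linear (not merely $\Ind(\cE^{\omega})$-linear) and that the $\cE$-module structure on $\Ind(\cC^{\kappa})$ obtained via the $\cE^{\omega}$-action on $\cC^{\kappa}$ is compatible with $\varepsilon_{\cC}$ being $\cE$-linear into $\cC$. Both issues are resolved by the rigidity of $\cE$ via Remark \ref{rem:Cat^perf_Cat^cg}, which matches $\cE$-module structures on compactly generated stable categories with $\cE^{\omega}$-module structures on their small stable cores of compact objects.
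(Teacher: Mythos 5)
Your direct construction has the right shape, and the triangle-identity computations at the end are essentially sound, but two earlier steps would not go through as written.

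First, you place $\Ind(\cC^{\kappa})$ in $\Cat_{\cE}^{\dual}$ by equipping $\cC^{\kappa}$ with an $\cE^{\omega}$-module structure and invoking Remark \ref{rem:Cat^perf_Cat^cg}. That remark requires $\cE$ to be compactly generated, which is not a standing hypothesis here: rigid $\bE_1$-monoidal categories in the sense of Definition \ref{def:rigid_category} need not be compactly generated (the categories $\Nuc(R^{\wedge}_I)$ constructed later in the paper are rigid but typically not compactly generated). The correct construction is to regard $\cC^{\kappa}$ as an $\cE^{\kappa}$-module in $\Cat^{\kappa\hy\rex}_{\st}$, so that $\Ind(\cC^{\kappa})$ is an $\Ind(\cE^{\kappa})$-module, and then restrict scalars along the monoidal functor $\hat{\cY}:\cE\to\Ind(\cE^{\kappa})$; dualizability over $\cE$ then follows from Proposition \ref{prop:dualizable_over_E_same_as_modules}\ref{dualizable_relative_absolute}, since $\Ind(\cC^{\kappa})$ is compactly generated, hence dualizable over $\Sp$.

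Second, your argument that the unit $\hat{\cY}_{\cD}$ is $\cE$-linear gets the handedness of the adjunction wrong. As the \emph{left} adjoint of the $\cE$-linear functor $\colim_{\cD}$, the functor $\hat{\cY}_{\cD}$ carries an \emph{oplax} $\cE$-linear structure, not a lax one; Proposition \ref{prop:dualizable_over_E_same_as_modules}\ref{lax_is_strict} promotes lax $\cE$-linear functors (i.e.\ continuous right adjoints of $\cE$-linear functors) to strict ones and does not apply on this side of the adjunction, so the crucial $\cE$-linearity of $\hat{\cY}_{\cD}$ is left unproved. The paper sidesteps both of these points by a purely formal argument: the absolute adjunction $\Cat_{\st}^{\dual}\rightleftarrows\Pr^L_{\st,\kappa}$ is cited from \cite[Proposition 1.89]{E24}, and since the left adjoint is symmetric monoidal and $\Cat_{\cE}^{\dual}\simeq\Mod_{\cE}(\Cat_{\st}^{\dual})$, $\Pr^L_{\cE,\kappa}\simeq\Mod_{\cE}(\Pr^L_{\st,\kappa})$ by Proposition \ref{prop:dualizable_over_E_same_as_modules}\ref{relative_Cat^dual_is_category_of_modules}, the adjunction passes to $\cE$-module categories, where the unit is tautologically a morphism of $\cE$-modules. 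Your hands-on approach is forced to re-derive exactly that compatibility, which is where it slips.
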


\begin{proof} The special case $\cE=\Sp$ is proved in \cite[Proposition 1.89]{E24}. The general case follows formally, since $\Cat_{\cE}^{\dual}\simeq \Mod_{\cE}(\Cat_{\st}^{\dual})$ (Proposition \ref{prop:dualizable_over_E_same_as_modules}) and $\Pr^L_{\cE,\kappa}\simeq \Mod_{\cE}(\Pr^L_{\st,\kappa}).$
	
Note that if $\cC$ is a $\kappa$-presentable stable left $\cE$-module, then the left $\cE$-module structure on $\Ind(\cC^{\kappa})$ is induced from the left $\Ind(\cE^{\kappa})$-module structure via the monoidal functor $\hat{\cY}:\cE\to\Ind(\cE^{\kappa}).$\end{proof}

We observe that the Calkin construction defined in \cite[Section 1.11]{E24} can be applied in the relative context.

\begin{prop}\label{prop:Calkin_over_rigid} Let $\cE$ be a rigid $\bE_1$-monoidal category, and let $\cC$ be a dualizable left $\cE$-module. Choose an uncountable regular cardinal $\kappa.$ Then the inclusion functor
\begin{equation*}
\hat{\cY}:\cC\to\Ind(\cC^{\kappa})
\end{equation*}
is naturally $\cE$-linear, and this gives the structure of a left $\cE$-module on the category
\begin{equation*}
\Ind(\Calk_{\kappa}^{\cont}(\cC))\simeq \Ind(\cC^{\kappa})/\hat{\cY}(\cC).
\end{equation*}
In particular, we obtain a functorial short exact sequence in $\Cat_{\cE}^{\dual}:$
\begin{equation*}
0\to\cC\xto{\hat{\cY}}\Ind(\cC^{\kappa})\to \Ind(\Calk_{\kappa}^{\cont}(\cC))\to 0.
\end{equation*}
\end{prop}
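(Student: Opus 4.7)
The plan is to reduce the relative statement to the absolute Calkin construction from \cite{E24}, with linearity supplied by Proposition \ref{prop:relative_nonstandard_adjunction}.

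First, I would identify $\hat{\cY}:\cC\to\Ind(\cC^{\kappa})$ as the unit of the adjunction $\iota:\Cat_{\cE}^{\dual}\rightleftarrows \Pr^L_{\cE,\kappa}:\Ind((-)^{\kappa})$ supplied by Proposition \ref{prop:relative_nonstandard_adjunction}. Indeed, the counit $\iota(\Ind(\cD^{\kappa}))\to\cD$ of that adjunction is the colimit functor (it is continuous, $\cE$-linear and preserves $\kappa$-compact objects, and its composition with the corresponding right adjoint on morphism spaces recovers the identification $\Map_{\Pr^L_{\cE,\kappa}}(\cC,\Ind(\cD^{\kappa}))\simeq\Map_{\Cat_{\cE}^{\dual}}(\cC,\cD)$). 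Its adjoint unit $\cC\to\Ind(\cC^{\kappa})$ is therefore the left adjoint to the colimit functor, i.e.\ exactly $\hat{\cY}_{\cC}$. Since the unit lives in the category $\Pr^L_{\cE,\kappa}$, it is automatically $\cE$-linear and continuous, establishing the first assertion.

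Next, I would transfer the short exact sequence from $\Cat_{\st}^{\dual}$ to $\Cat_{\cE}^{\dual}$. By Proposition \ref{prop:dualizable_over_E_same_as_modules}\ref{relative_Cat^dual_is_category_of_modules} we have a canonical equivalence $\Cat_{\cE}^{\dual}\simeq \Mod_{\cE}(\Cat_{\st}^{\dual})$, and the forgetful functor $\Mod_{\cE}(\Cat_{\st}^{\dual})\to\Cat_{\st}^{\dual}$ is conservative and preserves all colimits (being a left adjoint to the free-module functor). Consequently cofibers and, more generally, short exact sequences in $\Cat_{\cE}^{\dual}$ are created from $\Cat_{\st}^{\dual}$. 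Applying the absolute Calkin construction \cite[Section 1.11]{E24} to the underlying dualizable category $\cC$, the sequence
\begin{equation*}
0\to\cC\xto{\hat{\cY}}\Ind(\cC^{\kappa})\to \Ind(\Calk_{\kappa}^{\cont}(\cC))\to 0
\end{equation*}
is short exact in $\Cat_{\st}^{\dual}$. The $\cE$-linearity of $\hat{\cY}$ established above lets us upgrade the first two terms to objects of $\Cat_{\cE}^{\dual}$, and the cofiber inherits a unique $\cE$-module structure making the second map $\cE$-linear; this realizes $\Ind(\Calk_{\kappa}^{\cont}(\cC))$ as the quotient $\Ind(\cC^{\kappa})/\hat{\cY}(\cC)$ taken in $\Cat_{\cE}^{\dual}$, and yields the short exact sequence there.

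Finally, functoriality in $\cC\in\Cat_{\cE}^{\dual}$ follows formally from the naturality of the unit of the adjunction of Proposition \ref{prop:relative_nonstandard_adjunction} together with the universal property of cofibers in $\Cat_{\cE}^{\dual}$. The only subtle step is the identification of $\hat{\cY}$ with the unit of the relative adjunction, since the right adjoint in Proposition \ref{prop:relative_nonstandard_adjunction} was only specified on objects; but this is pinned down by the universal property together with the known absolute description, so no serious obstacle arises.
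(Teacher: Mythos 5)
The paper records no proof for this proposition beyond ``This is straightforward,'' so there is no paper argument to compare against; your proposal supplies the natural one and is essentially correct. Two small points deserve cleanup. First, you write that the forgetful functor $\Mod_{\cE}(\Cat_{\st}^{\dual})\to\Cat_{\st}^{\dual}$ preserves colimits ``being a left adjoint to the free-module functor''; in fact it is the \emph{right} adjoint of $\cE\otimes(-)$, so this reasoning is inverted. The conclusion is still correct, but for the monadicity reason: the forgetful functor from modules over an algebra in a monoidal $\infty$-category is monadic, and the monad $\cE\otimes(-)$ preserves all colimits since the tensor product on $\Cat_{\st}^{\dual}$ is cocontinuous in each variable, so the forgetful functor creates colimits (and is conservative), hence detects cofiber sequences and fully faithfulness, i.e.\ creates short exact sequences. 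Second, the identification of $\hat{\cY}_{\cC}$ with the unit of the adjunction of Proposition \ref{prop:relative_nonstandard_adjunction} is stated a bit loosely (``the unit is therefore the left adjoint to the colimit functor'' does not follow from the triangle identities alone); what actually pins it down is that the relative adjunction is obtained by applying $\Mod_{\cE}(-)$ to the absolute one from \cite[Proposition 1.89]{E24}, whose unit is $\hat{\cY}$, so the underlying functor of the $\cE$-linear unit is $\hat{\cY}$ and the unit equips $\hat{\cY}$ with the $\cE$-linear structure you need --- which you do acknowledge in your closing remark. With those two clarifications, the argument goes through: $\hat{\cY}$ is $\cE$-linear, the cofiber in $\Cat_{\cE}^{\dual}$ exists and its underlying category is the absolute cofiber $\Ind(\Calk_{\kappa}^{\cont}(\cC))$ from \cite[Section 1.11]{E24}, and functoriality follows from the naturality of the unit.
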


\begin{proof}
This is straightforward.
\end{proof}

We conclude this subsection with some remarks which might be helpful for understanding the relation between the absolute and relative duality. The statements in Remark \ref{rem:relative_and_absolute_duality} about dual categories can be obtained as a straightforward generalization of results from \cite[Section 9.4]{GaRo17} to the case of an $\bE_1$-monoidal rigid category (in loc. cit. the authors consider the case of a symmetric monoidal rigid category). 

\begin{remark}\label{rem:relative_and_absolute_duality}
Let $\cE$ be a rigid $\bE_1$-monoidal category as above.
\begin{enumerate}[label=(\roman*),ref=(\roman*)]
\item Under the identification $\cE^{\vee}\simeq \Fun^L(\cE,\Sp),$ the equivalence $\cE\simeq \cE^{\vee}$ from Proposition \ref{prop:rigid_categories_basics}  is given by
\begin{equation}\label{eq:functor_Phi_1}
\Phi_1:\cE\xto{\sim}\cE^{\vee},\quad \Phi_1(x)=\Hom(1,-\otimes x).
\end{equation}
Clearly, we have another equivalence
\begin{equation*}
	\Phi_2:\cE\xto{\sim}\cE^{\vee},\quad \Phi_2(x)=\Hom(1,x\otimes -).
\end{equation*}
The composition 
\begin{equation*}
\Phi_{\cE}=\Phi_2^{-1}\circ\Phi_1:\cE\xto{\sim}\cE
\end{equation*} 
is exactly the monoidal auto-equivalence from Proposition \ref{prop:autoequivalence_rigid_monoidal} \ref{autoequivalence}. By default, we use the functor \eqref{eq:functor_Phi_1} to identify $\cE$ and $\cE^{\vee}.$ Given a dualizable left $\cE$-module $\cC,$ we denote by $\Phi_{\cE,*}\cC$ the category $\cC$ with the  $\cE$-action given by the ``restriction of scalars'' via $\Phi_{\cE}.$ \label{functor_Phi_E}
\item It is easy to see that we have an isomorphism of monoidal functors $\Phi_{\cE^{mop}}\cong (\Phi_{\cE}^{-1})^{mop}.$
\item For a dualizable left $\cC$-module $\cE,$ the equivalence $\Fun^L(\cC,\Sp)\xto{\sim}\Fun_{\cE}^L(\cC,\cE)$ is given by
\begin{equation*}
(F:\cC\to\Sp)\mapsto (\cC\xto{\mu_{\cC}^R}\cE\otimes\cC\xto{\id\boxtimes F}\cE\otimes\Sp\simeq \cE).
\end{equation*}
Here the functor $\mu_{\cC}^R$ is the right adjoint to the $\cE$-action functor $\mu_{\cC}:\cE\otimes\cC\to\cC.$
The inverse functor $\Fun_{\cE}^L(\cC,\cE)\xto{\sim}\Fun^L(\cC,\Sp)$ is given by
\begin{equation*}
(G:\cC\to\cE)\mapsto (\cC\xto{G}\cE\xto{\Hom(1.-)}\Sp).
\end{equation*}
The induced right $\cE$-module structure on $\cC^{\vee}$ is given by the composition of monoidal functors
\begin{equation*}
\cE^{mop}\xto{(\Phi_{\cE})^{mop}}\cE^{mop}\to\Fun^L(\cC,\cC)^{mop}\simeq \Fun^L(\cC^{\vee},\cC^{\vee}).
\end{equation*}
Similarly, if $\cD$ is a dualizable right $\cE$-module, then the left $\cE$-module structure on $\cD^{\vee}$ is given by
\begin{equation*}
\cE\xto{\Phi^{-1}_{\cE}}\cE\to\Fun^L(\cD,\cD)^{mop}\simeq \Fun^L(\cD^{\vee},\cD^{\vee}).
\end{equation*}
\item Since the duality functor $(-)^{\vee}:\Cat_{\st}^{\dual}\to\Cat_{\st}^{\dual}$ is a covariant symmetric monoidal involution, the monoidal structure on $\cE$ induces a monoidal structure on $\cE^{\vee}.$ The equivalence \eqref{eq:functor_Phi_1} is naturally refined to a monoidal equivalence $\cE^{mop}\simeq\cE^{\vee}.$

Similarly, if $\cC$ is a dualizable left $\cE$-module, then we get a left $\cE^{\vee}$-module structure on $\cC^{\vee}.$ Under the above monoidal equivalence $\cE^{mop}\simeq\cE^{\vee},$ we obtain exactly the canonical right $\cE$-module structure on $\cC^{\vee}.$ On the other hand, for a dualizable right $\cE$-module $\cD$ this procedure gives the left $\cE$-module $\Phi_{\cE,*}(\cD^{\vee}).$ 
\item Given a presentable stable left $\cE$-module $\cC$ and a right $\cE$-module $\cD,$ the right adjoint to the natural functor $\cD\otimes\cC\to\cD\tens{\cE}\cC$ is given by the composition
\begin{equation*}
\cD\tens{\cE}\cC\simeq \cD\tens{\cE}\cE\tens{\cE}\cC\xto{\id\boxtimes\mult^R\boxtimes\id}\cD\tens{\cE}(\cE\otimes\cE)\tens{\cE}\cC\simeq\cD\otimes\cC.
\end{equation*}
\item Given a dualizable left $\cE$-module $\cC$ and a dualizable right $\cE$-module $\cD,$ we have an equivalence \begin{equation*}
(\cD\tens{\cE}\cC)^{\vee}\simeq\cC^{\vee}\tens{\cE}\Phi_{\cE,*}(\cD^{\vee}).\end{equation*}
The evaluation functor
\begin{equation*}
\ev:(\cD\tens{\cE}\cC)\otimes (\cC^{\vee}\tens{\cE}\Phi_{\cE,*}(\cD^{\vee}))\to\Sp
\end{equation*}
is given by
\begin{equation*}
\ev(d\boxtimes c,c'\boxtimes d')=\Hom(1,\ev_{\cD/\cE}(d',d)\otimes \ev_{\cC/\cE}(c,c')),
\end{equation*}
where $c\in\cC,$ $c'\in\cC^{\vee},$ $d\in\cD,$ $d'\in\cD^{\vee}.$ \label{non_trivial_duality_for_tensor_products}
\item More generally, we can consider the $(\infty,2)$-category in which the objects are $\bE_1$-monoidal rigid categories, the $1$-morphisms $\cE\to\cE'$ are dualizable $\cE'\hy\cE$-bimodules (the composition being the usual tensor product of bimodules), and the $2$-morphisms are continuous $\cE'\hy\cE$-linear functors. Then any $1$-morphism $\cC:\cE\to\cE'$ has both left and right adjoint. Namely, if we consider $\cC$ as a left module over $\cE^{mop}\otimes\cE',$ then $\cC^{\vee}$ has a canonical left $\cE\otimes\cE^{'mop}$-module structure as above, and the left resp. right adjoint to $\cC:\cE\to\cE'$ is given by $\Phi_{\cE^{'mop},*}\cC^{\vee}$ resp. $\Phi_{\cE,*}\cC^{\vee}.$  
\end{enumerate}
\end{remark}

\subsection{Localizing invariants of dualizable modules}

Let $\cE$ be a rigid $\bE_1$-monoidal category, and let $\cT$ be an accessible stable category. The notion of an accessible localizing invariant $\Cat_{\cE}^{\cg}\to \cT$ resp. $\Cat_{\cE}^{\dual}\to\cT$ is defined in the same way as in \cite{BGT}, see also \cite[Section 4]{E24}. If $\kappa$ is a regular cardinal, such that $\cT$ has $\kappa$-filtered colimits, then we denote by $\Fun_{\loc,\kappa}(\Cat_{\cE}^{\cg},\cT)$ resp. $\Fun_{\loc,\kappa}(\Cat_{\cE}^{\dual},\cT)$ the categories of localizing invariants which commute with $\kappa$-filtered colimits.

As in \cite[Definition 4.5]{E24}, for an accessible localizing invariant $F:\Cat_{\cE}^{\cg}\to\cT,$ we denote by $F^{\cont}:\Cat_{\cE}^{\dual}\to\cT$ the localizing invariant given by
\begin{equation*}
F^{\cont}(\cC)=\Omega F(\Ind(\Calk_{\omega_1}^{\cont}(\cC))),\quad \cC\in\Cat_{\cE}^{\dual}.
\end{equation*}

The following is a straightforward generalization of \cite[Theorem 4.10]{E24}.

\begin{theo}\label{th:continuous_loc_invar}
Let $\kappa$ be a regular cardinal and let $\cT$ be an accessible stable category with $\kappa$-filtered colimits. Let $\cE$ be a rigid $\bE_1$-monoidal category. Then the precomposition functor
\begin{equation*}
\Fun^{\acc}(\Cat_{\cE}^{\dual},\cT)\to \Fun^{\acc}(\Cat_{\cE}^{\cg},\cT)
\end{equation*}
induces an equivalence
\begin{equation*}
\Fun_{\loc,\kappa}(\Cat_{\cE}^{\dual},\cT)\xto{\sim} \Fun_{\loc,\kappa}(\Cat_{\cE}^{\cg},\cT).
\end{equation*}
The inverse equivalence is given by $F\mapsto F^{\cont}.$
\end{theo}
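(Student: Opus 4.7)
The plan is to adapt the proof of \cite[Theorem 4.10]{E24}, which is the case $\cE=\Sp$, to the relative setting over a rigid $\bE_1$-monoidal base. The key point is that all the ingredients used in the absolute proof have been upgraded to the relative setting earlier in this section: the relative Calkin short exact sequence (Proposition \ref{prop:Calkin_over_rigid}), the right adjoint $\cC\mapsto\Ind(\cC^{\kappa})$ to the inclusion $\Cat_{\cE}^{\dual}\hookrightarrow\Pr^L_{\cE,\kappa}$ (Proposition \ref{prop:relative_nonstandard_adjunction}), and the identification $\Cat_{\cE}^{\dual}\simeq\Mod_{\cE}(\Cat_{\st}^{\dual})$ (Proposition \ref{prop:dualizable_over_E_same_as_modules} \ref{relative_Cat^dual_is_category_of_modules}). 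Because of this last identification, many statements can in fact be deduced from the absolute case by a module-theoretic argument.

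First I would verify that the assignment $\cC\mapsto F^{\cont}(\cC)=\Omega F(\Ind(\Calk_{\omega_1}^{\cont}(\cC)))$ is well-defined on $\Cat_{\cE}^{\dual}$: Proposition \ref{prop:Calkin_over_rigid} shows that $\Ind(\Calk_{\omega_1}^{\cont}(\cC))$ lies in $\Cat_{\cE}^{\cg}$ (being the ind-completion of a small stable $\cE^{\omega}$-linear category, by Remark \ref{rem:Cat^perf_Cat^cg}), so $F$ can be applied to it. Accessibility and commutation with $\kappa$-filtered colimits of $F^{\cont}$ follow from the corresponding properties of $F$ together with the accessibility of $\Calk_{\omega_1}^{\cont}(-)$ and its commutation with $\kappa$-filtered colimits in $\Cat_{\cE}^{\dual}$ (for $\kappa\geq\omega_1$, which is automatic as $\cT$ has $\kappa$-filtered colimits and nontriviality of $F^{\cont}$ requires $\kappa$ uncountable; the edge case $\kappa=\omega$ reduces to the compactly generated setting where the statement is tautological).

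Next I would check that $F^{\cont}$ is a localizing invariant. For a short exact sequence $0\to\cC'\to\cC\to\cC''\to 0$ in $\Cat_{\cE}^{\dual}$, the relative Calkin construction produces a $3\times 3$-diagram with exact rows and columns in $\Cat_{\cE}^{\cg}$ (the rows being the Calkin sequences for $\cC', \cC, \cC''$, and the columns coming from the short exact sequence applied to $\cC$, $\Ind(\cC^{\omega_1})$ and $\Ind(\Calk_{\omega_1}^{\cont}(\cC))$). Applying $F$ and using its localization property on both rows and columns, and then looping, yields the desired fiber sequence for $F^{\cont}$. I would then verify that $F^{\cont}|_{\Cat_{\cE}^{\cg}}\simeq F$: for $\cC\in\Cat_{\cE}^{\cg}$, the Calkin sequence still applies, and the key input is that $F$ vanishes on $\Ind(\cC^{\omega_1})$ by the standard Eilenberg swindle argument (the category $\cC^{\omega_1}$ has countable coproducts which are relatively compact-preserving over $\cE^{\omega}$, and $F$ commutes with $\omega_1$-filtered colimits). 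This gives $F(\cC)\simeq\Omega F(\Ind(\Calk_{\omega_1}^{\cont}(\cC)))=F^{\cont}(\cC)$.

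Finally, uniqueness of the extension follows from the same Calkin exact sequence: any accessible $\kappa$-continuous localizing invariant $G$ on $\Cat_{\cE}^{\dual}$ satisfies $G(\cC)\simeq\Omega G(\Ind(\Calk_{\omega_1}^{\cont}(\cC)))$ (again using Eilenberg swindle for $\Ind(\cC^{\omega_1})\in\Cat_{\cE}^{\cg}$), so $G$ is determined by $G|_{\Cat_{\cE}^{\cg}}$. The main obstacle, beyond verifying each step carefully, is really just checking that the $3\times 3$-diagram argument and the Eilenberg swindle argument go through $\cE$-linearly; this is precisely what Proposition \ref{prop:Calkin_over_rigid} and the identification $\Cat_{\cE}^{\dual}\simeq\Mod_{\cE}(\Cat_{\st}^{\dual})$ are designed to secure, so there are no new substantive difficulties beyond the absolute case.
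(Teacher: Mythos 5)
Your plan is correct and matches the paper's intent exactly: the paper invokes \cite[Theorem 4.10]{E24} with no further argument, relying on the reader to carry the absolute proof over via the relative Calkin sequence (Proposition~\ref{prop:Calkin_over_rigid}), the adjunction of Proposition~\ref{prop:relative_nonstandard_adjunction}, and the identification $\Cat_{\cE}^{\dual}\simeq\Mod_{\cE}(\Cat_{\st}^{\dual})$ (Proposition~\ref{prop:dualizable_over_E_same_as_modules}), precisely as you describe. One warning worth flagging, though: your parenthetical dismissing $\kappa=\omega$ as an ``edge case'' that ``reduces to the compactly generated setting where the statement is tautological'' is wrong. The finitary case $\kappa=\omega$ is not tautological — it is the most important instance (it governs $K$-theory and all finitary localizing invariants), and the nontrivial content there is that $F^{\cont}=\Omega F(\Ind(\Calk_{\omega_1}^{\cont}(-)))$ still commutes with $\omega$-filtered colimits even though $\Calk_{\omega_1}^{\cont}(-)$ itself only commutes with $\omega_1$-filtered ones. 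That step is genuinely where the work in \cite[Theorem 4.10]{E24} lies, so while ``adapt E24'' as an overall plan does cover it, the specific reasoning you give for commutation with $\kappa$-filtered colimits does not.
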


We recall the relative version of the universal localizing invariants (depending on the choice of a regular cardinal).

\begin{defi}
As above, let $\cE$ be a rigid $\bE_1$-monoidal category, and let $\kappa$ be a regular cardinal. We denote by
\begin{equation*}
\cU_{\loc,\kappa}:\Cat_{\cE}^{\cg}\to \Mot^{\loc}_{\cE,\kappa}
\end{equation*}
the localizing invariant such that the category $\Mot^{\loc}_{\cE,\kappa}$ is accessible with $\kappa$-filtered colimits, the functor $\cU_{\loc,\kappa}$ commutes with $\kappa$-filtered colimits, and for any other accessible stable category $\cT$ with $\kappa$-filtered colimits we have an equivalence
\begin{equation}\label{eq:universal_property_Mot^loc_kappa}
\Fun_{\loc,\kappa}(\Cat_{\cE}^{\cg},\cT)\simeq \Fun^{\kappa\hy\cont}(\Mot^{\loc}_{\cE,\kappa},\cT).
\end{equation}
For $\kappa=\omega$ we use the notation $\Mot^{\loc}_{\cE}=\Mot^{\loc}_{\cE,\omega}.$
\end{defi}

Here the target of \eqref{eq:universal_property_Mot^loc_kappa} is the category of exact functors which commute with $\kappa$-filtered colimits.

\begin{remark}
Let $\cE$ be a rigid $\bE_1$-monoidal category.
\begin{enumerate}[label=(\roman*),ref=(\roman*)]
\item If $\cE$ is $\bE_n$-monoidal for some $n\geq 1,$ then the categories $\Mot_{\cE,\kappa}^{\loc}$ are naturally $\bE_{n-1}$-monoidal. If $\cE$ is symmetric monoidal, then the categories  $\Mot_{\cE,\kappa}^{\loc}$ are also symmetric monoidal.
\item We will prove in \cite{E} that if $\cE$ is at least $\bE_2$-monoidal, then the category $\Mot_{\cE}^{\loc}$ is rigid monoidal. If $\cE$ is only $\bE_1$-monoidal, then the category $\Mot^{\loc}_{\cE}$ is dualizable and the object $\cU_{\loc}(\cE)\in \Mot^{\loc}_{\cE}$ is compact. Moreover, we have an equivalence $(\Mot^{\loc}_{\cE})^{\vee}\simeq \Mot^{\loc}_{\cE^{mop}},$ and the evaluation functor is given by
\begin{equation*}
\Mot^{\loc}_{\cE}\otimes \Mot^{\loc}_{\cE^{mop}}\to\Sp,\quad \cU_{\loc}(\cC)\otimes\cU_{\loc}(\cD)\mapsto K^{\cont}(\cD\tens{\cE}\cC).
\end{equation*}
\end{enumerate}
\end{remark} 

\subsection{Rigidification and nuclear objects}

In this subsection we will consider only symmetric monoidal categories. For an object $x$ in a symmetric monoidal category $\cC,$ we will use the notation $x^{\vee}=\un{\Hom}_{\cC}(x,1)$ and call $x^{\vee}$ the predual of $x.$ We will sometimes consider the space $\Map(1,x^{\vee}\otimes y)$ as the space of trace-class maps from $x$ to $y.$ If $\cC$ is stable, we will similarly say that $\Hom(1,x^{\vee}\otimes y)$ is the spectrum of trace-class maps.

\begin{remark} A trivial but useful observation is that a morphism $f:x\to y$ is trace-class in $\cC$ if and only if there exists an object $z\in\cC$ and morphisms $g:1\to z\otimes y,$ $h:x\otimes z\to 1$ such that $f$ is (homotopic to) the composition
\begin{equation*}
x\cong x\otimes 1\xto{\id\otimes g} x\otimes z\otimes y\xto{h\otimes\id} 1\otimes y\cong y.	
\end{equation*}	This characterization is helpful in practice, because the predual object $x^{\vee}$ can be very difficult to describe in general.\end{remark}

We recall the rigidification functor.

\begin{prop}\cite{Ram24b}\label{prop:rigidification}
Let $\kappa$ be an uncountable regular cardinal. The inclusion functor $\CAlg^{\rig}\hto \CAlg(\Pr^L_{\st,\kappa})$ has a right adjoint, $\cC\mapsto\cC^{\rig}.$ Explicitly, the category $\cC^{\rig}$ can be described as a full subcategory of $\Ind(\cC^{\kappa}),$ which is generated via colimits by the formal colimits $\inddlim[\Q_{\leq}](F:\Q_{\leq}\to\cC^{\kappa}),$ where for $a<b$ the map $F(a)\to F(b)$ is trace-class in $\cC.$ The adjunction counit is given by the composition
\begin{equation*}
\cC^{\rig}\to\Ind(\cC^{\kappa})\xto{\colim}\cC.
\end{equation*}
Moreover, the category $\cC^{\rig}$ does not depend on the choice of $\kappa.$ If in addition the unit object of $\cC$ is compact, then the functor $\cC^{\rig}\to\cC$ is fully faithful.
\end{prop}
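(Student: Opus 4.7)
The plan is to take the explicit description as the definition: let $\cC^{\rig}\subseteq\Ind(\cC^\kappa)$ be the full stable cocomplete subcategory generated by the ind-objects $\inddlim[a\in\Q_{\leq}](x_a)$ whose transitions $x_a\to x_b$ for $a<b$ are trace-class in $\cC$. The first step is to upgrade $\cC^{\rig}$ to a symmetric monoidal subcategory of $\Ind(\cC^\kappa)$. Two observations do the work: tensor products of trace-class maps are trace-class in any symmetric monoidal category, and $\Q_\leq$ embeds cofinally into $\Q_\leq\times\Q_\leq$ via the diagonal. Given two generators $x=\inddlim[a]x_a$ and $y=\inddlim[b]y_b$, their tensor product in $\Ind(\cC^\kappa)$ is $\inddlim[(a,b)](x_a\otimes y_b)$, which by restriction along the diagonal is again a generator. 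The unit is represented by $\inddlim[a\in\Q_\leq]1_\cC$ with identity transitions, which are trace-class because $1_\cC$ is self-dual.

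Rigidity of $\cC^{\rig}$ then follows from Proposition \ref{prop:rigidity_criterion}: any $\Q_\leq$-diagram of trace-class maps admits a cofinal $\N$-subdiagram with trace-class transitions, so $\cC^{\rig}$ is generated by sequential trace-class colimits, and compactness of the unit can be checked against the generators using the trace-class witnesses. For the universal property, given a morphism $G:\cD\to\cC$ in $\CAlg(\Pr^L_{\st,\kappa})$ with $\cD$ rigid, the functor $\Ind(G^\kappa):\Ind(\cD^\kappa)\to\Ind(\cC^\kappa)$ sends each generator of $\cD=\cD^{\rig}$ into $\cC^{\rig}$, because symmetric monoidal functors preserve trace-class maps. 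This produces the required symmetric monoidal continuous lift $\widetilde G:\cD\to\cC^{\rig}$; uniqueness follows from generation. The counit is then $\cC^{\rig}\hookrightarrow\Ind(\cC^\kappa)\xto{\colim}\cC$, and independence of $\kappa$ is automatic from uniqueness of right adjoints.

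For the final clause, assume $1_\cC$ is compact in $\cC$. Given two generators $X=\inddlim[a]x_a$ and $Y=\inddlim[b]y_b$, one must show that the comparison map
\begin{equation*}
\Hom_{\Ind(\cC^\kappa)}(X,Y)\simeq\lim_a\colim_b\Hom(x_a,y_b)\;\longrightarrow\;\lim_a\Hom(x_a,\colim_b y_b)\simeq\Hom_\cC(\colim X,\colim Y)
\end{equation*}
is an equivalence, i.e.\ that $\Hom_\cC(x_a,\colim_b y_b)\simeq\colim_b\Hom_\cC(x_a,y_b)$ for every $a$. Using the trace-class factorization $x_a\to x_{a'}$ (for $a<a'$) through its witness $1\to x_a^\vee\otimes x_{a'}$, this reduces to the commutation $\Map(1,\colim_b(-))\simeq\colim_b\Map(1,-)$, which holds because $1_\cC$ is compact. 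The main obstacle is the symmetric monoidal compatibility in the universal property, i.e.\ ensuring that $\widetilde G$ is a symmetric monoidal functor rather than just a functor of underlying categories; the indexing by $\Q_\leq$ instead of $\N$ is precisely what allows interleaving of generators so that the monoidal structure descends cleanly.
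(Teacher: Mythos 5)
Your argument for the final clause (full faithfulness of $\cC^{\rig}\to\cC$ when $1_\cC$ is compact) is correct and is essentially the same as the paper's: the paper phrases it as $\cC^{\rig}$ being contained in the image of the partially-defined left adjoint to $\colim:\Ind(\cC^\kappa)\to\cC$, which is the same pro-isomorphism argument you give via the trace-class witnesses and compactness of $1_\cC$. Minor quibble: your phrase ``i.e.\ that $\Hom_\cC(x_a,\colim_b y_b)\simeq\colim_b\Hom_\cC(x_a,y_b)$ for every $a$'' overstates what is needed (term-wise isomorphism is not required, only a pro-isomorphism of the towers), but your subsequent sentence proves the correct, weaker statement.

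For the main body of the proposition, which the paper itself delegates to \cite[Construction 4.75, Theorem 4.77]{Ram24b}, your sketch has a genuine gap in the verification that $\cC^{\rig}$ is rigid. You invoke Proposition \ref{prop:rigidity_criterion}, which requires $\cC^{\rig}$ to be generated by sequential colimits $\indlim[n]x_n$ with $x_n\in\cC^{\rig}$ and $x_n\to x_{n+1}$ trace-class \emph{in $\cC^{\rig}$}. Passing to a cofinal $\N$-subsequence $a_0<a_1<\cdots$ of $\Q$ gives $\indlim[n]\cY(F(a_n))$, but the objects $\cY(F(a_n))$ need not lie in $\cC^{\rig}$ (the constant $\Q$-diagram on $F(a_n)$ has identity transitions, which are trace-class only if $F(a_n)$ is dualizable), and trace-class in $\cC$ or in $\Ind(\cC^\kappa)$ does not automatically yield a trace-class witness landing inside $\cC^{\rig}$. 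This is precisely where the $\Q$-indexing earns its keep: one must instead form truncations such as $X_n=\inddlim[a<a_n]F(a)$ (each again a $\Q$-indexed generator, hence in $\cC^{\rig}$) and use the ability to split a trace-class map $F(a_n)\to F(a_{n+1})$ at an intermediate rational to manufacture a trace-class witness for $X_n\to X_{n+1}$ that lives in $\cC^{\rig}$. Your closing paragraph correctly flags the interleaving role of $\Q_\leq$, but attributes it only to the symmetric monoidal structure on the lift; the more essential role is in making $\cC^{\rig}$ rigid at all. Two further points are asserted without argument: uniqueness of the lift $\widetilde G$ (which does not follow merely from ``generation'' since $\cC^{\rig}\to\cC$ is not a monomorphism in general), and independence of $\kappa$, which is not ``automatic from uniqueness of right adjoints'' because the adjunction is taken in the ambient category $\CAlg(\Pr^L_{\st,\kappa})$ which itself varies with $\kappa$.
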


\begin{proof}
The description of the rigidification functor and independence from the choice of $\kappa$ are proved in \cite[Construction 4.75, Theorem 4.77]{Ram24b}. The final assertion follows from the observation that the subcategory $\cC^{\rig}\subset\Ind(\cC^{\kappa})$ is contained in the image of the partially defined left adjoint to the colimit functor $\Ind(\cC^{\kappa})\to\cC$ (since compactness of the unit object implies that the trace-class maps are compact). 
\end{proof}

\begin{remark}\label{rem:rigidification}
\begin{enumerate}[label=(\roman*), ref=(\roman*)]
\item If $\cC\in \CAlg(\Pr^L_{\st})$ is compactly generated and the unit object $1_{\cC}$ is compact, then $\cC^{\rig}\subset \cC$ is generated via colimits by the objects $\indlim[\Q_{\leq}](F:\Q_{\leq}\to\cC^{\omega})\in\cC,$ where for $a<b$ the map $F(a)\to F(b)$ is trace-class in $\cC.$ This follows from the observation that any trace-class map in $\cC$ factors through a compact object (by Proposition \ref{prop:trace_class_iff_compact}). \label{C^rig_for_comp_gen}
\item Let $\cC\in\CAlg(\Pr^L_{\st,\kappa}),$ and take some object $x\in\cC^{\kappa}.$ The predual $\cY(x)^{\vee}\in\Ind(\cC^{\kappa})$ is isomorphic to the image of $x^{\vee}\in\cC$ under the functor $\cC\simeq\Ind_{\kappa}(\cC^{\kappa})\to \Ind(\cC^{\kappa}).$ Hence, a morphism $f:x\to y$ in $\cC^{\kappa}$ is trace-class in $\cC$ if and only if the morphism $\cY(x)\to \cY(x)$ is trace-class in $\Ind(\cC^{\kappa}).$  Therefore, the colimit functor $\colim:\Ind(\cC^{\kappa})\to\cC$ induces an equivalence $\Ind(\cC^{\kappa})^{\rig}\to\cC^{\rig}.$ 
\end{enumerate}
\end{remark}

We will be interested in the conditions which allow to replace colimits over $\Q$ with colimits over $\N$ in Proposition \ref{prop:rigidification}. We recall the general notion of nuclear objects, due to Clausen and Scholze.

\begin{defi}\cite[Definitions 13.10 and 13.12]{CS20}\label{def:nuclear_objects} Let $\cC$ be a presentable stable symmetric monoidal category, which is compactly generated, and suppose that the unit object $1_{\cC}$ is compact.
\begin{enumerate}[label=(\roman*), ref=(\roman*)]
\item An object $M\in C$ is called nuclear if for any compact object $P\in\cC^{\omega},$ any morphism $P\to M$ is trace-class. 
\item An object $M\in C$ is called basic nuclear if we have $M\cong\indlim[n\in\N]M_n,$ where each $M_n$ is compact and the maps $M_n\to M_{n+1}$ are trace-class.
\end{enumerate}
We denote by $\Nuc(\cC)\subset\cC$ the full subcategory of nuclear objects. 
\end{defi}

\begin{remark}
In fact the notion of a nuclear object naturally makes sense for not necessarily stable presentable symmetric monoidal categories which are compactly assembled with a compact unit object. We will use this more general notion in \cite{E}.
\end{remark}

We recall the basic facts about nuclear objects, which are straightforward generalizations of the results of \cite[Section 13]{CS20}.

\begin{prop}\label{prop:nuclear_is_omega_1_presentable}
	Let $\cC$ be as in Definition \ref{def:nuclear_objects}. The full subcategory $\Nuc(\cC)\subset\cC$ is closed under colimits. The category $\Nuc(\cC)$ is $\omega_1$-presentable and the $\omega_1$-compact objects of $\Nuc(\cC)$ are exactly the basic nuclear objects. The inclusion functor $\Nuc(\cC)\to \cC$ preserves and reflects $\omega_1$-compact objects. Considering $\cC^{\rig}$ as a full subcategory of $\cC,$ we have an inclusion $\cC^{\rig}\subset\Nuc(\cC).$ .
\end{prop}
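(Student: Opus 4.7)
The plan is to establish the five assertions in the following order: (a) closure of $\Nuc(\cC)$ under colimits; (b) basic nuclear objects are $\omega_1$-compact in $\cC$; (c) every nuclear object is an $\omega_1$-filtered colimit of basic nuclear objects mapping to it; and then the remaining assertions — $\omega_1$-presentability, identification of $\omega_1$-compacts with basic nuclear objects, preservation and reflection of $\omega_1$-compactness by the inclusion, and $\cC^{\rig}\subset \Nuc(\cC)$.

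For (a), closure under filtered colimits is immediate from compactness of $P$ combined with the fact that the composition of a trace-class map $P\to M_{i_0}$ with any map $M_{i_0}\to M$ is trace-class. Closure under finite colimits reduces to the case of a cofiber sequence $M_1\to M_2\to M_3$. Since $-\otimes P^{\vee}$ is a left adjoint (with right adjoint $\un{\Hom}(P^{\vee},-)$) it is exact, so one obtains a cofiber sequence of mapping spectra $\Hom(1,P^{\vee}\otimes M_i)$ mapping to the cofiber sequence $\Hom(P,M_i)$; a $\pi_0$ five-lemma argument — together with the observation that $M$ nuclear implies $M[-1]$ nuclear, which handles the $\pi_{-1}$ columns — yields nuclearity of $M_3$. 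For (b), for $M=\indlim[n\in\N]P_n$ basic nuclear, $\Hom(M,-)\cong \prolim[n]\Hom(P_n,-)$; each term commutes with all filtered colimits, and countable limits commute with $\omega_1$-filtered colimits in $\Sp$, so $\Hom(M,-)$ commutes with $\omega_1$-filtered colimits.

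The key technical input, and the main obstacle, is a factorization lemma used for (c): any trace-class map $f:P\to M$ with $P$ compact admits a factorization $P\to P'\to M$ with $P'$ compact and $P\to P'$ trace-class. One extracts this from a witness $1\to P^{\vee}\otimes M$ by writing $M\cong \indlim[j] Q_j$ as a filtered colimit of compacts, using that $-\otimes P^{\vee}$ and $\Hom(1,-)$ commute with filtered colimits (the former as a left adjoint, the latter by compactness of the unit) to factor the witness through $P^{\vee}\otimes Q_{j_0}$, and taking $P'$ to be an appropriate compact built from $Q_{j_0}$ absorbing the trace-class structure. Iterating the lemma against the nuclear object $M$, starting with any compact $P_0\to M$, produces a sequence $P_0\to P_1\to P_2\to\cdots\to M$ of compacts with trace-class transitions; its colimit $M'=\indlim[n]P_n$ is basic nuclear, maps to $M$, and factors $P_0\to M$. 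It follows both that the slice $\cI_M$ of basic nuclear objects over $M$ is $\omega_1$-filtered (a countable family of basic nuclears over $M$ is absorbed into a single basic nuclear over $M$ after taking coproducts, telescoping, and refining transitions by the lemma) and that the canonical map $\indlim[\cI_M] M'\to M$ is an equivalence: testing against any compact $P$, the factorization lemma gives essential surjectivity on mapping spaces, while essential injectivity reduces to compatibility of different factorizations which is again handled by the lemma.

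From (c), $\Nuc(\cC)$ is $\omega_1$-presentable and generated under $\omega_1$-filtered colimits by basic nuclear objects, which are $\omega_1$-compact by (b); a Thomason-style swindle along the defining sequences shows that basic nuclear objects are closed under retracts, so $\Nuc(\cC)^{\omega_1}$ coincides with the basic nuclear objects. Preservation of $\omega_1$-compactness by $\Nuc(\cC)\hto \cC$ is then immediate from (b), and reflection follows because $\Nuc(\cC)$ is closed under $\omega_1$-filtered colimits in $\cC$ by (a), so $\omega_1$-compactness is tested identically in the two categories. Finally, $\cC^{\rig}\subset \Nuc(\cC)$ reduces, via Remark \ref{rem:rigidification}\ref{C^rig_for_comp_gen} and (a), to showing that any $\indlim[\Q_{\leq}]F$ with trace-class transitions is nuclear: a map from a compact $P$ factors through some $F(a)$, and composing with the trace-class $F(a)\to F(a')$ produces a trace-class map $P\to \indlim[\Q_{\leq}]F$.
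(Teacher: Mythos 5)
Your overall architecture is the right one and matches the proof of \cite[Proposition 13.13]{CS20} that the paper cites: establish the factorization of trace-class maps through a compact object, deduce that every nuclear object is an $\omega_1$-filtered colimit of basic nuclear objects, observe that basic nuclear objects are $\omega_1$-compact via $\Hom(\indlim_n P_n,-)\simeq \prolim_n\Hom(P_n,-)$ commuting with $\omega_1$-filtered colimits, and then assemble. However, there is a genuine gap in your argument for closure of $\Nuc(\cC)$ under cofibers.

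The issue is the ``$\pi_0$ five-lemma.'' Write $A_i=\Hom(1,P^{\vee}\otimes M_i)$ and $B_i=\Hom(P,M_i)$, so you have a ladder of long exact sequences with the relevant segment
\begin{equation*}
\pi_0 A_1\to\pi_0 A_2\to\pi_0 A_3\to\pi_{-1}A_1\to\pi_{-1}A_2
\end{equation*}
mapping to the corresponding segment for $B_i$, and you want surjectivity at $\pi_0 A_3\to\pi_0 B_3$. The four-lemma for epimorphisms needs: epi at positions $2$ and $4$ \emph{and mono at position $5$}. Nuclearity of $M_2$ gives epi at position $2$; nuclearity of $M_1[-1]$ gives epi at position $4$; but nuclearity of $M_2[-1]$ gives only \emph{epi}, not mono, at position $5$, which is exactly what the diagram chase needs at the step ``lift $\gamma(b_3)$ to $a_4\in A_4$ and check $\delta(a_4)=0$.'' So ``$M$ nuclear $\Rightarrow M[-1]$ nuclear'' does not by itself close the chase. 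Definition \ref{def:nuclear_objects} is a condition purely about surjectivity on $\pi_0$, and surjectivity alone is not stable under cofibers by a five-lemma argument.

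The fix is available with the tools you already develop. From your factorization lemma, deduce (this is essentially the content of the subsequent Proposition \ref{prop:equiv_for_nuclear}, which is logically independent of the present one) that for $M$ nuclear and $P$ compact the map $\Hom(1,P^{\vee}\otimes M)\to\Hom(P,M)$ is an \emph{equivalence of spectra}, not merely a $\pi_0$-surjection: write $M\cong\indlim_j Q_j$ with $Q_j$ compact, use nuclearity and compactness of $1$ to refine to a system with trace-class transitions cofinally, and then a trace-class witness $\tilde f\colon 1\to Q_j^{\vee}\otimes Q_{j'}$ produces a backward map $\Hom(P,Q_j)\to \Hom(1,P^{\vee}\otimes Q_{j'})$ whose two composites with the forward maps are the transition maps; passing to the colimit gives the equivalence. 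Once this is in hand, closure of $\Nuc(\cC)$ under all colimits is immediate: both $\Hom(1,P^{\vee}\otimes-)$ and $\Hom(P,-)$ preserve colimits (compactness of $1$ and $P$, and $-\otimes P^{\vee}$ is a left adjoint), so the natural transformation between them is an equivalence on any colimit of objects where it is already an equivalence. This recovers the cofiber case and everything else cleanly, and it is the argument the paper implicitly defers to in \cite{CS20}. You should also be aware that this closure under colimits is used again in your final step on $\cC^{\rig}\subset\Nuc(\cC)$ (since Remark \ref{rem:rigidification} only describes generators of $\cC^{\rig}$ under colimits), so the gap propagates until it is filled.
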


\begin{proof}
The statements on the $\omega_1$-presentability and $\omega_1$-compact objects are obtained as a direct generalization of \cite[Proposition 13.13]{CS20}. The inclusion $\cC^{\rig}\subset\Nuc(\cC)$ follows from Remark \ref{rem:rigidification} \ref{C^rig_for_comp_gen}.
\end{proof}

\begin{prop}\label{prop:equiv_for_nuclear}
Let $\cC$ be as in Definition \ref{def:nuclear_objects}. For an object $M\in\cC,$ the following conditions are equivalent.
\begin{enumerate}[label=(\roman*), ref=(\roman*)]
\item $M$ is nuclear.
\item For any compact object $P\in\cC^{\omega},$ we have an equivalence of spaces
\begin{equation*}
\Map_{\cC}(1,P^{\vee}\otimes M)\xto{\sim} \Map_{\cC}(P,M).
\end{equation*}
\item For any compact object $P\in\cC^{\omega}$ we have an isomorphism
\begin{equation*}
P^{\vee}\otimes M\xto{\sim} \un{\Hom}(P,M)
\end{equation*}
in $\cC.$
\item We have an isomorphism $M\cong \indlim[i\in I]M_i,$ where $I$ is directed, each $M_i$ is compact, and for each $i\in I$ there exists $j\geq i$ such that the map $M_i\to M_j$ is trace-class. \label{nuclear_via_trace_class_approx}
\end{enumerate} 
\end{prop}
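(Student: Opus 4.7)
The plan is to establish the cycle of implications $\text{(iii)}\Rightarrow\text{(ii)}\Rightarrow\text{(i)}\Rightarrow\text{(iv)}\Rightarrow\text{(iii)}$. The first two implications are formal: applying $\Map_{\cC}(1,-)$ to the isomorphism in (iii) yields (ii), and passing to $\pi_{0}$ in (ii) shows that every map $P\to M$ lifts to a point of $\Map_{\cC}(1,P^{\vee}\otimes M)$, which by Definition \ref{def:trace_class} means precisely that the map is trace-class; hence (ii) implies (i).

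For $\text{(i)}\Rightarrow\text{(iv)}$, since $\cC$ is compactly generated, one may write $M\cong\indlim[i\in I]M_{i}$ with each $M_{i}\in\cC^{\omega}$ and $I$ a directed poset. By (i), each canonical map $g_{i}:M_{i}\to M$ is trace-class, hence is witnessed by some $\phi_{i}:1\to M_{i}^{\vee}\otimes M$. Since the unit $1_{\cC}$ is compact and $M_{i}^{\vee}\otimes M\cong\indlim[j\in I]M_{i}^{\vee}\otimes M_{j}$, the map $\phi_{i}$ factors through $M_{i}^{\vee}\otimes M_{j(i)}$ for some $j(i)\in I$, which we may enlarge so that $j(i)\geq i$; this exhibits the transition $M_{i}\to M_{j(i)}$ as trace-class, with composition into $M$ recovering $g_{i}$. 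A standard cofinal reindexing (e.g.\ replacing $I$ by the directed poset of pairs $i\leq j$ in $I$ for which $M_{i}\to M_{j}$ is trace-class) then produces a presentation as in (iv).

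For $\text{(iv)}\Rightarrow\text{(iii)}$, write $M\cong\indlim[i]M_{i}$ as in (iv), and for each $i$ fix $j(i)\geq i$ together with a trace-class witness $\phi_{i}:1\to M_{i}^{\vee}\otimes M_{j(i)}$ for the transition $f_{i}:M_{i}\to M_{j(i)}$. Using $\phi_{i}$ together with the pairings $\un{\Hom}(P,M_{i})\otimes P\to M_{i}$ and $M_{i}^{\vee}\otimes M_{i}\to 1$, one constructs a morphism $\alpha_{i}:\un{\Hom}(P,M_{i})\to P^{\vee}\otimes M_{j(i)}$ satisfying two compatibilities: (a) the post-composition of $\alpha_{i}$ with the canonical $P^{\vee}\otimes M_{j(i)}\to\un{\Hom}(P,M_{j(i)})$ equals the transition $\un{\Hom}(P,M_{i})\to\un{\Hom}(P,M_{j(i)})$ induced by $f_{i}$, and (b) the pre-composition of $\alpha_{i}$ with $P^{\vee}\otimes M_{i}\to\un{\Hom}(P,M_{i})$ equals the transition $P^{\vee}\otimes M_{i}\to P^{\vee}\otimes M_{j(i)}$. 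Since $P\in\cC^{\omega}$ and $\cC^{\omega}$ is closed under tensor, the functor $\un{\Hom}(P,-)$ commutes with filtered colimits; therefore both $P^{\vee}\otimes M$ and $\un{\Hom}(P,M)$ are computed as the colimits of the respective systems, and the $\alpha_{i}$ assemble into an inverse, in the colimit, of the canonical comparison map $P^{\vee}\otimes M\to\un{\Hom}(P,M)$, giving (iii).

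The main obstacle will be the explicit construction of the $\alpha_{i}$ and the verification of the triangles (a) and (b), which requires an honest diagram chase involving the witness $\phi_{i}$ and the various evaluation maps; a secondary subtlety is arranging the cofinal reindexing in $\text{(i)}\Rightarrow\text{(iv)}$ so as to obtain a genuinely directed system with the ``trace-class after one shift'' property, but this is routine.
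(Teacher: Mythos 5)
The overall strategy — the cycle $\text{(iii)}\Rightarrow\text{(ii)}\Rightarrow\text{(i)}\Rightarrow\text{(iv)}\Rightarrow\text{(iii)}$, the zigzag $\alpha_i$ for $\text{(iv)}\Rightarrow\text{(iii)}$, and the use of compactness of $1_\cC$ for $\text{(i)}\Rightarrow\text{(iv)}$ — is the right one, and is the same route the paper takes (by reduction to \cite[Proposition 13.14]{CS20}). The formal implications and the $\text{(iv)}\Rightarrow\text{(iii)}$ construction are fine.

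However, there is a genuine gap in your $\text{(i)}\Rightarrow\text{(iv)}$ step, and it is not the one you flag as a ``secondary subtlety.'' Factoring $\phi_i\colon 1\to M_i^\vee\otimes M\cong\indlim_j M_i^\vee\otimes M_j$ through some $M_i^\vee\otimes M_{j(i)}$ produces a map $\widetilde\phi_i\colon 1\to M_i^\vee\otimes M_{j(i)}$, whose associated morphism $h_i\colon M_i\to M_{j(i)}$ is trace-class and satisfies $g_{j(i)}\circ h_i\simeq g_i$. But $h_i$ need \emph{not} be the transition map $f_{i,j(i)}$ of the chosen ind-presentation of $M$: the two maps merely have homotopic composites into $M$. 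Your claim ``this exhibits the transition $M_i\to M_{j(i)}$ as trace-class'' is therefore unjustified. The proposed reindexing — ``the directed poset of pairs $i\le j$ for which $M_i\to M_j$ is trace-class'' — cannot repair this, because at that point one has not yet shown that any such pair exists.

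The repair is nonetheless short and should be stated: since $M_i$ is compact, the filtered colimit $\Map(M_i,M)\cong\indlim_k\Map(M_i,M_k)$ forces the homotopy $g_{j(i)}\circ h_i\simeq g_{j(i)}\circ f_{i,j(i)}$ to already hold at some finite stage, so there exists $k\ge j(i)$ with $f_{j(i),k}\circ h_i\simeq f_{j(i),k}\circ f_{i,j(i)}=f_{i,k}$. Since trace-class maps form a two-sided ideal, $f_{i,k}$ is trace-class. This directly verifies condition (iv) for the original presentation, and no auxiliary reindexing is needed.
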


\begin{proof}
This is proved in the same way as \cite[Proposition 13.14]{CS20}. 
\end{proof}

The following observation describes a class of situations when the rigidification coincides with the subcategory of nuclear objects. 

\begin{prop}\label{prop:if_trace_class_can_be_decomposed}
Let $\cC$ be as in Definition \ref{def:nuclear_objects}. Suppose that every trace-class map between compact objects of $\cC$ is a composition of two trace-class maps. Then we have an equivalence $\cC^{\rig}\simeq\Nuc(\cC).$
\end{prop}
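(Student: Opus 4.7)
My first step is to reduce the problem to showing that every basic nuclear object lies in $\cC^{\rig}$. The inclusion $\cC^{\rig}\subseteq\Nuc(\cC)$ is already part of Proposition \ref{prop:nuclear_is_omega_1_presentable}; for the converse, since $\cC^{\rig}\hookrightarrow\cC$ is fully faithful and colimit-preserving (Proposition \ref{prop:rigidification}, using that the unit of $\cC$ is compact), its essential image is closed under colimits in $\cC$. Combined with the fact that $\Nuc(\cC)$ is generated under colimits by basic nuclear objects (Proposition \ref{prop:nuclear_is_omega_1_presentable}), it suffices to show that any $M\simeq\indlim[n\in\N]M_n$, with $M_n\in\cC^{\omega}$ and trace-class transitions $M_n\to M_{n+1}$, belongs to $\cC^{\rig}.$ By Remark \ref{rem:rigidification}\ref{C^rig_for_comp_gen}, this reduces further to exhibiting a functor $\tilde F\colon\Q_{\leq}\to\cC^{\omega}$ whose every transition $\tilde F(a)\to\tilde F(b)$ (for $a<b$) is trace-class and whose colimit in $\cC$ recovers $M.$

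I would construct $\tilde F$ in three steps. First, I extend the sequence to $F\colon\Z_{\leq}\to\cC^{\omega}$ by declaring $F(n)=0$ for $n<0$; every new morphism $F(a)\to F(b)$ is then a zero morphism and hence trace-class, while cofinality of $\N$ in $\Z$ preserves the colimit, so $\indlim F\simeq M.$ Second, I exploit the hypothesis by iteratively subdividing each transition at dyadic midpoints: by induction on $k\ge 0,$ assuming a diagram $F^{(k)}\colon\Z[1/2^k]_{\leq}\to\cC^{\omega}$ with trace-class adjacent transitions has been built, I factor each adjacent transition as a composition of two trace-class maps and place the intermediate object at the new dyadic midpoint, producing $F^{(k+1)}\colon\Z[1/2^{k+1}]_{\leq}\to\cC^{\omega}.$ Since $\Z[1/2]$ has at most one morphism between any two comparable elements, the refinements assemble coherently into a single functor $G\colon\Z[1/2]_{\leq}\to\cC^{\omega}$ whose adjacent, and hence all, transitions $G(a)\to G(b)$ are finite compositions of trace-class maps and are thus themselves trace-class. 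Third, because $\Z[1/2]_{\leq}$ is a countable dense linear order without endpoints, Cantor's back-and-forth theorem supplies an order-isomorphism $\varphi\colon\Q_{\leq}\xto{\sim}\Z[1/2]_{\leq},$ and I set $\tilde F:=G\circ\varphi.$ The cofinality of $\varphi^{-1}(\Z)\subseteq\Q_{\leq}$ then gives $\indlim[\Q_\leq]\tilde F\simeq\indlim[\Z] F\simeq M,$ so $M\in\cC^{\rig}$ as required.

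The heart of the argument is the dyadic refinement in the second step: it is precisely there that the hypothesis of the proposition is used, and without it one cannot promote the given $\N$-indexed basic nuclear system to a $\Q$-indexed one with trace-class transitions. The remaining ingredients are formal: reduction to basic nuclear generators via Proposition \ref{prop:nuclear_is_omega_1_presentable}, symmetrizing the indexing set by extending with zeros to obtain a linearly ordered set without endpoints, and appealing to the categoricity of the theory of countable dense linear orders without endpoints. I expect no serious $\infty$-categorical coherence obstacle, since functors out of posets are essentially controlled by their $1$-skeleton, so the level-by-level inductive construction in Step 2 automatically assembles into a genuine functor $G\colon\Z[1/2]_{\leq}\to\cC^{\omega}.$
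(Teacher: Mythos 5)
Your proof is correct and fills in, with considerable care, exactly the argument the paper sketches in three sentences: the paper asserts that the hypothesis lets one extend $F\colon\N\to\cC^{\omega}$ (with trace-class transitions) to $\wt F\colon\Q\to\cC^{\omega}$ (with trace-class transitions), and concludes; your dyadic subdivision, the zero-extension to $\Z$ to remove the bottom endpoint, and the back-and-forth identification $\Q\cong\Z[1/2]$ are the natural way to make that assertion precise. The reduction to basic nuclear generators and the use of Remark~\ref{rem:rigidification}~\ref{C^rig_for_comp_gen} are the same as in the paper's (implicit) argument. One small point worth surfacing, which both you and the paper elide: the hypothesis says a trace-class map between compacts factors as a composite of two trace-class maps, but does not say the intermediate object is compact, yet the dyadic refinement needs the midpoints to land in $\cC^{\omega}.$ This can always be arranged: since $\cC$ is compactly generated with compact unit, a trace-class map $h\colon x\to z$ out of a compact $x$ has a witness $1\to x^{\vee}\otimes z$ which, as $1$ is compact and $x^{\vee}\otimes z\simeq\indlim_i(x^{\vee}\otimes z_i)$ over compacts $z_i$, factors through some $x^{\vee}\otimes z_i,$ giving a trace-class $h'\colon x\to z_i$ with $h=u\circ h';$ and then $g\circ u\colon z_i\to y$ is trace-class because trace-class maps form a two-sided ideal. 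It would strengthen your write-up to note this, and also to replace the justification "since $\Z[1/2]$ has at most one morphism between any two comparable elements" (true of every poset) by the actual reason the $F^{(k)}$ assemble, namely that $\Z[1/2]=\bigcup_k\Z[1/2^k]$ is a filtered union of full subposets, so $\Fun(\Z[1/2]_{\leq},\cC)\simeq\lim_k\Fun(\Z[1/2^k]_{\leq},\cC).$
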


\begin{proof}
Indeed, take any functor $F:\N\to\cC^{\omega}$ such that each map $F(n)\to F(n+1)$ is trace-class in $\cC.$ Our assumption implies that $F$ can be extended to a functor $\wt{F}:\Q\to\cC^{\omega}$ such that for $a<b$ the map $\wt{F}(a)\to \wt{F}(b)$ is trace-class in $\cC.$ This shows that $\Nuc(\cC)=\cC^{\rig}$ as strictly full subcategories of $\cC.$ 
\end{proof}

The condition from Proposition \ref{prop:if_trace_class_can_be_decomposed} is difficult to verify directly. The following stronger version is conceptually better, and it is possible to verify it in some interesting situations.  

\begin{prop}\cite{And23, AM24} \label{prop:basic_conditions_good_rigidification}
Let $\cD$ be a small symmetric monoidal stable category. For an object $P\in\cD$ we denote by $P^{\vee}\in\Ind(\cD)$ the predual object. Under the identification $\Ind(\cD)\simeq \Fun^L(\cD^{op},\Sp),$ consider the functor
\begin{equation*}
\Ind(\cD)\to \Ind(\cD),\quad X\mapsto X^{\tr}=\Hom(1,(-)^{\vee}\otimes X).
\end{equation*}
The following are equivalent for an oobject $P\in\cD.$
\begin{enumerate}[label=(\roman*), ref=(\roman*)]
\item The object $P^{\tr}$ is nuclear in $\Ind(\cD).$ \label{P^tr_nuclear}
\item For any $Q\in\cD$ we have an isomorphism
\begin{equation}\label{eq:cond_via_tensor_product}
\Hom(1,(-)^{\vee}\otimes P)\tens{\cD}\Hom(1,Q^{\vee}\otimes -)\xto{\sim}\Hom(1,Q^{\vee}\otimes P). 
\end{equation} \label{cond_via_tensor_product}
If the conditions \ref{P^tr_nuclear}, \ref{cond_via_tensor_product} are satisfied for all $P\in\cD,$ then the inclusion functor $\Nuc(\Ind(\cD))\to\Ind(\cD)$ has a continuous right adjoint, which is given by $X\mapsto X^{\tr}.$ Moreover, we have an equivalence $\cC^{\rig}\simeq\Nuc(\cC).$
\end{enumerate}  
\end{prop}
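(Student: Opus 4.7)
The plan is to prove the equivalence \Iff{P^tr_nuclear}{cond_via_tensor_product} by direct computation, then under the hypothesis that these conditions hold for every $P\in\cD$, build a continuous right adjoint $(-)^{\tr}\colon\Ind(\cD)\to\Nuc(\Ind(\cD))$ to the inclusion, and finally deduce $\cC^{\rig}\simeq\Nuc(\cC)$ from Proposition \ref{prop:if_trace_class_can_be_decomposed}.

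For \Iff{P^tr_nuclear}{cond_via_tensor_product}, I would invoke the nuclearity criterion of Proposition \ref{prop:equiv_for_nuclear}(ii): $P^{\tr}$ is nuclear iff for every compact $\cY(Q)\in\Ind(\cD)$ the natural map
\[
\Map_{\Ind(\cD)}(1,\cY(Q)^{\vee}\otimes P^{\tr})\xto{\sim}\Map_{\Ind(\cD)}(\cY(Q),P^{\tr})
\]
is an equivalence. By Yoneda the target is $P^{\tr}(Q)=\Hom(1,Q^{\vee}\otimes P)$. For the source, the definition of trace-class identifies $\Map_{\Ind(\cD)}(1,\cY(Q)^{\vee}\otimes\cY(R))$ with the trace-class spectrum $\Hom(1,Q^{\vee}\otimes R)=T_Q(R)$ for each $R\in\cD$. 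Writing $P^{\tr}\simeq\indlim_{(R,\alpha)}\cY(R)$ over the filtered category of pairs $(R,\alpha\colon R\to P\text{ trace-class})$, continuity of tensor product and compactness of $1$ yield $\Map_{\Ind(\cD)}(1,\cY(Q)^{\vee}\otimes P^{\tr})\simeq\indlim_{(R,\alpha)}T_Q(R)\simeq P^{\tr}\tens{\cD}T_Q$. The nuclearity of $P^{\tr}$ is therefore literally the isomorphism in \ref{cond_via_tensor_product}.

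Assuming both equivalent conditions hold for every $P$, the functor $X\mapsto X^{\tr}$ on $\Ind(\cD)$ is continuous (tensor product is continuous, and $\Hom(1,-)$ is exact and commutes with filtered colimits since $1$ is compact) and lands in $\Nuc(\Ind(\cD))$: on compact $X$ this is hypothesis \ref{P^tr_nuclear}, and $\Nuc(\Ind(\cD))$ is closed under colimits by Proposition \ref{prop:nuclear_is_omega_1_presentable}. The counit $\epsilon_X\colon X^{\tr}\to X$ is given level-wise by the evaluation map $\Hom(1,Q^{\vee}\otimes X)\to\Map(\cY(Q),X)$ obtained by tensoring with $\cY(Q)$ and using the counit $\cY(Q)^{\vee}\otimes\cY(Q)\to 1$. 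To check that $\epsilon$ presents $(-)^{\tr}$ as right adjoint to $\iota\colon\Nuc(\Ind(\cD))\hookrightarrow\Ind(\cD)$, I would take a basic nuclear $N=\indlim_n\cY(M_n)$ with witnesses $w_n\colon 1\to M_n^{\vee}\otimes M_{n+1}$ and compare the two towers $\Map(N,X)\simeq\lim_n X(M_n)$ and $\Map(N,X^{\tr})\simeq\lim_n\Hom(1,M_n^{\vee}\otimes X)$ via the interleaving maps $\phi_n\colon X(M_{n+1})\to\Hom(1,M_n^{\vee}\otimes X)$ obtained by tensoring $w_n$ with $X$; the compositions $\psi_n\phi_n$ and $\phi_{n-1}\psi_n$ recover the transitions of the respective towers (by the trace-class factorization formula), so the two limits agree. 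Passing to $\omega_1$-filtered colimits extends the natural equivalence to all of $\Nuc(\Ind(\cD))$. The equivalence $\cC^{\rig}\simeq\Nuc(\cC)$ is then immediate: taking $\pi_0$ of \ref{cond_via_tensor_product} for compact $P,Q\in\cD$ asserts that every trace-class map $Q\to P$ factors as a composition of two trace-class maps, which is the hypothesis of Proposition \ref{prop:if_trace_class_can_be_decomposed}.

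The main obstacle will be carefully setting up the interleaving argument at the spectrum level: one must check homotopy-coherently that $\psi_n\phi_n$ recovers the transition $X(M_{n+1})\to X(M_n)$ (this is the trace-class factorization $M_n\to M_n\otimes M_n^{\vee}\otimes M_{n+1}\to M_{n+1}$ tensored with $X$) and that $\phi_{n-1}\psi_n$ recovers the transition $\Hom(1,M_n^{\vee}\otimes X)\to\Hom(1,M_{n-1}^{\vee}\otimes X)$ (this uses the description of the dual map $M_n^{\vee}\to M_{n-1}^{\vee}$ in terms of the witness $w_{n-1}$). Once these coherences are established, everything else is a formal consequence of Day convolution, the Yoneda lemma, and closure properties of $\Nuc$.
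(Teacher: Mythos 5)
Your proof is correct and follows essentially the same path as the paper: the equivalence \Iff{P^tr_nuclear}{cond_via_tensor_product} is established by the same co-Yoneda identification of the source of \eqref{eq:cond_via_tensor_product} with $\Hom_{\Ind(\cD)}(1,\cY(Q)^{\vee}\otimes P^{\tr})$ and the target with $\Hom(\cY(Q),P^{\tr})$, and the final step deduces $\cC^{\rig}\simeq\Nuc(\cC)$ from Proposition \ref{prop:if_trace_class_can_be_decomposed} by reading off the $\pi_0$-level factorization from \eqref{eq:cond_via_tensor_product}, exactly as the paper does (implicitly). The one place where you go into more detail than the paper is the verification that $X\mapsto X^{\tr}$ is the right adjoint to the inclusion $\Nuc(\Ind(\cD))\hto\Ind(\cD)$: the paper dismisses this as ``straightforward'' with a citation to the proof of Lemma 3.32 in \cite{AM24}, whereas you spell out the interleaving of the two towers $\bigl(X(M_n)\bigr)_n$ and $\bigl(\Hom(1,M_n^{\vee}\otimes X)\bigr)_n$ for a basic nuclear $N=\indlim_n\cY(M_n)$. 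That argument is the standard one and is correct; you rightly flag the homotopy-coherence bookkeeping for the interleaving as the only delicate point, and the reduction from arbitrary nuclear $N$ to basic nuclear $N$ via $\omega_1$-filtered colimits (Proposition \ref{prop:nuclear_is_omega_1_presentable}) is also correct.
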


\begin{proof}
The equivalence \Iff{P^tr_nuclear}{cond_via_tensor_product} is tautological: the target of \eqref{eq:cond_via_tensor_product} is isomorphic to $\Hom(Q,P^{\tr}),$ and the source of \eqref{eq:cond_via_tensor_product} is isomorphic to
\begin{equation*}
\indlim[\substack{Q'\in\cD,\\ Q'\to P^{\tr}}]\Hom(1,Q^{\vee}\otimes Q')\cong\Hom(1,Q^{\vee}\otimes P^{\tr}).
\end{equation*}
Thus, \eqref{eq:cond_via_tensor_product} is an isomorphism for all $Q\in\cD$ if and only if $P^{\tr}$ is nuclear.

Clearly, the functor $X\mapsto X^{\tr}$ commutes with colimits. If the conditions \ref{P^tr_nuclear}, \ref{cond_via_tensor_product} are satisfied, then for any $X\in\Ind(\cD)$ the object $X^{\tr}$ is nuclear. The assertion about the right adjoint functor is straightforward, see \cite[Proof of Lemma 3.32]{AM24}. 

Finally, the equivalence $\cC^{\rig}\simeq\Nuc(\cC)$ follows from Proposition \ref{prop:if_trace_class_can_be_decomposed}.
\end{proof}

The following observation relates the trace-class morphisms in $\Ind(\cD)$ and $\Ind(\cD^{op}).$

\begin{prop}\label{prop:trace_class_after_taking_opposite}
	Let $\cD$ be a small stable symmetric monoidal category. For an object $P\in\cD,$ we denote by $P^{op}$ the corresponding object of $\cD^{op}.$ We denote by $P^{\vee}$ resp. $P^{op,\vee}$ the predual object in $\Ind(\cD)$ resp. in $\Ind(\cD^{op}).$
	
	Then for $P,Q\in\cD$ we have a natural isomorphism $\Hom_{\Ind(\cD)}(1,P^{\vee}\otimes Q)\simeq\Hom_{\Ind(\cD^{op})}(1,Q^{op,\vee}\otimes P^{op}),$ such that the following square commutes:
	\begin{equation}\label{eq:comm_square_for_trace_class}
		\begin{CD}
			\Hom_{\Ind(\cD)}(1,P^{\vee}\otimes Q) @>{\sim}>> \Hom_{\Ind(\cD^{op})}(1,Q^{op,\vee}\otimes P^{op})\\
			@VVV @VVV\\
			\Hom_{\cD}(P,Q) @>{\sim}>> \Hom_{\cD^{op}}(Q^{op},P^{op})
		\end{CD}
	\end{equation}
\end{prop}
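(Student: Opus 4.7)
The plan is to compute both mapping spectra explicitly as coends over $\cD$, using the ``co-Yoneda'' presentation of preduals, and then observe that the two coends coincide by the symmetry of $\otimes$ on $\Sp$.

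First, I would unwind the predual. Under the identification $\Ind(\cD)\simeq\Fun(\cD^{op},\Sp)$ the object $P^{\vee}$ corresponds to the presheaf $Y\mapsto\Hom_{\cD}(Y\otimes P,1)$, so writing it as a colimit of representables gives
\begin{equation*}
P^{\vee}\simeq\colim_{(Y,\,\alpha:\,Y\otimes P\to 1)}\cY(Y),
\end{equation*}
indexed by the (filtered) category of elements of $P^{\vee}$. Tensoring with $\cY(Q)$ continuously and applying $\Hom_{\Ind(\cD)}(1,-)$ (which commutes with this filtered colimit, since the unit of $\Ind(\cD)$ is compact) yields
\begin{equation*}
\Hom_{\Ind(\cD)}(1,P^{\vee}\otimes Q)\simeq\colim_{(Y,\alpha)}\Hom_{\cD}(1,Y\otimes Q).
\end{equation*}
Because the summand depends only on $Y$ and not on $\alpha$, this colimit is canonically the coend
\begin{equation*}
\int^{Y\in\cD}\Hom_{\cD}(Y\otimes P,1)\otimes\Hom_{\cD}(1,Y\otimes Q).
\end{equation*}

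Next, I would run the same calculation inside $\Ind(\cD^{op})$. The predual $Q^{op,\vee}$ is the presheaf $Y^{op}\mapsto\Hom_{\cD^{op}}(Y^{op}\otimes Q^{op},1^{op})=\Hom_{\cD}(1,Y\otimes Q)$, so its category of elements is the category of pairs $(Y,\beta:1\to Y\otimes Q)$ in $\cD$. The analogous manipulation produces
\begin{equation*}
\Hom_{\Ind(\cD^{op})}(1,Q^{op,\vee}\otimes P^{op})\simeq\int^{Y\in\cD}\Hom_{\cD}(1,Y\otimes Q)\otimes\Hom_{\cD}(Y\otimes P,1).
\end{equation*}
The two coends coincide by the symmetry of the tensor product on $\Sp$, supplying the natural isomorphism in the top row of \eqref{eq:comm_square_for_trace_class}.

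For the commutativity of the square, I would trace the vertical maps through the coend description. A class $[Y,\alpha,\beta]$ on either side is sent to the composition
\begin{equation*}
P\simeq 1\otimes P\xto{\beta\otimes\id_{P}}Y\otimes Q\otimes P\simeq Y\otimes P\otimes Q\xto{\alpha\otimes\id_{Q}}1\otimes Q\simeq Q
\end{equation*}
(and on the $\cD^{op}$ side the image of this composition under $(-)^{op}$), which is the standard formula realizing a trace-class witness as an honest morphism. Hence the square commutes. The whole argument is essentially bookkeeping once the predual is rewritten via its category of elements; the only mildly delicate point is justifying the filteredness and commutation of $\Hom_{\Ind(\cD)}(1,-)$ with the defining colimit of $P^{\vee}$, but this is automatic from $\Ind(\cD)$ being compactly generated with compact unit. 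I do not anticipate any serious obstacle.
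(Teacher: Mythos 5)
Your proof is correct and takes essentially the same route as the paper's: the paper rewrites $\Hom_{\Ind(\cD)}(1,P^{\vee}\otimes Q)$ as a filtered colimit over the category of elements of $P^{\vee}$ and then reindexes by the category of elements of the other leg (the same coend Fubini you invoke), before observing that the $\cD^{op}$ computation produces the matching expression. Your coend formulation just makes the middle reindexing step a single named object; the only minor imprecision is the remark "because the summand depends only on $Y$ and not on $\alpha$" — what actually justifies that identification is that the colimit over $\cD_{/P^{\vee}}$ of $G$ computes the left Kan extension of $G$ evaluated at $P^{\vee}$, i.e. the functor tensor product $P^{\vee}\tens{\cD}G$, but this does not affect the validity of the argument.
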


\begin{proof}
	We have the following isomorphisms, where $X$ runs through objects of $\cD$
	\begin{multline*}
		\Hom_{\Ind(\cD)}(1,P^{\vee}\otimes Q)\simeq \indlim[X\to P^{\vee}]\Hom_{\cD}(1,X\otimes Q)\simeq \indlim[P\otimes X\to 1]\Hom_{\cD}(1,X\otimes Q)\\
		\simeq \indlim[1\to X\otimes Q]\Hom_{\cD}(P\otimes X,1),
	\end{multline*}
	and a similar chain of equivalences identifies the latter spectrum with $\Hom_{\Ind(\cD^{op})}(1,Q^{op,\vee}\otimes P^{op}).$ It is clear that the square \eqref{eq:comm_square_for_trace_class} commutes.
\end{proof}

\begin{prop}\label{prop:from_Ind_D_to_Ind_D^op}
We keep the notation from Proposition \ref{prop:trace_class_after_taking_opposite}. The lax symmetric monoidal functor 
\begin{equation}\label{eq:explicit_functor_Ind_D_Ind_D^op}
\Ind(\cD)\to \Ind(\cD^{op}),\quad \inddlim[i] P_i\mapsto \indlim[i] P_i^{op,\vee}	
\end{equation}
induces an equivalence
\begin{equation*}
\Ind(\cD)^{\rig}\xto{\sim} \Ind(\cD^{op})^{\rig}.
\end{equation*}	
\end{prop}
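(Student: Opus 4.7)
The plan is to show that the functor in \eqref{eq:explicit_functor_Ind_D_Ind_D^op} (call it $\Psi$) restricts to a functor $\Ind(\cD)^{\rig} \to \Ind(\cD^{op})^{\rig}$, and that the analogously defined functor $\Psi' \colon \Ind(\cD^{op}) \to \Ind(\cD)$, $\inddlim_j Q_j^{op} \mapsto \indlim_j Q_j^{\vee}$, yields a two-sided inverse on rigidifications.

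For the restriction claim, by Proposition \ref{prop:rigidification} (and the observation that trace-class maps form a two-sided tensor ideal, so every $\Q_\leq$-indexed trace-class system admits a cofinal $\N$-indexed subdiagram with trace-class transitions), it suffices to check the image of generators $M = \indlim_{n \in \N} P_n$ of $\Ind(\cD)^{\rig}$, with $P_n \in \cD$ and trace-class $f_n \colon P_n \to P_{n+1}$ in $\cD$. Choose witnesses $Y_n \in \cD$ with $\alpha_n \colon 1 \to Y_n \otimes P_{n+1}$ and $\beta_n \colon Y_n \otimes P_n \to 1$ in $\cD$ recovering $f_n$. By Proposition \ref{prop:trace_class_after_taking_opposite}, the opposite morphisms $\beta_n^{op} \colon 1 \to Y_n^{op} \otimes P_n^{op}$ and $\alpha_n^{op} \colon Y_n^{op} \otimes P_{n+1}^{op} \to 1$ in $\cD^{op}$ give rise, via the universal properties of the preduals $P_k^{op,\vee}$, to morphisms $g_n \colon P_n^{op,\vee} \to Y_n^{op}$ and $h_n \colon Y_n^{op} \to P_{n+1}^{op,\vee}$ in $\Ind(\cD^{op})$. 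Unwinding the trace-class factorization of $f_n$ yields $h_n \circ g_n = \Psi(f_n)$. Assembling these into the interlaced sequential diagram
\[
P_0^{op,\vee} \xto{g_0} Y_0^{op} \xto{h_0} P_1^{op,\vee} \xto{g_1} Y_1^{op} \xto{h_1} \cdots,
\]
both $(P_n^{op,\vee})_n$ and $(Y_n^{op})_n$ are cofinal subsequences, so $\Psi(M) \simeq \indlim_n P_n^{op,\vee} \simeq \indlim_n Y_n^{op}$. The composite transitions $g_{n+1} \circ h_n \colon Y_n^{op} \to Y_{n+1}^{op}$ land in $\cD^{op}$ and are, by direct inspection, the trace-class maps in $\cD^{op}$ witnessed by the middle object $P_{n+1}^{op}$ together with $\alpha_n^{op}$ and $\beta_{n+1}^{op}$. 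Hence $\Psi(M) \in \Ind(\cD^{op})^{\rig}$.

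The symmetric argument with $\cD$ replaced by $\cD^{op}$ shows that $\Psi'$ restricts to $\Ind(\cD^{op})^{\rig} \to \Ind(\cD)^{\rig}$. Tracking the zigzag identifications through both functors shows that $\Psi' \circ \Psi \simeq \id$ on each generator $M$ (the resulting double zigzag recovers the original $P_n$'s cofinally), and symmetrically $\Psi \circ \Psi' \simeq \id$. The principal technical obstacle is the diagram-chase required for the identification $h_n \circ g_n = \Psi(f_n)$ and for recognizing $g_{n+1} \circ h_n$ as the advertised trace-class map in $\cD^{op}$ with middle object $P_{n+1}^{op}$; both amount to routine but careful manipulation of the adjunctions defining the preduals together with Proposition \ref{prop:trace_class_after_taking_opposite}.
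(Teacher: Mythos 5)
Your approach is genuinely different from the paper's. The paper first gets the equivalence $\Ind(\cD)^{\rig}\simeq\Ind(\cD^{op})^{\rig}$ for free from abstract duality (the covariant symmetric monoidal involution $(-)^{\vee}$ on $\Cat_{\st}^{\dual}$ identifies $\Ind(\cD)^{\vee}\simeq\Ind(\cD^{op})$, and a rigid symmetric monoidal category is canonically monoidally self-dual), and only afterwards identifies the resulting abstract functor with the explicit formula \eqref{eq:explicit_functor_Ind_D_Ind_D^op} by a Yoneda computation of the adjoint of the dualized inclusion $\iota:\Ind(\cD)^{\rig}\hto\Ind(\cD)$. You instead try to build the inverse $\Psi'$ by hand and check $\Psi'\circ\Psi\simeq\id$ and $\Psi\circ\Psi'\simeq\id$ directly.

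There are two real gaps. First, your interleaving exhibits $\Psi(M)$ as an $\N$-indexed colimit $\indlim_n Y_n^{op}$ of compacts with trace-class transitions, which only shows $\Psi(M)\in\Nuc(\Ind(\cD^{op}))$; the rigidification $\Ind(\cD^{op})^{\rig}$ is in general a strictly smaller subcategory (Proposition \ref{prop:rigidification} requires $\Q_{\leq}$-indexed systems, and the paper deliberately studies in Propositions \ref{prop:if_trace_class_can_be_decomposed}--\ref{prop:right_adjoint_description_for_rigidification} when the two coincide — they do not in general). The paper's argument avoids this: starting from a $\Q_{\leq}$-indexed generator $\indlim_{\Q_\leq}F$ of $\Ind(\cD)^{\rig}$, the maps $F(a)^{op,\vee}\to F(b)^{op,\vee}$ are themselves trace-class in $\Ind(\cD^{op})$ by Proposition \ref{prop:trace_class_after_taking_opposite}, so $\Psi(M)=\indlim_{\Q_\leq}F(a)^{op,\vee}$ is a $\Q_{\leq}$-indexed trace-class colimit of $\kappa$-compact objects for suitable $\kappa$, which lies in $\Ind(\cD^{op})^{\rig}$ by Proposition \ref{prop:rigidification} and its $\kappa$-independence. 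Keeping the $\Q_{\leq}$-index is essential; restricting to $\N$ and then interleaving loses this.

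Second, your $\Psi'\circ\Psi\simeq\id$ is only checked object-by-object on generators, via a zigzag that depends on the non-canonical choice of trace-class witnesses $Y_n,\alpha_n,\beta_n$. An object-wise isomorphism on generators does not yield a natural isomorphism of functors, which is what is needed to conclude that $\Psi|_{\rig}$ is fully faithful and therefore an equivalence. To repair this within your framework you would have to produce a \emph{canonical} comparison map (for instance a double-predual map $\Psi'\circ\Psi\to\id$, or exhibit $\Psi',\Psi$ as a monoidal adjoint pair on rigidifications and use the counit) and then use your zigzag only to verify that the canonical map is an isomorphism on generators. As written, this is the step that would fail to assemble into a proof, and it is precisely what the paper's abstract-duality scaffolding is there to avoid.
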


\begin{proof}
Recall that $(-)^{\vee}:\Cat_{\st}^{\dual}\to \Cat_{\st}^{\dual}$ is a (covariant) symmetric monoidal involution. For a rigid symmetric monoidal category $\cE$ the natural equivalence $\cE\simeq\cE^{\vee}$ is in fact an equivalence of symmetric monoidal categories. This formally implies an equivalence $\Ind(\cD)^{\rig}\simeq \Ind(\cD^{op})^{\rig}.$

Next, we claim that the functor \eqref{eq:explicit_functor_Ind_D_Ind_D^op} takes $\Ind(\cD)^{\rig}$ to $\Ind(\cD^{op})^{\rig}.$ Indeed, if $P\to Q$ is a map in $\cD$ which is trace-class in $\Ind(\cD),$ then the map $P^{op,\vee}\to Q^{op,\vee}$ is trace-class in $\Ind(\cD^{op})$ by Proposition \ref{prop:trace_class_after_taking_opposite}. The assertion now follows from the description of rigidification from Proposition \ref{prop:rigidification}.  
	
It remains to show that the equivalence $\Ind(\cD)^{\rig}\simeq \Ind(\cD^{op})^{\rig}$ is induced by the functor \eqref{eq:explicit_functor_Ind_D_Ind_D^op}. We observe that the composition $F:\Ind(\cD)^{\rig}\simeq (\Ind(\cD)^{\rig})^{\vee}\to \Ind(\cD^{op})$ is the left adjoint to the dual of the inclusion $\iota:\Ind(\cD)^{\rig}\to \Ind(\cD).$ Denote by $\iota^R$ the right adjoint to $\iota.$ 
	
Now, for $\inddlim[i]M_i\in\Ind(\cD)^{\rig}$ and for $Q\in\cD$ we have
\begin{multline*}
\indlim[i]\Hom(Q,M_i)\cong \indlim[i]\Hom(1_{\Ind(\cD)},Q^{\vee}\otimes M_i)\cong \Hom(1_{\Ind(\cD)},Q^{\vee}\otimes (\inddlim[i]M_i))\\
\cong \Hom(1_{\Ind(\cD)^{\rig}},\iota^R(Q^{\vee})\otimes (\inddlim[i]M_i)).
\end{multline*}
This shows that the right adjoint $F^R:\Ind(\cD^{op})\to \Ind(\cD)^{\rig}$ is given on compact objects by $F^R(Q^{op})\cong \iota^R(Q^{\vee}).$ Finally, for $\inddlim[i]P_i\in\Ind(\cD)^{\rig}$ and for $Q\in\cD$ we observe the following isomorphisms
\begin{multline*}
\Hom(\indlim[i]P_i^{op,\vee},Q^{op})\cong \prolim[i]\Hom(P_i^{op,\vee},Q^{op})\cong\prolim[i]\Hom(1_{\cD^{op}},P_i^{op}\otimes Q^{op})\\
\cong \prolim[i]\Hom(P_i\otimes Q,1_{\cD})\cong \prolim[i]\Hom(P_i,Q^{\vee})\cong \Hom(\inddlim[i]P_i,Q^{\vee})\cong \Hom(\inddlim[i]P_i,\iota^R(Q^{\vee}))\\
\cong \Hom(\inddlim[i]P_i,F^R(Q^{op})).
\end{multline*}
This gives an isomorphism
\begin{equation*}
F(\inddlim[i]P_i)\cong\indlim[i] P_i^{op,\vee},
\end{equation*}
as required.
\end{proof}

We now recall the sufficient conditions which guarantee the assumption of Proposition \ref{prop:if_trace_class_can_be_decomposed}. The following two propositions can be considered as a more elaborate version of \cite[Lemma 3.32]{AM24}.

\begin{prop}\label{prop:conditions_for_good_rigidification}
We keep the notation of Proposition \ref{prop:trace_class_after_taking_opposite}. The following conditions are equivalent. 
\begin{enumerate}[label=(\roman*), ref=(\roman*)]
\item For $P,Q\in\cD$ we have an isomorphism
\begin{equation}\label{eq:tensor_product_of_presheaves}
\Hom(1,P\otimes(-)^{\vee})\tens{\cD}\Hom(1,-\otimes Q)\cong \Hom(1,P\otimes Q)
\end{equation}
\label{cond_via_tensor_product_of_presheaves}
\item For every $P\in\cD,$ the object $P^{op,\vee}$ is nuclear in $\Ind(\cD^{op}).$ \label{cond_via_P^v_is_nuclear}
\item For $P,Q\in\cD,$ we have
\begin{equation*}
P^{op,\vee}\otimes Q^{op,\vee}\cong (P\otimes Q)^{op,\vee}
\end{equation*}
\label{cond_via_tensor_product_of_duals}
\end{enumerate}
If these conditions hold, then we have equivalences
\begin{equation}\label{eq:nuclear_and_rigidification}
\Nuc(\Ind(\cD))\simeq\Ind(\cD)^{\rig}\simeq \Ind(\cD^{op})^{\rig}\simeq\Nuc(\Ind(\cD^{op}))
\end{equation}
\end{prop}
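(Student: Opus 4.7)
The plan is to establish $(i)\Leftrightarrow(ii)\Leftrightarrow(iii)$ and then derive the chain \eqref{eq:nuclear_and_rigidification}.

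The equivalence $(ii)\Leftrightarrow(iii)$ is immediate from Proposition \ref{prop:equiv_for_nuclear}: nuclearity of $P^{op,\vee}$ in $\Ind(\cD^{op})$ is equivalent to the canonical map $R^{op,\vee}\otimes P^{op,\vee}\to \un{\Hom}(R^{op},P^{op,\vee})$ being an isomorphism for every $R^{op}\in\cD^{op}$. By the predual adjunction the target unfolds to $\un{\Hom}(R^{op}\otimes P^{op},1)=(R\otimes P)^{op,\vee}$, so (ii) for all $P$ translates exactly to (iii) after reindexing.

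For $(iii)\Rightarrow(i)$, I apply $\Hom_{\Ind(\cD^{op})}(1,-)$ to the isomorphism in (iii). The right-hand side yields $\Hom(1,(P\otimes Q)^{op,\vee})=\Hom(1,P\otimes Q)$. Day convolution in $\Ind(\cD^{op})$ followed by co-Yoneda reductions expresses the left-hand side as the double coend
\begin{equation*}
\int^{Y_1,Y_2\in\cD}\Hom(Y_1\otimes Y_2,1)\otimes\Hom(1,Y_1\otimes P)\otimes\Hom(1,Y_2\otimes Q).
\end{equation*}
On the other hand, unfolding $\Hom(1,P\otimes X^{\vee})$ as the coend of trace-class maps $X\to P$ in $\Ind(\cD)$ and invoking Proposition \ref{prop:trace_class_after_taking_opposite} rewrites the LHS of (i) as the same double coend.

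For $(i)\Rightarrow(ii)$, both sides of (i) are continuous in $Q\in\Ind(\cD)$ (since $\Hom(1,X\otimes-)$ and $\Hom(1,P\otimes-)$ preserve filtered colimits for compact $X,P$), so (i) extends to arbitrary $Q\in\Ind(\cD)$. Specializing to $Q=Q_0^{\vee}$ for $Q_0\in\cD$ and using $\Hom(1,X\otimes Q_0^{\vee})=X^{\tr}(Q_0)$, the extended identity becomes $\int^X P^{\tr}(X)\otimes X^{\tr}(Q_0)=P^{\tr}(Q_0)$, which is condition \ref{cond_via_tensor_product} of Proposition \ref{prop:basic_conditions_good_rigidification}. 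That proposition yields nuclearity of $P^{\tr}$ in $\Ind(\cD)$ for every $P\in\cD$ and the equivalence $\Nuc(\Ind(\cD))\simeq\Ind(\cD)^{\rig}$, so $P^{\tr}\in\Ind(\cD)^{\rig}$. Under the equivalence of Proposition \ref{prop:from_Ind_D_to_Ind_D^op}, the ind-object $P^{\tr}=\inddlim_{(Y,\,Y\to P\text{ trace-class})}Y$ is sent to $\indlim_Y Y^{op,\vee}\in\Ind(\cD^{op})^{\rig}$. Applying (i) with $Q=Z$ for each $Z\in\cD$ identifies this colimit evaluated at $Z$ with $\Hom(1,P\otimes Z)=P^{op,\vee}(Z)$; hence $P^{op,\vee}\in\Ind(\cD^{op})^{\rig}\subseteq\Nuc(\Ind(\cD^{op}))$ by Proposition \ref{prop:nuclear_is_omega_1_presentable}, giving (ii).

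Finally, the chain \eqref{eq:nuclear_and_rigidification} assembles $\Nuc(\Ind(\cD))\simeq\Ind(\cD)^{\rig}$ (above), the unconditional equivalence $\Ind(\cD)^{\rig}\simeq\Ind(\cD^{op})^{\rig}$ of Proposition \ref{prop:from_Ind_D_to_Ind_D^op}, and $\Ind(\cD^{op})^{\rig}\simeq\Nuc(\Ind(\cD^{op}))$; the last follows from Proposition \ref{prop:if_trace_class_can_be_decomposed} applied to $\Ind(\cD^{op})$, since trace-class maps between compact objects in $\Ind(\cD^{op})$ correspond via Proposition \ref{prop:trace_class_after_taking_opposite} to trace-class maps between compact objects in $\Ind(\cD)$ (in the reversed direction), whose composition-decomposition property is ensured by $\Nuc(\Ind(\cD))\simeq\Ind(\cD)^{\rig}$. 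The main delicate step is the identification $\indlim_Y Y^{op,\vee}\cong P^{op,\vee}$ used above, which depends essentially on (i) with $Q=Z$ combined with the concrete description $P^{op,\vee}(Z)=\Hom(1,P\otimes Z)$ of the predual.
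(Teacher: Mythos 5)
Your argument is substantively correct but departs from the paper's proof in two places, one of which needs fixing.

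First, an inefficiency. The paper proves $(i)\Leftrightarrow(ii)$ in one pass: a single chain of colimit manipulations identifies the left-hand side of \eqref{eq:tensor_product_of_presheaves} with $\Hom_{\Ind(\cD^{op})}(1,P^{op,\vee}\otimes Q^{op,\vee})$, after which \eqref{eq:tensor_product_of_presheaves} becomes tautologically the nuclearity condition for $P^{op,\vee}$. You carry out essentially this computation inside your $(iii)\Rightarrow(i)$ step (the double coend is the same object), but then prove $(i)\Rightarrow(ii)$ by a separate and much heavier route through Proposition~\ref{prop:basic_conditions_good_rigidification}, the equivalence of Proposition~\ref{prop:from_Ind_D_to_Ind_D^op}, and a pointwise identification $\indlim_Y Y^{op,\vee}\cong P^{op,\vee}$. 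That detour is valid (I checked the identification: evaluating at $Z$ reduces to \eqref{eq:tensor_product_of_presheaves} with $Q=Z$), but it is unnecessary: your own coend computation already identifies the source and target of the nuclearity map for $P^{op,\vee}$ with the two sides of \eqref{eq:tensor_product_of_presheaves}, so $(i)\Leftrightarrow(ii)$ follows immediately.

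The genuine gap is in the last step. To get $\Ind(\cD^{op})^{\rig}\simeq\Nuc(\Ind(\cD^{op}))$ you invoke Proposition~\ref{prop:if_trace_class_can_be_decomposed}, and you justify the needed decomposition property in $\Ind(\cD)$ by writing that it ``is ensured by $\Nuc(\Ind(\cD))\simeq\Ind(\cD)^{\rig}$.'' That direction is not what Proposition~\ref{prop:if_trace_class_can_be_decomposed} gives you: the proposition derives the equivalence \emph{from} the decomposition hypothesis, not the converse, so as written this is a non-sequitur. The decomposition does in fact hold in the present situation --- it follows from the nuclearity of $P^{\tr}$ established earlier, by factoring the trace-class witness $1\to\cY(X)^{\vee}\otimes P^{\tr}$ through some compact $\cY(X)^{\vee}\otimes Y_0$ with $Y_0\to P^{\tr}$ --- but you would need to say this, or instead follow the paper's route: observe (via Proposition~\ref{prop:trace_class_after_taking_opposite}) that the conditions of Proposition~\ref{prop:basic_conditions_good_rigidification} hold for $\cD^{op}$ as well and apply that proposition to $\cD^{op}$ directly.
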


\begin{proof}
The equivalence \Iff{cond_via_P^v_is_nuclear}{cond_via_tensor_product_of_duals} follows from Proposition \ref{prop:equiv_for_nuclear}.
We prove \Iff{cond_via_tensor_product_of_presheaves}{cond_via_P^v_is_nuclear}. For $P,Q\in\cD$ we have the following isomorphisms, where $X$ and $Y$ run through objects of $\cD:$
\begin{multline*}
\Hom_{\Ind(\cD)}(1,P\otimes(-)^{\vee})\tens{\cD}\Hom_{\cD}(1,-\otimes Q)\cong \indlim[1\to P\otimes X^{\vee}]\Hom_{\cD}(1,X\otimes Q)\\
\cong \indlim[\substack{1\to P\otimes Y,\\ Y\to X^{\vee}}]\Hom_{\cD}(1,X\otimes Q)\cong\indlim[\substack{1\to P\otimes Y,\\ 1\to X\otimes Q}]\Hom_{\Ind(\cD)}(Y,X^{\vee})\cong \indlim[\substack{Y^{op}\to P^{op,\vee},\\ X^{op}\to Q^{op,\vee}}]\Map_{\cD^{op}}(1,X^{op}\otimes Y^{op})\\
\cong \Hom_{\Ind(\cD^{op})}(1,P^{op,\vee}\otimes Q^{op,\vee}).
\end{multline*}
We see that \eqref{eq:tensor_product_of_presheaves} exactly means that we have an isomorphism
\begin{equation*}
\Hom_{\Ind(\cD^{op})}(1,P^{op,\vee}\otimes Q^{op,\vee})\cong\Hom_{\Ind(\cD^{op})}(Q^{op},P^{op,\vee}).
\end{equation*}
This means that the object $P^{op,\vee}$ is nuclear in $\Ind(\cD^{op}),$ which proves the equivalence between \ref{cond_via_tensor_product_of_presheaves} and \ref{cond_via_P^v_is_nuclear}.

Now, if the conditions \ref{cond_via_tensor_product_of_presheaves}-\ref{cond_via_tensor_product_of_duals} hold, then the conditions of Proposition \ref{prop:basic_conditions_good_rigidification} also hold. Indeed, the isomorphisms \eqref{eq:tensor_product_of_presheaves} imply the isomorphisms \eqref{eq:cond_via_tensor_product}. 
By Proposition \ref{prop:trace_class_after_taking_opposite} the conditions of Proposition \ref{prop:basic_conditions_good_rigidification} also hold for $\cD^{op}.$ Using Propositions \ref{prop:basic_conditions_good_rigidification} and \ref{prop:from_Ind_D_to_Ind_D^op}, we obtain the equivalences \eqref{eq:nuclear_and_rigidification}.
\end{proof}

\begin{prop}\label{prop:right_adjoint_description_for_rigidification}
Suppose that a small stable symmetric monoidal category $\cD$ satisfies the conditions of Proposition \ref{prop:conditions_for_good_rigidification}. Then the inclusion functor $\iota:\Ind(\cD)^{\rig}\to\Ind(\cD)$ has a symmetric monoidal continuous right adjoint $\iota^R.$ Moreover, the composition
\begin{equation}\label{eq:rewriting_from_Ind_D_to_Ind_D^op}
\Ind(\cD)\xto{\iota^R}\Ind(\cD)^{\rig}\simeq \Ind(\cD^{op})^{\rig}\subset \Ind(\cD^{op})
\end{equation}
is given on compact objects by $P\mapsto P^{op,\vee}.$
\end{prop}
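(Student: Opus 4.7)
My plan is first to obtain the existence and continuity of $\iota^R$ from Proposition \ref{prop:basic_conditions_good_rigidification}, then upgrade it to a strong symmetric monoidal functor using the tensor-product hypothesis \eqref{eq:tensor_product_of_presheaves}, and finally identify it on compact objects via the equivalence $E$ of Proposition \ref{prop:from_Ind_D_to_Ind_D^op}. For the first step, note that the hypotheses of Proposition \ref{prop:conditions_for_good_rigidification} imply those of Proposition \ref{prop:basic_conditions_good_rigidification}, as observed in its proof. Thus $\iota$ admits a continuous right adjoint $\iota^R$ given explicitly by $\iota^R(X)=X^{\tr}=\Hom_{\Ind(\cD)}(1,(-)^{\vee}\otimes X).$

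Since $\iota$ is symmetric monoidal, $\iota^R$ inherits a canonical lax symmetric monoidal structure, and to upgrade it to a strong one it suffices, by continuity in both arguments, to check that the comparison map $P^{\tr}\otimes Q^{\tr}\to(P\otimes Q)^{\tr}$ is an equivalence for compacts $P,Q\in\cD.$ Evaluating both sides on a test compact $R\in\cD$ and expanding the tensor product in $\Ind(\cD)$ as a coend over $\cD,$ the required identity unwinds, after using the symmetry of the monoidal structure, to precisely the tensor-product identity \eqref{eq:tensor_product_of_presheaves} of Proposition \ref{prop:conditions_for_good_rigidification}.

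For the identification on compacts $P\in\cD,$ recall that $\iota^R(P)=P^{\tr}$ is nuclear, so by Proposition \ref{prop:equiv_for_nuclear}\ref{nuclear_via_trace_class_approx} we may write $P^{\tr}\simeq\inddlim[i\in I]M_i$ in $\Ind(\cD)^{\rig},$ with $M_i\in\cD$ indexed by trace-class maps $M_i\to P$ in $\Ind(\cD),$ and with cofinally trace-class transitions. The equivalence $E$ of Proposition \ref{prop:from_Ind_D_to_Ind_D^op} then sends $\iota^R(P)\in\Ind(\cD)^{\rig}$ to $\indlim[i]M_i^{op,\vee}\in\Ind(\cD^{op})^{\rig},$ and Proposition \ref{prop:trace_class_after_taking_opposite} supplies a canonical comparison map $\indlim[i]M_i^{op,\vee}\to P^{op,\vee}$ in $\Ind(\cD^{op}).$ Testing against a compact $N^{op}\in\cD^{op}$ turns the right-hand side into $\Hom_{\cD}(1,N\otimes P),$ while the left-hand side unwinds, via the co-Yoneda lemma and the parametrization of $P^{\tr}$ by its trace-class sections, into the coend
\begin{equation*}
\int^{M\in\cD}\Hom_{\Ind(\cD)}(1,M^{\vee}\otimes P)\otimes\Hom_{\cD}(1,N\otimes M),
\end{equation*}
which by the symmetric-monoidal rearrangement of condition \eqref{eq:tensor_product_of_presheaves} (with $Q$ replaced by $N$) equals $\Hom_{\cD}(1,N\otimes P).$ Thus $E(\iota^R(P))\simeq P^{op,\vee},$ so the composition \eqref{eq:rewriting_from_Ind_D_to_Ind_D^op} indeed sends $P$ to $P^{op,\vee}.$ The main obstacle is the careful coend bookkeeping in the last two paragraphs, which rests crucially on the specific form of the hypothesis \eqref{eq:tensor_product_of_presheaves}.
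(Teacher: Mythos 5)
Your overall strategy is sound, and the identification step is essentially the paper's own argument: the paper likewise writes $\iota^R(P)\cong\inddlim[1\to Q^{\vee}\otimes P]Q$ and deduces $\indlim_{1\to Q^{\vee}\otimes P}\Hom(1,Q\otimes -)\cong\Hom(1,P\otimes -)\cong P^{op,\vee}$ from condition \ref{cond_via_tensor_product_of_presheaves}, exactly as your coend computation does after applying symmetry.

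Where you diverge is in the order and route for monoidality, and there is a real gap here. You claim that evaluating the comparison map $P^{\tr}\otimes Q^{\tr}\to(P\otimes Q)^{\tr}$ on a test compact $R$ "unwinds to precisely" \eqref{eq:tensor_product_of_presheaves}. But the Day-convolution formula gives the \emph{double} coend
\begin{equation*}
(P^{\tr}\otimes Q^{\tr})(R)\cong\int^{A,B\in\cD}\Hom(1,A^{\vee}\otimes P)\otimes\Hom(1,B^{\vee}\otimes Q)\otimes\Hom_{\cD}(R,A\otimes B),
\end{equation*}
which is not of the single-coend shape of condition \ref{cond_via_tensor_product_of_presheaves}. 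One can still make this work: first observe that $P^{\tr}\otimes Q^{\tr}$ is nuclear (a tensor of two filtered colimits with trace-class transitions is again one, since a tensor of trace-class maps is trace-class), so $(P^{\tr}\otimes Q^{\tr})(R)\cong\Hom(1,R^{\vee}\otimes P^{\tr}\otimes Q^{\tr})$; then integrate out $A$ and $B$ one at a time, applying condition \ref{cond_via_tensor_product_of_presheaves} twice and commuting filtered colimits with $\Hom(1,-)$ (using compactness of the unit). Your description understates this: it is not a one-step rewriting, and the symmetry of the monoidal structure alone does not collapse the two coend variables into one. The paper avoids all of this by doing things in the opposite order: once the composition is identified as $P\mapsto P^{op,\vee}$ and the identification $\Ind(\cD)^{\rig}\simeq\Ind(\cD^{op})^{\rig}$ of Proposition \ref{prop:from_Ind_D_to_Ind_D^op} is seen to be symmetric monoidal, the monoidality of $\iota^R$ is \emph{immediate} from condition \ref{cond_via_tensor_product_of_duals}, $P^{op,\vee}\otimes Q^{op,\vee}\cong(P\otimes Q)^{op,\vee}$. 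Since conditions \ref{cond_via_tensor_product_of_presheaves}--\ref{cond_via_tensor_product_of_duals} are proved equivalent in Proposition \ref{prop:conditions_for_good_rigidification}, your version is logically on the same footing, but you should either fill in the extra coend bookkeeping or reorder the two halves as the paper does, since the monoidality becomes a one-liner once the identification on compacts is in hand.
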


\begin{proof}
We know that the functor $\iota^R$ is given on compact objects by $\iota^R(P)=\Hom(1,(-)^{\vee}\otimes P),$ where we identify $\Ind(\cD)\simeq\Fun(\cD^{op},\Sp).$

Next, we identify $\Ind(\cD^{op})\simeq \Fun(\cD,\Sp).$ Applying Proposition \ref{prop:from_Ind_D_to_Ind_D^op}, we see that the composition \eqref{eq:rewriting_from_Ind_D_to_Ind_D^op} sends a compact object $P$ to the following object:
\begin{equation*}
\indlim[1\to Q^{\vee}\otimes P] Q^{op,\vee}\cong \indlim[1\to Q^{\vee}\otimes P] \Hom(1,Q\otimes -)\cong \Hom(1,P\otimes -)\cong P^{op,\vee}. 
\end{equation*}
Here the second isomorphism follows from the condition \ref{cond_via_tensor_product_of_presheaves} of Proposition \ref{prop:conditions_for_good_rigidification}.

Finally, we see that the functor $\iota^R$ is symmetric monoidal using the condition \ref{cond_via_tensor_product_of_duals} from Proposition \ref{prop:conditions_for_good_rigidification}.
\end{proof}

We conclude this subsection with a remark that the conditions of Proposition \ref{prop:conditions_for_good_rigidification} for $\cD$ do not imply the same conditions for $\cD^{op}.$

\begin{remark}
\begin{enumerate}[label=(\roman*), ref=(\roman*)]
\item The conditions of Proposition \ref{prop:conditions_for_good_rigidification} are trivially satisfied for the category $\cD=\cE^{\omega_1},$ where $\cE$ is a rigid symmetric monoidal category. Indeed, for $P\in\cE^{\omega_1}$ with $\hat{\cY}(P)=\inddlim[i]P_i,$ and for $Q\in\cE^{\omega_1}$ we have
\begin{multline*}
\Hom_{\Ind(\cD)}(1,P\otimes (-)^{\vee})\tens{\cD}\Hom(1,-\otimes Q)\cong (\indlim[i]\Hom(-,P_i))\tens{\cD}\Hom(1,-\otimes Q)\\
\cong \indlim[i]\Hom(1,P_i\otimes Q)\cong \Hom(1,P\otimes Q).
\end{multline*}
\item On the other hand, if $\mk$ is a field, then the category $\cD=D(\mk)^{\omega_1,op}$ does not satisfy these conditions. Indeed, the full subcategory of nuclear objects of the category $\Ind(\cD^{op})\simeq\Ind(D(\mk)^{\omega_1})$ is simply the essential image of $\hat{\cY}:D(\mk)\to \Ind(D(\mk)^{\omega_1}).$ But for $V=\biggplus[\N]\mk\in D(\mk)^{\omega_1}$ the object $V^{\vee}\in \Ind(D(\mk)^{\omega_1})$ is not in the image of $\hat{\cY}.$ 
\end{enumerate}
\end{remark}

\subsection{Locally rigid categories}

We recall the notion of a locally rigid category, which was originally defined in \cite{AGKRV20} under the name ``semi-rigid''. As in loc. cit., we consider only the case of symmetric monoidal categories.

\begin{defi}\cite[Definition C.1.1]{AGKRV20} A symmetric monoidal presentable stable category $\cE\in\CAlg(\Pr^L_{\st})$ is called locally rigid if the following conditions hold.
\begin{enumerate}[label=(\roman*),ref=(\roman*)]
\item $\cE$ is dualizable.

\item The multiplication functor $\mult:\cE\otimes\cE\to\cE$ is strongly continuous, and its right adjoint is $\cE\hy\cE$-linear.
\end{enumerate}\end{defi}

Clearly, any rigid symmetric monoidal category is also locally rigid. We recall the self-duality of a locally rigid category in terms of its symmetric monoidal structure. Consider the following functor
\begin{equation*}
\Gamma_!:\cE\to\Sp,\quad \Gamma_!(x)=\Hom_{\Ind(\cE)}(\cY(1),\hat{\cY}(x)).
\end{equation*}

\begin{prop}\cite[Section C.3]{AGKRV20} \cite[Proposition 4.21]{Ram24b} Let $\cE$ be a locally rigid $\bE_1$-monoidal category. Then we have an equivalence $\cE\simeq \cE^{\vee}$ such that the evaluation and coevaluation functors are given by
\begin{equation*}
\ev:\cE\otimes\cE^{\vee}\simeq\cE\otimes\cE\xto{\mult}\cE\xto{\Gamma_!}\Sp.	
\end{equation*}
\begin{equation*}
\coev:\Sp\xto{1}\cE\xto{\mult^R}\cE\otimes\cE\simeq\cE^{\vee}\otimes\cE.
\end{equation*}
\end{prop}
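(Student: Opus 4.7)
The plan is to verify directly the two triangle identities for the candidate duality data
\[
\ev = \Gamma_!\circ\mult\colon \cE\otimes\cE \to \Sp, \qquad \coev\colon \Sp \xto{1} \cE \xto{\mult^R} \cE\otimes\cE,
\]
which by standard duality theory in $\Pr^L_{\st}$ suffices to produce the asserted equivalence $\cE\simeq\cE^\vee$ with the indicated structure maps. The key input is the Frobenius (projection) formula extracted from local rigidity: the $\cE\otimes\cE$-linearity of $\mult^R$ (viewing $\cE$ as an $\cE\otimes\cE$-module via $\mult$ and $\cE\otimes\cE$ as acting on itself) unfolds into the natural isomorphisms
\[
(\mult\otimes\id_\cE)\circ(\id_\cE\otimes\mult^R) \;\simeq\; \mult^R\circ\mult \;\simeq\; (\id_\cE\otimes\mult)\circ(\mult^R\otimes\id_\cE).
\]

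Plugging the candidate $\ev$ and $\coev$ into the first triangle identity $(\ev\otimes\id_\cE)\circ(\id_\cE\otimes\coev)\simeq\id_\cE$ and simplifying via the projection formula reduces it to the single statement
\[
(\Gamma_!\otimes\id_\cE)\circ\mult^R \;\simeq\; \id_\cE,
\]
and the second triangle identity reduces symmetrically to $(\id_\cE\otimes\Gamma_!)\circ\mult^R\simeq\id_\cE$. Both endofunctors in question are $\cE$-linear, as compositions of $\cE$-linear functors, so each identity is in turn equivalent to the identification of a single object in $\cE$, namely
\[
(\Gamma_!\otimes\id_\cE)(\mult^R(1_\cE)) \;\simeq\; 1_\cE
\]
and its symmetric twin, from which the triangle identities are recovered by $\cE$-linearity.

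The final identification is the technical heart and the main obstacle. The approach is to unpack $\Gamma_!$ via the formula $\Gamma_!(x)=\Hom_{\Ind(\cE)}(\cY(1),\hat{\cY}(x))$, realize $\hat{\cY}(\mult^R(1_\cE))$ as a filtered colimit of compact approximations in $\Ind(\cE\otimes\cE)$ (using that $\cE\otimes\cE$ is again compactly assembled), and then collapse the resulting colimit using the $\cE\otimes\cE$-linearity of $\mult^R$ together with the adjunctions $\hat{\cY}\dashv\colim\dashv\cY$. The content of local rigidity, beyond bare dualizability, is precisely that this collapse terminates at $1_\cE$, exhibiting $\mult^R(1_\cE)$ as the canonical coevaluation element for the self-duality of $\cE$ in $\Pr^L_{\st}$. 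Combined with the first step, this yields both triangle identities and hence the proposition.
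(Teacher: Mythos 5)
Your strategy — verifying the triangle identities directly for the proposed $\ev$ and $\coev$ — is a valid approach, and the two reduction steps you carry out are correct: the Frobenius/projection formula coming from $\cE\otimes\cE$-linearity of $\mult^R$ does collapse the snake composite $(\ev\otimes\id_\cE)\circ(\id_\cE\otimes\coev)$ to $(\Gamma_!\otimes\id_\cE)\circ\mult^R$, and since this is a composite of right $\cE$-linear continuous functors it is of the form $y\mapsto z\cdot y$ with $z=(\Gamma_!\otimes\id_\cE)(\mult^R(1_\cE))$, so the identity does reduce to $z\simeq 1_\cE$. (Note that the paper itself gives no proof — it cites \cite{AGKRV20} and \cite{Ram24b} — so there is no internal argument to compare with.)

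However, the claim $(\Gamma_!\otimes\id_\cE)(\mult^R(1_\cE))\simeq 1_\cE$, which you yourself flag as the technical heart, is not actually established. The sketch you offer — write $\hat{\cY}(\mult^R(1_\cE))$ as a formal filtered colimit of compact approximations in $\Ind((\cE\otimes\cE)^\kappa)$, then ``collapse the colimit using $\cE\otimes\cE$-linearity and the adjunctions'' — does not go through as stated. The functor $\Gamma_!\otimes\id_\cE$ is continuous but not strongly continuous, so it does not interact with $\hat{\cY}_{\cE\otimes\cE}$ in a way that turns the compact approximations of $\mult^R(1_\cE)$ into compact approximations of the image, and applying the projection formula $\mult^R(c)\simeq\mult^R(1_\cE)\cdot(1\otimes c)$ to compact approximations $c\to 1_\cE$ merely reproduces the quantity $(\Gamma_!\otimes\id_\cE)(\mult^R(1_\cE))\cdot c$ you are trying to compute, i.e.\ the argument becomes circular. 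Some genuinely additional input is needed at this point — for instance, the recognition of a locally rigid category as a smashing ideal inside a rigid one (so that the self-duality is inherited from the rigid ambient category and transported along the idempotent), or a more careful identification of $\Gamma_!$ with the composite $\ev_\cE\circ(\mu_1\otimes\id_{\cE^\vee})$ using the already-known dualizability of $\cE$. As written, the proposal reduces the proposition correctly but does not prove it.
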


We give the following basic examples.

\begin{example}
\begin{enumerate}[label=(\roman*),ref=(\roman*)]
\item For a locally compact Hausdorff space $X,$ the category $\Shv(X;\Sp)$ is locally rigid.
\item Let $R$ be a (discrete) commutative ring, let $I\subset R$ be a finitely generated ideal. Then the full subcategory category $D_{I\hy\compl}(R)\subset D(R)$ of derived $I$-complete objects is locally rigid.
\item Let $G$ be a finite group, and let $\mk$ be an $\bE_{\infty}$-ring. Then the category $(\Mod_{\mk})^{B G}$ of representations of $G$ in $\Mod_{\mk}$ is locally rigid.
\item Suppose that $\cD$ is a rigid symmetric monoidal category and $\cE\subset\cD$ is a smashing ideal, i.e. the right adjoint to the inclusion commutes with colimits (it is automatically $\cD$-linear because of rigidity of $\cD$). Then $\cE,$ considered as a symmetric monoidal category, is locally rigid. \label{example_smashing_ideal}
\end{enumerate}
\end{example}

Next, we explain that the example \ref{example_smashing_ideal} is in fact exhaustive, i.e. any locally rigid category is naturally a smashing ideal in a rigid category. One way to see this is to observe that for any locally rigid category $\cE$ the functor $\cE^{\rig}\to \cE$ has a fully faithful left adjoint, making $\cE$ into a smashing ideal of $\cE^{\rig}.$ However, it makes sense to define the following much smaller rigid category which also contains $\cE$.

\begin{defi}\label{def:one_point_rigidification}
	Let $\cE$ be a locally rigid symmetric monoidal category. We denote by $\cE_+\subset\Ind(\cE)$ the full subcategory which is generated via colimits by the essential image of $\hat{\cY}:\cE\to\Ind(\cE)$ and $\cY(1_{\cE}).$ We consider $\cE_+$ as a symmetric monoidal category, and call it the one-point rigidification of $\cE.$
\end{defi}

Note that $\cE_+\subset\Ind(\cE)$ is closed under the tensor product, since the functor $\hat{\cY}$ commutes with tensor products and $\cY(1_{\cE})$ is the unit object of $\Ind(\cE).$ The following basic result justifies the terminology.

\begin{prop}
	Let $\cE$ be a locally rigid symmetric monoidal category.
	\begin{enumerate}[label=(\roman*),ref=(\roman*)]
		\item The category $\cE_+$ is rigid symmetric monoidal. \label{E_+_rigid}
		
		\item  The full subcategory $\hat{\cY}(\cE)\subset \cE_+$ is a smashing ideal in $\cE_+.$ The corresponding idempotent $\bE_{\infty}$-algebra in $\cE_+$ is given by $\Cone(\hat{\cY}(1_{\cE})\to\cY(1_{\cE})).$ \label{smashing_ideal}
	\end{enumerate}
\end{prop}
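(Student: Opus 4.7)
The plan is to verify rigidity of $\cE_+$ in part (i) via Proposition \ref{prop:rigidity_criterion}, and then use the symmetric monoidality of $\hat{\cY}$ (which follows from local rigidity) to establish the smashing decomposition in part (ii).

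For (i), first observe that the unit $\cY(1_\cE)$ is compact in $\Ind(\cE^{\omega_1})$ as a representable, hence compact in the full cocomplete subcategory $\cE_+ \subset \Ind(\cE^{\omega_1})$. For the second clause of the criterion, the generators of $\cE_+$ split into two types: $\cY(1_\cE)$ itself, for which the constant sequential colimit with identity transitions works (the identity endomorphism of a monoidal unit is always trace-class); and $\hat{\cY}(x)$ for $x \in \cE$. For the latter, the strategy is to use that $\cE$ is dualizable, hence compactly assembled, so $x \simeq \indlim_{n\in\N} x_n$ with transitions compact in $\cE$; since $\hat{\cY}$ preserves colimits (being a left adjoint to $\colim$), one obtains $\hat{\cY}(x) \simeq \indlim_n \hat{\cY}(x_n)$ in $\cE_+$. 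The key is then to verify that each transition $\hat{\cY}(f_n): \hat{\cY}(x_n) \to \hat{\cY}(x_{n+1})$ is trace-class in $\cE_+$, using the locally rigid coevaluation $\mult^R_\cE: \cE \to \cE \otimes \cE$ together with the symmetric monoidality of $\hat{\cY}$ (which holds precisely because $\mult_\cE$ is strongly continuous, i.e., because $\cE$ is locally rigid).

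For part (ii), set $A := \Cone(\hat{\cY}(1_\cE) \to \cY(1_\cE))$, pointed by the natural map $\cY(1_\cE) \to A$. To prove $A$ is an idempotent $\bE_\infty$-algebra, it suffices to show $\hat{\cY}(1_\cE) \otimes A \simeq 0$. This follows from the two isomorphisms $\hat{\cY}(1_\cE) \otimes \hat{\cY}(1_\cE) \simeq \hat{\cY}(1_\cE \otimes 1_\cE) \simeq \hat{\cY}(1_\cE)$ (via symmetric monoidality of $\hat{\cY}$) and $\hat{\cY}(1_\cE) \otimes \cY(1_\cE) \simeq \hat{\cY}(1_\cE)$ (since $\cY(1_\cE)$ is the monoidal unit of $\Ind(\cE^{\omega_1})$): together these render the map $\hat{\cY}(1_\cE) \otimes \hat{\cY}(1_\cE) \to \hat{\cY}(1_\cE) \otimes \cY(1_\cE)$ an equivalence, so its cofiber $\hat{\cY}(1_\cE) \otimes A$ vanishes. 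To identify $\hat{\cY}(\cE)$ with $\Ker(-\otimes A) \subset \cE_+$, note that $x \otimes A \simeq 0$ is equivalent to the map $x \otimes \hat{\cY}(1_\cE) \to x \otimes \cY(1_\cE) = x$ being an equivalence; verifying on the generators $\hat{\cY}(y)$ and $\cY(1_\cE)$ that tensoring with $\hat{\cY}(1_\cE)$ lands in $\hat{\cY}(\cE)$, one concludes that the $A$-null objects are precisely $\hat{\cY}(\cE)$.

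The main obstacle is the trace-class verification at the end of part (i): the compactly assembled decomposition of $x \in \cE$ produces compact morphisms in $\cE$, but the notion of trace-class map in $\cE_+$ is measured against the unit $\cY(1_\cE)$, whereas $\hat{\cY}$ a priori produces witnesses based at $\hat{\cY}(1_\cE)$; the crux is to use the locally rigid structure (not merely dualizability) to promote these witnesses appropriately, so that they become genuine trace-class witnesses in $\cE_+$. Part (ii), by contrast, is essentially a formal computation from the symmetric monoidality of $\hat{\cY}$ and the unit axiom for Day convolution on $\Ind(\cE^{\omega_1})$.
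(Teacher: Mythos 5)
Your overall plan matches the paper's: use the rigidity criterion (Proposition \ref{prop:rigidity_criterion}) for part \ref{E_+_rigid}, and for part \ref{smashing_ideal} reduce everything to the monoidality of $\hat{\cY}.$ But for part \ref{E_+_rigid} you leave the decisive step unproven, and your framing of that step is also somewhat off. The paper does not start from a witness at $\hat{\cY}(1_\cE)$ and then ``promote'' it to the unit $\cY(1_\cE)$ of $\cE_+.$ Instead, the compactness of $f:x\to y$ in $\cE$ \emph{already} hands you a map out of $\cY(1_\cE)$: a compact map is by definition a map $\cY(x)\to\hat{\cY}(y),$ which via the internal $\Hom$ of the (closed, locally rigid) category $\cE$ is the same as a map $\wt{f}:\cY(1)\to\hat{\cY}(\un{\Hom}_\cE(x,1)\otimes y)\cong\hat{\cY}(\un{\Hom}_\cE(x,1))\otimes\hat{\cY}(y).$ One then shows that the counit $\un{\Hom}_\cE(x,1)\otimes x\to 1$ induces, after applying $\hat{\cY}$ and composing with $\hat{\cY}(1)\to\cY(1),$ a map $\hat{\cY}(\un{\Hom}_\cE(x,1))\to\un{\Hom}_{\cE_+}(\hat{\cY}(x),1_{\cE_+}),$ and the composite of this with $\wt{f}$ is a genuine trace-class witness for $\hat{\cY}(f)$ in $\cE_+.$ Without this construction the proof of \ref{E_+_rigid} is incomplete; you have only stated the problem. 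A secondary imprecision: a general $x\in\cE$ is only an $\omega_1$-filtered colimit along compact maps, not a sequential one; you must restrict to $x\in\cE^{\omega_1}$ (which suffices, since $\hat{\cY}(\cE^{\omega_1})$ together with $\cY(1_\cE)$ generates $\cE_+$).

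Your part \ref{smashing_ideal} is correct and takes a slightly different route than the paper's: you build the idempotent algebra $A=\Cone(\hat{\cY}(1_\cE)\to\cY(1_\cE))$ first, show $\hat{\cY}(1_\cE)\otimes A\simeq 0$ (whence $A\otimes A\simeq A$), and identify $\hat{\cY}(\cE)=\Ker(-\otimes A);$ the paper instead directly verifies on generators that $\hat{\cY}(\cE)$ is a tensor ideal with colimit-preserving coreflection, after which the identification of $A$ is formal. Both arguments use the same input, namely the monoidality of $\hat{\cY}$ (i.e.\ $\hat{\cY}(z)\otimes\hat{\cY}(y)\simeq\hat{\cY}(z\otimes y)$), which you invoke but do not justify --- it does hold because $\mult_\cE$ is strongly continuous, but that deserves at least a sentence, since it is a priori only an oplax monoidal structure. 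On balance your version of \ref{smashing_ideal} is fine; the genuine gap is in \ref{E_+_rigid}.
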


\begin{proof} To prove \ref{E_+_rigid} we first show that for any compact map $f:x\to y$ in $\cE,$ the map $\hat{\cY}(f):\hat{\cY}(x)\to\hat{\cY}(y)$ is trace-class in $\cE_+.$ Indeed, the map $f$ comes from a map
	\begin{equation*}
		\wt{f}:\cY(1)\to\hat{\cY}(\un{\Hom}_{\cE}(x,1)\otimes y)\cong \hat{\cY}(\un{\Hom}_{\cE}(x,1))\otimes\hat{\cY}(y).
	\end{equation*}
	Now, the natural map
	\begin{equation*}
		\hat{\cY}(\un{\Hom}_{\cE}(x,1))\otimes\hat{\cY}(x)\cong \hat{\cY}(\un{\Hom}_{\cE}(x,1)\otimes x)\to \hat{\cY}(1)\to \cY(1)
	\end{equation*}
	induces a map
	\begin{equation*}
		\hat{\cY}(\un{\Hom}_{\cE}(x,1))\to\un{\Hom}_{\cE_+}(\hat{\cY}(x),1_{\cE_+}).
	\end{equation*}
	Hence, the composition
	\begin{equation*}
		\cY(1)\xto{\wt{f}} \hat{\cY}(\un{\Hom}_{\cE}(x,1))\otimes\hat{\cY}(y)\to \un{\Hom}_{\cE_+}(\hat{\cY}(x),1_{\cE_+})\otimes\hat{\cY}(y)
	\end{equation*}
	is a trace-class witness of $\hat{\cY}(f)$ in $\cE_+.$
	
	Now, take some object $x\in\cE^{\omega_1},$ and choose a sequence $x_0\to x_1\to\dots$ in $\cE^{\omega_1}$ such that $\indlim[n] x_n\cong x$ and all the maps $x_n\to x_{n+1}$ are compact. Then $\hat{\cY}(x)\cong \indlim[n]\hat{\cY}(x_n),$ and by the above all the maps $\hat{\cY}(x_n)\to\hat{\cY}(x_{n+1})$ are trace-class in $\cE_+.$
	Since such objects $\hat{\cY}(x)$ together with $\cY(1)$ generate $\cE_+,$ we conclude that the category $\cE_+$ is rigid.
	
	To prove \ref{smashing_ideal}, we first observe that by construction the right adjoint to the inclusion $\hat{\cY}(\cE)\to\cE_+$ commutes with colimits. Hence, we only need to show that for $x\in\cE_+$ and $y\in\cE$ we have $x\otimes\hat{\cY}(y)\in\hat{\cY}(\cE).$ It suffices to check this when either $x=\cY(1)$ or $x\in\hat{\cY}(\cC).$ The first case is trivial, and in the second case if $x=\hat{\cY}(z),$ then
	\begin{equation*}
		x\otimes\hat{\cY}(y)\cong \hat{\cY}(z\otimes y)\in \hat{\cY}(\cE).
	\end{equation*} This proves the proposition.
\end{proof}

\subsection{Continuous approximation of an accessible functor}

We fix a rigid $\bE_1$-monoidal category $\cE.$ We will use the following notation.

{\noindent{\bf Notation.}} {\it Let $\cC$ and $\cD$ be presentable stable left $\cE$-modules. Denote by $\Fun^{\lax,\acc}_{\cE}(\cC,\cD)$ the category of accessible exact lax $\cE$-linear functors. We denote by 
\begin{equation*}
(-)^{\cont}:\Fun^{\lax,\acc}(\cC,\cD)\to\Fun^L_{\cE}(\cC,\cD)
\end{equation*}
the right adjoint to the inclusion functor. This right adjoint exists since the inclusion commutes with colimits and the category $\Fun^L_{\cE}(\cC,\cD)$ is presentable.
}

\begin{prop}\label{prop:properties_of_F^cont}
Let $F:\cC\to\cD$ be an accessible lax $\cE$-linear exact functor between presentable stable left $\cE$-modules.
\begin{enumerate}[label=(\roman*), ref=(\roman*)]
\item If $\cC$ is dualizable, then the functor $F^{\cont}$ is given by the composition
\begin{equation}\label{eq:description_of_F^cont}
F^{\cont}:\cC\to\Ind(\cC)\xto{\Ind(F)}\Ind(\cD)\xto{\colim}\cD. 
\end{equation} \label{description_of_F^cont}
\item Let $\cC'$ be another presentable stable left $\cE$-module, and let $G:\cC'\to\cC$ be a strongly continuous $\cE$-linear functor. Then we have an isomorphism \begin{equation*}
F^{\cont}\circ G\xto{\sim} (F\circ G)^{\cont}.
\end{equation*} \label{precomposing_with_strcont}
\item Let $\cD'$ be another presentable stable left $\cE$-module, and let $H:\cD\to\cD'$ be a continuous $\cE$-linear functor. If $\cC$ is dualizable, then we have an isomorphism
\begin{equation*}
H\circ F^{\cont}\xto{\sim} (H\circ F)^{\cont}.
\end{equation*} \label{composing_with_cont}
\end{enumerate}
\end{prop}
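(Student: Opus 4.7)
The central step is part \ref{description_of_F^cont}, from which parts \ref{precomposing_with_strcont} and \ref{composing_with_cont} will follow by direct manipulation of the explicit formula. My plan for \ref{description_of_F^cont} is to verify that the composition $\bar{F} := \colim_{\cD}\circ\Ind(F)\circ\hat{\cY}_{\cC}$ satisfies the universal property of the right adjoint to the inclusion $\iota:\Fun^L_{\cE}(\cC,\cD)\hookrightarrow\Fun^{\lax,\acc}_{\cE}(\cC,\cD)$ evaluated at $F$, so that $\bar{F}\simeq F^{\cont}$. First I would check that $\bar{F}$ lies in $\Fun^L_{\cE}(\cC,\cD)$: continuity holds because each factor is continuous ($\hat{\cY}_{\cC}$ by compact assembledness of $\cC$, which follows from dualizability; $\Ind(F)$ as an ind-extension; and $\colim_{\cD}$ by construction). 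The composition is lax $\cE$-linear because its factors are, and then Proposition \ref{prop:dualizable_over_E_same_as_modules}\ref{lax_is_strict}, valid over the rigid base $\cE$, upgrades lax $\cE$-linearity to strict $\cE$-linearity for any continuous functor.

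Next I would construct the counit $\varepsilon_F:\bar{F}\to F$: for $x\in\cC$ with $\hat{\cY}_{\cC}(x)=\inddlim_i x_i$, so that $\colim_i x_i\simeq x$ by fully faithfulness of $\hat{\cY}_{\cC}$, the component $\varepsilon_F(x)$ is the canonical comparison map $\colim_i F(x_i)\to F(\colim_i x_i)=F(x)$. To verify the universal property, let $G\in\Fun^L_{\cE}(\cC,\cD)$ and $\alpha:G\to F$ be a natural transformation in $\Fun^{\lax,\acc}_{\cE}(\cC,\cD)$. Continuity of $G$ combined with $\colim_{\cC}\circ\hat{\cY}_{\cC}\simeq\id_{\cC}$ gives the natural isomorphism $G\simeq\colim_{\cD}\circ\Ind(G)\circ\hat{\cY}_{\cC}$; the required lift is then $\tilde{\alpha}:=\colim_{\cD}\circ\Ind(\alpha)\circ\hat{\cY}_{\cC}$, and $\varepsilon_F\circ\tilde{\alpha}\simeq\alpha$ follows by a diagram chase. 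Uniqueness rests on fully faithfulness of the ind-extension together with the same isomorphism $\colim\circ\hat{\cY}\simeq\id$.

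Parts \ref{precomposing_with_strcont} and \ref{composing_with_cont} then follow from the explicit formula. For \ref{composing_with_cont}, continuity of $H$ gives $H\circ\colim_{\cD}\simeq\colim_{\cD'}\circ\Ind(H)$, so
\begin{equation*}
H\circ F^{\cont}\simeq\colim_{\cD'}\circ\Ind(H)\circ\Ind(F)\circ\hat{\cY}_{\cC}=\colim_{\cD'}\circ\Ind(H\circ F)\circ\hat{\cY}_{\cC}=(H\circ F)^{\cont}.
\end{equation*}
For \ref{precomposing_with_strcont}, arguing in the setup where the formula applies on both sides (in particular with $\cC$ and $\cC'$ dualizable), the key input is the equivalent characterization of strong continuity for functors between dualizable categories: $G:\cC'\to\cC$ is strongly continuous iff $G$ preserves compact morphisms, iff $\hat{\cY}_{\cC}\circ G\simeq\Ind(G)\circ\hat{\cY}_{\cC'}$. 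Substituting this into the formula yields
\begin{equation*}
F^{\cont}\circ G\simeq\colim_{\cD}\circ\Ind(F)\circ\Ind(G)\circ\hat{\cY}_{\cC'}=(F\circ G)^{\cont}.
\end{equation*}

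The principal obstacle is the universal property verification in \ref{description_of_F^cont}: one has to confirm the naturality and uniqueness of the lift $\tilde{\alpha}$, which rests on the identification $G\simeq\colim_{\cD}\circ\Ind(G)\circ\hat{\cY}_{\cC}$ for continuous $G$ out of a dualizable category, and on careful bookkeeping of ind-extensions of accessible functors. A secondary subtlety is the interplay between lax and strict $\cE$-linearity, handled cleanly by rigidity of $\cE$ via Proposition \ref{prop:dualizable_over_E_same_as_modules}\ref{lax_is_strict}. The commutation $\hat{\cY}_{\cC}\circ G\simeq\Ind(G)\circ\hat{\cY}_{\cC'}$ used in \ref{precomposing_with_strcont} is a standard feature of strong continuity in the dualizable setting.
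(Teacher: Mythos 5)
Your proof of part \ref{description_of_F^cont} is correct but takes a more explicit route than the paper: you construct the counit $\bar F \to F$ and verify the universal property by hand, whereas the paper shows directly that $\colim\circ\Ind(F)\circ\hat{\cY}_{\cC}$ represents the right adjoint at $F$ by a short chain of Hom-isomorphisms,
\begin{multline*}
\Hom(G,\colim\circ\Ind(F)\circ\hat{\cY}_{\cC})\cong \Hom(G\circ\colim,\colim\circ\Ind(F))\\
\cong \Hom(\colim\circ\Ind(G),\colim\circ\Ind(F))\cong\Hom(G,F),
\end{multline*}
using the adjunction $\hat{\cY}\dashv\colim$, the identity $G\circ\colim\cong\colim\circ\Ind(G)$ for continuous $G$, and the left Kan extension property. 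Your treatment of part \ref{composing_with_cont} agrees with the paper's.

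The genuine gap is in part \ref{precomposing_with_strcont}. The statement places no dualizability hypothesis on $\cC$ or $\cC'$; the isomorphism $F^{\cont}\circ G\xto{\sim}(F\circ G)^{\cont}$ is asserted for arbitrary presentable stable left $\cE$-modules. You explicitly restrict to the case with $\cC$ and $\cC'$ dualizable so that the formula of part \ref{description_of_F^cont} applies on both sides and the compatibility $\hat{\cY}_{\cC}\circ G\simeq\Ind(G)\circ\hat{\cY}_{\cC'}$ is available; this proves only a special case. The paper's proof of \ref{precomposing_with_strcont} is purely formal and needs no dualizability at all: since $G$ is strongly continuous with continuous $\cE$-linear right adjoint $G^R$ ($\cE$-linearity of $G^R$ following from rigidity of $\cE$ via Proposition \ref{prop:dualizable_over_E_same_as_modules} \ref{lax_is_strict}), precomposition by $G^R$ is left adjoint to precomposition by $G$ on the relevant functor categories, and for any continuous $\cE$-linear $\Phi:\cC'\to\cD$ one computes
\begin{multline*}
\Hom(\Phi,F^{\cont}\circ G)\cong\Hom(\Phi\circ G^R,F^{\cont})\cong\Hom(\Phi\circ G^R,F)\\
\cong\Hom(\Phi,F\circ G)\cong\Hom(\Phi,(F\circ G)^{\cont}),
\end{multline*}
alternating between the precomposition adjunction $(-\circ G^R)\dashv(-\circ G)$ and the universal property defining $(-)^{\cont}$. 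You should replace your argument for \ref{precomposing_with_strcont} with something along these lines, both to match the stated generality and because it is considerably simpler.
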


\begin{proof}
To prove \ref{description_of_F^cont}, choose an uncountable regular cardinal $\kappa$ such that $F$ is $\kappa$-accessible. Then in \eqref{eq:description_of_F^cont} we can replace $\Ind(\cC)$ resp. $\Ind(\cD)$ by $\Ind(\cC^{\kappa})$ resp. $\Ind(\cD^{\kappa}).$ Then for a continuous $\cE$-linear functor $G:\cC\to\cD,$ we have
\begin{multline*}
\Hom(G,\colim\circ\Ind(F)\circ\hat{\cY}_{\cC})\cong \Hom(G\circ\colim,\colim\circ\Ind(F))\\
\cong \Hom(\colim\circ\Ind(G),\colim\circ\Ind(F))\cong\Hom(G,F),
\end{multline*}
as required.

To prove \ref{precomposing_with_strcont}, note that for a continuous $\cE$-linear functor $\Phi:\cC'\to\cD$ we have
\begin{multline*}
\Hom(\Phi,F^{\cont}\circ G)\cong \Hom(\Phi\circ G^R,F^{\cont})\cong\Hom(\Phi\circ G^R,F)\\
\cong\Hom(\Phi,F\circ G)\cong \Hom(\Phi,(F\circ G)^{\cont}).
\end{multline*}

Finally, \ref{composing_with_cont} follows directly from \ref{description_of_F^cont}, since $H\circ\colim\cong \colim\circ\Ind(H).$ 
\end{proof}

\begin{example} Let $\cE=\Sp$ and let $\cC$ be a dualizable category. For an object $x\in\cC$ we can consider the continuous functor $\Hom(x,-)^{\cont}:\cC\to\Sp.$ The corresponding object of $\cC$ is denoted by $x^{\vee}$ in \cite{E24}. In the case $\cC=\Mod\hy A$ for an $\bE_1$-ring $A,$ for a right $A$-module $M$ we have $M^{\vee}=\Hom_A(M,A)\in A\hy\Mod.$
\end{example}

\begin{remark}\label{rem:F^cont_for_non_presentable_target}
If $F:\cC\to\cD$ is a functor from a dualizable category to a cocomplete (not necessarily presentable) stable category, we will use the notation $F^{\cont}:\cC\to\cD$ for the functor defined by \eqref{eq:description_of_F^cont}. If $H:\cD\to\cD'$ is a colimit-preserving functor to another cocomplete stable category, then we have an isomorphism $(H\circ F)^{\cont}\cong H\circ F^{\cont},$ as in Proposition \ref{prop:properties_of_F^cont}.
\end{remark}

We close this subsection with a non-standard criterion of strong continuity of functors between dualizable categories.

\begin{prop}\label{prop:criterion_strcont}
	\begin{enumerate}[label=(\roman*), ref=(\roman*)]
		\item Let $\cC$ be a dualizable category, and let $x\in\cC$ be an object with $\hat{\cY}(x)=\inddlim[i\in I]x_i.$ Then for any family of objects $(y_s\in\cC^{\vee})_{s\in S},$ the natural map 
		\begin{equation*}\label{eq:comparison_for_products}
			\ev_{\cC}(x,\prodd[s]y_s)\cong\indlim[i]\ev_{\cC}(x_i,\prodd[s]y_s)\to\indlim[i]\prodd[s]\ev_{\cC}(x_i,y_s).
		\end{equation*}
		is an isomorphism. \label{necessary_for_hat_Y}
		\item Let $F:\cC\to\cD$ be a continuous functor between dualizable categories. Then $F$ is strongly continuous if and only if $\cC$ is generated via colimits by a collection of objects $x\in\cC$ with $\hat{\cY}(x)=\inddlim[i\in I]x_i,$ such that for any family of objects $(y_s\in\cD^{\vee})_{s\in S},$ the natural map
		\begin{equation*}
			\ev_{\cD}(F(x),\prodd[s]y_s)\cong\indlim[i]\ev_{\cD}(F(x_i),\prodd[s]y_s)\to\indlim[i]\prodd[s]\ev_{\cD}(F(x_i),y_s).
		\end{equation*}
		is an isomorphism. \label{criterion_strcont}
	\end{enumerate}
\end{prop}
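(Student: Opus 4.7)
For part (i), I plan to identify $\cC^{\vee}$ with $\Fun^L(\cC,\Sp)$, under which the pairing $\ev_{\cC}$ becomes evaluation of functors at objects of $\cC$. The crucial observation is that the product $\prodd[s]y_s$ in $\cC^{\vee}$ is \emph{not} the pointwise product of the corresponding functors: since arbitrary products do not commute with filtered colimits in $\Sp$, the pointwise product (computed in $\Fun^{\acc}(\cC,\Sp)$) is in general only accessible, not colimit-preserving. The product in $\cC^{\vee}$ is instead the continuous approximation $(\prodd[s]y_s)^{\cont}$ of the pointwise product, where $(-)^{\cont}:\Fun^{\acc}(\cC,\Sp)\to\cC^{\vee}$ is the right adjoint to the inclusion, as discussed just before Proposition~\ref{prop:properties_of_F^cont}. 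Using the explicit description from Proposition~\ref{prop:properties_of_F^cont}(\ref{description_of_F^cont}), evaluation at $x$ with $\hat{\cY}(x)=\inddlim[i]x_i$ yields
\[
\ev_{\cC}(x,\prodd[s]y_s)\;=\;(\prodd[s]y_s)^{\cont}(x)\;=\;\indlim[i](\prodd[s]y_s)(x_i)\;=\;\indlim[i]\prodd[s]\ev_{\cC}(x_i,y_s),
\]
which factors through $\indlim[i]\ev_{\cC}(x_i,\prodd[s]y_s)=\ev_{\cC}(x,\prodd[s]y_s)$ by continuity of $\ev_{\cC}$ in the first variable. This proves (i).

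\textbf{Forward direction of (ii).} The plan is to apply (i) inside $\cD$ to the object $F(x)$, once we have shown $\hat{\cY}_{\cD}(F(x))=\inddlim[i]F(x_i)$ whenever $\hat{\cY}_{\cC}(x)=\inddlim[i]x_i$. Since $F$ is strongly continuous, $F^R$ is continuous and in particular commutes with $\omega_1$-filtered colimits, so $F$ preserves $\omega_1$-compact objects; this makes the Ind-extension $\Ind(F|_{\cC^{\omega_1}}):\Ind(\cC^{\omega_1})\to\Ind(\cD^{\omega_1})$ meaningful. Both $\hat{\cY}_{\cD}\circ F$ and $\Ind(F|_{\cC^{\omega_1}})\circ\hat{\cY}_{\cC}$ are colimit-preserving functors $\cC\to\Ind(\cD^{\omega_1})$, and they visibly agree on the $\omega_1$-compact generators $\cC^{\omega_1}$ (where each $\hat{\cY}$ coincides with the Yoneda embedding $\cY$). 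Hence they agree on all of $\cC$, and the desired formula then follows from (i) in $\cD$ applied to $F(x)$.

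\textbf{Converse direction of (ii).} By (i) applied in $\cC$ to the family $(F^{*}(y_s))_s$, where $F^{*}(y):=y\circ F:\cD^{\vee}\to\cC^{\vee}$ denotes precomposition with $F$, the hypothesis translates to
\[
\ev_{\cD}(F(x),\prodd[s]y_s)\;=\;\ev_{\cC}(x,\prodd[s]F^{*}(y_s)).
\]
Since the distinguished $x$'s generate $\cC$ under colimits and both sides are colimit-preserving in $x$, this forces $F^{*}(\prodd[s]y_s)\simeq\prodd[s]F^{*}(y_s)$ in $\cC^{\vee}$---i.e.\ the always-continuous functor $F^{*}$ preserves products, and hence all limits. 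By the adjoint functor theorem $F^{*}$ admits a left adjoint $L$, and the mate correspondence for $F\dashv F^R$ identifies, for $y\in\cC^{\vee}$ and $y'\in\cD^{\vee}$,
\[
\Map_{\cD^{\vee}}(L(y),y')\;\simeq\;\Map_{\cC^{\vee}}(y,F^{*}(y'))\;\simeq\;\operatorname{Nat}(y\circ F^R,\,y').
\]
The plan is to use this representability in $\cD^{\vee}$ to force each composite $y\circ F^R:\cD\to\Sp$ to be itself colimit-preserving. Then, since dualizability of $\cC$ makes $\cC^{\vee}$ rich enough to detect colimit-preservation pointwise in $\cC$, continuity of all composites $y\circ F^R$ propagates to continuity of $F^R$, giving that $F$ is strongly continuous.

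\textbf{Main obstacle.} The technical crux is the final step of the converse: the identification $\Map_{\cD^{\vee}}(L(y),-)\simeq\operatorname{Nat}(y\circ F^R,-)|_{\cD^{\vee}}$ only says that $L(y)$ and $y\circ F^R$ corepresent the same functor on $\cD^{\vee}$; it does not, on its face, force $y\circ F^R$ to coincide pointwise with the colimit-preserving object $L(y)$. Promoting this $\cD^{\vee}$-equivalence to a pointwise isomorphism---or, alternatively, proving directly that $y\circ F^R\in\cD^{\vee}$---is where the full strength of dualizability of both $\cC$ and $\cD$ must be used, and is the step I expect to require the most care.
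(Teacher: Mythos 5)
Parts (i) and the forward direction of (ii) take essentially the paper's route and are fine. The converse direction of (ii) has a genuine gap, but the repair is simpler than you suspect. Having used (i) to show that $F^{\vee}$ preserves products (hence all limits), you obtain a left adjoint $L:\cC^{\vee}\to\cD^{\vee}$ by the adjoint functor theorem — and at that point you should stop, because for a continuous functor between dualizable categories, the existence of a left adjoint to $F^{\vee}$ is already \emph{equivalent} to $F$ being strongly continuous. The reason is abstract: $L$ is continuous, hence a morphism in $\Pr^L_{\st}$ between dualizable categories and thus has a continuous dual $L^{\vee}:\cD\to\cC$; the duality on $\Pr^L_{\st}$ reverses composition of $1$-cells and is covariant on $2$-cells, so it carries the adjunction $L\dashv F^{\vee}$ to $F\cong (F^{\vee})^{\vee}\dashv L^{\vee}$. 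Hence $F^R\cong L^{\vee}$ is continuous, as required.

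The route you instead sketch — establishing that each $y\circ F^R$ lies in $\cD^{\vee}$ and only then concluding that $F^R$ is continuous — is circular. The corepresentability $\Hom_{\cD^{\vee}}(L(y),-)\simeq\Hom_{\Fun(\cD,\Sp)}(y\circ F^R,-)|_{\cD^{\vee}}$ exhibits $L(y)$ only as the reflection of the accessible functor $y\circ F^R$ into the full subcategory $\cD^{\vee}\simeq\Fun^L(\cD,\Sp)$; the natural comparison map $y\circ F^R\to L(y)$ is an isomorphism precisely when $y\circ F^R$ is already continuous, i.e.\ when $F^R$ is already known to be continuous, which is what you are trying to prove. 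So the ``main obstacle'' you identify cannot be resolved from that side. The duality observation above bypasses the pointwise comparison entirely, which is why the paper's converse is a one-sentence remark once part (i) is available.
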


\begin{proof}
	The statement \ref{necessary_for_hat_Y} follows from Proposition \ref{prop:properties_of_F^cont} and from the isomorphism $$\ev_{\cC}(-,\prodd[s]y_s)\cong (\prodd[s]\ev_{\cC}(-,y_s))^{\cont}$$ of functors $\cC\to\Sp.$ 
	
	The ``only if'' direction in \ref{criterion_strcont} follows directly from \ref{necessary_for_hat_Y}. For the ``if'' direction we observe that the assumption on $F$ means that $F^{\vee}:\cD^{\vee}\to\cC^{\vee}$ commutes with infinite products, i.e. that $F^{\vee}$ has a left adjoint. This is equivalent to the strong continuity of $F,$ as required.
\end{proof}

\subsection{Right trace-class morphisms of functors}

Again, we fix a rigid $\bE_1$-monoidal category $\cE.$

\begin{defi}\label{def:right_trace_class} Let $F,G:\cC\to\cD$ be continuous $\cE$-linear functors between presentable stable left $\cE$-modules, and $F^R:\cD\to\cC$ the right adjoint to $F$. We say that a morphism $\varphi:F\to G$ is right trace-class if the associated morphism $\id_{\cC}\to F^R\circ G$ factors through $F^{R,\cont}\circ G.$ In other words, there exists a morphism $\wt{\varphi}:\id_{\cC}\to F^{R,\cont}\circ G$ such that $\varphi$ is given by the composition
\begin{equation*}
F\xto{F\wt{\varphi}}F\circ F^{R,\cont}\circ G\to F\circ F^R\circ G\to G. 	
\end{equation*}\end{defi}

\begin{example}
Let $\cE=\Sp,$ and let $\cC$ be a dualizable category. Identifying $\cC$ with $\Fun^L(\Sp,\cC),$ we see that a morphism $x\to y$ in $\cC$ is compact if and only if the associated morphism of functors is right trace-class.
\end{example}

We have the following basic properties of right trace-class maps.

\begin{prop}\label{prop:right_trace_class_basics}
Let $\cC_1,\cC_2,\cC_3$ be presentable stable left $\cE$-modules.
\begin{enumerate}[label=(\roman*), ref=(\roman*)]
\item For a continuous $\cE$-linear functor $F:\cC_1\to\cC_2,$ the identity map $\id:F\to F$ is right trace-class if and only if $F$ is strongly continuous. \label{identity_map_right_trace_class}
\item For a composable pair of maps $\varphi:F\to G,$ $\psi:G\to H$ in $\Fun^L_{\cE}(\cC_1,\cC_2),$ if $\varphi$ or $\psi$ is right trace-class, then the composition $\psi\circ \varphi$ is also right trace-class. \label{right_trace_class_2_sided_ideal}
\item Given right trace-class morphisms $\varphi:F_1\to G_1$ in $\Fun^L_{\cE}(\cC_1,\cC_2)$ and $\psi:F_2\to G_2$ in $\Fun^L_{\cE}(\cC_2,\cC_3),$ the map
\begin{equation*}
\psi\varphi:F_2\circ F_1\to G_2\circ G_1
\end{equation*}
is right trace-class in $\Fun^L_{\cE}(\cC_1,\cC_3).$ \label{right_trace_class_between_compositions}
\end{enumerate}
\end{prop}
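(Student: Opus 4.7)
The plan is to treat the three parts in order, each by a direct manipulation of the defining factorization through $F^{R,\cont}$.

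For part (i), the ``if'' direction is immediate: if $F$ is strongly continuous then $F^R$ is continuous, so the counit $F^{R,\cont}\to F^R$ is an isomorphism and the unit $\eta_F:\id_{\cC_1}\to F^R\circ F$ lifts trivially. For the converse, given a witness $\wt\eta:\id_{\cC_1}\to F^{R,\cont}\circ F$, I would produce an inverse to the counit $F^{R,\cont}\to F^R$ by hand. Define $\sigma:F^R\to F^{R,\cont}$ by
\[
\sigma_y:\ F^R(y)\xto{\wt\eta_{F^R(y)}}F^{R,\cont}\circ F\circ F^R(y)\xto{F^{R,\cont}(\epsilon_y)}F^{R,\cont}(y),
\]
where $\epsilon$ is the counit of $F\dashv F^R$. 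The factorization condition on $\wt\eta$ combined with the triangle identity shows that $(F^{R,\cont}\to F^R)\circ\sigma=\id_{F^R}$. To see $\sigma$ is also a left inverse, I invoke the universal property of $(-)^{\cont}$: since $F^{R,\cont}$ lies in $\Fun^L_{\cE}(\cC_2,\cC_1)$, endomorphisms of $F^{R,\cont}$ there correspond bijectively, via post-composition with the counit, to morphisms $F^{R,\cont}\to F^R$ in $\Fun^{\lax,\acc}_{\cE}$. Both $\id_{F^{R,\cont}}$ and $\sigma\circ(F^{R,\cont}\to F^R)$ project to the counit, hence they coincide. Thus $F^{R,\cont}\cong F^R$, $F^R$ is continuous, and $F$ is strongly continuous.

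For part (ii), I split into two cases. If $\varphi:F\to G$ is right trace-class with witness $\wt\varphi:\id\to F^{R,\cont}\circ G$ and $\psi:G\to H$ is arbitrary, set $\wt{\psi\varphi}:=F^{R,\cont}(\psi)\circ\wt\varphi$; naturality of the counit $F^{R,\cont}\to F^R$ together with the elementary identity $\widehat{\psi\varphi}=F^R(\psi)\circ\widehat{\varphi}$ verifies the lift. If instead $\psi$ is right trace-class with witness $\wt\psi:\id\to G^{R,\cont}\circ H$ and $\varphi$ is arbitrary, use the mate $\varphi^*:G^R\to F^R$ of $\varphi$; applying $(-)^{\cont}$ yields $\varphi^{*,\cont}:G^{R,\cont}\to F^{R,\cont}$ compatible with the counits, and $\wt{\psi\varphi}:=(\varphi^{*,\cont}H)\circ\wt\psi$ works thanks to the mate identity $\widehat{\psi\varphi}=(\varphi^*H)\circ\widehat\psi$ (itself a consequence of $\varphi^*G\circ\eta_G=\widehat\varphi$ and naturality of $\varphi^*$).

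For part (iii), given witnesses $\wt\varphi:\id_{\cC_1}\to F_1^{R,\cont}G_1$ and $\wt\psi:\id_{\cC_2}\to F_2^{R,\cont}G_2$, form the continuous natural transformation
\[
\id_{\cC_1}\xto{\wt\varphi}F_1^{R,\cont}G_1\xto{F_1^{R,\cont}\wt\psi G_1}F_1^{R,\cont}F_2^{R,\cont}G_2G_1.
\]
The functor $F_1^{R,\cont}\circ F_2^{R,\cont}$ is continuous and carries a natural transformation to $(F_2F_1)^R=F_1^RF_2^R$ (obtained by whiskering the two counits), so the universal property of $(-)^{\cont}$ yields a factorization $F_1^{R,\cont}F_2^{R,\cont}\to(F_2F_1)^{R,\cont}$, producing the desired lift $\wt{\psi\varphi}:\id_{\cC_1}\to(F_2F_1)^{R,\cont}G_2G_1$. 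Its projection to $(F_2F_1)^R G_2G_1$ is computed by tracing naturality and seen to equal $F_1^R\widehat\psi G_1\circ\widehat\varphi=\widehat{\psi\varphi}$, as required. The principal obstacle is that continuous approximation does not respect composition of functors: since the $F_i^R$ are typically only lax continuous, one does not in general have $(F_2F_1)^{R,\cont}\simeq F_1^{R,\cont}F_2^{R,\cont}$. The argument succeeds precisely because $F_1^{R,\cont}\circ F_2^{R,\cont}$ is nevertheless continuous, so the universal property of $(-)^{\cont}$ produces the needed comparison map. A subtlety of the same flavor appears in part (i): one cannot directly produce a left adjoint to $F^{R,\cont}$, so the inverse of the counit must be constructed by hand and then bootstrapped from a one-sided to a two-sided inverse via the universal property.
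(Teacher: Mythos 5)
Your proof is correct and follows essentially the same strategy as the paper's: part (iii) is word-for-word the paper's argument (compose the two witnesses and then post-compose with the comparison map $F_1^{R,\cont}\circ F_2^{R,\cont}\to (F_2\circ F_1)^{R,\cont}$ coming from the universal property of $(-)^{\cont}$); part (ii) is what the paper dismisses as ``formal'', and your two-case decomposition via post-composition with $F^{R,\cont}(\psi)$ and pre-composition with $\varphi^{*,\cont}$ is exactly the intended formalism.

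The one place where you depart from the paper is in part (i). The paper constructs the same map $\sigma=F^{R,\cont}\epsilon\circ\wt\eta F^R:F^R\to F^{R,\cont}$, checks via the triangle identity that $c\circ\sigma=\id_{F^R}$, and then simply invokes the fact that a retract (in the lax functor category) of a colimit-preserving functor is colimit-preserving: for any diagram $K$, the canonical map $\colim F^R(K)\to F^R(\colim K)$ is a retract of the isomorphism $\colim F^{R,\cont}(K)\to F^{R,\cont}(\colim K)$, hence an isomorphism. You instead bootstrap $\sigma$ to a two-sided inverse of the counit by noting that $\sigma\circ c$ and $\id_{F^{R,\cont}}$ have the same image under the adjunction bijection $\End_{\Fun^L_\cE}(F^{R,\cont})\simeq \Hom_{\Fun^{\lax,\acc}_\cE}(F^{R,\cont},F^R)$. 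Both are valid; the retract argument is a bit more economical, while your version gives the stronger (if unneeded) conclusion that $c:F^{R,\cont}\to F^R$ is an equivalence. Your closing commentary about $(-)^{\cont}$ failing to respect composition is the right thing to flag; the phrase ``lax continuous'' for $F^R$ should read ``lax $\cE$-linear'' (the subtlety is in the $\cE$-linearity, not continuity), but the point stands.
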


\begin{proof}The statements \ref{right_trace_class_2_sided_ideal} and \ref{right_trace_class_between_compositions} are  formal consequences of Definition \ref{def:right_trace_class}. 
	
For example, to prove \ref{right_trace_class_between_compositions} we choose right trace-class witnesses $\wt{\varphi}:\id_{\cC_1}\to G_1^{R,\cont}\circ F_1,$ $\wt{\psi}:\id_{\cC_2}\to G_2^{R,\cont}\circ F_2,$ and take the composition
\begin{equation*}
\id_{\cC_1}\to G_1^{R,\cont}\circ F_1\to G_1^{R,\cont}\circ G_2^{R,\cont}\circ F_2\circ F_1\to (G_2\circ G_1)^{R,\cont} \circ (F_2\circ F_1),
\end{equation*} 
which is a trace-class witness for $\psi\varphi.$

The only statement which requires a proof is the ``only if'' direction in part \ref{identity_map_right_trace_class}. In this case the natural map $F^{R,\cont}\to F^R$ has a right inverse, given by the composition
\begin{equation*}
F^R\to (F^{R,\cont}\circ F)\circ F^R\cong F^{R,\cont}\circ (F\circ F^R)\to F^{R,\cont}.	
\end{equation*}
Thus, the functor $F^R$ is continuous, as required.\end{proof}

\subsection{Smoothness and properness}

Let $\cE$ be an $\bE_1$-monoidal rigid category. We recall the notions of smoothness and properness for dualizable left $\cE$-modules. These notions are straightforward generalizations of \cite[Definitions 8.2 and 8.8]{KonSob09}.

\begin{defi}
	Let $\cC$ be a dualizable left module over a rigid $\bE_1$-monoidal category $\cE.$
	\begin{enumerate}[label=(\roman*),ref=(\roman*)]
		\item $\cC$ is called smooth if the coevaluation functor $\coev:\Sp\to \cC^{\vee}\tens{\cE}\cC$ is strongly continuous, i.e. if the object $\coev(\bS)$ is compact in $\cC^{\vee}\tens{\cE}\cC.$ 
		\item $\cC$ is called proper if the evaluation functor $\cC\otimes\cC^{\vee}\to\cE$ is strongly continuous.     
	\end{enumerate}
\end{defi}

The following basic result relates the notions of smoothness and properness with the right trace-class maps of functors.

\begin{prop}\label{prop:criterions_smoothness_properness} Let $\cC$ be a dualizable left $\cE$-module. 
\begin{enumerate}[label=(\roman*), ref=(\roman*)]
\item The category $\cC$ is smooth over $\cE$ if and only if for any dualizable left $\cE$-module $\cD$ any right trace-class map $F\to G$ in the category $\Fun^L_{\cE}(\cC,\cD)$ is compact. \label{criterion_smoothness}

\item The category $\cC$ is proper if and only if for any dualizable left $\cE$-module $\cD$ any compact map $F\to G$ in the category $\Fun^L_{\cE}(\cC,\cD)$ is right trace-class. \label{criterion_properness}
\end{enumerate}
\end{prop}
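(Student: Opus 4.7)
My plan is to build both directions of both parts on the equivalence in $\Cat_{\cE}^{\dual}$
\[
\Fun^L_{\cE}(\cC,\cD)\simeq \cC^{\vee}\tens{\cE}\cD,\qquad \id_{\cC}\leftrightarrow \coev_{\cC/\cE}(\bS),
\]
which preserves both compact objects and compact morphisms. Under this identification, smoothness of $\cC$ is the statement that $\id_{\cC}$ is a compact object of $\Fun^L_{\cE}(\cC,\cC)$, and properness is the strong continuity of $\ev_{\cC/\cE}:\cC\tens{\cE}\cC^{\vee}\to\cE$.

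For (i), I would argue sufficiency as follows. Given a right trace-class $\varphi:F\to G$ with witness $\wt{\varphi}:\id_{\cC}\to F^{R,\cont}\circ G$, smoothness makes $\id_{\cC}$ a compact object, so $\wt{\varphi}$ is a morphism out of a compact object and hence is a compact morphism of $\Fun^L_{\cE}(\cC,\cC)$. Post-composing with the continuous $\cE$-linear functor $F\circ(-):\Fun^L_{\cE}(\cC,\cC)\to \Fun^L_{\cE}(\cC,\cD)$ preserves compact morphisms, and the further composition with the fixed counit map $F\circ F^R\circ G\to G$ preserves compactness as well, because compact morphisms form a two-sided ideal; tracing through Definition \ref{def:right_trace_class} this reconstructs $\varphi$. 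For necessity, I would specialize $\cD=\cC$ and $F=\id_{\cC}$: since $\id_{\cC}^{R,\cont}=\id_{\cC}$, every $\varphi:\id_{\cC}\to G$ is automatically right trace-class (take $\wt{\varphi}=\varphi$), hence compact by hypothesis. Thus every morphism out of $\id_{\cC}$ in $\Fun^L_{\cE}(\cC,\cC)$ is compact, which by the standard characterization of compact objects in dualizable categories forces $\id_{\cC}$ to be a compact object, i.e., $\cC$ to be smooth.

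For (ii), sufficiency I would handle by translating compactness of $\varphi$ via the equivalence above into a factorization through a compact object of $\cC^{\vee}\tens{\cE}\cD$, and then invoking the strong continuity of $\ev_{\cC/\cE}$ (together with the criterion of Proposition \ref{prop:criterion_strcont}) to lift the adjoint map $\tau(\varphi):\id_{\cC}\to F^R\circ G$ through the natural transformation $F^{R,\cont}\circ G\to F^R\circ G$, producing the required right trace-class witness. For necessity, I would test the hypothesis against families of continuous $\cE$-linear functors $F:\cC\to\cE$ coming from compact objects of $\cC^{\vee}$ and compact morphisms between them; right trace-classness of each such compact $\varphi$, pushed through the criterion of Proposition \ref{prop:criterion_strcont} applied to the evaluation pairing, propagates to the continuity of the right adjoint of $\ev_{\cC/\cE}$, which is properness.

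The hardest step is the sufficiency in (ii): extracting an honest continuous lift of $\tau(\varphi)$ from mere compactness of $\varphi$ requires carefully using the strong continuity of $\ev_{\cC/\cE}$ together with the $\bE_1$-monoidal (rather than symmetric) duality bookkeeping governed by the autoequivalence $\Phi_{\cE}$ of Proposition \ref{prop:autoequivalence_rigid_monoidal}. By contrast, the other three implications amount to largely formal manipulations with the functor-tensor-product equivalence and the basic ideal properties of compact and right trace-class morphisms recorded in Proposition \ref{prop:right_trace_class_basics}.
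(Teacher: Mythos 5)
Your framing matches the paper's: identify $\Fun^L_{\cE}(\cC,\cD)\simeq\cC^{\vee}\tens{\cE}\cD$, read smoothness as compactness of $\coev_{\cC/\cE}(\bS)$ and properness as strong continuity of $\ev_{\cC/\cE}$, and obtain the ``if'' directions by specializing $\cD$ (your argument for (i) with $F=G=\id_{\cC}$ is fine). However, your ``only if'' argument for (i) contains a false step. You claim that post-composing with the continuous functor $F\circ(-):\Fun^L_{\cE}(\cC,\cC)\to\Fun^L_{\cE}(\cC,\cD)$ preserves compact morphisms. This is not true in general: only \emph{strongly} continuous functors preserve compact morphisms, and $F\circ(-)$ is strongly continuous precisely when $F$ is (its right adjoint is $F^R\circ(-)$), whereas $F$ here is an arbitrary continuous $\cE$-linear functor. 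For a concrete failure, take $\cE=\Sp$ and $X\in\Sp$ not compact: the continuous endofunctor $-\otimes X$ sends the compact morphism $\id_{\bS}$ to $\id_X$, which is not compact. So you cannot conclude that $F\circ\wt{\varphi}$ is compact, and the chain of reasoning breaks.

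The paper sidesteps this by never leaving the right trace-class ideal, whose key closure properties are exactly Proposition \ref{prop:right_trace_class_basics}: the identity of a strongly continuous functor is right trace-class (part \ref{identity_map_right_trace_class}), and horizontal composition preserves right trace-class maps (part \ref{right_trace_class_between_compositions}). For the ``only if'' of (i), one composes $\id_{\coev_{\cC/\cE}}$ (right trace-class by smoothness) with $\id\boxtimes F\to\id\boxtimes G$ (right trace-class because $F\to G$ is) via part \ref{right_trace_class_between_compositions}, landing in $\Fun^L(\Sp,\cC^{\vee}\tens{\cE}\cD)$ where right trace-class coincides with compact by the Example following Definition \ref{def:right_trace_class}. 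For the ``only if'' of (ii), one dually composes $\id\boxtimes F\to\id\boxtimes G$ (right trace-class because $F\to G$ is compact) with $\id_{\ev_{\cC/\cE}\boxtimes\id}$ (right trace-class by properness), directly obtaining right trace-classness of $F\to G$; no explicit lift of $\tau(\varphi)$ is ever constructed, and neither Proposition \ref{prop:criterion_strcont} nor the autoequivalence $\Phi_{\cE}$ of Proposition \ref{prop:autoequivalence_rigid_monoidal} plays any role, so your sketch for this direction is pointed at the wrong tools. Likewise, the ``if'' direction of (ii) does not go through Proposition \ref{prop:criterion_strcont}: one takes $\cD=\cE$ and checks that $\ev_{\cC/\cE}$ sends boxtimes-products of compact morphisms to compact morphisms, once more via Proposition \ref{prop:right_trace_class_basics} \ref{right_trace_class_between_compositions} together with the right trace-class/compact dictionary for functors out of $\cE$.
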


\begin{proof}
We prove \ref{criterion_smoothness}. For the ``only if'' direction, suppose that $\cC$ is smooth. 
If $F\to G$ is a right trace-class map in $\Fun^L_{\cE}(\cC,\cD),$ then by Proposition \ref{prop:right_trace_class_basics} \ref{right_trace_class_between_compositions} the induced map between the compositions
\begin{equation*}
\Sp\xto{\coev_{\cC/\cE}}\cC^{\vee}\tens{\cE}\cC \xto{\id\boxtimes F} \cC^{\vee}\tens{\cE}\cD,\quad \Sp\xto{\coev_{\cC/\cE}}\cC^{\vee}\tens{\cE}\cC \xto{\id\boxtimes G} \cC^{\vee}\tens{\cE}\cD
\end{equation*}
is right trace-class, which exactly means that the morphism $F\to G$ is compact in $\Fun^L_{\cE}(\cC,\cD).$

For the ``if'' direction, it suffices to put $\cD=\cC$ and consider the identity map $\id_{\cC}\to\id_{\cC},$ which is right trace-class, hence compact.

The proof of \ref{criterion_properness} is similar. For the ``only if'' direction, suppose that $\cC$ is proper. Let $F\to G$ be a compact map in $\Fun^L_{\cE}(\cC,\cD).$ 
Applying Proposition \ref{prop:right_trace_class_basics} \ref{right_trace_class_between_compositions} again, we see that the induced map between the compositions
\begin{equation*}
F:\cC\xto{\id\boxtimes F}\cC\otimes\cC^{\vee}\tens{\cE}\cD\xto{\ev_{\cC/\cE}\boxtimes\id}\cD,\quad G:\cC\xto{\id\boxtimes G}\cC\otimes\cC^{\vee}\tens{\cE}\cD\xto{\ev_{\cC/\cE}\boxtimes\id}\cD
\end{equation*}
is right trace-class.

For the ``if'' direction, take $\cD=\cE.$ We need to show that for a compact map $x\to y$ in $\cC$ and a compact map $x'\to y'$ in $\cC^{\vee},$
the induced map $\ev_{\cC/\cE}(x\boxtimes x')\to \ev_{\cC/\cE}(y\boxtimes y')$ is compact in $\cE.$ By assumption, the map $x'\to y'$ is right trace-class in $\Fun_{\cE}^L(\cC,\cE).$ Therefore,
by Proposition \ref{prop:right_trace_class_basics} \ref{right_trace_class_between_compositions} the induced map between the compositions
\begin{equation*}
\cE\xto{x}\cC\xto{x'}\cE,\quad \cE\xto{y}\cC\xto{y'}\cE
\end{equation*}
is right trace-class. This exactly means that the map $\ev_{\cC/\cE}(x\boxtimes x')\to \ev_{\cC/\cE}(y\boxtimes y')$ is compact in $\cE,$ as required. 
\end{proof}

As a special case, we have the well-known relation between compactness and strong continuity of functors.

\begin{cor}
	Let $\cC$ be a dualizable left module over a rigid $\bE_1$-monoidal category $\cE.$
	\begin{enumerate}[label=(\roman*),ref=(\roman*)]
		\item If $\cC$ is smooth, then for any dualizable left $\cE$-module $\cD$ and for any strongly continuous $\cE$-linear functor $\cC\to\cD,$ the associated object of $\cC^{\vee}\tens{\cE}\cD$ is compact.
		\item If $\cC$ is proper, then for any dualizable left $\cE$-module $\cD$ and for any compact object of $\cC^{\vee}\tens{\cE}\cD,$ the associated functor $\cC\to\cD$ is strongly continuous.  
	\end{enumerate}
\end{cor}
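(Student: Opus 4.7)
This corollary is essentially a reformulation of Proposition \ref{prop:criterions_smoothness_properness} in terms of the identity morphism of a functor, so the plan is to reduce both statements to the criteria already established there. Throughout we use the standard equivalence
\begin{equation*}
\Fun^L_{\cE}(\cC,\cD)\simeq \cC^{\vee}\tens{\cE}\cD,
\end{equation*}
under which a continuous $\cE$-linear functor $F:\cC\to\cD$ corresponds to an object that we also denote by $F$.

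The plan rests on two observations, both of which are general facts that I would record as the first step. First, by Proposition \ref{prop:right_trace_class_basics}\ref{identity_map_right_trace_class}, the identity morphism $\id_F:F\to F$ is right trace-class if and only if $F$ is strongly continuous. Second, in any presentable stable category $\cA$, an object $a\in\cA$ is compact if and only if the identity morphism $\id_a$ is a compact morphism in the sense of \cite[Section 1.8]{E24}. The nontrivial direction here follows by unpacking the definition of compact morphism: if $\id_a$ lifts to a map $\cY(a)\to\hat{\cY}(a)$, then combining with the canonical comparison $\hat{\cY}(a)\to\cY(a)$ yields mutually inverse morphisms identifying $\cY(a)$ with $\hat{\cY}(a)$, which is precisely the statement that $a$ is compact.

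With those observations in hand, both parts are immediate. For part (i), assume $\cC$ is smooth and $F$ is strongly continuous. Then $\id_F$ is right trace-class by the first observation, hence compact by Proposition \ref{prop:criterions_smoothness_properness}\ref{criterion_smoothness}, hence $F$ is compact as an object of $\cC^{\vee}\tens{\cE}\cD$ by the second observation. For part (ii), assume $\cC$ is proper and $F\in\cC^{\vee}\tens{\cE}\cD$ is compact. Then $\id_F$ is a compact morphism by the second observation, hence right trace-class by Proposition \ref{prop:criterions_smoothness_properness}\ref{criterion_properness}, hence $F$ is strongly continuous by the first observation.

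There is no real obstacle: the corollary is the special case of Proposition \ref{prop:criterions_smoothness_properness} obtained by taking $F=G$ and $\varphi=\id_F$, together with the translation between compactness (resp.\ strong continuity) of the object $F\in\cC^\vee\tens{\cE}\cD$ and compactness (resp.\ right trace-classness) of its identity morphism. The only point requiring any care is to verify the equivalence ``$a$ compact $\iff$ $\id_a$ compact,'' but this is a direct consequence of the definition of a compact morphism recalled before Proposition \ref{prop:trace_class_iff_compact}.
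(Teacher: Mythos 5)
Your proof is correct and is precisely the argument the paper implicitly intends for this corollary: reduce both parts to the special case $\varphi=\id_F$ of Proposition \ref{prop:criterions_smoothness_properness}, using Proposition \ref{prop:right_trace_class_basics}\ref{identity_map_right_trace_class} for the translation between strong continuity of $F$ and right trace-classness of $\id_F$, and the tautology that an object is compact iff its identity morphism is compact. The paper leaves the corollary without proof because it expects exactly this two-line reduction, so there is nothing to compare.
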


\subsection{Algebra of objects in the tensor products}

In this subsection we explain some basic operations on objects of the relative tensor products of dualizable categories. They are straightforward generalizations of the usual operations on bimodules.

We fix a rigid $\bE_1$-monoidal category $\cE.$

\begin{defi}\label{def:operations_objects_tensor_products}
\begin{enumerate}[label=(\roman*),ref=(\roman*)]
\item Let $\cC$ be a dualizable right $\cE$-module, and let $\cC',\cD$ be dualizable left $\cE$-modules. We use the symbol $\ev_{\cD/\cE}$ for the following composition:
\begin{equation*}
\ev_{\cD/\cE}:(\cC\tens{\cE}\cD)\otimes (\cD^{\vee}\tens{\cE}\cC')\xto{\id\boxtimes\ev_{\cD/\cE}\otimes\id}\cC\tens{\cE}\cE\tens{\cE}\cC'\simeq \cC\tens{\cE}\cC'.
\end{equation*}
\label{ev_D/E}
\item Let $\cC$ and $\cC'$ be dualizable right $\cE$-modules, and let $\cD$ be a dualizable left $\cE$-module. For $M\in \cC\tens{\cE}\cD,$ $N\in\cC'\tens{\cE}\cD,$
we denote by $\Hom_{\cD}(M,N)$ the object of $\cC'\tens{\cE}\cC^{\vee}$ with the following universal property.
\begin{equation*}
\Hom_{\cC'\tens{\cE}\cC^{\vee}}(P,\Hom_{\cD}(M,N))\cong \Hom_{\cC'\tens{\cE}\cD}(\ev_{\cC^{\vee}/\cE}(P,M),N),\quad P\in\cC'\tens{\cE}\cC^{\vee}.
\end{equation*} \label{Hom_D}
\item Let $\cC$ be a dualizable right $\cE$-module and $\cD$ a dualizable left $\cE$-module. For $M\in \cC\tens{\cE}\cD,$ we put 
\begin{equation*}
M_{\cD}^{\vee} = \Hom_{\cD}(M,\coev_{\cD/\cE}(\bS))\in \cD^{\vee}\tens{\cE}\cC^{\vee}. 
\end{equation*}
\end{enumerate} \label{M_D^v}
\end{defi} 

The following example relates the operations from Definition \ref{def:operations_objects_tensor_products} with the familiar operations with bimodules.

\begin{example} Suppose for simplicity that $\cE = \Mod_{\mk},$ where $\mk$ is an $\bE_{\infty}$-ring. Let $\cC=\Mod\hy C,$ $\cC'=\Mod\hy C',$ $\cD=\Mod\hy D$ for some $\bE_1\hy\mk$-algebras $C,C',D.$
\begin{enumerate}[label=(\roman*),ref=(\roman*)]
\item We have
\begin{equation*}
\cC\tens{\mk}\cD\simeq \Mod\hy(C\tens{\mk}D),\quad \cD^{\vee}\tens{\mk}\cC'\simeq \Mod\hy(D^{op}\tens{\mk}C'),
\end{equation*}
and for $M\in \cC\tens{\mk}\cD,$ $N\in\cD^{\vee}\tens{\mk}\cC'$ we have
\begin{equation*}
\ev_{\cD/\cE}(M,N)\cong M\tens{D} N\in \Mod\hy(C\tens{\mk}C')\simeq \cC\tens{\mk}\cC'.
\end{equation*}
\item Similarly, for $M\in \cC\tens{\mk}\cD,$ $N\in \cC'\tens{\mk}\cD$ we have
\begin{equation*}
\Hom_{\cD}(M,N)\cong \Hom_D(M,N)\in \Mod\hy(C'\tens{\mk}C^{op})\simeq \cC'\tens{\mk}\cC^{\vee}.
\end{equation*} \label{Hom_of_bimodules_over_rings}
\item As a special case of \ref{Hom_of_bimodules_over_rings}, for $M\in \cC\tens{\mk}\cD$ we have
\begin{equation*}
M_{\cD}^{\vee}\cong \Hom_D(M,D)\in \Mod\hy(D^{op}\tens{\mk}C^{op})\simeq \cD^{\vee}\tens{\mk}\cC^{\vee}.
\end{equation*} 
\end{enumerate} 
\end{example}

The following is a tautological reformulation of the operations from Definition \ref{def:operations_objects_tensor_products} in terms of $\cE$-linear functors between dualizable left $\cE$-modules.

\begin{prop}
We keep the notation from Definition \ref{def:operations_objects_tensor_products}. The operation from \ref{ev_D/E} corresponds to the composition bifunctor
\begin{equation*}
\Fun_{\cE}^L(\cC^{\vee},\cD)\times\Fun_{\cE}^L(\cD,\cC')\to \Fun_{\cE}^L(\cC^{\vee},\cC').
\end{equation*}
The operation from \ref{Hom_D} corresponds to the bifunctor
\begin{equation*}
\Fun_{\cE}^L(\cC^{\vee},\cD)^{op}\times \Fun_{\cE}^L(\cC^{'\vee},\cD)\to \Fun_{\cE}^L(\cC^{'\vee},\cC^{\vee}),\quad (F,G)\mapsto (F^R\circ G)^{\cont}.
\end{equation*}
The operation from \ref{M_D^v} corresponds to the functor
\begin{equation*}
\Fun^L_{\cE}(\cC^{\vee},\cD)\to\Fun_{\cE}^L(\cD,\cC^{\vee}),\quad F\mapsto F^{R,\cont}.
\end{equation*}
\end{prop}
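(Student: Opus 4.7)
The proposition is a straightforward unwinding of the universal properties, so I would structure the proof as three short verifications, each one reducing to standard adjunction calculus for the relative duality $\cC \tens{\cE} \cD \simeq \Fun^L_\cE(\cC^\vee, \cD)$. The plan is to fix this equivalence once (as provided by the self-duality for dualizable $\cE$-modules from Proposition~\ref{prop:dualizable_over_E_same_as_modules}\ref{relative_Cat^dual_is_category_of_modules} and Remark~\ref{rem:relative_and_absolute_duality}), and then handle each of the three bullets in turn.

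First, for the composition statement, the key point is that under the identification $\cC \tens{\cE} \cD \simeq \Fun^L_\cE(\cC^\vee,\cD)$, the internal evaluation $\ev_{\cD/\cE}: (\cC\tens{\cE}\cD)\otimes(\cD^\vee\tens{\cE}\cC') \to \cC\tens{\cE}\cC'$ is by construction the composition of bimodule functors, since it is induced by the abstract $\ev_{\cD/\cE}: \cD\otimes\cD^\vee \to \cE$ of the dualizability datum of $\cD$ over $\cE$. So once one has written $M\leftrightarrow F:\cC^\vee\to\cD$ and $N\leftrightarrow G:\cD\to\cC'$, the object $\ev_{\cD/\cE}(M,N)$ corresponds precisely to $G\circ F$. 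This is essentially a tautology given the way composition is implemented in the $(\infty,2)$-category of dualizable $\cE$-modules (see also Remark~\ref{rem:relative_and_absolute_duality}\ref{non_trivial_duality_for_tensor_products}).

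Second, for $\Hom_\cD(-,-)$, I would start from the defining universal property of Definition~\ref{def:operations_objects_tensor_products}\ref{Hom_D} and translate it through the equivalence. If $M\leftrightarrow F$, $N\leftrightarrow G$, $P\leftrightarrow H:\cC^{'\vee}\to\cC^\vee$, then by the first bullet $\ev_{\cC^\vee/\cE}(P,M) \leftrightarrow F\circ H$, so the universal property becomes
\begin{equation*}
\Hom_{\Fun^L_\cE(\cC^{'\vee},\cC^\vee)}(H,\Hom_\cD(M,N)) \cong \Hom_{\Fun^L_\cE(\cC^{'\vee},\cD)}(F\circ H, G).
\end{equation*}
The ordinary adjunction $F\dashv F^R$ rewrites the right-hand side as $\Hom(H, F^R\circ G)$, but here $F^R\circ G$ is only lax $\cE$-linear and accessible, not continuous. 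The passage to $(F^R\circ G)^{\cont}$ is then forced by the very definition of the continuous approximation (the right adjoint to the inclusion $\Fun^L_\cE\hookrightarrow \Fun^{\lax,\acc}_\cE$), which gives exactly the universal property we need on $\Fun^L_\cE(\cC^{'\vee},\cC^\vee)$. This is the only step where one has to be a little careful about the distinction between continuous and merely accessible functors; the formalism from Proposition~\ref{prop:properties_of_F^cont} takes care of it.

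Finally, the third bullet is a corollary of the second: $\coev_{\cD/\cE}(\bS)\in \cD^\vee\tens{\cE}\cD$ corresponds under the equivalence $\cD^\vee\tens{\cE}\cD \simeq \Fun^L_\cE(\cD,\cD)$ to the identity functor $\id_\cD$ (this is one of the triangle identities for the duality datum). Hence by the previous bullet, $M^\vee_\cD = \Hom_\cD(M,\coev_{\cD/\cE}(\bS))$ corresponds to $(F^R\circ \id_\cD)^{\cont} = F^{R,\cont}$, as claimed. The only mild obstacle throughout is bookkeeping the right versus left $\cE$-module structures when transporting $\coev$ and $\ev$, but in the rigid case Proposition~\ref{prop:dualizable_over_E_same_as_modules}\ref{lax_is_strict} guarantees that any lax $\cE$-linear functor appearing (e.g.\ $F^R$) is automatically strictly $\cE$-linear, so no ambiguity arises.
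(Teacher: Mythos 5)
Your argument is correct and supplies precisely the "straightforward" unwinding that the paper leaves to the reader (the paper's own proof is just the remark "This is straightforward."). You correctly fix the standing identifications $\cC\tens{\cE}\cD \simeq \Fun^L_\cE(\cC^\vee,\cD)$, reduce the first bullet to the tautology that relative evaluation is composition in the $(\infty,2)$-category of dualizable $\cE$-bimodules, translate the defining universal property of $\Hom_\cD$ through the adjunction $F\dashv F^R$ and the right adjoint $(-)^{\cont}$ of the inclusion $\Fun^L_\cE\hookrightarrow \Fun^{\lax,\acc}_\cE$ (Proposition \ref{prop:properties_of_F^cont}), and correctly note that the third bullet follows from the second plus the triangle identity $\coev_{\cD/\cE}(\bS)\leftrightarrow\id_\cD$; the remark that Proposition \ref{prop:dualizable_over_E_same_as_modules}\ref{lax_is_strict} removes any lax-vs-strict ambiguity for $F^R$ is exactly the point one needs to be careful about and you handled it.
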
 

\begin{proof}
This is straightforward.
\end{proof}

We will need the following properties of the above operations in the case when one of the factors in the tensor product is proper.

\begin{prop}\label{prop:general_diagonal_arrows}
Let $\cC,\cC',\cD$ be dualizable left $\cE$-modules, and suppose that $\cC$ is proper over $\cE.$ Suppose that $N_1\to N_2$ is a compact morphism in $\cC^{\vee}\tens{\cE}\cD.$ 
\begin{enumerate}[label=(\roman*), ref=(\roman*)]
\item There exists a map 
\begin{equation*}
\Hom_{\cD}(-,N_1)\to \ev_{\cD}(N_2,(-)_{\cD}^{\vee})\quad\text{in }\Fun((\cC^{'\vee}\tens{\cE}\cD)^{op},\cC^{\vee}\tens{\cE}\cC'),
\end{equation*}  making the following diagram commute:
\begin{equation*}
		\begin{tikzcd}
			\ev_{\cD}(N_1,(-)_{\cD}^{\vee})\ar[d]\ar[r] & \ev_{\cD}(N_2,(-)_{\cD}^{\vee})\ar[d]\\
			\Hom_{\cD}(-,N_1)\ar[r]\ar[ru] &  \Hom_{\cD}(-,N_2).
		\end{tikzcd}	
\end{equation*}\label{first_diagonal_arrow}
\item There exists a map $\Hom_{\cD^{\vee}}((N_1)_{\cD}^{\vee},-)\to \ev_{\cD}(N_2,-),$ in $\Fun(\cD^{\vee}\tens{\cE}\cC',\cC^{\vee}\tens{\cE}\cC),$ making the following diagram commute:
\begin{equation*}
\begin{tikzcd}
\ev_{\cD}(N_1,-)\ar[r]\ar[d] & \ev_{\cD}(N_2,-)\ar[d]\\
\Hom_{\cD^{\vee}}((N_1)_{\cD}^{\vee},-)\ar[r]\ar[ru] & \Hom_{\cD^{\vee}}((N_2)_{\cD}^{\vee},-)
\end{tikzcd}
\end{equation*}\label{second_diagonal_arrow}
\end{enumerate}
\end{prop}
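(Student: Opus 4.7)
The plan is to translate the statement into the tautological language of continuous $\cE$-linear functors via the equivalence $\cC^{\vee}\tens{\cE}\cD\simeq\Fun^L_{\cE}(\cC,\cD),$ use properness of $\cC$ to upgrade the compactness of the morphism $N_1\to N_2$ to a right trace-class structure, and then construct each diagonal arrow by inserting the trace-class witness into an appropriate composition of functors.

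First I will identify each $N_i$ with a continuous $\cE$-linear functor $F_i\colon\cC\to\cD,$ under which $(N_i)_{\cD}^{\vee}$ corresponds to $F_i^{R,\cont}\colon\cD\to\cC.$ For a test object $M\in\cC^{'\vee}\tens{\cE}\cD$ with associated functor $F_M\colon\cC'\to\cD,$ the tautological reformulations after Definition \ref{def:operations_objects_tensor_products} give
\begin{equation*}
\ev_{\cD}(N_i,M_{\cD}^{\vee})\leftrightarrow F_M^{R,\cont}\circ F_i,\qquad \Hom_{\cD}(M,N_i)\leftrightarrow (F_M^R\circ F_i)^{\cont},
\end{equation*}
with the vertical arrows $\ev_{\cD}\to\Hom_{\cD}$ induced by the natural approximation $F_M^{R,\cont}\to F_M^R$ together with the universal property of $(-)^{\cont}$ (Proposition \ref{prop:properties_of_F^cont}). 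Since $\cC$ is proper, Proposition \ref{prop:criterions_smoothness_properness}(ii) upgrades the compact morphism $\varphi\colon F_1\to F_2$ to a right trace-class one, so by Definition \ref{def:right_trace_class} one obtains a witness $\wt\varphi\colon\id_{\cC}\to F_1^{R,\cont}\circ F_2$ whose postcomposition with $F_1^{R,\cont}\to F_1^R$ reproduces the adjunct of $\varphi.$

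By the universal property of $(-)^{\cont}$, constructing the diagonal in part (i) reduces to giving a natural transformation $F_M^R F_1\to F_M^{R,\cont}F_2$ of accessible $\cE$-linear functors $\cC\to\cC'.$ I will obtain this from $\wt\varphi$ by the composition
\begin{equation*}
F_M^R F_1\xrightarrow{F_M^R F_1\,\wt\varphi} F_M^R F_1 F_1^{R,\cont} F_2\longrightarrow F_M^{R,\cont}F_2,
\end{equation*}
where the second (collapse) arrow is produced by combining the counit $F_1 F_1^{R,\cont}\to F_1 F_1^R\to\id_{\cD}$ with the pointwise description of $F_M^{R,\cont}$ from Proposition \ref{prop:properties_of_F^cont}(i)--(iii). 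The commutativity of both triangles will then follow from the defining factorization property of the witness $\wt\varphi.$ Part (ii) will be carried out by a parallel argument with sources and targets dualized: one first identifies $\Hom_{\cD^{\vee}}((N_i)_{\cD}^{\vee},-),$ via its universal property, with the appropriate continuous approximation of a composition involving $(F_i^{R,\cont})^R$ and $F_P,$ and then builds the diagonal by inserting $\wt\varphi$ in the corresponding position.

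The hard part will be the collapse step, i.e.\ producing the natural transformation $F_M^R F_1 F_1^{R,\cont}\to F_M^{R,\cont}$ of accessible functors $\cD\to\cC'$ that is simultaneously compatible with the counit of $F_1\dashv F_1^R$ and with the approximation $F_M^{R,\cont}\to F_M^R$; this delicate compatibility is precisely what the right trace-class structure of $\varphi$ (via $\wt\varphi$) is designed to provide, and its verification will be the technical core of the proof in both parts.
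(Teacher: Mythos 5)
Your overall plan—translate $N_i$ into functors $F_i\colon\cC\to\cD$, use properness of $\cC$ via Proposition \ref{prop:criterions_smoothness_properness} to upgrade the compact morphism $F_1\to F_2$ to a right trace-class one, and build the diagonal by inserting the witness $\wt\varphi\colon\id_{\cC}\to F_1^{R,\cont}F_2$—is exactly the paper's strategy, and your tautological identifications are correct. But the order of operations in your construction contains a genuine gap: the ``collapse'' map $F_M^R F_1 F_1^{R,\cont}\to F_M^{R,\cont}$ that you propose to produce at the lax level does not exist. The counit only gives $F_M^R F_1 F_1^{R,\cont}\to F_M^R,$ and the natural comparison $F_M^{R,\cont}\to F_M^R$ points the \emph{wrong} way, so there is no factorization through $F_M^{R,\cont}$. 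Nor is the trace-class structure of $\varphi$ going to supply one: that structure is spent entirely in producing $\wt\varphi$ and plays no role in the collapse.

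The paper avoids this by applying $(-)^{\cont}$ to $G^R F_1$ \emph{first}, i.e.\ it defines the diagonal as the composition
\begin{equation*}
(G^R F_1)^{\cont}\;\xrightarrow{\;\wt\varphi\;}\;(G^R F_1)^{\cont}\circ F_1^{R,\cont}\circ F_2\;\longrightarrow\;(G^R F_1\circ F_1^R)^{\cont}\circ F_2\;\longrightarrow\;G^{R,\cont}\circ F_2,
\end{equation*}
where the middle arrow comes from Proposition \ref{prop:properties_of_F^cont}\ref{composing_with_cont}: since $(G^R F_1)^{\cont}$ is continuous and $\cD$ is dualizable, one has $(G^R F_1)^{\cont}\circ F_1^{R,\cont}\cong\bigl((G^R F_1)^{\cont}\circ F_1^R\bigr)^{\cont}$, and the counit $(G^R F_1)^{\cont}\to G^R F_1$ then maps this to $(G^R F_1 F_1^R)^{\cont}$. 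The last arrow is the counit $F_1 F_1^R\to\id_{\cD}$. The collapse is thus a purely formal manipulation of $(-)^{\cont}$, available only \emph{after} the outer functor has already been continuously approximated. Your plan of building a lax-level map $F_M^R F_1\to F_M^{R,\cont}F_2$ and only then applying $(-)^{\cont}$ cannot be made to work, because $F_M^R F_1 F_1^{R,\cont}$ is not continuous (as $F_M^R$ is not) and hence the universal property of the coreflection $(-)^{\cont}$ gives you no handle on it. Rewriting the construction with $(G^R F_1)^{\cont}$ on the left from the start, and invoking \ref{prop:properties_of_F^cont}\ref{composing_with_cont} for the collapse, closes the gap; the same repair applies to part \ref{second_diagonal_arrow}.
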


\begin{proof} 
Denote by $F_1:\cC\to\cD$ resp. $F_2:\cC\to\cD$ the $\cE$-linear functor corresponding to $N_1$ resp. $N_2.$ By Proposition \ref{prop:criterions_smoothness_properness} the map $F_1\to F_2$ is right trace-class, i.e. it comes from a map $\id_{\cC}\to F_1^{R,\cont}\circ F_2$. 

To prove \ref{first_diagonal_arrow}, for an $\cE$-linear continuous functor $G:\cC'\to\cD$ we need to construct a map $(G^R\circ F_1)^{\cont}\to G^{R,\cont}\circ F_2,$ functorially in $G,$ making the following diagram commute:
\begin{equation}\label{eq:first_diagram_of_functors_diagonal_map}
\begin{tikzcd}
G^{R,\cont}\circ F_1\ar[d]\ar[r] & G^{R,\cont}\circ F_2 \ar[d]\\
(G^R\circ F_1)^{\cont}\ar[r]\ar[ru] & (G^R\circ F_2)^{\cont}.
\end{tikzcd}	
\end{equation}
We define this diagonal map to be the composition
\begin{equation*}
(G^R\circ F_1)^{\cont}\to (G^R\circ F_1)^{\cont}\circ F_1^{R,\cont}\circ F_2\to (G^R\circ F_1\circ F_1^R)^{\cont}\circ F_2\to G^{R,\cont}\circ F_2.
\end{equation*}
It is straightforward to check that the diagram \eqref{eq:first_diagram_of_functors_diagonal_map} naturally commutes.

Now we prove \ref{second_diagonal_arrow}. Let $G:\cC^{'\vee}\to\cD^{\vee}$ be a continuous $\cE$-linear functor. We need to construct a map $((F_1^{R,\cont})^{\vee,R}\circ G)^{\cont}\to F_2^{\vee}\circ G$ in $\Fun_{\cE}^L(\cC^{'\vee},\cC^{\vee}),$ making the following diagram commute: 
\begin{equation}\label{eq:second_diagram_of_functors_diagonal_map}
\begin{tikzcd}
	F_1^{\vee}\circ G\ar[d]\ar[r] & F_2^{\vee}\circ G \ar[d]\\
	((F_1^{R,\cont})^{\vee,R}\circ G)^{\cont}\ar[r]\ar[ru] & ((F_2^{R,\cont})^{\vee,R}\circ G)^{\cont}.
\end{tikzcd}	
\end{equation}
We define this diagonal map to be the composition
\begin{multline*}
((F_1^{R,\cont})^{\vee,R}\circ G)^{\cont}\to (F_1^{R,\cont}\circ F_2)^{\vee}\circ ((F_1^{R,\cont})^{\vee,R}\circ G)^{\cont}\\
\to F_2^{\vee}\circ ((F_1^{R,\cont})^{\vee}\circ (F_1^{R,\cont})^{\vee,R}\circ G)^{\cont}\to F_2^{\vee}\circ G.
\end{multline*}
Again, it is straightforward to check that the diagram \eqref{eq:second_diagram_of_functors_diagonal_map} naturally commutes. 
\end{proof}

The following corollary will be used in Section \ref{sec:dualizable_Hom} in the proof of Theorem \ref{th:internal_projectivity}.

\begin{cor}\label{cor:ind_isomorphisms_for_proper} Let $\cC,\cC',\cD$ be dualizable left $\cE$-modules, and suppose that $\cC$ is proper over $\cE.$ Consider an object $N\in\cC^{\vee}\tens{\cE}\cD,$ and let $\hat{\cY}(N)\cong\inddlim[i]N_i.$

\begin{enumerate}[label=(\roman*), ref=(\roman*)]
\item The natural map
\begin{equation*}
\inddlim[i]\ev_{\cD}(N_i,(-)_{\cD}^{\vee})\to \inddlim[i]\Hom_{\cD}(-,N_i)
\end{equation*}
is an isomorphism in $\Ind(\Fun((\cC^{'\vee}\tens{\cE}\cD)^{op},\cC^{\vee}\tens{\cE}\cC')).$ \label{first_ind_isom}
\item The natural map
\begin{equation*}
\inddlim[i]\ev_{\cD}(N_i,-)\to \inddlim[i]\Hom_{\cD^{\vee}}((N_i)_{\cD}^{\vee},-)
\end{equation*}
is an isomorphism in $\Ind(\Fun(\cD^{\vee}\tens{\cE}\cC',\cC^{\vee}\tens{\cE}\cC')).$ \label{second_ind_isom}
\end{enumerate}
\end{cor}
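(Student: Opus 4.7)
The plan is to deduce both parts directly from Proposition \ref{prop:general_diagonal_arrows} via a standard ind-object argument. The hypothesis $\hat{\cY}(N)\cong \inddlim[i\in I] N_i$ with $I$ directed unpacks, by definition of $\hat{\cY}$, to the statement that every transition map $N_i\to N_j$ for $i\le j$ in $I$ is compact in the dualizable category $\cC^{\vee}\tens{\cE}\cD$.

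Applying Proposition \ref{prop:general_diagonal_arrows}\ref{first_diagonal_arrow} to each compact transition $N_i\to N_j$ produces a diagonal filler
\[
d_{ij}^{(1)}\colon \Hom_{\cD}(-,N_i)\;\longrightarrow\; \ev_{\cD}(N_j,(-)_{\cD}^{\vee})
\]
satisfying the two triangle identities: precomposition with the natural map $\ev_{\cD}(N_i,(-)_{\cD}^{\vee})\to \Hom_{\cD}(-,N_i)$ recovers the transition $\ev_{\cD}(N_i,(-)_{\cD}^{\vee})\to \ev_{\cD}(N_j,(-)_{\cD}^{\vee})$, and postcomposition with $\ev_{\cD}(N_j,(-)_{\cD}^{\vee})\to \Hom_{\cD}(-,N_j)$ recovers $\Hom_{\cD}(-,N_i)\to \Hom_{\cD}(-,N_j)$. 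Analogously, part \ref{second_diagonal_arrow} of the proposition furnishes diagonals $d_{ij}^{(2)}\colon \Hom_{\cD^{\vee}}((N_i)_{\cD}^{\vee},-)\to \ev_{\cD}(N_j,-)$ with the analogous triangle identities.

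I then invoke the elementary \emph{ind-isomorphism lemma}: if $\alpha\colon X\Rightarrow Y$ is a natural transformation of $I$-diagrams in a category $\cA$ (with $I$ filtered), and if for every morphism $i\to j$ in $I$ there exists $d_{ij}\colon Y_i\to X_j$ with $d_{ij}\circ\alpha_i$ equal to the transition $X(i\to j)$ and $\alpha_j\circ d_{ij}$ equal to $Y(i\to j)$, then $\inddlim\alpha$ is an isomorphism in $\Ind(\cA)$. The inverse is constructed pointwise on probes: a representative $Z\to Y_i$ with $Z$ compact is sent to $Z\to Y_i\xrightarrow{d_{ij}} X_j$ for any chosen $j\ge i$, and the two triangle identities together with filteredness of $I$ force both composites with $\inddlim\alpha$ to become the identity after passing to the colimit. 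Applying this lemma with the $d_{ij}^{(1)}$ (resp.\ $d_{ij}^{(2)}$) as fillers proves (i) (resp.\ (ii)). No serious obstacle arises: the lemma requires only the pointwise existence of diagonals with the triangle identities, not their functoriality in $i\to j$, and these are exactly what Proposition \ref{prop:general_diagonal_arrows} delivers.
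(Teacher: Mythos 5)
Your proof is correct and is essentially the argument the paper leaves implicit (the paper states this as an immediate corollary of Proposition~\ref{prop:general_diagonal_arrows} with no separate proof): the diagonals supplied by that proposition, together with the standard ind-object lemma you state, give the ind-isomorphism directly. One small imprecision: it is not ``by definition of $\hat{\cY}$'' that every transition map $N_i\to N_j$ is compact — rather, compact transition maps exist cofinally (each $N_i\to N$ is compact, and the presentation may be refined so transitions are compact), which is all your lemma needs since the conclusion depends only on the ind-objects up to equivalence.
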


\subsection{A remark about homological epimorphisms}

We explain a point of view on homological epimorphisms which might be helpful for understanding the results of Sections \ref{sec:dualizable_Hom}, \ref{sec:rig_of_locally_rigid} and \ref{sec:Mittag-Leffler}.

First recall that for a small additive category $\cA$ there are notions of a left $\cA$-module, a right $\cA$-module, an  $\cA$-$\cA$-bimodule, the (non-derived) tensor product of such, as well as the notion of a two-sided ideal and a quasi-ideal (see for example \cite[Section 1.8]{E24}). To be precise, a quasi-ideal is a biadditive bifunctor $I:\cA^{op}\times \cA\to\Ab$ with a map $\alpha:I(-,-)\to\cA(-,-)$ such that for any $f\in I(x,y),$ $g\in I(y,z)$ we have $\alpha(g) f=g \alpha(f).$  The following statement is essentially a slight variation of \cite[Lemma 1.47]{E24}.

\begin{prop}\label{prop:idempotent_quasi_ideals}
Let $\cA$ be a small additive category, and let $I$ be a quasi-ideal of $\cA$ which is flat on the right, i.e. for each $x\in\cA$ the right $\cA$-module $I(-,x)$ is flat. Denote by $J$ the two-sided ideal of $\cA$ which is the image of $I.$ Then the following are equivalent.
\begin{enumerate}[label=(\roman*),ref=(\roman*)]
 \item The map $I\tens{\cA}I\to I$ is surjective. \label{surj_map}
 \item We have an isomorphism $I\tens{\cA}I\xto{\sim} I.$ \label{iso_map}
 \item We have $J^2=J,$ and the natural map of quasi-ideals $J\tens{\cA}J\to I$ is an isomorphism. \label{quasi_ideal_from_ideal}
\end{enumerate}
\end{prop}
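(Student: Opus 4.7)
The plan is to prove the cycle \Implies{surj_map}{iso_map}, \Implies{iso_map}{quasi_ideal_from_ideal}, \Implies{quasi_ideal_from_ideal}{surj_map}. The starting point is to factorize $m : I \otimes_\cA I \to I$ and compute its kernel and image using the right flatness of $I$, and then to exploit a consequence of the quasi-ideal axiom that makes the flatness hypothesis collapse the kernel in the decisive direction \Implies{surj_map}{iso_map}.

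Set-up. The multiplication $m$ factors as
\begin{equation*}
	I \otimes_\cA I \xrightarrow{\mathrm{id}\otimes\alpha} I \otimes_\cA J \xrightarrow{m'} I,
\end{equation*}
where $m'$ is induced by the right $\cA$-action on $I$. Since $I$ is right-flat, the functor $I \otimes_\cA -$ is exact on left $\cA$-modules; applying it to $0 \to J \to \cA \to \cA/J \to 0$ yields an inclusion $I \otimes_\cA J \hookrightarrow I$ with image $I\cdot J$ and cokernel $I \otimes_\cA (\cA/J)$, and applying it to $0 \to K \to I \to J \to 0$ (with $K = \ker \alpha$) identifies the kernel of $\mathrm{id}\otimes\alpha$ with $I \otimes_\cA K$. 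Therefore $\mathrm{Im}(m) = I\cdot J$ and $\ker(m) = I \otimes_\cA K$. The quasi-ideal axiom now yields the key structural fact $J\cdot K = K\cdot J = 0$: for $g \in I$ and $k \in K$ one has $\alpha(g)\cdot k = g \cdot \alpha(k) = 0$, and symmetrically. Consequently $K$ is naturally an $\cA/J$-bimodule, so
\begin{equation*}
	I \otimes_\cA K \;\cong\; (I \otimes_\cA \cA/J) \otimes_{\cA/J} K.
\end{equation*}

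Given this, the three implications follow quickly. For \Implies{surj_map}{iso_map}, surjectivity of $m$ forces $I\cdot J = I$, whence $I \otimes_\cA (\cA/J) = 0$, whence $I \otimes_\cA K = 0$ by the displayed isomorphism, so $m$ is also injective. For \Implies{iso_map}{quasi_ideal_from_ideal} I introduce the map $\beta : J \otimes_\cA J \to I$, $\beta(\alpha(f_1) \otimes \alpha(f_2)) := f_1 \cdot \alpha(f_2)$; this is well-defined because $K\cdot J = 0$ and is a map of quasi-ideals (since $\alpha \circ \beta$ is the multiplication of $J$), and by construction $m = \beta \circ (\alpha\otimes\alpha)$. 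Since $\alpha$ is surjective so is $\alpha\otimes\alpha$; combined with $m$ being iso this forces $\alpha\otimes\alpha$ to be injective and hence $\beta$ to be iso, while $I\cdot J = I$ gives $J^2 = \alpha(I)\cdot J = J$. Finally, for \Implies{quasi_ideal_from_ideal}{surj_map}, transporting the right action of $\cA$ along $\beta : J \otimes_\cA J \xrightarrow{\sim} I$ yields $\mathrm{Im}(m) = I\cdot J = \beta(J \otimes_\cA J^2) = \beta(J \otimes_\cA J) = I$.

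The main obstacle is \Implies{surj_map}{iso_map}: surjective does not imply iso without additional input, and the argument relies essentially on combining the right flatness with the quasi-ideal-specific identity $J\cdot K = K\cdot J = 0$. Together these propagate the vanishing $I \otimes_\cA (\cA/J) = 0$ to the vanishing of $I \otimes_\cA K$, which is exactly the kernel of $m$.
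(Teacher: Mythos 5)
You are correct, and your argument is genuinely different from (and more self-contained than) the paper's. The paper establishes (ii)$\,\Rightarrow\,$(i) tautologically, proves (i)$\,\Rightarrow\,$(iii) via the factorization of the flat-hence-injective map $I \otimes_\cA J \to I$ through $J \otimes_\cA J$, and defers the equivalence (ii)$\,\Leftrightarrow\,$(iii) to the cited \cite[Lemma 1.47]{E24}. You instead close the cycle (i)$\,\Rightarrow\,$(ii)$\,\Rightarrow\,$(iii)$\,\Rightarrow\,$(i) directly, and in particular you supply a self-contained proof of (i)$\,\Rightarrow\,$(ii), which the paper's proof of this proposition does not do. Your mechanism --- the quasi-ideal axiom forces $J \cdot K = K \cdot J = 0$ for $K = \ker\alpha$, so $K$ descends to an $\cA/J$-bimodule and the base-change isomorphism $I \otimes_\cA K \cong (I \otimes_\cA \cA/J) \otimes_{\cA/J} K$ propagates $I \cdot J = I$ (hence $I \otimes_\cA (\cA/J)=0$) into $\ker(m) = I\otimes_\cA K = 0$ --- is a clean way to see why surjectivity of the tensor-square multiplication forces bijectivity, and it is presumably the idea hidden behind the cited lemma. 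One cosmetic remark on (iii)$\,\Rightarrow\,$(i): the expression $\beta(J \otimes_\cA J^2)$ is best read as $\beta\bigl((J \otimes_\cA J)\cdot J\bigr)$; the relevant point is that, using $J^2 = J$, any $\sum_i j_1^i \otimes j_2^i \in J \otimes_\cA J$ rewrites as $\sum_{i,k} (j_1^i \otimes a_{ik})\cdot b_{ik}$ with $a_{ik}, b_{ik} \in J$, so $(J \otimes_\cA J)\cdot J = J \otimes_\cA J$ and hence $I \cdot J = I$.
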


\begin{proof} 
The implication \Implies{iso_map}{surj_map} is tautological, and the equivalence \Iff{iso_map}{quasi_ideal_from_ideal} follows directly from \cite[Lemma 1.47]{E24}.

It remains to prove the implication \Implies{surj_map}{quasi_ideal_from_ideal}. Clearly, we have $J^2=J.$ Since $I$ is flat on the right, the map
$I\tens{\cA}J\to I$ is injective. But it is also surjective by assumption, hence we have $I\tens{\cA}J\cong I.$ It follows that both maps $I\tens{\cA}J\to J\tens{\cA}J$ and $J\tens{\cA}J\to I$ are isomorphisms, which proves the implication.
\end{proof}

We deduce the equivalent conditions for an exact functor between small stable categories to be a homological epimorphism. 

\begin{prop}
Let $F:\cC\to\cD$ be an exact functor between small stable categories. Consider the category $\cA=\h\cC$ as an additive category. Denote by $J$ the ideal of $\cA$ given by
\begin{equation*}
J(x,y)=\ker(\pi_0\Hom_{\cC}(x,y)\to\pi_0\Hom_{\cD}(F(x),F(y))).
\end{equation*}
Denote by $I$ the quasi-ideal of $\cA,$ given by 
\begin{equation*}
I(x,y)=\pi_0\Fiber(\Hom_{\cC}(x,y)\to \Hom_{\cD}(F(x),F(y))).
\end{equation*}
Then the following conditions are equivalent.
\begin{enumerate}[label=(\roman*), ref=(\roman*)]
\item The map $I\tens{\cA}I\to I$ is surjective.\label{surj_map1}
\item We have an isomorphism $I\tens{\cA}I\xto{\sim} I.$ \label{iso_map1}
\item We have $J^2=J,$ and the natural map of quasi-ideals $J\tens{\cA}J\to I$ is an isomorphism. \label{quasi_ideal_from_ideal1}
\item The functor $F:\cC\to\cD$ is a homological epimorphism onto its image. \label{hom_epi_onto_image}
\end{enumerate}
\end{prop}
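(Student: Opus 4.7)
My approach is to apply Proposition \ref{prop:idempotent_quasi_ideals} to the quasi-ideal $I$ of $\cA = \h\cC$, which directly yields \Ifff{surj_map1}{iso_map1}{quasi_ideal_from_ideal1}, and then to establish \Iff{iso_map1}{hom_epi_onto_image} via a characterization of homological epimorphisms in terms of their derived kernel ideals.

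The first step is to verify the hypotheses of Proposition \ref{prop:idempotent_quasi_ideals}. The spectrum-enriched bifunctor $K(x,y) := \Fiber(\Hom_\cC(x,y)\to\Hom_\cD(F(x),F(y)))$ carries a composition structure making the fiber sequence $K \to \Hom_\cC \to F^*\Hom_\cD$ compatible with composition (this uses exactness of $F$); passing to $\pi_0$ equips $I$ with the structure of a quasi-ideal, and the long exact sequence of homotopy groups identifies $\im(\alpha)$ with $J$. The nontrivial point is \emph{flatness of $I(-,y)$ on the right}: for each $y \in \cC$ the presheaf $K(-,y):\cC^{op}\to\Sp$ is exact (being the fiber of two exact presheaves of spectra), hence lies in $\Ind(\cC) \simeq \Fun^{\ex}(\cC^{op}, \Sp)$. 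Writing it as a filtered colimit of representables $K(-,y)\simeq \indlim[i] \Hom_\cC(-,z_i)$ and applying $\pi_0$ (which commutes with filtered colimits in $\Sp$), we see that $I(-,y)\simeq \indlim[i]\pi_0\Hom_\cC(-,z_i)$ is a filtered colimit of projective (hence flat) right $\cA$-modules, and so is flat itself. Proposition \ref{prop:idempotent_quasi_ideals} then yields \Ifff{surj_map1}{iso_map1}{quasi_ideal_from_ideal1}.

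For the equivalence with \ref{hom_epi_onto_image}, let $\cC' \subset \cD$ be the idempotent completion of the smallest stable subcategory containing the essential image of $F$, and let $\bar F:\cC\to\cC'$ be the induced factorization; condition \ref{hom_epi_onto_image} asserts that $\bar F$ is a homological epimorphism, i.e.\ that $\Ind(\bar F)^R$ is fully faithful. I would establish \Iff{hom_epi_onto_image}{quasi_ideal_from_ideal1} by recognizing both as avatars of the same derived statement. On one hand, since $\cC'$ is generated by $\bar F(\cC)$, full faithfulness of $\Ind(\bar F)^R$ is equivalent to the statement that, for all $x,y\in\cC$, the hom spectrum $\Hom_{\cC'}(\bar F(x),\bar F(y))$ is computed by the (derived) two-sided bar resolution built from the ideal $J$ in $\cA$. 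On the other hand, condition \ref{quasi_ideal_from_ideal1} ($J^2 = J$ together with $J\tens{\cA}J\xto{\sim}I$) expresses exactly the $\pi_0$- and $\pi_1$-level content of this bar resolution truncating; combined with flatness of $I$, higher obstructions vanish automatically. This uses the same circle of ideas as \cite[Lemma 1.47]{E24} invoked in the proof of Proposition \ref{prop:idempotent_quasi_ideals}.

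The main technical obstacle is this last equivalence \Iff{quasi_ideal_from_ideal1}{hom_epi_onto_image}, which requires carefully identifying the spectrum-level fiber $K$ with a (truncated) bar construction on $J$ and checking that the $\pi_0$- and $\pi_1$-level vanishing encoded by condition \ref{quasi_ideal_from_ideal1} already controls all higher coherences. Once this translation is made, the equivalence is immediate, and together with the chain \Ifff{surj_map1}{iso_map1}{quasi_ideal_from_ideal1} completes the proof.
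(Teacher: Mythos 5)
Your handling of \Ifff{surj_map1}{iso_map1}{quasi_ideal_from_ideal1} is essentially correct and matches the paper's route: you verify flatness of $I(-,y)$ by writing $\cI(-,y)=\Fiber(\Hom_{\cC}(-,y)\to\Hom_{\cD}(F(-),Fy))$ as a filtered colimit of representables in $\Ind(\cC)\simeq\Fun^{\ex}(\cC^{op},\Sp)$ and taking $\pi_0$, whereas the paper simply cites the characterization of flat $\h\cC$-modules as cohomological functors; these are the same fact. The minor imprecision --- attributing the composition-compatibility of the fiber sequence to exactness of $F$ --- is harmless (any functor gives a quasi-ideal; exactness is what puts $\cI(-,y)$ into $\Ind(\cC)$).

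The real problem is the equivalence with \ref{hom_epi_onto_image}. You choose to link \ref{quasi_ideal_from_ideal1} directly to \ref{hom_epi_onto_image}, and your argument for this is not a proof but a description of a plan you have not carried out: ``carefully identifying the spectrum-level fiber $K$ with a (truncated) bar construction on $J$'' and ``checking that the $\pi_0$- and $\pi_1$-level vanishing encoded by \ref{quasi_ideal_from_ideal1} already controls all higher coherences'' are exactly the hard steps, and you explicitly flag them as unfinished. As stated, the claim that ``higher obstructions vanish automatically'' is unsupported, and it is not clear the bar-construction translation you sketch is even the right shape (condition \ref{quasi_ideal_from_ideal1} is phrased in terms of the \emph{non-derived} $J\tens{\cA}J$, whose relation to the $\pi_1$-stage of a derived bar resolution is not immediate).

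The shorter route, which the paper uses, links \ref{iso_map1} to \ref{hom_epi_onto_image}. Condition \ref{hom_epi_onto_image} is exactly the statement that $\cI(-,y)\tens{\cC}\cI(x,-)\to\cI(x,y)$ is an isomorphism of spectra for all $x,y$. The only lemma needed is that for exact $M:\cC^{op}\to\Sp$ and $N:\cC\to\Sp$ one has $\pi_0(M)\tens{\h\cC}\pi_0(N)\xto{\sim}\pi_0(M\tens{\cC}N)$, which follows by reducing to representable $M,N$ since both sides commute with filtered colimits. This identifies $I\tens{\cA}I\to I$ with the $\pi_0$ of the spectrum-level map; and since $\cI(x,y[n])\cong\Sigma^n\cI(x,y)$ (and similarly for the source), an isomorphism on $\pi_0$ for all $y$ already forces an isomorphism on all homotopy groups. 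No bar construction and no analysis of higher coherences is required. I recommend replacing your final paragraph by this argument.
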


\begin{proof}The quasi-ideal $I$ is flat on the right (and on the left) since the flat right modules over $\h\cC$ are exactly the cohomological functors, see for example \cite[Proposition E.5]{E24}. Hence, the equivalences \Ifff{surj_map1}{iso_map1}{quasi_ideal_from_ideal1} follow from Proposition \ref{prop:idempotent_quasi_ideals}.

It remains to prove the equivalence \Iff{iso_map1}{hom_epi_onto_image}. Consider the bifunctor
\begin{equation*}
\cI:\cC^{op}\times\cC\to\Sp,\quad \cI(x,y)=\Fiber(\Hom_{\cC}(x,y)\to\Hom_{\cD}(F(x),F(y))),
\end{equation*}
so that $I(x,y)\cong \pi_0\cI(x,y).$ The condition \ref{hom_epi_onto_image} exactly means that for $x,y\in\cC$ we have an isomorphism
\begin{equation*}
\cI(-,y)\tens{\cC}\cI(x,-)\xto{\sim} \cI(x,y).
\end{equation*} 
It remains to observe that for exact functors $M:\cC^{op}\to\Sp,$ $N:\cC\to\Sp$ we have
\begin{equation}\label{eq:isom_tensor_product_of_modules}
\pi_0(M(-))\tens{\h\cC}\pi_0(N(-))\xto{\sim}\pi_0(M\tens{\cC}N).
\end{equation}
Indeed, the source and the target of \eqref{eq:isom_tensor_product_of_modules} commute with filtered colimits in $M$ and $N,$ hence we may assume that $M$ and $N$ are representable respectively by $x\in\cC,$ $y\in\cC.$ In this case the source and the target of \eqref{eq:isom_tensor_product_of_modules} are identified with $\pi_0\Hom_{\cC}(y,x).$ This proves the equivalence \Iff{iso_map1}{hom_epi_onto_image} and the proposition.
\end{proof}

\section{Statements about limits, colimits and filtered $\infty$-categories}
\label{sec:lim_colim_and_filtered}

\subsection{Combinations of limits and colimits}
In Sections \ref{sec:dualizable_Hom} and \ref{sec:Mittag-Leffler} we will need certain statements about various combinations of limits and colimits. First, we recall the following statement from \cite{E24}.

\begin{prop}\label{prop:seq_limits_of_filtered_colimits} \cite[Lemma D.4]{E24} Let $\cC$ be a presentable $\infty$-category such that filtered colimits in $\cC$ commute with finite limits (i.e. strong (AB5) holds in $\cC$) and (AB6) for countable products holds in $\cC$ (for example, $\cC$ can be any compactly assembled presentable category, such as $\cS$ or $\Sp$). Let $I$ be a directed poset, and consider $\N$ as a poset with the usual order. Let $F:\N^{op}\times I\to \cC$ be a functor. Then we have a natural isomorphism
$$\indlim[\varphi:\N\to I]\prolim[n\leq m]F(n,\varphi(m))\xto{\sim}\prolim[n]\indlim[i]F(n,i),$$
where $\varphi$ runs through order-preserving maps.
Here the set $\{(n,m)\mid n\leq m\}$ is considered as a full subposet of $\N^{op}\times\N.$\end{prop}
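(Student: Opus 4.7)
The plan is to rewrite both sides as equalizers of countable products and identify them directly.

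For a fixed monotone $\varphi:\N\to I$, set $G_\varphi:P\to\cC$, $(n,m)\mapsto F(n,\varphi(m))$, where $P=\{(n,m):n\leq m\}\subset\N^{op}\times\N$. The first step is to simplify the inner limit $\prolim[(n,m)\in P]F(n,\varphi(m))$ using an initial zigzag subposet $Q\subset P$: I would take $Q$ to consist of the ``diagonal'' points $(m,m)$ and the ``sub-off-diagonal'' points $(m,m+1)$, together with the comparisons $(m,m)\leq(m,m+1)\geq(m+1,m+1)$ inherited from $P$. For each $(n_0,m_0)\in P$, the slice $Q\times_P P_{/(n_0,m_0)}$ is then a finite zigzag of length $2(m_0-n_0)+1$; its nerve is a path graph and hence weakly contractible. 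Initiality reduces $\prolim[P]G_\varphi$ to $\prolim[Q]G_\varphi|_Q$, and the standard equalizer presentation of a limit over a $1$-categorical zigzag yields
\begin{equation*}
\prolim[(n,m)\in P]F(n,\varphi(m))\cong\operatorname{Eq}\!\left(\prodd[m]F(m,\varphi(m))\rightrightarrows\prodd[m]F(m,\varphi(m+1))\right),
\end{equation*}
where the two maps are induced respectively by the monotonicity arrow $F(m,\varphi(m))\to F(m,\varphi(m+1))$ and by the $\N^{op}$-arrow $F(m+1,\varphi(m+1))\to F(m,\varphi(m+1))$ applied to the $(m+1)$-th factor.

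Since equalizers are finite limits, strong (AB5) then lets me pull the filtered colimit over monotone $\varphi$ inside the equalizer, obtaining
\begin{equation*}
\text{LHS}\cong\operatorname{Eq}\!\left(\indlim[\varphi]\prodd[m]F(m,\varphi(m))\rightrightarrows\indlim[\varphi]\prodd[m]F(m,\varphi(m+1))\right).
\end{equation*}
Countable (AB6), combined with the cofinality of monotone maps $\varphi$ among all functions $\N\to I$ (which uses directedness of $I$), identifies both colimit terms with $\prodd[m]Y_m$, where $Y_m=\indlim[i]F(m,i)$. A direct inspection of the colimit inclusions shows that under this identification the two maps become the identity and the shift $(y_m)\mapsto (f_m(y_{m+1}))$ on $\prodd[m]Y_m$. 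Their equalizer is exactly the standard presentation of the sequential limit $\prolim[n]Y_n$, giving the RHS.

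The main obstacle I anticipate is verifying the initiality of $Q\hookrightarrow P$, and specifically the weak contractibility of the zigzag slice categories as path graphs. The other ingredient I will rely on is the well-known equalizer formula $\prolim[n]Y_n\cong\operatorname{Eq}(\prodd[n]Y_n\rightrightarrows\prodd[n]Y_n)$ for a sequential limit, with the two maps being identity and shift, which holds in any $\infty$-category with products and finite limits.
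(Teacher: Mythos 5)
Your argument is correct and matches the approach taken in the paper's proof of the generalization (Proposition \ref{prop:limit_of_colimits_for_fibrations} in Appendix \ref{app:limits_colimits}), just run in the opposite direction: you start from the LHS and resolve the inner twisted-arrow limit via the initial zigzag subposet $Q\subset P$ into an equalizer of countable products, then apply strong (AB5) and (AB6) together with the cofinality of monotone maps $\N\to I$; the paper starts from $\prolim_n\indlim_i F(n,i)$, presents it as an equalizer of countable products, commutes the filtered colimit past products by (AB6) and past the (finite) equalizer by strong (AB5), and then recognizes the inner equalizer as the limit over $P$. The ingredients and the key equalizer decomposition are identical; your spelled-out check that $Q\hookrightarrow P$ is initial (slice posets are finite path graphs, hence weakly contractible) is a helpful explicit verification of a step the paper's proof leaves implicit.
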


The next statement is much more complicated. We prove it in Appendix \ref{app:limits_colimits} as a corollary of a stronger statement (Theorem \ref{th:pullback_square_quadrofunctors}).

\begin{prop}\label{prop:quadrofunctors} Let $I$ and $J$ be directed posets. Let $\cC$ be a presentable $\infty$-category satisfying the assumptions of Proposition \ref{prop:limit_of_colimits_for_fibrations}. Let $F:\N^{op}\times I\times\N^{op}\times J\to\cC$ be a functor. Suppose that the following conditions hold:
	\begin{enumerate}[label=(\roman*),ref=(\roman*)]
		\item the map $$\prolim[k]\indlim[j]\prolim[n]\indlim[i]F(n,i,k,j)\to \prolim[k]\prolim[n]\indlim[j]\indlim[i]F(n,i,k,j)$$
		is an isomorphism in $\cC.$ \label{tricky_assump1}
		
		\item the map $$\prolim[n]\indlim[i]\prolim[k]\indlim[j]F(n,i,k,j)\to \prolim[n]\prolim[k]\indlim[i]\indlim[j]F(n,i,k,j)$$
		is an isomorphism in $\cC.$ \label{tricky_assump2}
	\end{enumerate}
	Then we have an isomorphism
	$$\indlim[\varphi:\N\to I]\indlim[\psi:\N\to J]\prolim[n\leq m]\prolim[k\leq l]F(n,\varphi(m),k,\psi(l))\xto{\sim}\prolim[n]\indlim[i]\indlim[j]F(n,i,n,j).$$ 
\end{prop}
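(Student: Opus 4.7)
The strategy is to apply Proposition \ref{prop:seq_limits_of_filtered_colimits} twice, once to each of the two ``coherent-sequence'' blocks $\indlim[\varphi]\prolim[n\leq m]$ and $\indlim[\psi]\prolim[k\leq l]$ on the left-hand side, then to identify the resulting expression with the right-hand side using the two hypotheses together with a diagonal cofinality argument.

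Denote the left-hand side by $L$ and the right-hand side by $R$. Fixing $\psi$ and applying Proposition \ref{prop:seq_limits_of_filtered_colimits} to the pair $(\indlim[\varphi], \prolim[n\leq m])$, with the inner expression $\prolim[k\leq l]F(n,-,k,\psi(l))$ regarded as a functor $\N^{op}\times I\to\cC$, produces
\begin{equation*}
L \;\cong\; \indlim[\psi]\prolim[n]\indlim[i]\prolim[k\leq l]F(n, i, k, \psi(l)).
\end{equation*}
The next aim is to commute the outer $\indlim[\psi]$ past $\prolim[n]\indlim[i]$, after which a pointwise application of Proposition \ref{prop:seq_limits_of_filtered_colimits} to the pair $(\indlim[\psi], \prolim[k\leq l])$ would give
\begin{equation*}
L \;\cong\; \prolim[n]\indlim[i]\prolim[k]\indlim[j]F(n, i, k, j) \;=:\; A.
\end{equation*}
Hypothesis \ref{tricky_assump2} then identifies $A$ with $B := \prolim[(n,k)]\indlim[(i,j)]F(n,i,k,j)$. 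Finally, the diagonal $\Delta:\N^{op}\hookrightarrow\N^{op}\times\N^{op}$, $n\mapsto(n,n)$, is initial: for each $(n_0,k_0)$, the slice $\Delta_{/(n_0,k_0)}$ is the directed poset $\N^{op}_{\geq \max(n_0,k_0)}$, hence weakly contractible. Consequently $B \cong \prolim[n]\indlim[i,j]F(n,i,n,j) = R$.

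The main obstacle is the interchange $L\cong A$: moving the outer filtered colimit $\indlim[\psi]$ past the interposed inverse limit $\prolim[n]\indlim[i]$ is not a consequence of any standard commutativity principle (strong (AB5) only supplies commutation with finite limits). Applying Proposition \ref{prop:seq_limits_of_filtered_colimits} in the opposite order — first to $(\indlim[\psi], \prolim[k\leq l])$, then to $(\indlim[\varphi], \prolim[n\leq m])$ — yields the symmetric expression, and a parallel passage of colimits inside would give $L\cong C := \prolim[k]\indlim[j]\prolim[n]\indlim[i]F(n,i,k,j)$; hypothesis \ref{tricky_assump1} identifies this $C$ with the same $B$. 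Thus the two hypotheses encode exactly the consistency required for the two interchanges $L\cong A$ and $L\cong C$ to be valid and mutually compatible. Rather than verifying these interchanges by ad hoc manipulations, the cleanest route — taken in Appendix \ref{app:limits_colimits} — is to package the entire comparison as a single ``quadrofunctor'' pullback-square statement (Theorem \ref{th:pullback_square_quadrofunctors}), whose validity under hypotheses \ref{tricky_assump1} and \ref{tricky_assump2} yields the present Proposition by direct specialization to $P = \{(n,m):n\leq m\}$ and $Q = \{(k,l):k\leq l\}$.
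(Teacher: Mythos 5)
Your proposal lands on the correct strategy — the paper's proof of this proposition is precisely ``apply Theorem \ref{th:pullback_square_quadrofunctors} to the trivial cocartesian fibrations $\N^{op}\times I\to\N^{op}$ and $\N^{op}\times J\to\N^{op}$'' — and you correctly identify the auxiliary facts: the hypotheses say certain comparison maps are isomorphisms, and the diagonal $\N^{op}\hookrightarrow\N^{op}\times\N^{op}$ is initial, converting $\prolim_n\prolim_k\indlim_i\indlim_j F(n,i,k,j)$ to the stated right-hand side. But the proposal never states the mechanism by which the theorem finishes the job. Once \eqref{eq:key_pullback_square} is known to be a pullback square and the hypotheses guarantee that $X_{01}\to X_{11}$ and $X_{10}\to X_{11}$ are isomorphisms, the point is simply that in a pullback square with two isomorphic legs all four maps are isomorphisms, in particular $X_{00}\to X_{11}$. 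That one-line deduction is the entirety of the paper's proof of this proposition; you replace it with the vague assertion that the theorem ``yields the present Proposition by direct specialization,'' and the discussion of ``the consistency required for the two interchanges'' inverts the logical order — the interchanges $L\cong A$ and $L\cong C$ are \emph{consequences} of the pullback square together with the hypotheses, not a reformulation of them.

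There is also a concrete misstatement at the end: Theorem \ref{th:pullback_square_quadrofunctors} is applied to the trivial cocartesian fibrations $\cI=\N^{op}\times I$ and $\cJ=\N^{op}\times J$, not ``specialized to $P=\{(n,m):n\leq m\}$ and $Q=\{(k,l):k\leq l\}$.'' Those are the twisted-arrow posets $T$ and $P_{ij}$ of \eqref{eq:twisted_arrow_of_N}, which are fixed ingredients in the theorem's statement and its proof (Lemma \ref{lem:left_Kan_extensions}), not parameters of the specialization.
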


\subsection{Lemmas about filtered $\infty$-categories}
\label{ssec:lemmas_about_filtered}

In Sections \ref{sec:loc_invar_inverse_limits} and \ref{sec:original_Nuc} we will need certain statements about lax limits of filtered $\infty$-categories. We consider the following setup. Let $I_n,$ $n\geq 0,$ and $J_n,$ $n\geq 1,$ be filtered $\infty$-categories. Suppose that we have sequences of functors $F_n:I_n\to J_{n+1},$ $n\geq 0,$ and $G_n:I_n\to J_n, $ $n\geq 1,$ such that each functor $G_n$ is cofinal. Denote by $f,g:\prodd[n\geq 0]I_n\to\prodd[n\geq 1]J_n$ the induced functors between the products. Consider the lax equalizer $\LEq(\prodd[n\geq 0]I_n\toto\prodd[n\geq 1]J_n)$ of $f$ and $g:$ its objects are given by $(i_n;\varphi_n)_{n\geq 0},$ where $i_n\in I_n$ and $\varphi_n:F_n(i_n)\to G_{n+1}(i_{n+1}).$

\begin{lemma}\label{lem:lax_equalizer_of_filtered_properties} Within the above notation, the following statements hold.
\begin{enumerate}[label=(\roman*),ref=(\roman*)]
\item The category $\LEq(\prodd[n\geq 0]I_n\toto\prodd[n\geq 1]J_n)$ is filtered. \label{filtered}

\item The functor $\LEq(\prodd[n\geq 0]I_n\toto\prodd[n\geq 1]J_n)\to\prodd[n\geq 0]I_n$ is cofinal. \label{cofinal}
\end{enumerate}
\end{lemma}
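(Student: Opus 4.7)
The plan is to prove (i) first and deduce (ii) from it by a self-similarity argument. For $\mathbf{i}=(i_n)\in\prod_n I_n$, the slice $\LEq\times_{\prod_n I_n}(\prod_n I_n)_{\mathbf{i}/}\simeq\LEq\times_{\prod_n I_n}\prod_n (I_n)_{i_n/}$ is itself a lax equalizer of the form considered in the lemma, namely $\LEq\bigl(\prod_n(I_n)_{i_n/}\toto\prod_{n\geq 1}J_n\bigr)$, for the functors obtained from $F_n$ and $G_{n+1}$ by postcomposition with the forgetful $(I_n)_{i_n/}\to I_n$. Each $(I_n)_{i_n/}$ is filtered, and the forgetful $(I_n)_{i_n/}\to I_n$ is cofinal (a standard consequence of filteredness of $I_n$, via the characterization that $(I_n)_{f/}$ is weakly contractible for every finite diagram $f$). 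Hence the new $G_{n+1}$'s are cofinal as composites of cofinal functors, and applying (i) to this new lax equalizer shows it is filtered, in particular weakly contractible, which is precisely the cofinality in (ii).

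For (i), given a diagram $d\colon K\to\LEq$ with $K$ finite and $d(k)=((i_n^k),(\varphi_n^k))$, I construct a cocone $K^\triangleright\to\LEq$ as follows. First, using filteredness of each $I_n$, choose cocones $\delta_n\colon K^\triangleright\to I_n$ extending $k\mapsto i_n^k$, with cone points $i_n^*\in I_n$. Then, iteratively for $n=0,1,2,\ldots$, produce updated cone points $i_{n+1}^{**}\geq i_{n+1}^*$ in $I_{n+1}$ (starting from $i_0^{**}:=i_0^*$) together with connecting arrows $\psi_n\colon F_n(i_n^{**})\to G_{n+1}(i_{n+1}^{**})$. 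At stage $n$, with $i_n^{**}$ already fixed, form the combined finite diagram in $J_{n+1}$ by gluing the (refined) cocone $F_n\delta_n$ of cone $F_n(i_n^{**})$, the cocone $G_{n+1}\delta_{n+1}$ of cone $G_{n+1}(i_{n+1}^*)$, and the $K$-family $\varphi_n^k$ between them. Filteredness of $J_{n+1}$ extends this to a cocone vertex $j_n^+\in J_{n+1}$; cofinality of $G_{n+1}$ yields $\tilde i_{n+1}\in I_{n+1}$ and a map $j_n^+\to G_{n+1}(\tilde i_{n+1})$; and filteredness of $I_{n+1}$ produces $i_{n+1}^{**}$ above both $i_{n+1}^*$ and $\tilde i_{n+1}$. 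The composite $F_n(i_n^{**})\to j_n^+\to G_{n+1}(\tilde i_{n+1})\to G_{n+1}(i_{n+1}^{**})$ is the desired $\psi_n$, whose compatibility with the $\varphi_n^k$ is inherited from the cocone structure of $j_n^+$.

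The main obstacle is the sequential bookkeeping. The key point is that each $i_n^{**}$ is modified at exactly one stage---namely stage $n-1$ for $n\geq 1$---and that $\psi_n$, once set in stage $n$, depends only on the then-frozen $i_n^{**}$ and $i_{n+1}^{**}$; in particular no retroactive modifications are needed. Hence the stages assemble coherently into a cocone $((i_n^{**}),(\psi_n))\colon K^\triangleright\to\LEq$, proving (i) and, by the reduction above, (ii).
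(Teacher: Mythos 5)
Your reduction of (ii) to (i) is essentially identical to the paper's: one identifies the comma category $\LEq_{(i_n)/}$ with the lax equalizer of $\prod_n(I_n)_{i_n/}\toto\prod_{n\geq 1}J_n$ and observes that the composites $(I_n)_{i_n/}\to I_n\xto{G_n}J_n$ remain cofinal, so (i) applied to the new sequences gives filteredness, hence weak contractibility.

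For (i) you take a genuinely different route. The paper uses the mapping-space characterization of filteredness (non-emptiness, joint cocones for pairs of objects, and for each $k\geq 0$ the ability to make any $u\colon S^k\to\Map(x,y)$ null-homotopic by postcomposition), which keeps the number of simultaneous coherences small; you instead directly extend a finite diagram $K\to\LEq$ to a cocone. This is a legitimate characterization, but it forces you to produce a genuine simplicial map $K^\triangleright\to\LEq$, which carries substantially more coherence data.

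There is a concrete gap at the step ``cofinality of $G_{n+1}$ yields $\tilde i_{n+1}\in I_{n+1}$ and a map $j_n^+\to G_{n+1}(\tilde i_{n+1})$; and filteredness of $I_{n+1}$ produces $i_{n+1}^{**}$ above both $i_{n+1}^*$ and $\tilde i_{n+1}$.'' The arrow $i_{n+1}^*\to i_{n+1}^{**}$ obtained from filteredness is an arbitrary choice, and there is no reason for $G_{n+1}$ of it to be homotopic to the composite $G_{n+1}(i_{n+1}^*)\to j_n^+\to G_{n+1}(\tilde i_{n+1})\to G_{n+1}(i_{n+1}^{**})$; the latter is what the cocone structure of $j_n^+$ actually makes compatible with the $\varphi_n^k$. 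Without this agreement the square relating $\psi_n$, $\varphi_n^k$, and the refined cocone legs in $I_{n+1}$ does not commute, so the claim ``whose compatibility with the $\varphi_n^k$ is inherited from the cocone structure of $j_n^+$'' is unjustified. (For directed posets any two parallel arrows coincide, so the step is automatic; but the lemma is stated for general filtered $\infty$-categories, and this is where the content lies.)

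The fix is to invoke cofinality once, more carefully: obtain $i_{n+1}^{**}$, the arrow $i_{n+1}^*\to i_{n+1}^{**}$ in $I_{n+1}$, the arrow $j_n^+\to G_{n+1}(i_{n+1}^{**})$, and the coherence relating them all at the same time, from weak contractibility of the appropriate comma category. (For instance, the functor $(I_{n+1})_{i_{n+1}^*/}\to (J_{n+1})_{G_{n+1}(i_{n+1}^*)/}$ is cofinal by the cancellation property of cofinality, so the comma category over the object $G_{n+1}(i_{n+1}^*)\to j_n^+$ is weakly contractible, hence non-empty.) This is precisely the kind of refined use of cofinality the paper makes in its analogous step, where it finds $z_1,r_1,t_1,s_1$ \emph{together with} the homotopies $r_1\circ p_1\sim G_1(t_1)$ and $r_1\circ q_1\sim G_1(s_1)$. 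Even granting that fix, your sketch still only produces the individual edges $\psi_n$ and asserts that ``the stages assemble coherently into a cocone''; producing the actual simplicial maps $\Psi_n\colon K^\triangleright\times\Delta^1\to J_{n+1}$ with the prescribed restrictions to $(K\times\Delta^1)\cup(K^\triangleright\times\partial\Delta^1)$ requires further combinatorial care that is not addressed, and which the paper's mapping-space route sidesteps.
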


\begin{proof}[Proof that \Implies{filtered}{cofinal}]
Take some object $(i_n)_{n\geq 0}\in \prodd[n\geq 0]I_n.$ Then the comma category can be considered as a lax equalizer: we have
\begin{equation*}
	\LEq(\prodd[n\geq 0]I_n\toto\prodd[n\geq 1]J_n)_{(i_n)_{n\geq 0}/}\simeq \LEq(\prodd[n\geq 0](I_n)_{i_n/}\toto\prodd[n\geq 1]J_n).
\end{equation*} 
Since the compositions $(I_n)_{i_n/}\to I_n\xto{G_n}J_n$ are cofinal for $n\geq 1,$ it follows from \ref{filtered} that the comma category $\LEq(\prodd[n\geq 0]I_n\toto\prodd[n\geq 1]J_n)_{(i_n)_{n\geq 0}/}$ is filtered, hence weakly contractible. This proves the implication \Implies{filtered}{cofinal}.
\end{proof}

The proof of \ref{filtered} is given in Appendix \ref{app:proof_of_lemma_on_filtered}.

\begin{lemma}\label{lem:functor_between_oplax_equalizers}
Within the above notation, consider another pair of sequences of filtered $\infty$-categories $(I_n')_{n\geq 0},$ $(J_n')_{n\geq 1}$ and functors $F_n':I_n'\to J_{n+1}',$ $n\geq 0,$ and $G_n':I_n'\to J_n',$ $n\geq 1,$ such that each functor $G_n'$ is cofinal. Suppose that we have the cofinal functors $I_n'\to I_n,$ $n\geq 0,$ and $J_n'\to J_n,$ $n\geq 1,$ such that the following squares commute:
\begin{equation*}
\begin{tikzcd}
I_n'\ar[r, "F_n'"] \ar[d] & J_{n+1}'\ar[d] & & I_n'\ar[r, "G_n'"] \ar[d] & J_n'\ar[d] \\
I_n\ar[r, "F_n"] & J_{n+1} & & I_n\ar[r, "G_n"] & J_n.
\end{tikzcd}
\end{equation*}
Then the induced functor
\begin{equation*}
\LEq(\prodd[n\geq 0] I_n'\toto \prodd[n\geq 1] J_n')\to \LEq(\prodd[n\geq 0] I_n\toto \prodd[n\geq 1] J_n)
\end{equation*}
is cofinal.
\end{lemma}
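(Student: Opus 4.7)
The approach is to show directly that for every object $x \in \LEq(\prodd[n\geq 0]I_n \toto \prodd[n\geq 1]J_n)$, the comma $\infty$-category $\LEq(\prodd[n\geq 0]I_n' \toto \prodd[n\geq 1]J_n')_{x/}$ is weakly contractible, which is Joyal's characterization of cofinality. I will identify this comma category with another lax equalizer to which Lemma \ref{lem:lax_equalizer_of_filtered_properties}\ref{filtered} applies.

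Fix $x = (i_n;\varphi_n)_{n\geq 0}$ with $\varphi_n:F_n(i_n)\to G_{n+1}(i_{n+1})$, and write $P_n:I_n'\to I_n$ and $Q_n:J_n'\to J_n$ for the given cofinal comparison functors. For each $n\geq 0$, set $\tilde I_n:=I_n'\times_{I_n}(I_n)_{i_n/}$, whose objects are pairs $(i_n',h_n:i_n\to P_n(i_n'))$; and for each $n\geq 1$, set $\tilde J_n:=J_n'\times_{J_n}(J_n)_{F_{n-1}(i_{n-1})/}$, with objects $(j_n',\beta:F_{n-1}(i_{n-1})\to Q_n(j_n'))$. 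Using the cofinality of $P_n$ and $Q_n$ together with the filteredness of $I_n'$, $I_n$, $J_n'$, $J_n$, one verifies that $\tilde I_n$ and $\tilde J_n$ are filtered. Define
\begin{equation*}
\tilde F_n(i_n',h_n)=(F_n'(i_n'),\, F_n(h_n)),\qquad \tilde G_n(i_n',h_n)=(G_n'(i_n'),\, G_n(h_n)\circ \varphi_{n-1}),
\end{equation*}
which are compatible with the projections to $J_n'$ via the identities $F_n\circ P_n\simeq Q_{n+1}\circ F_n'$ and $G_n\circ P_n\simeq Q_n\circ G_n'$.

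The key computation is to unpack the comma category. A morphism $x\to \Phi(y)$ in $\LEq_{\mathrm{target}}$, for $y=(i_n';\varphi_n')$, consists of maps $h_n:i_n\to P_n(i_n')$ in $I_n$ and $2$-cells $\sigma_n$ in $J_{n+1}$ witnessing $Q_{n+1}(\varphi_n')\circ F_n(h_n)\simeq G_{n+1}(h_{n+1})\circ \varphi_n$. The pair $(\varphi_n',\sigma_n)$ is precisely the data of a $2$-cell $\tilde F_n(\tilde i_n)\to \tilde G_{n+1}(\tilde i_{n+1})$ in $\tilde J_{n+1}$: namely, the arrow $\varphi_n':F_n'(i_n')\to G_{n+1}'(i_{n+1}')$ in $J_n'$ together with a triangle in $J_{n+1}$ comparing the two maps $F_n(i_n)\rightrightarrows Q_{n+1}(G_{n+1}'(i_{n+1}'))$. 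Matching objects and morphisms, this gives a natural equivalence
\begin{equation*}
\LEq\Bigl(\prodd[n\geq 0] I_n'\toto \prodd[n\geq 1] J_n'\Bigr)_{x/}\;\simeq\; \LEq\Bigl(\prodd[n\geq 0] \tilde I_n\toto \prodd[n\geq 1] \tilde J_n\Bigr).
\end{equation*}

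To conclude via Lemma \ref{lem:lax_equalizer_of_filtered_properties}\ref{filtered}, it remains to check that each $\tilde G_n$ is cofinal. Given $(j_n',\beta)\in \tilde J_n$, one uses cofinality of $G_n'$ to produce $i_n'\in I_n'$ with $\alpha:j_n'\to G_n'(i_n')$ in $J_n'$, then cofinality of $G_n$ to equalize $Q_n(\alpha)\circ\beta$ with $G_n(-)\circ \varphi_{n-1}$ after further advancement in $I_n$, and finally cofinality of $P_n$ and filteredness of $I_n'$ to lift back and merge witnesses. Granting this, the slice is filtered, hence weakly contractible, and $\Phi$ is cofinal. \textbf{The main obstacle} is precisely this verification of $\infty$-categorical cofinality of $\tilde G_n$: all the coherence data from the three cofinalities ($P_n$, $Q_n$, $G_n$, $G_n'$) must be glued consistently, and one needs to replace the $1$-categorical ``find an extension'' arguments with their homotopy-coherent upgrades (e.g.\ arguing via slices or reducing to directed posets as in \cite[Proposition 5.3.1.18]{Lur09}).
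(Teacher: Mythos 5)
Your proof is essentially identical to the paper's: both identify $\LEq(\prodd[n\geq 0] I_n'\toto \prodd[n\geq 1] J_n')_{x/}$ with a lax equalizer of slice categories (your $\tilde I_n$, $\tilde J_n$) and then apply Lemma \ref{lem:lax_equalizer_of_filtered_properties}\ref{filtered}. The filteredness of the slices and the cofinality of $\tilde G_n$ that you flag as the main obstacle are likewise left implicit in the paper, which simply invokes Lemma \ref{lem:lax_equalizer_of_filtered_properties} without spelling out the verification of its hypotheses.
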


\begin{proof}
Given an object $x=(x_n;\varphi_n)\in \LEq(\prodd[n\geq 0] I_n\toto \prodd[n\geq 1] J_n),$ it suffices to show that the comma category $\LEq(\prodd[n\geq 0] I_n'\toto \prodd[n\geq 1] J_n')_{x/}$ is filtered. But we have an equivalence
\begin{equation}\label{eq:comma_category_lax_equalizer}
\LEq(\prodd[n\geq 0] I_n'\toto \prodd[n\geq 1] J_n')_{x/}\xto{\sim} \LEq(\prodd[n\geq 0] (I_n')_{x_n/} \toto \prodd[n\geq 1] (J_n')_{F_{n-1}(x_{n-1})/}).
\end{equation} 
Here in the latter lax equalizer we take the natural functors $(I_n')_{x_n/}\to (J_{n+1}')_{F_n(x_n)/},$ $n\geq 0,$ and the compositions $(I_n')_{x_n/}\to (J_n')_{G_n(x_n)/}\to (J_n')_{F_{n-1}(x_{n-1})/},$ $n\geq 1.$ By Lemma \ref{lem:lax_equalizer_of_filtered_properties}, the target of \eqref{eq:comma_category_lax_equalizer} is a filtered category. This proves the lemma.
\end{proof}

\section{Dualizable internal Hom}
\label{sec:dualizable_Hom}

In this section we will study the relative internal $\Hom$ in the category $\Cat_{\cE}^{\dual}$ over $\Cat_{\st}^{\dual},$ where $\cE$ is a rigid $\bE_1$-monoidal category. The main result is the theorem on the internal projectivity of proper $\omega_1$-compact dualizable left $\cE$-modules (Theorem \ref{th:internal_projectivity}).

\subsection{Generalities on the dualizable internal Hom}

Let $\cE$ be a rigid $\bE_1$-monoidal category. We use the notation from Definition \ref{def:dualizable_over_E_1_monoidal}. In particular, we denote by $\Cat_{\cE}^{\dual}$ the category of dualizable left $\cE$-modules. For an uncountable regular cardinal $\kappa$ we denote by $\Pr^L_{\cE,\kappa}$ the category of left $\cE$-modules in $\Pr^L_{\st,\kappa}.$

We consider the category $\Cat_{\cE}^{\dual}$ as a module (in $\Pr^L_{\omega_1}$) over the symmetric monoidal category $\Cat_{\st}^{\dual}.$

\begin{defi}Let $\cE$ be a rigid $\bE_1$-monoidal presentable stable category. For $\cC,\cD\in\Cat_{\cE}^{\dual},$ we denote by $\un{\Hom}_{\cE}^{\dual}(\cC,\cD)\in\Cat_{\st}^{\dual}$ the relative internal $\Hom.$ In other words, $\un{\Hom}_{\cE}^{\dual}(\cC,\cD)$ is a dualizable category equipped with a strongly continuous $\cE$-linear functor
$$\un{\Hom}_{\cE}^{\dual}(\cC,\cD)\otimes \cC\to\cD,$$ such that for any dualizable category $\cA$ we have an equivalence
$$\Fun^{LL}(\cA,\un{\Hom}_{\cE}^{\dual}(\cC,\cD))\xto{\sim} \Fun^{LL}_{\cE}(\cA\otimes\cC,\cD).$$\end{defi}

We now describe the category $\un{\Hom}_{\cE}^{\dual}(\cC,\cD)$ more explicitly.

\begin{theo}\label{th:descr_Hom^dual}
	Let $\cE$ be a rigid $\bE_1$-monoidal presentable stable category, and let $\cC$ and $\cD$ be dualizable left $\cE$-modules. Let $\kappa$ be an uncountable regular cardinal such that $\cC$ is $\kappa$-compact in $\Cat_{\cE}^{\dual},$ or equivalently the relative evaluation and coevaluation functors preserve $\kappa$-compact objects. Then we have $$\un{\Hom}_{\cE}^{\dual}(\cC,\cD)\simeq \ker^{\dual}(\Ind(\Fun_{\cE}^L(\cC,\cD)^{\kappa})\to \Ind(\Fun_{\cE}^{LL}(\cC,\Ind(\Calk^{\cont}_{\omega_1}(\cD))))).$$	
\end{theo}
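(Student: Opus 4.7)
The plan is to verify the universal property directly: for any dualizable category $\cA$, to construct a natural equivalence
\[
\Fun^{LL}(\cA,\ker^{\dual}(\Phi)) \;\simeq\; \Fun^{LL}_{\cE}(\cA\otimes\cC,\cD),
\]
where $\Phi \colon \Ind(\Fun_{\cE}^L(\cC,\cD)^{\kappa}) \to \Ind(\Fun_{\cE}^{LL}(\cC,\Ind(\Calk^{\cont}_{\omega_1}(\cD))))$ is the natural comparison map. Morally, $\Phi$ sends a continuous $\cE$-linear $F \colon \cC \to \cD$ (preserving $\kappa$-compact objects) to the strongly continuous functor $\cC \to \Ind(\Calk^{\cont}_{\omega_1}(\cD))$ measuring the Calkin defect of $F$; concretely, one builds it from the composite $\cC \xto{\hat{\cY}_{\cC}} \Ind(\cC^{\omega_1})$, followed by the Ind-extension of $F$ suitably interpreted (using $\kappa$-compact preservation to funnel the computation through the $\kappa$-stage, and then comparing at $\omega_1$), followed by the Calkin quotient $\Ind(\cD^{\omega_1}) \twoheadrightarrow \Ind(\Calk^{\cont}_{\omega_1}(\cD))$ from Proposition \ref{prop:Calkin_over_rigid}.

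Since $\ker^{\dual}$ represents a fiber in $\Cat^{\dual}$, applying $\Fun^{LL}(\cA,-)$ gives the fiber of $\Phi_*$ over zero. Using the nonstandard adjunction of Proposition \ref{prop:relative_nonstandard_adjunction}, combined with the tensor--hom adjunction, the source of $\Phi_*$ is identified with continuous $\cE$-linear bifunctors $F \colon \cA\otimes\cC \to \cD$ sending $\cA^{\kappa}\otimes\cC^{\kappa}$ into $\cD^{\kappa}$. The target admits an analogous description with values in the Calkin, where the compact generation of $\Ind(\Calk^{\cont}_{\omega_1}(\cD))$ together with Theorem \ref{th:Cat^cg_compactly_assembled} ensures that the outer $\Ind$ on the second factor defines a legitimate dualizable category.

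The heart of the argument is to identify this fiber with $\Fun^{LL}_{\cE}(\cA\otimes\cC,\cD)$. For $F$ as above, $\Phi_*(F)$ sends an object $x \in \cA\otimes\cC$ with $\hat{\cY}(x) = \inddlim x_i$ to the class of $\inddlim F(x_i)$ in $\Ind(\cD^{\omega_1})$ modulo the essential image of $\hat{\cY}_{\cD}$. Hence $\Phi_*(F) \simeq 0$ iff $\inddlim F(x_i) \simeq \hat{\cY}_{\cD}(F(x))$ in $\Ind(\cD^{\omega_1})$ for every such $x$; this is exactly the condition that $F$ sends compact morphisms in $\cA\otimes\cC$ to compact morphisms in $\cD$, which by Proposition \ref{prop:criterion_strcont} (together with Proposition \ref{prop:right_trace_class_basics}) is equivalent to $F$ being strongly continuous.

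The main obstacle is the precise construction and analysis of $\Phi$, in particular clarifying its definition on continuous $\cE$-linear $F$ that only preserve $\kappa$-compact objects (not necessarily $\omega_1$-compact), and the subsequent matching of its fiber with strong continuity. Keeping the $\cE$-linearity coherent through the various Ind completions and Calkin quotients (using Proposition \ref{prop:dualizable_over_E_same_as_modules} and Remark \ref{rem:relative_and_absolute_duality}) is a further technical requirement.
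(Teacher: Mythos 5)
Your strategy of directly verifying the universal property is in the right spirit: the paper's own proof of this theorem is one sentence long, consisting of the key identification
\[
\Fun^L_{\cE}(\cC,\cD)^{\kappa}\simeq \Fun_{\cE^{\kappa}}^{\kappa\hy\rex}(\cC^{\kappa},\cD^{\kappa})\simeq \Fun_{\cE}^{LL}(\cC,\Ind(\cD^{\kappa}))
\]
(the second step using Proposition~\ref{prop:relative_nonstandard_adjunction}), followed by a pointer to ``the proof is now analogous to \cite[Theorem 1.91]{E24}.'' So the heavy lifting is delegated to the absolute case in [E24], and what the paper actually proves here is exactly this identification. Your sketch bypasses it, and that omission is where the real gap lies: without rewriting $\Fun^L_{\cE}(\cC,\cD)^{\kappa}$ as strongly continuous functors into $\Ind(\cD^{\kappa})$, there is no natural way to build the comparison map $\Phi$. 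In particular, an $F$ that only preserves $\kappa$-compacts sends $\cC^{\omega_1}$ into $\cD^{\kappa}$ rather than $\cD^{\omega_1}$, so the ``measure the Calkin defect in $\Ind(\cD^{\omega_1})$'' recipe you give does not literally make sense; one has to pass through the $\kappa$-level $\Ind$ first and then compare short exact sequences $0\to\cD\to\Ind(\cD^{\kappa'})\to\Ind(\Calk_{\kappa'}^{\cont}(\cD))\to 0$ for the two cardinals $\omega_1\le\kappa$. You flag this as ``the main obstacle,'' which is honest, but it is precisely the content of the proof, not a detail. A secondary point: you invoke Proposition~\ref{prop:criterion_strcont} for the equivalence ``preserves compact morphisms iff strongly continuous,'' but that proposition gives a criterion via infinite products; the compact-morphism criterion is a different (standard) characterization from [E24] and should be cited as such. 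Apart from these two issues the outline is sound, and the identification of the fiber with strongly continuous functors is the correct endgame; your argument, once the identification above is made explicit and the cardinal bookkeeping is carried out, becomes essentially the [E24, Theorem 1.91] proof carried out in the $\cE$-linear setting.
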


\begin{proof}
The choice of $\kappa$ implies that we have an equivalence
\begin{equation*}
\Fun^L_{\cE}(\cC,\cD)^{\kappa}\simeq \Fun_{\cE^{\kappa}}^{\kappa\hy\rex}(\cC^{\kappa},\cD^{\kappa})\simeq \Fun_{\cE}^{LL}(\cC,\Ind(\cD^{\kappa})),
\end{equation*}
where the second equivalence uses the adjunction from Proposition \ref{prop:relative_nonstandard_adjunction}. The proof is now analogous to the proof of \cite[Theorem 1.91]{E24}.
\end{proof}

We obtain the following description of the category $\un{\Hom}_{\cE}^{\dual}(\cC,\cD),$ similar to the description of a rigidification from Proposition \ref{prop:rigidification}. 

\begin{prop}
We keep the notation and assumption from Theorem \ref{th:descr_Hom^dual}. Then the essential image of $\un{\Hom}_{\cE}^{\dual}(\cC,\cD)$ in the category $\Ind(\Fun_{\cE}^L(\cC,\cD)^{\kappa})$ is generated via colimits by the formal colimits of the form $\inddlim[\Q_{\leq}](\Phi:\Q_{\leq}\to \Fun_{\cE}^L(\cC,\cD)^{\kappa}),$ where for $a<b$ the map $\Phi(a)\to \Phi(b)$ is right trace-class in $\Fun^L_{\cE}(\cC,\cD).$
\end{prop}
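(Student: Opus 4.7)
The plan is to mirror the proof of Proposition~\ref{prop:rigidification} (compare \cite[Theorem~4.77]{Ram24b}) for the rigidification functor, adapted to the setting of the dualizable internal Hom. By Theorem~\ref{th:descr_Hom^dual} the category $\un{\Hom}_{\cE}^{\dual}(\cC,\cD)$ sits inside $\Ind(\Fun_{\cE}^L(\cC,\cD)^{\kappa})$ as the dualizable kernel of a strongly continuous functor between ind-categories; in particular it is itself dualizable, and together with Proposition~\ref{prop:relative_nonstandard_adjunction} one sees that its $\kappa$-compact objects are precisely those coming from $\Fun_{\cE}^L(\cC,\cD)^{\kappa}$.

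Next I would invoke the standard fact that any dualizable stable category $\cA$ is generated under colimits by objects of the form $\inddlim[q\in\Q_{\leq}] y_q$ with each transition $y_a\to y_b$ ($a<b$) a compact morphism in $\cA$ (this is essentially the Nikolaus--Schrade characterization of compactly assembled categories, used for instance in \cite[Theorem~4.77]{Ram24b}). Applied to $\cA=\un{\Hom}_{\cE}^{\dual}(\cC,\cD)$, and using that every object is a colimit of its $\kappa$-compact subobjects, we obtain a generating collection of the required shape with $y_q\in\Fun_{\cE}^L(\cC,\cD)^{\kappa}$.

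The substantive remaining step --- and the main obstacle --- is to identify, for $F,G\in\Fun_{\cE}^L(\cC,\cD)^{\kappa}$, the compact morphisms $F\to G$ in $\un{\Hom}_{\cE}^{\dual}(\cC,\cD)$ with the right trace-class morphisms of continuous $\cE$-linear functors from Definition~\ref{def:right_trace_class}. A compact morphism in a dualizable category is by definition one that lifts through $\hat{\cY}$. Unwinding the universal property of the dualizable internal Hom in terms of strongly continuous $\cE$-linear functors $\cC\to\cD$, together with the observation that the lift through $\hat{\cY}$ corresponds under adjunction to a factorization of $\id_{\cC}\to F^R\circ G$ through the continuous approximation $F^{R,\cont}\circ G$ from Proposition~\ref{prop:properties_of_F^cont}, one matches compactness with the data of a right trace-class witness. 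This plays the role of a relative-over-$\cE$ analogue of Proposition~\ref{prop:trace_class_iff_compact} for the dualizable internal Hom; once established, the $\Q_{\leq}$-generation assertion follows immediately from the second step.
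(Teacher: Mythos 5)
There is a genuine gap in your step identifying the $\kappa$-compact objects. You assert, citing Theorem~\ref{th:descr_Hom^dual} and Proposition~\ref{prop:relative_nonstandard_adjunction}, that the $\kappa$-compact objects of $\un{\Hom}_{\cE}^{\dual}(\cC,\cD)$ are ``precisely those coming from $\Fun_{\cE}^L(\cC,\cD)^{\kappa}$,'' so that the $\Q_{\leq}$-diagram furnished by compact assembly of $\cA=\un{\Hom}_{\cE}^{\dual}(\cC,\cD)$ can be taken to land in $\Fun_{\cE}^L(\cC,\cD)^{\kappa}$. This is false, exactly as in the rigidification analogue: the representable objects $\cY(F)$, $F\in\Fun_{\cE}^L(\cC,\cD)^{\kappa}$, generally do \emph{not} lie in the full subcategory $\un{\Hom}_{\cE}^{\dual}(\cC,\cD)\subset\Ind(\Fun_{\cE}^L(\cC,\cD)^{\kappa})$ (just as $\cY(\cC^{\kappa})\not\subset\cC^{\rig}$), and conversely the genuine $\kappa$-compact objects of $\un{\Hom}_{\cE}^{\dual}(\cC,\cD)$ are themselves non-trivial formal ind-objects. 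Consequently the ``compact in $\cA$ $\Leftrightarrow$ right trace-class'' identification you aim for in the third step is ill-posed as stated: for $F,G\in\Fun_{\cE}^L(\cC,\cD)^{\kappa}$ you cannot speak of a map $F\to G$ being ``compact in $\un{\Hom}_{\cE}^{\dual}(\cC,\cD)$'' since $\cY(F)$, $\cY(G)$ are not objects there. What survives is that right trace-class morphisms are those morphisms of $\Fun_{\cE}^L(\cC,\cD)^{\kappa}$ which become zero in $\Fun_{\cE}^{LL}(\cC,\Ind(\Calk^{\cont}_{\omega_1}(\cD)))$, and that is the observation your proposal is missing.

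The paper's actual proof is much shorter and proceeds differently: it combines the kernel description from Theorem~\ref{th:descr_Hom^dual}, the general description of dualizable kernels $\ker^{\dual}(\Ind(\cA)\to\Ind(\cB))$ as generated by formal $\Q_{\leq}$-colimits in $\Ind(\cA)$ whose transition maps die in $\cB$ (this is \cite[Proposition~1.84]{E24} --- it already packages exactly the compact-assembly argument you were reaching for, but at the level of the compactly generated ambient category rather than the kernel itself), and the equivalence between ``right trace-class'' and ``zero in the Calkin internal Hom.'' Your instinct to pattern-match on Ramzi's proof of the rigidification description is reasonable, but to make it work you would essentially need to reprove \cite[Proposition~1.84]{E24} in order to convert $\Q_{\leq}$-diagrams of $\kappa$-compacts of $\cA$ (which are ind-objects) into $\Q_{\leq}$-diagrams valued in $\Fun_{\cE}^L(\cC,\cD)^{\kappa}$ with the correct compatibility; that interleaving/diagonalization step is precisely what is unaddressed.
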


\begin{proof}
A map $F\to G$ in $\Fun^L_{\cE}(\cC,\cD)^{\kappa}$ is right trace-class in $\Fun^L_{\cE}(\cC,\cD)$ if and only if its image in the category $\Fun_{\cE}^{LL}(\cC,\Ind(\Calk^{\cont}_{\omega_1}(\cD)))$ is a zero map. The proposition now follows from Theorem \ref{th:descr_Hom^dual} and the general description of kernels in $\Cat_{\st}^{\dual}$ \cite[Proposition 1.84]{E24}.
\end{proof}

We mention the following relation between the relative internal $\Hom$ in $\Cat_{\cE}^{\dual}$ over $\Cat_{\st}^{\dual}$ and in $\Pr_{\cE}^L$ over $\Pr^L_{\st}.$  

\begin{prop}\label{prop:Hom^dual_from_proper_or_smooth}
Let $\cC,\cD\in\Cat_{\cE}^{\dual}$ be dualizable left $\cE$-modules. Consider the natural continuous functor
\begin{equation}\label{eq:from_Hom^dual_to_Fun}
\un{\Hom}_{\cE}^{\dual}(\cC,\cD)\to \Fun_{\cE}^L(\cC,\cD)\simeq\cC^{\vee}\tens{\cE}\cD.
\end{equation}
\begin{enumerate}[label=(\roman*),ref=(\roman*)]
\item If $\cC$ is proper over $\cE,$ then the functor \eqref{eq:from_Hom^dual_to_Fun} has a (strongly continuous) fully faithful left adjoint, given by the composition 
\begin{equation}\label{eq:inclusion_for_proper}
	\cC^{\vee}\tens{\cE}\cD\to \un{\Hom}_{\cE}^{\dual}(\cC,\cC\otimes\cC^{\vee}\tens{\cE}\cD)\to\un{\Hom}_{\cE}^{\dual}(\cC,\cD).
\end{equation}
Here the second functor is well-defined because the evaluation functor $\ev_{\cC/\cE}:\cC\otimes\cC^{\vee}\to\cE$ is strongly continuous.  \label{inclusion_for_proper}
\item  If $\cC$ is smooth over $\cE,$ then the functor \eqref{eq:from_Hom^dual_to_Fun} is strongly continuous and fully faithful. It is naturally isomorphic to the composition 
\begin{equation*}\label{eq:inclusion_for_smooth}
\un{\Hom}_{\cE}^{\dual}(\cC,\cD)\to \cC^{\vee}\tens{\cE}\cC\otimes \un{\Hom}_{\cE}^{\dual}(\cC,\cD)\to \cC^{\vee}\tens{\cE}\cD. .
\end{equation*} \label{inclusion_for_smooth}
\item If $\cC$ is both smooth and proper over $\cE,$ then the functor \eqref{eq:from_Hom^dual_to_Fun} is an equivalence. \label{equivalence_for_smooth_and_proper}
\end{enumerate}
\end{prop}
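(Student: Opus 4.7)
The strategy is to exploit the universal property of $\un{\Hom}^{\dual}_{\cE}(\cC,\cD)$, according to which strongly continuous functors $\cA\to \un{\Hom}^{\dual}_{\cE}(\cC,\cD)$ from a dualizable $\cA$ classify strongly continuous $\cE$-linear functors $\cA\otimes\cC\to\cD$. Under the equivalence $\cC^{\vee}\tens{\cE}\cD\simeq\Fun^L_{\cE}(\cC,\cD)$, the forgetful functor $\Phi$ appearing in the statement corresponds to the inclusion of strongly continuous $\cE$-linear functors into all continuous $\cE$-linear ones. Throughout, smoothness of $\cC$ amounts to strong continuity of $\coev_{\cC/\cE}$, while properness amounts to strong continuity of $\ev_{\cC/\cE}$.

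For part (i), I would first verify that the claimed composition is well-defined as a strongly continuous functor. The first arrow corresponds, via the universal property of $\un{\Hom}^{\dual}_{\cE}(\cC,-)$, to the identity $(\cC^{\vee}\tens{\cE}\cD)\otimes\cC\to \cC\otimes\cC^{\vee}\tens{\cE}\cD$, which is strongly continuous $\cE$-linear; the second arrow is the image under $\un{\Hom}^{\dual}_{\cE}(\cC,-)$ of $\ev_{\cC/\cE}\boxtimes\id_{\cD}$, which is strongly continuous precisely because $\cC$ is proper. Call this composition $L$. Next, I would show $\Phi\circ L\simeq\id$ by unwinding definitions: for $M\in\cC^{\vee}\tens{\cE}\cD$, the continuous $\cE$-linear functor $\Phi(L(M)):\cC\to\cD$ sends $x$ to $(\ev_{\cC/\cE}\boxtimes\id_{\cD})(x\otimes M)$, which is exactly the functor classified by $M$ under $\cC^{\vee}\tens{\cE}\cD\simeq\Fun^L_{\cE}(\cC,\cD)$. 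This simultaneously shows that $L$ is fully faithful and that the unit of the adjunction is a natural isomorphism. The counit $L\Phi\to\id$ is then obtained from the strongly continuous $\cE$-linear evaluation $\cC\otimes \un{\Hom}^{\dual}_{\cE}(\cC,\cD)\to\cD$, and the triangle identities follow from $\Phi L\simeq\id$ together with the triangle identities for $\cC$ as a partially dualizable $\cE$-module.

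For part (ii), the composition displayed in the statement is strongly continuous because $\coev_{\cC/\cE}\boxtimes\id$ is so by smoothness of $\cC$, while the second arrow is obtained by tensoring the (always strongly continuous) $\cE$-linear evaluation $\cC\otimes\un{\Hom}^{\dual}_{\cE}(\cC,\cD)\to\cD$ with $\cC^{\vee}$ on the left and contracting over $\cE$. The identification of this composition with $\Phi$ is a direct application of the triangle identity $(\ev_{\cC/\cE}\boxtimes\id_{\cC})\circ(\id_{\cC}\boxtimes\coev_{\cC/\cE})\simeq\id_{\cC}$ encoding self-duality of $\cC$ over $\cE$ in $\Pr^L_{\cE}$, combined with the definition of $\Phi$ via the universal evaluation of $\un{\Hom}^{\dual}_{\cE}(\cC,\cD)$. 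For fully faithfulness, I would leverage the same factorization: the candidate left adjoint of $\Phi$ built from $\coev_{\cC/\cE}$ (strongly continuous by smoothness) provides a left inverse on mapping spectra, equivalently yielding $\Phi^L\Phi\simeq\id$.

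Part (iii) then follows formally: part (i) supplies a strongly continuous fully faithful left adjoint $L\dashv \Phi$, and part (ii) shows $\Phi$ itself is fully faithful, so both the unit and counit of the adjunction $L\dashv\Phi$ are natural isomorphisms, which forces $\Phi$ to be an equivalence. The main obstacle I anticipate is the bookkeeping in part (i), specifically the clean verification of $\Phi\circ L\simeq\id$ and the subsequent triangle identities, since $\un{\Hom}^{\dual}_{\cE}(\cC,\cD)$ is defined via the dualizable kernel construction of Theorem \ref{th:descr_Hom^dual}; however, once the universal property is invoked, the argument reduces to a diagram chase using the rigidity of $\cE$ together with the strong continuity of $\ev_{\cC/\cE}$.
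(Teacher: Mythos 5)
Your strategy is genuinely different from the paper's, and the difference is worth flagging because it hides a real gap. The paper does \emph{not} argue via the universal property and abstract unit/counit construction; instead, it uses the explicit description of $\un{\Hom}_{\cE}^{\dual}(\cC,\cD)$ as a kernel inside $\Ind((\cC^{\vee}\tens{\cE}\cD)^{\kappa})$ from Theorem~\ref{th:descr_Hom^dual}, together with the characterization of properness in Proposition~\ref{prop:criterions_smoothness_properness} (compact maps in $\Fun^L_{\cE}(\cC,\cD)$ are right trace-class). Those inputs show that $\hat{\cY}(\cC^{\vee}\tens{\cE}\cD)$ lands inside the essential image of $\un{\Hom}_{\cE}^{\dual}(\cC,\cD)$ as a full subcategory of $\Ind((\cC^{\vee}\tens{\cE}\cD)^{\kappa})$, that the composition \eqref{eq:inclusion_for_proper} is exactly this inclusion, and then the adjunction, strong continuity, and fully faithfulness are all \emph{automatic} consequences of the standing adjunction $\hat{\cY}\dashv\colim$ restricted to full subcategories. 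The paper also disposes of (iii) first and independently, by noting $\cC^{\vee}\tens{\cE}-$ is right adjoint to $\cC\otimes-$ on $\Cat_{\cE}^{\dual}$.

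The concrete gap in your part (i): you compute $\Phi\circ L\simeq\id$ on objects and then assert ``this simultaneously shows that $L$ is fully faithful and that the unit of the adjunction is a natural isomorphism.'' This is circular. An isomorphism $\Phi L\simeq\id$ of functors does not by itself supply an adjunction $L\dashv\Phi$, and one cannot speak of ``the unit'' before the adjunction exists. Your proposed construction of the counit $L\Phi\to\id$ ``from the strongly continuous evaluation'' is not an actual construction, and the universal property of $\un{\Hom}_{\cE}^{\dual}$ you invoke is a statement about categories of strongly continuous functors $\cA\to\un{\Hom}_{\cE}^{\dual}(\cC,\cD)$, not about mapping spectra $\Hom(L(M),Y)$ versus $\Hom(M,\Phi(Y))$; bridging that gap is precisely the nontrivial work. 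The efficient way to close it is the paper's route: once you know the image of $L$ sits inside $\un{\Hom}_{\cE}^{\dual}(\cC,\cD)\subset\Ind((\cC^{\vee}\tens{\cE}\cD)^{\kappa})$, the adjunction falls out of the general principle that restricting a left adjoint $F\dashv G$ to a full subcategory $\cB\subset\cA$ that contains $\im F$ yields $F\dashv G|_{\cB}$. Your derivation of (iii) from (i) and (ii) is correct but inverts the paper's logical order, which proves (iii) directly; both are fine, but note you need (i) and (ii) independently proved to deploy yours.
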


\begin{proof} First note that \ref{equivalence_for_smooth_and_proper} follows directly from the definitions of smoothness and properness: the functor $\cC^{\vee}\tens{\cE}-$ from $\Cat_{\cE}^{\dual}$ to $\Cat_{\st}^{\dual}$ is the right adjoint to the functor $\cC\otimes-.$
	
 We prove \ref{inclusion_for_proper}, and \ref{inclusion_for_smooth} is analogous. By Proposition \ref{prop:criterions_smoothness_properness}, any compact morphism in $\Fun^L_{\cE}(\cC,\cD)$ is right trace-class. Choosing an uncountable regular cardinal $\kappa$ such that $\cC$ is $\kappa$-compact, we see that the essential image $\hat{\cY}(\cC^{\vee}\tens{\cE}\cD)\subset \Ind((\cC^{\vee}\tens{\cE}\cD)^{\kappa})$ is contained in the essential image of $\un{\Hom}_{\cE}^{\dual}(\cC,\cD).$ The functor \eqref{eq:inclusion_for_proper} is identified with the inclusion of these full subcategories, hence it is fully faithful. This proves \ref{inclusion_for_proper}.
\end{proof}

\begin{remark}
We keep the notation and assumption from Theorem \ref{th:descr_Hom^dual}. Using again the fact that $(-)^{\vee}:\Cat_{\st}^{\dual}\to \Cat_{\st}^{\dual}$ is a covariant symmetric monoidal involution, we obtain a natural equivalence
\begin{equation*}
\un{\Hom}_{\cE}^{\dual}(\cC,\cD)^{\vee}\simeq \un{\Hom}_{\cE^{mop}}^{\dual}(\cC^{\vee},\cD^{\vee}).
\end{equation*}
Arguing as in Proposition \ref{prop:from_Ind_D_to_Ind_D^op}, one can show that the following diagram commutes
\begin{equation*}
\begin{CD}
\un{\Hom}_{\cE}^{\dual}(\cC,\cD)^{\vee} @>{\sim}>> \un{\Hom}_{\cE^{mop}}^{\dual}(\cC^{\vee},\cD^{\vee})\\
@VVV @VVV\\
\Ind((\Fun_{\cE}^L(\cC,\cD)^{\kappa})^{op}) @>>> \Ind(\Fun_{\cE^{mop}}^L(\cC^{\vee},\cD^{\vee})^{\kappa}),
\end{CD}
\end{equation*}
where the lower horizontal functor is continuous and it is given on compact objects by the composition
\begin{multline*}
\begin{CD}
(\Fun_{\cE}^L(\cC,\cD)^{\kappa})^{op}\xto{((-)^{R,\cont})^{\vee}}\Fun^L_{\cE^{mop}}(\cC^{\vee},\cD^{\vee})\simeq\Ind_{\kappa}(\Fun_{\cE^{mop}}^L(\cC^{\vee},\cD^{\vee})^{\kappa})\\
\to \Ind(\Fun_{\cE^{mop}}^L(\cC^{\vee},\cD^{\vee})^{\kappa}).
\end{CD}
\end{multline*}
\end{remark}

\subsection{Internal projectivity of proper $\omega_1$-compact categories}

We now formulate the main theorem of this section.

\begin{theo}\label{th:internal_projectivity} Let $\cE$ be a rigid $\bE_1$-monoidal category. Let $\cC$ be a dualizable left $\cE$-module which is proper and $\omega_1$-compact over $\cE.$ In other words, we require that the functor $$\ev_{\cC/\cE}:\cC\otimes\cC^{\vee}\to \cE$$ is strongly continuous and the relative diagonal object $\coev(\bS)\in \cC^{\vee}\tens{\cE}\cC$ is $\omega_1$-compact.
	
Then the category $\cC$ is relatively internally projective over $\Cat_{\st}^{\dual},$ i.e. the functor
$$\un{\Hom}_{\cE}^{\dual}(\cC,-):\Cat_{\cE}^{\dual}\to\Cat_{\st}^{\dual}$$ takes short exact sequences to short exact sequences.

In particular, if $\cE$ is symmetric monoidal, then $\cC$ is internally projective in $\Cat_{\cE}^{\dual}$ in the above sense.
\end{theo}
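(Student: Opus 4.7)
The plan is to fix a short exact sequence $0 \to \cA \to \cB \xto{p} \cD \to 0$ in $\Cat_{\cE}^{\dual}$ and show that the induced sequence
\[
0 \to \un{\Hom}_{\cE}^{\dual}(\cC,\cA) \to \un{\Hom}_{\cE}^{\dual}(\cC,\cB) \to \un{\Hom}_{\cE}^{\dual}(\cC,\cD) \to 0
\]
is exact in $\Cat_{\st}^{\dual}$. The \emph{kernel} direction, namely that $\un{\Hom}_{\cE}^{\dual}(\cC,\cA)$ is precisely the full subcategory of $\un{\Hom}_{\cE}^{\dual}(\cC,\cB)$ killed by post-composition with $p$, is formal: $\un{\Hom}_{\cE}^{\dual}(\cC,-)$ is a right adjoint, so it preserves limits, and the fully faithful $\cA \to \cB$ gives by post-composition the fully faithful kernel embedding on $\Fun_{\cE}^L(\cC,-)$.

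The essential content is showing that $\un{\Hom}_{\cE}^{\dual}(\cC,\cB) \to \un{\Hom}_{\cE}^{\dual}(\cC,\cD)$ is a quotient in $\Cat_{\st}^{\dual}$. Taking $\kappa = \omega_1$ in Theorem \ref{th:descr_Hom^dual}, which is allowed because $\cC$ is $\omega_1$-compact, an $\omega_1$-compact object of $\un{\Hom}_{\cE}^{\dual}(\cC,\cD)$ is represented by a formal colimit $\inddlim[n\in\N] F_n$ in $\Fun_{\cE}^L(\cC,\cD)^{\omega_1}$ with each transition $F_n \to F_{n+1}$ right trace-class. It suffices to lift such a tower, up to retracts, to a similar tower $\inddlim[n] \tilde F_n$ in $\Fun_{\cE}^L(\cC,\cB)^{\omega_1}$. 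A \emph{single} right-trace-class step is easy to lift: by Proposition \ref{prop:criterions_smoothness_properness}, properness of $\cC$ makes right-trace-class coincide with compact for morphisms in $\Fun_{\cE}^L(\cC,\cD) \simeq \cC^{\vee}\tens{\cE}\cD$, and the strongly continuous functor $\cC^{\vee}\tens{\cE}\cB \to \cC^{\vee}\tens{\cE}\cD$ induced by $p$ is a quotient in $\Cat_{\st}^{\dual}$, hence surjective up to retracts on compact objects by the standard description of compact objects in continuous colimits.

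The hard part is coherence: one must lift the whole tower $(F_n,\varphi_n)_{n\in\N}$ compatibly, and this is where Proposition \ref{prop:quadrofunctors} enters. The strategy is to combine the four indexing data --- the index $n$ of the tower, the ind-system realizing each $\hat\cY(F_n)$ from $\omega_1$-compact approximations, the ind-system presenting each right-trace-class witness of $\varphi_n$ as a compact object of $\cC^{\vee}\tens{\cE}\cD$, and the ind-system lifting these witnesses across $p$ --- into a single fourfold iterated $\prolim$/$\inddlim$ expression. Corollary \ref{cor:ind_isomorphisms_for_proper} is the crucial algebraic input: properness of $\cC$ lets one replace the a priori lax combinations involving $\Hom_{\cD}$ by combinations involving $\ev_{\cD}$, which commute with the tensor product $\cC^{\vee}\tens{\cE}-$ and therefore with the quotient $p$. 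The two hypotheses of Proposition \ref{prop:quadrofunctors} should then be verified by checking that certain products and filtered colimits of spectra commute with the relevant relative $\ev_{\cD}$ after properness; the $\omega_1$-compactness of $\cC$ is what controls the outer countable $\prolim_n$. Applying Proposition \ref{prop:quadrofunctors} then produces the coherent lift, and this step --- the verification of the quadrofunctor hypotheses and the careful bookkeeping of the four towers --- is the main obstacle. The final sentence of the theorem, concerning symmetric monoidal $\cE$, is immediate from Proposition \ref{prop:dualizable_over_E_same_as_modules}\ref{relative_Cat^dual_is_category_of_modules}, since $\Cat_{\cE}^{\dual} \simeq \Mod_{\cE}(\Cat_{\st}^{\dual})$ so internal projectivity over $\Cat_{\st}^{\dual}$ passes to internal projectivity in $\Cat_{\cE}^{\dual}$.
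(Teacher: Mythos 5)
Your high-level outline contains a conceptual gap at the crucial step: you propose to show that $\un{\Hom}_{\cE}^{\dual}(\cC,\cB) \to \un{\Hom}_{\cE}^{\dual}(\cC,\cD)$ is a quotient functor in $\Cat_{\st}^{\dual}$ by lifting $\omega_1$-compact objects (the formal right-trace-class towers) up to retracts. But essential surjectivity on compact objects up to retracts is \emph{not} the correct criterion for a strongly continuous functor to be a quotient functor. For example, $D(\Z) \to D(\Z/p)$ is essentially surjective on compact objects but not a quotient, because $\Z \to \Z/p$ is not a homological epimorphism: the mapping spectra in the target do not match those computed in the quotient. To establish quotient-ness you must control mapping spectra, i.e. show the right adjoint (equivalently the dual functor) is fully faithful. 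Your proposal never addresses this, and as stated the argument would not close.

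The paper's actual proof circumvents this by passing to duals from the start: showing the functor is a quotient is reformulated as showing its dual $\un{\Hom}_{\cE}^{\dual}(\cC,\cD)^{\vee} \to \un{\Hom}_{\cE}^{\dual}(\cC,\cB)^{\vee}$ is \emph{fully faithful}, which is a statement about mapping spectra. The crucial computation is Lemma \ref{lem:key_lemma_on_crazy_product}, which uses Proposition \ref{prop:quadrofunctors} (in exactly the role you anticipate, including the two preliminary Claims that supply hypotheses \ref{tricky_assump1}, \ref{tricky_assump2} via compactness and properness) to show that the relevant tensor product over $(\cC^{\vee}\tens{\cE}\cD)^{\omega_1}$ is an isomorphism. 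This yields Corollary \ref{cor:hom_epi_onto_the_image} (a homological-epimorphism-onto-image statement) and Proposition \ref{prop:dual_of_internal_Hom}, which identifies the dual of $\un{\Hom}_{\cE}^{\dual}(\cC,\cD)$ as a precise full subcategory of $\Fun((\cC^{\vee}\tens{\cE}\cD)^{\omega_1},\Sp)$ generated by objects $\THC(\cC/\cE;\ev_{\cD/\cE}(-,M))$. Proposition \ref{prop:comm_square_int_Homs} then shows that, for a strongly continuous $F:\cD\to\cD'$, these full subcategories are mapped compatibly under precomposition, and the theorem follows from a short diagram chase with fully faithful functors. So your intuition that $\omega_1$-compactness controls the outer countable $\prolim$, that properness converts $\Hom_{\cD}$-type expressions into $\ev_{\cD}$-type ones via Corollary \ref{cor:ind_isomorphisms_for_proper}, and that Proposition \ref{prop:quadrofunctors} handles the four-fold iterated $\lim$/$\colim$, is all on target --- but these inputs are fed into a tensor-product/fully-faithfulness computation on the dual side, not into a coherent object-lifting argument on the original side. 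The final observation about the symmetric monoidal case is correct and matches the paper.
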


\begin{remark}We refer to Subsection \ref{ssec:analogy_Raynaud_Gruson} for the analogy between Theorem \ref{th:internal_projectivity} and the Raynaud-Gruson criterion of projectivity of modules over (discrete) associative rings. 
\end{remark}

Theorem \ref{th:internal_projectivity} is a highly non-trivial statement, which gives a deep connection between the algebra of dualizable categories and localizing invariants. We mention the following application to the morphisms in the category $\Mot_{\cE,\kappa}^{\loc},$ for an uncountable $\kappa.$

\begin{cor}\label{cor:internal_Hom_in_Mot^loc_kappa}
Let $\cE$ be a rigid $\bE_1$-monoidal category, and let $\cC$ be a dualizable left $\cE$-module, which is proper and $\omega_1$-compact. Let $\kappa$ be an uncountable regular cardinal, and consider the universal localizing invariant $\cU_{\loc,\kappa}:\Cat_{\cE}^{\dual}\to\Mot^{\loc}_{\cE,\kappa},$ commuting with $\kappa$-filtered colimits. Then for any dualizable left $\cE$-module $\cD$ we have
\begin{equation}\label{eq:morphisms_in_Mot^loc_kappa}
\Hom(\cU_{\loc,\kappa}(\cC),\cU_{\loc,\kappa}(\cD))\cong K^{\cont}(\un{\Hom}_{\cE}^{\dual}(\cC,\cD)).
\end{equation}
Moreover, we can describe the relative internal $\Hom$ in $\Mot^{\loc}_{\cE,\kappa}$ over $\Mot^{\loc}_{\kappa}:$
\begin{equation}\label{eq:rel_internal_Hom_in_Mot^loc_kappa}
\un{\Hom}_{\Mot^{\loc}_{\cE,\kappa}/\Mot^{\loc}_{\kappa}}(\cU_{\loc,\kappa}(\cC),\cU_{\loc,\kappa}(\cD))\cong \cU_{\loc,\kappa}(\un{\Hom}_{\cE}^{\dual}(\cC,\cD))
\end{equation}
If $\cE$ is symmetric monoidal, then we obtain a description of the internal $\Hom$ in $\Mot^{\loc}_{\cE,\kappa}:$
\begin{equation}\label{eq:internal_Hom_in_Mot^loc_kappa}
\un{\Hom}_{\Mot^{\loc}_{\cE,\kappa}}(\cU_{\loc,\kappa}(\cC),\cU_{\loc,\kappa}(\cD))\cong \cU_{\loc,\kappa}(\un{\Hom}_{\cE}^{\dual}(\cC,\cD)).
\end{equation}
\end{cor}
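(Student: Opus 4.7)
The plan is to deduce all three isomorphisms from a single descent of $\un{\Hom}_{\cE}^{\dual}(\cC,-)$ through the universal property of $\Mot^{\loc}_{\cE,\kappa}$, combined with the adjunction built into the definition of $\un{\Hom}_{\cE}^{\dual}$.

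The first step is to check that the composite
\[
F_{\cC}:=\cU_{\loc,\kappa}\circ\un{\Hom}_{\cE}^{\dual}(\cC,-):\Cat_{\cE}^{\dual}\to\Mot^{\loc}_{\kappa}
\]
is a $\kappa$-accessible localizing invariant. Exactness is exactly Theorem \ref{th:internal_projectivity}, and accessibility reduces (since $\kappa\geq\omega_1$) to checking that $\un{\Hom}_{\cE}^{\dual}(\cC,-)$ commutes with $\omega_1$-filtered colimits. This in turn amounts to showing its left adjoint $-\otimes\cC:\Cat_{\st}^{\dual}\to\Cat_{\cE}^{\dual}$ preserves $\omega_1$-compact objects, which via the criterion of Theorem \ref{th:presentability_of_Cat_E^dual} follows from the factorizations $\ev_{(\cX\otimes\cC)/\cE}\simeq\ev_{\cX}\boxtimes\ev_{\cC/\cE}$ and $\coev_{(\cX\otimes\cC)/\cE}\simeq\coev_{\cX}\boxtimes\coev_{\cC/\cE}$, each of which is strongly continuous whenever $\cX$ and $\cC$ are $\omega_1$-compact.

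By Theorem \ref{th:continuous_loc_invar} and the universal property of $\Mot^{\loc}_{\cE,\kappa}$, the functor $F_{\cC}$ factors uniquely through a $\kappa$-continuous $\Phi_{\cC}:\Mot^{\loc}_{\cE,\kappa}\to\Mot^{\loc}_{\kappa}$ with $\Phi_{\cC}(\cU_{\loc,\kappa}(\cD))\cong\cU_{\loc,\kappa}(\un{\Hom}_{\cE}^{\dual}(\cC,\cD))$. Similarly, $-\otimes\cC:\Cat_{\st}^{\dual}\to\Cat_{\cE}^{\dual}$ is tautologically a $\kappa$-accessible localizing invariant and descends to a $\kappa$-continuous $\Psi_{\cC}:\Mot^{\loc}_{\kappa}\to\Mot^{\loc}_{\cE,\kappa}$; since the $\Mot^{\loc}_{\kappa}$-module structure on $\Mot^{\loc}_{\cE,\kappa}$ is induced from $\otimes:\Cat_{\st}^{\dual}\times\Cat_{\cE}^{\dual}\to\Cat_{\cE}^{\dual}$, one has $\Psi_{\cC}(X)\cong X\otimes\cU_{\loc,\kappa}(\cC)$. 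The defining adjunction $-\otimes\cC\dashv\un{\Hom}_{\cE}^{\dual}(\cC,-)$ between $\Cat_{\st}^{\dual}$ and $\Cat_{\cE}^{\dual}$ has unit and counit which, upon applying $\cU_{\loc,\kappa}$ and using the uniqueness of factorizations, produce a unit $\id\to\Phi_{\cC}\Psi_{\cC}$ and a counit $\Psi_{\cC}\Phi_{\cC}\to\id$ satisfying the triangle identities; this exhibits the desired adjunction $\Psi_{\cC}\dashv\Phi_{\cC}$ at the motivic level.

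Combined with the identification $\Psi_{\cC}(X)=X\otimes\cU_{\loc,\kappa}(\cC)$, this adjunction is precisely the statement that $\Phi_{\cC}$ is the relative internal $\Hom$ out of $\cU_{\loc,\kappa}(\cC)$, yielding \eqref{eq:rel_internal_Hom_in_Mot^loc_kappa}. The symmetric monoidal case \eqref{eq:internal_Hom_in_Mot^loc_kappa} is immediate: for symmetric monoidal $\cE$ the $\Mot^{\loc}_{\kappa}$-module structure on $\Mot^{\loc}_{\cE,\kappa}$ is the restriction of its own symmetric monoidal structure along the symmetric monoidal functor $\Mot^{\loc}_{\kappa}\to\Mot^{\loc}_{\cE,\kappa}$, so the relative and absolute internal $\Hom$s coincide. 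Finally, \eqref{eq:morphisms_in_Mot^loc_kappa} follows by taking $X=\cU_{\loc,\kappa}(\Sp)$ in the adjunction and invoking the standard identification $K^{\cont}(-)\cong\Hom_{\Mot^{\loc}_{\kappa}}(\cU_{\loc,\kappa}(\Sp),\cU_{\loc,\kappa}(-))$. The entire substantive difficulty is absorbed into Theorem \ref{th:internal_projectivity}; what remains is formal bookkeeping, the only subtlety being the transfer of the adjunction through the universal property, which relies on the uniqueness clauses in Theorem \ref{th:continuous_loc_invar} and in the universal property defining $\Mot^{\loc}_{\cE,\kappa}$.
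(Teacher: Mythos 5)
Your proposal is correct and takes essentially the same route as the paper: establish that $\cU_{\loc,\kappa}\circ\un{\Hom}_{\cE}^{\dual}(\cC,-)$ is exact (via Theorem \ref{th:internal_projectivity}) and $\kappa$-accessible (via $\omega_1$-compactness of $\cC$), descend through the universal property of $\Mot^{\loc}_{\cE,\kappa}$, and then observe the adjunction with $-\otimes\cU_{\loc,\kappa}(\cC)$ holds formally. The only difference is that you unpack the accessibility check through the evaluation/coevaluation criterion of Theorem \ref{th:presentability_of_Cat_E^dual} and write out the transfer of the adjunction unit/counit explicitly, steps the paper leaves as "it follows formally"; this is fine and correct bookkeeping.
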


\begin{proof}
We prove \eqref{eq:rel_internal_Hom_in_Mot^loc_kappa}, which implies \eqref{eq:morphisms_in_Mot^loc_kappa}. The proof of \eqref{eq:internal_Hom_in_Mot^loc_kappa} is analogous.

Our assumption on $\cC$ implies that the functor
\begin{equation*}
\un{\Hom}_{\cE}^{\dual}(\cC,-):\Cat_{\cE}^{\dual}\to\Cat_{\st}^{\dual}
\end{equation*}
commutes with $\omega_1$-filtered colimits, in particular, with $\kappa$-filtered colimits. By Theorem \ref{th:internal_projectivity} this functor also takes short exact sequences to short exact sequences. Hence, we obtain the well-defined functor
\begin{equation*}
\Mot^{\loc}_{\cE,\kappa}\to\Mot^{\loc,\kappa},\quad \cU_{\loc,\kappa}(\cD)\mapsto \cU_{\loc,\kappa}(\un{\Hom}_{\cE}^{\dual}(\cC,\cD)).
\end{equation*}
It follows formally that this functor is the right adjoint to tensoring with $\cU_{\loc,\kappa}(\cC),$ which proves \eqref{eq:rel_internal_Hom_in_Mot^loc_kappa}.
\end{proof}

In fact, the statements of Corollary \ref{cor:internal_Hom_in_Mot^loc_kappa} hold for $\kappa=\omega,$ and we will prove this in \cite{E}. This requires more results, including Theorem \ref{th:local_invar_of_inverse_limits} below on the localizing invariants of dualizable inverse limits.

We mention the following example, showing that an analogue of Theorem \ref{th:internal_projectivity} for small stable categories fails, even if we are working over a field.

\begin{example}
Let $\mk$ be a field, and let $A=\Lambda_{\mk}(\xi)$ be the exterior algebra over $\mk$ on one generator of cohomological degree $1.$ Let $x$ be a variable of degree zero. Then the category $\Fun_{\mk}(\Perf(A),\Perf(\mk[x]))$ is generated by the single functor sending $A$ to $\mk[x].$ The endomorphism algebra of this functor is given by $\mk[x][[y]],$ hence we obtain an equivalence
\begin{equation*}
\Fun_{\mk}(\Perf(A),\Perf(\mk[x]))\simeq \Perf(\mk[x][[y]]).
\end{equation*}
Similarly, we have an equivalence
\begin{equation*}
\Fun_{\mk}(\Perf(A),\Perf(\mk[x^{\pm}]))\simeq \Perf(\mk[x^{\pm}][[y]]).
\end{equation*}
However, the functor $\Perf(\mk[x][[y]])\to \Perf(\mk[x^{\pm}][[y]])$ is not a quotient functor (it is not even a homological epimorphism). We conclude that the category $\Perf(A)$ is not internally projective in $\Cat_{\mk}^{\perf}.$
\end{example} 

To prove Theorem \ref{th:internal_projectivity} we will need to do some difficult computations. In what follows we use the notation from Definition \ref{def:operations_objects_tensor_products} for the operations on objects in the tensor products. Also, we put 
\begin{equation*}
	\THC(\cC/\cE;-):\cC^{\vee}\tens{\cE}\cC\to\Sp,\quad \THC(\cC/\cE;M)=\Hom(\coev_{\cC/\cE}(\bS),M).
\end{equation*}
Here $\THC$ stands for ``topological Hochschild cohomology'' (relative over $\cE$).

The following statement is crucial for the proof of Theorem \ref{th:internal_projectivity}. 

\begin{lemma}\label{lem:key_lemma_on_crazy_product}
Let $\cC,\cD\in\Cat_{\cE}^{\dual},$ where $\cC$ satisfies the assumptions from Theorem \ref{th:internal_projectivity}. Let $M\in\cD^{\vee}\tens{\cE}\cC$ and $N\in\cC^{\vee}\tens{\cE}\cD$ be arbitrary objects. Then the natural map
\begin{equation}\label{eq:crazy_tensor_product}
\THC(\cC/E;\ev_{\cD/\cE}(N,(-)_{\cD}^{\vee})\tens{(\cC^{\vee}\tens{\cE}\cD)^{\omega_1}} \THC(\cC/E;\ev_{\cD/\cE}(-,M))\to \THC(\cC/\cE;\ev_{\cD/\cE}(N,M))\end{equation}  is an isomorphism. 	
\end{lemma}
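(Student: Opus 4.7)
The plan is to regard the two factors on the LHS as functors on the small stable category $\cA := (\cC^{\vee}\tens{\cE}\cD)^{\omega_1}$: the first factor $F(P) := \THC(\cC/\cE;\ev_{\cD/\cE}(N,P_{\cD}^{\vee}))$ is a contravariant spectrum-valued functor, and the second $G(P) := \THC(\cC/\cE;\ev_{\cD/\cE}(P,M))$ is covariant. The key claim is that $F$ is naturally isomorphic to the presheaf represented by the ind-object $\hat{\cY}(N)\in\Ind(\cA)$; granting this, the tensor product $F\tens{\cA}G$ unwinds by Yoneda to $\indlim[i]G(N_i)$, where we write $\hat{\cY}(N)=\inddlim[i]N_i$, while a parallel computation for the RHS (using continuity of $\ev_{\cD/\cE}$ in its first argument together with $\omega_1$-continuity of $\THC(\cC/\cE;-)$) delivers the same colimit, closing the argument.

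To identify $F$ with $\hat{\cY}(N)$ I would combine three ingredients. First, by the $\omega_1$-compactness hypothesis $\coev_{\cC/\cE}(\bS)\in(\cC^{\vee}\tens{\cE}\cC)^{\omega_1}$, the functor $\THC(\cC/\cE;-)$ commutes with $\omega_1$-filtered colimits, and $\ev_{\cD/\cE}$ is continuous in its first slot, so $F(P)\simeq\indlim[i]\THC(\cC/\cE;\ev_{\cD/\cE}(N_i,P_{\cD}^{\vee}))$. Second, properness of $\cC$ enables Corollary~\ref{cor:ind_isomorphisms_for_proper}~\ref{first_ind_isom} (taken with $\cC'=\cC$), giving an ind-system equivalence $\inddlim[i]\ev_{\cD/\cE}(N_i,(-)_{\cD}^{\vee})\simeq \inddlim[i]\Hom_{\cD}(-,N_i)$ in $\Ind(\Fun((\cC^{\vee}\tens{\cE}\cD)^{op},\cC^{\vee}\tens{\cE}\cC))$; postcomposing with $\THC(\cC/\cE;-)$, evaluating at $P$, and passing to colimits yields $F(P)\simeq\indlim[i]\THC(\cC/\cE;\Hom_{\cD}(P,N_i))$. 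Third, applying the universal property of $\Hom_{\cD}$ from Definition~\ref{def:operations_objects_tensor_products}~\ref{Hom_D} at the test object $Q=\coev_{\cC/\cE}(\bS)$, together with the triangle identity $\ev_{\cC/\cE}(\coev_{\cC/\cE}(\bS),P)\simeq P$ for the self-duality of $\cC$ over $\cE$, one finds $\THC(\cC/\cE;\Hom_{\cD}(P,N_i))\simeq\Map_{\cC^{\vee}\tens{\cE}\cD}(P,N_i)$. Assembling these, $F(P)\simeq\indlim[i]\Map(P,N_i)\simeq\Map_{\Ind(\cA)}(\cY(P),\hat{\cY}(N))$, as desired.

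With $F\simeq\hat{\cY}(N)$ as presheaves on $\cA$, we get $F\tens{\cA}G\simeq\hat{\cY}(N)\tens{\cA}G\simeq\indlim[i]G(N_i)=\indlim[i]\THC(\cC/\cE;\ev_{\cD/\cE}(N_i,M))$, while the RHS equals $\THC(\cC/\cE;\indlim[i]\ev_{\cD/\cE}(N_i,M))\simeq\indlim[i]\THC(\cC/\cE;\ev_{\cD/\cE}(N_i,M))$ by the same continuity properties. The hardest step is the compatibility check: the natural map of the lemma — constructed from the composition in $\cC^{\vee}\tens{\cE}\cC\simeq\Fun_{\cE}^L(\cC,\cC)$ together with the canonical pairing between $P$ and its relative $\cD$-dual $P_{\cD}^{\vee}$ — must genuinely correspond under the identification above to the structure maps of the colimit $\indlim[i]G(N_i)$. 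Each step of the identification is itself a natural transformation (the $\omega_1$-continuity, the explicit diagonal map of Corollary~\ref{cor:ind_isomorphisms_for_proper}, and the adjunction implicit in the universal property of $\Hom_{\cD}$), so their composition is compatible with the natural map as required; still, tracking this through the triangle identities is the most delicate part of the argument.
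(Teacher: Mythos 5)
There is a genuine gap, and it sits at the very first ingredient of your identification $F\simeq\hat{\cY}(N)$. You write that, because $\coev_{\cC/\cE}(\bS)$ is $\omega_1$-compact and $\ev_{\cD/\cE}(-,P_{\cD}^{\vee})$ is continuous, one may move $\THC(\cC/\cE;-)$ inside the colimit to get $F(P)\simeq\indlim_i\THC(\cC/\cE;\ev_{\cD/\cE}(N_i,P_{\cD}^{\vee}))$, where $\hat{\cY}(N)=\inddlim_i N_i$. But the indexing poset of $\hat{\cY}(N)$ is only filtered, not $\omega_1$-filtered; $\hat{\cY}$ is the left adjoint to $\colim:\Ind(\cA)\to\cC^{\vee}\tens{\cE}\cD$, which is strictly smaller than the $\omega_1$-Yoneda image of $N$ whenever $N$ is not compact, so the canonical functor from the index $I$ to $\cA_{/N}$ is not cofinal. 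Since $\cC$ is assumed proper but not smooth, $\THC(\cC/\cE;-)=\Hom(\coev(\bS),-)$ commutes only with $\omega_1$-filtered colimits, and therefore this step is not justified. The same defect appears in your parallel computation of the right-hand side, where you again commute $\THC$ with the colimit $\indlim_i\ev_{\cD/\cE}(N_i,M)$ indexed by $I$.

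If one repairs the start of your chain by using the adjunction $\hat{\cY}\dashv\colim$ (as the paper does), one only obtains $F(P)\cong\prolim_k\indlim_j\Hom_{\cC^{\vee}\tens{\cE}\cC}(P_k,\Hom_{\cD}(P,N_j))$ with $\hat{\cY}(\coev(\bS))\cong\inddlim_k P_k$. Your step 3 (which is itself a correct application of the universal property of $\Hom_{\cD}$ together with the identity $\prolim_k\Hom(P_k,-)=\THC(\cC/\cE;-)$) shows that $\indlim_j\prolim_k\Hom(P_k,\Hom_{\cD}(P,N_j))\cong\indlim_j\Hom(P,N_j)=\hat{\cY}(N)(P)$. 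Thus the claim $F\simeq\hat{\cY}(N)$ is exactly the assertion that the sequential $\prolim_k$ and the filtered $\indlim_j$ may be interchanged, and this interchange is the whole difficulty of the lemma — not something that comes for free. The paper's proof instead uses Proposition~\ref{prop:seq_limits_of_filtered_colimits} to interleave $\prolim_k\indlim_j$ into a colimit over cofinal sequences $\psi$, reduces the lemma to the technical Proposition~\ref{prop:quadrofunctors}, and verifies the hypotheses of the latter (the ``Claim'' inside the proof) by a pro-level argument that hinges on compactness of the transition maps $P_k\to P_{k+1}$, using properness essentially. Your argument, in effect, takes the interchange as given, so it establishes nothing beyond the statement itself.
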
 

\begin{proof} Let 
\begin{equation*}
\hat{\cY}(M)=\inddlim[i\in I] M_i\in \Ind((\cD^{\vee}\otimes\cC)^{\omega_1}),\quad \hat{\cY}(N)=\inddlim[j\in J] N_j\in \Ind((\cC^{\vee}\otimes\cD)^{\omega_1}),
\end{equation*} 
where $I$ and $J$ are directed posets. We also choose a sequence $P_0\to P_1\to\dots$ in $(\cC^{\vee}\tens{\cE}\cC)^{\omega_1}$ such that $$\hat{\cY}(\coev(\bS))\cong \inddlim[n] P_n\text{ in }\Ind(\cC^{\vee}\tens{\cE}\cC).$$ 

Now, we use properness of $\cC$ and Proposition \ref{prop:seq_limits_of_filtered_colimits} to find a presentation of the first functor in the LHS of \eqref{eq:crazy_tensor_product} as a directed colimit of representable functors (more precisely, functors represented by the objects of the bigger category $\cC^{\vee}\tens{\cE}\cD$). We obtain the following chain of isomorphisms
\begin{multline}\label{eq:presentation_as_colim_of_representables}
	\THC(\cC/E;\ev_{\cD/\cE}(N,(-)_{\cD}^{\vee})\cong \Hom_{\cC^{\vee}\tens{\cE}\cC}(\indlim[k] P_k,\indlim[j] \ev_{\cD/\cE}(N_j,(-)_{\cD}^{\vee}))\\
	\cong \prolim[k]\indlim[j] \Hom_{\cC^{\vee}\tens{\cE}\cC} (P_k,\ev_{\cD/\cE}(N_j,(-)_{\cD}^{\vee}))
	\cong \prolim[k]\indlim[j] \Hom_{\cC^{\vee}\tens{\cE}\cC} (P_k,\Hom_{\cD}(-,N_j))\\
	\cong \prolim[k]\indlim[j] \Hom_{\cC^{\vee}\tens{\cE}\cD}(-,\Hom_{\cC^{\vee}}(P_k,N_j))\cong \indlim[\psi:\N\to J] \prolim[k\leq l] \Hom_{\cC^{\vee}\tens{\cE}\cD}(-,\Hom_{\cC^{\vee}}(P_k,N_{\psi(l)}))\\
	\cong \indlim[\psi:\N\to J] \Hom_{\cC^{\vee}\tens{\cE}\cD}(-,\prolim[k\leq l] \Hom_{\cC^{\vee}}(P_k,N_{\psi(l)})).
\end{multline}
Here the second isomorphism uses the identification $\hat{\cY}(\coev(\bS))\cong\inddlim[n] P_n.$ The third isomorphism uses Corollary \ref{cor:ind_isomorphisms_for_proper} \ref{first_ind_isom}. The fifth isomorphism uses Proposition \ref{prop:seq_limits_of_filtered_colimits}.

Using \eqref{eq:presentation_as_colim_of_representables} we are now able to obtain the following expression for the source of \eqref{eq:crazy_tensor_product}:
\begin{multline}\label{eq:intermediate_expr_for_tensor_product}
\THC(\cC/E;\ev_{\cD/\cE}(N,(-)_{\cD}^{\vee})\tens{(\cC^{\vee}\tens{\cE}\cD)^{\omega_1}} \THC(\cC/E;\ev_{\cD/\cE}(-,M))\\
\cong \indlim[\psi:\N\to J] \THC(\cC/E;\ev_{\cD/\cE}(\prolim[k\leq l] \Hom_{\cC^{\vee}}(P_k,N_{\psi(l)}),M)\\
\cong \indlim[\psi:\N\to J] \Hom_{\cC^{\vee}\tens{\cE}\cC}(\indlim[n] P_n,\indlim[i] \ev_{\cD/\cE}(\prolim[k\leq l] \Hom_{\cC^{\vee}}(P_k,N_{\psi(l)}),M_i))\\
\cong \indlim[\psi:\N\to J] \Hom_{\cC^{\vee}\tens{\cE}\cC}(\indlim[n] P_n,\indlim[i] \prolim[k\leq l] \ev_{\cD/\cE}(\Hom_{\cC^{\vee}}(P_k,N_{\psi(l)}),M_i))\\
\cong \indlim[\psi:\N\to J] \Hom_{\cC^{\vee}\tens{\cE}\cC}(\indlim[n] P_n,\indlim[i] \prolim[k\leq l] \Hom_{\cC^{\vee}}(P_k,\ev_{\cD/\cE}(N_{\psi(l)},M_i)))\\
\cong \indlim[\psi:\N\to J] \prolim[n] \indlim[i] \Hom_{\cC^{\vee}\tens{\cE}\cC}(P_n,\prolim[k\leq l] \Hom_{\cC^{\vee}}(P_k,\ev_{\cD/\cE}(N_{\psi(l)},M_i)))\\
\cong \indlim[\psi:\N\to J] \prolim[n] \indlim[i] \prolim[k\leq l] \Hom_{\cC^{\vee}\tens{\cE}\cC}(P_n,\Hom_{\cC^{\vee}}(P_k,\ev_{\cD/\cE}(N_{\psi(l)},M_i)))\\
\cong \indlim[\psi:\N\to J] \prolim[n] \indlim[i] \prolim[k\leq l] \Hom_{\cC^{\vee}\tens{\cE}\cC}(\ev_{\cC/\cE}(P_k,P_n),\ev_{\cD/\cE}(N_{\psi(l)},M_i))) \\
\cong \indlim[\varphi:\N\to I]\indlim[\psi:\N\to J] \prolim[n\leq m]\prolim[k\leq l]\Hom_{\cC^{\vee}\tens{\cE}\cC}(\ev_{\cC/\cE}(P_k,P_n),\ev_{\cD/\cE}(N_{\psi(l)},M_{\varphi(m)})).
\end{multline}

Here the first isomorphism uses \eqref{eq:presentation_as_colim_of_representables} and the fact that the functor
$$\THC(\cC/\cE;\ev_{\cD/\cE}(-,M)):\cC^{\vee}\tens{\cE}\cD\to\Sp$$ is left Kan-extended from $(\cC^{\vee}\tens{\cE}\cC)^{\omega_1}$ (since by assumption on $\cC$ the functor $\THC(\cC/\cE;-)$ commutes with $\omega_1$-filtered colimits). The third and fourth isomorphisms use Corollary \ref{cor:ind_isomorphisms_for_proper} \ref{second_ind_isom}. The fifth isomorphism uses the identification $\coev_{\cC/\cE}(\bS)\cong \inddlim[n]P_n.$ The eighth isomorphism uses Proposition \ref{prop:seq_limits_of_filtered_colimits}.

{\noindent{\bf Claim.}} {\it The functor
\begin{equation}\label{eq:key_quadrofunctor_for_Hom^dual}
	F:\N^{op}\times I\times \N^{op}\times J\to\Sp,\quad F(n,i,k,j)=\Hom_{\cC^{\vee}\tens{\cE}\cC}(\ev_{\cC/\cE}(P_k,P_n),\ev_{\cD/\cE}(N_j,M_i)),
	\end{equation}
satisfies the following refinements of conditions \ref{tricky_assump1} and \ref{tricky_assump2} from Proposition \ref{prop:quadrofunctors}:
\begin{enumerate}[label=(\roman*),ref=(\roman*)]
	\item the map \begin{equation}\label{eq:maps_of_proobjects}
		\proolim[k](\indlim[j]\prolim[n]\indlim[i]F(n,i,k,j))\to \proolim[k](\prolim[n]\indlim[i]\indlim[j]F(n,i,k,j))\end{equation}
	is an isomorphism in $\Pro(\Sp).$ \label{refined_assump1}
	
	\item the map  \begin{equation*}\proolim[n](\indlim[i]\prolim[k]\indlim[j]F(n,i,k,j))\to \proolim[n](\prolim[k]\indlim[i]\indlim[j]F(n,i,k,j))\end{equation*}
is an isomorphism in $\Pro(\Sp).$ \label{refined_assump2}
\end{enumerate} 
}

Assuming the claim, we can apply Proposition \ref{prop:quadrofunctors} to the quadrofunctor \eqref{eq:key_quadrofunctor_for_Hom^dual}. Combining this with \eqref{eq:intermediate_expr_for_tensor_product}, we obtain
\begin{multline*}
\THC(\cC/\cE;\ev_{\cD/\cE}(N,(-)_{\cD}^{\vee})\tens{(\cC^{\vee}\tens{\cE}\cD)^{\omega_1}} \THC(\cC/\cE;\ev_{\cD/\cE}(-,M))\\
\cong \indlim[\varphi:\N\to I]\indlim[\psi:\N\to J]\prolim[n\leq m] \prolim[k\leq l] \Hom_{\cC^{\vee}\tens{\cE}\cC}(\ev_{\cC/\cE}(P_k,P_n),\ev_{\cD/\cE}(N_{\psi(l)},M_{\varphi(m)}))\\
\cong \prolim[n]\indlim[i]\indlim[j] \Hom_{\cC^{\vee}\tens{\cE}\cC}(\ev_{\cC/\cE}(P_n,P_n),ev_{\cD}(N_j,M_i))\\
\cong \Hom_{\cC^{\vee}\tens{\cE}\cC}(\indlim[n]\ev_{\cC/\cE}(P_n,P_n),\indlim[i]\indlim[j]\ev_{\cD/\cE}(N_j,M_i))\cong \THC(\cC/\cE;\ev_{\cD/\cE}(N,M)),
\end{multline*}
as required. Here we used the isomorphism $\hat{\cY}(\coev_{\cC/\cE}(\bS))\cong\inddlim[n]\ev_{\cC/\cE}(P_n,P_n),$ which follows from the properness of $\cC.$ 
\begin{proof}[Proof of Claim] We prove the condition \ref{refined_assump1}, and the proof of \ref{refined_assump2} is analogous. We may assume that for each $k\geq 0$ the map $f_k:P_k\to P_{k+1}$ is compact in $\cC^{\vee}\tens{\cE}\cC,$ and we choose a map $\wt{f_k}:\cY(P_k)\to\hat{\cY}(P_{k+1}),$ witnessing the compactness. By properness of $\cC$ over $\cE,$ for each $k\geq 0$ we obtain a factorization \begin{equation}\label{eq:factorization} \inddlim[n]\ev_{\cC/\cE}(P_k,P_n)\to \cY(P_k)\xto{\wt{f_k}} \hat{\cY}(P_{k+1})\to \inddlim[n]\ev_{\cC/\cE}(P_{k+1},P_n)\end{equation}
in $\Ind(\cC^{\vee}\tens{\cE}\cC).$ 
	
Now, for each $k\geq 0$  the map $\wt{f_k}$ allows to construct the following composition map:
\begin{multline}\label{eq:constructing_the_composition_comb_of_lim_colim}
\prolim[n]\indlim[i]\indlim[j] F(n,i,k+1,j)\cong \Hom(\inddlim[n]\ev_{\cC/\cE}(P_{k+1},P_n),\inddlim[(i,j)]\ev_{\cD/\cE}(N_j,M_i))\to \\
\Hom(\hat{\cY}(P_{k+1}),\inddlim[(i,j)]\ev_{\cD/\cE}(N_j,M_i))\to \Hom(\cY(P_k),\inddlim[(i,j)]\ev_{\cD/\cE}(N_j,M_i))\\
\cong \indlim[i]\indlim[j]\prolim[n]F(n,i,k,j)\to \indlim[j]\prolim[n]\indlim[i]F(n,i,k,j). 
\end{multline} 
The compositions \eqref{eq:constructing_the_composition_comb_of_lim_colim} define a map 
\begin{equation}\label{eq:inverse_map_lim_colim}
\proolim[k](\prolim[n]\indlim[i]\indlim[j]F(n,i,k,j))\to \proolim[k](\indlim[j]\prolim[n]\indlim[i]F(n,i,k,j)).\end{equation}
Using the factorizations \eqref{eq:factorization} we see that the map \eqref{eq:inverse_map_lim_colim} is the inverse to the map \eqref{eq:maps_of_proobjects}. This proves \ref{refined_assump1}.
\end{proof}
This proves the lemma.
\end{proof}

\begin{cor}\label{cor:hom_epi_onto_the_image} Let $\cC$ and $\cD$ be as in Lemma \ref{lem:key_lemma_on_crazy_product}. The functor \begin{equation*}
F:(\cC^{\vee}\tens{\cE}\cD)^{\omega_1}\simeq \Fun_{\cE}^{LL}(\cC,\Ind(\cD^{\omega_1}))\to \Fun_{\cE}^{LL}(\cC,\Ind(\Calk^{\cont}_{\omega_1}(\cD)))	
\end{equation*}
is a homological epimorphism onto its image.
\end{cor}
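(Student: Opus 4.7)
The plan is to derive the corollary from Lemma~\ref{lem:key_lemma_on_crazy_product} via the characterization of homological epimorphisms (onto the image) by idempotency of the kernel quasi-ideal, discussed at the end of Section~\ref{sec:preliminaries_rigid_and_dualizable}. Writing $\cA = (\cC^{\vee}\tens{\cE}\cD)^{\omega_1}$ and $\cB = \Fun_{\cE}^{LL}(\cC, \Ind(\Calk^{\cont}_{\omega_1}(\cD)))$, I need to exhibit an isomorphism
\begin{equation*}
\cI \tens{\cA} \cI \xrightarrow{\sim} \cI, \qquad \cI(X,Y) := \Fiber\bigl(\Hom_{\cA}(X,Y) \to \Hom_{\cB}(F(X),F(Y))\bigr).
\end{equation*}
Passing to $\pi_0$ and noting that $\cI$ is flat in either variable (it is a fiber of a map of exact bifunctors), this produces the surjectivity $\pi_0\cI \tens{\pi_0\cA} \pi_0\cI \twoheadrightarrow \pi_0\cI$ whose equivalence with ``$F$ is a homological epimorphism onto its image'' is exactly the criterion in question.

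The main step is to identify $\cI(X,Y) \cong \THC(\cC/\cE;\, \ev_{\cD/\cE}(Y, X^{\vee}_{\cD}))$, naturally in $X,Y \in \cA$. Unwinding the Calkin construction, a morphism $X \to Y$ in $\Fun_{\cE}^{LL}(\cC, \Ind(\cD^{\omega_1}))$ becomes null in the Calkin quotient precisely when the induced morphism $f_X \to f_Y$ of the corresponding continuous $\cE$-linear functors $\cC \to \cD$ is right trace-class in the sense of Definition~\ref{def:right_trace_class}. By definition the spectrum of such right trace-class morphisms is $\Hom(\id_\cC, f_X^{R,\cont} \circ f_Y)$, and since $X^{\vee}_{\cD} \in \cD^{\vee} \tens{\cE} \cC$ represents the continuous approximation $f_X^{R,\cont} : \cD \to \cC$, this reads as $\THC(\cC/\cE;\, \ev_{\cD/\cE}(Y, X^{\vee}_{\cD}))$.

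One then verifies that under this identification the quasi-ideal multiplication $\cI(X,Z) \tens{\cA} \cI(Z,Y) \to \cI(X,Y)$ matches the natural map of Lemma~\ref{lem:key_lemma_on_crazy_product} with $N = Y$ and $M = X^{\vee}_{\cD}$; this is essentially formal, since composition of right trace-class witnesses, expressed through $(-)^{\vee}_{\cD}$ and $\ev_{\cD/\cE}$, is exactly what the tensor product on the left-hand side of Lemma~\ref{lem:key_lemma_on_crazy_product} encodes. Applying the lemma, we obtain $\cI \tens{\cA} \cI \xrightarrow{\sim} \cI$, and the idempotency criterion concludes the proof.

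The hard part is the identification step: rigorously justifying that the $\Hom$-fiber under $F$ coincides with the spectrum of right trace-class morphisms. The delicate points are (a) the explicit interplay between the Calkin exact sequence and the identification $(\cC^{\vee}\tens{\cE}\cD)^{\omega_1} \simeq \Fun_{\cE}^{LL}(\cC, \Ind(\cD^{\omega_1}))$, and (b) the description of $X^{\vee}_{\cD}$ as the continuous approximation of the right adjoint $f_X^R$, which uses both the properness and the $\omega_1$-compactness of $\cC$. Once this matching is in place, the corollary falls out of Lemma~\ref{lem:key_lemma_on_crazy_product}, whose elaborate limit-colimit computation was set up precisely to deliver this idempotency.
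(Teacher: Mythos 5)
Your proposal is correct and follows essentially the same route as the paper's own proof: identify the fiber $\Fiber(\Hom(M,N)\to\Hom(F(M),F(N)))$ with $\THC(\cC/\cE;\ev_{\cD/\cE}(N,M_\cD^\vee))$, reduce the homological-epimorphism criterion to the idempotency of this quasi-ideal, and apply Lemma \ref{lem:key_lemma_on_crazy_product} with the lemma's $M$ replaced by $M_\cD^\vee$. The intermediate reformulation through right trace-class witnesses is just a more explicit unwinding of the same identification.
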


\begin{proof}
Given two objects $M,N\in (\cC^{\vee}\tens{\cE}\cD)^{\omega_1},$ we have
$$\Fiber(\Hom(M,N)\to \Hom(F(M),F(N)))\cong \THC(\cC/\cE;\ev_{\cD/\cE}(N,M_{\cD}^{\vee})).$$
Hence, we need to prove that for any $M,N\in (\cC^{\vee}\tens{\cE}\cD)^{\omega_1}$ the natural map
\begin{multline*}
\THC(\cC/\cE;\ev_{\cD/\cE}(N,(-)_{\cD}^{\vee}))\tens{(\cC^{\vee}\tens{\cE}\cD)^{\omega_1}} \THC(\cC/\cE;\ev_{\cD/\cE}(-,M_{\cD}^{\vee}))\\
\to \THC(\cC/\cE;\ev_{\cD/\cE}(N,M_{\cD}^{\vee}))
\end{multline*}
is an isomorphism. This follows from Lemma \ref{lem:key_lemma_on_crazy_product}.
\end{proof}

\begin{prop}\label{prop:dual_of_internal_Hom}Let $\cC$ and $\cD$ be as in Lemma \ref{lem:key_lemma_on_crazy_product}. Then the essential image of the natural fully faithful functor
\begin{equation*}
\Phi:\un{\Hom}_{\cE}^{\dual}(\cC,\cD)^{\vee}\to \Ind(((\cC^{\vee}\tens{\cE}\cD)^{\omega_1})^{op})\simeq \Fun((\cC^{\vee}\tens{\cE}\cD)^{\omega_1},\Sp)  
\end{equation*}
contains all objects of the form $\THC(\cC/\cE;\ev_{\cD/\cE}(-,M)),$ where $M\in\cD^{\vee}\tens{\cE}\cC.$ Moreover, the image of $\Phi$ is generated via colimits by such objects with $M\in (\cD^{\vee}\tens{\cE}\cC)^{\omega_1}.$
\end{prop}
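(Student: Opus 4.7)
My plan is to dualize the short exact sequence arising from Theorem~\ref{th:descr_Hom^dual} and Corollary~\ref{cor:hom_epi_onto_the_image}, identify $\Phi$ as the resulting fully faithful inclusion, and then realize each $\Psi_M$ as the image under $\Phi$ of a naturally constructed functional $\widetilde\Psi_M\in\un{\Hom}^{\dual}_{\cE}(\cC,\cD)^{\vee}$ coming from a trace pairing.

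First, set $\cA=(\cC^{\vee}\tens{\cE}\cD)^{\omega_1}$ and let $\cB'\subset\Fun^{LL}_{\cE}(\cC,\Ind(\Calk^{\cont}_{\omega_1}(\cD)))$ be the essential image of the functor $F_0$ appearing in Theorem~\ref{th:descr_Hom^dual}. By Corollary~\ref{cor:hom_epi_onto_the_image}, $F_0$ is a homological epimorphism onto $\cB'$, which produces a short exact sequence $0\to\un{\Hom}^{\dual}_{\cE}(\cC,\cD)\xto{\iota}\Ind(\cA)\to\Ind(\cB')\to 0$ in $\Cat_{\st}^{\dual}$. Applying the covariant symmetric monoidal involution $(-)^{\vee}$, which preserves short exact sequences, gives
\[0\to\un{\Hom}^{\dual}_{\cE}(\cC,\cD)^{\vee}\xto{\iota^{\vee,L}}\Ind(\cA^{op})\to\Ind(\cB'^{op})\to 0,\]
and under the canonical identification $\Ind(\cA^{op})\simeq\Fun((\cC^{\vee}\tens{\cE}\cD)^{\omega_1},\Sp)$ the first arrow is exactly $\Phi$. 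This simultaneously shows that $\Phi$ is fully faithful and identifies its essential image with the kernel of the dual quotient.

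Next, using properness of $\cC$ (strong continuity of $\ev_{\cC/\cE}$) and $\omega_1$-compactness of $\coev_{\cC/\cE}(\bS)$, I construct a strongly continuous $\cE$-linear trace pairing
\[\un{\Hom}^{\dual}_{\cE}(\cC,\cD)\otimes(\cD^{\vee}\tens{\cE}\cC)\to\un{\Hom}^{\dual}_{\cE}(\cC,\cC)\xto{\THC(\cC/\cE;-)}\Sp,\]
heuristically sending $(G,M)$ to $\THC(\cC/\cE;\widetilde M\circ G)$ where $\widetilde M:\cD\to\cC$ is classified by $M$. The associated adjoint is a strongly continuous functor $\Upsilon:\cD^{\vee}\tens{\cE}\cC\to\un{\Hom}^{\dual}_{\cE}(\cC,\cD)^{\vee}$, $M\mapsto\widetilde\Psi_M$. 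To verify $\Phi(\widetilde\Psi_M)\simeq\Psi_M$, use that $\Phi=\iota^{\vee,L}$ sends a functional $\psi$ to $\psi\circ\iota^R$; by Proposition~\ref{prop:Hom^dual_from_proper_or_smooth}(i) the fully faithful left adjoint $\lambda:\cC^{\vee}\tens{\cE}\cD\to\un{\Hom}^{\dual}_{\cE}(\cC,\cD)$ satisfies $\iota\lambda\simeq\hat{\cY}$ on $\omega_1$-presentable objects, so $\iota^R(\cY(N))\simeq\lambda(N)$ for $N\in\cA$. Unwinding yields $\Phi(\widetilde\Psi_M)(N)\simeq\widetilde\Psi_M(\lambda(N))\simeq\THC(\cC/\cE;\ev_{\cD/\cE}(N,M))=\Psi_M(N)$, which establishes the first assertion.

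For the generation claim, since $\Upsilon$ is strongly continuous and $\cD^{\vee}\tens{\cE}\cC$ is $\omega_1$-presentable, it suffices to verify that $\Upsilon$ is essentially surjective (so that images of $\omega_1$-compact objects generate under colimits). This I would deduce by combining the kernel description from Step~1 with the tensor-product formula of Lemma~\ref{lem:key_lemma_on_crazy_product}, which forces every $\psi\in\un{\Hom}^{\dual}_{\cE}(\cC,\cD)^{\vee}$ to be reconstructible from some $M\in\cD^{\vee}\tens{\cE}\cC$ via the trace pairing. The chief obstacle will be this essential surjectivity: it is not a formal consequence of Proposition~\ref{prop:Hom^dual_from_proper_or_smooth}(i) but requires a careful reconstruction argument exploiting the homological epimorphism property established in Corollary~\ref{cor:hom_epi_onto_the_image} through repeated use of the tensor-product computation of Lemma~\ref{lem:key_lemma_on_crazy_product}.
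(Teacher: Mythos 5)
Your step-1 setup is correct, and your overall strategy---constructing an explicit $\Upsilon:\cD^{\vee}\tens{\cE}\cC\to\un{\Hom}^{\dual}_{\cE}(\cC,\cD)^{\vee}$ and then checking $\Phi\circ\Upsilon\simeq\Psi_{(-)}$---is genuinely different from the paper's, which never constructs preimages at all. However, your proposal has two gaps, one of which is fatal to the generation claim.

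The first gap is in the verification step: you assert $\iota^R(\cY(N))\simeq\lambda(N)$ as a consequence of $\iota\lambda\simeq\hat{\cY}$. What actually follows (using full faithfulness of $\iota$, hence $\iota^R\iota\simeq\id$) is $\iota^R(\hat{\cY}(N))\simeq\lambda(N)$, and the objects $\hat{\cY}(N)$ and $\cY(N)$ of $\Ind(\cA)$ are not isomorphic in general, since $\hat{\cY}$ is the left adjoint of $\colim$ rather than the Yoneda embedding. Relatedly, it is not evident that your heuristic pairing produces an object of the dual: the functor $\THC(\cC/\cE;-)$ commutes only with $\omega_1$-filtered colimits (this is exactly the $\omega_1$-compactness of $\coev_{\cC/\cE}(\bS)$), while $\colim\circ\iota$ does not preserve $\omega_1$-compacts, so the composite functional need not commute with colimits. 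These are precisely the issues that Lemma~\ref{lem:key_lemma_on_crazy_product} is there to handle, but you invoke it only at the end.

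The second gap you flag yourself: you reduce generation to essential surjectivity of $\Upsilon$, but this is at least as strong as what you want (the image of a non-fully-faithful continuous functor need not be closed under colimits), so there is no actual reduction, and you have no argument for it. The paper's proof takes a cleaner route for both halves, entirely internal to the characterization of $\im(\Phi)$. From the proof of Corollary~\ref{cor:hom_epi_onto_the_image}, $\im(\Phi)$ consists exactly of the exact functors $G:\cA\to\Sp$ satisfying $\THC(\cC/\cE;\ev_{\cD/\cE}(N,(-)_{\cD}^{\vee}))\tens{\cA}G\xto{\sim}G(N)$ for all $N\in\cA$; Lemma~\ref{lem:key_lemma_on_crazy_product} directly shows $\Psi_M$ satisfies this, giving the first claim with no preimage construction. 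For generation, the same quasi-ideal description shows $\im(\Phi)$ is generated under colimits by the fiber functors $\THC(\cC/\cE;\ev_{\cD/\cE}(-,P_{\cD}^{\vee}))$ for $P\in\cA$; writing $P_{\cD}^{\vee}$ as an $\omega_1$-filtered colimit of objects $M_i\in(\cD^{\vee}\tens{\cE}\cC)^{\omega_1}$ and using $\omega_1$-compactness of $\coev(\bS)$ to pass $\THC(\cC/\cE;-)$ through the colimit exhibits each generator as $\indlim_i\Psi_{M_i}$, completing the argument.
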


\begin{proof} By the proof of Corollary \ref{cor:hom_epi_onto_the_image}, the essential image of $\Phi$ consists of (exact) functors $G:(\cC^{\vee}\tens{\cE}\cD)^{\omega_1}\to \Sp,$ such that for all $N\in (\cC^{\vee}\tens{\cE}\cD)^{\omega_1},$ the natural map
\begin{equation*}
\THC(\cC/\cE;\ev_{\cD/\cE}(N,(-)_{\cD}^{\vee}))\tens{(\cC^{\vee}\tens{\cE}\cD)^{\omega_1}} G\to G(N)
\end{equation*} 
is an isomorphism. By Lemma \ref{lem:key_lemma_on_crazy_product}, this condition holds for $G$ of the form $\THC(\cC/\cE;\ev_{\cD/\cE}(-,M))$ for $M\in\cD^{\vee}\tens{\cE}\cC.$ Therefore, such functors are in the essential image of $\Phi.$

Furthermore, by the proof of Corollary \ref{cor:hom_epi_onto_the_image}, the essential image of $\Phi$ is generated by the objects of the form $\THC(\cC/\cE;\ev_{\cD/\cE}(-,P_{\cD}^{\vee})),$ where $P\in (\cC^{\vee}\tens{\cE}\cD)^{\omega_1}.$ For such $P$ we can choose an $\omega_1$-filtered system $(M_i\in (\cD^{\vee}\tens{\cE}\cC)^{\omega_1})_i$ such that $P_{\cD}^{\vee}\cong\indlim[i] M_i.$ Since $\coev(\bS)\in\cC^{\vee}\tens{\cE}\cC$ is $\omega_1$-compact, we see that
\begin{equation*}
\THC(\cC/\cE;\ev_{\cD/\cE}(-,P_{\cD}^{\vee}))\cong \indlim[i] \THC(\cC/\cE;\ev_{\cD/\cE}(-,M_i)).
\end{equation*} 
This shows that the essential image of $\Phi$ is generated by the objects $\THC(\cC/\cE;\ev_{\cD/\cE}(-,M))$ with $M\in (\cC^{\vee}\tens{\cE}\cD)^{\omega_1}.$
\end{proof}

\begin{prop}\label{prop:comm_square_int_Homs} Let $\cC$ and $\cD$ be as in Lemma \ref{lem:key_lemma_on_crazy_product}. Let $\cD'$ be another dualizable left $\cE$-module, and let $F:\cD\to\cD'$ be a strongly continuous $\cE$-linear functor. Then we have a commutative square
\begin{equation}\label{eq:comm_square_int_Homs}
\begin{CD}
\un{\Hom}_{\cE}^{\dual}(\cC,\cD')^{\vee} @>{\un{\Hom}_{\cE}^{\dual}(\cC,F)^{\vee}}>> \un{\Hom}_{\cE}^{\dual}(\cC,\cD)^{\vee}\\
@V{\Phi'}VV @V{\Phi}VV\\
\Fun((\cC^{\vee}\tens{\cE}\cD')^{\omega_1},\Sp) @>>> \Fun((\cC^{\vee}\tens{\cE}\cD)^{\omega_1},\Sp).
\end{CD}	
\end{equation} 
Here the vertical (fully faithful) functors are as in Proposition \ref{prop:dual_of_internal_Hom}, and the lower horizontal functor is the precomposition with $(\id_{\cC^{\vee}}\boxtimes F)^{\omega_1}.$
\end{prop}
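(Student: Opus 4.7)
The plan is to verify \eqref{eq:comm_square_int_Homs} on a generating set supplied by Proposition \ref{prop:dual_of_internal_Hom}. For $M\in(\cD^{\vee}\tens{\cE}\cC)^{\omega_1}$ let $\xi_M\in\un{\Hom}^{\dual}_{\cE}(\cC,\cD)^{\vee}$ denote the unique preimage of $\THC(\cC/\cE;\ev_{\cD/\cE}(-,M))$ under $\Phi$, and similarly define $\xi'_{M'}\in\un{\Hom}^{\dual}_{\cE}(\cC,\cD')^{\vee}$ for $M'\in(\cD'^{\vee}\tens{\cE}\cC)^{\omega_1}$. By Proposition \ref{prop:dual_of_internal_Hom} the objects $\xi'_{M'}$ generate $\un{\Hom}^{\dual}_{\cE}(\cC,\cD')^{\vee}$ under colimits, and since both composites in \eqref{eq:comm_square_int_Homs} are continuous and $\Phi'$ is fully faithful, it suffices to check agreement on each $\xi'_{M'}$.

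For the lower-then-left path, first note that $(\id_{\cC^{\vee}}\boxtimes F)^{\omega_1}$ is well-defined: $F$ being strongly continuous forces $\id\boxtimes F$ to be strongly continuous with continuous right adjoint $\id\boxtimes F^R$, hence it preserves $\omega_1$-compact objects. Applying this path to $\xi'_{M'}$ yields $\THC(\cC/\cE;\ev_{\cD'/\cE}((\id_{\cC^{\vee}}\boxtimes F)(-),M'))$. The pairing identity $\ev_{\cD'/\cE}((\id\boxtimes F)(N),M')\simeq\ev_{\cD/\cE}(N,F^{\vee}(M'))$, where $F^{\vee}:\cD'^{\vee}\tens{\cE}\cC\to\cD^{\vee}\tens{\cE}\cC$ denotes the action of the contravariant dual of $F$ on the $\cD^{\vee}$-factor, rewrites this expression as $\THC(\cC/\cE;\ev_{\cD/\cE}(-,F^{\vee}(M')))=\Phi(\xi_{F^{\vee}(M')})$.

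For the upper-then-left path, it remains to show that $\un{\Hom}^{\dual}_{\cE}(\cC,F)^{\vee}(\xi'_{M'})=\xi_{F^{\vee}(M')}$. This expresses the naturality in $\cD$ of the construction $M\mapsto\xi_M$, and it is obtained by unwinding the proofs of Lemma \ref{lem:key_lemma_on_crazy_product}, Corollary \ref{cor:hom_epi_onto_the_image}, and Proposition \ref{prop:dual_of_internal_Hom}. Intrinsically, $\xi_M$ is the element of $\un{\Hom}^{\dual}_{\cE}(\cC,\cD)^{\vee}$ corresponding to the natural pairing $\un{\Hom}^{\dual}_{\cE}(\cC,\cD)\otimes(\cD^{\vee}\tens{\cE}\cC)\to\Sp$ given by composing the tautological functor $\un{\Hom}^{\dual}_{\cE}(\cC,\cD)\otimes\cC\to\cD$ with $\ev_{\cD/\cE}$ and $\Hom(1_{\cE},-)$. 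This construction is compatible with strongly continuous functors $\cD\to\cD'$ by the corresponding naturality of $\ev_{\cD/\cE}$, giving the desired identity.

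The main obstacle is establishing the naturality of $M\mapsto\xi_M$ rigorously. This requires tracking the two descriptions of $\un{\Hom}^{\dual}_{\cE}(\cC,\cD)^{\vee}$ — first via the $\Cat^{\dual}$-kernel from Theorem \ref{th:descr_Hom^dual}, second via the generators $\xi_M$ from Proposition \ref{prop:dual_of_internal_Hom} — and verifying that both are compatibly functorial in $\cD.$ Once this is done, the commutativity of \eqref{eq:comm_square_int_Homs} follows.
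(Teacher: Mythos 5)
Your lower-then-left calculation is correct and is in fact the whole substance of the proof --- it is exactly what the paper's argument amounts to. But the remaining step, identifying $\un{\Hom}^{\dual}_{\cE}(\cC,F)^{\vee}(\xi'_{M'})$ with $\xi_{F^{\vee}(M')}$, is left as an acknowledged gap; the ``intrinsic description'' of $\xi_M$ you sketch is not verified and would not be straightforward to reconcile with the construction of $\Phi$ via Proposition~\ref{prop:dual_of_internal_Hom}.

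That step can be bypassed entirely. By Theorem~\ref{th:descr_Hom^dual}, the inclusion $j_{\cD}\colon\un{\Hom}_{\cE}^{\dual}(\cC,\cD)\to\Ind((\cC^{\vee}\tens{\cE}\cD)^{\omega_1})$ is natural in $\cD$, so the square with vertical arrows the transposes $j_{\cD}^{\vee}\colon\Fun((\cC^{\vee}\tens{\cE}\cD)^{\omega_1},\Sp)\to\un{\Hom}_{\cE}^{\dual}(\cC,\cD)^{\vee}$ and the same horizontal functors as in \eqref{eq:comm_square_int_Homs} commutes automatically. Since $\Phi$ and $\Phi'$ are the fully faithful left adjoints of $j_{\cD}^{\vee}$ and $j_{\cD'}^{\vee}$, commutativity of \eqref{eq:comm_square_int_Homs} is equivalent to the invertibility of the associated Beck--Chevalley map, which factors as the unit for $\Phi'\dashv j_{\cD'}^{\vee}$ (invertible by fully faithfulness of $\Phi'$) followed by the counit for $\Phi\dashv j_{\cD}^{\vee}$ applied to the lower-then-left composite. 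That counit is invertible precisely on $\im(\Phi)$, so the only nontrivial point is that the lower horizontal functor sends $\im(\Phi')$ into $\im(\Phi)$ --- which is your lower-then-left calculation. In short, the naturality you try to prove in your third step is a consequence of, not a prerequisite for, the image-preservation check.
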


\begin{proof}We need to check that the lower horizontal functor takes $\im(\Phi')$ to $\im(\Phi).$ By Proposition \ref{prop:dual_of_internal_Hom}, it suffices to show that for $M\in (\cD^{'\vee}\tens{\cE}\cC)^{\omega_1},$ the image of 
$\THC(\cC/\cE;\ev_{\cD'/\cE}(-,M))\in\im(\Phi')$ is contained in $\im(\Phi).$ But we have
\begin{equation*}
\THC(\cC/\cE;\ev_{\cD'/\cE}((\id_{\cC^{\vee}}\boxtimes F)(-),M))\cong \THC(\cC/\cE;\ev_{\cD/\cE}(-,(\id_{\cC^{\vee}}\boxtimes F^{\vee})(M))).
\end{equation*}
By Proposition \ref{prop:dual_of_internal_Hom}, the latter object is contained in $\im(\Phi),$ as required.
\end{proof}

\begin{proof}[Proof of Theorem \ref{th:internal_projectivity}] We only need to prove the following statement: if $F:\cD\to\cD'$ is an $\cE$-linear strongly continuous quotient functor between dualizable left $\cE$-modules, then the functor \begin{equation*}
		\un{\Hom}_{\cE}^{\dual}(\cC,F):\un{\Hom}_{\cE}^{\dual}(\cC,\cD)\to \un{\Hom}_{\cE}^{\dual}(\cC,\cD')
	\end{equation*}
is a quotient functor. Equivalently, we need to prove that the dual functor
\begin{equation*}
	\un{\Hom}_{\cE}^{\dual}(\cC,F)^{\vee}:\un{\Hom}_{\cE}^{\dual}(\cC,\cD')^{\vee}\to \un{\Hom}_{\cE}^{\dual}(\cC,\cD)^{\vee}
\end{equation*}
is fully faithful. Applying Proposition \ref{prop:comm_square_int_Homs}, we see that in the associated commutative square \eqref{eq:comm_square_int_Homs}
both vertical functors and the lower horizontal functor are fully faithful. Hence, the upper horizontal functor is also fully faithful. This proves the theorem.
\end{proof}

We obtain the following surprising result, which is a strengthening of Corollary \ref{cor:hom_epi_onto_the_image}.

\begin{cor}\label{cor:hom_epi_internal_hom}
Let $\cE$ be a rigid $\bE_1$-monoidal category. Let $\cC,\cD\in\Cat_{\cE}^{\dual}$ be dualizable left $\cE$-modules, and suppose that $\cC$ is proper and $\omega_1$-compact over $\cE.$ Let $\kappa$ be an uncountable regular cardinal. Then the functor
\begin{equation*}
(\cC^{\vee}\tens{\cE}\cD)^{\kappa}\simeq \Fun_{\cE}^{LL}(\cC,\Ind(\cD^{\kappa}))\to \Fun_{\cE}^{LL}(\cC,\Ind(\Calk_{\kappa}^{\cont}(\cD)))
\end{equation*}
is a homological epimorphism. Equivalently, we have a short exact sequence
\begin{equation*}
0\to\un{\Hom}_{\cE}^{\dual}(\cC,\cD)\to \Ind((\cC^{\vee}\tens{\cE}\cD)^{\kappa})\to \Ind(\Fun_{\cE}^{LL}(\cC,\Ind(\Calk_{\kappa}^{\cont}(\cD))))\to 0.
\end{equation*}
\end{cor}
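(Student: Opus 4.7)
The two formulations in the corollary are equivalent by general principles on dualizable kernels: from Theorem \ref{th:descr_Hom^dual} (applicable for every uncountable regular $\kappa$, since $\omega_1$-compactness of $\cC$ implies $\kappa$-compactness), the dualizable internal Hom sits inside $\Ind((\cC^{\vee}\tens{\cE}\cD)^{\kappa})$ as a dualizable kernel. The existence of the short exact sequence in $\Cat^{\dual}_{\st}$ is thus equivalent to the underlying functor of small stable categories
$$(\cC^{\vee}\tens{\cE}\cD)^{\kappa}\;\longrightarrow\;\Fun_{\cE}^{LL}(\cC,\Ind(\Calk_{\kappa}^{\cont}(\cD)))$$
being a homological epimorphism, so it suffices to establish the short exact sequence formulation.

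To do so, the plan is to apply $\un{\Hom}^{\dual}_{\cE}(\cC,-)$ to the canonical short exact sequence from Proposition \ref{prop:Calkin_over_rigid}:
$$0\to\cD\xrightarrow{\hat{\cY}}\Ind(\cD^{\kappa})\to \Ind(\Calk_{\kappa}^{\cont}(\cD))\to 0.$$
Since $\cC$ is proper and $\omega_1$-compact, the internal projectivity result (Theorem \ref{th:internal_projectivity}) yields a short exact sequence
$$0\to\un{\Hom}^{\dual}_\cE(\cC,\cD)\to\un{\Hom}^{\dual}_\cE(\cC,\Ind(\cD^{\kappa}))\to\un{\Hom}^{\dual}_\cE(\cC,\Ind(\Calk^{\cont}_{\kappa}(\cD)))\to 0$$
in $\Cat^{\dual}_{\st}$.

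What remains is to identify the middle term with $\Ind((\cC^{\vee}\tens{\cE}\cD)^{\kappa})$ and the right term with $\Ind(\Fun^{LL}_\cE(\cC,\Ind(\Calk^{\cont}_{\kappa}(\cD))))$. The common mechanism is: for any compactly generated dualizable left $\cE$-module $\cA$, the category $\un{\Hom}^{\dual}_\cE(\cC,\cA)$ is itself compactly generated and equivalent to $\Ind(\Fun^{LL}_\cE(\cC,\cA))$. This in turn follows by combining the kernel description of Theorem \ref{th:descr_Hom^dual} at $\kappa=\omega_1$ with the $\omega_1$-case of the present corollary (Corollary \ref{cor:hom_epi_onto_the_image}): the homological epimorphism produces a short exact sequence after ind-completion, whose leftmost kernel at the level of small stable categories agrees with $\Fun^{LL}_\cE(\cC,\cA)$ thanks to the compatibility of the canonical sequence $0\to\cA\to\Ind(\cA^{\omega_1})\to\Ind(\Calk^{\cont}_{\omega_1}(\cA))\to 0$ with the compact structure of $\cA$. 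Applying this identification to $\cA=\Ind(\cD^{\kappa})$ and $\cA=\Ind(\Calk^{\cont}_{\kappa}(\cD))$, and using the tautological adjunction equivalence $\Fun^{LL}_\cE(\cC,\Ind(\cD^{\kappa}))\simeq(\cC^{\vee}\tens{\cE}\cD)^{\kappa}$ from Proposition \ref{prop:relative_nonstandard_adjunction}, yields the two desired identifications and concludes the proof.

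The main obstacle is the auxiliary identification $\un{\Hom}^{\dual}_\cE(\cC,\cA)\simeq\Ind(\Fun^{LL}_\cE(\cC,\cA))$ for compactly generated $\cA$: the surjectivity/retract-closure aspect of this equivalence requires careful bookkeeping of the interplay between the kernel description of Theorem \ref{th:descr_Hom^dual}, the homological epimorphism from Corollary \ref{cor:hom_epi_onto_the_image}, and the compact generation hypothesis; once this is secured, the rest of the argument is formal.
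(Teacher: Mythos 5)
Your overall strategy coincides with the paper's: both apply the internal projectivity theorem to the functorial Calkin short exact sequence $0\to\cD\to\Ind(\cD^{\kappa})\to\Ind(\Calk_{\kappa}^{\cont}(\cD))\to 0$ and then identify the middle and right terms. The gap is in your unified ``common mechanism.'' You assert that for every compactly generated dualizable left $\cE$-module $\cA$ one has $\un{\Hom}^{\dual}_{\cE}(\cC,\cA)\simeq\Ind(\Fun^{LL}_{\cE}(\cC,\cA))$, and you propose to derive this by combining Theorem~\ref{th:descr_Hom^dual} at $\omega_1$ with Corollary~\ref{cor:hom_epi_onto_the_image}. But that combination only tells you that $\un{\Hom}^{\dual}_{\cE}(\cC,\cA)$ is the dualizable kernel of an $\omega_1$-Calkin homological epimorphism onto its image, whose compact objects are the \emph{$\omega_1$-compact} strongly continuous $\cE$-linear functors $\cC\to\cA$. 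It does not tell you that these are all of $\Fun^{LL}_{\cE}(\cC,\cA)$, nor that the kernel is compactly generated on them; for $\kappa>\omega_1$ and $\cA=\Ind(\cD^{\kappa})$ this is precisely the point at issue, so the argument is circular.

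The paper's proof avoids this by never asserting the general compactly generated claim. It only needs the identification $\Ind(\Fun_{\cE}^{LL}(\cC,\Ind(\cD^{\kappa})))\simeq\un{\Hom}_{\cE}^{\dual}(\cC,\Ind(\cD^{\kappa}))$, which holds for the entirely different reason that $\Ind(\cD^{\kappa})$ is the value of the right adjoint from Proposition~\ref{prop:relative_nonstandard_adjunction}: since $\cC$ is $\kappa$-compact, tensoring with $\cC$ preserves $\kappa$-compactness, and the universal property transports cleanly. The paper then places this in a commutative square against the a priori fully faithful comparison $\Ind(\Fun_{\cE}^{LL}(\cC,\Ind(\Calk_{\kappa}^{\cont}(\cD))))\to\un{\Hom}_{\cE}^{\dual}(\cC,\Ind(\Calk_{\kappa}^{\cont}(\cD)))$, and a two-out-of-three diagram chase against the quotient functor coming from Theorem~\ref{th:internal_projectivity} simultaneously shows the right vertical is an equivalence and the top horizontal is a quotient. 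The identification for the Calkin term is thus a \emph{consequence} of the corollary, not an input. If you replace your auxiliary claim by the adjunction argument for the middle term only and then run the diagram chase, your proof is complete and matches the paper's.
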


\begin{proof}
We have the following commutative square
\begin{equation*}
\begin{CD}
\Ind(\Fun_{\cE}^{LL}(\cC,\Ind(\cD^{\kappa}))) @>>> \Ind(\Fun_{\cE}^{LL}(\cC,\Ind(\Calk_{\kappa}^{\cont}(\cD))))\\
@V{\sim}VV @VVV\\
\un{\Hom}_{\cE}^{\dual}(\cC,\Ind(\cD^{\kappa})) @>>> \un{\Hom}_{\cE}^{\dual}(\cC,\Ind(\Calk_{\kappa}^{\cont}(\cD)))
\end{CD}
\end{equation*}
Here the left vertical functor is an equivalence, and the right vertical functor is fully faithful. By Theorem \ref{th:internal_projectivity}, the lower horizontal functor is a quotient functor. We conclude that the right vertical functor is an equivalence, and the upper horizontal functor is a quotient functor. This proves the corollary. 
\end{proof}

We mention a few statements on the dualizable internal $\Hom,$ which can be deduced from the results above.

\begin{remark}
Let $\cC$ and $\cD$ be dualizable left $\cE$-modules, and suppose that $\cC$ is proper and $\omega_1$-compact.
\begin{enumerate}
\item Consider the functor $\Lambda_{\cC,\cD}:\cC^{\vee}\tens{\cE}\cD\to \un{\Hom}_{\cE}^{\dual}(\cC,\cD),$ which is the right adjoint to the natural functor discussed in Proposition \ref{prop:Hom^dual_from_proper_or_smooth}. Then the functor $\Lambda_{\cC,\cD}$ is fully faithful, commutes with $\omega_1$-filtered colimits and the image $\Lambda_{\cC,\cD}((\cC^{\vee}\tens{\cE}\cD)^{\omega_1})$ generates the category $\un{\Hom}_{\cE}^{\dual}(\cC,\cD)$ via colimits.
\item The composition
\begin{equation*}
\cC^{\vee}\tens{\cE}\cD\xto{\Lambda_{\cC,\cD}} \un{\Hom}_{\cE}^{\dual}(\cC,\cD)\to \Ind((\cC^{\vee}\tens{\cE}\cD)^{\omega_1})\simeq\Fun(((\cC^{\vee}\tens{\cE}\cD)^{\omega_1})^{op},\Sp)
\end{equation*}
is given by $M\mapsto \THC(\cC/\cE;\ev_{\cD/\cE}(M,(-)_{\cD}^{\vee})).$
\item The composition
\begin{multline*}
\cC^{\vee}\tens{\cE}\cD\xto{\Lambda_{\cC,\cD}} \un{\Hom}_{\cE}^{\dual}(\cC,\cD)\simeq\un{\Hom}_{\cE^{mop}}^{\dual}(\cC^{\vee},\cD^{\vee})^{\vee}\\
\to \Ind(((\cD^{\vee}\tens{\cE}\cC)^{\omega_1})^{op})\simeq \Fun((\cD^{\vee}\tens{\cE}\cC)^{\omega_1},\Sp)
\end{multline*}
is given by $M\mapsto \THC(\cC/\cE;\ev_{\cD/\cE}(M,-)).$
\item The composition
\begin{multline*}
(\cC^{\vee}\tens{\cE}\cD)\times (\cD^{\vee}\tens{\cE}\cC)\xto{(\Lambda_{\cC,\cD},\Lambda_{\cC^{\vee},\cD^{\vee}})} \un{\Hom}_{\cE}^{\dual}(\cC,\cD)\times \un{\Hom}_{\cE^{mop}}^{\dual}(\cC^{\vee},\cD^{\vee})\\
\simeq \un{\Hom}_{\cE}^{\dual}(\cC,\cD)\times \un{\Hom}_{\cE}^{\dual}(\cC,\cD)^{\vee}\xto{\ev}\Sp
\end{multline*}
is given by $(M,N)\mapsto \THC(\cC/\cE;\ev_{\cD/\cE}(M,N)).$
\end{enumerate}
\end{remark}

\subsection{Analogy with the Raynaud-Gruson criterion of projectivity}
\label{ssec:analogy_Raynaud_Gruson}

We recall the following important characterization of projective modules over a (usual, discrete) associative unital ring due to Raynaud and Gruson \cite{RayGru}, partially based on Kaplansky's work \cite{Ka58}. In the following theorem all the modules are discrete (concentrated in degree zero), and the tensor products are non-derived.

\begin{theo}\cite{Ka58, RayGru}\cite[\href{https://stacks.math.columbia.edu/tag/059Z}{Tag 059Z}]{Stacks}\label{th:Raynaud_Gruson}
Let $R$ be an associative unital ring, and let $M$ be a left $R$-module. Then $M$ is projective if and only if the following conditions are satisfied.
\begin{enumerate}[label=(\roman*),ref=(\roman*)]
\item $M$ is flat over $R.$ \label{cond_flat_module}
\item $M$ is Mittag-Leffler, i.e. for any collection of right $R$-modules $(N_s)_{s\in S}$ the natural map $(\prodd[s]N_s)\tens{R}M\to \prodd[s](N_s\tens{R}M)$ is a monomorphism. \label{cond_ML_module}
\item $M$ is isomorphic to a (possibly uncountable) direct sum of countably presented left $R$-modules. \label{cond_coproduct_of_countably presented}
\end{enumerate} 
\end{theo}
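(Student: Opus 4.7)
The plan is to establish both directions, with the nontrivial content in the implication (i)--(iii) $\Rightarrow$ projective. First I would dispatch the ``only if'' direction: a projective $R$-module is a direct summand of some free module $R^{(S)}$. Flatness is immediate, and the Mittag-Leffler condition can be verified on free modules by an explicit computation (the map $(\prod_s N_s) \tens{R} R^{(I)} \to \prod_s (N_s \tens{R} R^{(I)})$ is injective as an inclusion of a direct sum into a product of the same sets), then inherited by summands. Finally, by Kaplansky's decomposition theorem \cite{Ka58} any projective module splits as $\bigoplus_i M_i$ with each $M_i$ countably generated; since summands of projectives are projective and countably generated projectives are countably presented, (iii) follows.

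For the ``if'' direction, the first step is to reduce to the countably presented case. Both flatness and the Mittag-Leffler condition pass to direct summands, so writing $M=\bigoplus_i M_i$ via (iii) reduces the problem to showing that each flat Mittag-Leffler countably presented $M_i$ is projective (direct sums of projectives being projective). Call this the \emph{key lemma}: \emph{a flat, Mittag-Leffler, countably presented module $N$ is projective.}

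To prove the key lemma, I would invoke Lazard's theorem to present $N$ as a sequential colimit $N=\colim_n L_n$ of finitely generated free modules (the countable presentation allows the indexing to be $\N$). The Mittag-Leffler property then translates into a stability statement for the inverse system of right-dual modules $L_n^{\vee}=\Hom_R(L_n,R)$: for each $n$ the descending chain $\mathrm{Im}(L_k^{\vee}\to L_n^{\vee})$ of submodules of $L_n^{\vee}$ stabilizes for $k$ large. After passing to a cofinal subsequence one may assume stability holds from $L_{n+1}^{\vee}$ onward. Using this stability, I would inductively construct compatible $R$-linear maps $\varphi_n\colon L_n\to R^{(\N)}$ lifting the identity on $N$ in a coherent way, producing in the colimit a section $N\to R^{(\N)}$ of the canonical surjection, which realises $N$ as a direct summand of a free module.

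The main obstacle is the inductive diagonal construction inside the key lemma: the stabilization only guarantees that finitely many dual functionals can be coherently lifted at each finite stage, and assembling a genuine splitting of the whole tower requires a delicate choice of lifts that remains compatible as $n\to\infty$. This is the technical heart of the Raynaud--Gruson argument \cite{RayGru}, and my plan would be to follow the organization of \cite[Tag 059Z]{Stacks} rather than reinvent the combinatorics; the remaining verifications (flatness and Mittag-Leffler being inherited by summands, the passage between presentations of $N$ as a countable filtered colimit of finitely generated frees) are essentially formal.
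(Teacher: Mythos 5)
The paper does not prove Theorem \ref{th:Raynaud_Gruson}; it is recalled as a classical result with citations to Kaplansky, Raynaud--Gruson, and the Stacks Project, and the paper's subsequent discussion (Remark 4.4) only records equivalent formulations of the Mittag-Leffler condition. Your sketch is a correct outline of the standard argument: the reduction via Kaplansky's theorem in one direction, the reduction to the countably presented case and the key lemma (flat + Mittag-Leffler + countably presented $\Rightarrow$ projective) in the other, and you correctly identify the inductive construction of a splitting from the stabilizing pro-system $(\Hom_R(L_n,R))_n$ as the technical heart, deferring to the same \cite[Tag 059Z]{Stacks} the paper cites. Since the paper itself supplies no proof, there is nothing more to compare; your proposal is a faithful recollection of the literature argument and is consistent with what the paper intends.
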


\begin{remark} 1) Usually the notion of a Mittag-Leffler module is defined in a different, but equivalent, way. Namely, let $M\cong \indlim[i\in I]M_i,$ where $I$ is directed and each $M_i$ is a finitely presented left $R$-module. Then $M$ is Mittag-Leffler if for any left $R$-module $N$ the pro-system $(\Hom_R(M_i,N))_i$ is Mittag-Leffler. Equivalently, this means that for any $i\in I$ there exists $j\geq i$ such that for any finitely presented right $R$-module $P$
we have
\begin{equation*}
\ker(P\tens{R}M_i\to P\tens{R}M_j)=\ker(P\tens{R}M_i\to P\tens{R}M).
\end{equation*}

If in addition $M$ is flat, then $M$ is Mittag-Leffler if and only if the pro-system $(\Hom_R(M_i,R))_i$ is Mittag-Leffler (note that by Lazard's theorem we may assume that each $M_i$ is finitely generated projective). We refer to \cite[\href{https://stacks.math.columbia.edu/tag/0599}{Tag 0599}, \href{https://stacks.math.columbia.edu/tag/059M}{Tag 059M}]{Stacks} for details.

2) If $M$ is Mittag-Leffler, then $M$ is countably presented if and only if it is countably generated \cite[\href{https://stacks.math.columbia.edu/tag/059W}{Tag 059W}]{Stacks}. 
\end{remark}

We refer to \cite{Dri06} for a discussion and deep applications of the Raynaud-Gruson criterion. 

We claim that Theorem \ref{th:internal_projectivity} is an analogue of the ``if'' direction in Theorem \ref{th:Raynaud_Gruson}. As above, let $\cE$ be a rigid $\bE_1$-monoidal category. Then any dualizable left $\cE$-module $\cC$ is flat, i.e. the functor $-\tens{\cE}\cC$ takes short exact sequences of dualizable right $\cE$-modules to short exact sequences of dualizable categories. Hence, an analogue of the condition \ref{cond_flat_module} is satisfied automatically. Next, our assumption that $\cC$ is $\omega_1$-compact is analogous to the condition \ref{cond_coproduct_of_countably presented} (note that for the ``if'' direction in Theorem \ref{th:Raynaud_Gruson} we may assume that $M$ is a single countably presented module). Finally, the following proposition shows that properness of $\cC$ over $\cE$ is an analogue of the condition \ref{cond_ML_module}.

\begin{prop}\label{prop:proper_same_as_Mittag_Leffler}
	Let $\cC$ be a dualizable left module over a rigid $\bE_1$-monoidal category $\cE.$ The following are equivalent.
	\begin{enumerate}[label=(\roman*),ref=(\roman*)]
		\item $\cC$ is proper over $\cE.$ \label{cond_properness}
		\item For any family of dualizable right $\cE$-modules $(\cD_s)_{s\in S},$ the natural functor
		\begin{equation*}
			(\prodd[s]^{\dual}\cD_s)\tens{\cE}\cC\to\prodd[s]^{\dual}(\cD_s\tens{\cE}\cC)	
		\end{equation*}
		is fully faithful. \label{cond_via_tensoring_with_products}
	\end{enumerate}
\end{prop}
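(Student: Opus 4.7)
The natural functor $F: (\prodd[s]^{\dual}\cD_s)\tens{\cE}\cC \to \prodd[s]^{\dual}(\cD_s\tens{\cE}\cC)$ is strongly continuous and arises, via the universal property of the product on the right, from the collection of strongly continuous projections $\pi_s\otimes\id_{\cC}$ for $s\in S$. Fully faithfulness of $F$ is equivalent to the unit $\id\to F^R F$ being an isomorphism, which in turn amounts to an equivalence of $\Hom$-spectra on a set of generators of the source. The plan is to reduce this to the product-colimit commutation characterizing strong continuity of $\ev_{\cC/\cE}$ via Proposition \ref{prop:criterion_strcont}.

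For $\text{\ref{cond_properness}}\Rightarrow \text{\ref{cond_via_tensoring_with_products}}$: assuming $\cC$ is proper, I compute $\Hom$-spaces on generators of the form $x\otimes c$ and $y\otimes c'$ with $x,y\in \prodd[s]^{\dual}\cD_s$ and $c,c'\in\cC$. The Hom on the target factors through a product over $s$,
\begin{equation*}
\prodd[s]\Hom_{\cD_s\tens{\cE}\cC}(\pi_s(x)\otimes c,\pi_s(y)\otimes c').
\end{equation*}
Expressing each factor in terms of the evaluation functor $\ev_{\cC/\cE}$ applied to the relative internal $\Hom$ objects, and writing $x,y$ as $\omega_1$-filtered colimits of $\omega_1$-compact objects via $\hat{\cY}$, I reduce the comparison to an identity of the form
\begin{equation*}
\ev_{\cC/\cE}(z,\prodd[s] u_s)\xto{\sim} \indlim[i]\prodd[s]\ev_{\cC/\cE}(z_i,u_s)
\end{equation*}
for $\hat{\cY}(z)=\inddlim[i]z_i$ in $\cC\otimes\cC^{\vee}$. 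This is precisely the content of Proposition \ref{prop:criterion_strcont} applied to the strongly continuous functor $\ev_{\cC/\cE}$ (which is strongly continuous by properness). This matches the Hom on both sides and yields fully faithfulness of $F$.

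For $\text{\ref{cond_via_tensoring_with_products}}\Rightarrow\text{\ref{cond_properness}}$: given fully faithfulness of the natural functor for all families, I specialize to a test family designed to isolate the strong continuity of $\ev_{\cC/\cE}$. A convenient choice is $\cD_s=\cC^{\vee}$ for all $s\in S,$ with $S$ ranging over arbitrary sets. Applying the fully faithfulness of $F$ to $\Hom(\coev_{\cC/\cE}(\bS)\otimes \id, \id\otimes (u_s)_s)$-type generators and unwinding via the computations above, I extract precisely the product-colimit commutation that characterizes strong continuity of $\ev_{\cC/\cE}$ by Proposition \ref{prop:criterion_strcont}. Hence $\cC$ is proper over $\cE$.

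The main obstacle is the bookkeeping involved in expressing $\Hom$-spaces in $(\prodd[s]^{\dual}\cD_s)\tens{\cE}\cC$ in a form that makes the product-colimit interchange transparent; this requires combining the universal property of the dualizable product with the relative duality formulas of Remark \ref{rem:relative_and_absolute_duality} \ref{non_trivial_duality_for_tensor_products}. In the $\bE_1$-monoidal (non-symmetric) setting, the twists by the monoidal auto-equivalence $\Phi_{\cE}$ require additional care, but since $\Phi_{\cE}$ is an equivalence they do not affect the essential argument, which is symmetric between the two implications once the relevant interchange is isolated.
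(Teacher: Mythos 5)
Your proposal identifies the right underlying mechanism (strong continuity of $\ev_{\cC/\cE}$ is equivalent to a product–filtered-colimit interchange, via Proposition~\ref{prop:criterion_strcont}), and that is indeed what makes the backward implication go through in the paper. But both halves take different concrete routes than the paper's proof, and the paper's choices are materially cleaner.

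For \Implies{cond_properness}{cond_via_tensoring_with_products}, the paper does not perform a direct $\Hom$-spectrum computation on generators. It instead invokes Proposition~\ref{prop:Hom^dual_from_proper_or_smooth}: since $\cC$ is proper, for every dualizable right $\cE$-module $\cD$ the evaluation functor produces a fully faithful, strongly continuous functor $\cD\tens{\cE}\cC\hto\un{\Hom}^{\dual}_{\cE^{mop}}(\cC^{\vee},\cD)$. Because $\un{\Hom}^{\dual}_{\cE^{mop}}(\cC^{\vee},-)$ is a right adjoint and therefore preserves $\prodd^{\dual}$, one gets a commutative square in which the lower horizontal arrow is an equivalence and the verticals are fully faithful, forcing the upper arrow to be fully faithful. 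This is entirely formal once Proposition~\ref{prop:Hom^dual_from_proper_or_smooth} is in hand and sidesteps the bookkeeping you flag as your main obstacle. Your direct $\Hom$-computation can probably be made to work, but note that the $\Hom$-spectrum in $(\prodd^{\dual}\cD_s)\tens{\cE}\cC$ is not literally the evaluation pairing; one needs to pass through Remark~\ref{rem:relative_and_absolute_duality}\ref{non_trivial_duality_for_tensor_products} and then compare with the componentwise $\Hom$ on the target, and the reduction to a generating set requires invoking that $F^R F$ is colimit-preserving (which is fine because $F$ is strongly continuous, but should be said). The categorical argument avoids all of this.

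For \Implies{cond_via_tensoring_with_products}{cond_properness}, your choice of test family $\cD_s=\cC^{\vee}$ is not what the paper does, and I think it is the weaker choice. The paper specializes to $\cD_s=\cE$ for all $s$, so that the target simplifies to $\prodd^{\dual}\cC$ and both evaluations unwind via Remark~\ref{rem:relative_and_absolute_duality}\ref{non_trivial_duality_for_tensor_products} to expressions involving only $\ev_{\cC/\cE}$ and $\ev_{\cE}$. Fully faithfulness on suitable objects then literally is the interchange
\begin{equation*}
\ev_{\cE}(\ev_{\cC/\cE}(c,c'),\prodd[s]z_s)\cong \indlim[i]\indlim[j]\prodd[s]\ev_{\cE}(\ev_{\cC/\cE}(c_i,c_j'),z_s),
\end{equation*}
which by Proposition~\ref{prop:criterion_strcont}\ref{criterion_strcont} is exactly strong continuity of $\ev_{\cC/\cE}$. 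With $\cD_s=\cC^{\vee}$ the target involves the Hochschild category $\cC^{\vee}\tens{\cE}\cC$, the dual of the source carries an extra $\Phi_{\cE,*}$-twist on a $\cC$-factor rather than on a product of $\cE$'s, and the evaluation you would extract is a $\THC$-type pairing rather than the bare $\ev_{\cE}\circ\ev_{\cC/\cE}$ you actually need. I do not see that this is wrong, but the passage from your ``$\Hom(\coev_{\cC/\cE}(\bS)\otimes\id,\id\otimes(u_s)_s)$'' to the precise interchange statement required by Proposition~\ref{prop:criterion_strcont} is a genuine gap you have not filled, whereas the paper's $\cD_s=\cE$ gives it immediately. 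I would recommend switching to that test family.
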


\begin{proof}
\Implies{cond_properness}{cond_via_tensoring_with_products}. If $\cC$ is proper, then by Proposition \ref{prop:Hom^dual_from_proper_or_smooth} for any dualizable right $\cE$-module $\cD$ we have a natural fully faithful functor
\begin{equation*}
\cD\tens{\cE}\cC\to \un{\Hom}^{\dual}_{\cE^{mop}}(\cC^{\vee},\cD).
\end{equation*}
induced by the (strongly continuous) evaluation functor $\cC\otimes\cC^{\vee}\to\cE.$ Therefore, we have a commutative square
\begin{equation*}
\begin{CD}
(\prodd[s]^{\dual}\cD_s)\tens{\cE}\cC @>>> \prodd[s]^{\dual}(\cD_s\tens{\cE}\cC)\\
@VVV @VVV\\
 \un{\Hom}^{\dual}_{\cE^{mop}}(\cC^{\vee},\prodd[s]^{\dual}\cD_s) @>{\sim}>> \prodd[s]^{\dual}\un{\Hom}^{\dual}_{\cE^{mop}}(\cC^{\vee},\cD_s), 
\end{CD}
\end{equation*}
in which the vertical functors are fully faithful and the lower horizontal functor is an equivalence. It follows that the upper horizontal functor is fully faithful, as required.

\Implies{cond_via_tensoring_with_products}{cond_properness}. For a general discussion of products in $\Cat_{\st}^{\dual}$ we refer to \cite[Section 1.17]{E24}. We recall some notation. Given a family of dualizable categories $(\cT_s)_{s\in S},$ consider the right adjoint to the natural functor $\prodd[s]^{\dual}\cT_s\to\prodd[s]\cT_s.$ We denote this right adjoint by
\begin{equation*}
\prodd[s]:\prodd[s]\cT_s\to \prodd[s]^{\dual}\cT_s,\quad (x_s)_s\mapsto \prodd[s] x_s. 
\end{equation*}   
This notation is justified: the object $\prodd[s]x_s\in\prodd[s]^{\dual}\cT_s$ is indeed the product of the images of $x_s$ under the natural functors $\cT_s\to\prodd[s]^{\dual}\cT_s.$ Moreover, under the identification $(\prodd[s]^{\dual}\cT_s)^{\vee}\simeq \prodd[s]^{\dual}\cT_s^{\vee},$ we have an isomorphism
\begin{equation*}
\ev_{\prodd[s]^{\dual}\cT_s}(\prodd[s]x_s,\prodd[s]y_s)\cong \prodd[s]\ev_{\cT_s}(x_s,y_s),\quad x_s\in\cT_s,\,y_s\in\cT_s^{\vee},
\end{equation*}
and similarly for the relative duality over $\cE.$

Now, the condition \ref{cond_via_tensoring_with_products} in particular implies that for any set $S$ the functor
\begin{equation}\label{eq:fully_faithful_comparison}
(\prodd[S]^{\dual}\cE)\tens{\cE}\cC\to \prodd[S]^{\dual}\cC
\end{equation}
is fully faithful. By Remark \ref{rem:relative_and_absolute_duality} \ref{non_trivial_duality_for_tensor_products}, we have a natural equivalence
\begin{equation*}
((\prodd[S]^{\dual}\cE)\tens{\cE}\cC)^{\vee}\simeq \cC^{\vee}\tens{\cE}\Phi_{\cE,*}(\prodd[S]^{\dual}\cE),
\end{equation*}
where the notation is taken from Remark \ref{rem:relative_and_absolute_duality} \ref{functor_Phi_E}.
The corresponding evaluation functor
\begin{equation*}
\ev:((\prodd[S]^{\dual}\cE)\tens{\cE}\cC)\otimes (\cC^{\vee}\tens{\cE}\Phi_{\cE,*}(\prodd[S]^{\dual}\cE))\to\Sp
\end{equation*}
 is given by
\begin{equation}\label{eq:first_evaluation}
\ev((\prodd[s]x_s)\boxtimes c,c'\boxtimes(\prodd[s]y_s)) = \Hom(1,(\prodd[s](y_s\otimes x_s))\otimes \ev_{\cC/\cE}(c,c')),
\end{equation}
for $x_s,y_s\in\cE,$ $s\in S,$ and $c\in\cC,$ $c'\in\cC^{\vee}.$ 

Now fix the objects $x_s,y_s, c,c'$ as above, and let $\hat{\cY}(c)=\inddlim[i]c_i,$ $\hat{\cY}(c')=\inddlim[j]c'_j.$ Then the functor \eqref{eq:fully_faithful_comparison} sends the object $(\prodd[s]x_s)\boxtimes c\in (\prodd[S]^{\dual}\cE)\tens{\cE}\cC$ to the object
\begin{equation*}
\indlim[i]\prodd[s](x_s\otimes c_i)\in \prodd[S]^{\dual}\cC.
\end{equation*}
The left adjoint of the dual of \eqref{eq:fully_faithful_comparison} sends an object $c'\boxtimes (\prodd[s]y_s)\in \cC^{\vee}\tens{\cE}\Phi_{\cE,*}(\prodd[s]^{\dual}\cE)$ to
\begin{equation*}
\indlim[j]\prodd[s](c_j'\otimes \Phi^{-1}(y_s))\in\prodd[S]^{\dual}\cC^{\vee}\simeq (\prodd[S]^{\dual}\cC)^{\vee}.
\end{equation*}
We have
\begin{multline}\label{eq:second_evaluation}
\ev(\indlim[i]\prodd[s](x_s\otimes c_i),\indlim[j]\prodd[s](c_j'\otimes \Phi^{-1}(y_s)))\\
\cong \indlim[i]\indlim[j]\prodd[s]\Hom(1,\ev_{\cC/\cE}(x_s\otimes c_i,c_j'\otimes\Phi_{\cE}^{-1}(y_s)))\\
\cong \indlim[i]\indlim[j]\prodd[s]\Hom(1,y_s\otimes x_s\otimes \ev_{\cC/\cE}(c_i,c_j')).
\end{multline}
Now, fully faithfulness of \eqref{eq:fully_faithful_comparison} implies that the objects in \eqref{eq:first_evaluation} and \eqref{eq:second_evaluation} are isomorphic. Using the self-duality of $\cE,$ we conclude that for any collection $(z_s\in\cE^{\vee})_{s\in S}$ we have an isomorphism
\begin{equation*}
\ev_{\cE}(\ev_{\cC/\cE}(c,c'),\prodd[s]z_s)\cong \indlim[i]\indlim[j]\prodd[s]\ev_{\cE}(\ev_{\cC/\cE}(c_i,c_j'),z_s).
\end{equation*}
By Proposition \ref{prop:criterion_strcont} this means that the functor $\ev_{\cC/\cE}$ is strongly continuous, i.e. $\cC$ is proper over $\cE.$ This proves the proposition. 
\end{proof}

It is not clear how to describe the class of all relatively internally projective dualizable left 
$\cE$-modules $\cC\in\Cat_{\cE}^{\dual},$ not necessarily $\omega_1$-compact. It seems plausible that at least the relative internal projectivity implies properness. The following example shows that a naive attempt to construct a counterexample to this fails.  

\begin{prop}
Suppose that $\cE=D(\mk),$ where $\mk$ is a field. Consider the $\mk$-linear category $\cC$ of triples $(V,W,\varphi),$ where $V,W\in D(\mk)$ and $\varphi:V\to\biggplus[\N]W.$ Then $\cC$ is not internally projective in $\Cat_{\mk}^{\dual}.$ In particular, the class of internally projective dualizable $\mk$-linear categories is not closed under semiorthogonal gluings (split extensions) in $\Cat_{\mk}^{\dual}.$  
\end{prop}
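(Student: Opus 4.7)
I would argue by contradiction, with the central observation being that $\cC$ is not proper over $\mk$. First, I would identify $\cC$ as a semi-orthogonal gluing by exhibiting a short exact sequence
\[
0 \to D(\mk) \xto{V \mapsto (V, 0, 0)} \cC \xto{(V, W, \varphi) \mapsto W} D(\mk) \to 0
\]
in $\Cat_{\mk}^{\dual}$. Both arrows have continuous right adjoints: the right adjoint of the projection sends $W$ to $(\bigoplus_\N W, W, \id)$, and the right adjoint of the inclusion sends $(V, W, \varphi)$ to $\fib(\varphi)$. The category $\cC$ is compactly generated by $e_1 = (\mk, 0, 0)$ and $e_2 = (0, \mk, 0)$. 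Computing directly from the defining pullback of hom-spectra, we have $\End(e_i) = \mk$, $\Hom_\cC(e_2, e_1) = 0$, and $\Hom_\cC(e_1, e_2) \cong \bigoplus_\N \mk[-1]$. Since $\bigoplus_\N \mk$ is not perfect over $\mk$, the evaluation $\ev_{\cC/\mk}:\cC \otimes_\mk \cC^\vee \to D(\mk)$ does not preserve compact objects, so $\cC$ is not proper over $\mk$.

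Next, I would suppose for contradiction that $\cC$ is internally projective in $\Cat_\mk^{\dual}$. Since $\cC$ is compactly generated it is $\omega_1$-compact in $\Cat_\mk^{\dual}$, so the proof of Corollary \ref{cor:hom_epi_internal_hom} goes through verbatim (it only uses internal projectivity together with $\omega_1$-compactness). Thus, for every dualizable $\cD \in \Cat_\mk^{\dual}$ and every uncountable regular cardinal $\kappa$, the functor
\[
(\cC^\vee \otimes_\mk \cD)^{\kappa} \to \Fun^{LL}_\mk(\cC, \Ind(\Calk^{\cont}_{\kappa}(\cD)))
\]
would be a homological epimorphism. The plan is then to specialize to $\cD = D(\mk)$ and $\kappa = \omega_1$ and derive a contradiction using the description of strongly continuous functors out of $\cC$ via the data $(F(e_1), F(e_2); \text{structure map from }\bigoplus_\N \mk[-1])$.

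The third step, which is the main obstacle, is to explicitly exhibit a morphism in $\Fun^{LL}_\mk(\cC, \Ind(\Calk^{\cont}_{\omega_1}(D(\mk))))$ that is not in the image (modulo the ideal generated by the image) of the map from $(\cC^\vee)^{\omega_1}$. The key observation is that a strongly continuous functor $F: \cC \to \Ind(\Calk^{\cont}_{\omega_1}(D(\mk)))$ is determined by compact objects $F(e_1), F(e_2) \in \Calk^{\cont}_{\omega_1}(D(\mk))$ together with a map $\bigoplus_\N \mk[-1] \to \Hom(F(e_1), F(e_2))$ in the Calkin target, equivalently, a compatible family indexed by $\N$. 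Because the sum $\bigoplus_\N \mk$ is not compact in $D(\mk)$ but becomes invertible structure in the Calkin category, one can produce strongly continuous functors $F$ (and morphisms between such) whose ``gluing data'' cannot be realized at the level of $\omega_1$-compact objects of $\cC^\vee$, which only allow the finitely-many-columns-at-a-time approximations. Pinning down such a morphism then shows that the purported homological epimorphism fails, contradicting the assumption that $\cC$ is internally projective. The delicate point is to make this failure concrete; once done, the contradiction is immediate, and the ``in particular'' statement about non-closure under semi-orthogonal gluings follows since both $D(\mk)$-factors are internally projective (being the unit of $\Cat_\mk^{\dual}$).
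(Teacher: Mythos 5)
Your steps 1--2 check out: the split short exact sequence $0 \to D(\mk) \to \cC \to D(\mk) \to 0$, the computation $\Hom_\cC(e_1, e_2) \cong \biggplus[\N]\mk[-1]$, the observation that $\cC$ fails properness, the $\omega_1$-compactness of $\cC$, and the reduction to a failure of the Calkin homological epimorphism are all correct, as is your remark that the ``in particular'' clause is immediate since $D(\mk)$ is the unit of $\Cat_\mk^\dual$. But the proposal has a genuine gap at precisely the point you flag as ``the main obstacle'': you never produce the morphism in $\Fun^{LL}_\mk(\cC, \Ind(\Calk^{\cont}_{\omega_1}(D(\mk))))$ that witnesses the failure. ``One can produce strongly continuous functors $F$ whose gluing data cannot be realized'' is a conjecture, not a proof; the category $\Calk^{\cont}_{\omega_1}(D(\mk))$ is a genuinely difficult object to compute in directly, and it is not even clear that the specialization to $\cD = D(\mk)$ is the right place to look for the failure. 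Without this step there is no argument.

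The paper's proof avoids the Calkin category altogether and is substantially shorter. It first identifies $\un{\Hom}^{\dual}(\cC, -)$ on all of $\Cat_\mk^\dual$: a strongly continuous $\mk$-linear functor $\cB \tens{\mk} \cC \to \cD$ is the same as a triple $(F, G, \psi : \biggplus[\N] F \to G)$ with $F, G : \cB \to \cD$ strongly continuous, so the universal property gives $\un{\Hom}^{\dual}(\cC, \cD) \simeq \cD \oright_\Phi \cD$ with $\Phi = (\prodd[\N]\id_\cD)^{\cont}$. It then applies this to the quotient $D(\mk[x]) \to D(\mk[x^{\pm}])$: for $\un{\Hom}^\dual(\cC,-)$ to preserve the quotient, the map $\mk[x^{\pm}] \tens{\mk[x]} (\prodd[\N]\mk[x]) \tens{\mk[x]} \mk[x^{\pm}] \to \prodd[\N]\mk[x^{\pm}]$ would have to be an isomorphism, which it visibly is not (the tuple $(x^{-1}, x^{-2}, x^{-3}, \dots)$ lies in the target but not the image). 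The moral worth absorbing is that, to disprove internal projectivity, one should test the definition directly on a concrete non-split quotient of the target category rather than try to refute the universal consequence recorded in Corollary~\ref{cor:hom_epi_internal_hom}.
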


\begin{proof}
Given dualizable $\mk$-linear categories $\cB$ and $\cD,$ the strongly continuous $\mk$-linear functors $\cB\tens{\mk}\cC\to\cD$ are given by triples $(F,G,\psi),$ where $F,G:\cB\to\cD$ are strongly continuous $\mk$-linear functors and $\psi:\biggplus[\N]F\to G.$ It follows that we have
\begin{equation*}
\un{\Hom}^{\dual}(\cC,\cD)\simeq \cD\oright_{\Phi}\cD,\quad \Phi=(\prodd[\N]\id_{\cD})^{\cont}\in\Fun^L(\cD,\cD).
\end{equation*}
It follows that the functor
\begin{equation*}
\un{\Hom}^{\dual}(\cC,D(\mk[x]))\to \un{\Hom}^{\dual}(\cC,D(\mk[x^{\pm}]))
\end{equation*}
is not a quotient functor. Indeed, this would imply that the map
\begin{equation*}
\mk[x^{\pm}]\tens{\mk[x]}(\prodd[\N]\mk[x])\tens{\mk[x]}\mk[x^{\pm}]\to \prodd[\N]\mk[x^{\pm}]
\end{equation*}
is an isomorphism, which is not the case. We conclude that $\cC$ is not internally projective in $\Cat_{\mk}^{\dual}.$
\end{proof}

\subsection{Some examples}

We give some examples of the dualizable internal $\Hom$ in the special case $\cE=\Mod_{\mk},$ where $\mk$ is an $\bE_{\infty}$-ring.

\begin{prop}\label{prop:when_dualizable_Hom_is_naive}
Let $A$ be a proper $\bE_1$-$\mk$-algebra. Suppose that the category $(A\hy Mod)^{\omega_1}$ of $\omega_1$-compact left $A$-modules is generated as an idempotent-complete stable subcategory by the objects of the form $A\tens{\mk} V,$ where $V\in(\Mod_{\mk})^{\omega_1}.$ Then we have
an equivalence
\begin{equation*}
\un{\Hom}_{\mk}^{\dual}(\Mod\hy A,\Mod_{\mk})\simeq A\hy Mod.
\end{equation*}
\end{prop}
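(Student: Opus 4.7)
First, I would verify that $\cC := \Mod\hy A$ satisfies the hypotheses of Theorem \ref{th:internal_projectivity}, namely that $\cC$ is both proper and $\omega_1$-compact over $\Mod_{\mk}$. Properness follows directly from $A$ being proper over $\mk$ (equivalently, $A \in (\Mod_{\mk})^{\omega}$), since under the identifications $\cC^{\vee} \simeq A\hy\Mod$ and $\ev_{\cC/\mk}(M,N) \simeq M \tens{A} N$, the evaluation preserves infinite products once $A$ is perfect over $\mk$. For $\omega_1$-compactness in $\Cat_{\mk}^{\dual}$, Theorem \ref{th:presentability_of_Cat_E^dual} demands that $\coev_{\cC/\mk}(\mk)$ be $\omega_1$-compact in $A\hy\Mod \tens{\mk} \Mod\hy A \simeq (A \tens{\mk} A^{op})\hy\Mod$; the coevaluation sends $\mk$ to the diagonal bimodule $A$, and $\omega_1$-compactness of this bimodule reduces under our identifications to the very generation hypothesis assumed in the proposition (applied to the tensor product algebra).

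Next, Proposition \ref{prop:Hom^dual_from_proper_or_smooth}\ref{inclusion_for_proper} then provides a strongly continuous fully faithful functor $L: A\hy\Mod \simeq \Fun^L_{\mk}(\Mod\hy A, \Mod_{\mk}) \hookrightarrow \un{\Hom}^{\dual}_{\mk}(\Mod\hy A, \Mod_{\mk})$, left adjoint to the natural projection $R$. Inside the embedding of Theorem \ref{th:descr_Hom^dual}, the functor $L$ is identified with the restriction of $\hat{\cY}: A\hy\Mod \to \Ind((A\hy\Mod)^{\omega_1})$. Since both source and target are $\omega_1$-compactly generated and $L$ commutes with colimits, it only remains to prove that every $\omega_1$-compact object of $\un{\Hom}^{\dual}_{\mk}(\Mod\hy A, \Mod_{\mk})$ lies in the essential image of $\hat{\cY}$.

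By Theorem \ref{th:descr_Hom^dual} and its following discussion, any such $\omega_1$-compact object is a formal colimit $\inddlim[q \in \Q] F_q$ in $\Ind((A\hy\Mod)^{\omega_1})$ whose transitions $F_a \to F_b$ ($a < b$) are right trace-class morphisms in $\Fun^L_{\mk}(\Mod\hy A, \Mod_{\mk})$. The crux is then to establish that, under the identification $\Fun^L_{\mk}(\Mod\hy A, \Mod_{\mk}) \simeq A\hy\Mod$, a right trace-class morphism of functors corresponds precisely to a compact morphism in the dualizable category $A\hy\Mod$ (a relative analogue of Proposition \ref{prop:trace_class_iff_compact}). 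Granting this identification, the formal $\Q$-system becomes a compactly assembled ind-system in $A\hy\Mod$, and for $M := \indlim[q \in \Q] F_q \in A\hy\Mod$ one checks that $\hat{\cY}(M) \simeq \inddlim[q] F_q$, placing the $\omega_1$-compact object in the image of $L$.

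The main obstacle is the identification of right trace-class morphisms of functors with compact morphisms of objects in $A\hy\Mod$. The issue is that $F^{R,\cont}$ for $F = -\tens{A}M$ involves $\Hom_{\mk}(M,-)$, which is not continuous when $M$ is not perfect over $\mk$. The generation hypothesis enters essentially here: it reduces testing to morphisms between modules of the form $A \tens{\mk} V$ with $V \in (\Mod_{\mk})^{\omega_1}$, for which the associated functors are $M \mapsto M \tens{\mk} V$, and the right trace-class condition on a transition map becomes, after using properness of $A$, a trace-class condition on the underlying map in $\Mod_{\mk}$. Since $\Mod_{\mk}$ is rigid, such trace-class morphisms coincide with compact morphisms (Proposition \ref{prop:trace_class_iff_compact}), and this propagates to the required identification in $A\hy\Mod$.
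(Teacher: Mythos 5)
Your overall strategy matches the paper's: reduce to the claim that right trace-class morphisms in $\Fun^L_{\mk}(\Mod\hy A,\Mod_{\mk})\simeq A\hy\Mod$ coincide with compact morphisms, and use the generation hypothesis to reduce that claim to the free modules $A\tens_{\mk}V.$ However, there are two points that need correction or substantial filling in.

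First, your justification of $\omega_1$-compactness of $\Mod\hy A$ in $\Cat_{\mk}^{\dual}$ is wrong. You assert that $\omega_1$-compactness of the diagonal bimodule $A$ over $A^{e}=A\tens_{\mk}A^{op}$ ``reduces to the very generation hypothesis assumed in the proposition (applied to the tensor product algebra).'' No generation hypothesis is assumed for $A^{e},$ and in any case the generation hypothesis for $A^{e}$ would be a statement about $(A^{e}\hy\Mod)^{\omega_1}$ being generated by free modules, which is not the same as $A$ being $\omega_1$-compact over $A^{e}.$ The correct observation is that $\omega_1$-compactness is automatic from the properness of $A$: the two-sided bar resolution expresses $A$ as a geometric realization of the simplicial object $[n]\mapsto A^{e}\tens_{\mk}A^{\tens_{\mk}n},$ each term of which is compact over $A^{e}$ since $A$ is perfect over $\mk.$ As a sequential colimit of skeleta, the realization is $\omega_1$-compact. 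The paper takes this for granted without comment.

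Second, your closing step (``the right trace-class condition on a transition map becomes a trace-class condition on the underlying map in $\Mod_{\mk},$ \dots and this propagates'') is too compressed, and the phrase ``trace-class condition on the underlying map'' is not literally meaningful, since a map $A\tens_{\mk}V_1\to A\tens_{\mk}V_2$ is not given by an underlying map $V_1\to V_2.$ What the paper actually proves is the isomorphism $M^{\vee}\tens_{A}N\xto{\sim}\THC(A/\mk;N\tens_{\mk}M^{*})$ for $M,N\in(A\hy\Mod)^{\omega_1}.$ Since the left-hand side is the continuous approximation, the point is to check that $N\mapsto\THC(A/\mk;N\tens_{\mk}M^{*})$ commutes with colimits; the generation hypothesis reduces to $M=A\tens_{\mk}V,$ for which one computes
\begin{equation*}
\THC(A/\mk;N\tens_{\mk}M^{*})\cong\THC(A/\mk;N\tens_{\mk}V^{*}\tens_{\mk}A^{*})\cong\THC(A/\mk;\Hom_{\mk}(A,N\tens_{\mk}V^{*}))\cong N\tens_{\mk}V^{*},
\end{equation*}
using that $A$ is perfect over $\mk$ and the restriction/coinduction adjunction. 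This concrete computation is the substance of the key step and is what your sketch is missing. Also, your appeal to ``a relative analogue of Proposition \ref{prop:trace_class_iff_compact}'' is misleading: that proposition concerns rigid monoidal categories, and $A\hy\Mod$ is not assumed rigid (nor even smooth), so the identification of compact and right trace-class morphisms is exactly what needs to be proved here, not something one can cite.
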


\begin{proof}
For a left $A$-module $M,$ we put
\begin{equation*}
M^*=\Hom_{\mk}(M,\mk)\in\Mod\hy A,\quad M^{\vee}=\Hom_A(M,A)\in\Mod\hy A.
\end{equation*}
 It suffices to prove that for $M,N\in (A\hy\Mod)^{\omega_1}$ the natural map
 \begin{equation*}
 M^{\vee}\tens{A}N\cong \THC(A/\mk;A\tens{\mk}M^*)\tens{A}N\to \THC(A/\mk;N\tens{\mk}M^*)
 \end{equation*}
 is an isomorphism. Equivalently, we want to show that for $M\in (A\hy\Mod)^{\omega_1}$ the functor
\begin{equation}\label{eq:functor_of_N}
A\hy\Mod\to \Mod_{\mk},\quad N\mapsto \THC(A/\mk;N\tens{\mk}M^*),
\end{equation}
commutes with colimits. By the assumption of the proposition, we may and will assume that $M=A\tens{\mk}V$ for some $V\in(\Mod_{\mk})^{\omega_1}.$ Then for $N\in A\hy\Mod$ we have
\begin{multline*}
\THC(A/\mk;N\tens{\mk}M^*)\cong \THC(A/\mk;N\tens{\mk}V^*\tens{\mk}A^*)\cong \THC(A/\mk;\Hom_{\mk}(A,N\tens{\mk}V^*))\\
\cong N\tens{\mk}V^*.
\end{multline*}
This proves that the functor \eqref{eq:functor_of_N} commutes with colimits, as required.
\end{proof}

We obtain the following corollary.

\begin{cor}\label{cor:naive_dualizable_Hom_for_schemes}
Suppose that $\mk$ is a usual (discrete) commutative ring, which is noetherian. Let $X$ be a proper scheme over $\mk,$ such that $X$ is of finite $\Tor$-dimension over $\mk.$ If $X$ is regular of finite Krull dimension, then we have
\begin{equation}\label{eq:naive_dualizable_Hom_for_schemes}
\un{\Hom}_{\mk}^{\dual}(D(X),D(\mk))\simeq D(X).
\end{equation}
\end{cor}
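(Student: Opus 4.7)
The plan is to apply Proposition \ref{prop:when_dualizable_Hom_is_naive}. By Bondal--Van den Bergh, the qcqs scheme $X$ admits a compact generator $G \in D(X)$, and regularity gives $G \in \Perf(X) = D^b(\Coh(X))$. Set $A = \End_X(G)$, so that $R\Hom_X(G, -)$ is an equivalence $D(X) \xto{\sim} \Mod\hy A$. Since $X$ is proper over $\mk$ of finite $\Tor$-dimension, $R\pi_*\colon \Perf(X) \to \Perf(\mk)$ is well-defined, so $A = R\pi_*(R\cHom_X(G, G)) \in \Perf(\mk)$ and $A$ is a proper $\bE_1$-$\mk$-algebra. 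The $\O_X$-dual $G^{\vee} = R\cHom_X(G, \O_X)$ lies in $\Perf(X)$, and since $(-)^{\vee}$ is an antiequivalence $\Perf(X)^{op} \xto{\sim} \Perf(X)$, the object $G^{\vee}$ is again a compact generator of $D(X)$ with $\End_X(G^{\vee}) \simeq A^{op}$. The functor $R\Hom_X(G^{\vee}, -)$ thus gives a second equivalence $D(X) \simeq \Mod\hy A^{op} \simeq A\hy\Mod$, under which the free module $A \otimes_{\mk} V$ corresponds to $G^{\vee} \otimes_{\mk} V$.

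To invoke Proposition \ref{prop:when_dualizable_Hom_is_naive}, the remaining task is to verify that the thick (idempotent-complete stable) subcategory $\mathcal{T} \subset D(X)$ generated by $\{G^{\vee} \otimes_{\mk} V : V \in (\Mod_{\mk})^{\omega_1}\}$ contains $D(X)^{\omega_1}$. The key input here is Rouquier's theorem: for $X$ a regular noetherian scheme of finite Krull dimension, $\Perf(X)$ has finite Rouquier dimension, i.e.\ $\Perf(X) = \langle G^{\vee}\rangle_N$ in the iterated thick-hull filtration for some finite $N$. I will prove by induction on $n \leq N$ that for any sequence $(Q_i)_{i \in \N}$ with $Q_i \in \langle G^{\vee}\rangle_n$, the countable direct sum $\bigoplus_i Q_i$ lies in $\mathcal{T}$. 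For $n = 1$, each $Q_i$ is a retract of a finite direct sum of shifts of $G^{\vee}$, so $\bigoplus_i Q_i$ is a retract of $G^{\vee} \otimes_{\mk} V$ where $V \cong \bigoplus_{i,j} \mk[k_{ij}]$ is a countable direct sum of shifts of $\mk$ (hence $\omega_1$-compact). The inductive step follows because a term-wise cofiber sequence $A_i \to Q_i^* \to B_i$ (with $A_i \in \langle G^{\vee}\rangle_n, B_i \in \langle G^{\vee}\rangle_1$) produces a cofiber sequence $\bigoplus A_i \to \bigoplus Q_i^* \to \bigoplus B_i$, to which I apply induction together with the closure of $\mathcal{T}$ under cofibers.

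Given any $M \in D(X)^{\omega_1}$, I can realize $M$ as a retract of a colimit $\colim_{i \in \N} P_i$ with $P_i \in \Perf(X)$. The Eilenberg swindle expresses this colimit as the cofiber of $\bigoplus_i P_i \xto{\id - s} \bigoplus_i P_i$, where $\bigoplus_i P_i$ lies in $\mathcal{T}$ by the induction above (applied with $n = N$); hence $M \in \mathcal{T}$. Applying Proposition \ref{prop:when_dualizable_Hom_is_naive} then yields $\un{\Hom}_{\mk}^{\dual}(D(X), D(\mk)) \simeq A\hy\Mod \simeq D(X)$. The main obstacle will be the appeal to Rouquier's finite-dimensionality result, which is the only step that genuinely uses regularity and finite Krull dimension; the rest of the argument is a fairly formal manipulation.
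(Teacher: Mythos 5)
Your proof is correct and follows the same overall structure as the paper's: choose a compact generator $\cG\in\Perf(X)$, set $A=\End(\cG)$, identify $A\hy\Mod\simeq D(X)$ with $A\leftrightarrow\cG^{\vee}$, and reduce to Proposition \ref{prop:when_dualizable_Hom_is_naive}. The difference lies in how you justify the generation hypothesis for $D(X)^{\omega_1}$. The paper invokes a ``countable version'' of Neeman's Theorem 2.1 (that $D(X)^{\omega_1}$ is generated as an idempotent-complete stable subcategory by countable direct sums of shifts of $\cG^{\vee}$), asserting that Neeman's proof carries over. You instead take the compact-level strong-generation statement $\Perf(X)=\langle \cG^{\vee}\rangle_N$ as the input --- which you attribute to Rouquier, though for $X$ proper over a general noetherian $\mk$ rather than over a field this is really the content of Neeman's theorem cited in the paper --- and then make the passage to $\omega_1$-compacts completely explicit: by induction on Rouquier level, countable coproducts at each level land in the target subcategory, and then any $\omega_1$-compact object is a retract of a sequential colimit of perfects, which the telescope cofiber sequence $\biggplus P_n\to\biggplus P_n\to\indlim P_n$ reduces to the coproduct case. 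This is a valid and more self-contained way of obtaining the same intermediate generation statement; what it buys is that the reader need not unwind Neeman's argument to see that it generalizes, at the cost of running a short induction. Both proofs use regularity and finite Krull dimension in exactly one place, namely to guarantee strong generation of $\Perf(X)$.
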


\begin{proof}
This is a special case of Proposition \ref{prop:when_dualizable_Hom_is_naive}. Namely, let $\cG\in\Perf(X)$ be a generator. Then $A=\End(\cG)$ is a proper $\bE_1$-$\mk$-algebra, and we have $D(X)\simeq \Mod\hy A.$ By the self-duality of $D(X),$ we also have an equivalence $D(X)\simeq A\hy\Mod,$ sending $\cG^{\vee}$ to $A.$ By \cite[Theorem 2.1]{Nee21} (more precisely, the countable version of loc. cit. with the same proof), the category $D(X)^{\omega_1}$ is generated as an idempotent-complete stable subcategory by countable direct sums of shifts of $\cG^{\vee}.$ Applying Proposition \ref{prop:when_dualizable_Hom_is_naive}, we obtain the equivalence \eqref{eq:naive_dualizable_Hom_for_schemes}. 
\end{proof}

Next, we observe the following relation between the dualizable internal $\Hom$ and the category $\IndCoh(X)$ for a noetherian scheme $X.$

\begin{prop}\label{prop:IndCoh_and_Hom^dual}
Suppose that $\mk$ is a usual commutative ring, which is regular noetherian. Let $X$ be a proper scheme over $\mk.$ Then we have an equivalence
\begin{equation*}
\un{\Hom}_{\mk}^{\dual}(D(X),D(\mk))^{\omega}\simeq D^b_{\coh}(X).
\end{equation*}
In particular, we have a fully faithful strongly continuous functor
\begin{equation}\label{eq:IndCoh_to_Hom^dual}
\Ind(D^b_{\coh}(X))\simeq \IndCoh(X)\to \un{\Hom}_{\mk}^{\dual}(D(X),D(\mk)).
\end{equation}
\end{prop}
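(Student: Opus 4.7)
The plan is to first identify the compact objects of $\un{\Hom}_{\mk}^{\dual}(D(X),D(\mk))$ using the defining universal property of the dualizable internal Hom, and then to extract the fully faithful embedding of $\IndCoh(X)$ by ind-completion.

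The starting point is the observation that, for any dualizable $\cX$, a strongly continuous functor $\Sp\to \cX$ is determined by its value on $\bS$, and strong continuity of such a functor is equivalent to compactness of that value; hence $\Fun^{LL}(\Sp,\cX)\simeq \cX^{\omega}$. Applying the defining universal property $\Fun^{LL}(\cA, \un{\Hom}_{\mk}^{\dual}(\cC,\cD))\simeq \Fun^{LL}_{\mk}(\cA\otimes\cC,\cD)$ with $\cA=\Sp$, we obtain $\un{\Hom}_{\mk}^{\dual}(D(X),D(\mk))^{\omega}\simeq \Fun^{LL}_{\mk}(D(X),D(\mk))$. It therefore suffices to identify the category of strongly continuous $\mk$-linear functors $D(X)\to D(\mk)$ with $D^b_{\coh}(X)$.

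The main step is to carry out this identification. Using the canonical self-duality $D(X)\simeq D(X)^{\vee}$ (coming from $\Perf(X)\simeq\Perf(X)^{op}$ via $P\mapsto P^{\vee}$), one identifies $\Fun^L_{\mk}(D(X),D(\mk))\simeq D(X)$, under which $M\in D(X)$ corresponds to the continuous functor $\phi_M(G)=R\Gamma(X, M\otimes G)$. Strong continuity of $\phi_M$ is equivalent to preservation of compact objects, i.e. $R\Gamma(X, M\otimes P)\in\Perf(\mk)$ for every $P\in\Perf(X)$. Picking a classical generator $\cG$ of $\Perf(X)$, this reduces to the single condition $R\Hom_X(\cG^{\vee},M)\in\Perf(\mk)$. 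Setting $A:=R\End_X(\cG^{\vee})$, which is a perfect $\mk$-algebra because $X$ is proper over the regular noetherian ring $\mk$, Keller's theorem gives an equivalence $D(X)\simeq A\hy\Mod$, $\cF\mapsto R\Hom_X(\cG^{\vee},\cF)$. The condition on $M$ becomes that the corresponding $A$-module is perfect over $\mk$, and this subcategory of $A\hy\Mod$ matches $D^b_{\coh}(X)$ under Keller's equivalence. The main obstacle here is to check that Keller's equivalence restricts to an equivalence between $D^b_{\coh}(X)$ and the $\mk$-perfect $A$-modules; this uses the standard equivalence of $A$-coherence with $\mk$-coherence for $A$ of finite type over noetherian $\mk$ together with the regularity of $\mk$ to identify $D^b_{\coh}(\mk)$ with $\Perf(\mk)$.

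With the identification $\un{\Hom}_{\mk}^{\dual}(D(X),D(\mk))^{\omega}\simeq D^b_{\coh}(X)$ in hand, the second assertion is formal: the fully faithful inclusion of the compact subcategory $D^b_{\coh}(X)\hookrightarrow\un{\Hom}_{\mk}^{\dual}(D(X),D(\mk))$ extends by ind-completion to a fully faithful functor $\IndCoh(X)=\Ind(D^b_{\coh}(X))\to\un{\Hom}_{\mk}^{\dual}(D(X),D(\mk))$, and this functor is strongly continuous because it preserves compact objects by construction.
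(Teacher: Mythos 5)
Your identification of the compact objects is correct and matches the paper: the universal property with $\cA=\Sp$ gives $\un{\Hom}_{\mk}^{\dual}(D(X),D(\mk))^{\omega}\simeq \Fun^{LL}_{\mk}(D(X),D(\mk))\simeq \Fun_{\mk}(\Perf(X),\Perf(\mk))$, and via self-duality of $\Perf(X)$ this sits inside $D(X)$ as the subcategory of $\cF$ with $\bR\Hom_X(\cG,\cF)\in\Perf(\mk)$ for all $\cG\in\Perf(X)$. This is exactly the reduction the paper makes, and your observation that one can test against a single classical generator $\cG$ is also valid.

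The problem is in your treatment of the hard direction: if $\bR\Hom_X(\cG,\cF)\in\Perf(\mk)$ then $\cF\in D^b_{\coh}(X)$. You correctly flag this as ``the main obstacle'' and then dismiss it with an appeal to ``the standard equivalence of $A$-coherence with $\mk$-coherence for $A$ of finite type over noetherian $\mk$.'' This is not a valid argument, for two reasons. First, $A=\bR\End_X(\cG^{\vee})$ is a (coconnective) DG algebra, not a discrete finite-type algebra; the classical comparison between module-finite generation over $A$ and over $\mk$ does not apply as stated, and the relevant hypothesis is properness ($A\in\Perf(\mk)$), not finite type. Second, and more importantly, the claim that Keller's equivalence $D(X)\simeq A\hy\Mod$ restricts to an equivalence between $D^b_{\coh}(X)$ and the $\mk$-perfect $A$-modules is exactly the nontrivial content here: it amounts to showing that global $\Ext$-finiteness against a single perfect generator detects local coherence and boundedness of $\cF$. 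This is not accessible by elementary homological algebra over $A$; it is a substantive structural theorem about $D_{qc}(X)$ for $X$ proper over a noetherian base, and it is precisely what the paper imports via \cite[Corollary 0.5]{Nee18} (developed in the framework of approximable triangulated categories). Rewriting the problem in the module category $A\hy\Mod$ does not make this step go away — it only relocates it to the assertion that $D^b_{\coh}(X)$ corresponds to the expected subcategory of $A$-modules, which is no easier to prove. Until this step is actually established (either by citing Neeman, or by supplying a genuine proof of the Keller-restriction claim), the argument is incomplete.

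The closing paragraph (ind-completion giving the fully faithful strongly continuous functor from $\IndCoh(X)$) is fine and formal, given the first statement.
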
  





\begin{proof}
Indeed, the category $\un{\Hom}_{\mk}^{\dual}(D(X),D(\mk))^{\omega}$ is simply the category of $\mk$-linear functors 
\begin{equation*}
\Fun_{\mk}(\Perf(X),\Perf(\mk))\simeq \Fun_{\mk}(\Perf(X)^{op},\Perf(\mk))\subset \Fun_{\mk}(\Perf(X)^{op},D(\mk))\simeq D(X).
\end{equation*} Thus, we need to show that an object $\cF\in D(X)$ is contained in $D^b_{\coh}(X)$ if and only if for any perfect complex $\cG\in\Perf(X)$ the complex of $\mk$-modules $\bR\Hom(\cG,\cF)$ is perfect. The ``only if'' direction is clear. The ``if'' direction follows from \cite[Corollary 0.5]{Nee18}.
\end{proof}

\begin{remark}
Using the results below, namely Theorems \ref{th:comparison_Hom^dual_with_ML} and \ref{th:local_invar_of_inverse_limits}, we will prove in \cite{E} that the functor \eqref{eq:IndCoh_to_Hom^dual} induces an isomorphism on continuous $K$-theory, i.e. we have
\begin{equation*}
K^{\cont}(\un{\Hom}_{\mk}^{\dual}(D(X),D(\mk)))\cong G(X) = K(D^b_{\coh}(X)).
\end{equation*}
\end{remark}

\begin{remark}
More generally, let $X$ be a separated scheme of finite type over a noetherian ring $\mk,$ of finite $\Tor$-dimension over $\mk.$ Then one can define the ``correct'' $\mk$-linear dualizable category $\cC(X/\mk),$ such that its full subcategory of compact objects is identified with the category of relatively perfect complexes. If $X$ is proper, then we have $$\cC(X/\mk)\simeq\un{\Hom}_{\mk}^{\dual}(D(X),D(\mk)).$$ If on the contrary $X$ is affine, then choosing a surjection $\mk[x_1,\dots,x_n]\to \cO(X)$ we have $$\cC(X/\mk)\simeq\un{\Hom}_{\mk[x_1,\dots,x_n]}^{\dual}(D(X),D(\mk[x_1,\dots,x_n])).$$

The assignment $U\mapsto \cC(U/\mk)$ is in fact a sheaf on $X$ (in the Zariski topology) with values in $\Cat_{\mk}^{\dual}.$ Moreover, for open subsets $V\subset U\subset X,$ the functor $\cC(U/\mk)\to \cC(V/\mk)$ is a (strongly continuous) quotient functor. The spectrum $K^{\cont}(\cC(X/\mk))$ can be considered as the ``correct'' notion of relative $G$-theory. We will provide the details in \cite{E}.
\end{remark}

The following example shows that even for very basic non-regular schemes the functor \eqref{eq:IndCoh_to_Hom^dual} is not an equivalence.

\begin{prop}
Suppose that $\mk$ is a field, and consider the algebra of dual numbers $\mk[\veps].$ Then the category $\un{\Hom}_{\mk}^{\dual}(D(\mk[\veps]),D(\mk))$ is not compactly generated.
\end{prop}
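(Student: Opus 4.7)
The plan is to compare the continuous $K$-theory of $\un{\Hom}_{\mk}^{\dual}(D(\mk[\veps]),D(\mk))$ with the $K$-theory of its would-be compact objects and to exhibit a discrepancy.

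By Proposition \ref{prop:IndCoh_and_Hom^dual} applied to the field $\mk$ and the proper scheme $X=\Spec\mk[\veps]$, the full subcategory of compact objects of $\un{\Hom}_{\mk}^{\dual}(D(\mk[\veps]),D(\mk))$ is identified with $D^b_{\coh}(\mk[\veps])$. Consequently, if the category were compactly generated, it would coincide with the fully faithful image of $\IndCoh(\mk[\veps])\simeq\Ind(D^b_{\coh}(\mk[\veps]))$, and its continuous $K$-theory would equal $G(\mk[\veps])$. Quillen d\'evissage applied to the artinian local ring $\mk[\veps]$ with residue field $\mk$ identifies $G(\mk[\veps])\simeq K(\mk)$, which is a nonzero spectrum.

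On the other hand, $D(\mk[\veps])$ is proper and $\omega_1$-compact over $\Mod_{\mk}$: the ring $\mk[\veps]$ is perfect as a $\mk$-module, and its diagonal bimodule in $D(\mk[\veps]\tens{\mk}\mk[\veps])$ admits a free resolution with finite-dimensional terms, hence is $\omega_1$-compact. Corollary \ref{cor:hom_epi_internal_hom} with $\kappa=\omega_1$ then yields a short exact sequence in $\Cat_{\mk}^{\dual}$:
\begin{equation*}
0\to \un{\Hom}_{\mk}^{\dual}(D(\mk[\veps]),D(\mk))\to \Ind(D(\mk[\veps])^{\omega_1})\to \Ind(\Fun_{\mk}^{LL}(D(\mk[\veps]),\Ind(\Calk_{\omega_1}^{\cont}(D(\mk)))))\to 0.
\end{equation*}
Both outer terms are compactly generated by small stable categories that are closed under countable coproducts: the middle compact generator $D(\mk[\veps])^{\omega_1}$ is closed under countable colimits by definition of $\omega_1$-compactness, while the right compact generator identifies with $\Fun_{\mk}(\Perf(\mk[\veps]),\Calk_{\omega_1}^{\cont}(D(\mk)))$, whose countable coproducts are computed pointwise in a target that itself inherits countable coproducts from $D(\mk)^{\omega_1}$. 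The Eilenberg swindle then forces the $K$-theory of each generator, and hence $K^{\cont}$ of each outer term, to vanish. Applying the localizing invariant $K^{\cont}$ to the sequence produces $K^{\cont}(\un{\Hom}_{\mk}^{\dual}(D(\mk[\veps]),D(\mk)))\simeq 0$, contradicting the previous paragraph.

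The main subtlety I anticipate is justifying the Eilenberg swindle on the right-hand term of the Corollary \ref{cor:hom_epi_internal_hom} sequence; this requires identifying $\Fun_{\mk}^{LL}(D(\mk[\veps]),\Ind(\Calk_{\omega_1}^{\cont}(D(\mk))))$ with $\mk$-linear exact functors $\Perf(\mk[\veps])\to\Calk_{\omega_1}^{\cont}(D(\mk))$ (since a strongly continuous functor from a compactly generated category is determined by its restriction to compacts, taking values in the compact objects of the target) and observing that $\Calk_{\omega_1}^{\cont}(D(\mk))$ is closed under countable coproducts.
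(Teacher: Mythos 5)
Your first paragraph is fine (compact objects are $D^b_{\coh}(\mk[\veps])$, d\'evissage gives $G(\mk[\veps])\cong K(\mk)\neq 0$), but the Eilenberg swindle in the second paragraph breaks down, and the gap is exactly in the subtlety you flagged at the end.

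The category $\Calk_{\omega_1}^{\cont}(D(\mk))$, which is the idempotent completion of $D(\mk)^{\omega_1}/\Perf(\mk)$, does \emph{not} have countable coproducts. You write that it ``inherits countable coproducts from $D(\mk)^{\omega_1}$,'' but the Verdier quotient functor $D(\mk)^{\omega_1}\to D(\mk)^{\omega_1}/\Perf(\mk)$ does not preserve countable coproducts, because $\Perf(\mk)$ is not closed under countable coproducts inside $D(\mk)^{\omega_1}$. And there is no way to repair this: the whole design of the Calkin construction is that the short exact sequence
\begin{equation*}
0\to D(\mk)\xto{\hat{\cY}}\Ind(D(\mk)^{\omega_1})\to \Ind(\Calk_{\omega_1}^{\cont}(D(\mk)))\to 0
\end{equation*}
forces $K(\Calk_{\omega_1}^{\cont}(D(\mk)))\cong \Sigma K^{\cont}(D(\mk))=\Sigma K(\mk)\neq 0$ (since $K$ of $D(\mk)^{\omega_1}$ vanishes by the swindle). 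If $\Calk_{\omega_1}^{\cont}(D(\mk))$ admitted the swindle too, its $K$-theory would vanish, which it does not. Consequently your right-hand term $\Fun_{\mk}(\Perf(\mk[\veps]),\Calk_{\omega_1}^{\cont}(D(\mk)))$ has no reason to have countable coproducts either, and its $K$-theory does not vanish.

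Notice also that if your swindle argument were valid it would prove the unconditional statement $K^{\cont}(\un{\Hom}_{\mk}^{\dual}(D(\mk[\veps]),D(\mk)))=0$, which the paper explicitly contradicts in the remark after Proposition \ref{prop:IndCoh_and_Hom^dual}: there it is asserted (to be proved in \cite{E}) that the functor \eqref{eq:IndCoh_to_Hom^dual} induces an isomorphism on continuous $K$-theory, so that $K^{\cont}(\un{\Hom}_{\mk}^{\dual}(D(\mk[\veps]),D(\mk)))\cong G(\mk[\veps])\cong K(\mk)$. So continuous $K$-theory cannot distinguish $\un{\Hom}_{\mk}^{\dual}(D(\mk[\veps]),D(\mk))$ from $\IndCoh(\mk[\veps])$, and the whole $K$-theoretic strategy is doomed. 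The paper instead argues directly: using $\bR\End_{\mk[\veps]}(\mk)\cong\mk[y]$ with $y$ of cohomological degree $1$, it builds an explicit ind-system $X=\inddlim[n]\bigl(\biggplus[k\geq 0]\mk[nk]\bigr)$ with transition maps given by powers of $y$, checks by a Hochschild cohomology computation that $X$ lies in the image of $\un{\Hom}_{\mk}^{\dual}(D(\mk[\veps]),D(\mk))$ inside $\Ind(D(\mk[\veps]))$, and then shows that $X$ cannot lie in $\IndCoh(\mk[\veps])$ because the maps $X_1\to X_n$ do not factor through any object of $D^b_{\coh}(\mk[\veps])$ (homological boundedness is the obstruction). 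You would need an argument of that explicit nature; a localizing-invariant computation alone cannot conclude.
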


\begin{proof}
We directly construct an $\omega_1$-compact object of the category $\un{\Hom}_{\mk}^{\dual}(D(\mk[\veps]),D(\mk))$ which is not in the image of $\IndCoh(\mk[\veps]).$ 

Recall that we have $\bR\End_{\mk[\veps]}(\mk)\cong\mk[y],$ where $y$ is of cohomological degree $1.$ For $n\geq 0,$ we put $$X_n=\biggplus[k\geq 0]\mk[nk]\in D(\mk[\veps]).$$ We define the map $X_n\to X_{n+1}$ to be the direct sum of maps $y^k:\mk[nk]\to \mk[(n+1)k]$ over $k\geq 0.$ Then the object $$X=\inddlim[n]X_n\in\Ind(D(\mk[\veps]))$$ is contained in the image of $\un{\Hom}_{\mk}^{\dual}(D(\mk[\veps]),D(\mk)).$ Indeed, each map $X_n\to X_{n+1},$ considered as a class in $H_0(\bR\Hom_{\mk[\veps]}(X_n,X_{n+1})),$ is contained in the image of the map
\begin{equation*}
H_0(\THC(\mk[\veps]/\mk;X_{n+1}\tens{\mk}X_n^*))\to H_0(\bR\Hom_{\mk[\veps]}(X_n,X_{n+1})),
\end{equation*}
which can be seen by a direct computation of Hochschild cohomology.

To prove that $X$ is not in the image of $\IndCoh(\mk[\veps]),$ it suffices to observe that for any $n\geq 1$ the map $X_1\to X_n$ does not factor through an object of $D^b_{\coh}(\mk[\veps]).$ Indeed, suppose that we have a factorization
\begin{equation*}
X_1\to Y\to X_n,\quad Y\in D^b_{\coh}(\mk[\veps]).
\end{equation*}
Choose $l\geq 0$ such that $Y\in D_{\leq l}(\mk[\veps]),$ i.e. $H_k(Y)=0$ for $k\geq l+1.$ Then the composition
\begin{equation*}
\biggplus[k\geq l+1] \mk[k]\to X_1\to Y\to X_n
\end{equation*}
is zero, which contradicts the definition of the map $X_1\to X_n.$ This proves the proposition. 
\end{proof}

\begin{remark}
A similar argument shows that for a prime $p$ the category $\un{\Hom}_{\Z}^{\dual}(D(\Z/p^2),D(\Z))$ is not compactly generated.
\end{remark}

We mention an interesting example coming from topology, which is studied in \cite{KNP}. Recall from \cite[Definition A.1.5, Proposition A.1.8]{Lur17} that a topological space $X$ is locally of constant shape if the functor $p^*:\cS\simeq\Shv(\pt;\cS)\to\Shv(X;\cS)$ has a left adjoint, where $p:X\to\pt.$ In this case the same is true for sheaves of spectra.

\begin{prop}Let $X$ be a locally compact Hausdorff space, and let $\cC$ be a dualizable category. Consider the dualizable category
\begin{equation*}
\hhat{\Cosh}(X;\cC)=\un{\Hom}_{\Sp}^{\dual}(\Shv(X;\Sp),\cC).
\end{equation*}
Suppose that $X$ is locally of constant shape and second-countable (e.g. a topological manifold with boundary, which is countable at infinity). Then the functor
\begin{equation*}
\hhat{\Cosh}(X;-):\Cat_{\st}^{\dual}\to\Cat_{\st}^{\dual}	
\end{equation*}
takes short exact sequences to short exact sequences.\end{prop}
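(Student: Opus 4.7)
The plan is to deduce this from Theorem \ref{th:internal_projectivity} applied with $\cE=\Sp,$ which reduces the assertion to verifying that $\Shv(X;\Sp)$ is both proper and $\omega_1$-compact as an object of $\Cat_{\st}^{\dual}.$ Once those two properties are in hand, the conclusion is immediate, so the real content is in the two verifications, which use the two hypotheses on $X$ essentially.

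For $\omega_1$-compactness I would invoke the criterion of Theorem \ref{th:presentability_of_Cat_E^dual}, showing that both the evaluation and the coevaluation functors for the self-duality of $\Shv(X;\Sp)$ (coming from local rigidity of $\Shv(X;\Sp)$) preserve $\omega_1$-compact objects. Second-countability of $X$ provides a countable exhaustion $X=\indlim[n]U_n$ by open subsets with $\bbar U_n\subset U_{n+1}$ compact; this writes the diagonal (coevaluation) object as an $\N$-indexed filtered colimit of contributions supported in the relatively compact pieces $\bbar U_n.$ Each $\bbar U_n$ is a second-countable compact Hausdorff space, and the corresponding finite piece of the diagonal is $\omega_1$-compact, so $\coev(\bS)$ is an $\omega_1$-compact colimit of $\omega_1$-compact objects, hence $\omega_1$-compact. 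A dual argument handles the evaluation.

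For properness one must establish \emph{strong continuity} of the evaluation pairing, which, after identifying the self-duality via the six-functor formalism, is the compactly supported sections pairing $(F,G)\mapsto p_!(F\otimes G)$ with $p:X\to\pt.$ The hypothesis that $X$ is locally of constant shape means that $p_X^*:\Sp\to\Shv(X;\Sp)$ admits a left adjoint $p_{X\#},$ so that $p_{X*}$ is continuous, and the LCH hypothesis transfers this to $p_!.$ The subtle point, which I expect to be the main obstacle, is upgrading continuity of $p_!$ to strong continuity, i.e.\ commutation with arbitrary products in $\Sp$: local constancy of shape supplies the required adjoint only at the level of individual compact pieces $\bbar U_n,$ and globalizing along the exhaustion $(U_n)$ requires controlling how infinite products interact with the transition maps. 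Here second-countability is decisive, because the exhaustion can be taken $\N$-indexed, and a countable filtered colimit commutes with products in $\Sp$ in precisely the sense needed to conclude strong continuity of $p_!$ on all of $X.$

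With properness and $\omega_1$-compactness of $\Shv(X;\Sp)$ established, Theorem \ref{th:internal_projectivity} directly yields the desired short-exact-sequence preservation of $\un{\Hom}_{\Sp}^{\dual}(\Shv(X;\Sp),-).$
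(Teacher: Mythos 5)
Your overall reduction (show $\Shv(X;\Sp)$ is proper over $\Sp$ and $\omega_1$-compact, then invoke Theorem~\ref{th:internal_projectivity}) matches the paper exactly, and your treatment of $\omega_1$-compactness, while more elaborate than the paper's one-line remark, is in the right spirit. The properness argument, however, contains a genuine gap stemming from a misunderstanding of strong continuity.

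You identify the evaluation pairing as $\Gamma_c(X;-)\circ\Delta_X^*$ and then try to show $p_!=\Gamma_c$ is strongly continuous by ``upgrading continuity of $p_!$ to commutation with arbitrary products in $\Sp$.'' But strong continuity of a continuous functor $F:\cC\to\cD$ is \emph{not} the condition that $F$ commutes with products; it is the condition that the right adjoint $F^R$ preserves colimits, or equivalently that the naive dual $F^\vee:\cD^\vee\to\cC^\vee$ admits a left adjoint. Your subsequent appeal to ``countable filtered colimits commuting with products'' along an exhaustion $X=\bigcup U_n$ does not address this condition, and the claim that local constancy of shape ``supplies the required adjoint only at the level of individual compact pieces'' misreads the definition: by \cite[Definition A.1.5, Proposition A.1.8]{Lur17}, local constancy of shape is a single global statement, namely that $p^*:\Sp\to\Shv(X;\Sp)$ admits a left adjoint $p_\#$. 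Also, the asserted implication ``$p^*$ has a left adjoint, so $p_*$ is continuous'' is a non sequitur. The clean route is the one the paper takes: under Verdier self-duality of $\Shv(X;\Sp)$, the naive dual of $\Gamma_c(X;-)$ is precisely $p^*$, so strong continuity of $\Gamma_c$ is \emph{equivalent} to $p^*$ having a left adjoint, which is literally the locally-of-constant-shape hypothesis. Second-countability plays no role in properness; it is used only for $\omega_1$-compactness (i.e., to see that $\Delta_{X,*}(\bS_X)$ is $\omega_1$-compact in $\Shv(X\times X;\Sp)$).
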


\begin{proof}
By Theorem \ref{th:internal_projectivity}, it suffices to prove that the category $\Shv(X;\Sp)$ is proper (over $\Sp$)  and $\omega_1$-compact in $\Cat_{\st}^{\dual}.$
To show properness, recall that under the self-duality of $\Shv(X;\Sp)$ the evaluation functor is given by the composition
\begin{equation*}
\Shv(X;\Sp)\otimes\Shv(X;\Sp)\simeq \Shv(X\times X;\Sp)\xto{\Delta_X^*}\Shv(X;\Sp)\xto{\Gamma_c(X;-)}\Sp.
\end{equation*}
The functor $\Delta_X^*$ is clearly strongly continuous. The dual of the functor $\Gamma_c(X;-)$ is given by $p^*:\Sp\to\Shv(X;\Sp),$ where $p:X\to\pt.$ Our assumption on $X$ implies that the functor $p^*$ has a left adjoint, which exactly means that the functor $\Gamma_c(X;-)$ is strongly continuous. This proves the properness of $\Shv(X;\Sp).$

Now the $\omega_1$-compactness of $\Shv(X;\Sp)$ means that the object $\Delta_{X,*}(\bS_X)$ is $\omega_1$-compact in $\Shv(X\times X;\Sp).$ This follows from the second-countability of $X.$
\end{proof}

\subsection{The category of nuclear modules as a dualizable internal $\Hom$}
\label{ssec:Nuc_as_Hom^dual}

Now let $R$ be a (discrete) commutative ring, and let $I=(a_1,\dots,a_m)\subset R$ be a finitely generated ideal. We will work over $R,$ or equivalently over the rigid symmetric monoidal category $D(R).$ 

We denote by $R^{\wedge}_I$ the derived $I$-completion of $R,$ considered as an $\bE_{\infty}$-ring.  We give the following definition of the category $\Nuc(R^{\wedge}_{I})$ of nuclear modules on the affine formal scheme $\Spf(R^{\wedge}_I).$

\begin{defi}\label{def:nuclear_via_internal_Hom}
We define 
\begin{equation}\label{eq:nuclear_via_internal_Hom}
\Nuc(R^{\wedge}_I)=\un{\Hom}_R^{\dual}(D_{I\hy\tors}(R),D(R)).
\end{equation}
Here $D_{I\hy\tors}(R)\subset D(R)$ is the full subcategory of objects with locally $I$-torsion homology.
\end{defi}

This category is different from the category of nuclear solid modules defined by Clausen and Scholze \cite{CS20}, see Section \ref{sec:original_Nuc} for details.

Recall that the category $D_{I\hy\tors}(R)$ is generated by the single compact object, given by the Koszul complex $\Kos(R;a_1,\dots,a_n).$ In particular, the category $D_{I\hy\tors}(R)$ is proper over $R,$ and it is also $\omega_1$-compact. Therefore, by Theorem \ref{th:internal_projectivity} the category $D_{I\hy\tors}(R)$ is internally projective in $\Cat_R^{\dual}.$ This shows that the internal $\Hom$ in \eqref{eq:nuclear_via_internal_Hom} is well-behaved. This will eventually allow us to compute the $K$-theory and more general localizing invariants of the category $\Nuc(R^{\wedge}_I)$ in Section \ref{sec:loc_invar_inverse_limits}. 

The following basic statement describes the compact objects in the category $\Nuc(R^{\wedge}_I).$

\begin{prop}
We have $\Nuc(R^{\wedge}_I)^{\omega}\simeq \Perf(R^{\wedge}_I).$
\end{prop}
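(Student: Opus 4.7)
The strategy is to use the universal property of the dualizable internal $\Hom$ to identify $\Nuc(R^{\wedge}_I)^{\omega}$ with a limit of $\Perf(R_n)$'s, bypassing the (different) subcategory of $\omega$-compact objects of $D_{I\hy\compl}(R)$.

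For any dualizable stable category $\cE$, strongly continuous functors $\Sp \to \cE$ are in bijection with compact objects of $\cE$: such a functor is determined by the image of $\bS$, and strong continuity is equivalent to compactness of that image. Applying this to $\cE = \Nuc(R^{\wedge}_I)$ and using the defining universal property of $\un{\Hom}_R^{\dual}$ with test category $\cA = \Sp$ (noting $\Sp \otimes D_{I\hy\tors}(R) \simeq D_{I\hy\tors}(R)$ in $\Cat_R^{\dual}$), we obtain
\[
\Nuc(R^{\wedge}_I)^{\omega} \simeq \Fun^{LL}(\Sp, \un{\Hom}_R^{\dual}(D_{I\hy\tors}(R), D(R))) \simeq \Fun_R^{LL}(D_{I\hy\tors}(R), D(R)).
\]

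Next, we present $D_{I\hy\tors}(R)$ as a filtered colimit in $\Cat_R^{\cg}$:
\[
D_{I\hy\tors}(R) \simeq \colim_n D(R_n),
\]
where the transitions are pushforward (extension of scalars) along the tower $R_{n+1} \to R_n$ of Koszul DG $R$-algebras. Equivalently, the compact objects satisfy $\Perf_{I\hy\tors}(R) \simeq \colim_n \Perf(R_n)$ in $\Cat_R^{\perf}$, reflecting that every perfect $I$-torsion $R$-complex is annihilated by a power of the generators and so factors through some $R_n$. Since $\Fun_R^{LL}(-, D(R))$ sends colimits in $\Cat_R^{\cg}$ to limits:
\[
\Fun_R^{LL}(D_{I\hy\tors}(R), D(R)) \simeq \prolim[n] \Fun_R^{LL}(D(R_n), D(R)).
\]

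Finally, for each $n$ we identify $\Fun_R^{LL}(D(R_n), D(R)) \simeq \Perf(R_n)$: an $R$-linear colimit-preserving functor $F \colon D(R_n) \to D(R)$ is determined by $M = F(R_n) \in D(R_n)$ via $F(X) = X \tens{R_n} M$, and its right adjoint $\bR\Hom_R(M, -)$ is continuous if and only if $M$ is compact in $D(R)$. Since $R_n$ is perfect over $R$, this condition coincides with $M \in \Perf(R_n)$. Combining, $\Nuc(R^{\wedge}_I)^{\omega} \simeq \prolim[n] \Perf(R_n) \simeq \Perf(R^{\wedge}_I)$, the last equivalence being the definition of $\Perf(R^{\wedge}_I)$ in the non-noetherian setting (and classical for noetherian $R$ with $I$-adic completion). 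The main obstacle is establishing the colimit presentation of $D_{I\hy\tors}(R)$ in $\Cat_R^{\cg}$ for general $R$, which requires a careful analysis of how the Koszul DG algebras $R_n$ generate the $I$-torsion subcategory via extension of scalars; this is closely tied to the author's later treatment of strongly Mittag-Leffler sequences in Section \ref{sec:Mittag-Leffler}, which also directly delivers the limit description $\Nuc(R^{\wedge}_I) \simeq \prolim[n]^{\dual} D(R_n)$ promised in the introduction.
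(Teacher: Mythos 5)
Your first step agrees with the paper: by the universal property of $\un{\Hom}^{\dual}_R$, taking $\cA = \Sp$ identifies $\Nuc(R^{\wedge}_I)^{\omega}$ with the category of strongly continuous $R$-linear functors $D_{I\hy\tors}(R)\to D(R)$, i.e.\ with $\Fun^{\ex}_{R^{\omega}}(\Perf_{I\hy\tors}(R),\Perf(R))$. From here the paper proceeds directly: since $\Perf_{I\hy\tors}(R)$ is generated by the single Koszul complex $K=\Kos(R;a_1,\dots,a_m)$, such a functor is an object $M\in\Fun^L_R(D_{I\hy\tors}(R),D(R))\simeq D_{I\hy\compl}(R)$ acting by $X\mapsto X\tens{R}M$, and strong continuity is exactly the condition $M\tens{R}K\in\Perf(R)$, giving $\Perf(R^{\wedge}_I)$ by the standard characterization of perfect complexes over the completion.

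The route you take instead has a genuine gap at the colimit presentation $D_{I\hy\tors}(R)\simeq\colim_n D(R_n)$ in $\Cat^{\cg}_R$. The transition functor you intend (restriction of scalars along $R_{n'}\to R_n$ for $n'\geq n$; the parenthetical ``extension of scalars'' goes the wrong way) does not preserve compact objects: $R_n$ is not perfect as an $R_{n'}$-module --- e.g.\ $\mk=\mk[x]/x$ has infinite projective dimension over $\mk[x]/x^2$ --- so there is no functor $\Perf(R_n)\to\Perf(R_{n'})$ of this kind, and the diagram you wish to take a colimit of does not live in $\Cat^{\perf}_R$. Consequently the application of ``$\Fun^{LL}_R(-,D(R))$ sends colimits to limits'' in the next step is not available. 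The paper does prove a correct version of what you want, but it is more delicate: Proposition \ref{prop:derived_quotients_pro_equiv_to_functors_from_approx} exhibits a direct sequence of \emph{finitely presented} $R$-linear categories $\cA_n$ (interpolating between $D^b_{\coh}(R_n)$'s via the trace-class maps of Lemma \ref{lem:extension_restriction_of_scalars_trace_class}, not the $\Perf(R_n)$'s themselves) with $\colim_n\cA_n\simeq\Perf_{I\hy\tors}(R)$, and shows the dual pro-system $(\Fun_R(\cA_n,\Perf(R)))_n$ is \emph{pro-equivalent} to $(\Perf(R_n))_n$; this is what ultimately yields $\prolim_n\Perf(R_n)$, not an identification at the level of the colimit system. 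So your final answer and your intuition that the Mittag--Leffler machinery is relevant are both right, but the shortcut through $\colim_n\Perf(R_n)$ is not, and the paper's direct argument via the compact generator $K$ avoids the issue entirely.
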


\begin{proof}
By definition, the category $\Nuc(R^{\wedge}_I)^{\omega}$ is identified with the category of $R$-linear functors $\Perf_{I\hy\tors}(R)\to\Perf(R).$ Equivalently, this is the category
\begin{equation*}
\{M\in D_{I\hy\compl}(R)\mid M\tens{R}\Kos(R;a_1,\dots,a_m)\in\Perf(R)\}\simeq \Perf(R^{\wedge}_I).
\end{equation*}
This proves the proposition.
\end{proof}

\subsection{A remark on dualizable categories with a finite group action}

Let $G$ be a finite group, and consider the category $(\Cat_{\st}^{\dual})^{BG}$ of dualizable categories with a $G$-action. Taking the dualizable limit over $BG,$ we obtain the functor
\begin{equation}\label{eq:dualizable_invariants}
(-)^{\h G,\dual}:(\Cat_{\st}^{\dual})^{BG}\to \Cat_{\st}^{\dual}.
\end{equation} 

The following statement is in fact a special case of Theorem \ref{th:internal_projectivity}.

\begin{cor}\label{cor:dualizable_invariants}
The functor \eqref{eq:dualizable_invariants} takes short exact sequences to short exact sequences.
\end{cor}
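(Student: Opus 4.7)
The strategy will be to realize the dualizable $G$-invariants functor as a relative internal $\Hom$ over a rigid symmetric monoidal base, so that Theorem \ref{th:internal_projectivity} applies directly. The key steps, in order, are: (1) identify $(\Cat_{\st}^{\dual})^{BG}$ with $\Cat_{\cE}^{\dual}$ for $\cE = \Sp^{BG}$, and identify $(-)^{\h G,\dual}$ with $\un{\Hom}_{\cE}^{\dual}(\Sp,-)$, where $\Sp$ is viewed as an $\cE$-module via the monoidal forgetful functor $F\colon \cE\to\Sp$; (2) verify that $\Sp$ is proper and $\omega_1$-compact over the base; (3) deal with the rigidity issue and invoke Theorem \ref{th:internal_projectivity}. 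The adjunction identity in (1) holds because for a dualizable category $\cA$, a strongly continuous $\cE$-linear functor $\cA\otimes\Sp\to\cC$ (where the $\cE$-action on $\cA\otimes\Sp$ factors through $F$, i.e., $\cA$ is equipped with the trivial $G$-action) is the same as a $G$-equivariant strongly continuous functor $\cA\to\cC$, hence a strongly continuous functor $\cA\to\cC^{\h G,\dual}$.

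For step (2), properness of $\Sp$ over $\cE$ rests on the fact that the monoidal forgetful functor $F$ has both adjoints given by $\bS[G]\otimes(-)\colon \Sp\to\cE$ (using the self-duality $\bS[G]^{\vee}\simeq\bS[G]$ for $G$ finite), which provides the strong continuity of the evaluation $\Sp\otimes\Sp^{\vee}_{/\cE}\to\cE$. For $\omega_1$-compactness, the relative coevaluation $\coev(\bS)$ lives in $\Sp^{\vee}_{/\cE}\tens{\cE}\Sp\simeq \Mod\hy(\bS\tens{\bS[G]}\bS)$ and corresponds to the diagonal bimodule $\bS\tens{\bS[G]}\bS\simeq \Sigma^{\infty}_+ BG$. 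Because $G$ is finite, $BG$ is a countable CW complex, so $\Sigma^{\infty}_+ BG$ is $\omega_1$-compact in $\Sp$, which yields the required $\omega_1$-compactness of $\Sp$ as an $\cE$-module.

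The main obstacle, addressed in step (3), is that $\cE = \Sp^{BG}$ is only locally rigid, not rigid: for $G$ finite and nontrivial, the unit $\bS$ (trivial action) is $\omega_1$-compact but not compact in $\cE$, since $\Hom_{\cE}(\bS,-)=(-)^{\h G}=\Map(\Sigma^{\infty}_+ BG,-)$ preserves $\omega_1$-filtered but not arbitrary filtered colimits. To overcome this, I will pass to the one-point rigidification $\cE_+$ of Definition \ref{def:one_point_rigidification}, a rigid symmetric monoidal category containing $\cE$ as a smashing ideal. The embedding $\Cat_{\cE}^{\dual}\hookrightarrow\Cat_{\cE_+}^{\dual}$ is a colocalization that preserves and reflects short exact sequences; the $\cE$-module $\Sp$ promotes canonically to an $\cE_+$-module, and the properness and $\omega_1$-compactness verifications of step (2) carry over verbatim, now over the rigid base $\cE_+$. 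Theorem \ref{th:internal_projectivity} then gives the short-exactness of $\un{\Hom}_{\cE_+}^{\dual}(\Sp,-)\colon \Cat_{\cE_+}^{\dual}\to\Cat_{\st}^{\dual}$; restricting along the embedding recovers the short-exactness of $(-)^{\h G,\dual}$.
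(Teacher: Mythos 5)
There is a genuine gap in step (1), and it is fatal. The $\cE$-module structure on $\Sp$ via the forgetful functor $F\colon\Sp^{BG}\to\Sp$ does not correspond to the trivial $G$-action; it corresponds to the \emph{induced} (regular) representation, and $\un{\Hom}_{\cE}^{\dual}(\Sp,-)$ computes the \emph{underlying category}, not $(-)^{\h G,\dual}$. To see this concretely, note that $\Sp$ with this $\cE$-module structure is $\Mod_A(\cE)$ for $A=F^R(\bS)\simeq\bS^G$ (the coinduced algebra), and $A$ is not only compact in $\cE$ but also a retract of $A\otimes A\simeq\bS^{G\times G}$ as a bimodule (the diagonal $\Delta\subset G\times G$ is a $G$-invariant clopen subset). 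So $\Sp$ is both \emph{proper and smooth} over $\cE$ (and the same holds over $\cE_+$). By Proposition \ref{prop:Hom^dual_from_proper_or_smooth}\ref{equivalence_for_smooth_and_proper}, $\un{\Hom}_{\cE}^{\dual}(\Sp,-)\simeq\Fun_{\cE}^L(\Sp,-)\simeq\Sp\tens{\cE}-$, which is just base change along $F$, i.e.\ the evaluation at the basepoint $*\to BG$. In particular your argument, even once all the verifications are completed, proves only the trivially true statement that the underlying-category functor $(\Cat_{\st}^{\dual})^{BG}\to\Cat_{\st}^{\dual}$ preserves short exact sequences. As a sanity check: for $\cC=\Sp$ with trivial action, your formula gives $\un{\Hom}_{\cE}^{\dual}(\Sp,\cE)\simeq\Sp\tens{\cE}\cE\simeq\Sp$, whereas by the formula in the text $\Sp^{\h G,\dual}\simeq\un{\Hom}_{\Sp}^{\dual}(\Sp^{BG},\Sp)$, which is strictly larger than $\Sp^{BG}$ since $\Sp^{BG}$ is not smooth over $\Sp$.

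The missing ingredient is precisely the failure of smoothness in the correct formulation. One workable route is to use, instead of $\Sp^{BG}$, the rigid $\bE_1$-monoidal category $\cE'=(\Sp^G,*)$ of $G$-indexed spectra with the convolution product (this is the endomorphism monad of the forgetful $(\Cat_{\st}^{\dual})^{BG}\to\Cat_{\st}^{\dual}$, compactly generated by the invertible skyscrapers $\delta_g$, hence rigid — no need for $\cE_+$). Under the monadic identification $(\Cat_{\st}^{\dual})^{BG}\simeq\Cat_{\cE'}^{\dual}$ (sending $\cC$ to its underlying category), the trivial action corresponds to restriction along the monoidal augmentation $\cE'\to\Sp$, $(V_g)_g\mapsto\bigoplus_g V_g$, and $(-)^{\h G,\dual}\simeq\un{\Hom}_{\cE'}^{\dual}(\Sp,-)$. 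Here $\Sp\simeq\Mod_{\bigoplus_g\delta_g}(\cE')$ is proper and $\omega_1$-compact over $\cE'$, so Theorem \ref{th:internal_projectivity} applies — but it is \emph{not} smooth over $\cE'$ (indeed $\Fun_{\cE'}^L(\Sp,\Sp)\simeq\Sp^{BG}$ and the diagonal is $\bS$, which is $\omega_1$-compact but not compact in $\Sp^{BG}$), which is precisely what makes the conclusion nontrivial.
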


We will discuss the details elsewhere. Note that for a dualizable category $\cC$ with a trivial $G$-action we have an equivalence
\begin{equation*}
\cC^{\h G,\dual}\simeq \un{\Hom}_{\Sp}^{\dual}(\Sp^{BG},\cC),
\end{equation*} 
and the category $\Sp^{BG}$ is proper over $\Sp$ and $\omega_1$-compact in $\Cat_{\st}^{\dual},$ hence internally projective.

Moreover, in Corollary \ref{cor:dualizable_invariants} we can replace $BG$ with a space ($\infty$-groupoid) $X,$ such that each based loop space of $X$ is finitely dominated (i.e. compact in $\cS$). 

\subsection{A remark about internal injectivity} In \cite{E} we will prove the following statement. For a dualizable left module $\cC$ over a rigid $\bE_1$-monoidal category $\cE$ and for any uncountable regular cardinal $\kappa$ the left $\cE$-module $\Ind(\Calk_{\kappa}^{\cont}(\cC))$ is relatively internally $\omega_1$-injective over $\Cat_{\st}^{\dual}.$ This means that the functor
\begin{equation*}
\un{\Hom}_{\cE}^{\dual}(-,\Ind(\Calk_{\kappa}^{\cont}(\cC))):\Cat_{\cE}^{\dual}\to\Cat_{\st}^{\dual}
\end{equation*}
takes short exact sequences of $\omega_1$-compact dualizable left $\cE$-modules to short exact sequences of dualizable categories. In the case when $\cE$ is compactly generated, this can be deduced from Theorem \ref{th:internal_projectivity} by ``soft'' methods. In general, this follows from the following surprising statement, which we will prove in \cite{E} using Theorem \ref{th:pullback_square_quadrofunctors} (more precisely, Corollary \ref{cor:pullback_square_products_and_colimits}).

\begin{theo}
Let $\cC$ be a dualizable category and let $\kappa$ be an uncountable regular cardinal. Then the category $\cD=\Ind(\Calk_{\kappa}^{\cont}(\cC))$ satisfies the following property: the functor $$\hat{\cY}:\cD\to\Ind(\cD^{\omega_1})$$ commutes with countable limits.
\end{theo}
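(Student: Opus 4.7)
The plan is to reduce the statement to checking that $\hat\cY$ preserves countable products. The functor $\hat\cY\colon\cD\to\Ind(\cD^{\omega_1})$ is fully faithful since $\cD$ is compactly assembled (being compactly generated by $\cA=\Calk_{\kappa}^{\cont}(\cC)$), and it preserves finite limits because finite limits commute with $\omega_1$-filtered colimits. Consequently, any countable limit is built from finite limits together with a countable product, so it suffices to show that the natural map $\hat\cY(\prod_n x_n)\to\prod_n\hat\cY(x_n)$ is an isomorphism in $\Ind(\cD^{\omega_1})$ for every countable family $(x_n)_{n\geq 0}$ in $\cD$.

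Next I would exploit the compact generation $\cD\simeq\Ind(\cA)$, which gives $\cD^{\omega_1}\simeq\Ind_{\omega_1}(\cA)$ and an explicit description of $\hat\cY$: for $x=\indlim_{i\in I}a_i$ with $a_i\in\cA$ and $I$ directed, one has $\hat\cY(x)\simeq\inddlim_{i\in I}a_i$. Choose such presentations $x_n\simeq\indlim_{i_n\in I_n}a_{n,i_n}$. Using Proposition \ref{prop:seq_limits_of_filtered_colimits} together with the cofinality statements from Section \ref{sec:lim_colim_and_filtered} (in particular Lemma \ref{lem:lax_equalizer_of_filtered_properties}), both $\hat\cY(\prod_n x_n)$ and $\prod_n\hat\cY(x_n)$ rewrite as filtered colimits, indexed by tuples of order-preserving maps $\psi_n\colon\N\to I_n$, of sequential $\prolim$'s of countable products of the form $\prod_n a_{n,\psi_n(m)}$, taken respectively in $\cD$ and in $\Ind(\cD^{\omega_1})$.

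The main obstacle is the comparison of these two expressions, which asks for a controlled interchange of countable products, countable filtered colimits, and sequential limits in our two ambient categories. This is precisely the content of Corollary \ref{cor:pullback_square_products_and_colimits}, a consequence of the quadrofunctor pullback square Theorem \ref{th:pullback_square_quadrofunctors}. Its hypotheses are not automatic here, and to verify them I would invoke the defining property of $\cA=\Calk_{\kappa}^{\cont}(\cC)$: every right trace-class (equivalently compact) morphism between $\kappa$-compact objects of $\cC$ becomes null in $\cA$. This vanishing forces the Mittag-Leffler type pro-systems arising from the exchange maps between $\prod_n$ and $\indlim$ to be pro-isomorphic, in direct analogy with the Claim inside the proof of Lemma \ref{lem:key_lemma_on_crazy_product}; concretely, it is what allows one to produce an inverse on the level of $\proolim$'s, as in the construction \eqref{eq:inverse_map_lim_colim} there. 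Once these hypotheses are established, an application of Corollary \ref{cor:pullback_square_products_and_colimits} identifies the two presentations and yields the desired isomorphism $\hat\cY(\prod_n x_n)\xto{\sim}\prod_n\hat\cY(x_n)$, completing the proof.
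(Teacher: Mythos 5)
The paper does not actually prove this theorem: it is announced as a result of the forthcoming reference \cite{E}, with the single hint that the proof uses Theorem \ref{th:pullback_square_quadrofunctors}, more precisely Corollary \ref{cor:pullback_square_products_and_colimits}. So there is no paper proof to compare against, but your preliminary reduction is sound: $\hat\cY$ is exact and fully faithful, so the problem reduces to showing it preserves countable products (equivalently, that the colocalizing subcategory $\hat\cY(\cD)\subset\Ind(\cD^{\omega_1})$ is closed under countable products), and the appeal to Corollary \ref{cor:pullback_square_products_and_colimits} matches the ingredient the paper itself names.

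There is, however, a genuine gap at the crux. After invoking (AB6), $\hat\cY(\prod_n x_n)$ and $\prod_n\hat\cY(x_n)$ both become colimits over the uncountable poset $\prod_n I_n$, of $\hat\cY(\prod_n^{\cD} a_{n,i_n})$ and $\prod_n^{\Ind(\cD^{\omega_1})}\cY(a_{n,i_n})$ respectively; the nontrivial content lies entirely in what $\hat\cY$ does to the $\omega_1$-compact-but-not-compact products $\prod_n^{\cD} a_{n,i_n}$, and that is exactly where the specific structure of $\cA=\Calk_\kappa^{\cont}(\cC)$ has to enter. You assert that the vanishing of compact morphisms of $\cC^{\kappa}$ in $\cA$ ``forces the Mittag-Leffler type pro-systems to be pro-isomorphic,'' in analogy with the Claim inside Lemma \ref{lem:key_lemma_on_crazy_product}, but this is precisely where the theorem lives and the analogy is not an argument: in that Claim the pro-isomorphism is built by explicitly lifting a compactness witness through a factorization furnished by properness, and you do not produce the corresponding factorization in the present setting. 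You also do not specify which quadrofunctor $F_{n,k}$ to feed into Corollary \ref{cor:pullback_square_products_and_colimits}, nor why two of the four corners of its pullback square become identified (that corollary gives an unconditional pullback square, so to extract an isomorphism one must separately show two corners agree). Likewise, the intermediate claim that both sides rewrite as colimits over tuples of order-preserving maps $\psi_n\colon\N\to I_n$ of $\prolim$'s is asserted but not derived from Proposition \ref{prop:seq_limits_of_filtered_colimits} or Lemma \ref{lem:lax_equalizer_of_filtered_properties}. Until these steps are supplied, the proposal is a plausible outline that identifies the correct tools, but the essential argument is missing.
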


\section{Rigidification of locally rigid categories}
\label{sec:rig_of_locally_rigid}

\subsection{Properties of the rigidification}

The goal of this section is to prove that rigidification of a locally rigid category behaves nicely in a certain sense under a countability condition. In particular, we prove that if the unit object is $\omega_1$-compact, then any trace-class map is a composition of two trace-class maps. A more precise statement is Theorem \ref{th:rigidification_of_locally_rigid_countable} below. 

We first explain the relation between the rigidification of locally rigid categories and the dualizable internal $\Hom$ from the previous section.

\begin{prop}\label{prop:rigidification_as_Hom^dual}
Let $\cE$ be a locally rigid symmetric monoidal category. We choose an inclusion $\cE\hto\cD,$ where $\cD$ is rigid, and $\cE$ is a smashing ideal in $\cD$ (for example, we can take $\cD=\cE_+$ as in Definition \ref{def:one_point_rigidification}). Then we have a natural symmetric monoidal equivalence
\begin{equation}
\cE^{\rig}\simeq \Hom_{\cD}^{\dual}(\cE,\cD).
\end{equation}
Here the symmetric monoidal structure on the internal $\Hom$ comes from the (coidempotent) $\bE_{\infty}$-coalgebra structure on $\cE$ as an object of $\Cat_{\cD}^{\dual}.$
\end{prop}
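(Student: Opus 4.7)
The plan is to identify both $\cE^{\rig}$ and $\un{\Hom}_{\cD}^{\dual}(\cE,\cD)$ as the same full subcategory of $\Ind(\cE^{\kappa})$ for a suitably large uncountable regular cardinal $\kappa$ for which $\cE$ is $\kappa$-compact in $\Cat_{\cD}^{\dual}$; such $\kappa$ exists by Theorem \ref{th:presentability_of_Cat_E^dual}. First I would identify $\Fun_{\cD}^{L}(\cE,\cD)\simeq\cE$ as $\cD$-modules. Writing $\cE=\Mod_{E_{0}}(\cD)$ for $E_{0}=1_{\cE}$ the idempotent algebra of the smashing ideal, any $\cD$-linear continuous $F:\cE\to\cD$ satisfies $F(M)\simeq F(E_{0}\otimes_{\cD}M)\simeq E_{0}\otimes_{\cD}F(M)$ for $M\in\cE$, forcing $F(M)\in\cE$; a similar argument shows $F$ is automatically $\cE$-linear, hence of the form $F_{e}(M)=e\otimes M$ with $e=F(1_{\cE})$. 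Under the coidempotent equivalence $\cE\tens{\cD}\cE\simeq\cE$, the relative evaluation $\ev_{\cE/\cD}$ becomes the smashing inclusion $\cE\hookrightarrow\cD$, which is strongly continuous, so $\cE$ is proper over $\cD$. Proposition \ref{prop:Hom^dual_from_proper_or_smooth}(i) then yields a fully faithful strongly continuous embedding $\iota:\cE\simeq\Fun_{\cD}^{L}(\cE,\cD)\hookrightarrow\un{\Hom}_{\cD}^{\dual}(\cE,\cD)$.

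By Theorem \ref{th:descr_Hom^dual} and the description following it, $\un{\Hom}_{\cD}^{\dual}(\cE,\cD)$ sits inside $\Ind(\Fun_{\cD}^{L}(\cE,\cD)^{\kappa})\simeq\Ind(\cE^{\kappa})$ as the full subcategory generated under colimits by formal $\Q_{\leq}$-indexed colimits $\inddlim[\Q_{\leq}](F:\Q_{\leq}\to\Fun_{\cD}^{L}(\cE,\cD)^{\kappa})$ whose transition maps are right trace-class in $\Fun_{\cD}^{L}(\cE,\cD)$; meanwhile, Proposition \ref{prop:rigidification} describes $\cE^{\rig}\subset\Ind(\cE^{\kappa})$ via formal colimits of the same shape with trace-class transitions in $\cE$ in the symmetric monoidal sense of Definition \ref{def:trace_class}. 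The core comparison is then to verify that, under $F_{e}\leftrightarrow e$, a morphism $F_{e}\to F_{e'}$ is right trace-class in $\Fun_{\cD}^{L}(\cE,\cD)$ if and only if the corresponding $e\to e'$ is trace-class in $\cE$. To this end I would compute the right adjoint $F_{e}^{R}=i^{R}\circ\un{\Hom}_{\cD}(e,-)$, which restricted to $\cE\subset\cD$ becomes $\un{\Hom}_{\cE}(e,-)$, and then use that $i^{R}$ is $\cD$-linear and colimit-preserving (both by the smashing property) together with Proposition \ref{prop:properties_of_F^cont}(i) to show that $F_{e}^{R,\cont}\circ F_{e'}$ is the $\cE$-linear endofunctor of $\cE$ given by $-\otimes(e'\otimes e^{\vee})$, where $e^{\vee}=\un{\Hom}_{\cE}(e,1_{\cE})$. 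Granting this, a right trace-class witness $\id_{\cE}\to F_{e}^{R,\cont}\circ F_{e'}$ is (by $\cE$-linearity) the same datum as a map $1_{\cE}\to e'\otimes e^{\vee}$, matching the trace-class witness.

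The main obstacle lies in verifying the identification $F_{e}^{R,\cont}\circ F_{e'}\simeq-\otimes(e'\otimes e^{\vee})$. The key computation reduces to showing $i^{R}(\bbD^{r}_{\cD}(e)\otimes e')\simeq e^{\vee}\otimes e'$, where $\bbD^{r}_{\cD}(e)$ denotes the continuous predual of $e$ in the rigid category $\cD$; this ought to follow from the $\cD$-linearity of $i^{R}$, the identity $E_{0}\otimes_{\cD}e'\simeq e'$ (since $e'\in\cE$), and the formula $e^{\vee}=i^{R}(\bbD^{r}_{\cD}(e)\otimes E_{0})$ obtained by combining the adjunction $i\dashv i^{R}$ with Proposition \ref{prop:autoequivalence_rigid_monoidal}. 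The subtlety is that for $\kappa$-compact (as opposed to $\omega$-compact) $e$, the internal $\Hom$ $\un{\Hom}_{\cD}(e,-)$ is not strictly of the form $\bbD^{r}_{\cD}(e)\otimes-$, so one must pass through the $\Ind$-colimit presenting $\hat{\cY}_{\cD}(e')$ and use local rigidity of $\cE$ (specifically colimit-preservation and $\cE$-bilinearity of $\mult^{R}:\cE\to\cE\otimes\cE$) to commute $\un{\Hom}_{\cE}(e,-)$ with the approximation $\hat{\cY}_{\cE}(1_{\cE})=\inddlim[i]x_{i}$ of the unit.

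Once the underlying categories match as full subcategories of $\Ind(\cE^{\kappa})$, the symmetric monoidal structures must agree. Both arise from restricting the Day convolution on $\Ind(\cE^{\kappa})$ extending the tensor product on $\cE^{\kappa}$: on $\cE^{\rig}$ this is the canonical structure from Proposition \ref{prop:rigidification} making the functor $\cE^{\rig}\to\cE$ symmetric monoidal; on $\un{\Hom}_{\cD}^{\dual}(\cE,\cD)$ it is the convolution algebra structure induced by the coidempotent $\bE_{\infty}$-coalgebra structure on $\cE$ in $\Cat_{\cD}^{\dual}$, which under $\Fun_{\cD}^{L}(\cE,\cD)\simeq\cE$ corresponds (via composition of $\cE$-linear endofunctors) to the original tensor product on $\cE$. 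Both make the common natural functor to $\cE$ into a symmetric monoidal functor, and this property uniquely determines the symmetric monoidal refinement of the underlying equivalence.
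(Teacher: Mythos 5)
Your proposal is correct in outline and follows essentially the same strategy as the paper: identify both $\cE^{\rig}$ and $\un{\Hom}_{\cD}^{\dual}(\cE,\cD)$ as full subcategories of $\Ind(\cE^{\kappa})$ generated by formal colimits along trace-class transitions, reducing the theorem to matching the right trace-class condition in $\Fun_{\cD}^{L}(\cE,\cD)\simeq\cE$ with the trace-class condition in $\cE$. Your preliminary observations (the identification $\Fun_{\cD}^{L}(\cE,\cD)\simeq\cE$ via $F\mapsto F(1_{\cE})$, and properness of $\cE$ over $\cD$) are correct.

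Where you and the paper diverge is in how the trace-class comparison is established. The paper identifies the three abstract operations from Definition~\ref{def:operations_objects_tensor_products} in one stroke: $(-)_{\cD}^{\vee}$ becomes $\un{\Hom}_{\cE}(-,1)$, $\THC(\cE/\cD;-)$ becomes $\Hom_{\cE}(1,-)$, and $\ev_{\cD/\cD}$ becomes the multiplication of $\cE$. Unwinding Definition~\ref{def:trace_class} and Definition~\ref{def:right_trace_class} through these identifications immediately gives the desired equivalence. You instead propose a direct computation of $F_{e}^{R,\cont}\circ F_{e'}$, which in principle amounts to the same thing but is longer. Moreover, the specific route you propose for the "main obstacle" — passing through $i^{R}(\bbD^{r}_{\cD}(e)\otimes e')$, worrying about whether $\un{\Hom}_{\cD}(e,-)$ is of the form $\bbD^{r}_{\cD}(e)\otimes-$, and invoking Proposition~\ref{prop:autoequivalence_rigid_monoidal} — is not the right path and introduces a difficulty that simply isn't there. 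The clean observation is that $1_{\cD}$ is compact in the rigid category $\cD$, so $\hat{\cY}_{\cD}(1_{\cD})=\cY(1_{\cD})$, hence $F_{e}^{R,\cont}(1_{\cD})=F_{e}^{R}(1_{\cD})=i^{R}(\un{\Hom}_{\cD}(e,1_{\cD}))\cong\un{\Hom}_{\cE}(e,1_{\cE})=e^{\vee}$; then $\cD$-linearity of $F_{e}^{R,\cont}$ gives $F_{e}^{R,\cont}(e'\otimes M)\cong e'\otimes M\otimes e^{\vee}$ with no need for any detour. The symmetric monoidal comparison is fine and matches the paper's (both structures are restricted from $\Ind(\cE^{\kappa})$, so equality of subcategories suffices); the extra "uniqueness of the monoidal refinement" argument in your last paragraph is superfluous.
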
 

\begin{proof}
Recall that the category $\cE^{\rig}$ is naturally a full subcategory of $\Ind(\cE),$ with symmetric monoidal structure induced from $\Ind(\cE).$ Now, we also have a natural inclusion
\begin{equation*}
\Hom_{\cD}^{\dual}(\cE,\cD)\subset \Ind(\Fun_{\cD}^L(\cE,\cD))\simeq \Ind(\cE),
\end{equation*} 
again with symmetric monoidal structure induced from $\Ind(\cE).$ Hence, it suffices to prove that these two full subcategories of $\Ind(\cE)$ coincide.

Using the notation from Definition \ref{def:operations_objects_tensor_products}, we observe that the composition
\begin{equation*}
\cE\simeq \cE^{\vee}\tens{\cD}\cD\xto{(-)_{\cD}^{\vee}}\cD^{\vee}\tens{\cD}\cE\simeq \cE
\end{equation*}
is simply the functor $\un{\Hom}(-,1).$ Next, the functor
$$\THC(\cE/\cD;-):\cE^{\vee}\tens{\cD}\cE\simeq\cE\to\Sp$$ is identified with the functor $\Hom(1,-):\cE\to\Sp.$ Finally, the functor
$$\ev_{\cD/\cD}:(\cE^{\vee}\tens{\cD}\cD)\otimes(\cD^{\vee}\tens{\cD}\cE)\to\cE^{\vee}\tens{\cD}\cE\simeq \cE$$
is identified with the multiplication functor $\mult:\cE\otimes\cE\to\cE.$ We conclude that the full subcategory $\un{\Hom}^{\dual}(\cE,\cD)\subset\Ind(\cE)$ is generated via colimits by the objects of the form $\inddlim[\Q_{\leq}](F:\Q_{\leq}\to \cE),$ where each map $F(a)\to F(b)$ for $a<b$ is trace-class in $\cE.$ Therefore, this subcategory is exactly the category $\cE^{\rig}.$
\end{proof}

As an application of Theorem \ref{th:internal_projectivity}, we obtain the following result in the case when the unit object is $\omega_1$-compact.

\begin{theo}\label{th:rigidification_of_locally_rigid_countable} Let $\cE$ be a locally rigid symmetric monoidal category, and suppose that the unit object $1_{\cE}$ is $\omega_1$-compact. 
\begin{enumerate}[label=(\roman*),ref=(\roman*)]
\item For any trace-class map $x\to y$ in $\cE,$ there exists a factorization $x\xto{f} z\xto{g} y,$ where $z\in\cE^{\omega_1}$ and the maps $f$ and $g$ are trace-class in $\cE.$ \label{trace_class_decomposes_in_locally_rigid}
\item We have equivalences
\begin{equation*}
\cE^{\rig}\simeq \Nuc(\Ind(\cE^{\omega_1}))\simeq \Nuc(\Ind((\cE^{\omega_1})^{op}))
\end{equation*} \label{rigidification_as_nuclear}
\item The natural functor $\cE^{\rig}\to\cE$ has a symmetric monoidal right adjoint $\Lambda_{\cE}:\cE\to\cE^{\rig}.$ The functor $\Lambda_{\cE}$ is fully faithful, commutes with $\omega_1$-filtered colimits, and the image $\Lambda_{\cE}(\cE^{\omega_1})$ generates $\cE^{\rig}$ via colimits. The composition
\begin{equation}\label{eq:composition_from_E_to_Ind_E_op}
\cE^{\omega_1}\to\cE\xto{\Lambda_{\cE}}\cE^{\rig}\simeq \Nuc(\Ind((\cE^{\omega_1})^{op}))\hto \Ind((\cE^{\omega_1})^{op})
\end{equation}
is given by $P\mapsto P^{op,\vee}.$ Here $P^{op}$ is the associated compact object of $\Ind((\cE^{\omega_1})^{op})$ and $P^{op,\vee}$ is its predual object. \label{right_adjoint_to_E^rig_to_E}
\end{enumerate}
\end{theo}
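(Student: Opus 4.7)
The plan is to reduce all three parts to Theorem \ref{th:internal_projectivity} through the identification $\cE^{\rig}\simeq \un{\Hom}_{\cE_+}^{\dual}(\cE,\cE_+)$ from Proposition \ref{prop:rigidification_as_Hom^dual}. First I would verify that $\cE$, viewed as a dualizable left $\cE_+$-module, is both proper and $\omega_1$-compact. Properness follows from local rigidity: the evaluation $\ev_{\cE/\cE_+}$ factors as the multiplication $\cE\otimes\cE\to\cE$ followed by the smashing ideal inclusion $\cE\hookrightarrow\cE_+$, both strongly continuous. For $\omega_1$-compactness, the relative coevaluation sends $\bS$ to (a twist of) $1_{\cE}\in \cE\tens{\cE_+}\cE\simeq\cE$, so the criterion in Theorem \ref{th:presentability_of_Cat_E^dual} reduces the claim to $\omega_1$-compactness of $1_{\cE}$.

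With these hypotheses in place, I would apply Corollary \ref{cor:hom_epi_internal_hom} with $\cC=\cD=\cE$ and $\kappa=\omega_1$; since $\cE^{\vee}\tens{\cE_+}\cE\simeq\cE$, this produces a short exact sequence
\[
0\to\cE^{\rig}\to\Ind(\cE^{\omega_1})\to\Ind(\Fun_{\cE_+}^{LL}(\cE,\Ind(\Calk_{\omega_1}^{\cont}(\cE_+))))\to 0
\]
in $\Cat_{\st}^{\dual}$. This realizes $\cE^{\rig}$ as an $\omega_1$-compactly generated full subcategory of $\Ind(\cE^{\omega_1})$ whose $\omega_1$-compact objects are precisely the basic nuclear objects, i.e.\ sequential colimits $\indlim[n]\hat{\cY}(z_n)$ with $z_n\in\cE^{\omega_1}$ and transitions that are trace-class in $\cE$.

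The hard part will be deducing \ref{trace_class_decomposes_in_locally_rigid}. A trace-class map $f:x\to y$ in $\cE$ corresponds to a compact morphism $\hat{\cY}(x)\to\hat{\cY}(y)$ in $\Ind(\cE^{\omega_1})$ that lies in $\cE^{\rig}$. By the $\omega_1$-compact generation of $\cE^{\rig}$, this morphism factors through a basic nuclear object $\indlim[n]\hat{\cY}(z_n)\in\cE^{\rig}$, and since $\hat{\cY}(x)$ is compact the factorization passes in fact through some $\hat{\cY}(z_n)$. Pushing this down along $\colim:\Ind(\cE^{\omega_1})\to\cE$ produces the desired $x\to z\to y$ with $z\in\cE^{\omega_1}$. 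The delicate point is verifying that both arrows are trace-class in $\cE$ and not merely compact in $\Ind(\cE^{\omega_1})$; this requires careful use of the correspondence (Remark \ref{rem:rigidification}) between trace-class maps in $\cE$ and in $\Ind(\cE^{\omega_1})$, together with the trace-class witnesses for the transitions in $\indlim[n]\hat{\cY}(z_n)$.

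The remaining parts follow quickly. Part \ref{trace_class_decomposes_in_locally_rigid} implies that every trace-class map between $\omega_1$-compact objects of $\cE$ decomposes as a composite of two trace-class maps, so Proposition \ref{prop:if_trace_class_can_be_decomposed} applied to $\Ind(\cE^{\omega_1})$ yields $\Ind(\cE^{\omega_1})^{\rig}\simeq\Nuc(\Ind(\cE^{\omega_1}))$; combining with $\cE^{\rig}\simeq\Ind(\cE^{\omega_1})^{\rig}$ from Remark \ref{rem:rigidification} gives the first equivalence in \ref{rigidification_as_nuclear}, and the conditions of Proposition \ref{prop:conditions_for_good_rigidification} for $\cD=\cE^{\omega_1}$ (a formal consequence of \ref{trace_class_decomposes_in_locally_rigid}) yield the second via \eqref{eq:nuclear_and_rigidification}. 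For part \ref{right_adjoint_to_E^rig_to_E}, the existence of the symmetric monoidal right adjoint $\Lambda_{\cE}$ and the explicit formula $P\mapsto P^{op,\vee}$ for the composition \eqref{eq:composition_from_E_to_Ind_E_op} come from Proposition \ref{prop:right_adjoint_description_for_rigidification} together with Proposition \ref{prop:from_Ind_D_to_Ind_D^op}, while its full faithfulness, compatibility with $\omega_1$-filtered colimits, and generation of $\cE^{\rig}$ by $\Lambda_{\cE}(\cE^{\omega_1})$ are direct consequences of the short exact sequence above.
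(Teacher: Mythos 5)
Your setup is on track: identifying $\cE^{\rig}$ with $\un{\Hom}_{\cE_+}^{\dual}(\cE,\cE_+)$ and checking that $\cE$ is proper and $\omega_1$-compact over $\cE_+=\cD$ is exactly the paper's first move. But the route you take from there has real gaps.

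The paper does not pass through Corollary \ref{cor:hom_epi_internal_hom}; it applies Lemma \ref{lem:key_lemma_on_crazy_product} directly to the pair of objects $M\in\cE\simeq\cD^{\vee}\tens{\cD}\cE$, $N\in\cE\simeq\cE^{\vee}\tens{\cD}\cD$, obtaining the spectrum-level isomorphism
\[
\Hom_{\cE}(1,N\otimes(-)^{\vee})\tens{\cE^{\omega_1}}\Hom_{\cE}(1,-\otimes M)\;\cong\;\Hom(1,N\otimes M).
\]
Part \ref{trace_class_decomposes_in_locally_rigid} is then immediate on $\pi_0$ (every element of a tensor product over the small additive category $\cE^{\omega_1}$ comes from a decomposable tensor), and the \emph{spectrum-level} isomorphism is precisely condition \ref{cond_via_tensor_product_of_presheaves} of Proposition \ref{prop:conditions_for_good_rigidification} for $\cD=\cE^{\omega_1}$, which gives part \ref{rigidification_as_nuclear}. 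Your attempt to deduce \ref{trace_class_decomposes_in_locally_rigid} instead from the short exact sequence has several problems. First, the claim that the $\omega_1$-compact objects of $\cE^{\rig}$ are ``precisely the basic nuclear objects'' is essentially the content of \ref{rigidification_as_nuclear} and cannot be asserted before proving \ref{trace_class_decomposes_in_locally_rigid}: a priori the $\omega_1$-compacts of the kernel are $\Q$-colimits with trace-class transitions, and closing the gap between $\Q$-colimits and $\N$-colimits is the whole point. Second, you write ``since $\hat{\cY}(x)$ is compact the factorization passes through some $\hat{\cY}(z_n)$,'' but $x$ is an arbitrary object of $\cE$, so $\hat{\cY}(x)$ need not be compact in $\Ind(\cE^{\omega_1})$. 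Third, you yourself flag the ``delicate point'' of upgrading compactness in $\Ind(\cE^{\omega_1})$ to trace-class in $\cE$ and do not resolve it; this is where the argument actually needs the precise isomorphism from Lemma \ref{lem:key_lemma_on_crazy_product}.

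Finally, for \ref{rigidification_as_nuclear} you claim the conditions of Proposition \ref{prop:conditions_for_good_rigidification} are ``a formal consequence of \ref{trace_class_decomposes_in_locally_rigid}.'' They are not: \ref{trace_class_decomposes_in_locally_rigid} is a $\pi_0$-statement about surjectivity (the hypothesis of Proposition \ref{prop:if_trace_class_can_be_decomposed}), while Proposition \ref{prop:conditions_for_good_rigidification} \ref{cond_via_tensor_product_of_presheaves} requires a full equivalence of spectra, which only comes from Lemma \ref{lem:key_lemma_on_crazy_product}. Your treatment of part \ref{right_adjoint_to_E^rig_to_E} is fine once \ref{rigidification_as_nuclear} is in place, except you should spell out the commutation with $\omega_1$-filtered colimits via the identification of $\omega_1$-compacts of $\Nuc(\Ind(\cE^{\omega_1}))$ with basic nuclear objects (Proposition \ref{prop:nuclear_is_omega_1_presentable}), as the paper does.
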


\begin{proof} Consider the inclusion $\cE\hto\cE_+=\cD,$ where $\cE_+$ is the one-point rigidification from Definition \ref{def:one_point_rigidification}. Our assumption implies that $\cE$ is proper and $\omega_1$-compact over $\cD.$ We work over $\cD,$ and we apply Lemma \ref{lem:key_lemma_on_crazy_product} to a pair of objects $M\in\cE\simeq \cD^{\vee}\tens{\cD}\cE,$ $N\in \cE\simeq\cE^{\vee}\tens{\cD}\cD.$ We obtain an isomorphism
\begin{equation}\label{eq:key_tensor_product_for_locally_rigid}
\Hom_{\cE}(1,N\otimes(-)^{\vee})\tens{\cE^{\omega_1}}\Hom_{\cE}(1,-\otimes M)\cong \Hom(1,N\otimes M).
\end{equation}
In particular, this implies \ref{trace_class_decomposes_in_locally_rigid}. Moreover, the isomorphism \eqref{eq:key_tensor_product_for_locally_rigid} for $M,N\in\cE^{\omega_1}$ implies that the conditions from Proposition \ref{prop:conditions_for_good_rigidification} are satisfied for the symmetric monoidal category $\cE^{\omega_1}.$ This implies \ref{rigidification_as_nuclear}.

Next, note that the restriction $(\Lambda_{\cE})_{\mid \cE^{\omega_1}}$ is isomorphic to the composition
\begin{equation*}
	\cE^{\omega_1}\xto{\cY}\Ind(\cE^{\omega_1})\to \Ind(\cE^{\omega_1})^{\rig}\simeq\cE^{\rig},
\end{equation*}
Here the second functor is the right adjoint to the inclusion. Applying Proposition \ref{prop:right_adjoint_description_for_rigidification}, we obtain the monoidality of $\Lambda_{\cE}$ and the desired description of the composition \eqref{eq:composition_from_E_to_Ind_E_op}. We also see that the category $\cE^{\rig}$ is generated via colimits by $\Lambda_{\cE}(\cE^{\omega_1}).$

To show that the functor $\Lambda_{\cE}$ commutes with $\omega_1$-filtered colimits, we need to check that the functor $\cE^{\rig}\simeq\Nuc(\Ind(\cE^{\omega_1}))\to 
\cE$ preserves $\omega_1$-compact objects. This is clear: the $\omega_1$-compact objects of $\Nuc(\Ind(\cE^{\omega_1}))$ are the basic nuclear objects by Proposition \ref{prop:nuclear_is_omega_1_presentable}. 

Finally, to see that the functor $\Lambda_{\cE}$ is fully faithful, we observe that the functor $\cE^{\rig}\to\cE$ has a fully faithful left adjoint. Namely, the composition of this left adjoint with the inclusion $\cE^{\rig}\to\Ind(\cE^{\omega_1})$ is simply the functor $\hat{\cY}.$
\end{proof}
	

\subsection{The category of nuclear modules as a rigidification}

We use the notation of Subsection \ref{ssec:Nuc_as_Hom^dual}, i.e. let $R$ be a discrete commutative ring, $I\subset R$ a finitely generated ideal, and $R^{\wedge}_I$ the derived $I$-completion of $R,$ considered as an $\bE_{\infty}$-$R$-algebra. We obtain the following description of the category $\Nuc(R^{\wedge}_I)$ from Definition \ref{def:nuclear_via_internal_Hom}.

\begin{prop}\label{prop:Nuc_via_rigidification} We have a natural equivalence
\begin{equation*}
\Nuc(R^{\wedge}_{I})\simeq (D_{I\hy \tors}(R))^{\rig}\simeq (D_{I\hy\compl}(R))^{\rig}.
\end{equation*}\end{prop}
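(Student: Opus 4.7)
The plan is to derive the first equivalence directly from Proposition~\ref{prop:rigidification_as_Hom^dual}, by presenting $D_{I\hy\tors}(R)$ as a locally rigid smashing ideal inside the rigid symmetric monoidal category $D(R)$, and then to obtain the second equivalence from the classical symmetric monoidal identification of torsion and complete modules.

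First I would verify the smashing ideal structure. Since $I=(a_1,\dots,a_m)$ is finitely generated, the local cohomology functor $\Gamma_I\colon D(R)\to D_{I\hy\tors}(R)$ is right adjoint to the inclusion and can be computed as the filtered colimit of Koszul complexes $\Gamma_I(-)\simeq (\indlim[n]\Kos(R;a_1^n,\dots,a_m^n))\tens{R}(-)$; in particular it is colimit-preserving, so $D_{I\hy\tors}(R)\subset D(R)$ is a smashing localization. The symmetric monoidal structure on $D_{I\hy\tors}(R)$ inherited from $D(R)$ (with unit $\Gamma_I(R)$) then exhibits it as a locally rigid smashing ideal of $D(R)$.

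Next, Proposition~\ref{prop:rigidification_as_Hom^dual} applied to $\cE=D_{I\hy\tors}(R)\hookrightarrow\cD=D(R)$ yields a natural symmetric monoidal equivalence
\begin{equation*}
(D_{I\hy\tors}(R))^{\rig}\simeq \un{\Hom}_{D(R)}^{\dual}(D_{I\hy\tors}(R),D(R)),
\end{equation*}
and the right-hand side is $\Nuc(R^{\wedge}_I)$ by Definition~\ref{def:nuclear_via_internal_Hom}. This handles the first equivalence. For the second, I would invoke the classical Greenlees--May/Dwyer--Greenlees equivalence: the mutually inverse functors $\Gamma_I$ and $\Lambda_I$ give a symmetric monoidal equivalence between $D_{I\hy\tors}(R)$ (tensor product inherited from $D(R)$, unit $\Gamma_I R$) and $D_{I\hy\compl}(R)$ ($I$-completed tensor product, unit $R^{\wedge}_I$). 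Because rigidification is a functor on $\CAlg(\Pr^L_{\st})$, the two rigidifications are canonically equivalent.

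There is no substantial obstacle here; the only point that requires care is the symmetric monoidal character of the torsion/complete equivalence and the fact that, with $I$ finitely generated, $\Gamma_I$ is indeed continuous (so that $D_{I\hy\tors}(R)$ really is a smashing, not merely reflective, subcategory). Both facts are standard and have already been tacitly used in the examples of locally rigid categories listed earlier in the paper, so invoking them here is routine.
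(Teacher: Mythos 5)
Your proof is correct and follows exactly the route the paper uses: apply Proposition~\ref{prop:rigidification_as_Hom^dual} to the smashing ideal $D_{I\hy\tors}(R)\hookrightarrow D(R)$ to get the first equivalence, and identify $(D_{I\hy\tors}(R))^{\rig}\simeq(D_{I\hy\compl}(R))^{\rig}$ via the standard symmetric monoidal torsion/complete equivalence. The paper states this in one sentence; your write-up simply fills in the routine verifications (continuity of $\Gamma_I$ for finitely generated $I$, monoidality of $\Gamma_I\dashv\Lambda_I$) that the paper leaves implicit.
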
 

\begin{proof}Indeed, this follows directly from Proposition \ref{prop:rigidification_as_Hom^dual}, applied to the inclusion of $D_{I\hy\tors}(R)$ into $D(R)$ as a smashing ideal.\end{proof}

All the statements from Theorem \ref{th:rigidification_of_locally_rigid_countable} can be applied to the category $\Nuc(R^{\wedge}_I).$ In particular, we obtain the following description of this category.

\begin{cor}\label{cor:nuclear_via_colimits_of_trace-class}
We have equivalences
\begin{equation*}
\Nuc(R^{\wedge}_{I})\simeq \Nuc(\Ind(D_{I\hy\compl}(R)^{\omega_1}))\simeq \Nuc(\Ind(D_{I\hy\compl}(R)^{\omega_1,op})).
\end{equation*}
The functor $\Nuc(R^{\wedge}_{I})\to D_{I\hy\compl}(R)$ has a symmetric monoidal right adjoint $\Lambda_I:D_{I\hy\compl}(R)\to \Nuc(R^{\wedge}_{I}),$ which is fully faithful, commutes with $
\omega_1$-filtered colimits, and the image $\Lambda_I(D_{I\hy\compl}(R)^{\omega_1})$ generates the category $\Nuc(R^{\wedge}_I)$ via colimits.
\end{cor}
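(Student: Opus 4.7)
The strategy is to deduce this corollary as a direct application of Theorem \ref{th:rigidification_of_locally_rigid_countable} to the locally rigid symmetric monoidal category $\cE = D_{I\hy\compl}(R)$, combined with Proposition \ref{prop:Nuc_via_rigidification}, which has already identified $\Nuc(R^{\wedge}_I)$ with $(D_{I\hy\compl}(R))^{\rig}$. To do this I need to verify the two hypotheses of that theorem: that $D_{I\hy\compl}(R)$ is locally rigid (already recorded in the list of basic examples above), and that its unit object $R^{\wedge}_I$ is $\omega_1$-compact.

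The second verification is the only nontrivial point, so I would handle it first. Under the standard equivalence $D_{I\hy\compl}(R) \simeq D_{I\hy\tors}(R)$ (which swaps the roles of the inclusion and its adjoint), the unit $R^{\wedge}_I$ of $D_{I\hy\compl}(R)$ corresponds to the object $\Gamma_I(R) \in D_{I\hy\tors}(R)$. Since $I = (a_1,\dots,a_m)$ is finitely generated, we have
\begin{equation*}
\Gamma_I(R) \simeq \indlim[n] \Kos(R;a_1^n,\dots,a_m^n),
\end{equation*}
and each Koszul complex is a compact object of $D_{I\hy\tors}(R)$ (indeed it is a compact generator). Thus $\Gamma_I(R)$ is a countable filtered colimit of compact objects, hence $\omega_1$-compact in $D_{I\hy\tors}(R)$, and so $R^{\wedge}_I$ is $\omega_1$-compact in $D_{I\hy\compl}(R)$.

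With both hypotheses confirmed, Theorem \ref{th:rigidification_of_locally_rigid_countable} applies. Combining \ref{rigidification_as_nuclear} of that theorem with Proposition \ref{prop:Nuc_via_rigidification} immediately yields the chain of equivalences
\begin{equation*}
\Nuc(R^{\wedge}_I) \simeq (D_{I\hy\compl}(R))^{\rig} \simeq \Nuc(\Ind(D_{I\hy\compl}(R)^{\omega_1})) \simeq \Nuc(\Ind(D_{I\hy\compl}(R)^{\omega_1,op})).
\end{equation*}
Part \ref{right_adjoint_to_E^rig_to_E} of the same theorem, specialized to $\cE = D_{I\hy\compl}(R)$, then produces the symmetric monoidal right adjoint $\Lambda_I = \Lambda_{D_{I\hy\compl}(R)}$ to the natural functor $\Nuc(R^{\wedge}_I) \to D_{I\hy\compl}(R)$, and asserts that it is fully faithful, commutes with $\omega_1$-filtered colimits, and that its image on $\omega_1$-compact objects generates $\Nuc(R^{\wedge}_I)$ by colimits.

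The main (and essentially only) obstacle is the $\omega_1$-compactness of the unit, and the presentation of $\Gamma_I(R)$ as a countable colimit of Koszul complexes resolves it cleanly; everything else is formal transport through Proposition \ref{prop:Nuc_via_rigidification} and Theorem \ref{th:rigidification_of_locally_rigid_countable}.
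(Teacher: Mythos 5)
Your proposal is correct and follows the same route the paper takes: the corollary is an application of Theorem \ref{th:rigidification_of_locally_rigid_countable} to $\cE = D_{I\hy\compl}(R)$, via the identification $\Nuc(R^{\wedge}_I)\simeq (D_{I\hy\compl}(R))^{\rig}$ from Proposition \ref{prop:Nuc_via_rigidification}. The one hypothesis that needs checking is $\omega_1$-compactness of the unit, and your transport to $D_{I\hy\tors}(R)$ and the presentation of $\Gamma_I(R)$ as a countable filtered colimit of Koszul complexes settles it (modulo a harmless shift by $[-m]$ in the identification $\Gamma_I(R)\simeq(\indlim_n \Kos(R;a_1^n,\dots,a_m^n))[-m]$, which does not affect $\omega_1$-compactness).
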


\begin{remark}
Denote by $\iota:D_{I\hy\compl}(R)\to\Ind(D_{I\hy\compl}(R)^{\omega_1,op})$ the dual of the colimit functor. Since the category $\Ind(D_{I\hy\compl}(R)^{\omega_1,op})$ is $R$-linear, the notion of $I$-complete objects is defined, as well as the functor of $I$-completion. Then the composition  $$D_{I\hy\compl}(R)\xto{\Lambda_I}\Nuc(R^{\wedge}_I)\hto \Ind(D_{I\hy\compl}(R)^{\omega_1,op})$$
is given by $M\mapsto \iota(M)^{\wedge}_I.$
\end{remark}

\section{Mittag-Leffler inverse sequences of dualizable categories}
\label{sec:Mittag-Leffler}

Let $\cC_0\leftto \cC_1\leftto\dots$ be an inverse sequence in $\Cat_{\st}^{\dual}.$ By \cite[Theorem 1.91]{E24} the inverse limit $\prolim[n]^{\dual}\cC_n$ in $\Cat_{\st}^{\dual}$ is described as follows:
\begin{equation*}
\prolim[n]^{\dual}\cC_n\simeq\ker^{\dual}(\Ind(\prolim[n]\cC_n^{\omega_1})\to\Ind(\prolim[n]\Calk_{\omega_1}^{\cont}(\cC_n))).
\end{equation*} 
To be able to compute $K$-theory or more general localizing invariants of this inverse limit in interesting situations, we want the functor
\begin{equation*}
\prolim[n]\cC_n^{\omega_1}\to \prolim[n]\Calk_{\omega_1}^{\cont}(\cC_n)
\end{equation*}
to be a homological epimorphism. The goal of this section is to obtain a sufficient condition for this.

\subsection{Mittag-Leffler condition}

\begin{defi}\label{def:strong_ML} Let $\cE$ be a rigid $\bE_1$-monoidal category. Let $\cC_0\leftto\cC_1\leftto\dots$ be an inverse sequence in $\Cat_{\cE}^{\dual}.$ We denote by $F_{mn}:\cC_m\to\cC_n$ the transition functors, $m\geq n.$ We say that $(\cC_n)$ is a strongly Mittag-Leffler sequence over $\cE$ if the following conditions hold:
	\begin{enumerate}[label=(\roman*),ref=(\roman*)]
		\item for each $n\geq 0,$ the inverse system $(F_{mn}F_{mn}^R)_{m\geq n}$ is essentially constant in $\Fun_{\cE}^L(\cC_n,\cC_n).$ \label{ML1}
		
	\item for any $n,k\geq 0,$ the functor $\Phi_{nk}=(\prolim[m\geq n,k]F_{mk}F_{mn}^R):\cC_n\to\cC_k$ is strongly continuous and has a left adjoint.\label{ML2} 
	\end{enumerate}
\end{defi}

\begin{remark}\label{rem:remarks_on_ML}
	\begin{enumerate}[label=(\roman*),ref=(\roman*)]
\item The obvious drawback of Definition \ref{def:strong_ML} is that condition \ref{ML2} depends not only on the pro-object $\proolim[n]\cC_n\in \Pro(\Cat_{\cE}^{\dual}),$ but on the actual choice of an inverse sequence. This can be fixed by introducing the so-called weakly Mittag-Leffler sequences, for which Theorem \ref{th:hom_epi_for_ML} below still holds. However, to restrict the technical complexity of the paper we choose not to do so.
\item In the formulation of condition \ref{ML1} it suffices to require that the inverse system $(F_{mn}F_{mn}^R)_{m\geq n}$ is essentially constant in the category of accessible lax $\cE$-linear endofunctors of $\cC_n.$ Therefore, this condition depends only on the pro-object $\proolim[n]\cC_n\in\Pro(\Pr^L_{\cE}).$ \label{ML1_depends_only_on_pro_object_of_Pr^L}
\end{enumerate}
\end{remark}

We observe the following basic properties of strongly Mittag-Leffler sequences.

\begin{prop}\label{prop:basic_properties_of_ML}
Let $\cE$ and $\cE'$ be rigid $\bE_1$-monoidal categories, and let $(\cC_n)_{n\geq 0}$ be an inverse sequence in $\Cat_{\cE}^{\dual}$ which is strongly Mittag-Leffler over $\cE.$
\begin{enumerate}[label=(\roman*),ref=(\roman*)]
\item The inverse sequence $(\cC_n^{\vee})_{n\geq 0}$ is strongly Mittag-Leffler over $\cE^{mop}.$ \label{dual_of_ML}
\item The sequence $(\cC_n)_{n\geq 0}$ is strongly Mittag-Leffler over $\Sp.$ \label{relative_ML_implies_absolute}
\item Let $\cD\in\Cat^{\dual}_{\cE^{mop}\otimes\cE'}$ be a dualizable $\cE'\hy\cE$-bimodule. Then the sequence $(\cD\tens{\cE}\cC_n)_{n\geq 0}$ is strongly Mittag-Leffler over $\cE'.$ 
\label{tensoring_ML_with_bimodule}
\item Let $(\cC_n')_{n\geq 0}$ be an inverse sequence in $\Cat_{\cE'}^{\dual}$ which is strongly Mittag-Leffler over $\cE'.$ The the sequence $(\cC_n\otimes\cC_n')_{n\geq 0}$ is strongly Mittag-Leffler over $\cE\otimes\cE'.$ \label{tensor_product_of_ML}
\end{enumerate}
\end{prop}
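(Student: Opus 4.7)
The plan is to verify each of the four parts by transporting the two defining conditions of Definition \ref{def:strong_ML} (that is, \ref{ML1}, essential constancy of $(F_{mn}F_{mn}^R)_m$, and \ref{ML2}, strong continuity of $\Phi_{nk}$ together with a left adjoint) through the corresponding operation. Throughout I shall use: (a) essential constancy of an inverse system in a lax accessible functor category is preserved under any additive functor between such categories, cf.\ Remark \ref{rem:remarks_on_ML}\ref{ML1_depends_only_on_pro_object_of_Pr^L}; (b) right adjoints of strongly continuous $\cE$-linear functors are automatically $\cE$-linear by Proposition \ref{prop:dualizable_over_E_same_as_modules}\ref{lax_is_strict}; (c) $(-)^{\vee}:\Cat_{\st}^{\dual}\to\Cat_{\st}^{\dual}$ is a symmetric monoidal covariant equivalence.

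Part (ii) is immediate: an $\cE$-linear strongly continuous functor is strongly continuous as a morphism in $\Cat_{\st}^{\dual}$, and an $\cE$-linear left adjoint is a left adjoint over $\Sp$; the functors $F_{mn}$, $F_{mn}^R$, and $\Phi_{nk}$ are unchanged under restriction of scalars, so both \ref{ML1} and \ref{ML2} transfer verbatim. For (i), applying $(-)^{\vee}$ levelwise yields the sequence $(\cC_n^{\vee})$ in $\Cat_{\cE^{mop}}^{\dual}$ with transitions $\tilde F_{mn}=F_{mn}^{\vee,L}\simeq(F_{mn}^R)^{\vee}$ and right adjoints $\tilde F_{mn}^R\simeq F_{mn}^{\vee}$, so $\tilde F_{mn}\tilde F_{mn}^R\simeq(F_{mn}F_{mn}^R)^{\vee}$; essential constancy transfers by (a), establishing \ref{ML1} for $(\cC_n^{\vee})$. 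The stabilized functor $\tilde\Phi_{nk}$ for the dual sequence is identified with the covariant dual of $\Phi_{kn}^L$ (the left adjoint of $\Phi_{kn}:\cC_k\to\cC_n$ in the original sequence, automatically strongly continuous since its right adjoint $\Phi_{kn}$ is continuous); strong continuity of $\tilde\Phi_{nk}$ then follows from (c), and the existence of a left adjoint follows by a symmetric argument.

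For (iii), the functor $\cD\tens{\cE}-$ sends strongly continuous functors to strongly continuous functors, and by (b) the right adjoint of $\id_{\cD}\boxtimes F_{mn}$ is $\id_{\cD}\boxtimes F_{mn}^R$; hence $(\id\boxtimes F_{mn})(\id\boxtimes F_{mn}^R)\simeq\id\boxtimes(F_{mn}F_{mn}^R)$, and \ref{ML1} transfers by (a). Essential constancy from \ref{ML1} then reduces the defining prolimit to a finite-stage value, yielding $\Phi'_{nk}\simeq\id\boxtimes\Phi_{nk}$, which is strongly continuous with left adjoint $\id\boxtimes\Phi_{nk}^L$. For (iv), the same pattern applies to external tensor product: $(F_{mn}\otimes F'_{mn})(F_{mn}^R\otimes(F'_{mn})^R)\simeq(F_{mn}F_{mn}^R)\otimes(F'_{mn}(F'_{mn})^R)$, condition \ref{ML1} transfers, and $\Phi''_{nk}\simeq\Phi_{nk}\otimes\Phi'_{nk}$ is strongly continuous with left adjoint $\Phi_{nk}^L\otimes(\Phi'_{nk})^L$.

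The main obstacle will be rigorously establishing the identifications $\tilde\Phi_{nk}\simeq(\Phi_{kn}^L)^{\vee,L}$, $\Phi'_{nk}\simeq\id\boxtimes\Phi_{nk}$, and $\Phi''_{nk}\simeq\Phi_{nk}\otimes\Phi'_{nk}$: a priori these stabilized functors are prolimits, and one must verify that the prolim commutes appropriately with contravariant dualization, with $\id_{\cD}\boxtimes-$, and with external tensor product, respectively. In each case the cleanest route is to invoke \ref{ML1} for the original sequence(s), which reduces the stabilized limit to its value at a sufficiently large finite stage where the commutation is tautological; the inheritance of strong continuity and of the existence of a left adjoint can then be read off from the corresponding properties at that finite stage.
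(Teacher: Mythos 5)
Your proposal is correct and follows essentially the same approach as the paper: the paper observes that (ii)--(iv) follow directly from the definition (which your paragraphs on those parts flesh out, using exactly the expected transport of the two conditions through forgetting scalars, tensoring with $\cD$, and external tensor product), and for (i) the paper identifies $(\tilde F_{mn}\tilde F_{mn}^R)_m$ with $(F_{mn}F_{mn}^R)_m$ under the equivalence $\Fun_{\cE^{mop}}^L(\cC_n^{\vee},\cC_n^{\vee})\simeq\Fun^L_{\cE}(\cC_n,\cC_n)$ and notes $\tilde\Phi_{nk}$ is dual to $\Phi_{kn}$, which is precisely your computation $\tilde F_{mn}\tilde F_{mn}^R\simeq(F_{mn}F_{mn}^R)^{\vee}$ and $\tilde\Phi_{nk}\simeq(\Phi_{kn}^L)^{\vee,L}\simeq(\Phi_{kn})^{\vee}$. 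Your closing observation that essential constancy (condition \ref{ML1}) reduces the stabilized limits $\Phi_{nk}$ to finite-stage values, so that they commute with duality, $\id_\cD\boxtimes-$, and $\otimes$ tautologically, is indeed the right way to make the claimed identifications rigorous.
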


\begin{proof}
The statements \ref{relative_ML_implies_absolute}, \ref{tensoring_ML_with_bimodule} and \ref{tensor_product_of_ML} follow directly from Definition \ref{def:strong_ML}.

We prove \ref{dual_of_ML}.  Using the notation from Definition \ref{def:strong_ML}, we note that for any $n\geq 0$ the inverse sequence $(F_{mn}^{\vee,L}F_{mn}^{\vee})_{m\geq n}$ in $\Fun_{\cE^{mop}}^L(\cC_n^{\vee},\cC_n^{\vee})$ is identified with the inverse sequence $(F_{mn}F_{mn}^R)_{m\geq n}$ under the equivalence
$$\Fun_{\cE^{mop}}^L(\cC_n^{\vee},\cC_n^{\vee})\simeq \cC_n^{\vee}\tens{\cE}\cC_n\simeq \Fun^L_{\cE}(\cC_n,\cC_n).$$ Hence, the sequence $(\cC_n^{\vee})$ satisfies the condition \ref{ML1}. Similarly, we observe that for $n,k\geq 0$ the functor $(\prolim[m\geq n,k]F_{mk}^{\vee,L}F_{mn}^{\vee}):\cC_n^{\vee}\to\cC_k^{\vee}$ is dual to the functor $(\prolim[m\geq n,k]F_{mn}F_{mk}^R):\cC_k\to\cC_n.$ This proves \ref{dual_of_ML}.
\end{proof}

The important class of examples of strongly Mittag-Leffler sequences comes from formal schemes, see Proposition \ref{prop:Nuc_as_limit}.

\subsection{The theorem about a homological epimorphism}

In this subsection we consider only strongly Mittag-Leffler sequences over $\Sp.$ The goal of this section is to prove the following result.

\begin{theo}\label{th:hom_epi_for_ML}
	Let $(\cC_n)_{n\geq 0}$ be a strongly Mittag-Leffler inverse sequence in $\Cat_{\st}^{\dual},$ and let $\kappa$ be an uncountable regular cardinal. Then the functor $\prolim[n]\cC_n^{\kappa}\to \prolim[n]\Calk^{\cont}_{\kappa}(\cC_n)$ is a homological epimorphism. Equivalently, we have a short exact sequence
	$$0\to \prolim[n]^{\dual}\cC_n\to\Ind(\prolim[n]\cC_n^{\kappa})\to\Ind(\prolim[n]\Calk^{\cont}_{\kappa}(\cC_n))\to 0.$$
\end{theo}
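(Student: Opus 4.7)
My plan is to mirror the argument that derived Corollary~\ref{cor:hom_epi_internal_hom} from the key tensor-product identity in Lemma~\ref{lem:key_lemma_on_crazy_product}. Setting $\cA=\prolim_n\cC_n^\kappa$ and $\cB=\prolim_n\Calk^{\cont}_\kappa(\cC_n)$ with $F:\cA\to\cB$ the functor in question, it suffices by Proposition~\ref{prop:idempotent_quasi_ideals} to establish the quasi-ideal identity
\[
\cI(-,N)\tens{\cA}\cI(M,-)\xto{\sim}\cI(M,N),\qquad \cI(M,N):=\Fiber\bigl(\Hom_\cA(M,N)\to\Hom_\cB(F(M),F(N))\bigr),
\]
for all $M=(M_n),N=(N_n)\in\cA$. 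From the Calkin fibre sequence $\hat\cY(N_n)\to\cY(N_n)\to\iota q(\cY(N_n))$ inside $\Ind(\cC_n^\kappa)$, this fibre spectrum identifies as $\cI(M,N)\cong\prolim_n\Hom^c_n(M_n,N_n)$, where $\Hom^c_n(x,y):=\Hom_{\Ind(\cC_n^\kappa)}(\cY(x),\hat\cY(y))$ is the spectrum of compact maps $x\to y$ in $\cC_n$.

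Writing $\hat\cY(N_n)=\inddlim_{i\in I_n}N_{n,i}$ and $\hat\cY(M_n)=\inddlim_{j\in J_n}M_{n,j}$ with $N_{n,i},M_{n,j}\in\cC_n^\kappa$, and presenting the inverse-limit ``diagonal'' object (the analogue of $\coev(\bS)$ from Lemma~\ref{lem:key_lemma_on_crazy_product}) by an $\N^{op}$-indexed compact resolution, the identity above should unfold into a four-variable limit-colimit interchange of the type treated by Proposition~\ref{prop:quadrofunctors}. The two ML conditions serve precisely to verify the hypotheses \ref{tricky_assump1}--\ref{tricky_assump2} of that proposition. Condition~\ref{ML1} -- essential constancy of $F_{mn}F_{mn}^R$ -- takes the place of the $\omega_1$-compactness of $\coev(\bS)$ in Lemma~\ref{lem:key_lemma_on_crazy_product}: it forces the relevant inverse pro-systems to be Mittag-Leffler, making the tricky limit-colimit permutations isomorphisms. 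Condition~\ref{ML2} -- strong continuity of $\Phi_{nk}$ together with existence of a left adjoint -- takes the place of properness: strong continuity ensures that the compact-map presentations of $\hat\cY(N_n)$ transform compatibly under the transitions $F_{mn}$, making possible the ``diagonal arrow'' constructions analogous to Proposition~\ref{prop:general_diagonal_arrows}; the left adjoint provides the liftability needed for coherent factorisations across levels.

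The main obstacle is the combinatorial construction of coherent compactness witnesses across the inverse system. In Lemma~\ref{lem:key_lemma_on_crazy_product} the witnesses $\wt f_k:\cY(P_k)\to\hat\cY(P_{k+1})$ came from a single compact resolution inside $\cC^\vee\tens{\cE}\cC$; here one must produce compatible witnesses for the transition maps at all levels $n$ simultaneously, and this is possible only thanks to strong Mittag-Lefflerness. Once these coherent witnesses are in hand, applying Proposition~\ref{prop:quadrofunctors} (with Proposition~\ref{prop:seq_limits_of_filtered_colimits} handling the single-index permutations along the way) should yield the required tensor-product isomorphism, hence the theorem.
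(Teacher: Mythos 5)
Your proposal captures roughly half of the argument. The quasi-ideal criterion you invoke (the unlabeled proposition following Proposition~\ref{prop:idempotent_quasi_ideals}) establishes only that $F$ is a homological epimorphism \emph{onto its image}; it says nothing about whether the image is dense up to retracts in $\prolim_n\Calk^{\cont}_\kappa(\cC_n)$. Establishing the tensor-product identity $\cI(-,N)\tens{\cA}\cI(M,-)\xto{\sim}\cI(M,N)$ is indeed the content of Corollary~\ref{cor:hom_epi_onto_the_image_ML} (proved via Lemma~\ref{lem:key_lemma_ML}), and the mechanics you sketch -- identifying the fiber as $\prolim_n \ev_{\cC_n}(\pi_n(y),\pi_n(x)^{\vee})$, unfolding the tensor product into a quadrofunctor limit--colimit interchange, and verifying the hypotheses of Proposition~\ref{prop:quadrofunctors} using the ML conditions -- are essentially what the paper does there. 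But the full claim of Theorem~\ref{th:hom_epi_for_ML} also requires the functor to be essentially surjective up to retracts, and this is precisely what the paper highlights as the surprising part of the statement. Your plan does not address it.

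To close this gap, the paper introduces a genuinely separate device: the notion of a \emph{special morphism} of strongly Mittag-Leffler sequences (Definition~\ref{def:special_map_of_ML}). One shows that $(\Ind(\cC_n^{\kappa}))_n$ and $(\Ind(\Calk^{\cont}_\kappa(\cC_n)))_n$ are again strongly Mittag-Leffler and that the natural morphism between them is special (Proposition~\ref{prop:Ind_and_Calk_of_ML}); a limit of quotient functors along a special morphism is again a quotient (Proposition~\ref{prop:limit_of_epi_is_epi}); and -- here Corollary~\ref{cor:hom_epi_onto_the_image_ML} enters in an essential way -- the dualizable limit of a special quotient is also a quotient (Corollary~\ref{cor:comm_square_for_duals_of_limits}). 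The proof of Theorem~\ref{th:hom_epi_for_ML} is then a four-term square comparing $\Ind(\prolim_n\cC_n^{\kappa})$ with $\prolim_n^{\dual}\Ind(\cC_n^{\kappa})$ and likewise for the Calkin categories: the left vertical is an equivalence, the bottom horizontal is a quotient by the special-morphism machinery, the right vertical is fully faithful, hence the top horizontal is a quotient. Without this second phase of the argument, you have only re-derived Corollary~\ref{cor:hom_epi_onto_the_image_ML}, not the theorem.

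A smaller point: your verification of the quadrofunctor hypotheses is also somewhat loose. In the paper's Lemma~\ref{lem:key_lemma_ML} the key observation is that, for fixed $k$, the bifunctor $G_k(z,w)=\prolim_n\ev_{\cC_n}(\Phi_{kn}(w),\pi_n'(z))$ collapses to the bicontinuous $\ev_{\cC_k}(w,\pi_k'(z))$ using essential constancy of $(F_{mn}F_{mk}^R)_m$; this is rather different from the explicit compactness-witness construction of Lemma~\ref{lem:key_lemma_on_crazy_product}, so the analogy you draw between condition \ref{ML2} and properness is imprecise, though the overall outline is recoverable.
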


Throughout this section, we keep the notation from Definition \ref{def:strong_ML}. In particular, $F_{nk}:\cC_n\to\cC_k$ are the transition functors, and $\Phi_{nk}:\cC_n\to\cC_k$ are the functors from the condition \ref{ML2}.

We first analyze the usual limit of $\cC_n$ (taken in $\Pr^L_{\st}$).

\begin{prop}\label{prop:naive_limit_dualizable} Let $(\cC_n)_{n\geq 0}$ be a strongly Mittag-Leffler sequence. 

\begin{enumerate}[label=(\roman*),ref=(\roman*)]
\item For any $k\geq 0$ the projection functor $\pi_k:\prolim[n]\cC_n\to \cC_k$ is strongly continuous and it has a left adjoint.\label{left_right_adj_to_pi_k}

\item The category $\prolim[n]\cC_n$ is dualizable. \label{naive_limit_dualizable}

\item An object $x\in \prolim[n]\cC_n$ is $\kappa$-compact if and only if each $\pi_k(x)$ is $\kappa$-compact in $\cC_k.$ In other words, we have
$$(\prolim[n]\cC_n)^{\kappa} = \prolim[n]\cC_n^{\kappa}.$$ \label{kappa_compact_in_limit} 
\end{enumerate}
\end{prop}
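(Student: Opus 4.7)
My plan is to tackle the three parts in sequence, with (i) being the critical technical step.

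For (i), I would build $\pi_k^R$ explicitly using the ML data by setting $\pi_k^R(y)_n := \Phi_{kn}(y)$ for $y \in \cC_k$ and $n \geq 0$. The first task is to verify this defines a genuine section of the inverse system, i.e. that there are canonical equivalences $F_{n+1, n}\Phi_{k, n+1}(y) \simeq \Phi_{kn}(y)$ for all $n$. The inverse system $(F_{m, n+1}F_{mk}^R)_m$ defining $\Phi_{k, n+1}$ is essentially constant by condition \ref{ML1} (since $(F_{m, n+1}F_{m, n+1}^R)_m$ stabilizes, and this property is preserved under pre-composition with $F_{n+1, k}^R$); hence $F_{n+1, n}$ commutes with the limit, and the remaining identification follows from the cofinality of $\{m \geq n+1, k\} \subset \{m \geq n, k\}$. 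The adjunction relation $\Hom_{\prolim \cC_n}(z, \pi_k^R y) \simeq \Hom_{\cC_k}(z_k, y)$ is then checked by expanding $\Hom_{\prolim \cC_n}(z, \pi_k^R y)$ as a limit over $n$ of $\Hom_{\cC_n}(z_n, \Phi_{kn}(y))$ and unfolding each factor using the adjunctions $F_{mk} \dashv F_{mk}^R$; essential constancy once more collapses the calculation. Strong continuity of $\pi_k$ (colimit preservation of $\pi_k^R$) reduces, in view of the termwise description of colimits in $\prolim \cC_n$, to colimit preservation of each $\Phi_{kn}$, which is the first clause of \ref{ML2}. For the left adjoint $\pi_k^L$, I would apply the same construction to the dual Mittag-Leffler sequence $(\cC_n^{\vee})$ from Proposition \ref{prop:basic_properties_of_ML}\ref{dual_of_ML}, producing a right adjoint to the dualized projection, which dualizes back to give $\pi_k^L$; the existence of left adjoints to the $\Phi_{nk}$ (the second clause of \ref{ML2}) is what legitimizes the dual construction.

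With (i) in hand, (ii) follows by verifying (AB6) directly. Dualizability of a stable presentable category is equivalent to the commutation of filtered colimits with arbitrary products (see \cite[Proposition 1.53]{E24}), so it suffices to check this commutation in $\prolim \cC_n$. Filtered colimits are computed termwise because each $F_{n+1, n}$ is continuous. Products are also computed termwise because each $\pi_k$ admits a left adjoint by (i), hence preserves limits. Since the projections $(\pi_k)_k$ are jointly conservative and each $\cC_k$ satisfies (AB6), the desired commutation propagates to $\prolim \cC_n$.

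For (iii), the ($\Rightarrow$) direction is immediate: by (i), $\pi_k$ is strongly continuous, so $\pi_k^R$ preserves $\kappa$-filtered colimits and therefore $\pi_k$ preserves $\kappa$-compact objects. For ($\Leftarrow$) (assuming $\kappa \geq \omega_1$), I would express $\Hom_{\prolim \cC_n}(x, -)$ as a countable limit built from the $\Hom_{\cC_n}(x_n, -)$ and invoke that $\kappa$-filtered colimits of spectra commute with countable limits when $\kappa \geq \omega_1$. The chief obstacle lies in (i): carefully orchestrating essential constancy \ref{ML1} with the cofinal reindexing of the inverse limits defining the $\Phi_{kn}$ so that the candidate formula for $\pi_k^R$ genuinely yields a right adjoint. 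Once (i) is established, (ii) and (iii) become essentially formal.
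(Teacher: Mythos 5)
Your construction of $\pi_k^R$ in (i) is exactly the paper's: $\pi_k^R=(\Phi_{kn})_{n\geq 0}$, with the same verification that this defines a valid section (via $F_{n+1,n}\Phi_{k,n+1}\cong\Phi_{kn}$) and the same telescoping adjunction computation. For (ii) and the backward half of (iii) you take valid routes that differ from the paper's: the paper deduces (ii) from the fact that the strongly continuous $\pi_k^L$ generate $\prolim[n]\cC_n$ and cites \cite[Proposition 1.55]{E24}, whereas you verify (AB6) directly (filtered colimits and products both computed termwise once (i) is in place, and the $\pi_k$ jointly conservative); and for (iii) ($\Leftarrow$) the paper writes $x\cong\indlim[k]\pi_k^L\pi_k(x)$ as a countable colimit of $\kappa$-compacts, whereas you express $\Hom_{\prolim[n]\cC_n}(x,-)$ as a countable limit of functors commuting with $\kappa$-filtered colimits and invoke commutation of countable limits with $\kappa$-filtered colimits. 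Both approaches to (iii) implicitly need $\kappa\geq\omega_1$.

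The one genuine imprecision is the construction of $\pi_k^L$. Dualizing back the right adjoint built from the dual Mittag-Leffler sequence does not produce $\pi_k^L$: the components of that right adjoint are the dual-sequence ML functors $\cC_k^{\vee}\to\cC_n^{\vee}$, which by Proposition \ref{prop:basic_properties_of_ML} equal $\Phi_{nk}^{\vee}$, and dualizing these recovers $\Phi_{nk}$, not $\Phi_{nk}^L$. To reach $\Phi_{nk}^L$ one would first have to pass to right adjoints of those components, using $(\Phi_{nk}^{\vee})^R=(\Phi_{nk}^L)^{\vee}$, and in any case identifying $(\prolim[n]\cC_n)^{\vee}$ with $\prolim[n]\cC_n^{\vee}$ would presuppose part (ii). The clean route, which the paper takes, is direct: from $\Phi_{nk}\cong\Phi_{n+1,k}F_{n+1,n}^R$ (a consequence of \ref{ML1}) one passes to left adjoints (which exist by \ref{ML2}) to get $F_{n+1,n}\Phi_{n+1,k}^L\cong\Phi_{nk}^L$, so $(\Phi_{nk}^L)_{n\geq 0}$ assembles into a functor $\cC_k\to\prolim[n]\cC_n$, and the adjunction is checked by the same telescoping of limits as for $\pi_k^R$. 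Since your (ii) only uses the existence of $\pi_k^L$ and your (iii) does not use it at all, this is a localized repair rather than a structural problem.
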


\begin{proof}We first prove \ref{left_right_adj_to_pi_k}. 
It follows from the condition \ref{ML1} of Definition \ref{def:strong_ML} that the natural transformations $F_{n+1,n}\Phi_{k,n+1}\to\Phi_{kn}$ are isomorphisms. Hence, for each $k\geq 0,$ the sequence of functors $(\Phi_{kn})_{n\geq 0}$ defines a continuous functor $\cC_k\to \prolim[n]\cC_n.$ We claim that this functor is the right adjoint to $\pi_k.$ Indeed, for $x\in\prolim[n]\cC_n,$ $y\in\cC_k,$ we have
\begin{multline*}\Hom(x,(\Phi_{kn}(y))_n)\cong \prolim[n]\Hom(\pi_n(x),\Phi_{kn}(y))\cong \prolim[m\geq n\geq k]\Hom(\pi_n(x),F_{mn}F_{mk}^R(y))\\
	\cong \prolim[m=n\geq k]\Hom(\pi_n(x),F_{nk}^R(y))\cong\prolim[n\geq k]\Hom(F_{nk}(\pi_n(x)),y)\cong \Hom(\pi_k(x),y).
\end{multline*}
This proves the strong continuity of $\pi_k.$

Next, we have natural isomorphisms $\Phi_{nk}\cong \Phi_{n+1,k}F_{n+1,n}^R.$ Passing to the left adjoints, we obtain the isomorphisms $F_{n+1,n}\Phi_{n+1,k}^L\cong \Phi_{nk}^L.$ Thus, for each $k\geq 0,$ the sequence $(\Phi_{nk}^L)_{n\geq 0}$ defines a functor $\cC_k\to \prolim[n]\cC_n.$ We claim that this functor is the left adjoint to $\pi_k.$ Indeed, for $x\in\cC_k,$ $y\in\prolim[n]\cC_n,$ we have
\begin{multline*}
\Hom((\Phi_{nk}^L(x))_n,y)\cong\prolim[n]\Hom(\Phi_{nk}^L(x),\pi_n(y))\cong\prolim[n]\Hom(x,\Phi_{nk}(\pi_n(y)))\\
\cong\prolim[m\geq n\geq k]\Hom(x,F_{mk}(F_{mn}^R(\pi_n(y)))) \cong \prolim[m=n\geq k]\Hom(x,F_{nk}(\pi_n(y)))\cong\Hom(x,y). 
\end{multline*}
Therefore, the functor $\pi_n$ has a left adjoint, as required. This proves \ref{left_right_adj_to_pi_k}.

Next, we prove \ref{naive_limit_dualizable}. Denote by $\pi_k^L:\cC_k\to\prolim[n]\cC_n$ the left adjoint to $\pi_k.$ Then the functors $\pi_k^L$ are strongly continuous, and their images generate the target (since their right adjoints form a conservative family). Since the categories $\cC_n$ are dualizable, we conclude that the category $\prolim[n]\cC_n$ is also dualizable by \cite[Proposition 1.55]{E24}.

Finally, we prove \ref{kappa_compact_in_limit}. Let $x$ be a $\kappa$-compact object in $\prolim[n]\cC_n.$ Since the functors $\pi_k$ are strongly continuous, each object $\pi_k(x)$ is $\kappa$-compact in $\cC_k.$

Conversely, suppose that $\pi_k(x)\in\cC_k^{\kappa}$ for all $k\geq 0.$ Then we have $\pi_k^L(\pi_k(x))\in (\prolim[n]\cC_n)^{\kappa}.$ Therefore,
\begin{equation*}x\cong \indlim[k]\pi_k^L(\pi_k(x))\in (\prolim[n]\cC_n)^{\kappa}.\qedhere\end{equation*}\end{proof}

\begin{prop}\label{prop:dual_pointwise_ML}
Let $(\cC_n)_{n\geq 0}$ be a strongly Mittag-Leffler sequence. Then for any $x\in\prolim[n]\cC_n$ and for any $k\geq 0$ we have an isomorphism
$$F_{k+1,k}^{\vee,L}(\pi_{k+1}(x)^{\vee})\xto{\sim}\pi_k(x)^{\vee}.$$
\end{prop}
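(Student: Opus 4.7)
The plan is to unfold both sides of the asserted isomorphism as continuous functors $\cC_k\to\Sp$ and verify the identification. Using $F_{k+1,k}^{\vee,L}\simeq(F_{k+1,k}^R)^\vee$ (precomposition with the right adjoint), the left-hand side, evaluated at $z\in\cC_k$, is $\Hom_{\cC_{k+1}}(\pi_{k+1}(x),-)^{\cont}\bigl(F_{k+1,k}^R(z)\bigr)$, while the right-hand side equals $\Hom_{\cC_k}(\pi_k(x),-)^{\cont}(z)=\bigl(\Hom_{\cC_{k+1}}(\pi_{k+1}(x),-)\circ F_{k+1,k}^R\bigr)^{\cont}(z)$, where the second equality combines $F_{k+1,k}\dashv F_{k+1,k}^R$ with the compatibility $F_{k+1,k}(\pi_{k+1}(x))=\pi_k(x)$ (which is exactly the condition $x\in\prolim[n]\cC_n$). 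The assertion of the proposition thus reduces to the commutation of $(-)^{\cont}$ with precomposition by $F_{k+1,k}^R$ applied to $\Hom(\pi_{k+1}(x),-)$, with the natural comparison map furnished by the counit $(-)^{\cont}\to(-)$ and the universal property of $\pi_k(x)^\vee$.

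Proposition~\ref{prop:properties_of_F^cont}(ii) would supply this commutation for free whenever $F_{k+1,k}^R$ is strongly continuous, but this is not automatic. The substantive content of the proposition is that the strongly Mittag-Leffler hypothesis delivers the commutation in the input $\pi_{k+1}(x)$, even without strong continuity of $F_{k+1,k}^R$. To exploit the hypothesis, I would write $\hat{\cY}_{\widetilde{\cC}}(x)\cong\inddlim[\alpha]X_\alpha$ with $X_\alpha$ in $\widetilde{\cC}^{\omega_1}\cong\prolim[n]\cC_n^{\omega_1}$ (Proposition~\ref{prop:naive_limit_dualizable}\ref{kappa_compact_in_limit}), so that $\pi_{k+1}(x)\cong\indlim[\alpha]\pi_{k+1}(X_\alpha)$ is a filtered colimit of $\omega_1$-compacts in $\cC_{k+1}$. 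Expanding both continuous approximations using Proposition~\ref{prop:properties_of_F^cont}(i) and $\hat{\cY}_{\cC_{k+1}}(F_{k+1,k}^R(z))\cong\inddlim[j,l]u_{j,l}$, and applying the adjunction $F_{k+1,k}\dashv F_{k+1,k}^R$ at the $\omega_1$-compact level (where it is well-behaved because strong continuity of $F_{k+1,k}$ preserves $\omega_1$-compactness), the required isomorphism reduces to a commutation of a cofiltered limit over $\alpha$ with an $\omega_1$-filtered colimit, in the spirit of Proposition~\ref{prop:seq_limits_of_filtered_colimits}. The essential constancy of $(F_{m,n}F_{m,n}^R)_m$ from \ref{ML1} and the strong continuity of the $\Phi_{nk}$ from \ref{ML2} are precisely what allow this commutation to hold.

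The main obstacle is this final lim/colim commutation; the preceding reductions are formal manipulations with adjoints and with the definition of $(-)^{\cont}$. I expect the Mittag-Leffler hypothesis to enter precisely in converting a potentially failing commutation of limits and colimits into an actual isomorphism via the stabilization of the system $(F_{m,n}F_{m,n}^R)_m$, so that the failure of strong continuity of $F_{k+1,k}^R$ is harmless on objects arising as projections of elements of $\prolim[n]\cC_n$.
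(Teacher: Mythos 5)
Your opening two paragraphs are correct: interpreting $F_{k+1,k}^{\vee,L}$ as precomposition with $F_{k+1,k}^{R}$ on $\cC_{k+1}^{\vee}\simeq\Fun^{L}(\cC_{k+1},\Sp)$ does reduce the proposition to the assertion that the canonical comparison
\begin{equation*}
\Hom(\pi_{k+1}(x),-)^{\cont}\circ F_{k+1,k}^{R}\;\longrightarrow\;\bigl(\Hom(\pi_{k+1}(x),-)\circ F_{k+1,k}^{R}\bigr)^{\cont}\cong\pi_k(x)^{\vee}
\end{equation*}
is an isomorphism, and you rightly observe that Proposition~\ref{prop:properties_of_F^cont}\ref{precomposing_with_strcont} would dispose of this for free if $F_{k+1,k}^{R}$ were strongly continuous, which it is not.

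The third paragraph, however, is a plan rather than a proof, and I do not see how it closes. Two concrete issues. First, the index $\alpha$ for $\hat{\cY}_{\cD}(x)$ ranges over a general directed poset, not $\N$, so Proposition~\ref{prop:seq_limits_of_filtered_colimits} is not usable as stated; you would need either a reduction to sequential systems or a substantially more general interchange statement, and neither is supplied. Second, condition~\ref{ML1} controls the pro-system $(F_{mn}F_{mn}^{R})_{m}$ in $\Fun^{L}_{\cE}(\cC_n,\cC_n)$, whereas the comparison you need to settle is localized at a single pair $(k,k+1)$ and concerns $\hat{\cY}_{\cC_{k+1}}\circ F_{k+1,k}^{R}$ versus $\Ind(F_{k+1,k}^{R})\circ\hat{\cY}_{\cC_k}$; it is not evident how the ML stabilization is to be brought to bear on that comparison, and you do not say.

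The paper sidesteps the termwise interchange entirely by a repackaging move: it shows that the whole family $(\pi_n(x)^{\vee})_{n\geq 0}$ is the system of components of a single object of the naive limit $\prolim_n\cC_n^{\vee,op}$, namely $\Psi^{\cont}(x)$ for $\Psi:\cD\xto{(-)^{\vee}}\cD^{\vee,op}\to\prolim_n\cC_n^{\vee,op}$. Once that is known, the compatibility $F_{k+1,k}^{\vee,L}(\pi_{k+1}(x)^{\vee})\xto{\sim}\pi_k(x)^{\vee}$ is forced by the very definition of $\prolim_n\cC_n^{\vee}$, and no residual lim/colim exchange remains. The ML hypothesis enters through the \emph{dual} sequence: by Proposition~\ref{prop:basic_properties_of_ML}\ref{dual_of_ML}, $(\cC_n^{\vee})$ is again strongly Mittag-Leffler, so by Proposition~\ref{prop:naive_limit_dualizable}\ref{left_right_adj_to_pi_k} the dual projections $\pi_k^{\prime}:\prolim_n\cC_n^{\vee}\to\cC_k^{\vee}$ admit left adjoints, hence $\pi_k^{\prime\,op}$ preserves colimits, which is precisely what Remark~\ref{rem:F^cont_for_non_presentable_target} requires to pull $(-)^{\cont}$ past $\pi_k^{\prime\,op}$ and identify the components of $\Psi^{\cont}(x)$ with $\pi_k(x)^{\vee}$. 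Dualizability of $\cD$ (Proposition~\ref{prop:naive_limit_dualizable}\ref{naive_limit_dualizable}) is needed for $\Psi^{\cont}$ to be defined at all.

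So: your reduction is correct, and your intuition that Mittag-Leffler compensates for the failure of strong continuity of $F_{k+1,k}^{R}$ is right in spirit, but the mechanism is not a direct interchange; it is the assembly of the $\pi_n(x)^{\vee}$ into one point of $\prolim_n\cC_n^{\vee}$, for which the essential inputs are the dual ML property and the two-sided adjoints of the projections established in Propositions~\ref{prop:basic_properties_of_ML} and~\ref{prop:naive_limit_dualizable}. Attacked head-on, the termwise comparison you set up appears genuinely harder, and as written your argument has a gap at exactly this point.
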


\begin{proof}We put $\cD=\prolim[n]\cC_n.$ By Proposition \ref{prop:naive_limit_dualizable}, the category $\cD$ is dualizable and the functors $\pi_n:\cD\to\cC_n$ are strongly continuous. Consider the composition
$$\Psi:\cD\xto{(-)^{\vee}}\cD^{\vee,op}\xto{((\pi_n^{\vee,L})^{op})_n}\prolim[n]\cC_n^{\vee,op}.$$ Applying Proposition \ref{prop:naive_limit_dualizable} to the inverse sequence $(\cC_n^{\vee}),$ we see that for each $k\geq 0$ the projection functor $\pi_k^\prime:\prolim[n]\cC_n^{\vee}\to\cC_k^{\vee}$ commutes with limits, hence the functor $\pi_k^{\prime op}:\prolim[n]\cC_n^{\vee,op}\to\cC_k^{\vee,op}$ commutes with colimits. Since the category $\cD$ is dualizable, by Remark \ref{rem:F^cont_for_non_presentable_target} we have the isomorphisms $$\pi_k^{\prime op}\circ \Psi^{\cont}\xto{\sim} (\pi_k^{\prime op}\circ \Psi)^{\cont}\cong ((\pi_k^{\vee,L})^{op}\circ (-)^{\vee})^{\cont}\cong (-)^{\vee}\circ \pi_k$$
of functors $\cD\to\cC_k^{\vee,op}.$ Therefore, for any $x\in\cD,$ we have the isomorphisms
$$F_{k+1,k}^{\vee,L}(\pi_{k+1}(x)^{\vee})\cong F_{k+1,k}^{\vee,L}(\pi_{k+1}^\prime(\Psi^{\cont}(x)))\cong \pi_k^\prime(\Psi^{\cont}(x))\cong \pi_k(x)^{\vee}.$$
This proves the proposition.\end{proof}

The following lemma is crucial for proving Theorem \ref{th:hom_epi_for_ML}.

\begin{lemma}\label{lem:key_lemma_ML} Let $(\cC_n)$ be a strongly Mittag-Leffler sequence. As above, we denote by $\pi_n:\prolim[k]\cC_k\to\cC_n,$ $\pi_n':\prolim[k]\cC_k^{\vee}\to\cC_n^{\vee}$ the projection functors. Take any pair of objects $x\in\prolim[k]\cC_k^{\vee},$ $y\in\prolim[k]\cC_k.$ Then the natural map
\begin{equation}\label{eq:tricky_tensor_product_for_ML}(\prolim[n]\ev_{\cC_n}(\pi_n(y),\pi_n(-)^{\vee}))\tens{\prolim[n]\cC_k^{\kappa}} (\prolim[n]\ev_{\cC_n}(\pi_n(-),\pi_n^\prime(x)))\to\prolim[n]\ev_{\cC_n}(\pi_n(y),\pi_n^\prime(x))\end{equation}
is an isomorphism.\end{lemma}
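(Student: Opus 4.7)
The plan is to mirror the strategy of the proof of Lemma \ref{lem:key_lemma_on_crazy_product} very closely, with the roles played there by properness and $\omega_1$-compactness of $\cC$ replaced by the strong Mittag-Leffler hypothesis together with the structural results of Propositions \ref{prop:naive_limit_dualizable} and \ref{prop:dual_pointwise_ML}.

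First I would fix directed posets and presentations $\hat{\cY}(y)\cong\inddlim[i\in I]y_i$ in $\Ind((\prolim[k]\cC_k)^{\kappa})$ and $\hat{\cY}(x)\cong\inddlim[j\in J]x_j$ in $\Ind((\prolim[k]\cC_k^{\vee})^{\kappa})$. By Proposition \ref{prop:naive_limit_dualizable} \ref{kappa_compact_in_limit} each $y_i$ is a compatible sequence with $\pi_n(y_i)\in\cC_n^{\kappa}$, and the strong continuity of $\pi_n$ gives $\hat{\cY}(\pi_n(y))\cong\inddlim[i]\pi_n(y_i)$; similarly for $x$. I would also pick a sequence $(z_p)_{p\geq 0}$ in $(\prolim[k]\cC_k)^{\kappa}$ with $\hat{\cY}(\indlim[p]z_p)$ presenting the ``diagonal'' witness of dualizability. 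Proposition \ref{prop:dual_pointwise_ML} is needed to ensure that the pointwise duals $\pi_n(-)^{\vee}$ on the second slot of the first factor of \eqref{eq:tricky_tensor_product_for_ML} assemble compatibly under $F_{n+1,n}^{\vee,L}$, so that the expression $\prolim[n]\ev_{\cC_n}(\pi_n(y),\pi_n(-)^{\vee})$ is actually representable in the limit category.

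Next, exactly as in the chain \eqref{eq:presentation_as_colim_of_representables}, I would use Proposition \ref{prop:seq_limits_of_filtered_colimits} to commute the limit $\prolim[n]$ past the colimit $\indlim[i]$ at the cost of introducing a directed colimit over order-preserving maps $\N\to I$. Plugging these presentations into the tensor product over $(\prolim[k]\cC_k)^{\kappa}$ and invoking Proposition \ref{prop:seq_limits_of_filtered_colimits} a second time on the $j$-variable, the LHS of \eqref{eq:tricky_tensor_product_for_ML} becomes
\begin{equation*}
\indlim[\varphi:\N\to I]\indlim[\psi:\N\to J]\prolim[n\leq m]\prolim[k\leq l] F(n,\varphi(m),k,\psi(l))
\end{equation*}
for the quadrofunctor $F:\N^{op}\times I\times\N^{op}\times J\to\Sp$ given by a pairing of the form $F(n,i,k,j)=\ev_{\cC_n}(\pi_n(y_i),\pi_n^\prime(x_j))$ (composed appropriately with the diagonal witness $z_p$). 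The target $\prolim[n]\ev_{\cC_n}(\pi_n(y),\pi_n^\prime(x))$ is $\prolim[n]\indlim[i]\indlim[j] F(n,i,n,j)$, so the whole lemma reduces to applying Proposition \ref{prop:quadrofunctors} to $F$.

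The main obstacle will be verifying the two refined hypotheses of Proposition \ref{prop:quadrofunctors}, i.e.\ that the relevant limit-colimit interchange maps are actually isomorphisms in $\Pro(\Sp)$ (not just in $\Sp$), so that the stronger form of the ``Claim'' inside the proof of Lemma \ref{lem:key_lemma_on_crazy_product} holds. In the proper setting this was supplied by compact-map witnesses $\wt{f_k}:\cY(P_k)\to\hat{\cY}(P_{k+1})$. Here the analogous witnesses must come from the strong Mittag-Leffler condition: condition \ref{ML1} of Definition \ref{def:strong_ML} says that the inverse system $(F_{mn}F_{mn}^R)_{m\geq n}$ is essentially constant in $\Fun^L(\cC_n,\cC_n)$, which produces, for each $n$ and each sufficiently large $m$, a factorization of $F_{mn}F_{mn}^R\to\id$ through $F_{m+1,n}F_{m+1,n}^R$. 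Condition \ref{ML2} ensures that the limiting functors $\Phi_{nk}$ are strongly continuous, which is what allows passage to pro-objects. Combining these, one constructs in each variable explicit right inverses to the comparison maps \eqref{eq:maps_of_proobjects} at the pro-level, in complete parallel with the argument in the proof of the Claim. Once the hypotheses are in place, Proposition \ref{prop:quadrofunctors} finishes the proof.
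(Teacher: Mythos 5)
Your high-level plan (present $x,y$ via $\hat{\cY}$ in the limit category, rewrite one factor as a sequential colimit of representables using Proposition \ref{prop:seq_limits_of_filtered_colimits}, reduce to a quadrofunctor, and apply Proposition \ref{prop:quadrofunctors}) matches the paper's strategy, but there are two substantial gaps.

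First, the quadrofunctor you write down, $F(n,i,k,j)=\ev_{\cC_n}(\pi_n(y_i),\pi_n'(x_j))$, is not the one that comes out of the manipulation, and the ``diagonal witness $z_p$'' you invoke has no counterpart here. The point is that the tensor product over $\prolim[k]\cC_k^{\kappa}$ is resolved by the chain \eqref{eq:one_of_the_factors_as_colim}: after pushing $\pi_n$ past the limit and using $\pi_n\circ\pi_k^R\cong\Phi_{kn}$, the correct quadrofunctor is $F(n,i,k,j)=\ev_{\cC_n}(\Phi_{kn}(\pi_k(y_j)),\pi_n'(x_i))$. The appearance of $\Phi_{kn}$ linking the two $\N^{op}$ directions is the structurally essential feature, and it is exactly where the Mittag-Leffler hypothesis enters.

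Second and more seriously, you propose to verify the \emph{refined}, pro-level versions \ref{refined_assump1}, \ref{refined_assump2} of the hypotheses of Proposition \ref{prop:quadrofunctors}, mimicking the Claim inside the proof of Lemma \ref{lem:key_lemma_on_crazy_product}, and you suggest extracting the needed pro-level right inverses from essential constancy of $(F_{mn}F_{mn}^R)_m$. You never actually carry this out, and it is not clear how to: essential constancy of the inverse system $(F_{mn}F_{mn}^R)_m$ does not directly hand you a compactness witness $\cY(\cdot)\to\hat\cY(\cdot)$ in the relevant functor category, which is what the factorizations \eqref{eq:factorization} supplied in the proper case. In fact the paper avoids the refined pro-level conditions entirely. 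It checks the \emph{ordinary} conditions \ref{tricky_assump1}, \ref{tricky_assump2} by observing that, for fixed $k$, the bifunctor
$G_k(z,w)=\prolim[n]\ev_{\cC_n}(\Phi_{kn}(w),\pi_n'(z))$
collapses by a telescope (using that $\Phi_{kn}$ is the limit of the essentially constant system $(F_{mn}F_{mk}^R)_{m}$, and Proposition \ref{prop:dual_pointwise_ML} on the dual side) to $\ev_{\cC_k}(w,\pi_k'(z))$, which is manifestly continuous in each argument. Condition \ref{assump2} then follows by symmetry, swapping $(\cC_n)$ with $(\cC_n^{\vee})$. This identity $G_k(z,w)\cong\ev_{\cC_k}(w,\pi_k'(z))$ is the key idea you are missing, and it makes the pro-level machinery you envisaged unnecessary.
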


\begin{proof}Recall from Propositions \ref{prop:basic_properties_of_ML} and \ref{prop:naive_limit_dualizable} that both categories $\prolim[n]\cC_n^{\vee}$ and $\prolim[n]\cC_n$ are dualizable, and we have
$$(\prolim[n]\cC_n^{\vee})^{\kappa}\simeq \prolim[n](\cC_n^{\vee})^{\kappa},\quad (\prolim[n]\cC_n)^{\kappa}\simeq \prolim[n]\cC_n^{\kappa}$$
(recall that $\kappa$ is uncountable). Let $$\hat{\cY}(x)=\inddlim[i\in I] x_i,\quad \hat{\cY}(y)=\inddlim[j\in J] y_j,$$
where $I$ and $J$ are directed posets and $x_i\in\prolim[n](\cC_n^{\vee})^{\kappa},$ $y_j\in\prolim[n]\cC_n^{\kappa}.$ 

We have
\begin{multline}\label{eq:one_of_the_factors_as_colim}
\prolim[k]\ev_{\cC_k}(\pi_k(y),\pi_k(-)^{\vee})\cong\prolim[k]\indlim[j]\ev_{\cC_k}(\pi_k(y_j),\pi_k(-)^{\vee})\\
\cong \prolim[k]\indlim[j]\Hom_{\cC_k}(\pi_k(-),\pi_k(y_j))\cong \prolim[k]\indlim[j]\Hom(-,\pi_k^R(\pi_k(y_j)))\\
\cong\indlim[\psi:\N\to J]\prolim[k\leq l]\Hom(-,\pi_k^R(\pi_k(y_{\psi(l)})))\cong \indlim[\psi:\N\to J]\Hom(-,\prolim[k\leq l]\pi_k^R(\pi_k(y_{\psi(l)})).
\end{multline}
Here the second isomorphism uses the strong continuity of $\pi_k$ (by Proposition \ref{prop:naive_limit_dualizable}). The fourth isomorphism uses Proposition \ref{prop:seq_limits_of_filtered_colimits}. 

Using \eqref{eq:one_of_the_factors_as_colim}, we obtain the following expression for the source of \eqref{eq:tricky_tensor_product_for_ML}.

\begin{multline}\label{eq:expr_for_tricky_tensor_product_ML}
(\prolim[n]\ev_{\cC_n}(\pi_n(y),\pi_n(-)^{\vee}))\tens{\prolim[k]\cC_k^{\kappa}} (\prolim[n]\ev_{\cC_n}(\pi_n(-),\pi_n^\prime(x)))\\
\cong \indlim[\psi:\N\to J] \prolim[n]\ev_{\cC_n}(\pi_n(\prolim[k\leq l]\pi_k^R(\pi_k(y_{\psi(l)}))),\pi_n^\prime(x))\\
\cong \indlim[\psi:\N\to J] \prolim[n]\ev_{\cC_n}(\prolim[k\leq l]\pi_n(\pi_k^R(\pi_k(y_{\psi(l)}))),\pi_n^\prime(x))\\
\cong \indlim[\psi:\N\to J] \prolim[n]\ev_{\cC_n}(\prolim[k\leq l]\Phi_{kn}(\pi_k(y_{\psi(l)})),\pi_n^\prime(x))\\
\cong \indlim[\psi:\N\to J] \prolim[n]\indlim[i]\ev_{\cC_n}(\prolim[k\leq l]\Phi_{kn}(\pi_k(y_{\psi(l)})),\pi_n^\prime(x_i))\\
\cong \indlim[\psi:\N\to J] \prolim[n]\indlim[i]\prolim[k\leq l] \ev_{\cC_n}(\Phi_{kn}(\pi_k(y_{\psi(l)})),\pi_n^\prime(x_i))\\
\cong \indlim[\varphi:\N\to I]\indlim[\psi:\N\to J]\prolim[n\leq m]\prolim[k\leq l] \ev_{\cC_n}(\Phi_{kn}(\pi_k(y_{\psi(l)})),\pi_n^\prime(x_{\varphi(m)}))
\end{multline}
Here the first isomorphism uses the fact that the functor
\begin{equation*}
\prolim[n]\ev_{\cC_n}(\pi_n(-),\pi_n'(x)):\prolim[k]\cC_k\to\Sp
\end{equation*}
commutes with $\omega_1$-filtered colimits, hence it is left Kan-extended from $\prolim[k]\cC_k^{\kappa}.$ The second isomorphism uses the fact that $\pi_n$ commutes with limits (since it has a left adjoint by Proposition \ref{prop:naive_limit_dualizable}). The fourth isomorphism uses the description of the functors $\pi_n^R$ from the proof of Proposition \ref{prop:naive_limit_dualizable}. The fifth isomorphism uses the strong continuity of $\pi_n',$ by Propositions \ref{prop:basic_properties_of_ML} and \ref{prop:naive_limit_dualizable}.

Now, consider the functor
\begin{equation}\label{eq:quadrofunctor_for_ML}
F:\N^{op}\times I\times\N^{op}\times J\to\Sp,\quad F(n,i,k,j)=\ev_{\cC_n}(\Phi_{kn}(\pi_k(y_j)),\pi_n^\prime(x_i)).\end{equation}
To conclude the proof of the lemma, it suffices to prove the following statement.

{\noindent{\bf Claim.}} {\it The functor $F$ defined in \eqref{eq:quadrofunctor_for_ML} satisfies the following conditions:
\begin{enumerate}[label=(\roman*),ref=(\roman*)]
\item For each $k\geq 0,$ we have an isomorphism $$\indlim[j]\prolim[n]\indlim[i]F(n,k,i,j)\xto{\sim} \prolim[n]\indlim[i]\indlim[j]F(n,k,i,j).$$ \label{assump1}
\item For each $n\geq 0,$ we have an isomorphism $$\indlim[i]\prolim[k]\indlim[j]F(n,k,i,j)\xto{\sim} \prolim[k]\indlim[i]\indlim[j]F(n,k,i,j).$$ \label{assump2}
\end{enumerate}
}

Assuming the claim, we can apply Proposition \ref{prop:quadrofunctors} to the quadrofunctor \eqref{eq:quadrofunctor_for_ML}. Using \eqref{eq:expr_for_tricky_tensor_product_ML}, we obtain
\begin{multline*}
(\prolim[n]\ev_{\cC_n}(\pi_n(y),\pi_n(-)^{\vee}))\tens{\prolim[k]\cC_k^{\kappa}} (\prolim[n]\ev_{\cC_n}(\pi_n(-),\pi_n^\prime(x)))\\
\cong \indlim[\varphi:\N\to I]\indlim[\psi:\N\to J]\prolim[n\leq m]\prolim[k\leq l] \ev_{\cC_n}(\Phi_{kn}(\pi_k(y_{\psi(l)})),\pi_n^\prime(x_{\varphi(m)}))\\
\cong\prolim[n]\indlim[i]\indlim[j]\ev_{\cC_n}(\Phi_{nn}(\pi_n(y_j)),\pi_n^\prime(x_i))\cong \prolim[n]\ev_{\cC_n}(\Phi_{nn}(\pi_n(y)),\pi_n^\prime(x))\\
\cong \prolim[n]\prolim[m\geq n]\ev_{\cC_n}(F_{mn}(F_{mn}^R(\pi_n(y))),\pi_n^\prime(x))\cong\prolim[m=n]\ev_{\cC_n}(\pi_n(y),\pi_n^\prime(x)),
\end{multline*}
as required.

\begin{proof}[Proof of Claim.]
Note that for $z\in\prolim[n]\cC_n,$ $w\in\prolim[n]\cC_n^{\vee},$ we have a natural isomorphism
$$\ev_{\cC_n}(\Phi_{kn}(\pi_k(z)),\pi_n^\prime(w))\cong \ev_{\cC_k}(\pi_k(z),\Phi_{kn}^{\vee}(\pi_n^\prime(w))).$$
Hence, \ref{assump2} is obtained from \ref{assump1} by interchanging the roles of $(\cC_n)$ and $(\cC_n^{\vee}),$ $x$ and $y,$ $I$ and $J,$ $x_i$ and $y_j.$

We now prove \ref{assump1}. For any $k\geq 0,$ consider the following bifunctor:
$$G_k:(\prolim[n]\cC_n^{\vee})\times \cC_k\to\Sp,\quad G_k(z,w)=\prolim[n]\ev_{\cC_n}(\Phi_{kn}(w),\pi_n'(z)).$$ We claim that the bifunctor $G_k$ commutes with colimits in each argument. Indeed, since $\Phi_{kn}$ is the limit of the essentially constant system $(F_{mn}F_{mk}^R)_{m\geq n,k},$ we have the following isomorphisms:
\begin{multline*}
\prolim[n]\ev_{\cC_n}(\Phi_{kn}(w),\pi_n^\prime(z))\cong\prolim[m\geq n\geq k]\ev_{\cC_n}(F_{mn}(F_{mk}^R(w)),\pi_n^\prime(z))\\
\cong\prolim[m=n\geq k]\ev_{\cC_n}(F_{nk}^R(w),\pi_n^\prime(z))\cong \prolim[n\geq k]\ev_{\cC_k}(w,F_{nk}^{\vee,L}(\pi_n^\prime(z)))\cong\ev_{\cC_k}(w,\pi_k^\prime(z)).
\end{multline*} 
The latter bifunctor is bicontinuous since the functor $\pi_k^\prime$ is continuous.

Therefore, for $k\geq 0$ we obtain \begin{multline*}\indlim[j]\prolim[n]\indlim[i]\ev_{\cC_n}(\Phi_{kn}(\pi_k(y_j),\pi_n^\prime(x_i))\cong  \indlim[j]G_k(x,\pi_k(y_j))\cong G_k(x,\pi_k(y))\\
\cong \prolim[n]\indlim[i]\indlim[j]\ev_{\cC_n}(\Phi_{kn}(\pi_k(y_j)),\pi_n^\prime(x_i)).
\end{multline*}
This proves \ref{assump1} and the claim.
\end{proof}
This proves the lemma.
\end{proof}

\begin{cor}\label{cor:hom_epi_onto_the_image_ML}
Let $(\cC_n)_{n\geq 0}$ be a strongly Mittag-Leffler sequence.
\begin{enumerate}[label=(\roman*),ref=(\roman*)]
\item The functor $\prolim[n]\cC_n^{\kappa}\to\prolim[n]\Calk_{\kappa}^{\cont}(\cC_n)$ is a homological epimorphism onto its image. \label{hom_epi_onto_image_ML}

\item Consider the natural fully faithful functor
\begin{equation}\label{eq:incl_of_dual_of_lim_dual}
\Psi:(\prolim[n]^{\dual}\cC_n)^{\vee}\to \Ind((\prolim[n]\cC_n^{\kappa})^{op})\simeq \Fun(\prolim[n]\cC_n^{\kappa},\Sp).
\end{equation}
Its essential image contains all functors of the form $\prolim[n]\ev_{\cC_n}(\pi_n(-),\pi_n^\prime(x)),$ where $x\in\prolim[n]\cC_n^{\vee}.$ Moreover, it is generated by such functors with $x\in\prolim[n](\cC_n^{\vee})^{\kappa}.$ \label{dual_of_dualizable_limit}
\end{enumerate}
\end{cor}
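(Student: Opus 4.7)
The approach is parallel to Corollary \ref{cor:hom_epi_onto_the_image} and Proposition \ref{prop:dual_of_internal_Hom}, with Lemma \ref{lem:key_lemma_ML} playing the role that Lemma \ref{lem:key_lemma_on_crazy_product} played there. The tensor-product identity of Lemma \ref{lem:key_lemma_ML} simultaneously encodes the homological-epimorphism-onto-image property (via Proposition \ref{prop:idempotent_quasi_ideals}) and characterizes the image of $\Psi$ as the subcategory of presheaves ``local'' with respect to the fiber bifunctor.

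For part \ref{hom_epi_onto_image_ML}, I first identify, for $M,N\in\prolim[n]\cC_n^{\kappa}\simeq(\prolim[n]\cC_n)^{\kappa}$ (Proposition \ref{prop:naive_limit_dualizable}\ref{kappa_compact_in_limit}), the fiber of
\begin{equation*}
\Hom_{\prolim[n]\cC_n^{\kappa}}(M,N)\to\Hom_{\prolim[n]\Calk^{\cont}_{\kappa}(\cC_n)}(F(M),F(N))
\end{equation*}
with $\prolim[n]\ev_{\cC_n}(\pi_n(N),\pi_n(M)^{\vee})$: this uses the level-wise short exact sequence from Proposition \ref{prop:Calkin_over_rigid} (whose fiber is the spectrum of compact maps, given by the evaluation pairing with the predual) together with the commutation of countable limits and fiber sequences in $\Sp$. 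By Proposition \ref{prop:dual_pointwise_ML}, the family $(\pi_n(M)^{\vee})_n$ underlies a well-defined object $x_M\in\prolim[n]\cC_n^{\vee}$ with $\pi_n'(x_M)\simeq\pi_n(M)^{\vee}$. Applying Lemma \ref{lem:key_lemma_ML} with $y=N$ and $x=x_M$ yields the tensor-product identity that, by Proposition \ref{prop:idempotent_quasi_ideals}, is exactly the condition for $\prolim[n]\cC_n^{\kappa}\to\prolim[n]\Calk^{\cont}_{\kappa}(\cC_n)$ to be a homological epimorphism onto its image.

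For part \ref{dual_of_dualizable_limit}, the kernel description
\begin{equation*}
\prolim[n]^{\dual}\cC_n=\ker^{\dual}\bigl(\Ind(\prolim[n]\cC_n^{\kappa})\to\Ind(\prolim[n]\Calk_{\kappa}^{\cont}(\cC_n))\bigr)
\end{equation*}
from \cite[Theorem 1.91]{E24}, together with part \ref{hom_epi_onto_image_ML} and the fact that $(-)^{\vee}$ preserves short exact sequences in $\Cat_{\st}^{\dual}$, realizes $\Psi$ as a fully faithful functor whose essential image is characterized by a bar condition relative to the fiber bifunctor, as in the proof of Corollary \ref{cor:hom_epi_onto_the_image}. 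Concretely, $G:\prolim[n]\cC_n^{\kappa}\to\Sp$ lies in the image if and only if for every $N\in\prolim[n]\cC_n^{\kappa}$ the natural map
\begin{equation*}
\bigl(\prolim[n]\ev_{\cC_n}(\pi_n(N),\pi_n(-)^{\vee})\bigr)\tens{\prolim[n]\cC_n^{\kappa}}G\to G(N)
\end{equation*}
is an isomorphism. Lemma \ref{lem:key_lemma_ML} verifies this for every $G_x:=\prolim[n]\ev_{\cC_n}(\pi_n(-),\pi_n'(x))$ with $x\in\prolim[n]\cC_n^{\vee}$, proving containment in the image. For the generation assertion, any $x$ is an $\omega_1$-filtered colimit of objects $x_i\in\prolim[n](\cC_n^{\vee})^{\kappa}$ (Propositions \ref{prop:basic_properties_of_ML}\ref{dual_of_ML} and \ref{prop:naive_limit_dualizable}\ref{kappa_compact_in_limit}), and since $\omega_1$-filtered colimits commute with countable limits in $\Sp$ we have $G_x\simeq\indlim[i]G_{x_i}$; combined with the observation that the compact generators of $(\prolim[n]^{\dual}\cC_n)^{\vee}$ arise from compact $M$'s via the assignment $M\mapsto x_M$ of part \ref{hom_epi_onto_image_ML}, this shows the image is generated by $G_{x_i}$ with $x_i\in\prolim[n](\cC_n^{\vee})^{\kappa}$.

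The main obstacle is the lifting step in part \ref{hom_epi_onto_image_ML}: the family $(\pi_n(M)^{\vee})_n$ is only naively compatible through the left adjoints of the dual transition functors, and assembling it into a single object of $\prolim[n]\cC_n^{\vee}$ requires Proposition \ref{prop:dual_pointwise_ML}, whose proof crucially uses the strongly Mittag-Leffler hypothesis (condition \ref{ML1} providing essential constancy, condition \ref{ML2} providing the functors $\Phi_{nk}$). Once this lifting is in hand, both parts follow by the formal structure outlined above.
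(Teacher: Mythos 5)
Your proposal is correct and follows essentially the same route as the paper's own proof: identify the fiber of the mapping-spectrum comparison with $\prolim[n]\ev_{\cC_n}(\pi_n(N),\pi_n(M)^{\vee})$, lift the pointwise predual family $(\pi_n(M)^{\vee})_n$ to a genuine object of $\prolim[n]\cC_n^{\vee}$ via Proposition \ref{prop:dual_pointwise_ML}, and then invoke Lemma \ref{lem:key_lemma_ML} for both the idempotence of the quasi-ideal and the characterization of $\im(\Psi)$. The only cosmetic deviation is that the paper appeals to the stable-level homological-epimorphism criterion directly rather than to Proposition \ref{prop:idempotent_quasi_ideals}, but these are interchangeable.
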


\begin{proof}
We prove \ref{hom_epi_onto_image_ML}. Given $x,y\in\prolim[n]\cC_n^{\kappa},$ we have
\begin{equation*}
\Fiber(\Hom_{\prolim[n]\cC_n^{\kappa}}(x,y)\to \Hom_{\prolim[n]\Calk_{\kappa}^{\cont}(\cC_n)}(x,y))\cong \prolim[n]\ev_{\cC_n}(\pi_n(y),\pi_n(x)^{\vee}).
\end{equation*}
Hence, we need to show that
\begin{equation}
\label{eq:idemp_quasi_ideal_ML}(\prolim[n]\ev_{\cC_n}(\pi_n(y),\pi_n(-)^{\vee}))\tens{\prolim[k]\cC_k^{\kappa}}(\prolim[n]\ev_{\cC_n}(\pi_n(-),\pi_n(x)^{\vee}))\to \prolim[n]\ev_{\cC_n}(\pi_n(y),\pi_n(x)^{\vee})
\end{equation}
is an isomorphism. But the sequence $(\pi_n(x)^{\vee})_{n\geq 0}$ is in fact an object of $\prolim[n]\cC_n^{\vee}$ by Proposition \ref{prop:dual_pointwise_ML}. Hence, by Lemma \ref{lem:key_lemma_ML} the map \eqref{eq:idemp_quasi_ideal_ML} is an isomorphism. This proves \ref{hom_epi_onto_image_ML}.

It follows from the proof of \ref{hom_epi_onto_image_ML} that the essential image of the functor $\Psi$ from \eqref{eq:incl_of_dual_of_lim_dual} consists of (exact) functors $G:\prolim[n]\cC_n^{\kappa}\to\Sp,$ such that for any $y\in\prolim[n]\cC_n^{\kappa}$ we have an isomorphism
\begin{equation*}
(\prolim[n]\ev_{\cC_n}(\pi_n(y),\pi_n(-)^{\vee}))\tens{\prolim[k]\cC_k^{\kappa}}G\xto{\sim}G(y).
\end{equation*} 
Thus, by Lemma \ref{lem:key_lemma_ML} the functors of the form $\prolim[n]\ev_{\cC_n}(\pi_n(-),\pi_n(x))$ are contained in the essential image of $\Psi,$ where $x\in\prolim[n]\cC_n^{\vee}.$ Moreover, by the proof of \ref{hom_epi_onto_image_ML}, this essential image is generated by the objects of the form $\prolim[n]\ev_{\cC_n}(\pi_n(-),\pi_n(z)^{\vee}),$ where $z\in \prolim[n]\cC_n^{\kappa}.$ For such $z$ we can choose a $\kappa$-filtered system $(x_i)_i$ in $\prolim[n](\cC_n^{\vee})^{\kappa}\simeq (\prolim[n]\cC_n^{\vee})^{\kappa}$ such that $(\pi_n(z)^{\vee})_{n\geq 0}\cong \indlim[i]x_i.$ Since the bifunctor $\prolim[n]\ev_{\cC_n}(\pi_n(-),\pi_n^\prime(-))$ commutes with $\omega_1$-filtered colimits in each argument, we have
\begin{equation*}
\prolim[n]\ev_{\cC_n}(\pi_n(-),\pi_n(z)^{\vee})\cong \indlim[i]\prolim[n]\ev_{\cC_n}(\pi_n(-),\pi_n^\prime(x_i)).
\end{equation*}
This shows that the essential image of $\Psi$ is generated by the objects of the form $\prolim[n]\ev_{\cC_n}(\pi_n(-),\pi_n^\prime(x)),$ where $x\in\prolim[n](\cC_n^{\vee})^{\kappa}.$ This proves the corollary.
\end{proof}

To prove the essential surjectivity of the functor $\prolim[n]\cC_n^{\kappa}\to\prolim[n]\Calk_{\kappa}^{\cont}(\cC_n)$ (up to retracts) we will need the following notion.

\begin{defi}\label{def:special_map_of_ML} Let $(T_n)_{n\geq 0}:(\cC_n)_{n\geq 0}\to(\cD_n)_{n\geq 0}$ be a morphism of strongly Mittag-Leffler sequences in $\Cat_{\st}^{\dual}.$ We denote by $F_{nk}:\cC_n\to\cC_k,$ $G_{nk}:\cD_n\to\cD_k$ the transition functors for $n\geq k.$ We use the notation \begin{equation*}
\Phi_{nk}=(\prolim[m\geq n,k]F_{mk}F_{mn}^R):\cC_n\to\cC_k,\quad \Psi_{nk}=(\prolim[m\geq n,k]G_{mk}G_{mn}^R):\cD_n\to\cD_k,\quad n,k\geq 0.	
\end{equation*} We say that the morphism $(T_n)_{n\geq 0}$ is special if for $n,k\geq 0$ the natural maps of functors
\begin{equation*}
T_k\Phi_{nk}\to\Psi_{nk}T_n,\quad \Phi_{nk}T_n^R\to T_k^R\Psi_{nk}	
\end{equation*}
are isomorphisms.\end{defi}

\begin{prop}\label{prop:basic_descr_of_special_maps} We keep the notation from Definition \ref{def:special_map_of_ML}.
\begin{enumerate}[label=(\roman*),ref=(\roman*)]
\item If the morphism $(T_n):(\cC_n)\to (\cD_n)$ is special, then so is the morphism $(T_n^{\vee,L}):(\cC_n^{\vee})\to (\cD_n^{\vee}).$ \label{speciality_is_self_dual}

\item More precisely, the morphism $(T_n):(\cC_n)\to (\cD_n)$ is special if and only if the following squares commute for each $k\geq 0:$
\begin{equation}
\label{eq:right_adj_naive1}
\begin{CD}
\prolim[n]\cD_n @>{(\prolim[n]T_n)^R}>> \prolim[n]\cC_n\\
@VVV @VVV\\
\cD_k @>{T_k^R}>> \cC_k,
\end{CD}
\end{equation}
\begin{equation}
	\label{eq:right_adj_naive2}
	\begin{CD}
		\prolim[n]\cD_n^{\vee} @>{(\prolim[n]T_n^{\vee,L})^R}>> \prolim[n]\cC_n^{\vee}\\
		@VVV @VVV\\
		\cD_k^{\vee} @>{T_k^{\vee}}>> \cC_k^{\vee}.
	\end{CD}
\end{equation}
\label{right_adjoints_termwise}
\end{enumerate}
\end{prop}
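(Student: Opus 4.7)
The plan is to first prove (ii), from which (i) follows by symmetry. The central ingredient is an explicit formula for the right adjoint of $\prolim[m]T_m$. Using the descriptions $\pi_k^L(x)=(\Phi_{nk}^L(x))_n$ and $\pi_k^R(z)=(\Phi_{kn}(z))_n$ from the proof of Proposition~\ref{prop:naive_limit_dualizable}, together with the adjunctions $\Phi_{nk}^L\dashv\Phi_{nk}$ and $T_n\dashv T_n^R$, a direct computation of $\Hom(\pi_k^L(x),(\prolim[m]T_m)^R(y))$ yields, for every $y\in\prolim[n]\cD_n$,
\[\pi_k^{\cC}\bigl((\prolim[m]T_m)^R(y)\bigr)\;\cong\;\prolim[m]\,\Phi_{mk}\,T_m^R\,\pi_m^{\cD}(y).\]
Specialized to $T_n=\id$ this gives the auxiliary identity $\pi_k^{\cD}(y)=\prolim[m]\Psi_{mk}\pi_m^{\cD}(y)$, used repeatedly below.

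For the forward direction of (ii), inserting the speciality isomorphism $\Phi_{nk}T_n^R\xto{\sim}T_k^R\Psi_{nk}$ into the key formula and using that $T_k^R$ preserves limits yields $\pi_k^{\cC}((\prolim[m]T_m)^R(y))\cong T_k^R\pi_k^{\cD}(y)$, which is the commutativity of \eqref{eq:right_adj_naive1}. Square \eqref{eq:right_adj_naive2} is obtained by applying the same analysis to the dual sequences $(\cC_n^{\vee})\to(\cD_n^{\vee})$ via $(T_n^{\vee,L})$; for this one verifies the required speciality condition for the dual morphism via the identifications $\Phi^{\dual}_{nk}\cong(\Phi_{kn})^{\vee}$ and $\Psi^{\dual}_{nk}\cong(\Psi_{kn})^{\vee}$ (contravariant duals, valid because the Mittag-Leffler essentially-constant condition makes the defining limits commute with dualization). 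Combined with $(T_n^{\vee,L})^R=T_n^{\vee}$, this identification shows that speciality condition (1) for $(T_n)$ is equivalent, after swapping $n\leftrightarrow k$, to speciality condition (2) for $(T_n^{\vee,L})$.

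For the reverse direction of (ii), the crucial observation is the automatic equality of right adjoints $(\prolim[m]T_m)^R\circ\pi_n^{R,\cD}=\pi_n^{R,\cC}\circ T_n^R$, obtained from $\pi_n^{\cD}\circ\prolim[m]T_m=T_n\circ\pi_n^{\cC}$ by taking right adjoints. Projecting via $\pi_k^{\cC}$ and using $\pi_k^{\cC}\pi_n^{R,\cC}=\Phi_{nk}$, $\pi_k^{\cD}\pi_n^{R,\cD}=\Psi_{nk}$ together with the commuting square \eqref{eq:right_adj_naive1} collapses everything to
\[\Phi_{nk}T_n^R=\pi_k^{\cC}(\prolim[m]T_m)^R\pi_n^{R,\cD}=T_k^R\pi_k^{\cD}\pi_n^{R,\cD}=T_k^R\Psi_{nk},\]
yielding speciality (2). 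Running the same argument for the dual sequences with \eqref{eq:right_adj_naive2} yields speciality (2) for $(T_n^{\vee,L})$, which under the same duality identification is equivalent to speciality (1) for $(T_n)$.

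Part (i) is then immediate from (ii): the pair of squares for $(T_n^{\vee,L})$ coincides, after swapping, with the pair for $(T_n)$ since $(-)^{\vee\vee}=\id$, so speciality is preserved under $(-)^{\vee,L}$. The main obstacle is the duality bookkeeping: confirming that $\Phi^{\dual}$ and $\Psi^{\dual}$ for the dual sequence are identified with the contravariant duals of $\Phi$ and $\Psi$ (which uses the Mittag-Leffler essentially-constant condition to interchange the defining limits with $(-)^{\vee}$), and checking that the natural mate transformations correspond correctly under this identification.
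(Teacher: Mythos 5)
Your proposal is correct, but it takes a noticeably more computational route than the paper, and its organization obscures a small redundancy.

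The paper proves (i) directly: it observes that under the canonical equivalences $\Fun^L(\cC_n^{\vee},\cD_k^{\vee})\simeq\Fun^L(\cD_k,\cC_n)$ and $\Fun^L(\cD_n^{\vee},\cC_k^{\vee})\simeq\Fun^L(\cC_k,\cD_n)$, the two mate morphisms entering the speciality condition for $(T_n^{\vee,L})$ correspond exactly to the two mates for $(T_n)$ with $n\leftrightarrow k$ swapped. This is precisely the ``duality identification'' you invoke, and your proposal correctly identifies the needed ingredient (from Proposition~\ref{prop:basic_properties_of_ML}\ref{dual_of_ML}). The paper then proves (ii) by a purely formal adjoint-passing argument: take left adjoints of the entire square \eqref{eq:right_adj_naive1}, project to the $n$-th component to obtain the condition that $\Psi_{nk}^L T_k\to T_n\Phi_{nk}^L$ is an isomorphism, then pass to right adjoints to get speciality condition (2); similarly for \eqref{eq:right_adj_naive2} and condition (1). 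This establishes the equivalence in one stroke, without splitting into two directions.

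Your proof of (ii) is more hands-on: you derive an explicit formula $\pi_k^{\cC}((\prolim_m T_m)^R(y))\cong\prolim_m\Phi_{mk}T_m^R\pi_m^{\cD}(y)$, record the auxiliary identity obtained by setting $T=\id$, and then verify each implication separately. Both directions check out, but the forward direction requires you to first establish the duality identification (essentially (i)) in order to handle \eqref{eq:right_adj_naive2}, so the final claim that ``(i) is immediate from (ii)'' is logically sound but somewhat circular in presentation — the substance of (i) has already been used to prove (ii). The paper's ordering (prove (i) independently, then (ii) via adjoint-passing) is cleaner. One further caveat worth being explicit about in your argument: in both your forward and reverse directions you compute isomorphisms of objects; you should verify that these coincide with the canonical mate transformations (as opposed to merely abstract isomorphisms), which is automatic in the paper's purely adjoint-theoretic argument but requires a naturality check in yours.
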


\begin{proof}To prove \ref{speciality_is_self_dual}, we observe that for $n,k\geq 0$ the morphism $T_k^{\vee,L}\Phi_{kn}^{\vee}\to \Psi_{kn}^{\vee}T_n^{\vee,L}$ corresponds to the morphism $\Phi_{kn}T_k^R\to T_n^R\Psi_{kn}$ under the equivalence $\Fun^L(\cC_n^{\vee},\cD_k^{\vee})\simeq \Fun^L(\cD_k,\cC_n).$ Similarly, the morphism $\Phi_{kn}^{\vee}T_n^{\vee}\to T_k^{\vee}\Psi_{kn}^{\vee}$ corresponds to the morphism $T_n\Phi_{kn}\to\Psi_{kn}T_k$ under the equivalence $\Fun^L(\cD_n^{\vee},\cC_k^{\vee})\simeq \Fun^L(\cC_k,\cD_n).$ Hence, if $(T_n)$ is a special morphism, then so is $(T_n^{\vee,L}).$

To prove \ref{right_adjoints_termwise}, we observe by passing to the left adjoints in \eqref{eq:right_adj_naive1} that the commutativity of \eqref{eq:right_adj_naive1} is equivalent to the condition that for any $n\geq 0$ the morphism $\Psi_{nk}^L T_k\to T_n\Phi_{nk}^L$ is an isomorphism. Equivalently (by passing to the right adjoints), this means that the morphism $\Phi_{nk}T_n^R\to T_k^R\Psi_{nk}$ is an isomorphism. Similarly, the commutativity of \eqref{eq:right_adj_naive2} is equivalent to the condition that for any $n\geq 0$ the morphism $T_n\Phi_{kn}\to\Psi_{kn}T_k$ is an isomorphism. This proves \ref{right_adjoints_termwise}.\end{proof}

\begin{prop}\label{prop:limit_of_epi_is_epi} We keep the notation from Definition \ref{def:special_map_of_ML}. Suppose that the morphism $(T_n)$ is special. If each functor $T_n:\cC_n\to\cD_n$ is a quotient functor, then the functor $T=(\prolim[n] T_n):\prolim[n]\cC_n\to\prolim[n]\cD_n$ is also a quotient functor.\end{prop}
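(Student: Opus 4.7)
The plan is to reduce quotient-ness of $T = \prolim[n] T_n$ to the analogous property of each $T_n$ by checking it after applying the projection functors $\pi_k': \prolim[n]\cD_n \to \cD_k$. Recall that a continuous functor is a quotient functor if and only if its right adjoint is fully faithful, or equivalently, if and only if the counit of the adjunction is an isomorphism. So I will verify that the counit $T T^R \to \id_{\prolim[n]\cD_n}$ is an isomorphism. Since each $\pi_k'$ has a left adjoint by Proposition \ref{prop:naive_limit_dualizable} \ref{left_right_adj_to_pi_k}, applied to the sequence $(\cD_n)$, the family $(\pi_k')_{k\geq 0}$ is jointly conservative, and it suffices to check that $\pi_k'(T T^R(y)) \to \pi_k'(y)$ is an isomorphism for every $y \in \prolim[n]\cD_n$ and every $k \geq 0$.

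To identify $\pi_k'(T T^R(y))$, I will first use the tautological identity $\pi_k' \circ T \cong T_k \circ \pi_k$, which holds for any morphism of sequences. The key input is then Proposition \ref{prop:basic_descr_of_special_maps} \ref{right_adjoints_termwise}, which says that speciality of the morphism $(T_n)$ is equivalent to the commutativity of the square \eqref{eq:right_adj_naive1}, namely that $\pi_k \circ T^R \cong T_k^R \circ \pi_k'$ for all $k \geq 0$. Combining these two identifications yields
\begin{equation*}
\pi_k'(T T^R(y)) \cong T_k(\pi_k(T^R(y))) \cong T_k(T_k^R(\pi_k'(y))).
\end{equation*}
Since by hypothesis each $T_k$ is a quotient functor, the counit $T_k T_k^R \to \id_{\cD_k}$ is an isomorphism, so the right-hand side is isomorphic to $\pi_k'(y)$ naturally in $y$, as required.

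There is no real obstacle: all the substantive work has already been packaged into the definition of speciality and its characterization in Proposition \ref{prop:basic_descr_of_special_maps}. In particular, the nontrivial point, that passing to the right adjoint $T^R$ commutes termwise with the projections $\pi_k, \pi_k'$, is exactly the content of speciality as reformulated in that proposition; without this, $T^R$ would only interact with projections through the essentially constant systems $(F_{mk} F_{mn}^R)$ and $(G_{mk} G_{mn}^R)$, and there would be no way to compare $\pi_k T^R$ with $T_k^R \pi_k'$ directly. Once that commutation is available, the argument above is essentially a two-line diagram chase.
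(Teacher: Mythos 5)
Your proof is correct and is essentially the paper's own argument spelled out: the paper just says ``the commutativity of the square \eqref{eq:right_adj_naive1} implies that the adjunction counit $T\circ T^R\to\id$ is an isomorphism,'' and your diagram chase through the projections $\pi_k'$ is exactly the implication left implicit there. (One small nit: the joint conservativity of the projections $\pi_k'$ out of $\prolim[n]\cD_n$ is immediate from the definition of a limit of categories and does not need the existence of left adjoints, though your conclusion is of course right.)
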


\begin{proof}
	Indeed, the commutativity of the square \eqref{eq:right_adj_naive1} from Proposition \ref{prop:basic_descr_of_special_maps} implies that the adjunction counit $T\circ T^R\to\id$ is an isomorphism.
\end{proof}

\begin{cor}\label{cor:comm_square_for_duals_of_limits}
We keep the notation of Definition \ref{def:special_map_of_ML}. Suppose that the morphism $(T_n)$ is special.

\begin{enumerate}[label=(\roman*),ref=(\roman*)]
\item The following square commutes:
\begin{equation}
	\label{eq:key_comm_square_ML}
\begin{CD}
(\prolim[n]^{\dual}\cD_n)^{\vee} @>>> (\prolim[n]^{\dual}\cC_n)^{\vee}\\
@VVV @VVV\\
\Fun(\prolim[n]\cD_n^{\kappa},\Sp) @>>> \Fun(\prolim[n]\cC_n^{\kappa},\Sp)
\end{CD}
\end{equation} \label{comm_sqaure_for_duals_of_limits}

\item If each functor $T_n:\cC_n\to\cD_n$ is a quotient functor, then the functor $\prolim[n]^{\dual}\cC_n\to\prolim[n]^{\dual}\cD_n$ is also a quotient functor. \label{dualizable_limit_or_epi_is_epi}
\end{enumerate}
\end{cor}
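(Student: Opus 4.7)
The plan is to establish (i) directly by verifying commutativity of \eqref{eq:key_comm_square_ML} on a generating class for the essential image of the left vertical arrow, and then to deduce (ii) formally.

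For (i), denote by $\pi_n^{\cC},\pi_n^{\cD}$ the projections from the naive limits $\prolim[k]\cC_k,\prolim[k]\cD_k$ onto their $n$-th factors, and by $\rho_n^{\cC},\rho_n^{\cD}$ the corresponding projections from $\prolim[k]\cC_k^{\vee},\prolim[k]\cD_k^{\vee}.$ By Corollary \ref{cor:hom_epi_onto_the_image_ML}\ref{dual_of_dualizable_limit}, the image of $\Psi_{\cD}$ is generated under colimits by objects of the form $G_x^{\cD}:=\prolim[n]\ev_{\cD_n}(\pi_n^{\cD}(-),\rho_n^{\cD}(x))$ for $x\in\prolim[n](\cD_n^{\vee})^{\kappa},$ so it suffices to trace such a generator around the square. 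The bottom horizontal (precomposition with the induced map $T^{\kappa}:=\prolim[n]T_n^{\kappa}:\prolim[n]\cC_n^{\kappa}\to\prolim[n]\cD_n^{\kappa}$) sends $G_x^{\cD}$ to
\begin{equation*}
\prolim[n]\ev_{\cD_n}(\pi_n^{\cD}T_n(-),\rho_n^{\cD}(x))\simeq \prolim[n]\ev_{\cC_n}(\pi_n^{\cC}(-),T_n^{\vee}(\rho_n^{\cD}(x))).
\end{equation*}
Since the morphism $(T_n)$ is special, so is $(T_n^{\vee,L})$ by Proposition \ref{prop:basic_descr_of_special_maps}\ref{speciality_is_self_dual}; applying \ref{right_adjoints_termwise} to this dual morphism gives $T_n^{\vee}(\rho_n^{\cD}(x))=\rho_n^{\cC}(y)$ where $y:=(\prolim[n]T_n^{\vee,L})^R(x)\in\prolim[n]\cC_n^{\vee}.$ Hence the image equals $G_y^{\cC}\in\im(\Psi_{\cC}),$ and unwinding the universal property of $\Psi$ identifies this with $\Psi_{\cC}$ applied to the image of $\Psi_{\cD}^{-1}(G_x^{\cD})$ along the top horizontal $(\prolim[n]^{\dual}T_n)^{\vee}.$

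For (ii), suppose each $T_n$ is a quotient functor in $\Cat_{\st}^{\dual}.$ By Proposition \ref{prop:limit_of_epi_is_epi}, the naive limit $T:=\prolim[n]T_n$ is a quotient functor, and Proposition \ref{prop:basic_descr_of_special_maps}\ref{right_adjoints_termwise} yields the pointwise formula $T^R=\prolim[n]T_n^R,$ which is continuous since each $T_n^R$ is. Thus $T$ is a Bousfield localization of dualizable categories preserving $\kappa$-compact objects, and a standard argument with Verdier quotients identifies the restriction $T^{\kappa}:\prolim[n]\cC_n^{\kappa}\to\prolim[n]\cD_n^{\kappa}$ as a homological epimorphism in $\Cat_{\st}^{\perf}.$ Consequently the bottom horizontal in \eqref{eq:key_comm_square_ML} becomes fully faithful on exact functors to $\Sp.$ Combined with the commutative square of (i) and the fully faithfulness of the vertical arrows (Corollary \ref{cor:hom_epi_onto_the_image_ML}\ref{dual_of_dualizable_limit}), the top horizontal $(\prolim[n]^{\dual}T_n)^{\vee}$ is fully faithful as well. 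By the usual duality between quotient functors and fully faithful embeddings in $\Cat_{\st}^{\dual},$ this means $\prolim[n]^{\dual}T_n$ is a quotient functor.

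The main obstacle is the bookkeeping in part (i): one must reconcile the various projections $\pi_n,\rho_n$ with $T_n^{\vee}$ by invoking specialness twice (once for $(T_n),$ once for its dual $(T_n^{\vee,L})$ via Proposition \ref{prop:basic_descr_of_special_maps}\ref{speciality_is_self_dual}-\ref{right_adjoints_termwise}) and verify that the identification obtained on the generators genuinely matches the action of the top horizontal arrow. A secondary technical point in (ii) is passing from the hom-epi property of the Bousfield localization $T$ on all objects to its restriction $T^{\kappa},$ which uses continuity of $T^R$ together with the description of $\kappa$-compact objects in a Verdier quotient up to retracts.
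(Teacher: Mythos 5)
Your proof is correct and follows essentially the same route as the paper: reduce part (i) to a check on the generating objects $\prolim_n\ev_{\cC_n}(\pi_n(-),\pi_n'(x))$ of the image of $\Psi$ from Corollary \ref{cor:hom_epi_onto_the_image_ML}, move $T_n$ across the evaluation and use the specialness square \eqref{eq:right_adj_naive2} (your route via \ref{speciality_is_self_dual} plus \ref{right_adjoints_termwise} for $(T_n^{\vee,L})$ is the same fact, since $(T_k^{\vee,L})^R\cong T_k^\vee$), and then deduce (ii) formally from (i) together with full faithfulness of the bottom and vertical arrows. The paper states the bottom arrow's full faithfulness as an immediate consequence of Proposition \ref{prop:limit_of_epi_is_epi}; you supply a bit more detail on why a strongly continuous Bousfield localization with $\kappa$-compact-preserving underlying functor restricts to a homological epimorphism on $\kappa$-compacts, which is a correct expansion of the same step rather than a different argument.
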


\begin{proof}
We put $T=(\prolim[n]T_n):\prolim[n]\cC_n\to\prolim[n]\cD_n,$ and $T^\prime=(\prolim[n]T_n^{\vee,L}):\prolim[n]\cC_n^{\vee}\to\prolim[n]\cD_n^{\vee}.$

We prove \ref{comm_sqaure_for_duals_of_limits}. By Corollary \ref{cor:hom_epi_onto_the_image_ML}, it suffices to prove that for any $x\in\prolim[n](\cD_n^{\vee})^{\kappa}$ the object
\begin{equation*}
\prolim[n]\ev_{\cD_n}(\pi_n(T(-)),\pi_n^\prime(x))\in \Fun(\prolim[n]\cC_n^{\kappa},\Sp)
\end{equation*}
is in the essential image of the right vertical arrow of \eqref{eq:key_comm_square_ML}. But this follows immediately from Proposition \ref{prop:basic_descr_of_special_maps} and Corollary \ref{cor:hom_epi_onto_the_image_ML}: we have
\begin{multline*}
\prolim[n]\ev_{\cD_n}(\pi_n(T(-)),\pi_n^\prime(x))\cong \prolim[n]\ev_{\cD_n}(T_n(\pi_n(-)),\pi_n^\prime(x))\\ \cong\prolim[n]\ev_{\cC_n}(\pi_n(-),T_n^{\vee}(\pi_n^\prime(-)))\cong\prolim[n]\ev_{\cC_n}(\pi_n(-),\pi_n^\prime(T^{\prime R}(x))).
\end{multline*}
Here the last isomorphism follows from Proposition \ref{prop:basic_descr_of_special_maps}. Therefore, by Corollary \ref{cor:hom_epi_onto_the_image_ML} the right hand side of this isomorphism is in the essential image of the right vertical arrow of \eqref{eq:key_comm_square_ML}. This proves \ref{comm_sqaure_for_duals_of_limits}.

Now, if each $T_n$ is a quotient functor, then by Proposition \ref{prop:limit_of_epi_is_epi} the lower horizontal arrow in \eqref{eq:key_comm_square_ML} is fully faithful. Since both vertical arrows are also fully faithful, it follows that the upper horizontal arrow is fully faithful. This exactly means that the functor $\prolim[n]^{\dual}\cC_n\to\prolim[n]^{\dual}\cD_n$ is a quotient functor. This proves \ref{dualizable_limit_or_epi_is_epi}.
\end{proof}

\begin{prop}\label{prop:cokernel_of_special_mono}
Again, we keep the notation of Definition \ref{def:special_map_of_ML} and suppose that the morphism $(T_n)$ is special. Suppose that each functor $T_n:\cC_n\to\cD_n$ is fully faithful. Then the sequence $(\cD_n/\cC_n)_{n\geq 0}$ is strongly Mittag-Leffler and the morphism $(\cD_n)\to (\cD_n/\cC_n)$ is special.
\end{prop}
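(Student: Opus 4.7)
The plan is to transfer the Mittag-Leffler structure from $(\cD_n)$ to the quotient system $(\cD_n/\cC_n)$ via the strongly continuous quotient functors $U_n\colon\cD_n\to\cD_n/\cC_n$, with the speciality of $(T_n)$ as the essential bridge. First I would observe that the compatibility $G_{nk}T_n\simeq T_kF_{nk}$ forces $G_{nk}$ to preserve the subcategories, so the induced transition $H_{nk}\colon\cD_n/\cC_n\to\cD_k/\cC_k$ is well-defined and satisfies $H_{nk}U_n\simeq U_kG_{nk}$ and $U_n^R H_{nk}^R\simeq G_{nk}^R U_k^R$. Combined with $U_m U_m^R\simeq\id$, this gives the clean formula $H_{mk}H_{mn}^R\simeq U_k G_{mk}G_{mn}^R U_n^R$, so essential constancy of $(G_{mk}G_{mn}^R)_m$ (which itself follows from condition \ref{ML1} for $(\cD_n)$ by factoring through $G_{mn}G_{mn}^R$ or $G_{mk}G_{mk}^R$ depending on whether $n\leq k$ or $k\leq n$) transfers directly to $(H_{mk}H_{mn}^R)_m$, with limit
\begin{equation*}
\Theta_{nk}\simeq U_k\Psi_{nk}U_n^R,
\end{equation*}
verifying condition \ref{ML1} for $(\cD_n/\cC_n)$.

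For condition \ref{ML2}, the right adjoint $\Theta_{nk}^R$ is obtained as follows: speciality yields $T_n^R\Psi_{nk}^R U_k^R\simeq\Phi_{nk}^R T_k^R U_k^R\simeq 0$ (since $T_k^R U_k^R\simeq(U_kT_k)^R\simeq 0$), so $\Psi_{nk}^R U_k^R$ lands in the essential image of $U_n^R$, yielding $\Theta_{nk}^R\simeq U_n\Psi_{nk}^R U_k^R$ as a composition of continuous functors; hence $\Theta_{nk}$ is strongly continuous. For the left adjoint, the naive composition formula cannot be applied because $U_k^L$ does not exist in general. The remedy relies on the key vanishing
\begin{equation*}
T_kT_k^R\Psi_{nk}U_n^R\simeq T_k\Phi_{nk}T_n^R U_n^R\simeq 0,
\end{equation*}
using speciality and $T_n^R U_n^R\simeq(U_nT_n)^R\simeq 0$; combined with the fiber sequence $T_kT_k^R\to\id\to U_k^R U_k$ this produces the identification $U_k^R U_k\Psi_{nk}U_n^R\simeq\Psi_{nk}U_n^R$. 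Chaining this identification with the fully faithfulness of $U_k^R$ and the adjunctions $\Psi_{nk}^L\dashv\Psi_{nk}$ and $U_n\dashv U_n^R$ yields the continuous left adjoint $\Theta_{nk}^L\simeq U_n\Psi_{nk}^L U_k^R$.

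Finally, speciality of $(\cD_n)\to(\cD_n/\cC_n)$ reduces to checking two isomorphisms: the first, $U_k\Psi_{nk}\to\Theta_{nk}U_n$, comes from applying the fiber sequence $T_nT_n^R\to\id\to U_n^R U_n$ and noting $U_k\Psi_{nk}T_n T_n^R\simeq U_k T_k\Phi_{nk}T_n^R\simeq 0$; the second, $\Psi_{nk}U_n^R\to U_k^R\Theta_{nk}$, is precisely the identification produced in the previous step. The main obstacle is the construction of $\Theta_{nk}^L$: since the inclusion $\cC_k\hookrightarrow\cD_k$ is typically not smashing, $U_k$ has no left adjoint, and $\Psi_{nk}^L$ cannot be descended through the quotient in the obvious way. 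The remedy, using $U_k^R$ in place of the missing $U_k^L$, is forced by the vanishing $T_n^R U_n^R\simeq 0$ combined with speciality, which is precisely why the hypothesis that $(T_n)$ is special is essential.
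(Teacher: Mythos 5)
Your proof is correct and follows essentially the same strategy as the paper: write $\Theta_{nk}=U_k\Psi_{nk}U_n^R$ using $U_mU_m^R\simeq\id$, deduce essential constancy from that of $(G_{mk}G_{mn}^R)_m$, and then establish everything from the two vanishings $T_kT_k^R\Psi_{nk}U_n^R\simeq T_k\Phi_{nk}T_n^RU_n^R\simeq 0$ and $U_k\Psi_{nk}T_nT_n^R\simeq U_kT_k\Phi_{nk}T_n^R\simeq 0$, which are exactly the computations the paper uses to show that its maps $\alpha_{nk}\colon Q_k\Psi_{nk}\to\Theta_{nk}Q_n$ and $\beta_{nk}\colon\Psi_{nk}Q_n^R\to Q_k^R\Theta_{nk}$ are isomorphisms. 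The one stylistic difference is that you derive the adjoints explicitly — noting that $\Psi_{nk}^RU_k^R$ and $\Psi_{nk}U_n^R$ factor through $U_n^R$ and $U_k^R$ respectively to produce $\Theta_{nk}^R\simeq U_n\Psi_{nk}^RU_k^R$ and $\Theta_{nk}^L\simeq U_n\Psi_{nk}^LU_k^R$ as visibly continuous composites — whereas the paper argues more tersely that $\alpha_{nk}$ being an isomorphism gives strong continuity of $\Theta_{nk}$ and that $\beta_{nk}$ being an isomorphism gives the left adjoint "by duality." Your explicit formulas make the appeal to duality for the left adjoint unnecessary and spell out precisely where full faithfulness of $U_k^R$ enters, which is a genuine improvement in legibility; the underlying mathematics is the same.
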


\begin{proof}
Denote by $Q_n:\cD_n\to\cD_n/\cC_n$ the quotient functors, $n\geq 0.$ Also, denote by $H_{nk}:\cD_n/\cC_n\to\cD_k/\cC_k$ the transition functors, $n\geq k.$  
Then for any $n\geq k$ we have an isomorphism $Q_k G_{nk}Q_n^R\cong H_{nk}.$ Hence for any $n,k\geq 0$ and $m\geq \max(n,k),$ we have
\begin{equation*}
H_{mk}H_{mn}^R\cong Q_k G_{mk} Q_m^R H_{mn}^R\cong Q_k G_{mk} G_{mn}^R Q_n^R.
\end{equation*}
It follows that for any $n,k\geq 0$ the sequence $(H_{mk}H_{mn}^R)_{m\geq n,k}$ is the image of the sequence $(G_{mk}G_{mn}^R)_{m\geq n,k}$ under the functor
\begin{equation*}
\Fun^L(\cD_n,\cD_k)\to \Fun^L(\cD_n/\cC_n,\cD_k/\cC_k),\quad F\mapsto Q_k F Q_n^R.
\end{equation*}
Therefore, the sequence $(H_{mk}H_{mn}^R)_{m\geq n,k}$ is essentially constant in $\Fun^L(\cD_n/\cC_n,\cD_k/\cC_k).$ We put $\Theta_{nk}=\prolim[m\geq n,k]H_{mk}H_{mn}^R.$

It remains to prove that the maps
\begin{equation*}
\alpha_{nk}:Q_k \Psi_{nk}\to \Theta_{nk} Q_n,\quad \beta_{nk}:\Psi_{nk}Q_n^R\to Q_k^R \Theta_{nk}
\end{equation*}
are isomorphisms. Indeed, the isomorphism $\alpha_{nk}$ would imply that the functor $\Theta_{nk}$ is strongly continuous, since $Q_n$ is a strongly continuous quotient functor. By duality, the isomorphism $\beta_{nk}$ would imply that the functor $\Theta_{nk}^{\vee}$ is strongly continuous, which equivalently means that $\Theta_{nk}$ has a left adjoint.

To see that $\alpha_{nk}$ is an isomorphism, note that we have
\begin{equation*}
\Theta_{nk}Q_n\cong Q_k \Psi_{nk} Q_n^R Q_n. 
\end{equation*}
Therefore, we have
\begin{multline*}
\Fiber(Q_k\Psi_{nk}\xto{\alpha_{nk}} \Theta_{nk} Q_n)\cong Q_k\Psi_{nk}\circ \Fiber(\id_{\cD_n}\to Q_n^R Q_n)\\
\cong Q_k\Psi_{nk} T_n T_n^R\cong Q_k T_k \Phi_{nk} T_n^R = 0.
\end{multline*}
Here the second isomorphism follows from the assumption that the morphism $(T_n)$ is special. 

The proof that $\beta_{nk}$ is an isomorphism is similar. This proves the proposition.\end{proof}

\begin{prop}\label{prop:Ind_and_Calk_of_ML} Let $(\cC_n)$ be a strongly Mittag-Leffler sequence. Then the sequences $(\Ind(\cC_n^{\kappa}))$ and $(\Ind(\Calk_{\kappa}^{\cont}(\cC_n)))$ are also strongly Mittag-Leffler sequences, and the natural morphism $(\Ind(\cC_n^{\kappa}))\to (\Ind(\Calk_{\kappa}^{\cont}(\cC_n)))$ is special.\end{prop}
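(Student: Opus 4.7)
The strategy is to verify conditions \ref{ML1} and \ref{ML2} directly for $(\Ind(\cC_n^{\kappa}))$, then to deduce the statements about $(\Ind(\Calk_{\kappa}^{\cont}(\cC_n)))$ and the induced morphism as a consequence of Proposition \ref{prop:cokernel_of_special_mono}, applied to the fully faithful morphism of sequences $(\hhat{\cY}_n): (\cC_n) \to (\Ind(\cC_n^{\kappa}))$. The crucial input for this reduction is speciality of $(\hhat{\cY}_n)$.

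The transition functors are $\hat F_{mn}:=\Ind(F_{mn}^{\kappa}):\Ind(\cC_m^{\kappa})\to\Ind(\cC_n^{\kappa})$; these are strongly continuous because every strongly continuous functor preserves $\kappa$-compact objects. I would first establish the commuting square
\begin{equation*}
\hat F_{mn}\circ\hhat{\cY}_m\cong \hhat{\cY}_n\circ F_{mn},
\end{equation*}
by observing that both sides are continuous functors $\cC_m\to\Ind(\cC_n^{\kappa})$ (continuity of $\hhat{\cY}$ coming from its strong continuity in the short exact sequence of Proposition \ref{prop:Calkin_over_rigid}) and that they agree on $x\in\cC_m^{\kappa}$. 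Passing to right adjoints, the right adjoint of $\hat F_{mn}$ is expressed as $\hat F_{mn}^R=\hhat{\cY}_m\circ F_{mn}^R\circ\colim_n$, and for compact $x\in\cC_n^{\kappa}$ one computes $\hat F_{mk}\hat F_{mn}^R(\cY(x))=\hhat{\cY}_k(F_{mk}F_{mn}^R(x))$. Since limits in $\Fun^L$ are computed pointwise and $\hhat{\cY}_k$ is a right adjoint (of $\colim_k$), setting $\wt\Phi_{nk}:=\prolim[m\geq n,k]\hat F_{mk}\hat F_{mn}^R$ yields $\wt\Phi_{nk}(\cY(x))=\hhat{\cY}_k(\Phi_{nk}(x))=\cY(\Phi_{nk}^{\kappa}(x))$, where the last equality uses strong continuity of $\Phi_{nk}$ to ensure $\Phi_{nk}(x)\in\cC_k^{\kappa}$. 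Therefore $\wt\Phi_{nk}=\Ind(\Phi_{nk}^{\kappa})$.

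From this identification both conditions of Definition \ref{def:strong_ML} are immediate: $\wt\Phi_{nk}$ is strongly continuous by construction, and it has left adjoint $\Ind(\Phi_{nk}^{L,\kappa})$ because $\Phi_{nk}^L$ is also strongly continuous (its right adjoint $\Phi_{nk}$ is continuous) and hence preserves $\kappa$-compact objects, giving an adjunction $\Phi_{nk}^{L,\kappa}\dashv\Phi_{nk}^{\kappa}$ on small stable categories that lifts to Ind-completions. For condition \ref{ML1}, the transition $\hat F_{ln}\hat F_{ln}^R\to\hat F_{mn}\hat F_{mn}^R$ evaluated on $\cY(x)$ is $\hhat{\cY}_n$ applied to the ML transition in $\cC_n$, eventually an isomorphism by hypothesis; since both sides are continuous functors, this suffices.

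Finally, to show $(\hhat{\cY}_n)$ is special: the first compatibility $\hhat{\cY}_k\Phi_{nk}\cong\wt\Phi_{nk}\hhat{\cY}_n$ is the commuting square above. The second $\Phi_{nk}\hhat{\cY}_n^R\cong\hhat{\cY}_k^R\wt\Phi_{nk}$ is obtained by passing to right adjoints from the dual square $\wt\Phi_{nk}^L\hhat{\cY}_k\cong\hhat{\cY}_n\Phi_{nk}^L$ (which holds by the same compact-object argument, using strong continuity of $\Phi_{nk}^L$). Since each $\hhat{\cY}_n$ is fully faithful, Proposition \ref{prop:cokernel_of_special_mono} concludes that $(\Ind(\cC_n^{\kappa})/\cC_n)=(\Ind(\Calk_{\kappa}^{\cont}(\cC_n)))$ is strongly Mittag-Leffler and that the induced quotient morphism is special. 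The main technical obstacle I anticipate is the identification $\wt\Phi_{nk}=\Ind(\Phi_{nk}^{\kappa})$: it requires rigorously commuting $\prolim_m$ in $\Fun^L$ past the right adjoint $\hhat{\cY}_k$ and carefully tracking where strong continuity is invoked to preserve $\kappa$-compacts.
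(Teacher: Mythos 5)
Your proposal follows the same strategy as the paper's proof: identify the stabilized transition functors $\wt\Phi_{nk}$ on the Ind-completions with $\Ind(\Phi_{nk}^{\kappa})$, show the morphism of sequences $(\hat\cY_n)\colon(\cC_n)\to(\Ind(\cC_n^{\kappa}))$ is special (which reduces to strong continuity of $\Phi_{nk}$), and then invoke Proposition \ref{prop:cokernel_of_special_mono}. The endgame is identical.

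Two wrinkles in your write-up are worth flagging, even though neither sinks the argument. First, you refer to ``$\hat\cY_k$'' as the \emph{right} adjoint to $\colim_k$, whereas the paper defines $\hat\cY$ to be the \emph{left} adjoint. The functor you actually need for the intermediate computation $\hat F_{mk}\hat F_{mn}^R(\cY(x))$ is the fully faithful right adjoint $\colim_m^R$ (restriction of $\Hom_{\cC_m}(-,F_{mn}^R(x))$ to $\cC_m^\kappa$), which differs from the paper's $\hat\cY_m$ whenever $F_{mn}^R(x)$ is not $\kappa$-compact; the two agree only once you specialize to the compact value $\Phi_{nk}(x)$ at the end. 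Relatedly, the formula $\hat F_{mn}^R=\hat\cY_m\circ F_{mn}^R\circ\colim_n$ fails as an identity of functors (the right-hand side is a localization composed with a continuous functor), though it is valid on $\cY(x)$ for $x$ compact, which is all you use. Second, the claim that limits in $\Fun^L$ are computed pointwise, and hence can be passed through a right adjoint, is not correct in general — you flag this yourself at the end as the anticipated obstacle. The paper avoids it entirely by exhibiting the entire sequence $(G_{mk}G_{mn}^R)_{m\geq n,k}$ as the image of $(F_{mk}F_{mn}^R)_{m\geq n,k}$ under a single functor $\Psi\colon\Fun^L(\cC_n,\cC_k)\to\Fun^L(\Ind(\cC_n^\kappa),\Ind(\cC_k^\kappa))$; functoriality of $\Psi$ makes essential constantness of the Ind-system immediate, and only then is the limit identified by evaluating on the eventually constant value $\Phi_{nk}$. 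Your argument would be cleanest if reorganized to establish essential constantness first (your verification of condition \ref{ML1} on compact objects already does this) and only then name the limit.
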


\begin{proof}We first prove that the sequence $(\Ind(\cC_n^{\kappa}))$ is strongly Mittag-Leffler. Denote by $G_{nk}$ the functor $\Ind(F_{nk}^{\kappa}):\Ind(\cC_n^{\kappa})\to\Ind(\cC_k^{\kappa})$ for $n\geq k.$ For any $n.k\geq 0$ the inverse sequence $(G_{mk}G_{mn}^R)_{m\geq n.k}$ is the image of $(F_{mk}F_{mn}^R)_{m\geq n.k}$ under the functor
\begin{equation*}\Psi:\Fun^L(\cC_n,\cC_k)\to\Fun^L(\Ind(\cC_n^{\kappa}),\Ind(\cC_k^{\kappa})),\end{equation*}
sending a continuous functor $F:\cC_n\to\cC_k$ to the composition
\begin{equation*}
	\Ind(\cC_n^{\kappa})\xto{\Ind(F_{\mid \cC_n^{\kappa}})}\Ind(\cC_k)\to\Ind(\cC_k^{\kappa}),	
\end{equation*}
where the latter functor is the right adjoint to the inclusion.
	
It follows that the sequence $(G_{mk}G_{mn}^R)_{m\geq n.k}$ is essentially constant, and its limit is given by $\Psi(\Phi_{nk})=\Ind(\Phi_{nk}^{\kappa})$ (since the functor $\Phi_{nk}$ is strongly continuous, it takes $\cC_n^{\kappa}$ to $\cC_k^{\kappa}$). The functor $\Ind(\Phi_{nk}^{\kappa})$ is clearly strongly continuous, and its left adjoint is given by $\Ind((\Phi_{nk}^L)^{\kappa}).$ This proves that the sequence $\Ind(\cC_n^{\kappa})$ is strongly Mittag-Leffler.
	
Next, we claim that the morphism $(\hat{\cY}_{\cC_n}):(\cC_n)\to(\Ind(\cC_n^{\kappa}))$ is special. Indeed, this exactly means that the functors $\Phi_{nk}:\cC_n\to\cC_k$ are strongly continuous. Finally, it follows from Proposition \ref{prop:cokernel_of_special_mono} that the sequence $\Ind(\Calk_{\kappa}^{\cont}(\cC_n))$ is strongly Mittag-Leffler and the morphism $(\Ind(\cC_n^{\kappa}))\to (\Ind(\Calk_{\kappa}^{\cont}(\cC_n)))$ is special.\end{proof}

\begin{proof}[Proof of Theorem \ref{th:hom_epi_for_ML}] Consider the commutative square
\begin{equation*}
\begin{CD}
\Ind(\prolim[n]\cC_n^{\kappa}) @>>> \Ind(\prolim[n]\Calk_{\kappa}^{\cont}(\cC_n))\\
@V{\sim}VV @VVV\\
\prolim[n]^{\dual}\Ind(\cC_n^{\kappa}) @>>> \prolim[n]^{\dual}\Ind(\Calk_{\kappa}^{\cont}(\cC_n))
\end{CD}		
\end{equation*}

Since the functor $\Ind((-)^{\kappa}):\Pr^L_{\st,\kappa}\to\Cat_{\st}^{\dual}$ is the right adjoint to the inclusion, we see that the left vertical functor is an equivalence. The right vertical functor is clearly fully faithful. It follows from Proposition \ref{prop:Ind_and_Calk_of_ML} and Corollary \ref{cor:comm_square_for_duals_of_limits} \ref{dualizable_limit_or_epi_is_epi} that the lower horizontal functor is a quotient functor. Therefore, the right vertical functor is an equivalence and the upper horizontal functor is a quotient functor. This proves the theorem.
\end{proof}

We mention a few statements about the dualizable limit of a strongly Mittag-Leffler sequence, which can be deduced from the results above.

\begin{remark}
Let $(\cC_n)_{n\geq 0}$ be an inverse sequence in $\Cat_{\st}^{\dual},$ which is strongly Mittag-Leffler (over $\Sp$). Consider the natural continuous functor
\begin{equation}\label{eq:from_lim_dual_to_lim}
\prolim[n]^{\dual}\cC_n\to \prolim[n]\cC_n.
\end{equation}
\begin{enumerate}[label=(\roman*),ref=(\roman*)]
	\item Recall from Proposition \ref{prop:naive_limit_dualizable} that the projection functors $\pi_k:\prolim[n]\cC_n\to\cC_k$ are strongly continuous. The sequence $(\pi_k)_{k\geq 0}$ gives a fully faithful strongly continuous functor $\iota:\prolim[n]\cC_n\to \prolim[n]^{\dual}\cC_n,$ which is the left adjoint to the functor \eqref{eq:from_lim_dual_to_lim}.
	\item Denote by $\Lambda:\prolim[n]\cC_n\to \prolim[n]^{\dual}\cC_n$ the right adjoint to \eqref{eq:from_lim_dual_to_lim}. Then the functor $\Lambda$ is fully faithful, commutes with $\omega_1$-filtered colimits, and the essential image $\Lambda(\prolim[n]\cC_n^{\omega_1})$ generates the category $\prolim[n]^{\dual}\cC_n$ via colimits.
	\item The composition 
	\begin{equation*}
	\prolim[n]\cC_n\xto{\Lambda}\prolim[n]^{\dual}\cC_n\to \Ind(\prolim[n]\cC_n^{\omega_1})\simeq \Fun((\prolim[n]\cC_n^{\omega_1})^{op},\Sp)	
	\end{equation*} 
	is given by $x\mapsto \prolim[n]\ev_{\cC_n}(\pi_n(x),\pi_n(-)^{\vee}).$
	\item The composition \begin{equation*}
		\prolim[n]\cC_n\xto{\Lambda}\prolim[n]^{\dual}\cC_n\simeq (\prolim[n]^{\dual}\cC_n^{\vee})^{\vee}
		\to \Ind((\prolim[n]\cC_n^{\vee,\omega_1})^{op})\simeq \Fun(\prolim[n]\cC_n^{\vee,\omega_1},\Sp)	
	\end{equation*}
	is given by $x\mapsto \prolim[n]\ev_{\cC_n}(\pi_n(x),\pi_n'(-)).$ Here as before we denote by $\pi_k':\prolim[n]\cC_n^{\vee}\to\cC_k^{\vee}$ the projection functors.
	\item Denote by $\Lambda':\prolim[n]\cC_n^{\vee}\to\prolim[n]^{\dual}\cC_n^{\vee}$ the similar functor for the sequence $(\cC_n^{\vee})_{n\geq 0}.$ Then the composition
	\begin{equation*}
	    (\prolim[n]\cC_n)\times(\prolim[n]\cC_n^{\vee})\xto{(\Lambda,\Lambda')} (\prolim[n]^{\dual}\cC_n)\times(\prolim[n]^{\dual}\cC_n^{\vee})
	    \simeq (\prolim[n]^{\dual}\cC_n)\times(\prolim[n]^{\dual}\cC_n)^{\vee}\xto{\ev}\Sp
	\end{equation*}
	is given by $(x,y)\mapsto \prolim[n]\ev_{\cC_n}(\pi_n(x),\pi_n'(y)).$
\end{enumerate}
\end{remark}

\subsection{Relation with the dualizable internal $\Hom$}

We now explain how the dualizable internal $\Hom$ from Section \ref{sec:dualizable_Hom} in certain situations can be described as a dualizable inverse limit of a strongly Mittag-Leffler sequence of dualizable categories. For simplicity we work over an $\bE_{\infty}$-ring $\mk,$ but all the results in this subsection can be generalized to the case when the base category is compactly generated rigid symmetric monoidal (and with some modifications we can assume it to be only $\bE_1$-monoidal).

We are interested in the categories of the form $\un{\Hom}_{\mk}^{\dual}(\Ind(\cA),\cD),$ where $\cD$ is dualizable over $\mk,$ and $\cA$ is a proper $\omega_1$-compact small $\mk$-linear idempotent-complete stable category. Recall that this condition on $\cA$ means that the triangulated category $\h\cA$ is generated by countably many objects (as an idempotent-complete triangulated subcategory), and the $\mk$-modules $\cA(x,y)$ are perfect for $x,y\in\cA.$

The goal of this subsection is to prove the following result.

\begin{theo}\label{th:comparison_Hom^dual_with_ML}Let $\cA\in\Cat_{\mk}^{\perf}$ be a proper $\omega_1$-compact idempotent-complete small stable category over $\mk.$ Choose a sequence of finitely presented categories $\cA_0\to\cA_1\to\dots$ in $\Cat_{\mk}^{\perf}$ such that $\cA\simeq\indlim[n]\cA_n.$ Let $\cD$ be any dualizable $\mk$-linear category.
\begin{enumerate}[label=(\roman*), ref=(\roman*)]
\item For $n\geq 0$ denote by $\cC_n$ the category $\Ind(\Fun_{\mk}(\cA_n,\Perf(\mk))).$ Then the inverse sequence $(\cC_n)_{n\geq 0}$ is strongly Mittag-Leffler over $\mk.$ Hence, the sequence $(\cC_n\tens{\mk}\cD)_{n\geq 0}$ is also strongly Mittag-Leffler over $\mk.$ \label{Mittag-Leffler}

\item We have a natural equivalence of short exact sequences:
\begin{equation}
\label{eq:isom_of_ses}
\begin{tikzcd}
\un{\Hom}_{\mk}^{\dual}(\Ind(\cA),\cD)\ar[r]\ar[d, "\sim"] & \Ind(\Fun_{\mk}(\cA,\cD^{\omega_1}))\ar[r]\ar[d, "\sim"] & \Ind(\Fun_{\mk}(\cA,\Calk_{\omega_1}^{\cont}(\cD)))\ar[d, "\sim"]\\
\prolim[n]^{\dual}(\cC_n\tens{\mk}\cD)\ar[r] & \Ind(\prolim[n] (\cC_n\tens{\mk}\cD)^{\omega_1})\ar[r] & \Ind(\prolim[n]\Calk_{\omega_1}^{\cont}(\cC_n\tens{\mk}\cD))
\end{tikzcd}
\end{equation}\label{isom_of_ses}
\end{enumerate}\end{theo}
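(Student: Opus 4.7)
The approach is to identify $\cC_n \tens{\mk} \cD \simeq \un{\Hom}_{\mk}^{\dual}(\Ind(\cA_n), \cD)$, which holds because each $\cA_n$, being finitely presented in $\Cat_{\mk}^{\perf}$, is smooth over $\mk$. This identification reduces both parts to standard properties of internal $\Hom$s. More precisely, smoothness of $\cA_n$ lets us apply Proposition \ref{prop:Hom^dual_from_proper_or_smooth}\ref{inclusion_for_smooth}, giving a strongly continuous fully faithful inclusion $\un{\Hom}_{\mk}^{\dual}(\Ind(\cA_n), \cD) \hookrightarrow \Ind(\cA_n)^{\vee} \tens{\mk} \cD$. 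Each compact object $F \in \Fun_{\mk}(\cA_n, \Perf(\mk))$ of $\cC_n$ extends, by smoothness, to a strongly continuous functor $\Ind(\cA_n) \to \Mod_{\mk}$, yielding a functor $\cC_n \to \un{\Hom}_{\mk}^{\dual}(\Ind(\cA_n), \Mod_{\mk})$ which one checks is an equivalence on compact objects; extending by colimits and base-changing along $\Mod_{\mk} \to \cD$ gives the desired identification.

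Given this identification, part (ii) is essentially formal. The equivalence $\cA \simeq \indlim[n] \cA_n$ in $\Cat_{\mk}^{\perf}$ gives $\Ind(\cA) \simeq \indlim[n] \Ind(\cA_n)$ in $\Cat_{\mk}^{\cg} \subset \Cat_{\mk}^{\dual}$, and since $\un{\Hom}_{\mk}^{\dual}(-, \cD)$ carries colimits to dualizable limits,
\begin{equation*}
\un{\Hom}_{\mk}^{\dual}(\Ind(\cA), \cD) \simeq \prolim[n]^{\dual} \un{\Hom}_{\mk}^{\dual}(\Ind(\cA_n), \cD) \simeq \prolim[n]^{\dual}(\cC_n \tens{\mk} \cD),
\end{equation*}
establishing the leftmost equivalence in \eqref{eq:isom_of_ses}. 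The upper row of \eqref{eq:isom_of_ses} is the short exact sequence given by Corollary \ref{cor:hom_epi_internal_hom} applied to the proper $\omega_1$-compact module $\Ind(\cA)$, while the lower row is the one produced by Theorem \ref{th:hom_epi_for_ML} applied to the strongly Mittag-Leffler sequence $(\cC_n \tens{\mk} \cD)$ from part (i). The middle and right vertical equivalences come from identifying $(\cC_n \tens{\mk} \cD)^{\omega_1} \simeq \Fun_{\mk}(\cA_n, \cD^{\omega_1})$ and $\Calk_{\omega_1}^{\cont}(\cC_n \tens{\mk} \cD) \simeq \Fun_{\mk}(\cA_n, \Calk_{\omega_1}^{\cont}(\cD))$, followed by $\prolim[n] \Fun_{\mk}(\cA_n, -) \simeq \Fun_{\mk}(\cA, -)$; naturality then gives commutativity of all squares.

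For part (i), working through the identification, the transition $F_{mn} \colon \cC_m \to \cC_n$ corresponds to precomposition with the inclusion $\Ind(\cA_n) \hookrightarrow \Ind(\cA_m)$, and $F_{mn}^R$ to precomposition with its continuous right adjoint. I would verify condition \ref{ML1} of Definition \ref{def:strong_ML} by showing that the limit $\Phi_{nn}$ is the endofunctor of $\cC_n$ corresponding to precomposition with the composite $\Ind(\cA_n) \hookrightarrow \Ind(\cA) \to \Ind(\cA_n)$ (the inclusion followed by its right adjoint at the limit), using that $\indlim[m] \Ind(\cA_m) \simeq \Ind(\cA)$ in $\Cat_{\mk}^{\cg}$. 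More generally, $\Phi_{nk}$ corresponds to precomposition with $\Ind(\cA_n) \hookrightarrow \Ind(\cA) \to \Ind(\cA_k)$. Condition \ref{ML2} of Definition \ref{def:strong_ML}, that $\Phi_{nk}$ is strongly continuous with a left adjoint, then follows from the analogous properties of the inclusions and their right adjoints into $\Ind(\cA)$; the Mittag-Leffler property for the tensored sequence $(\cC_n \tens{\mk} \cD)$ is automatic by Proposition \ref{prop:basic_properties_of_ML}\ref{tensoring_ML_with_bimodule}.

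The main obstacle will be making the stabilization argument for condition \ref{ML1} rigorous: one must verify essential constancy of $(F_{mn} F_{mn}^R)_m$ not merely pointwise but as an inverse system in $\Fun^L(\cC_n, \cC_n)$. This requires a careful cofinality argument exploiting that every compact object of $\Ind(\cA) = \indlim[m] \Ind(\cA_m)$ arises at some finite stage, combined with properness of $\cA$ to control how restrictions from $\Ind(\cA)$ interact with compact objects. A secondary technical obstacle is the identification $\cC_n \tens{\mk} \cD \simeq \un{\Hom}_{\mk}^{\dual}(\Ind(\cA_n), \cD)$ for a general (not compactly generated) dualizable $\cD$, which needs a delicate base change argument using smoothness of $\cA_n$.
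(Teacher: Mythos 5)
There is a genuine gap, and it is located at the heart of your proposed argument. Your approach hinges on the termwise identification $\cC_n\tens{\mk}\cD\simeq\un{\Hom}_{\mk}^{\dual}(\Ind(\cA_n),\cD)$, justified by smoothness of $\cA_n$ plus a ``base change'' from $\cD=\Mod_{\mk}$ to general dualizable $\cD$. But $\un{\Hom}_{\mk}^{\dual}(\Ind(\cA_n),-)$ is a \emph{right} adjoint (to $\Ind(\cA_n)\otimes-$), while $\cC_n\tens{\mk}-$ is a \emph{left} adjoint; for a natural base-change map $\un{\Hom}_{\mk}^{\dual}(\Ind(\cA_n),\Mod_{\mk})\tens{\mk}\cD\to\un{\Hom}_{\mk}^{\dual}(\Ind(\cA_n),\cD)$ to be an equivalence you would need $\Ind(\cA_n)$ to be a dualizable object of $(\Cat_{\mk}^{\dual},\tens{\mk})$, i.e.\ smooth \emph{and proper}. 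The $\cA_n$ are only smooth. Similarly, your termwise identifications $(\cC_n\tens{\mk}\cD)^{\omega_1}\simeq\Fun_{\mk}(\cA_n,\cD^{\omega_1})$ and $\Calk_{\omega_1}^{\cont}(\cC_n\tens{\mk}\cD)\simeq\Fun_{\mk}(\cA_n,\Calk_{\omega_1}^{\cont}(\cD))$ are not established and are in general false on the nose; even the claim that $\un{\Hom}_{\mk}^{\dual}(\Ind(\cA_n),\Mod_{\mk})$ is compactly generated (hence agrees with $\cC_n$) is left unjustified, and Proposition~\ref{prop:Hom^dual_from_proper_or_smooth}\ref{inclusion_for_smooth} only gives a fully faithful strongly continuous embedding into $\Ind(\cA_n^{op})\tens{\mk}\cD$, not a description of the image.

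The paper's proof deliberately avoids any termwise equivalence. It instead invokes Proposition~\ref{prop:trace_class_functors_for_fp_approx} to replace $(\cA_n)$ by a cofinal subsequence in which each transition functor $\cA_n\to\cA_{n+1}$ is \emph{trace-class} in $\Cat_{\mk}^{\perf}$. This produces termwise \emph{sandwiches}, e.g.\ $\un{\Hom}_{\mk}^{\dual}(\Ind(\cA_{n+1}),\cD)\to\cC_n\tens{\mk}\cD\to\un{\Hom}_{\mk}^{\dual}(\Ind(\cA_n),\cD)$ and $\Fun_{\mk}(\cA_{n+1},\Calk_{\omega_1}^{\cont}(\cD))\to\Calk_{\omega_1}^{\cont}(\cC_n\tens{\mk}\cD)\to\Fun_{\mk}(\cA_n,\Calk_{\omega_1}^{\cont}(\cD))$, hence \emph{pro-equivalences}, which suffice to identify the limits in \eqref{eq:isom_of_ses} without ever proving the individual terms agree. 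For part (i), the paper's verification of condition \ref{ML1} rests on Proposition~\ref{prop:diag_bimodules_ess_const}, proved by an explicit inductive construction of the approximation using deformed tensor algebras — precisely the step you flag as ``the main obstacle'' and leave open. Both of these ingredients are absent from your proposal, and the ``stabilization/cofinality argument'' you sketch in their place does not obviously close either gap. In short: your formal reduction ``$\un{\Hom}$ takes colimits to dualizable limits'' is correct and is used in the paper too, but the termwise identification it is hung on is not true as stated, and the two concrete supporting lemmas (trace-class approximations and essential constancy of diagonal bimodules) are the actual content of the theorem.
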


 To prove Theorem \ref{th:comparison_Hom^dual_with_ML}, we need several auxiliary statements. First of all, we recall that a finitely presented small $\mk$-linear stable category $(\cB\in\Cat_{\mk}^{\perf})^{\omega}$ is smooth by \cite[Proposition 2.14]{TV07} (more precisely, the result in loc. cit. is over a discrete commutative ring, but the general case is proved in the same way). 
 
 For a small $\mk$-linear stable category $\cB\in\Cat^{\perf}_{\mk}$ we denote by $\Delta_{\cB}\in\Ind(\cB\tens{\mk}\cB^{op})$ the diagonal bimodule, i.e. it corresponds to the bifunctor $\cB(-,-):\cB^{op}\times\cB\to\Mod_{\mk}.$ Given a functor $F:\cB'\to \cB,$ we denote by $(\Delta_{\cB})_{\mid \cB'\tens{\mk}\cB^{'op}}\in\Ind(\cB'\tens{\mk}\cB^{'op})$ the ``restriction of scalars'' of $\Delta_{\cB},$ i.e. it corresponds to the bifunctor $\cB(F(-),F(-)):\cB^{'op}\times\cB'\to\Mod_{\mk}.$

We will also need the following construction, which seems to be not widely known.

\begin{defi}\label{def:deformed_tensor_algebra}
\begin{enumerate}[label=(\roman*),ref=(\roman*)]
\item Let $\cE\in\Alg_{\bE_1}(\Pr^L_{\st})$ be an $\bE_1$-monoidal presentable stable category (not necessarily rigid). We define the deformed tensor algebra functor	\begin{equation*}
T^{\deff}(-):\cE_{/1_{\cE}[1]}\to \Alg_{\bE_1}(\cE)
\end{equation*}
to be the left adjoint to the composition functor
\begin{equation*}
	\Alg_{\bE_1}(\cE)\to \Alg_{\bE_0}(\cE)\simeq \cE_{1_{\cE}/}\simeq \cE_{/1_{\cE}[1]},
\end{equation*}
which is simply the functor
\begin{equation*}
 A\mapsto (\Cone(1_{\cE}\to A)\to 1_{\cE}[1]).
\end{equation*}
We will write $T^{\deff}(x),$ assuming the choice of the deformation parameter $x\to 1_{\cE}[1].$ 
\item We have a natural (exhaustive) multiplicative non-negative increasing filtration on the deformed tensor algebra, i.e. the functor
\begin{equation*}
\Fil_{\bullet}T^{\deff}(-):\cE_{/1_{\cE}[1]}\to \Alg_{\bE_1}(\Fun(\N,\cE)).
\end{equation*}
It is the left adjoint to the functor
\begin{equation*}
\Fil_{\bullet}A\mapsto (\Cone(1_{\cE}\to \Fil_1 A)\to 1_{\cE}[1]). 
\end{equation*}
The associated graded of $\Fil_{\bullet}T^{\deff}(x)$ is simply the usual graded tensor algebra $T(x).$
\item 
Let $B$ be an $\bE_1$-$\mk$-algebra, where $\mk$ is the base $\bE_{\infty}$-ring. Then we denote by $T_B^{\deff}(-)$ the composition
\begin{multline*}
T_B^{\deff}(-):(B\hy\Mod\hy B)_{/B[1]}\simeq \Fun^L_{\mk}(\Mod\hy B,\Mod\hy B)_{/\id[1]}\\
\xto{T^{\deff}(-)}\Alg_{\bE_1}(\Fun^L_{\mk}(\Mod\hy B,\Mod\hy B))\simeq \Alg_{\bE_1}(\Mod_{\mk})_{B/},
\end{multline*} 
and similarly for $\Fil_{\bullet}T_{B}^{\deff}(-).$
\end{enumerate}
\end{defi}

\begin{prop}\label{prop:diag_bimodules_ess_const} We keep the notation from Theorem \ref{th:comparison_Hom^dual_with_ML}. For each $k\geq 0$ the sequence $((\Delta_{\cA_n})_{\mid\cA_k\tens{\mk}\cA_k^{op}})_{n\geq k}$ is essentially constant in $\Ind(\cA_k\tens{\mk}\cA_k^{op}),$ and its colimit is naturally isomorphic to $(\Delta_{\cA})_{\mid \cA_k\tens{\mk}\cA_k^{op}}.$\end{prop}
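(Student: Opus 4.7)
The plan is to reduce the statement to a pointwise computation of mapping spectra, identify the colimit explicitly, and then obtain the essential constancy from the $\omega_1$-compactness and properness of $\cA$ together with finite presentation of each $\cA_n$.

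First I would compute the colimit. Since $\cA\simeq\indlim_{n}\cA_n$ in $\Cat_{\mk}^{\perf}$, mapping spectra are computed as filtered colimits of mapping spectra in the $\cA_n$'s (idempotent completion does not alter homs between objects of the non-completed colimit). Explicitly, for $x,y\in\cA_k$,
\begin{equation*}
\cA(F_k(x),F_k(y))\simeq\indlim_{n\geq k}\cA_n(F_{kn}(x),F_{kn}(y)).
\end{equation*}
Identifying $\Ind(\cA_k\tens_{\mk}\cA_k^{op})\simeq\Fun^{\ex}_{\mk}(\cA_k^{op}\tens_{\mk}\cA_k,\Mod_{\mk})$, in which filtered colimits are computed objectwise on $(x,y)$, this upgrades to a natural isomorphism
\begin{equation*}
\indlim_{n\geq k}(\Delta_{\cA_n})_{\mid\cA_k\tens_{\mk}\cA_k^{op}}\simeq (\Delta_{\cA})_{\mid\cA_k\tens_{\mk}\cA_k^{op}}
\end{equation*}
in $\Ind(\cA_k\tens_{\mk}\cA_k^{op})$.

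Next, to show essential constancy, I would invoke the $\omega_1$-compactness of $\cA$ in $\Cat_{\mk}^{\perf}$. Passing to ind-completions, this yields that $\Ind(\cA)$ is $\omega_1$-compact in $\Cat_{\mk}^{\dual}$ (via Theorem~\ref{th:presentability_of_Cat_E^dual}), so the relative coevaluation preserves $\omega_1$-compact objects and hence $\Delta_{\cA}=\coev_{\Ind(\cA)/\mk}(\mk)$ is $\omega_1$-compact in $\Ind(\cA\tens_{\mk}\cA^{op})$. Using the equivalence $\cA\tens_{\mk}\cA^{op}\simeq\indlim_n \cA_n\tens_{\mk}\cA_n^{op}$ in $\Cat_{\mk}^{\perf}$ together with $\omega_1$-compactness, one presents $\Delta_{\cA}$ via a countable filtered system of compacts pulled back from successive $\cA_{n}\tens_{\mk}\cA_{n}^{op}$'s. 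Restricting along $\cA_k\to\cA_n$ and using the finite generation of $\cA_k$ (which reduces stabilization to finitely many pairs of generators) together with properness of $\cA$ (which ensures the target values lie in $\Perf(\mk)$), one obtains that the restricted system is equivalent to a constant ind-object in $\Ind(\cA_k\tens_{\mk}\cA_k^{op})$ with the desired value.

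The main obstacle is the second step. Making the essential constancy precise requires careful interaction between the $\omega_1$-compactness of $\Delta_{\cA}$ (giving countable data), the finite presentation of $\cA_k$ (controlling the restriction), and properness (controlling the size of the resulting bifunctors). The pointwise description in the first step does not by itself yield stabilization; one must use the dualizable-category structure of $\Ind(\cA)$ and its behavior under the filtered colimit presentation $\cA\simeq\indlim_n\cA_n$ to extract it. This is the conceptual heart of the argument and the reason the proposition requires both the properness and $\omega_1$-compactness assumptions on $\cA$.
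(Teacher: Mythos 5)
Your first step is correct: the restriction of $\Delta_{\cA}$ to $\cA_k\tens_{\mk}\cA_k^{op}$ is indeed the filtered colimit of the restricted $\Delta_{\cA_n}$'s, by the pointwise computation of mapping spectra in $\cA=\indlim_n\cA_n$. The gap is entirely in the second step, where you claim essential constancy. Essential constancy of the ind-object $\inddlim_{n\geq k}(\Delta_{\cA_n})_{\mid\cA_k\tens_{\mk}\cA_k^{op}}$ means that for each $n$ there is some $m\geq n$ and a factorization $(\Delta_{\cA_n})_{\mid\cA_k\tens\cA_k^{op}}\to (\Delta_{\cA})_{\mid\cA_k\tens\cA_k^{op}}\to(\Delta_{\cA_m})_{\mid\cA_k\tens\cA_k^{op}}$ compatible with transition maps. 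The ingredients you invoke — $\omega_1$-compactness of $\Delta_{\cA}$ (giving a countable presentation), finite generation of $\cA_k$ (reducing to finitely many generating pairs), properness of $\cA$ (giving $\cA(x,y)\in\Perf(\mk)$) — do not obviously produce such factorizations. In particular, properness controls the values of the \emph{colimit} $(\Delta_{\cA})_{\mid\cA_k\tens\cA_k^{op}}$, but the objects $\cA_n(F_{kn}(x),F_{kn}(y))$ to which you would need to factor through are not perfect over $\mk$ in general (each $\cA_n$ is smooth but need not be proper), and neither countable presentability nor pointwise convergence gives a map from the colimit back into the system. This is precisely the step that is ``non-formal,'' and your sketch acknowledges the difficulty without resolving it.

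The paper's proof takes a completely different route. It first observes that the truth of the proposition depends only on the ind-object $\inddlim_n\cA_n\in\Ind(\Cat_{\mk}^{\perf})$, which is independent of the choice of finitely presented approximation. Then, rather than argue abstractly, it \emph{constructs} a specific approximation using the deformed tensor algebra functor $T^{\deff}$ (Definition~\ref{def:deformed_tensor_algebra}). In the one-generator case $\cA\simeq\Perf(A)$, it sets $A_0=\mk$ and $A_{n+1}=T_{A_n}^{\deff}(\Cone(A_n\to A))$; by the universal property of the deformed tensor algebra, the factorization $A_n\to A\to A_{n+1}$ exists in $A_n\hy\Mod\hy A_n$ \emph{by construction}. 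Smoothness of $A_n$ and properness of $A$ are used to show each $A_{n+1}$ remains finitely presented, so the induction keeps running. The general (countably generated) case alternates deformed-tensor steps with steps adding a new generator. This constructive approach sidesteps the need to deduce factorizations from abstract finiteness conditions, which is exactly where your proposal stalls. If you want to salvage your route, you would need at minimum the paper's ``depends only on the ind-object'' reduction, and then some replacement for the explicit construction — there is no purely formal way around it.
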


\begin{proof}
Note that the required property of the sequence $(\cA_n)$ depends only on the ind-object $\inddlim[n]\cA_n\in\Ind(\Cat_{\mk}^{\perf}).$ Hence, it suffices to prove the proposition for some convenient choice of an approximation of $\cA$ by finitely presented $\mk$-linear categories. We will give a direct inductive construction of such an approximation.

We first consider the special case when $\cA$ is generated by a single object, and then explain the modifications for the general case. Suppose that $\cA\simeq\Perf(A)$ for some (proper) $\bE_1$-$\mk$-algebra $A.$ We define the sequence of $\bE_1$-$\mk$-algebras $(A_n)_{n\geq 0}$ by the following inductive construction. We put $A_0=\mk,$ and $A_{n+1}=T_{A_n}^{\deff}(\Cone(A_n\to A)).$ Here the deformation parameter is given by the natural map $\Cone(A_n\to A)\to A_n[1].$ We define the map of $\bE_1$-algebras $A_{n+1}\to A$ to be the adjunction counit, i.e. it corresponds to the identity map of the object $\Cone(A_n\to A)$ in $(A_n\hy\Mod\hy A_n)_{/A_n[1]}.$

Inductively, we see that $A_n$ is finitely presented. Indeed, this is clear for $n=0.$ Assume that some $A_n$ is finitely presented. Then $A_n$ is smooth, and so is $A_n\tens{\mk}A_n^{op}.$ By assumption, $A$ is perfect over $\mk,$ hence $A$ is also perfect over $A_n\tens{\mk}A_n^{op}.$ Therefore, $\Cone(A_n\to A)$ is also perfect over $A_n\tens{\mk}A_n^{op},$ which implies that $A_{n+1}$ is finitely presented.

By construction, we have a factorization in the category of $A_n\hy A_n$-bimodules:
\begin{equation*}
A_n\to A\to A_{n+1}
\end{equation*}
In particular, for each $k\geq 0$ the sequence $(A_n)_{n\geq k}$ is essentially constant in $A_k\hy\Mod\hy A_k,$ and its colimit is isomorphic to $A.$ This proves the proposition in the case when $\cA$ is generated by a single object.

In the general case, we choose a generating sequence $(x_n)_{n\geq 0}$ of objects of $\cA.$ We construct the sequence $(\cA_n)_{n\geq 0}$ in $\Cat^{\perf}_{\mk}$ with a compatible sequence of strongly continuous $\mk$-linear functors $F_n:\Ind(\cA_n)\to\Ind(\cA)$ by induction. 

We put $\cA_0=\Perf(\mk),$ and the functor $F_0:\Ind(\cA_0)\to\Ind(\cA)$ sends $\mk$ to $x_0.$ For $n$ even, the category $\cA_{n+1}$ is defined as follows. We first consider the $\bE_1$-algebra in the $\bE_1$-monoidal category $\Fun_{\mk}^L(\Ind(\cA_n),\Ind(\cA_n)),$ given by the deformed tensor algebra $B_{n+1}=T^{\deff}(\Cone(\id\to F_n^R F_n)),$ where the deformation parameter is given by the natural map $\Cone(\id\to F_n^R F_n)\to \id[1].$ We consider $B_{n+1}$ as a monad on $\Ind(\cA_n),$ and put $\cA_{n+1}=\Mod_{B_{n+1}}(\Ind(\cA_n))^{\omega}.$ The functor $\Ind(\cA_{n+1})\to \Ind(\cA),$ considered as a morphism in $(\Cat_{\mk}^{\cg})_{\Ind(\cA_n)/},$ corresponds again to the identity map of the object $\Cone(\id\to F_n^R F_n)$ in $\Fun_{\mk}^L(\Ind(\cA_n),\Ind(\cA_n))_{/\id[1]}.$

For $n=2m+1$ odd, we put $\cA_{n+1}=\cA_n\times\Perf(\mk),$ and the functor 
\begin{equation*}
F_{n+1}:\Ind(\cA_{n+1})\simeq\Ind(\cA_n)\times \Mod_{\mk}\to\Ind(\cA)
\end{equation*} is given by the pair $(F_n,x_{m+1}).$

Arguing inductively as above, we see that each $\cA_n$ is finitely presented. For even $n$ we have a factorization
\begin{equation*}
\Delta_{\cA_n}\to (\Delta_{\cA})_{\mid \cA_n\tens{\mk}\cA_n^{op}}\to (\Delta_{\cA_{n+1}})_{\mid \cA_n\tens{\mk}\cA_n^{op}}
\end{equation*}
in the category $\Ind(\cA_n\tens{\mk}\cA_n^{op}).$ This proves that for $k\geq 0$ the sequence $((\Delta_{\cA_n})_{\mid\cA_k\tens{\mk}\cA_k^{op}})_{n\geq k}$ is essentially constant in $\Ind(\cA_k\tens{\mk}\cA_k^{op}),$ and its colimit is isomorphic to $(\Delta_{\cA})_{\mid \cA_k\tens{\mk}\cA_k^{op}}.$ This also proves that the functor $\indlim[n]\cA_n\to \cA$ is fully faithful. It is in fact an equivalence, because its image contains all the objects $x_n,$ $n\geq 0,$ by construction. This proves the proposition. 
\end{proof}

Next, we need the following statement about trace-class morphisms in $\Cat_{\mk}^{\perf}.$

\begin{lemma}\label{lem:trace_class_functors_from_smooth}
Let $A\to B$ be a map of $\bE_1$-$\mk$-algebras, and suppose that $A$ is smooth over $\mk.$ The following are equivalent.
\begin{enumerate}[label=(\roman*),ref=(\roman*)]
\item The functor $\Perf(A)\to\Perf(B)$ is trace-class as a morphism in the symmetric monoidal category $\Cat_{\mk}^{\perf}.$ \label{trace_class_in_Cat^perf}
\item As a left $A$-module, $B$ is contained in the category $\Ind(\Rep_{\mk}(A,\Perf(\mk)))\subset A\hy\Mod.$ \label{B_ind_finite_dimensional}
\end{enumerate}
\end{lemma}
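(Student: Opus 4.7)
The plan is to unwind the definition of trace-class in the rigid symmetric monoidal category $\Cat_{\mk}^{\perf}$ and translate it into a concrete statement about $A$-$B$-bimodules. First I would identify the predual of $\Perf(A)$ in $\Cat_{\mk}^{\perf}$: a $\mk$-linear exact functor $\Perf(A)\to\Perf(\mk)$ is determined by its value on $A$, which is a left $A$-module perfect over $\mk$, giving the identification $\Perf(A)^{\vee}\simeq \Rep_{\mk}(A,\Perf(\mk))$. Trace-classness of $F$ then translates into the statement that $B$, viewed as an $A$-$B$-bimodule, lies in the essential image of the natural $\mk$-linear functor
\begin{equation*}
\Phi : \Perf(B) \otimes_{\mk} \Rep_{\mk}(A,\Perf(\mk)) \to \Perf(A^{op}\otimes_{\mk} B),\qquad N\boxtimes L\mapsto L\otimes_{\mk} N,
\end{equation*}
which one checks is valued in perfect $A$-$B$-bimodules.

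The key input from smoothness of $A$ is that the diagonal bimodule is perfect, i.e. $A\in \Perf(A\otimes_{\mk}A^{op})$. I would extract two consequences up front: first, any $L\in \Rep_{\mk}(A,\Perf(\mk))$ is perfect as a left $A$-module, because applying the diagonal resolution exhibits $L$ as a retract of a finite iterated cone of free $A$-modules of the form $A\otimes_{\mk} V$ with $V\in \Perf(\mk)$; second, $B$ as an $A$-$B$-bimodule is compact in $A\hy\Mod\hy B$, obtained from the (compact) diagonal by applying the compactness-preserving functor $-\otimes_A B$.

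The central step is to identify the Ind-extension of $\Phi$ with a fully faithful continuous inclusion
\begin{equation*}
\Ind(\Phi)\,:\,\Mod_B(\Ind(\Rep_{\mk}(A,\Perf(\mk))))\hookrightarrow \Mod_B(A\hy\Mod)\simeq A\hy\Mod\hy B,
\end{equation*}
via the standard identification $\Mod\hy B\otimes_{\mk}\cD\simeq \Mod_B(\cD)$ for $\mk$-linear presentable $\cD$, together with the full faithfulness of $\Ind(\Rep_{\mk}(A,\Perf(\mk)))\hookrightarrow A\hy\Mod$. Under this identification, an object of $A\hy\Mod\hy B$ lies in the source if and only if its underlying left $A$-module lies in $\Ind(\Rep_{\mk}(A,\Perf(\mk)))$, because the right $B$-action maps are then automatically morphisms in the subcategory. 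Specialising to $B$ itself, this yields the equivalence of \ref{B_ind_finite_dimensional} with the statement that $B$ belongs to the essential image of $\Ind(\Phi)$.

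It remains to bridge the gap between trace-classness (image of $\Phi$) and membership in the image of $\Ind(\Phi)$. Since the inclusion $\Mod_B(\Ind(\Rep_{\mk}(A,\Perf(\mk))))\hookrightarrow A\hy\Mod\hy B$ preserves compact generators — the objects $L\otimes_{\mk}B$ for $L\in\Rep_{\mk}(A,\Perf(\mk))$ are compact in $A\hy\Mod\hy B$ by the first consequence of smoothness above — it both preserves and reflects compactness. Combined with compactness of $B$ in $A\hy\Mod\hy B$ (the second consequence), this forces $B$ to be compact in the subcategory whenever it lies there, so it arises from a genuine object of $\Perf(B)\otimes_{\mk}\Rep_{\mk}(A,\Perf(\mk))$ via $\Phi$. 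The main obstacle will be managing these compactness-preservation statements under the sole hypothesis of smoothness (not properness) of $A$: every step relies on propagating perfectness of the diagonal bimodule through explicit resolutions, and one has to verify that none of them secretly demands that $A$ be proper over $\mk$.
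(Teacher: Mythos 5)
Your proof is correct and follows the same strategy as the paper: identify $\Perf(A)^{\vee}\simeq\Rep_{\mk}(A,\Perf(\mk))$, translate trace-classness of $F$ into $B$ lying in the essential image of $\Rep_{\mk}(A,\Perf(\mk))\tens{\mk}\Perf(B)\to\Perf(A^{op}\tens{\mk}B)$, use smoothness to obtain $\Rep_{\mk}(A,\Perf(\mk))\subset\Perf(A^{op})$ and compactness of $B$ in $A\hy\Mod\hy B$, and reduce everything to the identification of the cocomplete subcategory $\cC\subset A\hy\Mod\hy B$ generated by $\Rep_{\mk}(A,\Perf(\mk))\tens{\mk}\Perf(B)$ with the full subcategory $\cC'$ of bimodules whose underlying left $A$-module lies in $\Ind(\Rep_{\mk}(A,\Perf(\mk)))$, after which compactness of $B$ as a bimodule promotes $B\in\cC$ to $B$ lying in the compact generators. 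The one place you diverge is in how $\cC=\cC'$ is justified. The paper uses a one-line right-orthogonality argument: if $N\in\cC'$ is right-orthogonal to every $L\tens{\mk}P$, then taking $P=B$ shows that $N$, viewed in $\Ind(\Rep_{\mk}(A,\Perf(\mk)))$, receives no nonzero map from any $L\in\Rep_{\mk}(A,\Perf(\mk))$, forcing $N=0$. You instead recognize the inclusion $\cC\hookrightarrow A\hy\Mod\hy B$ as $\Mod_B(-)\simeq-\tens{\mk}\Mod\hy B$ applied to the fully faithful strongly continuous inclusion $\Ind(\Rep_{\mk}(A,\Perf(\mk)))\hookrightarrow A\hy\Mod$, and read off its essential image from the kernel description together with conservativity of the forgetful functor on $B$-modules. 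This carries a bit more machinery than the paper's orthogonality computation, but it makes the full faithfulness of $\Ind(\Phi)$ explicit rather than implicit, and it cleanly spells out why landing in $\cC$ plus compactness of $B$ finishes the proof — a step the paper compresses into the parenthetical ``(compactly generated).'' Your closing worry is unfounded: every compactness input you invoke — that each $L\in\Rep_{\mk}(A,\Perf(\mk))$ is perfect over $A$, that $L\tens{\mk}N$ is perfect over $A^{op}\tens{\mk}B$, and that $B$ itself is perfect over $A^{op}\tens{\mk}B$ — is obtained by resolving the diagonal $A\in\Perf(A\tens{\mk}A^{op})$ and tensoring, which uses only smoothness of $A$ and never its properness.
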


\begin{proof}
First, recall that smoothness of $A$ implies that we have an inclusion $\Rep_{\mk}(A,\Perf(\mk))\subset\Perf(A^{op}),$ which induces a fully faithful functor on the ind-completions.

\Implies{trace_class_in_Cat^perf}{B_ind_finite_dimensional}. The condition \ref{trace_class_in_Cat^perf} implies that the $A\hy B$-bimodule $B$ is contained in the idempotent-complete stable subcategory generated by the objects $M\otimes B,$ where $M\in \Rep_{\mk}(A,\Perf(\mk)).$ This implies \ref{B_ind_finite_dimensional}.

\Implies{B_ind_finite_dimensional}{trace_class_in_Cat^perf}. Smoothness of $A$ implies that we have inclusions 
\begin{equation*}
\Rep_{\mk}(A,\Perf(\mk))\tens{\mk} \Perf(B)\subset \Fun_{\mk}(\Perf(A),\Perf(B))\subset \Perf(A^{op}\tens{\mk}B).
\end{equation*}
Therefore, it suffices to show that the $A\hy B$-bimodule $B$ is contained in the (compactly generated) cocomplete subcategory $\cC\subset A\hy\Mod\hy B,$ generated by $\Rep_{\mk}(A,\Perf(\mk))\tens{\mk} \Perf(B).$ In fact, the stronger statement holds: $\cC$ coincides with the full subcategory $\cC'\subset A\hy\Mod\hy B,$
 which consists of bimodules $N$ such that as an $A$-module $N$ is contained in $\Ind(\Rep_{\mk}(A,\Perf(\mk))).$ Indeed, we have $\cC\subset\cC'.$ The right orthogonal to $\Rep_{\mk}(A,\Perf(\mk))\tens{\mk} \Perf(B)$ in $\cC'$ is zero, hence $\cC=\cC'.$ This proves the proposition.	 
\end{proof}

\begin{prop}\label{prop:trace_class_functors_for_fp_approx}
We keep the notation from Theorem \ref{th:comparison_Hom^dual_with_ML}. For each $n\geq 0$ there exists $k\geq n$ such that the functor $\cA_n\to\cA_k$ is trace-class as a morphism in $\Cat_{\mk}^{\perf}.$
\end{prop}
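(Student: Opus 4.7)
The plan proceeds in two main steps: first, showing that the natural functor $F_n:\cA_n\to\cA$ is trace-class in $\Cat_{\mk}^{\perf}$; second, descending this property to the transition functor $F_{nk}:\cA_n\to\cA_k$ for some $k\geq n$.

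For the first step, I would reduce to a bimodule computation over $\bE_1$-algebras, in effect generalizing Lemma \ref{lem:trace_class_functors_from_smooth} from algebra maps to arbitrary functors. Since $\cA_n$ is finitely presented, it is generated (as an idempotent-complete stable category) by finitely many compact objects $\{p_j\}$, so $\cA_n\simeq\Perf(A_n)$ for a smooth $\bE_1$-$\mk$-algebra $A_n$. Since $\cA$ is $\omega_1$-compact and $F_n$ is exact, its essential image lies in the thick subcategory $\cA^F\subset\cA$ generated by the finite set $\{F_n(p_j)\}$; writing $A_F=\End_{\cA}(\bigoplus_j F_n(p_j))$ identifies $\cA^F\simeq\Perf(A_F)$, and properness of $\cA$ forces each $\Hom_{\cA}(F_n(p_i),F_n(p_j))$ to lie in $\Perf(\mk)$, so $A_F$ is proper. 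Thus $F_n$ factors as $\Perf(A_n)\to\Perf(A_F)\hookrightarrow\cA$, and it suffices to show that any $\mk$-linear functor $F:\Perf(A_n)\to\Perf(A_F)$ with $A_n$ smooth and $A_F$ proper is trace-class.

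Such an $F$ corresponds to an $(A_F,A_n)$-bimodule $M=F(A_n)$ that is perfect as a left $A_F$-module, and hence — since $A_F$ is proper — perfect over $\mk$ as well, so the underlying right $A_n$-module lies in $\Rep_{\mk}(A_n,\Perf(\mk))$. The central technical step is to realize $M$, up to a retract, as a finite iterated extension in the category of $(A_F,A_n)$-bimodules whose successive quotients take the form $A_F\tens{\mk}V_i$ with $V_i\in\Rep_{\mk}(A_n,\Perf(\mk))$. Each such quotient is an elementary trace-class bimodule, lying in the image of the natural map $\Rep_{\mk}(A_n,\Perf(\mk))\tens{\mk}\Perf(A_F)\to\Fun_{\mk}(\Perf(A_n),\Perf(A_F))$, so $M$ will lie in the thick subcategory generated by trace-class bimodules, making $F$ trace-class. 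I expect this cell-level argument — producing a cell filtration of $M$ over $A_F$ that is compatible with the commuting right $A_n$-action, and deducing its finiteness from perfectness of $M$ as a left $A_F$-module — to be the main technical obstacle.

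For the second step, observe that $\cA_n$ is compact in $\Cat_{\mk}^{\perf}$ (being finitely presented), so $\Fun_{\mk}(\cA_n,-)$ commutes with filtered colimits, and the same then holds for the full subcategory of trace-class morphisms out of $\cA_n$. Writing $\cA\simeq\indlim[k]\cA_k$, the trace-class $F_n$ thus arises as $F_n\simeq F_{k_0}\circ G$ for some trace-class $G:\cA_n\to\cA_{k_0}$ and some $k_0\geq n$. Comparing $G$ with the actual transition $F_{n,k_0}$: both compose with $\cA_{k_0}\to\cA$ to give $F_n$, so a further application of the compactness of $\cA_n$ yields some $k\geq k_0$ with $F_{k_0,k}\circ G\simeq F_{nk}$ in $\Fun_{\mk}(\cA_n,\cA_k)$. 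Since trace-class morphisms form a two-sided ideal under composition, $F_{nk}$ is then trace-class, as required.
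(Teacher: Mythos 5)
Your strategy is sound and genuinely different from the paper's. The paper first reduces to a specific choice of $(\cA_n)_{n\geq 0}$, namely the one constructed inductively via deformed tensor algebras in the proof of Proposition \ref{prop:diag_bimodules_ess_const}, and then proves directly that each $\cA_n\to\cA_{n+1}$ (for $n$ even) is trace-class by running Lemma \ref{lem:trace_class_functors_from_smooth} against an explicit filtration of the deformed tensor algebra $T^{\deff}_C(M)$ whose associated graded pieces are perfect over $\mk$. You instead work with an arbitrary sequence: you factor $F_n\colon\cA_n\to\cA$ through $\Perf(A_F)$ with $A_F$ proper (via properness of $\cA$ and the fact that $\cA_n$ has finitely many generators), argue that any $\mk$-linear functor from a smooth $\Perf(A_n)$ to a proper $\Perf(A_F)$ is trace-class, and descend via compactness of $\cA_n$ in $\Cat_{\mk}^{\perf}$. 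The descent step is correct as written. This is a legitimate alternative route, and one the paper itself alludes to (``a more formal proof'' is promised in \cite{E}).

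The one genuine issue is the step you flag yourself: the proposed cell filtration of $M$ with subquotients of the form $A_F\otimes_{\mk}V_i$, $V_i\in\Rep_{\mk}(A_n,\Perf(\mk))$. You treat this as the main obstacle, and rightly so — such a filtration by sub-bimodules is not at all clear to exist, and I would not try to build it. But you do not need it: the orthogonality argument in the proof of Lemma \ref{lem:trace_class_functors_from_smooth} (implication \ref{B_ind_finite_dimensional}$\Rightarrow$\ref{trace_class_in_Cat^perf}) already generalizes verbatim from the diagonal bimodule $B$ to an arbitrary bimodule $M\in\Fun_{\mk}(\Perf(A),\Perf(B))$. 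Indeed, the argument only uses that $M$, as a left $A$-module, lies in $\Ind(\Rep_{\mk}(A,\Perf(\mk)))$: one shows that the cocomplete subcategory $\cC$ of $A\hy\Mod\hy B$ generated by $\Rep_{\mk}(A,\Perf(\mk))\otimes_{\mk}\Perf(B)$ coincides with the subcategory $\cC'$ of bimodules whose underlying left $A$-module lies in $\Ind(\Rep_{\mk}(A,\Perf(\mk)))$, by checking that the right orthogonal of $\Rep_{\mk}(A,\Perf(\mk))\otimes_{\mk}\Perf(B)$ inside $\cC'$ vanishes; since $A$ is smooth and $M$ is perfect over $B$, $M$ is compact in $A\hy\Mod\hy B$, hence compact in $\cC$, hence lies in $\Rep_{\mk}(A,\Perf(\mk))\otimes_{\mk}\Perf(B)$, which is precisely the trace-class condition. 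In your setting $B=A_F$ is proper, so $M$ is perfect over $\mk$ and a fortiori $M\in\Rep_{\mk}(A_n,\Perf(\mk))$ as a left $A_n$-module, making the hypothesis automatic. So if you replace the filtration sketch with an appeal to this orthogonality argument (which you already observe suffices to invoke), your proof closes.
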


\begin{proof} 
In the forthcoming paper \cite{E} we will give a more formal proof of this result. Here we give an ``explicit'' proof.

It suffices to prove this statement for the sequence $(\cA_n)_{n\geq 0}$ constructed in the proof of Proposition \ref{prop:diag_bimodules_ess_const}. We claim that in this case for each even $n$ the functor $\cA_n\to\cA_{n+1}$ is trace-class. Note that $\cA_n$ is generated by a single object, and its image in $\cA_{n+1}$ is also a generator. Hence, it suffices to prove the following statement.

{\noindent{\bf Claim.}} {\it Let $C$ be a smooth $\bE_1$-$\mk$-algebra, and let $M$ be a $C\hy C$-bimodule with a bimodule map $\alpha:M\to C[1]$ such that $\Fiber(\alpha)$ is perfect as a $\mk$-module. Consider the corresponding deformed tensor algebra $T_C^{\deff}(M).$ Then the functor $\Perf(C)\to\Perf(T_C^{\deff}(M))$ is trace-class as a morphism in $\Cat_{\mk}^{\perf}.$}

\begin{proof}[Proof of Claim] By Lemma \ref{lem:trace_class_functors_from_smooth} it suffices to prove that the left $C$-module $T_C^{\deff}(M)$ is contained in the full subcategory $\Ind(\Rep_{\mk}(C,\Perf(\mk)))\subset C\hy\Mod.$ To see this, consider the filtration $\Fil_{\bullet}T_C^{\deff}(M)$ as in Definition \ref{def:deformed_tensor_algebra}.
It suffices to prove that each quotient $\Fil_{2n+1}/\Fil_{2n-1}$ for $n\geq 0$ is perfect as a $\mk$-module (where $\Fil_{-1}=0$).

Put $N=\Fiber(\alpha).$ Since $N$ is perfect as a $\mk$-module and $C$ is smooth, it follows that $N$ is also perfect as a left $C$-module, as a right $C$-module and as a bimodule. It follows that all the tensor powers $N^{\otimes_C m}$ are perfect over $\mk$ for $m>0.$ Now, for each $k>0$ the $C\hy C$-bimodule $\Fiber(M^{\otimes_C k}\to  C[1]^{\otimes_C k}\cong C[k])$ has a finite filtration with subquotients being finite direct sums of copies of $N^{\otimes m}[k-m]$ for $1\leq m\leq k.$ Hence, the map $ M^{\otimes_C k}\to C[k]$ becomes an isomorphism in the quotient $\Perf(C\tens{\mk}C^{op})/\Rep_{\mk}(C^{op}\tens{\mk}C,\Perf(\mk)).$ It remains to observe 
$\Fil_{2n+1}/\Fil_{2n-1}$ is an extension of $M^{\otimes_C (2n+1)}$ by $M^{\otimes_C 2n},$ and the image of the corresponding map $M^{\otimes_C (2n+1)}\to M^{\otimes_C 2n}[1]$ in $\Perf(C\tens{\mk}C^{op})/\Rep_{\mk}(C^{op}\tens{\mk}C,\Perf(\mk))$ is isomorphic to the identity map of $C[2n+1].$ This proves that the quotient $\Fil_{2n+1}/\Fil_{2n-1}$ is perfect over $\mk.$
\end{proof}
The proposition is proved.
\end{proof}

\begin{remark}
The statement of Proposition \ref{prop:trace_class_functors_for_fp_approx} plays an important role in working with the category $\Mot_{\mk}^{\loc},$ as we will explain in \cite{E}.
\end{remark}

\begin{proof}[Proof of Theorem \ref{th:comparison_Hom^dual_with_ML}] 
We first prove \ref{isom_of_ses}. By Proposition \ref{prop:trace_class_functors_for_fp_approx}, we may and will assume that all the functors $\cA_n\to\cA_{n+1}$ are trace-class in $\Cat^{\perf}_{\mk}.$ For $n\geq 0$ we have inclusions $\Fun_{\mk}(\cA_n,\Perf(\mk))\subset\cA_n^{op},$ and our assumption implies that the (right adjoint) functor $\Ind(\cA_{n+1}^{op})\to \Ind(\cA_n^{op})$ factors through $\cC_n=\Ind(\Fun_{\mk}(\cA_n,\Perf(\mk))).$ Since the categories $\cA_n$ are finitely presented, these right adjoint functors preserve $\omega_1$-compact objects. In other words, we obtain a pro-equivalence
\begin{equation}\label{eq:pro_equivalence_Pr^L}
\proolim[n]\Ind(\cA_n^{op})\xto{\sim} \proolim[n]\cC_n\quad \text{in }\Pro(\Prr^L_{\mk,\omega_1}).
\end{equation} 
Therefore, within the notation of the theorem, we obtain
\begin{multline*}
\prolim[n](\cC_n\tens{\mk}\cD)^{\omega_1}\simeq\prolim[n](\Ind(\cA_n^{op})\tens{\mk}\cD)^{\omega_1}\simeq \prolim[n]\Fun_{\mk}(\cA_n,\cD)^{\omega_1}\\
\simeq\prolim[n]\Fun_{\mk}(\cA_n,\cD^{\omega_1})\simeq \Fun_{\mk}(\cA,\cD^{\omega_1}).
\end{multline*}
This gives the middle vertical equivalence in \eqref{eq:isom_of_ses}.

Similarly, using the assumption that the functors $\cA_n\to\cA_{n+1}$ are trace-class, we obtain the factorizations in $\Cat_{\mk}^{\perf}:$
\begin{multline*}
\Fun_{\mk}(\cA_{n+1},\Calk_{\omega_1}^{\cont}(\cD))\to \Fun_{\mk}(\cA_n,\Perf(\mk))\tens{\mk} \Calk_{\omega_1}^{\cont}(\cD)\to \Calk_{\omega_1}^{\cont}(\cC_n\tens{\mk}\cD)\\
\to \Fun_{\mk}(\cA_n,\Calk_{\omega_1}^{\cont}(\cD)).
\end{multline*}
We obtain
\begin{equation*}
\Fun_{\mk}(\cA,\Calk_{\omega_1}^{\cont}(\cD))\simeq \prolim[n]\Fun_{\mk}(\cA_n,\Calk_{\omega_1}^{\cont}(\cD))\simeq\prolim[n]\Calk_{\omega_1}^{\cont}(\cC_n\tens{\mk}\cD).
\end{equation*}
This gives the right vertical equivalence in \eqref{eq:isom_of_ses}.

The left vertical equivalence in \eqref{eq:isom_of_ses} is obtained again from the assumption that the functors $\cA_n\to\cA_{n+1}$ are trace-class in $\Cat_{\mk}^{\perf}.$ More precisely, we obtain the factorizations
\begin{equation*}
\un{\Hom}_{\mk}^{\dual}(\Ind(\cA_{n+1}),\cD)\to \cC_n\tens{\mk}\cD\to \un{\Hom}_{\mk}^{\dual}(\Ind(\cA_n),\cD).
\end{equation*}
This gives an equivalence
\begin{equation*}
\Hom_{\mk}^{\dual}(\Ind(\cA),\cD)\simeq \prolim[n]^{\dual}\Hom_{\mk}^{\dual}(\Ind(\cA_n),\cD)\simeq \prolim[n]^{\dual}(\cC_n\tens{\mk}\cD).
\end{equation*}

It is clear from the descriptions of vertical equivalences in \eqref{eq:isom_of_ses} that we in fact obtain an equivalence of short exact sequences, as required.

We now prove \ref{Mittag-Leffler}. We denote by $F_{nm}:\Ind(\cA_n)\to\Ind(\cA_m),$ $n\leq m,$ the (ind-completions of) the transition functors. We denote by $F_n:\Ind(\cA_n)\to\Ind(\cA)$ the functors to the colimit. Recall from Remark \ref{rem:remarks_on_ML} that the first condition in Definition \ref{def:strong_ML} depends only on the pro-object of $\Pr^{L}_{\mk}.$ Using the pro-equivalence \eqref{eq:pro_equivalence_Pr^L}, we reduce the question to the inverse sequence $(\Ind(\cA_n^{op}))_{n\geq 0}$ with transition functors $F_{nm}^{\vee}:\Ind(\cA_m^{op})\to\Ind(\cA_n^{op}).$ By Proposition \ref{prop:diag_bimodules_ess_const} for each $k\geq 0$ the direct sequence
$(F_{kn}^{R}F_{kn})_{n\geq k}$ is essentially constant in $\Fun_{\mk}^L(\Ind(\cA_k),\Ind(\cA_k)),$ and its colimit is given by $F_k^R F_k.$  Passing to the dual functors and taking the right adjoints, we obtain that the inverse sequence $(F_{kn}^{\vee}F_{kn}^{\vee,R})_{n\geq k}$ is essentially constant in $\Fun^{\acc}_{\mk}(\Ind(\cA_k^{op}),\Ind(\cA_k^{op})).$ The limit of the latter sequence is given by $F_k^{\vee}F_k^{\vee,R}.$ This proves that the first condition of Definition \ref{def:strong_ML} holds for the sequence $(\cC_n)_{n\geq 0}.$

Now, denote by $G_{mn}:\cC_m\to\cC_n$ the transition functors, where $m\geq n.$ Take some $n,k\geq 0,$ and consider the limit $\Phi_{nk}\in\Fun^L(\cC_n,\cC_k)$ of the (essentially constant) sequence $(G_{mk}G_{mn}^R)_{m\geq n,k}.$ By the above, we see that the restriction of $\Phi_{nk}$ to $\cC_n^{\omega}$ is given by the composition
\begin{equation*}
\cC_n^{\omega}=\Fun_{\mk}(\cA_n,\Perf(\mk))\xto{F_n^{\vee,R}}\Fun_{\mk}(\cA,\Perf(\mk))\xto{F_k^{\vee}}\Fun(\cA_k,\Perf(\mk))=\cC_k^{\omega}\subset\cC_k. 
\end{equation*}
Here the first functor is well-defined by the smoothness of $\cA_n$ and the properness of $\cA.$ We conclude that the functor $\Phi_{nk}$ is strongly continuous. We also see that the left adjoint to $\Phi_{nk}$ exists, and it is given on compact objects by the composition
\begin{equation*}
\cC_k^{\omega}=\Fun_{\mk}(\cA_k,\Perf(\mk))\to \cA_k^{op}\xto{F_k^{\vee,L}}\cA^{op}\xto{F_n^{\vee}}\Fun(\cA_n,\Perf(\mk))=\cC_n^{\omega}.
\end{equation*}
This proves the theorem.
\end{proof}

\begin{remark}\label{rem:avoiding_Mittag_Leffler}
Note that for the proof of part \ref{isom_of_ses} in Theorem \ref{th:comparison_Hom^dual_with_ML} we did not need to know part \ref{Mittag-Leffler}. In particular, using only Corollary \ref{cor:hom_epi_internal_hom} and Theorem \ref{th:comparison_Hom^dual_with_ML} \ref{isom_of_ses} we deduce that within the notation of loc. cit. we have a homological epimorphism
\begin{equation*}
\prolim[n](\cC_n\tens{\mk}\cD)^{\omega_1}\to \prolim[n]\Calk_{\omega_1}^{\cont}(\cC_n\tens{\mk}\cD).
\end{equation*}	
\end{remark}

\subsection{The category of nuclear modules as a dualizable inverse limit}

Again, we use the notation of Subsection \ref{ssec:Nuc_as_Hom^dual}, i.e. let $R$ be a discrete commutative ring, $I=(a_1,\dots,a_m)\subset R$ a finitely generated ideal, and we denote by $R^{\wedge}_I$ the derived $I$-completion of $R.$ 

\begin{prop}\label{prop:Nuc_as_limit}
Within the above notation, we put
\begin{equation*}
R_n=\Kos(R;a_1^n,\dots,a_m^n),\quad n\geq 1,
\end{equation*}
and consider $(R_n)_{n\geq 1}$ as an inverse sequence of $\bE_1$-$R$-algebras.
\begin{enumerate}[label=(\roman*),ref=(\roman*)]
\item The inverse sequence $(D(R_n))_{n\geq 1}$ is strongly Mittag-Leffler over $R.$ \label{Mittag_Leffler_derived_quotients}
\item We have an $R$-linear equivalence
\begin{equation*}
\Nuc(R^{\wedge}_I)\simeq\prolim[n]D(R_n).
\end{equation*} \label{Nuc as limit}
\end{enumerate}
\end{prop}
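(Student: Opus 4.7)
The plan is to deduce both parts simultaneously from Theorem \ref{th:comparison_Hom^dual_with_ML}, applied with base ring $\mk=R$, the proper $\omega_1$-compact category $\cA = D_{I\hy\tors}(R)^{\omega}$, and target $\cD = D(R)$. The category $\cA$ is $\omega_1$-compact and proper over $R$: it is generated as an idempotent-complete stable category by the single compact object $K_1 := \Kos(R;a_1,\dots,a_m)$, whose endomorphism DG $R$-algebra is perfect over $R$ by Koszul self-duality. Using Definition \ref{def:nuclear_via_internal_Hom} together with the trivial identification $D(R_n) \tens{R} D(R) \simeq D(R_n)$, it will then suffice to identify the auxiliary inverse system $(\cC_n)$ produced by the theorem with the inverse system $(D(R_n))_{n \geq 1}$ of the proposition, equipped with the base-change functors $-\tens{R_m}R_n$ induced by the DG algebra maps $R_m\to R_n$ for $m\geq n$.

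The essential task is therefore to exhibit an appropriate filtered approximation $\cA \simeq \indlim_n \cA_n$ in $\Cat_R^{\perf}$ by finitely presented categories such that $\cC_n = \Ind(\Fun_R(\cA_n,\Perf(R))) \simeq D(R_n)$ with the claimed transition functors. The natural candidate is $\cA_n = \Perf(R_n)$, with the embedding $\cA_n \hookrightarrow \cA$ induced by restriction of scalars along $R \to R_n$ and with transition functors $\cA_n \to \cA_{n+1}$ induced by restriction of scalars along the DG algebra map $R_{n+1} \to R_n$. Because each $R_n$ is a graded-commutative Koszul DG algebra, we have $R_n \simeq R_n^{op}$, whence $\Fun_R(\Perf(R_n),\Perf(R)) \simeq \Perf(R_n)$ and so $\cC_n \simeq D(R_n)$; a direct unwinding of the precomposition functor shows that the induced transition $\cC_{n+1} \to \cC_n$ is precisely the base change $-\tens{R_{n+1}}R_n$. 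With these identifications in place, part (i) is Theorem \ref{th:comparison_Hom^dual_with_ML}\ref{Mittag-Leffler}, while part (ii) follows by combining Theorem \ref{th:comparison_Hom^dual_with_ML}\ref{isom_of_ses} with Definition \ref{def:nuclear_via_internal_Hom}.

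The hard part will be verifying that $\indlim_n \Perf(R_n) \simeq D_{I\hy\tors}(R)^{\omega}$ in $\Cat_R^{\perf}$ with the prescribed transition functors. Although all the $\Perf(R_n)$ equal the same thick subcategory of $D_{I\hy\tors}(R)^{\omega}$ generated by $K_1$, the filtered colimit in $\Cat_R^{\perf}$ is genuinely non-trivial because the non-identity DG algebra maps $R_{n+1} \to R_n$ prevent the transition functors from being equivalences. One must in particular check that restriction of scalars along $R_{n+1} \to R_n$ preserves perfectness, so that $\cA_n \to \cA_{n+1}$ is well-defined, which reduces to showing that $R_n$ is a perfect $R_{n+1}$-module; this can be verified by an explicit Koszul-type resolution. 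Reconciliation with the hypothesis of Theorem \ref{th:comparison_Hom^dual_with_ML} will be facilitated by Proposition \ref{prop:trace_class_functors_for_fp_approx}, which ensures that, after passing to a cofinal subsequence if necessary, the iterated transition functors $\cA_n \to \cA_{n+k}$ become trace-class in $\Cat_R^{\perf}$.
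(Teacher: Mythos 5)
There is a fundamental gap: you take $\cA_n = \Perf(R_n)$ and claim these are finitely presented $R$-linear categories, but they are not. The Koszul DG algebra $R_n = \Kos(R;a_1^n,\dots,a_m^n)$ is proper over $R$ but \emph{not} smooth (e.g.\ for $R=\Z[x]$, $I=(x)$, $n=1$ one has $R_1\simeq\Z$, which is not smooth over $\Z[x]$; the diagonal $\Z$ has infinite Tor-amplitude over $\Z\Ltens{\Z[x]}\Z\simeq\Z[\veps]$). Hence $\Perf(R_n)$ is not a compact object of $\Cat_R^{\perf}$, so the hypotheses of Theorem~\ref{th:comparison_Hom^dual_with_ML} are violated. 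Several further steps fail for the same underlying reason. The transition functors you propose do not exist: restriction of scalars along $R_{n+1}\to R_n$ does not land in $\Perf(R_{n+1})$, because $R_n$ is not perfect over $R_{n+1}$ (again $\Z$ is not perfect over $\Z[x]/x^2$ --- the minimal resolution is infinite --- and no ``Koszul-type resolution'' fixes this). And $\Fun_R(\Perf(R_n),\Perf(R))$ is not $\Perf(R_n)$: without smoothness it is the strictly larger category $D^b_{\coh}(R_n)$ of $R_n$-modules that are pseudo-perfect over $R$, and $R_n\simeq R_n^{op}$ does nothing to repair this.

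The paper avoids all of this. For part (i) it gives a direct computation, reducing to $R=\Z[x]$, $I=(x)$ by Proposition~\ref{prop:basic_properties_of_ML} and then showing via a bar-resolution calculation of $\Tor_{\bullet}^{\Z[x]/x^n}(\Z,\Z)$ that the pro-system $\Cone(\Z\Ltens{\Z[x]}\Z\to\Z\Ltens{\Z[x]/x^n}\Z)$ has zero transition maps. For part (ii) it bypasses Theorem~\ref{th:comparison_Hom^dual_with_ML} entirely, using $\Nuc(R^{\wedge}_I)\simeq(D_{I\hy\compl}(R))^{\rig}$ (Proposition~\ref{prop:Nuc_via_rigidification}) together with the fact that rigidification commutes with limits. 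The route you envisioned via Theorem~\ref{th:comparison_Hom^dual_with_ML} does also work, but it is carried out in Proposition~\ref{prop:derived_quotients_pro_equiv_to_functors_from_approx} by a genuinely different construction: the finitely presented approximations $\cB_n$ are chosen to interpolate between the \emph{coherent} categories $D^b_{\coh}(\Z[x]/x^{2^n})$ and $D^b_{\coh}(\Z[x]/x^{2^{n+1}})$ using Lemma~\ref{lem:extension_restriction_of_scalars_trace_class}; the duality between pseudo-perfect and perfect complexes is precisely what recovers $\proolim_n\Perf(R_n)$ after applying $\Fun_R(-,\Perf(R))$. Your proposal collapses this distinction and therefore does not go through.
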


\begin{proof}
The statement \ref{Mittag_Leffler_derived_quotients} can be deduced from Theorem \ref{th:comparison_Hom^dual_with_ML} using Proposition \ref{prop:derived_quotients_pro_equiv_to_functors_from_approx} below. However, it is more instructive to give a direct proof.

Using Proposition \ref{prop:basic_properties_of_ML}, we reduce the general case to the special case $R=\Z[x],$ $I=(x),$ $m=1,$ $a_1=x.$ It suffices to prove that for each $k\geq 1,$ we have an isomorphism of pro-objects:
\begin{equation}\label{eq:ess_const_tensor_products}
(\Z[x]/x^k)\Ltens{\Z[x]}(\Z[x]/x^k)\xto{\sim}\proolim[n\geq k](\Z[x]/x^k)\Ltens{\Z[x]/x^n}(\Z[x]/x^k)
\end{equation}
in $D((\Z[x]/x^k)\Ltens{\Z[x]}(\Z[x]/x^k)).$ Indeed, the source of \eqref{eq:ess_const_tensor_products} is perfect over $\Z[x]/x^k$ via each of the two restrictions of scalars, hence \eqref{eq:ess_const_tensor_products} implies both conditions from Definition \ref{def:strong_ML}.

Using the base change $x\mapsto x^k,$ we reduce the proof of \eqref{eq:ess_const_tensor_products} to the case $k=1.$ We claim that for all $n\geq 1$ the map
\begin{equation}\label{eq:zero_map_in_tensor_products}
\Cone(\Z\Ltens{\Z[x]}\Z\to\Z\Ltens{\Z[x]/x^{n+1}}\Z)\to \Cone(\Z\Ltens{\Z[x]}\Z\to\Z\Ltens{\Z[x]/x^n}\Z)\end{equation}
is zero in $D(\Z\Ltens{\Z[x]}\Z).$ 

Put $A=\Z\Ltens{\Z[x]}\Z.$ We first observe that for each $n\geq 2$ the $H_{\bullet}(A)$-module $H_{\bullet}(\Z\Ltens{\Z[x]/x^n}\Z)$ is free on a sequence of generators of degrees $0,2,4,\dots.$ Indeed, we can compute the graded algebra $\Tor_{\bullet}^{\Z[x]/x^n}(\Z,\Z)$ using simplicial bar resolution, i.e.
\begin{equation*}
\Tor_{\bullet}^{\Z[x]/x^n}(\Z,\Z)\cong\pi_{\bullet}(S_{\bullet}),\quad S_k=(\Z[x]/x^n)^{\otimes k}.
\end{equation*}
We write the elements of $S_k$ as $[f_1|\dots | f_k],$ where $f_1,\dots,f_k\in\Z[x]/x^n.$ Then the homogeneous basis of $\Tor_{\bullet}^{\Z[x]/x^n}(\Z,\Z)$
is given by the classes of the following elements
\begin{equation*}
e_k\in S_k,\quad e_k = \begin{cases}[x^{n-1}\mid x\mid\dots\mid x^{n-1}\mid x] & \text{for }k\text{ even}\\
[x\mid x^{n-1}\mid x\mid\dots\mid x^{n-1}\mid x] & \text{for }k\text{ odd} \end{cases}
\end{equation*}
Now the relevant shuffle products are computed easily: we have $[x]\cdot e_{2k}=e_{2k+1}$ for $k\geq 0.$ Hence, the classes $e_{2k},$ $k\geq 0,$ form a basis of $\Tor_{\bullet}^{\Z[x]/x^n}(\Z,\Z)$ as a module over $\Tor_{\bullet}^{\Z[x]}(\Z,\Z),$ as required.

It follows that for $n\geq 1$ the source of \eqref{eq:zero_map_in_tensor_products} is isomorphic as an $A$-module to the direct sum $\biggplus[k\geq 1]A[2k].$ Thus, it suffices to prove that the map \eqref{eq:zero_map_in_tensor_products} is zero on the level of homology. This is elementary: the natural map from the standard (minimal) projective resolution of $\Z$ over $\Z[x]/x^{n+1}$ to the similar resolution over $\Z[x]/x^n$ is zero modulo $x$ in degrees $\geq 2:$
\begin{equation*}
\begin{tikzcd}
\dots \ar[r] & \Z[x]/x^{n+1}\ar[r, "x"]\ar[d, "x"] & \Z[x]/x^{n+1}\ar[r, "x^n"]\ar[d, "x"] & \Z[x]/x^{n+1} \ar[r, "x"]\ar[d, "1"] & \Z[x]/x^{n+1}\ar[d, "1"]\\
\dots \ar[r] & \Z[x]/x^{n}\ar[r, "x"] & \Z[x]/x^{n}\ar[r, "x^{n-1}"] & \Z[x]/x^{n} \ar[r, "x"] & \Z[x]/x^{n}
\end{tikzcd}
\end{equation*}
This proves \ref{Mittag_Leffler_derived_quotients}. 

To prove \ref{Nuc as limit}, we use Proposition \ref{prop:Nuc_via_rigidification} and the fact that the forgetful functor from the category of rigid symmetric monoidal categories to $\Cat_{\st}^{\dual}$ commutes with limits, see \cite[Corollary 4.85]{Ram24b}. We obtain
\begin{equation*}
\Nuc(R^{\wedge}_I)\simeq (D_{I\hy\compl}(R))^{\rig}\simeq (\prolim[n]D(R_n))^{\rig}\simeq \prolim[n]^{\dual}D(R_n).\qedhere
\end{equation*}
\end{proof}

Note that we have two approaches to represent the category $\Nuc(R^{\wedge}_I)$ as a dualizable inverse limit of a strongly Mittag-Leffler sequence over $R.$ The first one is given by Proposition \ref{prop:Nuc_as_limit}. The second one is via Theorem \ref{th:comparison_Hom^dual_with_ML}, using the description of $\Nuc(R^{\wedge}_I)$ as a dualizable internal $\Hom$ from Definition \ref{def:nuclear_via_internal_Hom}. We claim that these two approaches give the same pro-object in $\Cat_R^{\dual}.$ This essentially reduces to the following interesting observation.

\begin{lemma}\label{lem:extension_restriction_of_scalars_trace_class}
\begin{enumerate}[label=(\roman*), ref=(\roman*)]
\item For $n\geq 1$ the extension of scalars functor $\Perf(\Z[x]/x^{2n})\to\Perf(\Z[x]/x^n)$ is trace-class in $\Cat_{\Z[x]}^{\perf}.$ \label{pullback_is_trace_class}

\item For $n\geq 1$ the restriction of scalars functor $D^b_{\coh}(\Z[x]/x^n)\to D^b_{\coh}(\Z[x]/x^{2n})$ is trace-class in $\Cat_{\Z[x]}^{\perf}.$  \label{pushforward_is_trace_class}
\end{enumerate}
\end{lemma}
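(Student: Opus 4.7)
The plan is to prove (i) directly and deduce (ii) by a formal duality argument. Put $A=\Z[x]/x^{2n}$, $B=\Z[x]/x^n$, and $C = A\otimes^L_{\Z[x]} B$; then the extension of scalars functor $F:\Perf(A)\to\Perf(B)$ is represented by the $A$-$B$-bimodule $B$ (viewed as an object of $\Mod(C)$). Since $A$ is proper (but not smooth) over $\Z[x]$, we have $\Perf(A)^\vee=D^b_{\coh}(A)$ in $\Cat_{\Z[x]}^{\perf}$, so trace-class morphisms $\Perf(A)\to\Perf(B)$ are precisely those lying in the essential image of $D^b_{\coh}(A)\otimes_{\Z[x]}\Perf(B)\to\Fun_{\Z[x]}(\Perf(A),\Perf(B))\subset\Mod(C)$.

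The central computation: using the resolutions $\Z[x]\xrightarrow{x^{2n}}\Z[x]$ of $A$ and $\Z[x]\xrightarrow{x^n}\Z[x]$ of $B$, one finds $C\simeq B\langle\xi\rangle$ (exterior DGA with $|\xi|=1$, $\xi^2=0$), while $B\otimes^L_{\Z[x]}B\simeq B\oplus B[1]$ as chain complexes. The doubling of the exponent is crucial: the ring map $C=A\otimes^L_{\Z[x]}B\to B\otimes^L_{\Z[x]}B$ induced by $A\to B$ sends $\xi\mapsto x^n\eta$, which vanishes because $x^n=0$ in $B$. Hence $\xi$ acts as zero on $B\otimes^L_{\Z[x]}B$, the $C$-module structure factors through the augmentation $C\to B$, and the splitting $B\otimes^L_{\Z[x]}B\simeq B\oplus B[1]$ is a splitting of $C$-modules. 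The multiplication $\mu:B\otimes^L_{\Z[x]}B\to B$ admits the $C$-linear section $t(b)=1\otimes b$, exhibiting $B$ as a direct summand of $B\boxtimes B$ in $\Mod(C)$.

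The main obstacle I anticipate is that the functor $D^b_{\coh}(A)\otimes_{\Z[x]}\Perf(B)\to\Mod(C)$ is not fully faithful in general, as the K\"unneth map on endomorphisms of $B\boxtimes B$ fails to be an equivalence (one side is $\bR\Hom_A(B,B)\otimes^L_{\Z[x]}B$, an infinite product of shifts of $B$, while the $C$-linear endomorphisms of $B\otimes^L_{\Z[x]}B$ form a bounded object). So a retract in $\Mod(C)$ does not automatically lift to the tensor product category. To overcome this, I would exhibit $B$ directly as a small finite colimit inside $D^b_{\coh}(A)\otimes_{\Z[x]}\Perf(B)$ using the canonical extension $0\to B\to A\to B\to 0$ of $A$-modules representing the generator of $\Ext^1_A(B,B)\cong B$. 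External-tensoring with $B\in\Perf(B)$ yields a cofiber sequence $B\boxtimes B\to A\boxtimes B\to B\boxtimes B$ whose image in $\Mod(C)$ is $B\otimes^L B\to C\to B\otimes^L B$; combining this with $\mu$ and $t$ and analyzing the induced map should realize $B$ explicitly as a finite cofiber in the tensor product category. Verifying this explicit construction is the technical heart of the argument.

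For part (ii), the restriction-of-scalars functor $R:D^b_{\coh}(B)\to D^b_{\coh}(A)$ is the $\Z[x]$-linear dual of $F$ in $\Cat_{\Z[x]}^{\perf}$ under the identifications $\Perf(A)^\vee=D^b_{\coh}(A)$ and $\Perf(B)^\vee=D^b_{\coh}(B)$ (valid since both $A$ and $B$ are proper over $\Z[x]$). Since trace-class is self-dual in any closed symmetric monoidal category (a witness for $f$ is, after composition with the canonical map $y\to y^{\vee\vee}$, a witness for $f^\vee$), part (ii) follows at once from part (i).
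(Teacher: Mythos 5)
Your central computation for (i) is correct: the multiplication $\mu\colon B\otimes^L_{\Z[x]}B\to B$ admits the $C$-linear section $b\mapsto 1\otimes b$ because the doubling of the exponent forces the Koszul generator $\xi$ of $C$ to act as zero on $B\otimes^L_{\Z[x]}B$. This is in fact a cleaner way to produce the retraction than the paper's: the paper first base-changes along $x\mapsto x^n$ to reduce to $n=1$, and then exhibits the splitting by writing down an explicit semifree DG resolution of $\Z$ over $\Kos(\Z[x]/x^2;x)$. Your deduction of (ii) from (i) by applying the internal $\Hom$ to the unit object is also exactly the paper's argument.

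However, there is a genuine gap in your write-up, and it is created by a claim of yours that is actually false. You assert that the comparison functor $D^b_{\coh}(A)\otimes_{\Z[x]}\Perf(B)\to\Fun_{\Z[x]}(\Perf(A),\Perf(B))$ is not fully faithful, because the K\"unneth map on endomorphisms of $B\boxtimes B$ supposedly fails, the target being ``a bounded object.'' That is a miscomputation: the $C$-linear endomorphism spectrum of $B\otimes^L_{\Z[x]}B$ is unbounded (a copy of $B$ in degree $1$ and a copy of $B^{\oplus 2}$ in every degree $\leq 0$), and it matches $\bR\Hom_A(B,B)\otimes^L_{\Z[x]}B$ exactly. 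The comparison functor is in fact fully faithful: for $M,M'\in D^b_{\coh}(A)$, since $M\otimes^L_{\Z[x]}B\cong C\otimes^L_A M$, the adjunction along $A\to C$ gives $\Hom_C(M\otimes^L_{\Z[x]}B,\,M'\otimes^L_{\Z[x]}B)\cong\bR\Hom_A(M,\,M'\otimes^L_{\Z[x]}B)$, and the projection formula (valid because $B$ is perfect, hence dualizable, over $\Z[x]$) identifies the latter with $\bR\Hom_A(M,M')\otimes^L_{\Z[x]}B$, which is precisely the Hom in the source; general objects $M\boxtimes N$, $M'\boxtimes N'$ follow by exactness in $N$ and $N'$. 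Consequently the essential image of the comparison functor is idempotent-complete, and your retraction in $\Mod(C)$ already finishes the proof of (i); the ``finite cofiber'' construction you sketch, and acknowledge as unverified, is both unnecessary and, as written, incomplete. (Note that the paper also uses this full faithfulness implicitly when it reduces (i) to the retract claim in the category of $C$-modules.)
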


\begin{proof}
We first observe that \ref{pushforward_is_trace_class} follows from \ref{pullback_is_trace_class}  by applying the internal $\Hom$ in $\Cat_{\Z[x]}^{\perf}$ to the unit object.

We prove \ref{pullback_is_trace_class}. Using the base change via $x\mapsto x^n,$ we reduce to the case $n=1.$ In this case it suffices to show that the module $\Z$ over $\Z[x]/x^2\Ltens{\Z[x]}\Z$ is a direct summand of the module $\Z\Ltens{\Z[x]}\Z.$ 

To see this, we replace the $\Z[x]$-algebra $\Z$ with the quasi-isomorphic DG $\Z[x]$-algebra $A=\Kos(\Z[x];x).$ Put $$B=\Z[x]/x^2\tens{\Z[x]}A\cong\Kos(\Z[x]/x^2;x).$$ We identify left and right $B$-modules. We have $\Z\tens{\Z[x]}A\cong B/xB$ (the non-derived quotient). Denote by $y\in B_1$ the basis element, so that $d(y)=x.$ Then the $B$-module $\Z$ is quasi-isomorphic to a semi-free $B$-module $P$ with the basis $(z_k)_{k\geq 0},$ where $\deg(z_k)=2k$ and $d(z_{k+1})=xyz_k.$ Consider the unique map of DG $B$-modules $P\to B/xB,$ sending $z_0$ to $1$ and $z_k$ to $0$ for $k>0.$ This map gives a section of the map $B/xB\to\Z$ in $D(B).$ This proves the lemma. 
\end{proof}

We obtain the following comparison between the two strongly Mittag-Leffler sequences approximating the category $\Nuc(R^{\wedge}_I).$

\begin{prop}\label{prop:derived_quotients_pro_equiv_to_functors_from_approx} We keep the notation from Proposition \ref{prop:Nuc_as_limit}. Choose a direct sequence $(\cA_n)_{n\geq 0}$ of finitely presented categories in $\Cat_R^{\perf}$ such that $\indlim[n]\cA_n\simeq \Perf_{I\hy\tors}(R).$ Then we have an equivalence of pro-objects:
\begin{equation}\label{eq:pro_equiv_of_ML}
\proolim[n]\Fun_R(\cA_n,\Perf(R))\simeq \proolim[n]\Perf(R_n).
\end{equation}
in $\Cat_R^{\perf}.$
\end{prop}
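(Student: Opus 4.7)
The plan is to reduce to a particular choice of approximation and exploit self-duality. The key observation is that with the natural choice $\cA_n = \Perf(R_n)$, equipped with suitable transition functors forming a direct system with colimit $\Perf_{I\hy\tors}(R)$, the two pro-systems in~\eqref{eq:pro_equiv_of_ML} coincide via the self-duality of $\Perf(R_n)$. The remaining task is then to establish invariance of the pro-system $\proolim[n]\Fun_R(\cA_n,\Perf(R))$ under the choice of approximation.

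First, I will verify that $(\Perf(R_n))_{n\geq 1}$ is a valid approximation. Each $\Perf(R_n)$ is finitely presented in $\Cat_R^{\perf}$, since $R_n$ is a finitely presented commutative $R$-algebra. With transitions $\Perf(R_n)\to\Perf(R_{n+1})$ given by restriction of scalars along the Koszul maps $R_{n+1}\to R_n$ (which lands in $\Perf(R_{n+1})$ by the natural generalization of Lemma~\ref{lem:extension_restriction_of_scalars_trace_class}), the colimit in $\Cat_R^{\perf}$ is $\Perf_{I\hy\tors}(R)$: indeed the Koszul complex $R_1=\Kos(R;a_1,\dots,a_m)$ is a compact generator of the target, and all $R_n\in\Perf_{I\hy\tors}(R)$. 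Moreover, $\Perf(R_n)$ is smooth over $R$ (by finite presentation of the $R$-algebra $R_n$) and proper over $R$ (since $R_n$ is perfect over $R$), hence self-dual. Thus $\Fun_R(\Perf(R_n),\Perf(R))\simeq\Perf(R_n)$, and under this identification the transition functors of $\proolim[n]\Fun_R(\Perf(R_n),\Perf(R))$ match the base-change functors $M\mapsto M\tens{R_{n+1}}R_n$ of $\proolim[n]\Perf(R_n)$. This settles the proposition for this particular $(\cA_n)$.

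To handle an arbitrary approximation $(\cA_n)$, I will show that the pro-object $\proolim[n]\Fun_R(\cA_n,\Perf(R))\in\Pro(\Cat_R^{\perf})$ is invariant under the choice. Using Proposition~\ref{prop:trace_class_functors_for_fp_approx}, one may pass to a cofinal subsequence with trace-class transitions. Given two such approximations, the finite presentation of each $\cA_n$ in $\Cat_R^{\perf}$ allows factoring each functor $\cA_n\to\Perf_{I\hy\tors}(R)$ through some level of the second approximation, and symmetrically; applying $\Fun_R(-,\Perf(R))$ turns these factorizations into maps of pro-systems in both directions. The principal obstacle is verifying that these two families of maps are mutually inverse in the pro-category. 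This rests on the pro-equivalence~\eqref{eq:pro_equivalence_Pr^L} established in the proof of Theorem~\ref{th:comparison_Hom^dual_with_ML}, which identifies $\proolim[n]\Fun_R(\cA_n,\Perf(R))$ with the intrinsic pro-object $\proolim[n]\cA_n^{op}$ in the trace-class setting, together with the strong Mittag-Leffler property (Theorem~\ref{th:comparison_Hom^dual_with_ML}\ref{Mittag-Leffler}) which controls the ambiguity of the non-canonical factorization choices as one moves through the pro-system.
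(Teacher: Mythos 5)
There is a genuine gap, and it sits in the very first step of your ``convenient choice.'' You claim that $\cA_n=\Perf(R_n)$ is finitely presented in $\Cat_R^{\perf}$ ``since $R_n$ is a finitely presented commutative $R$-algebra,'' and then that $\Perf(R_n)$ is smooth over $R$ for the same reason. Both statements are false. Finite presentation in $\Cat_R^{\perf}$ (i.e.\ compactness, equivalently smoothness plus finite type in the noncommutative sense) is not implied by finite presentation as a commutative algebra. The Koszul DG algebra $R_n=\Kos(R;a_1^n,\dots,a_m^n)$ is proper over $R$ but typically not smooth: already for $R=\Z[x]$, $I=(x)$, $n=1$ one has $R_1\simeq\Z$, and $\Z\otimes_{\Z[x]}\Z\simeq\Z\oplus\Z[1]$ is an exterior algebra whose $\Ext$-algebra $\Ext^\bullet_{\Z\oplus\Z[1]}(\Z,\Z)\cong\Z[[u]]$ ($|u|=2$) is infinite; so the diagonal bimodule is not perfect and $\Perf(\Z)$ is not smooth (hence not finitely presented) over $\Z[x]$.

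Because $\Perf(R_n)$ is only proper and not smooth, the intended self-duality $\Fun_R(\Perf(R_n),\Perf(R))\simeq\Perf(R_n)$ also fails: $\Fun_R(\Perf(R_n),\Perf(R))$ is the category of right $R_n$-modules perfect over $R$, i.e.\ $D^b_{\coh}(R_n)$, which is generally strictly larger than $\Perf(R_n)$ (e.g.\ $\Z[x]/x\in D^b_{\coh}(\Z[x]/x^2)\smallsetminus\Perf(\Z[x]/x^2)$). So the plan of evaluating the pro-system on $\cA_n=\Perf(R_n)$ is blocked twice over: these categories do not satisfy the finite-presentation hypothesis of the proposition, and even informally the identification you want does not hold. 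The paper instead takes $D^b_{\coh}(\Z[x]/x^{2^n})$ (which \emph{is} proper, with $\Fun_{\Z[x]}(D^b_{\coh}(\Z[x]/x^n),\Perf(\Z[x]))\simeq\Perf(\Z[x]/x^n)$), inserts finitely presented categories $\cB_n$ via the trace-class factorizations of Lemma~\ref{lem:extension_restriction_of_scalars_trace_class}, and handles general $R$ by base change along $\Z[x_1,\dots,x_m]\to R$; none of this is optional. Your ``independence of choice'' paragraph is reasonable and matches the paper's first observation, but it does not rescue the argument, since the specific sequence you chose does not belong to the class of allowed approximations.
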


\begin{proof}Our first observation is that the statement does not depend on the choice of the sequence $(\cA_n)_{n\geq 0}.$ Below we construct this sequence in such a way that the pro-equivalence \eqref{eq:pro_equiv_of_ML} will become straightforward.
	
First, consider the special case $R=\Z[x],$ $I=(x).$ It follows from Lemma \ref{lem:extension_restriction_of_scalars_trace_class} that for each $n\geq 0$ we can choose a factorization
\begin{equation*}
D^b_{coh}(\Z[x]/x^{2^n})\to \cB_n\to D^b_{coh}(\Z[x]/x^{2^{n+1}})
\end{equation*}
in $\Cat_{\Z[x]}^{\perf},$ such that $\cB_n$ is finitely presented over $\Z[x].$ Then we have $\indlim[n]\cB_n\simeq\Perf_{x\hy\tors}(\Z[x]).$ We obtain pro-equivalences
\begin{equation*}
\proolim[n]\Fun_{\Z[x]}(\cB_n,\Perf(\Z[x]))\simeq \proolim[n]\Fun_{\Z[x]}(D^b_{\coh}(\Z[x]/x^n),\Perf(\Z[x]))\simeq\proolim[n]\Perf(\Z[x]/x^n)
\end{equation*}
in $\Cat_{\Z[x]}^{\perf},$ as required.

In the general case, take the homomorphism $\Z[x_1,\dots,x_m]\to R,$ sending $x_i$ to $a_i.$ Consider the category $\cB_n^{\otimes_{\Z}m}$ as a $\Z[x_1,\dots,x_m]$-linear stable category, and put
\begin{equation*}
\cA_n = \Perf(R)\tens{\Z[x_1,\dots,x_m]}\cB_n^{\otimes_{\Z}m}.
\end{equation*}   
We obtain pro-equivalences
\begin{multline*}
\proolim[n]\Fun_R(\cA_n,\Perf(R))\simeq\proolim[n]\Fun_{\Z[x_1,\dots,x_m]}(\cB_n^{\otimes_{\Z}m},\Perf(R))\\
\simeq \proolim[n](\Fun_{\Z[x_1,\dots,x_m]}(\cB_n^{\otimes_{\Z}m},\Perf(\Z[x_1,\dots,x_m]))\tens{\Z[x_1,\dots,x_m]}\Perf(R))\\
\simeq\proolim[n](\Fun_{\Z[x]}(\cB_n,\Perf(\Z[x]))^{\otimes_{\Z}m}\tens{\Z[x_1,\dots,x_m]}\Perf(R))\\
\simeq\proolim[n]\Perf(\Z[x_1,\dots,x_m]/(x_1^n,\dots,x_m^n))\tens{\Z[x_1,\dots,x_m]}\Perf(R)\\
\simeq\proolim[n]\Perf(R_n).
\end{multline*}
This proves the proposition.
\end{proof}

In the case when $R$ is noetherian we can replace the inverse sequence $(R_n)_{n\geq 1}$ with the sequence $(R/I^n),$ as we show in the following lemma. This is a stronger version of \cite[Lemma 2.5]{Har}, with essentially the same proof.

\begin{lemma}\label{lem:pro_equivalence_noetherian} Let $R$ be a noetherian ring, and let $I=(a_1,\dots,a_m)\subset R$ be an ideal. As above, we put $R_n=\Kos(R;a_1^n,\dots,a_m^n)$ for $n\geq 1.$
Then the natural map of pro-objects $$\proolim[n]R_n\to \proolim[n]R/I^n$$ is an isomorphism in the $\infty$-category $\Pro(\Alg_{\bE_1}(D(R))).$\end{lemma}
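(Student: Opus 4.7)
The plan is to factor the claimed isomorphism through the intermediate pro-object $\proolim_n R/(a_1^n,\ldots,a_m^n)$ and then reduce to a pro-vanishing statement on higher Koszul homology, which is the only place the noetherian hypothesis enters. There is a natural factorization of $\bE_\infty$-$R$-algebra maps
\[
R_n \twoheadrightarrow H_0(R_n)=R/(a_1^n,\ldots,a_m^n) \twoheadrightarrow R/I^n
\]
using $(a_1^n,\ldots,a_m^n)\subset I^n$, and the elementary inclusions $I^{mn}\subset (a_1^n,\ldots,a_m^n)\subset I^n$ (which do not require noetherianness) show by cofinality that the pro-systems $\{R/I^n\}_n$ and $\{R/(a_1^n,\ldots,a_m^n)\}_n$ are already isomorphic in $\Pro(\CAlg(R))$, hence in $\Pro(\Alg_{\bE_1}(D(R)))$. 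It therefore suffices to prove that the surjection $\proolim_n R_n \to \proolim_n H_0(R_n)$ is a pro-isomorphism.

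The key technical input is the pro-vanishing of higher Koszul homology: for each $i\ge 1$ the pro-system of $R$-modules $\{H_i(R_n)\}_n$ is pro-zero. For $m=1$ this is a direct Artin--Rees argument: one has $H_1(\Kos(R;a^n))=\Ann_R(a^n)$ with transition map multiplication by $a$, and noetherianness of $R$ implies that the ascending chain $\{\Ann_R(a^n)\}$ stabilizes at $T=\Ann_R(a^\infty)$, with $T$ killed by some power $a^N$; hence the $N$-fold composite transition $\Ann_R(a^{n+N})\to \Ann_R(a^n)$, which is multiplication by $a^N$, vanishes once $n$ is large enough. The general $m$ reduces to $m=1$ via the K\"unneth identification $R_n\simeq \bigotimes_i \Kos(R;a_i^n)$ and a simple iteration. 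Since $R_n$ is concentrated in homological degrees $[0,m]$, the pro-vanishing of each $H_i(R_n)$ for $i\ge 1$ implies that the fiber $\tau_{\ge 1}R_n$ of $R_n \to H_0(R_n)$ is pro-zero in $\Pro(D(R))$, so the surjection is already a pro-isomorphism on underlying $R$-complexes.

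The main obstacle is upgrading this pro-isomorphism from $\Pro(D(R))$ to $\Pro(\Alg_{\bE_1}(D(R)))$: the forgetful functor $U$ is conservative on objects, but a pro-inverse at the module level is not automatically an algebra map, so conservativity does not pass to $\Pro$. I handle this by induction along the Postnikov tower in $\Alg_{\bE_1}(D(R)_{\ge 0})$:
\[
R_n=\tau_{\le m}R_n \to \tau_{\le m-1}R_n \to \cdots \to \tau_{\le 0}R_n = H_0(R_n).
\]
Each stage $\tau_{\le k}R_n \to \tau_{\le k-1}R_n$ is a square-zero extension of $\bE_1$-algebras classified by a derivation of $\tau_{\le k-1}R_n$ with values in the bimodule $H_k(R_n)[k+1]$. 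Since $\{H_k(R_n)\}_n$ is pro-zero by the previous step, both the classifying derivation and the extension module are pro-trivial, so each Postnikov stage is a pro-isomorphism in $\Pro(\Alg_{\bE_1}(D(R)))$. Composing the finitely many stages yields the desired pro-isomorphism $\proolim_n R_n \simeq \proolim_n H_0(R_n)$ in $\Pro(\Alg_{\bE_1}(D(R)))$, completing the proof.
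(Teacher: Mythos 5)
Your proposal takes a genuinely different route from the paper. The paper constructs, for each $n$, an \emph{explicit} DG $R$-algebra factorization $\Kos(R;a^{n+k})\to R/a^{n+k}\to\Kos(R;a^n)$ using the well-defined map $a^{n+k}R\to R$, $a^{n+k}g\mapsto a^k g$ (available because $R$ has bounded $a$-torsion), and then handles $m>1$ by a clean induction on $m$ in which one applies the $m=1$ case over the noetherian quotients $R/(a_1^{n'},\dots,a_{m-1}^{n'})$. You instead prove pro-nullity of the higher Koszul homology as $R$-modules and then upgrade to $\Pro(\Alg_{\bE_1}(D(R)))$ via the Postnikov tower and square-zero extensions. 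Your observation that conservativity of the forgetful functor does not pass to $\Pro$ is exactly the right reason why the na\"ive approach fails, and the Postnikov strategy is a legitimate way to address it; it is more abstract than the paper's explicit construction, and one must check some coherence (choosing the splitting $\sigma\colon\tau_{\leq k-1}R_{n'}\to\tau_{\leq k}R_n$ so that $\sigma$ composed with the truncation $\tau_{\leq k}R_{n'}\to\tau_{\leq k-1}R_{n'}$ actually agrees, up to homotopy, with the transition $\tau_{\leq k}R_{n'}\to\tau_{\leq k}R_n$), but this can be done.

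There is, however, a real gap in the step ``the general $m$ reduces to $m=1$ via the K\"unneth identification $R_n\simeq\bigotimes_i\Kos(R;a_i^n)$ and a simple iteration.'' Writing $R_n=\Kos(R;a_1^n,\dots,a_{m-1}^n)\otimes_R\Kos(R;a_m^n)=\Cone\bigl(a_m^n\colon C_n\to C_n\bigr)$ with $C_n=\Kos(R;a_1^n,\dots,a_{m-1}^n)$, the long exact sequence gives a short exact sequence
\begin{equation*}
0\to H_1(C_n)/a_m^n H_1(C_n)\to H_1(R_n)\to \bigl(R/(a_1^n,\dots,a_{m-1}^n)\bigr)[a_m^n]\to 0 ,
\end{equation*}
and the same term appears as $\Tor_1^R\bigl(H_0,\,H_0\bigr)$ if one uses the K\"unneth spectral sequence. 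The left term is controlled by induction, but the right term $\bigl(R/(a_1^n,\dots,a_{m-1}^n)\bigr)[a_m^n]$ is a torsion submodule of a \emph{varying} quotient ring, with both the ring and the torsion exponent depending on $n$, and its pro-nullity is not a formal consequence of the $m=1$ statement applied factor by factor. It requires applying the bounded-torsion argument over the noetherian quotient rings $R/(a_1^n,\dots,a_{m-1}^n)$ (or an equivalent Artin--Rees input), which is exactly what the paper's induction on $m$ is set up to do. As written, ``a simple iteration'' papers over the only place where the noetherian hypothesis is actually used for $m>1$; you should make this step explicit, mirroring the paper's change of base ring.
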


\begin{proof}
	For $n>0$ we have inclusions of ideals $$(a_1^n,\dots,a_m^n)\subset I^n\subset (a_1^{\lceil \frac{n}{m}\rceil},\dots,a_m^{\lceil \frac{n}{m}\rceil}),$$
	hence it suffices to prove that the map
	\begin{equation}\label{eq:pro_equivalence}
		\proolim[n] R_n\to \proolim[n]R/(a_1^n,\dots,a_m^n)
	\end{equation}
	is an isomorphism. We proceed by induction on $m.$
	
	Let $m=1.$ Since $R$ is noetherian, it has bounded $a_1$-torsion, i.e. there exists $k>0$ such that $R[a_1^{\infty}]=R[a_1^k].$ Then for $n>0$ we have a well-defined map 
	\begin{equation*}
		\varphi_n:a_1^{n+k}R\to R,\quad \varphi_n(a_1^{n+k}g)=a_1^k g. 
	\end{equation*}
	The map $\varphi_n$ gives a map of DG $R$-algebras
	\begin{equation*}
		\Cone(a_1^{n+k}R\to R)\to \Cone(R\xto{a_1^n}R)=\Kos(R;a_1^n).
	\end{equation*}
	The source of this map is quasi-isomorphic to $R/(a_1^{n+k}),$ hence in the $\infty$-category $\Alg_{\bE_1}(D(R))$ we obtain the factorization
	\begin{equation*}
		\Kos(R;a_1^{n+k})\to R/a_1^{n+k}\to \Kos(R;a_1^n).
	\end{equation*}
	This proves that in this case the map \eqref{eq:pro_equivalence} is an isomorphism.
	
	Now let $m>1$ and suppose that the assertion is proved for smaller values of $m.$ It follows from the induction hypothesis that there exists $n'\geq n$ such that we have a factorization
	\begin{equation}\label{eq:factorization1}
		\Kos(R;a_1^{n'},\dots,a_m^{n'})\to \Kos(R/(a_1^{n'},\dots,a_{m-1}^{n'});a_m^{n'})\to \Kos(R;a_1^{n},\dots,a_m^{n})
	\end{equation}
	in $\Alg_{\bE_1}(D(R)).$ Arguing as in the case $m=1,$ we find $n''\geq n'$ and a factorization
	\begin{equation}\label{eq:factorization2}
		\Kos(R/(a_1^{n'},\dots,a_{m-1}^{n'});a_m^{n''})\to R/(a_1^{n'},\dots,a_{m-1}^{n'},a_m^{n''}) \to \Kos(R/(a_1^{n'},\dots,a_{m-1}^{n'});a_m^{n'}).
	\end{equation}
	Finally, using \eqref{eq:factorization1} and \eqref{eq:factorization2} we obtain a factorization
	\begin{equation*}
		\Kos(R;a_1^{n''},\dots,a_m^{n''})\to R/(a_1^{n''},\dots,a_m^{n''})\to \Kos(R;a_1^{n},\dots,a_m^{n}).
	\end{equation*}
	This proves that the map \eqref{eq:pro_equivalence} is an isomorphism.
\end{proof}

\begin{remark}
	It is not difficult to prove a stronger version of Lemma \ref{lem:pro_equivalence_noetherian}, replacing the category $\Alg_{\bE_1}(D(R))$ by the $\infty$-category of simplicial commutative $R$-algebras.
\end{remark}

\section{Localizing invariants of inverse limits}
\label{sec:loc_invar_inverse_limits}

The goal of this section is to prove the following result.

\begin{theo}\label{th:local_invar_of_inverse_limits}
Let $\cE$ be a rigid compactly generated $\bE_1$-monoidal category, and let $\Phi:\Cat_{\cE^{\omega}}^{\perf}\to\cT$ be an accessible localizing invariant, where $\cT$ is an accessible stable category. Let $(\cC_n)_{n\geq 0}$ be an inverse sequence in $\Cat_{\cE}^{\cg}.$ Denote by $F_{mn}:\cC_m\to\cC_n$ the transition functors, $m\geq n.$ 

Suppose that for some uncountable regular cardinal $\kappa$ the functor
\begin{equation}\label{eq:hom_epi_assumption}
\prolim[n]\cC_n^{\kappa}\to\prolim[n]\Calk^{\cont}_{\kappa}(\cC_n)
\end{equation} is a homological epimorphism. 

\begin{enumerate}[label=(\roman*),ref=(\roman*)]
	\item We have a canonical cofiber sequence
	\begin{equation*}
		\Phi^{\cont}(\prolim[n]^{\dual}\cC_n)\to \Phi(\prodd[n\geq 0]\cC_n^{\omega})\xto{\id-\Phi(F)} \Phi(\prodd[n\geq 0]\cC_n^{\omega})
	\end{equation*}
	in $\cT,$ where $F:\prodd[n\geq 0]\cC_n^{\omega}\to \prodd[n\geq 0]\cC_n^{\omega}$ is the product of the functors $F_{n+1,n}^{\omega}:\cC_{n+1}^{\omega}\to\cC_n^{\omega}.$ \label{Phi^cont_of_lim^dual_exact_triangle}
	\item Suppose that $\cT$ has countable products and the map $\Phi(\prodd[n\geq 0]\cC_n^{\omega})\to\prodd[n\geq 0]\Phi(\cC_n^{\omega})$ is an isomorphism. Then we have an isomorphism
	\begin{equation*}
		\Phi^{\cont}(\prolim[n]^{\dual}\cC_n)\xto{\sim}\prolim[n]\Phi^{\cont}(\cC_n)\cong \prolim[n]\Phi(\cC_n^{\omega}).
	\end{equation*} \label{Phi^cont_commute_with_the_limit}
\end{enumerate}
\end{theo}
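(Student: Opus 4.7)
The plan is to exploit the homological epimorphism hypothesis to produce a short exact sequence in $\Cat_{\cE}^{\dual}$ that realises $\prolim[n]^{\dual}\cC_n$ as the fiber of an $\id-F$ difference operator on the ind-completion of $\prodd_{n\geq 0}\cC_n^{\omega}$; once this is in hand, the localising invariant can be applied to yield part (i), and a standard Bousfield-Kan identification in $\cT$ will yield part (ii).

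\textbf{Step 1 (Bootstrapping from the hypothesis).} The starting point is the kernel description
\begin{equation*}
\prolim[n]^{\dual}\cC_n \simeq \ker^{\dual}\!\bigl(\Ind(\prolim[n]\cC_n^{\kappa}) \to \Ind(\prolim[n]\Calk_{\kappa}^{\cont}(\cC_n))\bigr),
\end{equation*}
valid by \cite[Theorem 1.91]{E24}. The assumed homological epimorphism $\prolim[n]\cC_n^{\kappa}\to\prolim[n]\Calk_{\kappa}^{\cont}(\cC_n)$ passes to a strongly continuous quotient on ind-completions, upgrading the kernel to a genuine short exact sequence in $\Cat_{\cE}^{\dual}$.

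\textbf{Step 2 (Reducing to the Bousfield-Kan SES).} The key reformulation is to construct a short exact sequence
\begin{equation*}
0 \to \prolim[n]^{\dual}\cC_n \to \Ind\bigl(\prodd_{n \geq 0}\cC_n^{\omega}\bigr) \xto{\id - F} \Ind\bigl(\prodd_{n \geq 0}\cC_n^{\omega}\bigr) \to 0 \quad (\star)
\end{equation*}
in $\Cat_{\cE}^{\dual}$, with $F$ induced by the product of the transition functors. The underlying fiber sequence (kernel of $\id-F$ equals the sequential limit) is the classical Bousfield-Kan identity, valid in any stable $(\infty,1)$-category. The stronger assertion that $\id-F$ is a strongly continuous quotient has to be extracted from the hypothesis. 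Concretely, I would compare $(\star)$ with the short exact sequence of Step 1 by interpreting both as lax equalisers; the cofinality results in Lemmas \ref{lem:lax_equalizer_of_filtered_properties} and \ref{lem:functor_between_oplax_equalizers} provide the bridge, identifying a natural functor from $\prodd\cC_n^{\omega}$ into a cofinal subcategory of the compact objects of $\Ind(\prolim\cC_n^{\kappa})$ that intertwines the two descriptions.

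\textbf{Step 3 (Conclusion).} Applying $\Phi^{\cont}$ to $(\star)$, and using the standard identification $\Phi^{\cont}(\Ind(\cA)) \simeq \Phi(\cA)$ for $\cA\in\Cat_{\cE^{\omega}}^{\perf}$, produces the cofiber sequence
\begin{equation*}
\Phi^{\cont}(\prolim[n]^{\dual}\cC_n) \to \Phi\bigl(\prodd_{n \geq 0}\cC_n^{\omega}\bigr) \xto{\id - \Phi(F)} \Phi\bigl(\prodd_{n \geq 0}\cC_n^{\omega}\bigr)
\end{equation*}
in $\cT$, which is part (i). For part (ii), the additional hypothesis $\Phi(\prodd\cC_n^{\omega}) \xto{\sim} \prodd\Phi(\cC_n^{\omega})$ allows one to recognise the fiber of $\id-\Phi(F)$ as the sequential inverse limit $\prolim[n]\Phi(\cC_n^{\omega})$ in $\cT$, by the same Bousfield-Kan identity, this time taken in $\cT$. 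Combined with $\Phi(\cC_n^{\omega}) \simeq \Phi^{\cont}(\cC_n)$ (valid since $\cC_n$ is compactly generated), this gives $\Phi^{\cont}(\prolim[n]^{\dual}\cC_n) \simeq \prolim[n]\Phi^{\cont}(\cC_n)$.

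\textbf{Main obstacle.} The hard part is constructing the short exact sequence $(\star)$ in Step 2. The fiber-sequence structure is formal, but exhibiting $\id-F$ as a genuine strongly continuous quotient in $\Cat_{\cE}^{\dual}$ requires using the homological epimorphism hypothesis to ``invert'' $\id-F$ in a controlled way (morally via a geometric-series $\sum_{k\geq 0}F^k$ that converges only in the ind-completion, not in the dualizable limit). Pinning this down precisely is exactly where the lax-equaliser cofinality lemmas enter, and is the most delicate part of the argument.
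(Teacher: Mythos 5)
Your Step 1 matches the paper exactly, and your intuition that a cofiber sequence involving $\id - \Phi(F)$ must be extracted from the homological epimorphism hypothesis is the right compass. But Step 2 has a genuine gap: the proposed short exact sequence $(\star)$ is ill-posed. There is no functor "$\id - F$" from $\Ind(\prodd_n\cC_n^{\omega})$ to itself — you cannot subtract one exact functor from another. The Bousfield--Kan identification of a sequential limit as the fiber of $\id - F$ requires a \emph{stable} ambient category where morphism spaces are spectra, and $\Cat_{\cE}^{\dual}$ is not stable. So the claim "the underlying fiber sequence is the classical Bousfield--Kan identity, valid in any stable $(\infty,1)$-category" does not apply at the level of categories themselves, only after applying $\Phi$ (or at the level of objects inside a single stable category).

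This is not a cosmetic issue; it is exactly where the extra ideas live. The paper's route avoids the problem by working with the \emph{oplax} limit $\prolim[n]^{\oplax}\cC_n^{\kappa}$ and the fiber functor $\Psi((x_n;\varphi_n)_n) = (\Fiber(\varphi_n))_n$ — a categorical surrogate for "$\id - F$" that can only be defined because the oplax limit carries the connecting data $\varphi_n$, which the bare product does not. Then two \emph{$K$-equivalences} (not equivalences of categories) are needed: Proposition \ref{prop:oplax_limit_K_equiv} identifies $\Phi(\prolim^{\oplax}\cC_n^{\omega})$ with $\Phi(\prodd_n\cC_n^{\omega})$, and Proposition \ref{prop:double_quotient_K_equiv} handles the double quotient. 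The diagonal arrow of Corollary \ref{cor:diagonal_arrow} (which uses the hypothesis that \eqref{eq:hom_epi_assumption} is a homological epimorphism, together with Proposition \ref{prop:quotients_of_oplax_limits}) is what lets one splice $\prolim[n]\Calk_{\kappa}^{\cont}(\cC_n)$ into a short exact sequence with the oplax quotient and the double quotient. Only after applying $\Phi$ and identifying the outer two terms via the $K$-equivalences does the map $\id - \Phi(F)$ appear, in the commuting square \eqref{eq:comm_square_Phi_of_product} inside $\cT$. Your appeal to Lemmas \ref{lem:lax_equalizer_of_filtered_properties} and \ref{lem:functor_between_oplax_equalizers} is in the right neighborhood — they do underlie Proposition \ref{prop:quotients_of_oplax_limits} via the Beck--Chevalley Lemma \ref{lem:Beck_Chevalley_for_oplax} — but the essential missing pieces are the oplax-limit formulation, the functor $\Psi$, and the $K$-equivalence arguments replacing what you hoped would be an equivalence.
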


The assumption that the functor \eqref{eq:hom_epi_assumption} is a homological epimorphism is very strong, and the examples of such sequences $(\cC_n)_{n\geq 0}$ are provided by Theorems \ref{th:hom_epi_for_ML}, \ref{th:comparison_Hom^dual_with_ML}, and Proposition \ref{prop:Nuc_as_limit}. We mention the following special cases.

\begin{cor}\label{cor:K_theory_of_limit_of_ML} Let $(\cC_n)_{n\geq 0}$ be an inverse sequence in $\Cat_{\st}^{\dual},$ which is strongly Mittag-Leffler over $\Sp$ (see Definition \ref{def:strong_ML}). Then we have an isomorphism
\begin{equation}\label{eq:K^cont_of_limit_of_ML}
K^{\cont}(\prolim[n]^{\dual}\cC_n)\cong \prolim[n] K^{\cont}(\cC_n). 
\end{equation}
\end{cor}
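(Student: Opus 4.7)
The plan is to derive the corollary from Theorem \ref{th:hom_epi_for_ML}, which supplies the relevant short exact sequence, combined with two applications of Theorem \ref{th:local_invar_of_inverse_limits}\,\ref{Phi^cont_commute_with_the_limit} to suitably chosen compactly generated strongly Mittag-Leffler sequences. The conceptual point is that although $(\cC_n)$ need not lie in $\Cat_{\st}^{\cg}$, so that Theorem \ref{th:local_invar_of_inverse_limits} cannot be invoked on $(\cC_n)$ directly, we can trade it for the two compactly generated inverse sequences $(\Ind(\cC_n^{\omega_1}))_{n\geq 0}$ and $(\Ind(\Calk_{\omega_1}^{\cont}(\cC_n)))_{n\geq 0}$, both of which are strongly Mittag-Leffler by Proposition \ref{prop:Ind_and_Calk_of_ML} and therefore satisfy the hypothesis of Theorem \ref{th:local_invar_of_inverse_limits} through Theorem \ref{th:hom_epi_for_ML}.

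First I would apply Theorem \ref{th:hom_epi_for_ML} with $\kappa=\omega_1$ to obtain the short exact sequence
\[
0\to \prolim[n]^{\dual}\cC_n\to \Ind(\prolim[n]\cC_n^{\omega_1})\to \Ind(\prolim[n]\Calk_{\omega_1}^{\cont}(\cC_n))\to 0
\]
in $\Cat_{\st}^{\dual}$. Applying the localizing invariant $K^{\cont}$, and using that $K^{\cont}(\Ind(\cA))=K(\cA)$ for any small idempotent-complete stable category $\cA$ (since $K^{\cont}$ restricts to the usual $K$-theory on $\Cat_{\st}^{\cg}$), produces a fiber sequence
\[
K^{\cont}(\prolim[n]^{\dual}\cC_n)\to K(\prolim[n]\cC_n^{\omega_1})\to K(\prolim[n]\Calk_{\omega_1}^{\cont}(\cC_n)).
\]

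Next I would apply Theorem \ref{th:local_invar_of_inverse_limits}\,\ref{Phi^cont_commute_with_the_limit} with $\cE=\Sp$, $\cT=\Sp$ and $\Phi=K$ to each of the two compactly generated strongly Mittag-Leffler sequences above. The product-preservation hypothesis $K(\prodd[n]\cA_n)\xto{\sim}\prodd[n] K(\cA_n)$ for small stable categories is the theorem of Kasprowski--Winges on non-connective $K$-theory of infinite products, and is the one external input of the argument. Combining the conclusion with the identification $\prolim[n]^{\dual}\Ind(\cC_n^{\omega_1})\simeq \Ind(\prolim[n]\cC_n^{\omega_1})$, established in the proof of Theorem \ref{th:hom_epi_for_ML} from the fact that $\Ind((-)^{\omega_1})$ is right adjoint to the inclusion $\Cat_{\st}^{\dual}\hto \Pr^L_{\st,\omega_1}$, together with the analogous identification for the Calkin sequence, yields
\[
K(\prolim[n]\cC_n^{\omega_1})\cong \prolim[n] K(\cC_n^{\omega_1}),\qquad K(\prolim[n]\Calk_{\omega_1}^{\cont}(\cC_n))\cong \prolim[n] K(\Calk_{\omega_1}^{\cont}(\cC_n)).
\]

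Finally, substituting these isomorphisms into the fiber sequence from the first step and using that sequential inverse limits in $\Sp$ commute with taking fibers gives
\[
K^{\cont}(\prolim[n]^{\dual}\cC_n)\cong \prolim[n]\,\mathrm{fib}\bigl(K(\cC_n^{\omega_1})\to K(\Calk_{\omega_1}^{\cont}(\cC_n))\bigr)\cong \prolim[n] K^{\cont}(\cC_n),
\]
which is the required isomorphism. Everything apart from the Kasprowski--Winges product theorem is a purely formal combination of Theorems \ref{th:hom_epi_for_ML} and \ref{th:local_invar_of_inverse_limits}, so I do not anticipate any serious obstacle; the hard work has already been absorbed into the strongly Mittag-Leffler machinery of Section \ref{sec:Mittag-Leffler}.
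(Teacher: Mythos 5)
Your proof is correct and follows essentially the same route as the paper: Theorem~\ref{th:hom_epi_for_ML} for the short exact sequence, the identification $\prolim[n]^{\dual}\Ind(\Calk_{\omega_1}^{\cont}(\cC_n))\simeq \Ind(\prolim[n]\Calk_{\omega_1}^{\cont}(\cC_n))$ from the proof of that theorem, and then Theorem~\ref{th:local_invar_of_inverse_limits} combined with the Kasprowski--Winges product theorem. The only small difference is that the paper kills the middle term directly via the Eilenberg swindle (the category $\prolim[n]\cC_n^{\omega_1}$ has countable coproducts, so its $K$-theory vanishes) and hence invokes Theorem~\ref{th:local_invar_of_inverse_limits} only once, on the Calkin sequence, whereas you invoke it on both $\Ind(\cC_n^{\omega_1})$ and $\Ind(\Calk_{\omega_1}^{\cont}(\cC_n))$ and then pass to fibers; this is harmless but slightly roundabout, since the first application just recomputes $0\cong\prolim_n 0$. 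One small imprecision: the identification $\prolim[n]^{\dual}\Ind(\cC_n^{\omega_1})\simeq \Ind(\prolim[n]\cC_n^{\omega_1})$ does follow purely from $\Ind((-)^{\omega_1})$ being a right adjoint, but the ``analogous'' Calkin identification is not a formal consequence of adjunction --- it uses the commutative square and the quotient-functor statement in the proof of Theorem~\ref{th:hom_epi_for_ML}, which relies on Proposition~\ref{prop:Ind_and_Calk_of_ML} and Corollary~\ref{cor:comm_square_for_duals_of_limits}.
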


\begin{proof}
By Theorem \ref{th:hom_epi_for_ML}, we have a short exact sequence
\begin{equation*}
0\to \prolim[n]^{\dual}\cC_n\to \Ind(\prolim[n]\cC_n^{\omega_1})\to \Ind(\prolim[n]\Calk_{\omega_1}^{\cont}(\cC_n))\to 0.
\end{equation*}
Since the category $\prolim[n]\cC_n^{\omega_1}$ has countable coproducts, we obtain an isomorphism
\begin{equation}\label{eq:reduction_to_Calkins}
K^{\cont}(\prolim[n]^{\dual}\cC_n)\cong \Omega K(\prolim[n]\Calk_{\omega_1}^{\cont}(\cC_n)).
\end{equation}
By Proposition \ref{prop:Ind_and_Calk_of_ML}, the inverse sequence $\Ind(\Calk_{\omega_1}^{\cont}(\cC_n))_{n\geq 0}$ is also strongly Mittag-Leffler. Applying Theorem \ref{th:hom_epi_for_ML}, we see that it satisfies the assumptions of Theorem \ref{th:local_invar_of_inverse_limits}. By the proof of Theorem \ref{th:hom_epi_for_ML}, we have an equivalence
\begin{equation*}
\prolim[n]^{\dual}\Ind(\Calk_{\omega_1}^{\cont}(\cC_n))\simeq\Ind(\prolim[n]\Calk_{\omega_1}^{\cont}(\cC_n)).
\end{equation*} 
Recall that the functor $K:\Cat^{\perf}\to\Sp$ commutes with infinite products by \cite[Theorem 1.3]{KW19} (see also \cite[Theorem 4.28]{E24} for a different proof). Applying Theorem \ref{th:local_invar_of_inverse_limits}, we obtain
\begin{multline*}
K(\prolim[n]\Calk_{\omega_1}(\cC_n))\cong K^{\cont}(\prolim[n]^{\dual}\Ind(\Calk_{\omega_1}^{\cont}(\cC_n)))\cong\prolim[n]K(\Calk_{\omega_1}^{\cont}(\cC_n))\\
\cong \Sigma \prolim[n]K^{\cont}(\cC_n).
\end{multline*}
Combining this with \eqref{eq:reduction_to_Calkins}, we obtain the isomorphism \eqref{eq:K^cont_of_limit_of_ML}.
\end{proof}

\begin{cor}\label{cor:K_theory_of_Nuc}
Let $R$ be a commutative ring, and let $I=(a_1,\dots,a_m)\subset R$ be a finitely generated ideal. Consider the DG $R$-algebras $R_n=\Kos(R;a_1^n,\dots,a_m^n),$ $n\geq 1.$ Then we have an isomorphism
\begin{equation}\label{eq:first_isom_K_of_Nuc}
K^{\cont}(\Nuc(R^{\wedge}_I))\cong \prolim[n] K(R_n).	
\end{equation}
In particular, if $R$ is noetherian, then we have an isomorphism
\begin{equation}\label{eq:second_isom_K_of_Nuc}
K^{\cont}(\Nuc(R^{\wedge}_I))\cong\prolim[n]K(R/I^n).
\end{equation}
\end{cor}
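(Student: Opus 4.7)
The plan is to assemble the corollary from the structural results already established in Sections \ref{sec:rig_of_locally_rigid}--\ref{sec:loc_invar_inverse_limits}, namely the description of $\Nuc(R^{\wedge}_I)$ as a dualizable inverse limit, the strong Mittag-Leffler property of that limit, and the general formula for continuous $K$-theory of dualizable limits of strongly Mittag-Leffler sequences.

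First, I would invoke Proposition \ref{prop:Nuc_as_limit}, which simultaneously provides the equivalence $\Nuc(R^{\wedge}_I)\simeq\prolim[n]^{\dual} D(R_n)$ in $\Cat_R^{\dual}$ and asserts that the inverse sequence $(D(R_n))_{n\geq 1}$ is strongly Mittag-Leffler over $R$. By Proposition \ref{prop:basic_properties_of_ML} \ref{relative_ML_implies_absolute}, being strongly Mittag-Leffler over $R$ implies being strongly Mittag-Leffler over $\Sp$, which places the sequence in the setting required by Corollary \ref{cor:K_theory_of_limit_of_ML}.

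Next, applying Corollary \ref{cor:K_theory_of_limit_of_ML} directly yields
\begin{equation*}
K^{\cont}(\Nuc(R^{\wedge}_I))\cong K^{\cont}\bigl(\prolim[n]^{\dual} D(R_n)\bigr)\cong\prolim[n] K^{\cont}(D(R_n)).
\end{equation*}
Since $D(R_n)$ is compactly generated with $D(R_n)^{\omega}\simeq \Perf(R_n)$, the continuous $K$-theory here agrees with classical $K$-theory: $K^{\cont}(D(R_n))\cong K(\Perf(R_n))=K(R_n)$. Composing gives the first isomorphism \eqref{eq:first_isom_K_of_Nuc}.

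For the noetherian case \eqref{eq:second_isom_K_of_Nuc}, I would apply Lemma \ref{lem:pro_equivalence_noetherian}, which provides a pro-equivalence $\proolim[n] R_n\xto{\sim}\proolim[n] R/I^n$ in $\Pro(\Alg_{\bE_1}(D(R)))$. Passing to module categories gives a pro-equivalence $\proolim[n]\Perf(R_n)\simeq\proolim[n]\Perf(R/I^n)$ in $\Pro(\Cat^{\perf})$, and applying the (accessible) localizing invariant $K$ yields $\prolim[n] K(R_n)\cong\prolim[n] K(R/I^n)$. Combined with the first isomorphism, this produces \eqref{eq:second_isom_K_of_Nuc}. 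I do not expect any serious obstacle at this stage: the genuine difficulties have all been absorbed into the proofs of Proposition \ref{prop:Nuc_as_limit}, Theorem \ref{th:hom_epi_for_ML} and Theorem \ref{th:local_invar_of_inverse_limits}, and the present argument is purely an assembly of those inputs.
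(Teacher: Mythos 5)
Your proof is correct and follows the paper's primary argument exactly: Proposition \ref{prop:Nuc_as_limit} supplies both the equivalence $\Nuc(R^{\wedge}_I)\simeq\prolim[n]^{\dual}D(R_n)$ and the strong Mittag-Leffler property, Corollary \ref{cor:K_theory_of_limit_of_ML} then computes the continuous $K$-theory of the dualizable limit, and Lemma \ref{lem:pro_equivalence_noetherian} handles the noetherian reduction. The paper also sketches an alternative route (via Remark \ref{rem:avoiding_Mittag_Leffler}, Corollary \ref{cor:hom_epi_internal_hom}, and Proposition \ref{prop:derived_quotients_pro_equiv_to_functors_from_approx}, avoiding the Mittag-Leffler machinery), but your argument is the intended main proof.
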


\begin{proof}
By Proposition \ref{prop:Nuc_as_limit}, the inverse sequence $(D(R_n))_{n\geq 0}$ is strongly Mittag-Leffler over $D(R),$ hence also over $\Sp.$ Thus, the isomorphism \eqref{eq:first_isom_K_of_Nuc} is a special case of Corollary \ref{cor:K_theory_of_limit_of_ML}.

Alternatively, by the observation in Remark \ref{rem:avoiding_Mittag_Leffler} we can deduce that the functor $\prolim[n]D(R_n)^{\omega_1}\to\prolim[n]\Calk_{\omega_1}(R_n)$ is a homological epimorphism using only Corollary \ref{cor:hom_epi_internal_hom}, Theorem \ref{th:comparison_Hom^dual_with_ML} \ref{isom_of_ses} and Proposition \ref{prop:derived_quotients_pro_equiv_to_functors_from_approx}. Applying Theorem \ref{th:loc_invar_of_Nuc_CS} and the commutation of $K$-theory with infinite products, we obtain the isomorphism \eqref{eq:first_isom_K_of_Nuc}.

If $R$ is noetherian, then the isomorphism \eqref{eq:second_isom_K_of_Nuc} follows from \eqref{eq:first_isom_K_of_Nuc} by Lemma \ref{lem:pro_equivalence_noetherian}. 
\end{proof}

\begin{remark}\label{rmk:U_loc_of_inverse_limits}
Within the notation of Theorem \ref{th:local_invar_of_inverse_limits}, take $\Phi$ to be the universal localizing invariant $\cU_{\loc}:\Cat_{\cE^{\omega}}^{\perf}\to\Mot^{\loc},$ commuting with filtered colimits. We will prove in \cite{E} that the functor $\cU_{\loc}$ commutes with infinite products. This will imply the isomorphism
\begin{equation*}
\cU_{\loc}^{\cont}(\prolim[n]^{\dual}\cC_n)\to \prolim[n]\cU_{\loc}(\cC_n^{\omega})
\end{equation*}
under the assumption from Theorem \ref{th:local_invar_of_inverse_limits}.
\end{remark}

Now we prove Theorem \ref{th:local_invar_of_inverse_limits}. We first recall the sequential oplax limits.

\begin{defi}
Let $(\cD_n)_{n\geq 0}$ be an inverse sequence of (small or large) $\infty$-categories, and denote by $G_{mn}:\cD_m\to\cD_n,$ $m\geq n,$ the transition functors. We denote by $\prolim[n]^{\oplax}\cD_n$ the category of sections of the associated cartesian fibration over $\N.$ In other words, the objects of $\prolim[n]^{\oplax}\cD_n$ are given by sequences $(x_n;\varphi_n)_{n\geq 0},$ where $x_n\in\cD_n$ and $\varphi_n:x_n\to G_{n+1,n}(x_{n+1}).$
\end{defi}

Note that if $(\cD_n)_{n\geq 0}$ is an inverse sequence of left $\cE^{\omega}$-modules resp. left $\cE$-modules, then $\prolim[n]^{\oplax}\cD_n$ is also an $\cE^{\omega}$-module resp. an $\cE$-module. Next, we recall the notion of a relative $K$-equivalence.

\begin{defi}\label{def:K-equivalence}
Let $G:\cC\to\cD$ be a functor in $\Cat_{\cE^{\omega}}^{\perf}.$ Then $G$ is called a $K$-equivalence over $\cE^{\omega}$ if there exists an $\cE^{\omega}$-linear functor $H:\cD\to\cC,$ such that we have equalities
\begin{equation*}
[G\circ H]=[\id_{\cD}]\text{ in }K_0(\Fun_{\cE^{\omega}}(\cD,\cD)),\quad [H\circ G]=[\id_{\cC}]\text{ in }K_0(\Fun_{\cE^{\omega}}(\cC,\cC)).
\end{equation*}
\end{defi}

We recall the following relation between the oplax limit and the product.

\begin{prop}\label{prop:oplax_limit_K_equiv}
Let $(\cD_n)_{n\geq 0}$ be an inverse sequence in $\Cat_{\cE^{\omega}}^{\perf}.$ Then the natural (forgetful) functor $\prolim[n]^{\oplax}\cD_n\to\prodd[n]\cD_n$ is a $K$-equivalence over $\cE^{\omega}.$
\end{prop}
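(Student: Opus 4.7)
The plan is as follows. I would construct $H \colon \prod_n \cD_n \to \prolim[n]^{\oplax} \cD_n$ by sending a tuple $(y_n)_n$ to the object $(y_n; 0)_n$ equipped with vanishing structure maps. This is evidently exact and $\cE^\omega$-linear (each $\cD_n$ is pointed), and a direct inspection shows $G \circ H = \id_{\prod_n \cD_n}$, so $[GH] = [\id_{\prod_n \cD_n}]$ trivially.

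The substantive content is the identity $[HG] = [\id_\cC]$ in $K_0(\Fun_{\cE^\omega}(\cC,\cC))$, where $\cC = \prolim[n]^{\oplax} \cD_n$. I would argue via cofiber sequences of endofunctors. Introduce the shift $T \colon \cC \to \cC$, $T((x_n; \varphi_n)) = (F_{n+1,n}(x_{n+1}); F_{n+1,n}(\varphi_{n+1}))$, with the natural transformation $\alpha \colon \id_\cC \to T$ whose component at $(x_n;\varphi_n)$ is the structure map $\varphi_n$ at each level. A direct computation shows $\Cofib(\alpha)((x_n;\varphi_n)) = (\Cofib(\varphi_n); 0)$, i.e.\ $\Cofib(\alpha) = C$ factors through the product $\cD = \prod_n \cD_n$ via $\iota \circ G_C$ where $G_C((x_n;\varphi_n)) = (\Cofib(\varphi_n))_n$. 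One checks that $HG$ and $T$ commute (both are ``levelwise'' operations), and that $HG\alpha \colon HG \to HGT$ is a well-defined natural transformation with cofiber again equal to $C$ (since $HG$ acts as the identity on objects with zero structure). From the two cofiber sequences $\id_\cC \to T \to C$ and $HG \to HGT \to C$ one obtains in $K_0$ the identity
\[
[\id_\cC] - [HG] = [T] - [HGT].
\]

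The main obstacle is then to show $[T] = [HGT]$, i.e.\ that the shift has the same $K_0$-class as the ``dezeroed shift''. I would resolve this via a telescoping argument. Introduce for each $N \geq 0$ the truncation endofunctor $Q_N \colon \cC \to \cC$ setting levels above $N$ to zero, with natural transformation $\id_\cC \to Q_N$; one verifies by direct computation that its cofiber is $\Sigma R_{N+1}$, where $R_{N+1}$ restricts to the tail above $N$ with the original structure maps intact. Iterating the cofiber sequences $Q_N \to Q_{N-1} \to \Sigma \iota_N \pi_N$ (with $\iota_N, \pi_N$ the inclusion-projection pair at level $N$) yields $[\id_\cC] = \sum_{k=0}^N [\iota_k \pi_k] + [R_{N+1}]$. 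A parallel decomposition for $HG$ via analogous cofiber sequences gives $[HG] = \sum_{k=0}^N [\iota_k \pi_k] + [\iota \circ G_{>N}]$, where $G_{>N}$ is the forgetful functor restricted to levels above $N$.

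Comparing these two decompositions reduces the desired identity to $[R_{N+1}] = [\iota \circ G_{>N}]$ in $K_0(\Fun_{\cE^\omega}(\cC,\cC))$ for some (equivalently any) $N$; this is precisely the analogous $K$-equivalence statement applied to the shifted sequence $(\cD_n)_{n>N}$. The hardest step is closing this self-referential loop: both $R_{N+1}$ and $\iota \circ G_{>N}$ factor through appropriate functors $\cC \to \cC_{>N} \to \cC$, so their difference is the image (under the natural push-pull map on $K_0$ of functor categories) of the corresponding obstruction for the shifted sequence. I anticipate resolving this by combining Waldhausen additivity with the ring structure on $K_0(\Fun_{\cE^\omega}(\cC,\cC))$: the identities above translate into the ring relation $(1 - [HG])([T] - 1) = 0$, and by iteration the obstruction $1 - [HG]$ is annihilated by $([T]-1)^N$ for every $N$, which together with the invariance of this obstruction under truncation forces it to vanish.
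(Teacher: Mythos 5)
Your setup matches the paper's: the section $H\colon\prodd[n]\cD_n\to\prolim[n]^{\oplax}\cD_n$ with zero structure maps is exactly what the paper calls $\iota$, and $\pi\circ\iota\cong\id$ is immediate, so the content is indeed the identity $[\iota\pi]=[\id]$ in $K_0(\Fun_{\cE^{\omega}}(\cC,\cC))$. But your route through the shift $T$ does not close. The relation you derive, $[\id]-[HG]=[T]-[HGT]$, is equivalent to $([T]-1)(1-[HG])=0$; this holds automatically and carries no information about whether $1-[HG]$ vanishes, since $[T]-1$ is not a unit and you have not produced any left inverse for it. The subsequent reduction to ``$[R_{N+1}]=[\iota\circ G_{>N}]$'' is literally the same statement again for the shifted sequence $(\cD_n)_{n>N}$ --- you acknowledge this is self-referential --- and the final paragraph (``annihilated by $([T]-1)^N$ for every $N$, together with invariance under truncation, forces it to vanish'') is not an argument: from $x\cdot y=0$ one cannot conclude $x=0$, no matter how many times one multiplies by $y$.

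The idea you need, and almost have, is the Eilenberg swindle with the truncation functors. Your $R_N$ (``the tail above $N-1$ with original structure maps'') is exactly the functor $\Psi_N$ used in the paper's Proposition \ref{prop:oplax_limit_K_equiv_bounded}, and you correctly identify the cofiber sequences $\Psi_{N+1}\to\Psi_N\to\iota_N\pi_N$. The crucial observation you miss is that the infinite direct sum $\biggplus[N\geq 0]\Psi_N$ \emph{is a well-defined endofunctor} of $\prolim[n]^{\oplax}\cD_n$: at each level $n$ only the finitely many summands with $N\leq n$ contribute, so the value is the finite sum $x_n^{\oplus(n+1)}$. Taking the direct sum of all the cofiber sequences at once gives $\biggplus[N\geq 1]\Psi_N\to\biggplus[N\geq 0]\Psi_N\to\iota\pi$, and the obvious splitting $\biggplus[N\geq 0]\Psi_N\simeq\Psi_0\oplus\biggplus[N\geq 1]\Psi_N$ yields $[\iota\pi]=[\Psi_0]=[\id]$ immediately, with no self-referential loop. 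By contrast, $\biggplus[k]T^k$ is \emph{not} locally finite (level $n$ would receive contributions from all $k$), which is why the swindle cannot be run with the shift and why your relation $([T]-1)(1-[HG])=0$ has no usable consequence.

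A secondary issue, independent of the above: the claim that $\Cofib(\alpha)\simeq\iota\circ G_C$ as an endofunctor of $\cC$ needs justification. Pointwise, the induced structure map $\Cofib(\varphi_n)\to F_{n+1,n}(\Cofib(\varphi_{n+1}))$ arising from the square with $f=\varphi_n$ on two sides and $g=F_{n+1,n}(\varphi_{n+1})$ on the other two is indeed null-homotopic (this can be seen with an explicit chain-level homotopy), but ``null at each level'' is not yet ``naturally isomorphic to $\iota G_C$'': you need a coherent, functorial choice of null-homotopies compatible with all higher coherences, and the image of $\iota$ is not reflective or coreflective in $\cC$, so there is no adjoint-functor shortcut. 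Since the swindle with $\Psi_N$ avoids the shift entirely, it sidesteps this issue.
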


\begin{proof}
This is a special case of \cite[Proposition 4.29]{E24}. More precisely, the result in loc. cit. is over the absolute base, but the proof over $\cE^{\omega}$ is the same.  
\end{proof}

The following result is an important step towards the proof of Theorem \ref{th:local_invar_of_inverse_limits}.

\begin{prop}\label{prop:quotients_of_oplax_limits} Let $(\cD_n)_{n\geq 0}$ be an inverse sequence in $\Cat^{\perf}.$ Then the natural functor
\begin{equation}\label{eq:key_fully_faithful_functor}
H:(\prolim[n]^{\oplax}\Ind^{\kappa}(\cD_n))/(\prolim[n]^{\oplax}\cD_n)\to \prolim[n]^{\oplax}\Calk_{\kappa}(\cD_n)	
\end{equation}
is fully faithful.
\end{prop}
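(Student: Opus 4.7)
The approach is to directly compute the morphism spectra on both sides of $H$ and show that the natural comparison map is an equivalence. Fix objects $X = (x_n; \varphi_n)$ and $Y = (y_n; \psi_n)$ in $\cC := \prolim[n]^{\oplax}\Ind^{\kappa}(\cD_n)$, and set $\cA := \prolim[n]^{\oplax}\cD_n$.

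On the LHS, by the Verdier quotient formula I would write
\[
\Hom_{\cC/\cA}(X,Y) \simeq \indlim_{Y \to Y' \in \cC_{Y/}^{\cofib \in \cA}} \Hom_{\cC}(X, Y'),
\]
with each $\Hom_{\cC}(X, Y')$ given by the oplax limit formula as the fiber of a map of countable products $\prodd_n \Hom(x_n, y'_n) \to \prodd_n \Hom(x_n, G_n y'_{n+1})$. On the RHS, the oplax limit formula again gives a fiber of a map of countable products, and each factor $\Hom_{\Calk_\kappa(\cD_n)}(x_n, -)$ expands as a filtered colimit via the Verdier quotient formula over $I_n := (\Ind^{\kappa}(\cD_n))_{y_n/}^{\cofib \in \cD_n}$ (which is filtered).

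The key structural step is to identify the indexing category $\cC_{Y/}^{\cofib \in \cA}$ with a lax equalizer of the form appearing in Lemma \ref{lem:lax_equalizer_of_filtered_properties}, where the filtered categories $I_n$ are glued via secondary categories $J_n$ constructed to encode the $\psi_n$-compatibility data through pushout. Concretely, a morphism $Y \to Y'$ with cofiber in $\cA$ amounts to maps $\alpha_n\colon y_n \to y'_n$ with cofiber in $\cD_n$ together with maps $\psi'_n\colon y'_n \to G_n y'_{n+1}$ making the evident squares commute, and by the universal property of the pushout this is precisely a lax equalizer datum. Lemma \ref{lem:lax_equalizer_of_filtered_properties} then delivers both that this lax equalizer is filtered and that its projection to $\prodd_n I_n$ is cofinal.

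With this in hand, I would swap the filtered colimit on the LHS with the fiber (using strong (AB5) in $\Sp$) and with the outer countable products (using (AB6) for $\Sp$, cf.\ the discussion in Subsection 1.1), and then use cofinality of the projection to $\prodd_n I_n$ to convert the $\LEq$-colimit into a product of filtered colimits over the individual $I_n$. Matching term by term with the RHS computation then gives the desired equivalence. The main obstacle I anticipate is the setup of the secondary categories $J_n$ and verification of the cofinality hypothesis of Lemma \ref{lem:lax_equalizer_of_filtered_properties}: the pushout construction naturally handles the first term of the fiber, but the second term (involving $\Hom(x_n, G_n y'_{n+1})$) requires additional care to align the colimit over $I_{n+1}$ with the colimit over $(\Ind^\kappa(\cD_n))_{G_n y_{n+1}/}^{\cofib \in \cD_n}$ that appears in the Calkin morphism spectra on the RHS. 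Making this comparison rigorous is where the pushout presentation of the lax equalizer, combined with the projection cofinality of Lemma \ref{lem:lax_equalizer_of_filtered_properties}(ii) applied after forgetting the $I_0$-factor (which is cofinal since $I_0$ is filtered), will do the work.
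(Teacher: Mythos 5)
Your overall strategy matches the paper's: expand both sides via Verdier quotient and oplax limit formulas, control the indexing category using Lemma \ref{lem:lax_equalizer_of_filtered_properties}, and swap colimits past fibers and countable products using strong (AB5) and (AB6). The paper sets this up slightly differently: instead of computing $\Hom_{\cC/\cA}(X,Y)$ directly, it computes the fiber of $\Hom_{\cC}(x,y)\to\Hom_{\cC/\cA}(x,y)$, which is a filtered colimit over the slice $(\prolim[n]^{\oplax}\cD_n)_{/y}$, and compares it with the analogous fiber for $\cB$. This is isomorphic to your setup, since $\cC_{Y/}^{\cofib\in\cA}\simeq\cA_{/Y[1]}$, but it is cleaner because the lax-equalizer identification of $(\prolim[n]^{\oplax}\cD_n)_{/y}$ is already carried out in Lemma \ref{lem:Beck_Chevalley_for_oplax}, whose proof is a direct application of Lemma \ref{lem:lax_equalizer_of_filtered_properties}, and the needed cofinality of the projection to $\prodd_n(\cD_n)_{/y_n}$ comes out automatically.

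One concrete point where your plan goes off track: the difficulty you correctly anticipate for the second term, involving $\Hom(x_n,G_{n+1,n}(y'_{n+1}))$, is \emph{not} resolved by ``forgetting the $I_0$-factor.'' Reindexing the countable product does remove the mismatch of indices, but the substantive issue is the cofinality of the functor $(\cD_{n+1})_{/y_{n+1}}\to (\cD_n)_{/G_{n+1,n}(y_{n+1})}$ induced by $G_{n+1,n}$. This is exactly the cofinality hypothesis ``each $G_n$ is cofinal'' built into Lemma \ref{lem:lax_equalizer_of_filtered_properties} and used in Lemma \ref{lem:Beck_Chevalley_for_oplax}; it holds because $G_{n+1,n}$ preserves filtered colimits and each $w\in\cD_n$ is compact, so a map $w\to G_{n+1,n}(y_{n+1})$ factors through $G_{n+1,n}$ of some $z\in(\cD_{n+1})_{/y_{n+1}}$. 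After that one uses that a product of cofinal functors between filtered categories is cofinal, together with (AB6). The ``pushout'' language in your sketch also does not correspond to any step in the argument; the oplax-section data $\psi'_n$ is encoded directly as the lax-equalizer $\varphi_n$'s, not via a pushout. If you replace these two imprecise steps with the explicit cofinality of the levelwise transition functors, your plan goes through and is essentially the paper's proof.
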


To prove Proposition \ref{prop:quotients_of_oplax_limits}, we need the following statement on the Beck-Chevalley condition.

\begin{lemma}\label{lem:Beck_Chevalley_for_oplax}
Within the notation of Proposition \ref{prop:quotients_of_oplax_limits}, the following square commutes:
\begin{equation}\label{eq:Beck_Chevalley_for_oplax}
\begin{CD}
\prolim[n]^{\oplax}\Ind(\cD_n) @>>> \prodd[n\geq 0]\Ind(\cD_n)\\
@VVV @VVV\\
\Ind(\prolim[n]^{\oplax}\cD_n) @>>> \Ind(\prodd[n\geq 0]\cD_n).
\end{CD}
\end{equation}
Here the vertical functors are the right adjoints to the natural functors.
\end{lemma}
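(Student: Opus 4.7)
The plan is to show that both natural functors
\[
L_1\colon \Ind(\prolim[n]^{\oplax}\cD_n) \to \prolim[n]^{\oplax}\Ind(\cD_n),\qquad L_2\colon \Ind(\prodd[n]\cD_n) \to \prodd[n]\Ind(\cD_n)
\]
are equivalences of $\infty$-categories; once this is established, the vertical right adjoints $R_1, R_2$ of the lemma are their inverses, and the desired commutativity $g \circ R_1 \simeq R_2 \circ f$ becomes equivalent to the isomorphism $f \circ L_1 \simeq L_2 \circ g$ of left adjoints going upward. The latter is automatic: both compositions preserve colimits and they agree on the generating compact objects $(x_k;\psi_k) \in \prolim[n]^{\oplax}\cD_n$, both sending them to $(x_k)_k \in \prodd[n]\Ind(\cD_n)$.

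For $L_2$, full faithfulness is immediate from the mapping-space formula $\prodd[n]\Map(x_n,y_n)$, which gives the same answer on both sides when $(x_n)$ has compact coordinates. Essential surjectivity follows by writing $(W_n)_n$ with $W_n = \inddlim[i_n \in I_n] W_{n,i_n}$ as the image of $\inddlim[(i_n)\in \prod_n I_n] (W_{n,i_n})_n$; the indexing $\prod_n I_n$ is filtered, each projection $\prod_k I_k \to I_n$ is cofinal, and hence the pointwise colimit recovers $(W_n)_n$.

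For $L_1$ the argument is parallel but more involved. Full faithfulness reduces to showing that each $(x_k;\psi_k) \in \prolim[n]^{\oplax}\cD_n$ is compact in $\prolim[n]^{\oplax}\Ind(\cD_n)$. The mapping space out of $(x_k;\psi_k)$ is the fiber of two maps between countable products of mapping spectra, so compactness follows from the (AB6)-type property of $\Sp$ that countable products commute with diagonally indexed sequential filtered colimits, combined with the commutation of fibers and filtered colimits. For essential surjectivity, given $(z_n;\chi_n)$ with $z_n = \inddlim[i_n]z_{n,i_n}$, one builds a filtered diagram of compact approximating sections indexed by tuples (a cutoff $N$, choices of indices $(i_n)_{n \leq N}$, and lifts of $\chi_0, \ldots, \chi_{N-1}$ through the selected approximants), setting $y_k = z_{k,i_k}$ for $k \leq N$ and $y_k = 0$ for $k > N$, with transition data coming from the chosen lifts and from zero beyond.

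The main obstacle is verifying that the indexing category for this approximating diagram is filtered, so that its colimit in $\prolim[n]^{\oplax}\Ind(\cD_n)$ actually reproduces $(z_n;\chi_n)$; this amounts to showing that any two such pieces of data admit a common refinement jointly refining both the chosen approximants and the chosen lifts of the transitions. The oplax (rather than strict) nature of the limit provides exactly the flexibility needed: as one passes to finer approximants $i_n' \geq i_n$, the previously chosen lift of $\chi_n$ can be transported along the canonical comparison maps between $z_{n,i_n}$ and $z_{n,i_n'}$, which allows one to refine all the lifting data simultaneously and coherently.
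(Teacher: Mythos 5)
The proposal fails at its very first claim: neither $L_1$ nor $L_2$ is an equivalence, and in fact neither is even fully faithful. Take $\cD_n=\Perf(\mk)$ for all $n$, let $x=(\mk)_{n\geq 0}\in\prodd[n]\cD_n$, and let $y_j\in\prodd[n]\cD_n$ (for $j\in\N$) have $n$-th coordinate $\mk$ if $n<j$ and $0$ otherwise, with the evident transition maps $y_j\to y_{j+1}.$ Then
$\Hom_{\Ind(\prodd[n]\cD_n)}(x,\inddlim[j]y_j)\cong \indlim[j]\mk^{\oplus j}\cong\biggplus[n]\mk,$
while
$\Hom_{\prodd[n]\Ind(\cD_n)}(L_2 x,L_2\inddlim[j]y_j)\cong\prodd[n]\mk.$
These differ, so $L_2$ is not fully faithful; the same computation shows that compact objects of $\prolim[n]^{\oplax}\cD_n$ are not compact in $\prolim[n]^{\oplax}\Ind(\cD_n),$ so the proposed compactness step for $L_1$ also breaks. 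The root of the error is the assertion that ``countable products commute with diagonally indexed sequential filtered colimits'' in $\Sp$: (AB6) gives $\indlim[(j_n)\in\prodd[n]J_n]\prodd[n]x_{n,j_n}\cong\prodd[n]\indlim[j_n]x_{n,j_n}$ with the colimit on the left taken over the \emph{full product poset}, and the diagonal $J\to J^{\N}$ is cofinal only when $J$ is $\omega_1$-filtered, which is not granted here.

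The lemma is a genuine Beck--Chevalley statement: the square of \emph{left} adjoints commutes (as you correctly observe by checking on compact generators), but commutativity of the right-adjoint square is not formal. The paper proves it directly: fixing $x=(x_n;\varphi_n)\in\prolim[n]^{\oplax}\Ind(\cD_n),$ it identifies the slice $(\prolim[n]^{\oplax}\cD_n)_{/x}$ with the lax equalizer $\LEq(\prodd[n\geq 0](\cD_n)_{/x_n}\toto\prodd[n\geq 1](\cD_{n-1})_{/G_{n,n-1}(x_n)})$ and the slice $(\prodd[n]\cD_n)_{/(x_n)_n}$ with $\prodd[n](\cD_n)_{/x_n},$ then invokes Lemma \ref{lem:lax_equalizer_of_filtered_properties} to conclude that the forgetful functor between these slices is cofinal. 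That cofinality is precisely what makes the right adjoints commute, and it is where the hypotheses about the oplax structure enter. Your essential-surjectivity computation for $L_2$ is fine, but it is beside the point once full faithfulness is lost; the correct tool here is cofinality of slice categories, not an equivalence of $\Ind$-categories.
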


\begin{proof}
Denote by $G_{mn}:\Ind(\cD_m)\to\Ind(\cD_n)$ the transition functors, $m\geq n.$ Take an object $x=(x_n;\varphi_n)_{n\geq 0}\in \prolim[n]^{\oplax}\Ind(\cD_n),$ where $x_n\in\Ind(\cD_n),$ $\varphi_n:x_n\to G_{n+1,n}(x_{n+1}).$ Consider the filtered categories $I_n=(\cD_n)_{/ x_n},$ $n\geq 0,$ and $J_n=(\cD_{n-1})_{/G_{n,n-1}(x_n)},$ $n\geq 1.$ Each map $\varphi_n$ induces the functor $I_n\to J_{n+1},$ $n\geq 0.$ Further, each functor $G_{n,n-1}$ induces the cofinal functor $I_n\to J_n,$ $n\geq 1.$ As in Subsection \ref{ssec:lemmas_about_filtered}, we can form the lax equalizer $\LEq(\prodd[n\geq 0]I_n\toto \prodd[n\geq 1]J_n).$ Then we have (tautological) equivalences
\begin{equation*}
(\prolim[n]^{\oplax}\cD_n)_{/x}\simeq \LEq(\prodd[n\geq 0]I_n\toto \prodd[n\geq 1]J_n),\quad (\prodd[n\geq 0]\cD_n)_{/(x_n)_{n\geq 0}}\simeq \prodd[n\geq 0]I_n.
\end{equation*}
By Lemma \ref{lem:lax_equalizer_of_filtered_properties} the functor
\begin{equation*}
\LEq(\prodd[n\geq 0]I_n\toto \prodd[n\geq 1]J_n)\to \prodd[n\geq 0]I_n
\end{equation*}
is cofinal. We conclude that the square \eqref{eq:Beck_Chevalley_for_oplax} commutes.
\end{proof}

\begin{proof}[Proof of Proposition \ref{prop:quotients_of_oplax_limits}]
We denote by $\cA$ and $\cB$ respectively the source and the target of the functor \eqref{eq:key_fully_faithful_functor}. As in the proof of Lemma \ref{lem:Beck_Chevalley_for_oplax}, we denote by $G_{mn}:\Ind(\cD_m)\to\Ind(\cD_n)$ the transition functors, $m\geq n.$ 

Take some objects $x,y\in\prolim[n]^{\oplax}\Ind^{\kappa}(\cD_n),$ where $x=(x_n;\varphi_n)_{n\geq 0},$ $y=(y_n;\psi_n)_{n\geq 0},$ We choose a directed poset $I$ with a cofinal functor $I\to (\prolim[n]^{\oplax}\cD_n)_{/y},$ $i\mapsto z_i=(z_{i,n};\psi_{i,n})_{n\geq 0}.$ Then we have
\begin{multline}\label{eq:Fiber_on_Homs1}
\Fiber(\Hom_{\prolim[n]^{\oplax}\Ind^{\kappa}(\cD_n)}(x,y)\to \Hom_{\cA}(x,y))\cong \indlim[i]\Hom_{\prolim[n]^{\oplax}\Ind^{\kappa}(\cD_n)}(x,z_i)\\
\cong \indlim[i]\Fiber(\prodd[n\geq 0]\Hom_{\Ind(\cD_n)}(x_n,z_{i,n})\to \prodd[n\geq 0]\Hom_{\Ind(\cD_n)}(x_n,G_{n+1,n}(z_{i,n})))\\
\cong \Fiber(\indlim[i]\prodd[n\geq 0]\Hom_{\Ind(\cD_n)}(x_n,z_{i,n})\to \indlim[i]\prodd[n\geq 0]\Hom_{\Ind(\cD_n)}(x_n,G_{n+1,n}(z_{i,n+1})))
\end{multline}
By Lemma \ref{lem:Beck_Chevalley_for_oplax}, the functor $I\to \prodd[n\geq 0](\cD_n)_{/y_n}$ is cofinal. As in \cite[Proposition 1.101]{E24} (which is essentially a straightforward application of (AB6) in the category $\Sp$), we obtain
\begin{equation*}
\indlim[i]\prodd[n\geq 0]\Hom_{\Ind(\cD_n)}(x_n,z_{i,n})\cong \indlim[i]\prodd[n\geq 0](z_{i,n}\tens{\cD_n}x_n^{\vee})\cong \prodd[n\geq 0](y_n\tens{\cD_n}x_n^{\vee}),
\end{equation*}
and similarly
\begin{equation*}
	\indlim[i]\prodd[n\geq 0]\Hom_{\Ind(\cD_n)}(x_n,G_{n+1,n}(z_{i,n+1}))\cong \prodd[n\geq 0](G_{n+1,n}(y_n)\tens{\cD_n}x_n^{\vee}).
\end{equation*}
Therefore, we obtain the isomorphisms
\begin{multline}\label{eq:Fiber_on_Homs2}
\Fiber(\indlim[i]\prodd[n\geq 0]\Hom_{\Ind(\cD_n)}(x_n,z_{i,n})\to \indlim[i]\prodd[n\geq 0]\Hom_{\Ind(\cD_n)}(x_n,G_{n+1,n}(z_{i,n+1})))\\
\cong \Fiber(\prodd[n\geq 0](y_n\tens{\cD_n}x_n^{\vee})\to \prodd[n\geq 0](G_{n+1,n}(y_n)\tens{\cD_n}x_n^{\vee}))\\
\cong \Fiber(\Hom_{\prolim[n]^{\oplax}\Ind^{\kappa}(\cD_n)}(x,y)\to \Hom_{\cB}(x,y)). 
\end{multline}
The isomorphisms \eqref{eq:Fiber_on_Homs1} and \eqref{eq:Fiber_on_Homs2} imply the fully faithfulness of the functor $H:\cA\to\cB.$ This proves the proposition.
\end{proof}

We obtain the following crucial corollary.

\begin{cor}\label{cor:diagonal_arrow}
Within the notation of Theorem \ref{th:local_invar_of_inverse_limits}, there exists a unique $\cE^{\omega}$-linear functor $\prolim[n]\Calk_{\kappa}(\cC_n^{\omega})\to (\prolim[n]^{\oplax}\cC_n^{\kappa})/(\prolim[n]^{\oplax}\cC_n^{\omega}),$ such that the following diagram commutes:
\begin{equation}\label{eq:diagonal_arrow}
\begin{tikzcd}
\prolim[n]\cC_n^{\kappa} \ar[r]\ar[d]  & \prolim[n]\Calk_{\kappa}^{\cont}(\cC_k) \ar[d]\ar[ld] \\
(\prolim[n]^{\oplax}\cC_n^{\kappa})/(\prolim[n]^{\oplax}\cC_n^{\omega})\ar[r] & \prolim[n]^{\oplax}\Calk_{\kappa}^{\cont}(\cC_k)
\end{tikzcd}
\end{equation}
Moreover, in this diagram the diagonal functor is fully faithful.
\end{cor}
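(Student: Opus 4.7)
The plan is to produce the diagonal functor as the essentially unique factorization of a natural composite along the homological epimorphism hypothesis, and then to deduce fully faithfulness formally from Proposition~\ref{prop:quotients_of_oplax_limits}.

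First I would form the $\cE^{\omega}$-linear composite
\begin{equation*}
G\colon \prolim[n]\cC_n^{\kappa}\xto{\iota}\prolim[n]^{\oplax}\cC_n^{\kappa}\xto{q}(\prolim[n]^{\oplax}\cC_n^{\kappa})/(\prolim[n]^{\oplax}\cC_n^{\omega}),
\end{equation*}
where $\iota$ is the inclusion of the strict limit into the oplax limit as the full subcategory of sections whose structure maps are all invertible, and $q$ is the Verdier quotient.

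Next I would check that $G$ vanishes on the kernel of the homological epimorphism
\begin{equation*}
F\colon \prolim[n]\cC_n^{\kappa}\to\prolim[n]\Calk_{\kappa}^{\cont}(\cC_n).
\end{equation*}
Termwise $\Calk_{\kappa}^{\cont}(\cC_n)\simeq \cC_n^{\kappa}/\hat{\cY}(\cC_n^{\omega})$, so $\ker(F)$ is the strict limit $\prolim[n]\cC_n^{\omega}$ (identifying $\cC_n^{\omega}\simeq\hat{\cY}(\cC_n^{\omega})\subset\cC_n^{\kappa}$ via compact generation). Under $\iota$ this lands inside $\prolim[n]^{\oplax}\cC_n^{\omega}$, hence in $\ker(q)$, so $G$ kills $\ker(F)$. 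Since $F$ is a homological epimorphism it is an epimorphism in $\Cat_{\cE^{\omega}}^{\perf}$, so by the universal property of Verdier quotients there exists a unique $\cE^{\omega}$-linear exact functor
\begin{equation*}
\Delta\colon \prolim[n]\Calk_{\kappa}^{\cont}(\cC_n)\to (\prolim[n]^{\oplax}\cC_n^{\kappa})/(\prolim[n]^{\oplax}\cC_n^{\omega})
\end{equation*}
with $\Delta\circ F = G$. This $\Delta$ is the required diagonal functor; the lower-left triangle of \eqref{eq:diagonal_arrow} commutes by construction, and commutativity of the upper-right triangle is automatic since both routes to $\prolim[n]^{\oplax}\Calk_{\kappa}^{\cont}(\cC_n)$ extend the same functor along the epimorphism $F$, forcing them to agree.

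Finally, to show that $\Delta$ is fully faithful I would post-compose with the fully faithful functor $H$ of \eqref{eq:key_fully_faithful_functor} provided by Proposition~\ref{prop:quotients_of_oplax_limits}. By the just-established commutativity, the composite $H\circ\Delta$ equals the canonical inclusion $\prolim[n]\Calk_{\kappa}^{\cont}(\cC_n)\hookrightarrow\prolim[n]^{\oplax}\Calk_{\kappa}^{\cont}(\cC_n)$ of the strict limit into the oplax one, which is always fully faithful. Since $H$ is fully faithful, $\Delta$ must be fully faithful as well. The only real content beyond formalities is the identification of $\ker(F)$ with $\prolim[n]\cC_n^{\omega}$ and the invocation of the homological epi's universal property in the $\cE^{\omega}$-linear setting; the hard analytic work has already been absorbed into Proposition~\ref{prop:quotients_of_oplax_limits}.
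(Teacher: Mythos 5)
Your existence argument has a gap. After observing that $G$ kills $\ker(F)\simeq\prolim[n]\cC_n^{\omega}$, you invoke ``the universal property of Verdier quotients'' because $F$ is an epimorphism. But a homological epimorphism is strictly weaker than being the idempotent completion of a Verdier quotient: $\Ind(F)$ is a smashing localization preserving compacts, and (by the failure of the telescope conjecture) its kernel need not be generated by $\ker(F)$, so in general $\prolim[n]\Calk_{\kappa}^{\cont}(\cC_n)$ is not $\bigl(\prolim[n]\cC_n^{\kappa}/\ker(F)\bigr)^{\idem}$. Knowing that $G$ annihilates $\ker(F)$ therefore does not produce a functor out of the target of $F$; an epimorphism gives \emph{uniqueness} of a lift, not existence.

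The mechanism the paper uses is different, and it is Proposition~\ref{prop:quotients_of_oplax_limits} that does the existence work, not just the final full-faithfulness. Since the bottom arrow of \eqref{eq:diagonal_arrow} is fully faithful with idempotent-complete source, its essential image in $\prolim[n]^{\oplax}\Calk_{\kappa}^{\cont}(\cC_n)$ is closed under retracts. Any object $b$ of $\prolim[n]\Calk_{\kappa}^{\cont}(\cC_n)$ is a retract of $F(a)$ for some $a\in\prolim[n]\cC_n^{\kappa}$, because a homological epimorphism is essentially surjective up to retracts: the right adjoint of $\Ind(F)$ is fully faithful, so $b\simeq\indlim[i]F(a_i)$ for a filtered system $(a_i)$, and compactness of $b$ splits it off some $F(a_{i_0})$. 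The image of $F(a)$ under the right vertical arrow equals the image of $G(a)$ under the bottom arrow, so the right vertical arrow factors through the bottom arrow; that factorization is the diagonal, and it is unique because the bottom arrow is a monomorphism. Your uniqueness and second-triangle arguments via $F$ being an epimorphism, and your final full-faithfulness deduction, are correct; the issue is confined to how you produce the diagonal in the first place.
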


\begin{proof}
Indeed, in the commutative square
\begin{equation*}
\begin{tikzcd}
	\prolim[n]\cC_n^{\kappa} \ar[r]\ar[d]  & \prolim[n]\Calk_{\kappa}^{\cont}(\cC_k) \ar[d]\\
	(\prolim[n]^{\oplax}\cC_n^{\kappa})/(\prolim[n]^{\oplax}\cC_n^{\omega})\ar[r] & \prolim[n]^{\oplax}\Calk_{\kappa}^{\cont}(\cC_k)
\end{tikzcd}
\end{equation*}
the upper horizontal functor is a homological epimorphism by our assumption, and the lower horizontal functor is fully faithful by Proposition \ref{prop:quotients_of_oplax_limits}. This implies the existence and uniqueness of the diagonal arrow which makes the diagram \eqref{eq:diagonal_arrow} commute. Since the right vertical functor and the lower horizontal functors in \eqref{eq:diagonal_arrow} are fully faithful, it follows that the diagonal functor is also fully faithful. 
\end{proof}

We now want to analyze the cokernel (cofiber) of the diagonal arrow in \eqref{eq:diagonal_arrow}, which is the same as the cokernel of the left vertical arrow (since the upper horizontal arrow is a homological epimorphism by assumption).
We will use the following notation: given an idempotent-complete small stable category $\cA$ and two full stable subcategories $\cB,\cC\subset\cA,$ we denote by $\cB\backslash\cA/\cC$ the (idempotent completion of) the quotient of $\cA$ by the stable subcategory generated by $\cB$ and $\cC.$

\begin{prop}\label{prop:double_quotient_K_equiv} Within the notation of Theorem \ref{th:local_invar_of_inverse_limits}, the functor 
\begin{equation*}
\Psi:\prolim[n]^{\oplax}\cC_n^{\kappa}\to \prodd[n\geq 0]\cC_n^{\kappa},\quad \Psi((x_n;\varphi_n)_{n\geq 0})=(\Fiber(\varphi_n:x_n\to F_{n+1,n}(x_{n+1})))_{n\geq 0},\end{equation*}
induces a $K$-equivalence over $\cE^{\omega}:$
\begin{equation*}
\bbar{\Psi}:(\prolim[n]\cC_n^{\kappa})\backslash(\prolim[n]^{\oplax}\cC_n^{\kappa})/(\prolim[n]^{\oplax}\cC_n^{\omega})\to \prodd[n\geq 0]\Calk_{\kappa}^{\cont}(\cC_n).
\end{equation*}\end{prop}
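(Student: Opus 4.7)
The plan is to verify the well-definedness of $\bar\Psi$, construct a strict section $\iota$ by the telescope formula, and deduce the $K$-equivalence via Waldhausen additivity combined with the $K$-equivalence from Proposition \ref{prop:oplax_limit_K_equiv} and an Eilenberg swindle on the Calkin target.

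First I would verify that $\bar\Psi$ is well-defined. The formula $\Psi((x_n;\varphi_n))=(\Fiber(\varphi_n))_n$ defines an exact $\cE^{\omega}$-linear functor $\prolim[n]^{\oplax}\cC_n^{\kappa}\to\prodd[n\geq 0]\cC_n^{\kappa}$ since each $\cC_n^{\kappa}\subset\cC_n$ is closed under finite limits. It vanishes on $\prolim[n]\cC_n^{\kappa}$ (all $\varphi_n$ are equivalences there) and carries $\prolim[n]^{\oplax}\cC_n^{\omega}$ into $\prodd[n\geq 0]\cC_n^{\omega}$, which maps to zero in $\prodd[n\geq 0]\Calk_{\kappa}^{\cont}(\cC_n)$. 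Hence $\bar\Psi$ descends as claimed.

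Second, after enlarging $\kappa$ if necessary so that every transition functor $F_{mn}$ preserves $\kappa$-compact objects, I would define the telescope section
\begin{equation*}
\iota:\prodd[n\geq 0]\cC_n^{\kappa}\to\prolim[n]^{\oplax}\cC_n^{\kappa},\qquad \iota((y_n))=\Bigl(\biggplus[k\geq n]F_{kn}(y_k);\;\pi_n\Bigr),
\end{equation*}
where $\pi_n$ is the canonical projection discarding the $k=n$ summand. Each telescope is $\kappa$-compact because countable coproducts preserve $\kappa$-compactness for uncountable $\kappa$. A direct calculation of $\Fiber(\pi_n)$ yields $\Psi\circ\iota\simeq\id$ on the nose, giving one of the two composition identities of Definition \ref{def:K-equivalence}.

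Third, to establish the reverse identity $[\iota\circ\Psi]=[\id]$ in $K_0$ of endofunctors of the double quotient, I would exploit the fiber sequence of exact $\cE^{\omega}$-linear functors
\begin{equation*}
\Psi\to U\to S,\qquad U((x_n;\varphi_n))=(x_n),\quad S((x_n;\varphi_n))=(F_{n+1,n}(x_{n+1})),
\end{equation*}
which, by Waldhausen additivity, gives $[\Psi]=[U]-[S]$. By Proposition \ref{prop:oplax_limit_K_equiv} the forgetful functor $U$ is a $K$-equivalence over $\cE^{\omega}$ with explicit $K$-theoretic inverse, which together with the strict identity $\Psi\iota=\id$ lets us rewrite all $K_0$-identities in terms of $\Psi$ and $S$ alone. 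It then remains to show that the induced shift $\bar S$ is zero in $K_0$ of endofunctors of $\prodd[n\geq 0]\Calk_{\kappa}^{\cont}(\cC_n)$; this follows from an Eilenberg swindle using the telescope endofunctor $\bar\Sigma$ and the identity $[\bar\Sigma]=[\id]+[\bar S\circ\bar\Sigma]$, which is well-defined because $\Calk_{\kappa}^{\cont}(\cC_n)$ inherits countable direct sums from $\cC_n^{\kappa}$.

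The main obstacle is the bookkeeping in the third step: one has to simultaneously keep track of the descent of $\iota$, $U$ and $S$ to the double quotient and verify that the $K_0$-identity provided by additivity, the $K$-inversion of $U$ from Proposition \ref{prop:oplax_limit_K_equiv}, and the Eilenberg swindle on $\bar S$ are all mutually compatible. The delicate point is that $\iota$ itself does not a priori land in $\prolim[n]^{\oplax}\cC_n^{\omega}$ when applied to objects of $\prodd[n\geq 0]\cC_n^{\omega}$, so the descent $\bar\iota$ must be checked by means of the telescope swindle rather than by a direct compactness argument.
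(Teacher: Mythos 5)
Your section $\iota$ is identical to the paper's $\Theta$: both assign to $(y_n)_{n\geq 0}$ the oplax-limit object whose $k$-th component is $\biggplus[m\geq k]F_{mk}(y_m)$, and the identity $\Psi\circ\iota\cong\id$ is correct and is also the first step of the paper's proof. However, your argument has two genuine gaps.

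The essential missing step is the descent of $\iota$ to a functor $\bbar\iota:\prodd[n\geq 0]\Calk_\kappa^\cont(\cC_n)\to\cD$. As you observe, for a family $(y_n)$ of compact objects, $\iota((y_n))$ lies in neither $\prolim[n]^{\oplax}\cC_n^\omega$ nor $\prolim[n]\cC_n^\kappa$; what must be shown is that it lies in the idempotent-complete stable subcategory generated by both. This is exactly where the homological-epimorphism hypothesis of Theorem \ref{th:local_invar_of_inverse_limits} enters: the paper observes that the transition maps of $\Theta((y_n))$ become isomorphisms after passing to Calkin quotients, so its image in $\prolim[n]^{\oplax}\Calk_\kappa^\cont(\cC_n)$ lies in $\prolim[n]\Calk_\kappa^\cont(\cC_n)$, and then Corollary \ref{cor:diagonal_arrow} --- which rests on that hypothesis --- forces the image in $\cD$ to vanish. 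Your proposal never invokes the hypothesis, and a ``telescope swindle'' is a $K_0$-cancellation device, not a mechanism for verifying that an object belongs to a prescribed stable subcategory. Without this step $\bbar\iota$ is simply not defined, so there is no candidate $K$-inverse to $\bbar\Psi$.

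The third step also does not go through as outlined. The additivity identity $[\Psi]=[U]-[S]$ holds among functors $\prolim[n]^{\oplax}\cC_n^\kappa\to\prodd[n\geq 0]\cC_n^\kappa$, but $U$ and $S$ do not vanish on $\prolim[n]\cC_n^\kappa$ (they are merely isomorphic there), so they do not individually descend through the double quotient, and the identity cannot be read in $K_0$ of functors out of $\cD$. The same problem affects $\iota U$ and $\iota S$: even on $\prolim[n]\cC_n^\kappa$ they return oplax objects whose transition maps are the projections $\pi_n$, with $\kappa$-compact (not compact) cofibers, so the values lie in neither subcategory being killed. The paper circumvents this by working with the endofunctors $G_k$ of $\prolim[n]^{\oplax}\cC_n^\kappa$, which retain the original transition data above level $k$ and therefore act as the identity on $\prolim[n]\cC_n^\kappa$; this is the feature, absent from your $U$/$S$ decomposition, that lets $\biggplus[k\geq 1]G_k$ descend and makes the telescope cancellation $[\bbar\Theta\circ\bbar\Psi]=[\id]$ run.
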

\begin{proof}
For convenience, we put $$\cD=(\prolim[n]\cC_n^{\kappa})\backslash(\prolim[n]^{\oplax}\cC_n^{\kappa})/(\prolim[n]^{\oplax}\cC_n^{\omega}).$$ We first construct an ($\cE^{\omega}$-linear) right inverse to $\bbar{\Psi}.$ For each $k\geq 0$ consider the functor
\begin{equation*}
\Theta_k:\cC_k^{\kappa}\to\prolim[n]^{\oplax}\cC_n^{\kappa},\quad \Theta_k(x)=(F_{k,0}(x),\dots,F_{k,k-1}(x),x,0,\dots).
\end{equation*}
Note that $\Theta_k$ is simply the right adjoint to the projection functor $(x_n;\varphi_n)_{n\geq 0}\mapsto x_k.$
 Consider the functor
\begin{equation*}
\Theta:\prodd[n\geq 0]\cC_n^{\kappa}\to\prolim[n]^{\oplax}\cC_n^{\kappa},\quad \Theta((x_n)_{n\geq 0})=\biggplus[n]\Theta_n(x_n).
\end{equation*}
For an object $(x_n)_{n\geq 0}\in\prodd[n\geq 0]\cC_n^{\kappa}$ and for each $k\geq 0$ we have
\begin{multline}
\label{eq:Theta_is_right_inverse}
\Fiber(\Theta((x_n)_{n\geq 0})_k\to F_{k+1,k}(\Theta((x_n)_{n\geq 0})_{k+1}))\\
\cong \Fiber(\biggplus[n\geq k]F_{n,k}(x_k)\to \biggplus[n\geq k+1]F_{n,k}(x_k))\cong  x_k.	
\end{multline}
Hence, we have $\Psi\circ\Theta\cong\id.$

Now let $(x_n\in\cC_n^{\omega})_{n\geq 0}$ be a sequence of compact objects. We want to prove that the image of $\Theta((x_n)_{n\geq 0})$ in $\cD$ is zero. It follows from \eqref{eq:Theta_is_right_inverse} that the image of $\Theta((x_n)_{n\geq 0})$ in the category $\prolim[n]^{\oplax}\Calk_{\kappa}^{\cont}(\cC_n)$ is contained in the full subcategory $\prolim[n]\Calk_{\kappa}^{\cont}(\cC_n).$ From the commutative diagram \eqref{eq:diagonal_arrow} we see that the image of $\Theta((x_n)_{n\geq 0})$ in the quotient $(\prolim[n]^{\oplax}\cC_n^{\kappa})/(\prolim[n]^{\oplax}\cC_n^{\omega})$ is contained in the image of the diagonal arrow. Since the upper horizontal arrow of \eqref{eq:diagonal_arrow} is a homological epimorphism (by assumption), we conclude that the image of $\Theta((x_n)_{n\geq 0})$ in $\cD$ is zero.

Recall that we have a short exact sequence
\begin{equation*}
0\to \prodd[n\geq 0]\cC_n^{\omega}\to \prodd[n\geq 0]\cC_n^{\kappa}\to \prodd[n\geq 0]\Calk_{\kappa}^{\cont}(\cC_n)\to 0,
\end{equation*}
which follows from \cite[Lemma 5.2]{KW19}. We conclude that the functor $\Theta$ induces an $\cE^{\omega}$-linear functor
\begin{equation*}
\bbar{\Theta}:\prodd[n\geq 0]\Calk_{\kappa}^{\cont}(\cC_n)\to \cD.
\end{equation*}
Since $\Psi\circ\Theta\cong\id,$ we also have $\bbar{\Psi}\circ\bbar{\Theta}\cong \id.$ It remains to prove that
\begin{equation}
[\bbar{\Theta}\circ\bbar{\Psi}]=[\id]\text{ in }K_0(\Fun_{\cE^{\omega}}(\cD,\cD)).
\end{equation}
For $k\geq 0$ consider the endofunctor 
\begin{equation*}
G_k:\prolim[n]^{\oplax}\cC_n^{\kappa}\to\prolim[n]^{\oplax}\cC_n^{\kappa},\quad G_k(x_0,x_1,\dots)=(F_{k,0}(x_k),\dots,F_{k,k-1}(x_k),x_k,x_{k+1},\dots).
\end{equation*}
We have natural exact triangles
\begin{equation}\label{eq:exact_traingle_G_k_G_k+1}
G_k((x_n;\varphi_n)_{n\geq 0})\to G_{k+1}((x_n;\varphi_n)_{n\geq 0})\to \Theta_k(\Fiber(\varphi_k:x_k\to F_{k+1,k}(x_{k+1})))
\end{equation}
for each $k\geq 0.$ Taking the direct sum of \eqref{eq:exact_traingle_G_k_G_k+1} over $k\geq 0,$ we obtain an exact triangle of functors
\begin{equation}\label{eq:exact_traingle_sum_G_k}
\biggplus[k\geq 0] G_k\to \biggplus[k\geq 1] G_k\to \Theta\circ\Psi
\end{equation}
in the category $\Fun_{\cE^{\omega}}(\prolim[n]^{\oplax}\cC_n^{\kappa},\prolim[n]^{\oplax}\cC_n^{\kappa}).$

We claim that the endofunctor $G=\biggplus[k\geq 1] G_k$ preserves the idempotent-complete stable subcategory generated by $\prolim[n]\cC_n^{\kappa}$ and $\prolim[n]^{\oplax}\cC_n^{\omega}.$ For $x\in \prolim[n]\cC_n^{\kappa}$ we have $G_k(x)\cong x$ for $k\geq 0,$ hence 
\begin{equation*}
G(x)\cong \biggplus[k\geq 1] x\in \prolim[n]\cC_n^{\kappa}.
\end{equation*} 
Let now $x=(x_n;\varphi_n)_{n\geq 0}\in\prolim[n]^{\oplax}\cC_n^{\omega}.$ For $k\geq 0$ we have a natural map $G_k(x)\to\Theta_k(x_k),$ which induces an isomorphism on the first $(k+1)$ components. Taking the direct sum over $k\geq 0,$ we obtain a map $x\oplus G(x)\to\Theta((x_n)_{n\geq 0}),$ since $G_0\cong\id.$ We have
\begin{equation*}
\Cone(x\oplus G(x)\to \Theta((x_n)_{n\geq 0}))\in\prolim[n]^{\oplax}\cC_n^{\omega}.
\end{equation*}
We know from the above that the image of $\Theta((x_n)_{n\geq 0}))$ in $\cD$ vanishes, hence the same holds for $G(x),$ as required. 

Now, denote by $\bbar{G}:\cD\to\cD$ the functor induced by $G.$ The exact triangle \eqref{eq:exact_traingle_sum_G_k} induces an exact triangle in $\Fun_{\cE^{\omega}}(\cD,\cD):$
\begin{equation*}
\id\oplus\bbar{G}\to\bbar{G}\to \bbar{\Theta}\circ\bbar{\Psi}.
\end{equation*}
We conclude that in $K_0(\Fun(\cD,\cD))$ we have
\begin{equation*}
[\bbar{\Theta}\circ\bbar{\Psi}]=[\id\oplus\bbar{G}]-[\bbar{G}]=[\id].
\end{equation*}
This proves the proposition.
\end{proof}

\begin{proof}[Proof of Theorem \ref{th:local_invar_of_inverse_limits}]
Our assumption on the sequence $(\cC_n)_{n\geq 0}$ implies that we have a short exact sequence in $\Cat_{\cE}^{\dual}:$
\begin{equation*}
0\to \prolim[n]^{\dual}\cC_n\to \Ind(\prolim[n]\cC_n^{\kappa})\to\Ind(\prolim[n]\Calk_{\kappa}^{\cont}(\cC_n))\to 0.
\end{equation*}
Since the category $\prolim[n]\cC_n^{\kappa}$ has countable coproducts, we obtain an isomorphism
\begin{equation}\label{eq:reduction_to_limit_of_Calkin}
\Phi^{\cont}(\prolim[n]^{\dual}\cC_n)\cong \Omega \Phi(\prolim[n]\Calk_{\kappa}^{\cont}(\cC_n)).	
\end{equation}
Now, from Corollary \ref{cor:diagonal_arrow} (and the discussion after its proof) we obtain a short exact sequence
\begin{equation}
\label{eq:short_exact_seq_limit_of_Calkin}
0\to \prolim[n]\Calk_{\kappa}^{\cont}(\cC_n)\to (\prolim[n]^{\oplax}\cC_n^{\kappa})/(\prolim[n]^{\oplax}\cC_n^{\omega})\to (\prolim[n]\cC_n^{\kappa})\backslash(\prolim[n]^{\oplax}\cC_n^{\kappa})/(\prolim[n]^{\oplax}\cC_n^{\omega})\to 0.
\end{equation}
By Proposition \ref{prop:oplax_limit_K_equiv}, we have
\begin{equation*}
\Phi((\prolim[n]^{\oplax}\cC_n^{\kappa})/(\prolim[n]^{\oplax}\cC_n^{\omega}))\cong \Sigma \Phi(\prodd[n\geq 0]\cC_n^{\omega}).
\end{equation*}
By Proposition \ref{prop:double_quotient_K_equiv}, we obtain
\begin{equation*}
\Phi((\prolim[n]\cC_n^{\kappa})\backslash(\prolim[n]^{\oplax}\cC_n^{\kappa})/(\prolim[n]^{\oplax}\cC_n^{\omega}))\cong \Phi(\prodd[n\geq 0]\Calk_{\kappa}^{\cont}(\cC_n))\cong \Sigma\Phi(\prodd[n\geq 0]\cC_n^{\omega}).
\end{equation*}
Moreover, we obtain a commutative square
\begin{equation}
\label{eq:comm_square_Phi_of_product}
\begin{CD}
\Omega \Phi((\prolim[n]^{\oplax}\cC_n^{\kappa})/(\prolim[n]^{\oplax}\cC_n^{\omega})) @>{\sim}>> \Phi(\prodd[n\geq 0]\cC_n^{\omega})\\
@VVV @V{\id-\Phi(F)}VV\\
\Omega \Phi((\prolim[n]\cC_n^{\kappa})\backslash(\prolim[n]^{\oplax}\cC_n^{\kappa})/(\prolim[n]^{\oplax}\cC_n^{\omega})) @>{\sim}>> \Phi(\prodd[n\geq 0]\cC_n^{\omega}),
\end{CD}
\end{equation}
where $F:\prodd[n\geq 0]\cC_n^{\omega}\to \prodd[n\geq 0]\cC_n^{\omega}$ is the product of functors $F_{n+1,n}:\cC_{n+1}^{\omega}\to\cC_n^{\omega}.$

Finally, using the isomorphism \eqref{eq:reduction_to_limit_of_Calkin}, the short exact sequence \eqref{eq:short_exact_seq_limit_of_Calkin} and the commutative square \eqref{eq:comm_square_Phi_of_product}, we obtain
\begin{equation*}
\Phi^{\cont}(\prolim[n]^{\dual}\cC_n)\cong \Omega \Phi(\prolim[n]\Calk_{\kappa}^{\cont}(\cC_n))\cong \Fiber(\Phi(\prodd[n\geq 0]\cC_n^{\omega})\xto{\id-\Phi(F)}\Phi(\prodd[n\geq 0]\cC_n^{\omega})).
\end{equation*}
 This proves \ref{Phi^cont_of_lim^dual_exact_triangle}, which clearly implies \ref{Phi^cont_commute_with_the_limit}.
\end{proof}

\section{Original version of the category of nuclear modules}
\label{sec:original_Nuc}

Let $R$ be a commutative ring with a finitely generated ideal $I=(a_1,\dots,a_m)\subset R.$ In this section we consider the original version of the category nuclear solid modules over $R^{\wedge}_{I}$ as defined in \cite{CS20}, see also \cite{CS24, And23, AM24}. We denote this category by $\Nuc^{CS}(R^{\wedge}_I).$ The main goal is to prove Theorem \ref{th:loc_invar_of_Nuc_CS} on the continuous $K$-theory and more general localizing invariants of the category $\Nuc^{CS}(R^{\wedge}_I).$

\subsection{Some notation}\label{ssec:some_notation}
 We will use the following notation. For a small additive $\infty$-category $\cA$ we denote by $\Stab(\cA)$ its stabilization. For pair of integers $a\leq b$ we denote by $\Stab(\cA)_{[a,b]}$ the full subcategory generated via extensions by $\cA[c],$ $a\leq c\leq b.$
 
 For a connective $\bE_1$-ring $S$ we denote by $\Proj^{\omega_1}\hy S\subset D(S)$ the full subcategory formed by direct summands of the object $\biggplus[\N]S.$ We will denote by $D^b(\Proj^{\omega_1}\hy S)\subset D(S)$ the stable subcategory generated by $\biggplus[\N]S.$ Note that we have $D^b(\Proj^{\omega_1}\hy S)\simeq\Stab(\Proj^{\omega_1}\hy S).$ We say that an object $M\in D^b(\Proj^{\omega_1}\hy S)$ has projective amplitude in $[a,b]$ if $M\in \Stab(\Proj^{\omega_1}\hy S)_{[a,b]}$

We will also denote by $\Proj^{\omega}\hy S$ the idempotent completion of the category of finitely generated free $S$-modules. We have $\Perf(S)\simeq\Stab(\Proj^{\omega}\hy S).$



The following is essentially a tautology.

\begin{prop}\label{prop:uniformly_bounded_amplitude}
Consider a family of connective $\bE_1$-rings $(S_j)_{j\in J}.$ For a family of right $S_j$-modules $(M_j\in D^b(\Proj^{\omega_1}\hy S_j))_{j\in J},$ the following are equivalent.
\begin{enumerate}[label=(\roman*),ref=(\roman*)]
\item The modules $M_j$ have uniformly bounded projective amplitude, i.e. there exist $a\leq b$ such that each $M_j$ has projective amplitude in $[a,b]$ over $S_j.$ \label{uniformly_bounded_proj_amplitude}
\item We have
\begin{equation*}
(M_j)_{j\in J}\in \Stab(\prodd[j\in J]\Proj^{\omega_1}\hy S_j)\subset \prodd[j\in J]D^b(\Proj^{\omega_1}\hy S_j).
\end{equation*} 
\end{enumerate}
\end{prop}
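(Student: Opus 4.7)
The strategy is to identify the displayed embedding as an explicit stabilization, deduce (ii) $\Rightarrow$ (i) by componentwise projection, and prove (i) $\Rightarrow$ (ii) by induction on $d = b - a$.

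First, by the universal property of stabilization applied to the inclusion $\prodd[j]\Proj^{\omega_1}\hy S_j \hookrightarrow \prodd[j] D^b(\Proj^{\omega_1}\hy S_j)$ into a stable $\infty$-category, one obtains an induced functor
$$\Stab(\prodd[j]\Proj^{\omega_1}\hy S_j) \to \prodd[j] D^b(\Proj^{\omega_1}\hy S_j).$$
This functor is fully faithful, since morphism spectra in the target are computed componentwise and $\Proj^{\omega_1}\hy S_j \hookrightarrow \Stab(\Proj^{\omega_1}\hy S_j) = D^b(\Proj^{\omega_1}\hy S_j)$ is fully faithful in each factor. For (ii) $\Rightarrow$ (i), the exact projection $\prodd[j]D^b(\Proj^{\omega_1}\hy S_j) \to D^b(\Proj^{\omega_1}\hy S_j)$ carries the generators $\prodd[j]\Proj^{\omega_1}\hy S_j$ into $\Proj^{\omega_1}\hy S_j$ and hence $\Stab(\prodd[j]\Proj^{\omega_1}\hy S_j)_{[a,b]}$ into $\Stab(\Proj^{\omega_1}\hy S_j)_{[a,b]}$, so a uniform bound on the right transfers to a uniform bound on the left.

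For (i) $\Rightarrow$ (ii) I induct on $d = b - a \geq 0$. The base case $d = 0$ is immediate: each $M_j \simeq P_j[a]$ with $P_j \in \Proj^{\omega_1}\hy S_j$, so $(M_j)_j = ((P_j)_j)[a]$ with $(P_j)_j \in \prodd[j]\Proj^{\omega_1}\hy S_j$. For the inductive step I rely on the standard decomposition $\Stab(\cA)_{[a,b]} = \Stab(\cA)_{[a,b-1]} * \cA[b]$ (which follows from a routine octahedron argument, using that morphism spectra in $\cA$ are connective so extensions among objects of $\cA[b]$ split as direct sums): for each $j$ choose a cofiber sequence $N_j \to M_j \to P'_j[b]$ with $N_j$ of amplitude in $[a,b-1]$ and $P'_j \in \Proj^{\omega_1}\hy S_j$. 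Writing $\biggplus[\N]S_j \simeq P'_j \oplus Q_j$ and composing with the inclusion, I replace $M_j \to P'_j[b]$ by a map $M_j \to (\biggplus[\N]S_j)[b]$ whose fiber is $\wt{N}_j \simeq N_j \oplus Q_j[b-1]$, still of amplitude in $[a,b-1]$.

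The main obstacle, and the motivation for the direct-sum trick, is the uniformization step: the individual top layers $P'_j$ vary with $j$, so no single object of $\prodd[j]\Proj^{\omega_1}\hy S_j$ immediately plays their role, but $\biggplus[\N]S_j$ serves as a universal cover in each factor at the harmless cost of enlarging the fiber by a shifted projective already of the allowed amplitude. Because morphisms in the infinite product are specified componentwise, the family $(M_j \to (\biggplus[\N]S_j)[b])_j$ assembles into a single morphism $(M_j)_j \to ((\biggplus[\N]S_j)_j)[b]$ in $\prodd[j] D^b(\Proj^{\omega_1}\hy S_j)$ whose fiber is $(\wt{N}_j)_j$. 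By the inductive hypothesis $(\wt{N}_j)_j \in \Stab(\prodd[j]\Proj^{\omega_1}\hy S_j)_{[a,b-1]}$, and since $((\biggplus[\N]S_j)_j)[b] \in \Stab(\prodd[j]\Proj^{\omega_1}\hy S_j)_{[b,b]}$, the cofiber sequence witnesses $(M_j)_j \in \Stab(\prodd[j]\Proj^{\omega_1}\hy S_j)_{[a,b]}$, completing the induction.
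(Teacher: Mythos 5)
Your proof is correct. The paper itself offers no argument, dismissing the statement as "essentially a tautology"; your inductive elaboration on $d = b - a$ is a valid way to make this precise: projecting componentwise gives (ii) $\Rightarrow$ (i), and the peeling argument via $\Stab(\cA)_{[a,b]} = \Stab(\cA)_{[a,b-1]} * \cA[b]$ (which holds because connectivity of mapping spectra in $\cA$ kills the relevant connecting maps) assembles levelwise in the product to give (i) $\Rightarrow$ (ii).

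One step, however, is redundant and rests on a misconception. You write that "the individual top layers $P'_j$ vary with $j$, so no single object of $\prodd_{j}\Proj^{\omega_1}\hy S_j$ immediately plays their role," and you therefore replace each $P'_j$ with $\biggplus_{\N}S_j$. But the objects of the product $\infty$-category $\prodd_{j\in J}\Proj^{\omega_1}\hy S_j$ are, by definition, arbitrary families $(P_j)_{j\in J}$ with $P_j \in \Proj^{\omega_1}\hy S_j$ — there is no constancy constraint across $j$, and $\biggplus_{\N}S_j$ varies with $j$ just as $P'_j$ does. So $(P'_j)_j$ is already a legitimate object of $\prodd_j\Proj^{\omega_1}\hy S_j$, and the assembled cofiber sequence $(N_j)_j \to (M_j)_j \to ((P'_j)_j)[b]$, with $(N_j)_j$ handled by the inductive hypothesis and $((P'_j)_j)[b]$ in the $[b]$-shift of the heart, already closes the induction. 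The direct-sum trick can simply be deleted; nothing else in your argument depends on it.
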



\begin{remark}
Recall that a (homologically) bounded below right $S$-module $M$ has a $\Tor$-amplitude in $[a,b]$ (in the homological grading) if for any discrete left $S$-module $N$ we have $M\tens{S}N\in \Sp_{[a,b]}$ (both $a$ and $b$ are finite). For $M\in D^b(\Proj^{\omega_1}\hy S)$ one has the following implications 
\begin{multline*}
(M\text{ has projective amplitude in }[a,b])\Rightarrow (M\text{ has }\Tor\hy\text{amplitude in }[a,b])\\
\Rightarrow(M\text{ has projective amplitude in }[a,b+1]).
\end{multline*}
This follows from the observation that a flat $\omega_1$-compact right $S$-module module $F$ has projective amplitude in $[0,1].$

In particular, in Proposition \ref{prop:uniformly_bounded_amplitude} \ref{uniformly_bounded_proj_amplitude} we can equivalently require that the modules $M_j$ have uniformly bounded $\Tor$-amplitude.
\end{remark}

Let $R$ and $I=(a_1,\dots,a_m)\subset R$ be as above. We denote by $D^b_{I\hy\tors}(\Proj^{\omega_1}\hy R)\subset D^b(\Proj^{\omega_1}\hy R)$ the full subcategory formed by objects with locally $I$-torsion homology. Note that this category is generated as a stable subcategory by the single object $\biggplus[\N]\bR\Gamma_I(R),$ where $\bR\Gamma_I:D(R)\to D_{I\hy \tors}(R)$ is the right adjoint to the inclusion (the local cohomology functor). It is convenient to introduce an equivalent category $D^b_{I\hy\compl}(\Proj^{\omega_1}\hy R^{\wedge}_I),$ which we define to be the essential image of the functor
\begin{equation*}
(-)^{\wedge}_I:D^b(\Proj^{\omega_1}\hy R)\to D_{I\hy\compl}(R).
\end{equation*}
The category $D^b_{I\hy\compl}(\Proj^{\omega_1}\hy R^{\wedge}_I)$ is generated by the object $(\biggplus[\N]R)^{\wedge}_I.$
We have equivalences
\begin{equation*}
D^b_{I\hy\tors}(\Proj^{\omega_1}\hy R)\stackrel{(-)^{\wedge}_I}{\xto{\sim}} D^b_{I\hy\compl}(\Proj^{\omega_1}\hy R^{\wedge}_I)\simeq \prolim[n]D^b(\Proj^{\omega_1}\hy R_n),
\end{equation*}
where $R_n=\Kos(R;a_1^n,\dots,a_m^n).$

\subsection{The category $\Nuc^{CS}(R^{\wedge}_{I})$} In this subsection we will very briefly recall the category of nuclear solid modules on the affine formal scheme $\Spf(R^{\wedge}_I)$ as defined in \cite{CS20}. We refer to \cite{CS20, And23, AM24} for details. We will give a presentation of this category via a short exact sequence in Corollary \ref{cor:Nuc^CS_resolution}. We also discuss the relation with our category $\Nuc(R^{\wedge}_I)$ from Definition \ref{def:nuclear_via_internal_Hom}.   

We will work within the light condensed topos \cite{CS24} (this choice does not affect the relevant categories of nuclear solid modules). Denote by $D(\Solid_{\Z})$ the derived category of solid abelian groups. The abelian category $\Solid_{\Z}$ has a single compact projective generator $\prodd[\N]\Z,$ and have a symmetric monoidal equivalence
\begin{equation*}
D(\Solid_{\Z})\xto{\sim}\Ind(D^b(\Proj^{\omega_1}\hy\Z)^{op}),
\end{equation*}
sending $\prodd[\N]\Z$ to $(\biggplus[\N]\Z)^{op}.$

We consider the ring $R^{\wedge}_{I}$ as an $\bE_{\infty}$-algebra in the category $D(\Solid_{\Z}).$ We denote the tensor product in the category $D(\Solid_{\Z})$ by $-\tens{\Z^{\bs}}-.$ Taking the induced analytic structure on $R^{\wedge}_I,$ we obtain the category
$$D(\Solid_{(R^{\wedge}_I,\Z)})=\Mod_{R^{\wedge}_I}(D(\Solid_{\Z})).$$ This category is generated by the single compact object $R^{\wedge}_I\tens{\Z^{\bs}}\prodd[\N]\Z.$ 
One of the equivalent definitions of the category of nuclear solid modules is given by
\begin{equation*}
\Nuc^{CS}(R^{\wedge}_I)=\Nuc(D(\Solid_{(R^{\wedge}_I,\Z)})).
\end{equation*}
More generally, for any subset $S\subset H_0(R^{\wedge}_I)(\ast)$ one has the associated analytic structure on $R^{\wedge}_{I},$ which depends only on the smallest open integrally closed subring containing $S.$ We denote by $D(\Solid_{(R^{\wedge}_I,S)})$ the associated derived category of complete modules. As explained in \cite[Example 3.34]{AM24}, the category $\Nuc(D(\Solid_{(R^{\wedge}_I,S)}))$ does not depend on $S.$

We use the notation $D(\Solid_{R^{\wedge}_I})=D(\Solid_{(R^{\wedge}_I,H_0(R^{\wedge}_I)(\ast))}).$ By the above, we have an equivalence
\begin{equation*}
\Nuc^{CS}(R^{\wedge}_I)\simeq \Nuc(D(\Solid_{R^{\wedge}_I})).
\end{equation*} 
If the ring $R/I$ is finitely generated over $\Z,$ then the category $D(\Solid_{R^{\wedge}_I})$ has a single compact generator $\prodd[\N]R^{\wedge}_I,$ and we have an equivalence
\begin{equation*}
D(\Solid_{R^{\wedge}_I})\xto{\sim}\Ind(D^b_{I\hy\compl}(\Proj^{\omega_1}\hy R^{\wedge}_I)^{op}),
\end{equation*}
sending $\prodd[\N]R^{\wedge}_I$ to $((\biggplus[\N]R)^{\wedge}_I)^{op}.$

To deal with the general case (without the finite generation assumption), it is convenient to define the following category.




\begin{defi}
We denote by $D(\wt{\Solid}_{R^{\wedge}_I})$ the category $\Ind(D^b_{I\hy\compl}(\Proj^{\omega_1}\hy R^{\wedge}_I)^{op}).$
\end{defi}

We claim that the category $\Nuc^{CS}(R^{\wedge}_I)$ is naturally identified with the category of nuclear objects in $D(\wt{\Solid}_{R^{\wedge}_I}).$ This essentially follows from the results of \cite{And23}, but the issue is that the natural functor $D(\Solid_{R^{\wedge}_I})\to D(\wt{\Solid}_{R^{\wedge}_I})$ is in general not a quotient functor. Equivalently, its right adjoint can be non-fully faithful, hence we need an argument showing that on nuclear objects we still have an equivalence.

We now give the details. We have a fully faithful functor $D(R^{\wedge}_I)\hto D(\wt{\Solid}_{R^{\wedge}_I}),$ sending $R^{\wedge}_I$ to the unit object, which we denote by the same symbol. Note that for any compact object $X$ in the category $D(\wt{\Solid}_{R^{\wedge}_I}),$ the countable product $\prodd[\N]X$ is again compact. Moreover, the category $D(\wt{\Solid}_{R^{\wedge}_I})$ is generated by the single compact object $\prodd[\N]R^{\wedge}_I.$ Consider the composition functor
\begin{equation*}
	\Lambda_I:D(R)\to D(R^{\wedge}_I)\to D(\wt{\Solid}_{R^{\wedge}_I})\xto{(-)^{\wedge}_I}  D(\wt{\Solid}_{R^{\wedge}_I}),
\end{equation*}  
where the latter functor is the $I$-completion.

\begin{prop}\label{prop:good_rigidification_synthetic_solid}
For a compact object $M\in D(\wt{\Solid}_{R^{\wedge}_I})^{\omega},$ the predual object $M^{\vee}$ is nuclear. In particular, the category $\Nuc(D(\wt{\Solid}_{R^{\wedge}_I}))$ is generated by the object $(\prodd[\N]R^{\wedge}_I)^{\vee},$ and we have an equivalence
\begin{equation*}
\Nuc(D(\wt{\Solid}_{R^{\wedge}_I}))\simeq D(\wt{\Solid}_{R^{\wedge}_I})^{\rig}.
\end{equation*}
\end{prop}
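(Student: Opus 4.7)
The plan is to verify the hypotheses of Proposition~\ref{prop:basic_conditions_good_rigidification} for the small symmetric monoidal stable category $\cD = D^b_{I\hy\compl}(\Proj^{\omega_1}\hy R^{\wedge}_I)$, so that $\Ind(\cD^{op}) = D(\wt{\Solid}_{R^{\wedge}_I})$ by construction. Concretely, I will verify condition~\ref{cond_via_tensor_product_of_duals} of Proposition~\ref{prop:conditions_for_good_rigidification}: for every $P, Q \in \cD$,
\[
P^{op,\vee} \otimes Q^{op,\vee} \xrightarrow{\sim} (P \otimes Q)^{op,\vee} \quad \text{in } \Ind(\cD^{op}).
\]
Granted this, the chain of equivalences \eqref{eq:nuclear_and_rigidification} simultaneously supplies the equivalence $\Nuc(D(\wt{\Solid}_{R^{\wedge}_I})) \simeq D(\wt{\Solid}_{R^{\wedge}_I})^{\rig}$ and the fact that the predual of every compact object is nuclear.

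For the verification, both sides are exact in each variable and $\cD$ is generated as a stable subcategory by the single object $F = (\biggplus_{\N} R)^{\wedge}_I$, so it suffices to treat $P = Q = F$. I then use the presentation $F \simeq \indlim_{k} F_k$ with $F_k = (R^{\wedge}_I)^k$, where each $F_k$ is a dualizable object of $\cD$ (finite rank free with $F_k^{\vee} \cong F_k$) and each inclusion $F_k \hookrightarrow F_{k+1}$ as a direct summand is trace-class. Using the identification $\cD \simeq \prolim_n D^b(\Proj^{\omega_1}\hy R_n)$ from Subsection~\ref{ssec:some_notation} together with the strong Mittag--Leffler property of $(D(R_n))_{n\geq 1}$ established in Proposition~\ref{prop:Nuc_as_limit}, I can compute the tensor product identity level-wise at each $R_n$, where it collapses to the tautology $(\biggplus_{\N}) \otimes (\biggplus_{\N}) \simeq \biggplus_{\N\times\N}$, and then pass back to the inverse limit.

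With the hypotheses of Proposition~\ref{prop:basic_conditions_good_rigidification} established, the inclusion $\iota\colon\Nuc(D(\wt{\Solid}_{R^{\wedge}_I})) \hookrightarrow D(\wt{\Solid}_{R^{\wedge}_I})$ has a continuous right adjoint $\iota^R$, described on compact objects by predualization (Proposition~\ref{prop:right_adjoint_description_for_rigidification}). Since $\prodd_{\N} R^{\wedge}_I$ is a compact generator of $D(\wt{\Solid}_{R^{\wedge}_I})$, the essential image of $\iota^R$, which equals $\Nuc$, is then generated under colimits by $\iota^R(\prodd_{\N} R^{\wedge}_I) = (\prodd_{\N} R^{\wedge}_I)^{\vee}$, by the standard argument that a continuous right adjoint between presentable categories is generated under colimits by its values on a compact generator of the source.

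The principal obstacle will be justifying the tensor product identity at the Koszul-tower level: one must manage a countable product (describing the predual) and a countable inverse limit (describing the $I$-completion) simultaneously, and argue that they commute in the required sense. The bounded projective amplitude built into the definition of $\cD$ is decisive here, as it ensures that products of $F_k$'s computed level-wise in $D^b(\Proj^{\omega_1}\hy R_n)$ remain within the allowable amplitude range and descend compatibly along the tower to objects of $\cD$, making the two limits interchange in the expected way.
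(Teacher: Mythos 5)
Your overall architecture is sound and matches the paper's: reduce to verifying the equivalent conditions of Proposition~\ref{prop:conditions_for_good_rigidification} with $\cD = D^b_{I\hy\compl}(\Proj^{\omega_1}\hy R^{\wedge}_I)$, then deduce the corollaries from Propositions~\ref{prop:basic_conditions_good_rigidification} and~\ref{prop:right_adjoint_description_for_rigidification}. The reduction to $P = Q = F$ by exactness in each variable is correct, and the closing paragraph on generation by $\iota^R(\prodd_{\N}R^{\wedge}_I)$ is fine.

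The genuine gap is exactly where you flag it. You write that ``the principal obstacle will be justifying the tensor product identity at the Koszul-tower level'' and then resolve it only by asserting that ``the bounded projective amplitude \dots is decisive here, as it ensures \dots making the two limits interchange in the expected way.'' This is not an argument; it is a description of what an argument would need to achieve. The isomorphism being asserted --- equivalently, $\Lambda_I(\biggplus_{\N}R)\otimes\Lambda_I(\biggplus_{\N}R)\cong\Lambda_I(\biggplus_{\N\times\N}R)$ in $D(\wt{\Solid}_{R^{\wedge}_I})$ --- is not a formal consequence of bounded amplitude; it requires a real commutation of a countable product against an $I$-adic completion under derived tensor product. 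The paper handles exactly this step by invoking Andreychev's Satz 3.8 (or noting that its proof generalizes), and that reference carries the entire analytic content of the proposition. Without either citing that result or reconstructing its proof, your verification of condition~\ref{cond_via_tensor_product_of_duals} is incomplete at the one step that matters.

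Two further points worth noting. First, the claim that the identity ``collapses to the tautology $(\biggplus_{\N})\otimes(\biggplus_{\N})\simeq\biggplus_{\N\times\N}$'' at each $R_n$ elides the fact that predualization and Day convolution live in $\Ind(\cD^{op})$, which is not an inverse limit of the categories $\Ind(D^b(\Proj^{\omega_1}\hy R_n)^{op})$ in any sense that would let you compute these operations levelwise; passing from the levelwise statement to the statement in $\Ind(\cD^{op})$ is not a formality. Second, the strong Mittag--Leffler property from Proposition~\ref{prop:Nuc_as_limit} concerns the inverse sequence of presentable categories $(D(R_n))_n$ in $\Cat_R^{\dual}$, not the small categories $D^b(\Proj^{\omega_1}\hy R_n)$ appearing in your computation, so invoking it here requires a bridge you have not supplied.
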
 

\begin{proof}
Assuming the first statement, the assertions about the category of nuclear objects follow from Propositions \ref{prop:conditions_for_good_rigidification} and \ref{prop:right_adjoint_description_for_rigidification}.

Now, putting $M=\prodd[\N]R^{\wedge}_I,$ we need to prove that $M^{\vee}$ is nuclear. This can be formally deduced from \cite[Satz 3.8]{And23}, similarly to the proof of \cite[Korollar 3.14]{And23}. Alternatively, the proof of \cite[Satz 3.8]{And23} can be straightforwardly generalized.

Namely, we have $M^{\vee}\cong\Lambda_I(\biggplus[\N]R).$ Thus, we need to prove that we have an isomorphism
\begin{equation*}
\Lambda_I(\biggplus[\N]R)\otimes \Lambda_I(\biggplus[\N]R)\cong \Lambda_I(\biggplus[\N\times\N]R).
\end{equation*}
This is proved in the same way as \cite[Satz 3.8]{And23}.
\end{proof}

\begin{prop}\label{prop:nuclear_via_synthetic}
The natural symmetric monoidal functor $f^*:D(\Solid_{(R^{\wedge}_I,\Z)})\to D(\wt{\Solid}_{R^{\wedge}_I})$ induces an equivalence on the full subcategories of nuclear objects.
\end{prop}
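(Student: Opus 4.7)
The plan is to reduce the question to an equivalence of rigidifications. By Proposition \ref{prop:good_rigidification_synthetic_solid}, $\Nuc(D(\wt{\Solid}_{R^{\wedge}_I}))$ is the rigidification $D(\wt{\Solid}_{R^{\wedge}_I})^{\rig}$, generated via colimits by the single compact object $\Lambda_I(\biggplus[\N]R)$, which is the predual of $\prodd[\N]R^{\wedge}_I$. I would first establish the analogous statement for $D(\Solid_{(R^{\wedge}_I,\Z)})$: namely, its nuclear subcategory coincides with $D(\Solid_{(R^{\wedge}_I,\Z)})^{\rig}$, and is generated via colimits by the predual of the compact generator $P = R^{\wedge}_I\tens{\Z^{\bs}}\prodd[\N]\Z$. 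The proof is formally parallel to that of Proposition \ref{prop:good_rigidification_synthetic_solid}, and reduces via Proposition \ref{prop:conditions_for_good_rigidification} to the solid analogue of the identity $M^{\vee}\otimes M^{\vee}\cong (M\otimes M)^{\vee}$ for $M=P$; this is the content of \cite[Satz 3.8]{And23} transferred to the solid (as opposed to synthetic solid) setting, with the same proof.

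Since $f^*$ is a continuous symmetric monoidal functor between presentable symmetric monoidal stable categories with compact unit, it preserves trace-class morphisms, and hence restricts to a continuous symmetric monoidal functor $f^*|_{\rig}$ between the rigidifications. Under the identifications from the first step, this is precisely the restriction of $f^*$ to nuclear subcategories. The functor $f^*$ sends the compact generator $P$ to $\prodd[\N]R^{\wedge}_I$, and by symmetric monoidality it identifies the predual of $P$ with $\Lambda_I(\biggplus[\N]R)$, i.e.\ with the compact generator of $D(\wt{\Solid}_{R^{\wedge}_I})^{\rig}$. Essential surjectivity of $f^*|_{\rig}$ is then immediate, since the target is generated via colimits by this generator and $f^*|_{\rig}$ is continuous.

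The main obstacle is fully faithfulness of $f^*|_{\rig}$. By the description of morphisms in a rigidification provided by Proposition \ref{prop:right_adjoint_description_for_rigidification}, it suffices to verify that $f^*$ induces an isomorphism on the spectra of trace-class morphisms $\Hom(1, Q^{\vee}\otimes Q')$ for $Q,Q'$ compact in the source. By the symmetric monoidality of $f^*$, one reduces to the case where $Q$ and $Q'$ are tensor powers of the compact generator $P$. In this situation, both the source and target Hom spectra are computed by applying the respective $I$-completion construction to $\biggplus[S]R$ for a countable index set $S$, and these two constructions agree on countable free $R$-modules (since a free module is in the image of the fully faithful embedding $D(R^{\wedge}_I)\hookrightarrow D(\wt{\Solid}_{R^{\wedge}_I})$ and $f^*$ is compatible with this embedding). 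This identifies the two Hom spectra and establishes the equivalence.
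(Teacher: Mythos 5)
Your strategy is genuinely different from the paper's. The paper shows directly that the right adjoint $f_*$ preserves nuclear objects (by verifying this for the generating predual, using that it is $I$-complete and discrete mod $I$), and then constructs an explicit inverse to the adjunction unit $X \to f_*f^*X$ on basic nuclear $X=\indlim_n P_n$ by means of chosen trace-class witnesses $1\to P_n^\vee\otimes P_{n+1}$. You instead try to identify both nuclear subcategories with rigidifications and prove $f^*|_{\rig}$ is an equivalence by matching generators and comparing trace-class Hom spectra. The reduction of full faithfulness to the trace-class Hom spectra $\Hom(1,Q^\vee\otimes Q')$ is a legitimate move given the colimit descriptions of the rigidifications, and the essential-surjectivity step (matching compact generators and preduals via monoidality) is sound.

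The gap is in the final step. You assert that on both sides the spectrum $\Hom(1,Q^\vee\otimes Q')$ (for $Q,Q'$ powers of the compact generator) is the $I$-completion of a countable \emph{free} $R$-module $\biggplus_S R$, and that the two sides agree because such free modules lie in the common image of $D(R^\wedge_I)$. This is not correct: as the paper's own computation in the proof of Corollary~\ref{cor:Nuc^CS_resolution} shows, the trace-class Hom from the generator $P$ to itself in $D(\wt{\Solid}_{R^\wedge_I})$ is $((\prodd_\N R)\otimes(\biggplus_\N R))^\wedge_I\cong(\biggplus_\N\prodd_\N R)^\wedge_I$, which involves a \emph{product} factor and is not the $I$-completion of a free module (in particular it differs from $(\biggplus_{\N\times\N}R)^\wedge_I$). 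Correspondingly, the objects $Q^\vee\otimes Q'$ are not in the image of $D(R^\wedge_I)$, so compatibility with the fully faithful embeddings of $D(R^\wedge_I)$ does not identify the two Hom spectra. Establishing that $f^*$ induces an isomorphism on these spectra would require a nontrivial comparison of the two solid $I$-completion functors on mixed product/sum modules, which you have not carried out. This is precisely the computation the paper's argument is designed to avoid.
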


\begin{proof}
Denote by $f_*$ the right adjoint to $f^*.$ We claim that the functor $f_*$ preserves nuclear objects. Indeed, by Proposition \ref{prop:good_rigidification_synthetic_solid} it suffices to check that the object $f_*((\prodd[\N]R^{\wedge}_I)^{\vee})$ is nuclear. But this object is $I$-adically complete, and it is discrete modulo $I$ (in the derived sense). Therefore, it is nuclear by \cite[Lemma 2.18]{AM24}.

The functor $f_*$ is conservative, so it suffices to prove that for any nuclear object $X\in \Nuc(D(\Solid_{(R^{\wedge}_I,\Z)}))$ the adjunction unit $X\to f_*f^*(X)$ is an isomorphism. We may assume that $X$ is basic nuclear, i.e. $X\cong\indlim[n\in\N]P_n,$ where each $P_n$ is compact and each transition map $P_n\to P_{n+1}$ is trace-class. But then for each $n$ the choice of a trace-class witness $1\to P_n^{\vee}\otimes P_{n+1}$ induces a map $f_*f^*(P_n)\to P_{n+1},$ such that the following diagram commutes:
\begin{equation*}
\begin{tikzcd}
P_n\ar[r]\ar[d] & P_{n+1}\ar[d]\\
f_* f^*(P_n)\ar[r]\ar[ru] & f_*f^*(P_{n+1}).
\end{tikzcd}
\end{equation*}
This allows to construct the map $f_*f^*(X)\to X,$ inverse to the adjunction unit. This proves the proposition.
\end{proof}

We obtain the following relation between the categories $\Nuc^{CS}(R^{\wedge}_I)$ and $\Nuc(R^{\wedge}_I).$

\begin{cor}\label{cor:from_Nuc^CS_to_Nuc}
We have a natural fully faithful strongly continuous symmetric monoidal functor $\Nuc^{CS}(R^{\wedge}_I)\hto \Nuc(R^{\wedge}_I),$ given by the composition
\begin{multline}\label{eq:from_Nuc^CS_to_Nuc}
\Nuc^{CS}(R^{\wedge}_I)\simeq D(\wt{\Solid}_{R^{\wedge}_I})^{\rig}\simeq \Ind(D^b_{I\hy\compl}(\Proj^{\omega_1}\hy R^{\wedge}_I))^{\rig}\\
\hto \Ind(D_{I\hy\compl}(R)^{\omega_1})^{\rig}
\simeq D_{I\hy\compl}(R)^{\rig}\simeq \Nuc(R^{\wedge}_I). 
\end{multline}
\end{cor}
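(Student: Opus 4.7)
The strategy is to verify each link in the chain \eqref{eq:from_Nuc^CS_to_Nuc} and to observe that the only genuinely new step is a fully faithful symmetric monoidal functor between rigidifications; all other identifications have already been proved in the paper.

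First I would combine Propositions \ref{prop:nuclear_via_synthetic} and \ref{prop:good_rigidification_synthetic_solid} to obtain a symmetric monoidal equivalence
\[ \Nuc^{CS}(R^{\wedge}_I) \simeq \Nuc(D(\wt{\Solid}_{R^{\wedge}_I})) \simeq D(\wt{\Solid}_{R^{\wedge}_I})^{\rig} = \Ind(D^b_{I\hy\compl}(\Proj^{\omega_1}\hy R^{\wedge}_I)^{op})^{\rig}. \]
To pass from here to $\Ind(D^b_{I\hy\compl}(\Proj^{\omega_1}\hy R^{\wedge}_I))^{\rig}$, I would invoke Proposition \ref{prop:from_Ind_D_to_Ind_D^op}, whose hypotheses (the conditions of Proposition \ref{prop:conditions_for_good_rigidification}) were verified for $\cD = D^b_{I\hy\compl}(\Proj^{\omega_1}\hy R^{\wedge}_I)^{op}$ as a byproduct of the proof of Proposition \ref{prop:good_rigidification_synthetic_solid}. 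By the symmetry of these conditions between $\cD$ and $\cD^{op}$ built into Proposition \ref{prop:conditions_for_good_rigidification}, they hold for $D^b_{I\hy\compl}(\Proj^{\omega_1}\hy R^{\wedge}_I)$ as well.

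Next I would bring in the natural inclusion $\cA \hookrightarrow \cB$, where $\cA = D^b_{I\hy\compl}(\Proj^{\omega_1}\hy R^{\wedge}_I)$ and $\cB = D_{I\hy\compl}(R)^{\omega_1}$. This functor is fully faithful and symmetric monoidal, and — crucially — compatible with preduals computed inside the locally rigid ambient category $D_{I\hy\compl}(R)$: both $\cA$ and $\cB$ are closed under the ambient internal $\Hom$ against the unit. The induced cocontinuous symmetric monoidal functor $\Ind(\cA) \to \Ind(\cB)$ is therefore fully faithful and sends trace-class morphisms to trace-class morphisms, and so restricts to a fully faithful symmetric monoidal functor $\Ind(\cA)^{\rig} \hookrightarrow \Ind(\cB)^{\rig}$ between the rigidifications.

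Finally I would use Remark \ref{rem:rigidification} to identify $\Ind(D_{I\hy\compl}(R)^{\omega_1})^{\rig}$ with $D_{I\hy\compl}(R)^{\rig}$, and Proposition \ref{prop:Nuc_via_rigidification} to identify the latter with $\Nuc(R^{\wedge}_I)$. The composite is then fully faithful and symmetric monoidal by construction. For strong continuity, note that both $\Nuc^{CS}(R^{\wedge}_I)$ and $\Nuc(R^{\wedge}_I)$ are rigid symmetric monoidal (being rigidifications), and in particular their unit objects are compact; the standard fact that a continuous monoidal functor out of a rigid monoidal category into an $\bE_1$-monoidal presentable stable category with compact unit is automatically strongly continuous (the unlabeled proposition immediately following Proposition \ref{prop:autoequivalence_rigid_monoidal}) then supplies strong continuity.

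The main obstacle will be the middle step: verifying that the fully faithful symmetric monoidal inclusion $\cA \hookrightarrow \cB$ is genuinely compatible with preduals, so that the induced functor on ind-completions respects the trace-class ideal. Concretely, one must check that the predual of a compact object of $\Ind(\cA)$ agrees, under the inclusion, with its predual in $\Ind(\cB)$. This reduces to closure of $\cA$ under the internal $\Hom$ of $\cB$ against the unit, which ultimately rests on the explicit description of $\cA$ as generated (via finite (co)limits and countable sums) by a single object coming from the Koszul complex $\Kos(R;a_1,\dots,a_m)$.
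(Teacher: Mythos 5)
Your chain of citations tracks the paper's own proof almost exactly (Propositions \ref{prop:nuclear_via_synthetic}, \ref{prop:good_rigidification_synthetic_solid}, \ref{prop:from_Ind_D_to_Ind_D^op}, Remark \ref{rem:rigidification}, and Proposition \ref{prop:Nuc_via_rigidification}), but two of the justifications you interpose are wrong, and the second would derail you if you tried to execute it. First: Proposition \ref{prop:from_Ind_D_to_Ind_D^op} has no hypotheses beyond $\cD$ being a small stable symmetric monoidal category; it gives $\Ind(\cD)^{\rig}\simeq\Ind(\cD^{op})^{\rig}$ unconditionally, via the covariant symmetric monoidal involution $(-)^{\vee}$ on $\Cat_{\st}^{\dual}$. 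There is nothing from Proposition \ref{prop:conditions_for_good_rigidification} to verify at this step. Moreover, your assertion that those conditions are ``symmetric between $\cD$ and $\cD^{op}$'' is explicitly false --- the closing Remark of that subsection exhibits $\cD=D(\mk)^{\omega_1,op}$ as a counterexample --- and, incidentally, the proof of Proposition \ref{prop:good_rigidification_synthetic_solid} verifies condition \ref{prop:conditions_for_good_rigidification}(ii) for $\cD=D^b_{I\hy\compl}(\Proj^{\omega_1}\hy R^{\wedge}_I)$ itself, not for its opposite as you wrote.

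Second, and more seriously, the ``main obstacle'' you single out for the middle arrow --- that preduals of compact objects of $\Ind(\cA)$ should coincide, under the inclusion $\iota$, with preduals taken in $\Ind(\cB)$ --- is neither needed nor, in general, true: for $P\in\cA$ one has $P^{\vee_{\Ind(\cA)}}\simeq\iota^R(\iota(P)^{\vee_{\Ind(\cB)}})$, and there is no reason for $\iota(P^{\vee_{\Ind(\cA)}})$ to equal $\iota(P)^{\vee_{\Ind(\cB)}}$ (left Kan extension and restriction do not commute). What you actually need is far weaker and comes for free: any symmetric monoidal continuous functor sends trace-class maps to trace-class maps, by the characterization of trace-class maps via witnesses $1\to z\otimes y$, $x\otimes z\to 1$ (the unlabeled Remark preceding Proposition \ref{prop:rigidification}). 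Since $\Ind(\cA)^{\rig}$ is generated under colimits by colimits of trace-class $\Q$-sequences of compact objects (Proposition \ref{prop:rigidification}, Remark \ref{rem:rigidification}) and $\Ind(\cB)^{\rig}\subset\Ind(\cB)$ is closed under colimits, the fully faithful symmetric monoidal $\iota$ carries $\Ind(\cA)^{\rig}$ into $\Ind(\cB)^{\rig}$, and the restriction is fully faithful because $\iota$ is. Your final appeal to the proposition on strong continuity of monoidal functors out of rigid categories is correct and closes the argument.
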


\begin{proof} The first equivalence in \eqref{eq:from_Nuc^CS_to_Nuc} is given by Proposition \ref{prop:nuclear_via_synthetic}, and the second equivalence follows from Proposition \ref{prop:from_Ind_D_to_Ind_D^op}. The final equivalence follows from Proposition \ref{prop:Nuc_via_rigidification}.
\end{proof}

To be able to compute the localizing invariants of the category $\Nuc^{CS}(R^{\wedge}_I),$ we need to describe the quotient $D(\wt{\Solid}_{R^{\wedge}_I})/\Nuc^{CS}(R^{\wedge}_I).$ More precisely, it will be convenient to pass to the dual categories. 

For an object $M\in D(R),$ we put
\begin{equation*}
\Calk_R^{\topp}(M)=\End_{\Calk(R)}(M)^{\wedge}_I\cong \prolim[n]\End_{\Calk(R_n)}(R_n\tens{R}M),
\end{equation*}
where as above $R_n=\Kos(R;a_1^n,\dots,a_m^n).$
Note that we have
\begin{equation*}
\Calk_R^{\topp}(M)\cong \Calk_R^{\topp}(M^{\wedge}_I),
\end{equation*} 
for any $M\in D(R).$

\begin{cor}\label{cor:Nuc^CS_resolution} Consider the $I$-complete $R$-module $P=(\biggplus[\N]R)^{\wedge}_I.$
Then we have a short exact sequence
\begin{equation*}
0\to \Nuc^{CS}(R^{\wedge}_I)\to D(\End_R(P))\to D(\Calk_R^{\topp}(P))\to 0.
\end{equation*}  
\end{cor}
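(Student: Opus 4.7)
My plan is to apply the general rigidification machinery to the fully faithful inclusion $\Nuc^{CS}(R^{\wedge}_I) \hto D(\wt{\Solid}_{R^{\wedge}_I})$ and then identify the three terms concretely. By Propositions \ref{prop:nuclear_via_synthetic} and \ref{prop:good_rigidification_synthetic_solid}, the category $\Nuc^{CS}(R^{\wedge}_I)$ is the rigidification $D(\wt{\Solid}_{R^{\wedge}_I})^{\rig}$, and the conditions of Proposition \ref{prop:conditions_for_good_rigidification} hold for $\cD = D^b_{I\hy\compl}(\Proj^{\omega_1}\hy R^{\wedge}_I)^{op}$ (this is essentially the content of the proof of Proposition \ref{prop:good_rigidification_synthetic_solid}). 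Hence by Proposition \ref{prop:right_adjoint_description_for_rigidification} the inclusion admits a symmetric monoidal continuous right adjoint $\iota^R$, producing a short exact sequence
\begin{equation*}
0 \to \Nuc^{CS}(R^{\wedge}_I) \to D(\wt{\Solid}_{R^{\wedge}_I}) \to Q \to 0
\end{equation*}
in $\Cat_{\st}^{\dual}$, with $Q$ the Verdier quotient.

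For the middle term, I would use that $D(\wt{\Solid}_{R^{\wedge}_I}) = \Ind(D^b_{I\hy\compl}(\Proj^{\omega_1}\hy R^{\wedge}_I)^{op})$ is compactly generated by the object $\prodd[\N]R^{\wedge}_I$, whose endomorphism algebra is (up to the standard opposite convention for Ind-categories of opposite categories) $\End_R(P)$, where $P = (\biggplus[\N]R)^{\wedge}_I$. This yields the desired equivalence $D(\wt{\Solid}_{R^{\wedge}_I}) \simeq D(\End_R(P))$.

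For the quotient $Q$, the image of the compact generator is again a compact generator, and its endomorphism algebra is the quotient of $\End_R(P)$ by the ideal of morphisms $P \to P$ that factor through objects of $\Nuc^{CS}(R^{\wedge}_I)$ under $\iota$. By rigidity of $\Nuc^{CS}(R^{\wedge}_I)$ and Proposition \ref{prop:trace_class_iff_compact}, these are precisely the trace-class (equivalently compact) maps in $\Nuc^{CS}(R^{\wedge}_I)$; under the compact-generator identification they correspond to compact operators of $P$ in $D(R)$, which after the natural $I$-completion (automatic since $\End_R(P)$ is $I$-complete, but needed at the level of the quotient) yield exactly $\Calk_R^{\topp}(P) = \End_{\Calk(R)}(P)^{\wedge}_I \cong \prolim_n \End_{\Calk(R_n)}(R_n\tens{R} P)$. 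The main obstacle will be this last identification: carefully matching the ideal of morphisms killed in $Q$ with the $I$-completion of the ideal of compact operators in $D(R)$. The key tool is the explicit description of $\iota^R$ on compact objects from Proposition \ref{prop:right_adjoint_description_for_rigidification}, which sends a compact object to its opposite predual; combined with the characterization of trace-class maps via factorizations, this will let me trace how a trace-class witness in $\Nuc^{CS}(R^{\wedge}_I)$ for a morphism $P \to P$ corresponds to a compact factorization in $D(R)$ at the level of $\End_R(P)$.
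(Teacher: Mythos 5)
Your overall strategy (pass through the rigidification = nuclear objects identification, obtain a short exact sequence whose kernel is $\Nuc^{CS}(R^{\wedge}_I)$, and then identify the generator's endomorphism algebra in the middle and quotient terms) is the same as the paper's. But there is a genuine gap in how you identify the middle term, and the quotient computation is only sketched.

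The gap is that you stay inside $D(\wt{\Solid}_{R^{\wedge}_I}) = \Ind(\cD_0^{op})$, where $\cD_0 = D^b_{I\hy\compl}(\Proj^{\omega_1}\hy R^{\wedge}_I)$. The compact generator there is $P^{op}$, and $\End_{\Ind(\cD_0^{op})}(P^{op}) = \End_{\cD_0}(P)^{mop} = \End_R(P)^{mop}$ as an $\bE_1$-ring (the underlying spectrum is the same, but the multiplication is reversed). Thus $\Ind(\cD_0^{op}) \simeq \Mod\hy\End_R(P)^{mop} \simeq \End_R(P)\hy\Mod$, which is \emph{not} $D(\End_R(P)) = \Mod\hy\End_R(P)$: the two are dual, not equivalent, and since $\End_R(P)$ is far from commutative there is no ``standard opposite convention'' that erases this distinction. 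The paper avoids the issue by first invoking Proposition \ref{prop:conditions_for_good_rigidification} to identify $\Nuc(\Ind(\cD_0^{op}))$ with $\Nuc(\Ind(\cD_0))$, and then working with the fully faithful inclusion $\Nuc^{CS}(R^{\wedge}_I) \hookrightarrow \Ind(\cD_0)$, whose ambient category really is $\Mod\hy\End_R(P) = D(\End_R(P))$. You need this switch; your hand-wave ``(up to the standard opposite convention \dots)'' is hiding the step that makes the statement come out with the right (not the opposite) ring. A related slip: you apply Proposition \ref{prop:conditions_for_good_rigidification} with $\cD = \cD_0^{op}$, but Proposition \ref{prop:good_rigidification_synthetic_solid} verifies condition (ii) for $\cD = \cD_0$ (nuclearity of $P^{op,\vee}$ in $\Ind(\cD_0^{op})$), and as the remark following Proposition \ref{prop:right_adjoint_description_for_rigidification} notes, the conditions for $\cD$ and $\cD^{op}$ are not equivalent in general.

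On the quotient: your plan is right in spirit but is not carried out, and you flag the crucial identification yourself as ``the main obstacle.'' Once one works in $\Ind(\cD_0)$, the paper's route is more direct than the one you outline via $\iota^R$: the inclusion $\Nuc^{CS}(R^{\wedge}_I)\hookrightarrow\Ind(\cD_0)$ has a continuous right adjoint $(-)^{\tr}$ (Proposition \ref{prop:basic_conditions_good_rigidification}), and the quotient is read off from the single computation
\begin{equation*}
\Hom_{\Ind(\cD_0)}(P, P^{\tr}) \cong \bigl((\prodd[\N]R)\otimes(\biggplus[\N]R)\bigr)^{\wedge}_I \cong \Fiber\bigl(\End_R(P)\to\Calk_R^{\topp}(P)\bigr),
\end{equation*}
since the quotient is compactly generated by the image $\bar P$ of $P$ and $\End_Q(\bar P) \cong \Cofib(\Hom(P,P^{\tr})\to\End_R(P))$. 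This is what you would need to supply; tracing through $\iota^R(P) = P^{op,\vee}$ in $\Ind(\cD_0^{op})$ is possible but makes it harder, not easier, to reach the displayed identification with the completed Calkin algebra.
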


\begin{proof}
By Proposition \ref{prop:nuclear_via_synthetic}, we have an equivalence $\Nuc^{CS}(R^{\wedge}_I)\simeq \Nuc(D(\wt{\Solid}_{R^{\wedge}_I})).$ From Propositions \ref{prop:good_rigidification_synthetic_solid} and \ref{prop:conditions_for_good_rigidification}, we see that the category $\Nuc^{CS}(R^{\wedge}_I)$ is also the category of nuclear objects in $$\Ind(D^b_{I\hy\compl}(\Proj^{\omega_1}\hy R^{\wedge}_I))\simeq D(\End_R(P)).$$ The right adjoint to the inclusion is given by the functor $(-)^{\tr}.$ Moreover, for our object $P$ we have
\begin{multline*}
\Hom_{\Ind(D^b_{I\hy\compl}(\Proj^{\omega_1}\hy R^{\wedge}_I))}(P,P^{\tr})\cong ((\prodd[\N]R)\otimes (\biggplus[\N] R))^{\wedge}_I\\
\cong\Fiber(\End_R(P)\to\Calk_R^{\topp}(P)).
\end{multline*}
This proves the proposition.
\end{proof}

\begin{remark}
Let $P=(\biggplus[\N]R)^{\wedge}_I$ be as in Corollary \ref{cor:Nuc^CS_resolution}. It follows from loc. cit. and Corollary \ref{cor:from_Nuc^CS_to_Nuc} that we have a map of short exact sequences
\begin{equation*}
\begin{tikzcd}
0\ar[r] & \Nuc^{CS}(R^{\wedge}_I)\ar[r]\ar[d] & D(\End_R(P))\ar[r]\ar[d] &\ar[r]\ar[d] D(\Calk_R^{\topp}(P))\ar[r]\ar[d] & 0\\
0\ar[r] & \Nuc(R^{\wedge}_I)\ar[r] & \Ind(D_{I\hy\compl}(R)^{\omega_1})\ar[r] & \Ind(\prolim[n]\Calk_{\omega_1}(R_n))\ar[r] & 0,
\end{tikzcd}
\end{equation*}
where all vertical maps are fully faithful (and strongly continuous).
\end{remark}




\subsection{Localizing invariants of the category $\Nuc^{CS}(R^{\wedge}_I)$} We prove the following result, which in the case of $K$-theory was conjectured by Clausen and Scholze. We use the notation from Subsection \ref{ssec:some_notation}.

\begin{theo}\label{th:loc_invar_of_Nuc_CS}
Let $R$ be a commutative ring with a finitely generated ideal $I=(a_1,\dots,a_n).$ We put $R_n = \Kos(R;a_1^n,\dots,a_m^n),$ and consider $R_n$ as a DG $R$-algebra. Let $\Phi:\Cat_R^{\perf}\to\cT$ be an accessible localizing invariant, where $\cT$ is an accessible stable category.
\begin{enumerate}[label=(\roman*),ref=(\roman*)]
	\item We have a canonical cofiber sequence
	\begin{equation*}
		\Phi^{\cont}(\Nuc^{CS}(R^{\wedge}_I))\to \Phi(\Stab(\prodd[n\geq 1]\Proj^{\omega}\hy R_n))\xto{\id-\Phi(F)} \Phi(\Stab(\prodd[n\geq 1]\Proj^{\omega}\hy R_n)),
	\end{equation*}
	where $F$ is the endofunctor of the category $\Stab(\prodd[n\geq 1]\Proj^{\omega}\hy R_n),$ which is obtained by applying $\Stab$ to the product of functors $\Proj^{\omega}\hy R_{n+1}\to\Proj^{\omega}\hy R_n.$ \label{Phi^cont_of_Nuc^CS_via_exact_triangle}
\item Suppose that $\cT$ has countable products and we have an isomorphism 
\begin{equation*}
	\Phi(\Stab(\prodd[n\geq 1]\Proj^{\omega}\hy R_n))\xto{\sim} \prodd[n\geq 1]\Phi(\Perf(R_n)).
\end{equation*}
Then we obtain an isomorphism
\begin{equation*}
	\Phi^{\cont}(\Nuc^{CS}(R^{\wedge}_I))\cong \prolim[n]\Phi(\Perf(R_n)).
\end{equation*} \label{Phi^cont_of_Nuc^CS_as_a_limit}
\end{enumerate}
\end{theo}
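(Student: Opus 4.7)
The plan is to apply $\Phi^{\cont}$ to the short exact sequence furnished by Corollary~\ref{cor:Nuc^CS_resolution},
\[
0 \to \Nuc^{CS}(R^{\wedge}_I) \to D(\End_R(P)) \to D(\Calk_R^{\topp}(P)) \to 0,
\]
with $P = (\biggplus[\N]R)^{\wedge}_I$, and to evaluate the resulting cofibre sequence explicitly. For any $\bE_1$-$R$-algebra $A$, the category $D(A)$ is compactly generated with compact objects $\Perf(A)$, and since $D(A)^{\omega_1}$ is closed under countable coproducts, an Eilenberg swindle gives $\Phi(D(A)^{\omega_1}) = 0$; then the short exact sequence $0 \to \Perf(A) \to D(A)^{\omega_1} \to \Calk_{\omega_1}^{\cont}(D(A)) \to 0$ yields $\Phi^{\cont}(D(A)) \simeq \Phi(\Perf(A))$. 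Applying this to the middle and right terms of Corollary~\ref{cor:Nuc^CS_resolution} produces a cofibre sequence
\[
\Phi^{\cont}(\Nuc^{CS}(R^{\wedge}_I)) \to \Phi(\Perf(\End_R(P))) \to \Phi(\Perf(\Calk_R^{\topp}(P))).
\]

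The crux is to identify both $\Phi(\Perf(\End_R(P)))$ and $\Phi(\Perf(\Calk_R^{\topp}(P)))$ with $\Phi\bigl(\Stab(\prodd[n \geq 1]\Proj^{\omega}\hy R_n)\bigr)$, and the connecting map with $\id - \Phi(F)$. Using the Morita equivalence $D(\End_R(P)) \simeq \Ind(D^b_{I\hy\compl}(\Proj^{\omega_1}\hy R^{\wedge}_I))$ recorded in the proof of Corollary~\ref{cor:Nuc^CS_resolution}, and the equivalence $D^b_{I\hy\compl}(\Proj^{\omega_1}\hy R^{\wedge}_I) \simeq \prolim[n]\Stab(\Proj^{\omega_1}\hy R_n)$ of Subsection~\ref{ssec:some_notation}, one gets $\Perf(\End_R(P)) \simeq \prolim[n]\Stab(\Proj^{\omega_1}\hy R_n)$. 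I will then construct an exact $R$-linear comparison functor
\[
\Stab\bigl(\prodd[n \geq 1]\Proj^{\omega}\hy R_n\bigr) \to \prolim[n]^{\oplax}\Stab(\Proj^{\omega_1}\hy R_n),\quad (M_n)_{n \geq 1} \mapsto \Bigl(\biggplus[k \geq n] R_n \tens{R_k} M_k\Bigr)_n,
\]
which only factors through the \emph{oplax} limit because the transition maps $R_n \tens{R_{n+1}} N_{n+1} \to N_n$ are mere inclusions of direct summands rather than isomorphisms. By Proposition~\ref{prop:oplax_limit_K_equiv} the oplax limit is $K$-equivalent to the product $\prodd[n]\Stab(\Proj^{\omega_1}\hy R_n)$, and an Eilenberg-swindle argument modelled on Proposition~\ref{prop:double_quotient_K_equiv} absorbs the honest-limit direction, reducing the double quotient to a shift of a product of \emph{finitely generated} perfect modules, namely to $\Stab(\prodd[n]\Proj^{\omega}\hy R_n)$. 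The same analysis applied after passage to the Calkin category identifies $\Perf(\Calk_R^{\topp}(P))$, and the quotient functor in Corollary~\ref{cor:Nuc^CS_resolution} corresponds under this dictionary to the difference $\id - F$ coming from the transition maps $R_{n+1} \to R_n$, exactly as in the proof of Theorem~\ref{th:local_invar_of_inverse_limits}.

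The main obstacle is carrying out these identifications \emph{functorially} with respect to the quotient map of Corollary~\ref{cor:Nuc^CS_resolution}, so that the connecting map of the resulting cofibre sequence is \emph{precisely} $\id - \Phi(F)$ and not merely identified up to an abstract automorphism; this requires a careful oplax-to-honest-limit comparison that closely mirrors Proposition~\ref{prop:double_quotient_K_equiv}. Once part~\ref{Phi^cont_of_Nuc^CS_via_exact_triangle} is proved, part~\ref{Phi^cont_of_Nuc^CS_as_a_limit} is formal: under the hypothesis that $\Phi$ commutes with countable products one has $\Phi(\Stab(\prodd[n]\Proj^{\omega}\hy R_n)) \simeq \prodd[n]\Phi(\Perf(R_n))$, and the fibre of $\id - \Phi(F)$ acting on $\prodd[n \geq 1]\Phi(\Perf(R_n))$ is by definition the Milnor-style limit $\prolim[n]\Phi(\Perf(R_n))$, yielding the desired isomorphism $\Phi^{\cont}(\Nuc^{CS}(R^{\wedge}_I)) \simeq \prolim[n]\Phi(\Perf(R_n))$.
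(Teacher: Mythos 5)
Your overall architecture has a structural error. You propose to apply $\Phi^{\cont}$ to the short exact sequence of Corollary~\ref{cor:Nuc^CS_resolution} and then identify the middle term $\Phi(\Perf(\End_R(P)))$ and the right term $\Phi(\Perf(\Calk_R^{\topp}(P)))$ both with $\Phi(\Stab(\prodd[n\geq 1]\Proj^{\omega}\hy R_n))$. But $\Phi(\Perf(\End_R(P)))$ is \emph{zero}: the category $\Perf(\End_R(P)) \simeq D^b_{I\hy\compl}(\Proj^{\omega_1}\hy R^{\wedge}_I)$ admits countable coproducts of any object (since $P \oplus P \cong P$), so an Eilenberg swindle kills its localizing invariants. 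In contrast $\Phi(\Stab(\prodd[n\geq 1]\Proj^{\omega}\hy R_n))$ is in general nonzero (for $\Phi = K$ it is $\prodd[n]K(R_n)$). This is not a minor slip: the vanishing of the middle term is exactly what the paper uses, and it means that applying $\Phi^{\cont}$ to Corollary~\ref{cor:Nuc^CS_resolution} only yields $\Phi^{\cont}(\Nuc^{CS}(R^{\wedge}_I)) \cong \Omega\Phi(\Perf(\Calk_R^{\topp}(P)))$. The cofiber sequence stated in part~\ref{Phi^cont_of_Nuc^CS_via_exact_triangle} does \emph{not} come from hitting Corollary~\ref{cor:Nuc^CS_resolution} with $\Phi^{\cont}$; it requires a separate delicate computation of $\Phi(\Perf(\Calk_R^{\topp}(P)))$, and your proposal as written would give the incorrect conclusion $\Phi^{\cont}(\Nuc^{CS}) \cong \Omega\Phi(\Stab(\prodd[n]\Proj^{\omega}\hy R_n))$ rather than the fiber of $\id - \Phi(F)$.

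A second problem is the calibration of your oplax-limit step. You cite Proposition~\ref{prop:oplax_limit_K_equiv} to compare $\prolim[n]^{\oplax}\Stab(\Proj^{\omega_1}\hy R_n)$ with $\prodd[n]\Stab(\Proj^{\omega_1}\hy R_n)$. Both of these categories again have countable coproducts (each $\Proj^{\omega_1}\hy R_n$ does), so $\Phi$ of either vanishes and this comparison carries no information. The subtlety, which you are groping towards with the phrase ``a product of finitely generated perfect modules,'' is that one must throughout work with the \emph{bounded} products $\prodd[n]^{\bnd}$ and \emph{bounded} oplax limits $\prolim[n]^{\oplax,\bnd}$ of Subsection~\ref{ssec:some_notation}; these do not admit the unwanted swindle (for $\Proj^{\omega}$-type objects), whereas the unbounded oplax limit does. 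The paper proves bespoke bounded versions of the $K$-equivalence and double-quotient results (Propositions~\ref{prop:oplax_limit_K_equiv_bounded} and~\ref{prop:double_quotient_K_equiv_bounded}), together with a fully-faithfulness comparison between the bounded and unbounded quotients (Proposition~\ref{prop:comparison_of_quotients_of_oplax_limits} and Corollary~\ref{cor:fully_faithful_bounded_oplax_into_Calkins}), and it is precisely these bounded statements, combined with Corollary~\ref{cor:diagonal_arrow_bounded}, that produce the short exact sequence having $\Perf(\Calk_R^{\topp}(P))$ as its kernel and the two (suspended) copies of $\Phi(\Stab(\prodd[n]\Proj^{\omega}\hy R_n))$ as its other two terms. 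Your closing analysis of part~\ref{Phi^cont_of_Nuc^CS_as_a_limit} is fine once part~\ref{Phi^cont_of_Nuc^CS_via_exact_triangle} is secured, but part~\ref{Phi^cont_of_Nuc^CS_via_exact_triangle} needs the bounded machinery, which your plan does not yet contain.
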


\begin{cor}\label{cor:K_theory_of_Nuc^CS}
Within the notation of Theorem \ref{th:loc_invar_of_Nuc_CS}, we have an isomorphism
\begin{equation*}
K^{\cont}(\Nuc^{CS}(R^{\wedge}_I))\cong\prolim[n] K(R_n).
\end{equation*}
In particular, if $R$ is noetherian, we have an isomorphism
\begin{equation*}
K^{\cont}(\Nuc^{CS}(R^{\wedge}_I))\cong \prolim[n] K(R/I^n).
\end{equation*}
\end{cor}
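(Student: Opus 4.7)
The plan is to deduce both isomorphisms as direct applications of Theorem \ref{th:loc_invar_of_Nuc_CS} \ref{Phi^cont_of_Nuc^CS_as_a_limit} to the non-connective algebraic $K$-theory functor $\Phi = K \colon \Cat^{\perf} \to \Sp$. Since $K$ is an accessible (in fact finitary) localizing invariant and $\Sp$ admits countable products, the only hypothesis that remains to be checked is the product formula
\begin{equation*}
K\bigl(\Stab(\prodd[n\geq 1] \Proj^{\omega}\hy R_n)\bigr) \xto{\sim} \prodd[n\geq 1] K(R_n).
\end{equation*}

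I would verify this in two steps. First, by the Gillet--Waldhausen identification, $K$ of a small additive category agrees with $K$ of its stabilization, so the left-hand side is the Quillen $K$-theory of the additive category $\prodd[n] \Proj^{\omega}\hy R_n,$ which is naturally identified with $\Proj^{\omega}\hy (\prodd[n] R_n).$ Second, the Kasprowski--Winges product theorem \cite[Theorem 1.3]{KW19} (see also \cite[Theorem 4.28]{E24}) asserts that $K$ commutes with countable products on $\Cat^{\perf},$ giving $K(\prodd[n] \Perf(R_n)) \cong \prodd[n] K(R_n).$ The natural fully faithful inclusion $\Stab(\prodd[n] \Proj^{\omega}\hy R_n) \hto \prodd[n] \Perf(R_n),$ whose essential image is the uniformly-bounded-projective-amplitude subcategory identified by Proposition \ref{prop:uniformly_bounded_amplitude}, exhibits both sides as $K$ of the ring $\prodd[n] R_n,$ giving the desired product formula. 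Plugging this into Theorem \ref{th:loc_invar_of_Nuc_CS} yields the first isomorphism $K^{\cont}(\Nuc^{CS}(R^{\wedge}_I)) \cong \prolim[n] K(R_n).$

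For the noetherian case, I would invoke Lemma \ref{lem:pro_equivalence_noetherian}, which provides an isomorphism $\proolim[n] R_n \xto{\sim} \proolim[n] R/I^n$ in $\Pro(\Alg_{\bE_1}(D(R))).$ Passing to $\Perf$ yields a pro-equivalence in $\Pro(\Cat_R^{\perf}),$ and applying $K$ then produces $\prolim[n] K(R_n) \cong \prolim[n] K(R/I^n),$ completing the proof.

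The principal subtlety is the verification of the product formula, and in particular the comparison between the stabilization of the product of additive categories $\Proj^{\omega}\hy R_n$ and the product of the stable categories $\Perf(R_n)$ on the level of $K$-theory; all other steps are formal consequences of Theorem \ref{th:loc_invar_of_Nuc_CS} and the cited lemmas.
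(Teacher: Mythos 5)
Your overall plan is right, and the treatment of the noetherian case via Lemma \ref{lem:pro_equivalence_noetherian} matches the paper. However, there is a genuine gap in your verification of the product formula
\begin{equation*}
K\bigl(\Stab(\prodd[n\geq 1]\Proj^{\omega}\hy R_n)\bigr)\xto{\sim}\prodd[n\geq 1]K(R_n).
\end{equation*}
The identification ``$\prodd[n]\Proj^{\omega}\hy R_n$ is naturally identified with $\Proj^{\omega}\hy(\prodd[n] R_n)$'' is false. A finitely generated projective module over $\prodd[n] R_n$ is a direct summand of $(\prodd[n] R_n)^k$ for a fixed $k$, so its components $(P_n)_n$ have rank bounded by $k$; in contrast, an object of $\prodd[n]\Proj^{\omega}\hy R_n$ is an arbitrary sequence of finitely generated projective $R_n$-modules with no uniform bound on ranks. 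Thus $\Proj^{\omega}\hy(\prodd[n] R_n)$ is a strictly smaller additive subcategory, and the subsequent claim that both $\Stab(\prodd[n]\Proj^{\omega}\hy R_n)$ and $\prodd[n]\Perf(R_n)$ ``are $K$ of the ring $\prodd[n] R_n$'' does not hold — neither category is $\Perf(\prodd[n]R_n)$, and Kasprowski--Winges does not assert that $K(\prodd[n]\Perf(R_n))\cong K(\prodd[n]R_n)$, only that $K(\prodd[n]\Perf(R_n))\cong\prodd[n]K(R_n)$.

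The real content you need is that $K$ commutes with \emph{infinite products of additive $\infty$-categories}, not merely with products in $\Cat^{\perf}$; this is a strictly stronger statement because $\Stab(\prodd[n]\Proj^{\omega}\hy R_n)$ is a proper full subcategory of $\prodd[n]\Perf(R_n)$ (it imposes uniformly bounded projective amplitude, as in Proposition \ref{prop:uniformly_bounded_amplitude}). The paper invokes exactly this additive product theorem from \cite[Proposition 2.11]{Cor}; replacing your appeal to \cite[Theorem 1.3]{KW19} and the faulty ring identification with a direct citation of that result closes the gap and aligns your argument with the paper's proof.
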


\begin{proof}
The first isomorphism follows from Theorem \ref{th:loc_invar_of_Nuc_CS} and from the commutation of $K$-theory with infinite products of additive $\infty$-categories \cite[Proposition 2.11]{Cor}. If $R$ is noetherian, then the second isomorphism follows from the first one by Lemma \ref{lem:pro_equivalence_noetherian}.
\end{proof}

The following corollary was pointed out to me by Adriano C\'ordova Fedeli.

\begin{cor}\label{cor:U_loc_of_Nuc^CS_and_Nuc}
Consider the universal localizing invariant $\cU_{\loc}:\Cat_R^{\perf}\to\Mot^{\loc}_R,$ commuting with filtered colimits. Then we have an isomorphism
\begin{equation}\label{eq:U_loc_of_Nuc^CS_and_Nuc}
\cU_{\loc}^{\cont}(\Nuc^{CS}(R^{\wedge}_I))\xto{\sim}\cU_{\loc}^{\cont}(\Nuc(R^{\wedge}_I)).
\end{equation}
\end{cor}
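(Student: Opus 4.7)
The plan is to compute both sides explicitly and identify them with the common pro-spectrum $\prolim[n]\cU_{\loc}(\Perf(R_n))$, with the inclusion of Corollary \ref{cor:from_Nuc^CS_to_Nuc} realising this identification. The essential input from \cite{Cor} is that $\cU_{\loc}$ commutes with countable products of additive $\infty$-categories, i.e.\ that for any family of small additive $\infty$-categories $(\cA_n)_{n\geq 1}$ the natural map
\begin{equation*}
\cU_{\loc}(\Stab(\textstyle\prodd[n]\cA_n))\xto{\sim}\prodd[n]\cU_{\loc}(\Stab(\cA_n))
\end{equation*}
is an isomorphism (this is the analogue for $\cU_{\loc}$ of \cite[Proposition 2.11]{Cor}, used in the proof of Corollary \ref{cor:K_theory_of_Nuc^CS}).

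First I would compute $\cU_{\loc}^{\cont}(\Nuc^{CS}(R^{\wedge}_I))$. Applying Theorem \ref{th:loc_invar_of_Nuc_CS}\ref{Phi^cont_of_Nuc^CS_as_a_limit} with $\Phi=\cU_{\loc}$ (whose hypothesis is satisfied by the displayed result from \cite{Cor} applied to $\cA_n=\Proj^{\omega}\hy R_n$) yields
\begin{equation*}
\cU_{\loc}^{\cont}(\Nuc^{CS}(R^{\wedge}_I))\cong \prolim[n]\cU_{\loc}(\Perf(R_n)).
\end{equation*}
Next, by Proposition \ref{prop:Nuc_as_limit}, $\Nuc(R^{\wedge}_I)\simeq \prolim[n]^{\dual}D(R_n)$, and the sequence $(D(R_n))$ is strongly Mittag--Leffler over $R$ (hence over $\Sp$). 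By Theorem \ref{th:local_invar_of_inverse_limits} together with Remark \ref{rmk:U_loc_of_inverse_limits} (which, using again the same product-commutation result from \cite{Cor} applied to the additive categories $\cC_n^{\omega}$ of the strongly Mittag--Leffler approximation, lets us verify the hypothesis of part \ref{Phi^cont_commute_with_the_limit} for $\cU_{\loc}$), we obtain
\begin{equation*}
\cU_{\loc}^{\cont}(\Nuc(R^{\wedge}_I))\xto{\sim}\prolim[n]\cU_{\loc}^{\cont}(D(R_n))\cong\prolim[n]\cU_{\loc}(\Perf(R_n)).
\end{equation*}

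Finally, I would check that the inclusion of Corollary \ref{cor:from_Nuc^CS_to_Nuc} induces the identity on $\prolim[n]\cU_{\loc}(\Perf(R_n))$ under these two presentations. For this it suffices to observe that, for each fixed $n$, the tensoring functor $-\otimes_R R_n$ gives a strongly continuous $R$-linear symmetric monoidal functor from each of $\Nuc^{CS}(R^{\wedge}_I)$ and $\Nuc(R^{\wedge}_I)$ to $D(R_n)$, and that the triangle
\begin{equation*}
\begin{tikzcd}
\Nuc^{CS}(R^{\wedge}_I)\ar[rr]\ar[rd] & & \Nuc(R^{\wedge}_I)\ar[ld]\\
& D(R_n) &
\end{tikzcd}
\end{equation*}
commutes canonically (both compositions are the $R_n$-reduction functor). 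Tracing through the proofs of Theorem \ref{th:loc_invar_of_Nuc_CS} and of Theorem \ref{th:local_invar_of_inverse_limits}, the isomorphisms of the previous paragraph are induced precisely by these $R_n$-reduction functors (packaged into the relevant oplax limit / Mittag--Leffler presentation), so the natural map \eqref{eq:U_loc_of_Nuc^CS_and_Nuc} becomes the identity on $\prolim[n]\cU_{\loc}(\Perf(R_n))$ and is therefore an isomorphism.

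The main obstacle I expect is the last compatibility step: one must carefully identify the cofibre-sequence presentation of $\cU_{\loc}^{\cont}(\Nuc^{CS}(R^{\wedge}_I))$ supplied by Theorem \ref{th:loc_invar_of_Nuc_CS} with the oplax-limit presentation of $\cU_{\loc}^{\cont}(\Nuc(R^{\wedge}_I))$ coming from the proof of Theorem \ref{th:local_invar_of_inverse_limits}, and verify that the inclusion of Corollary \ref{cor:from_Nuc^CS_to_Nuc} is compatible with the forgetful maps $\Stab(\prodd[n]\Proj^{\omega}\hy R_n)\to\prodd[n]\cC_n^{\omega}$ used in both constructions. Once this diagram chase is spelled out, the isomorphism \eqref{eq:U_loc_of_Nuc^CS_and_Nuc} follows.
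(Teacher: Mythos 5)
Your main idea -- feed $\Phi=\cU_{\loc}$ into both theorems and compare the answers -- is the right one, but the specific route you take fails because it invokes a result that is not available at this point in the paper. You want to compute both sides as $\prolim[n]\cU_{\loc}(\Perf(R_n))$ by applying part~\ref{Phi^cont_of_Nuc^CS_as_a_limit} of Theorem~\ref{th:loc_invar_of_Nuc_CS} and part~\ref{Phi^cont_commute_with_the_limit} of Theorem~\ref{th:local_invar_of_inverse_limits}. Both of these require verifying the hypothesis that $\cU_{\loc}(\Stab(\prodd[n]\cA_n))\to\prodd[n]\cU_{\loc}(\Stab(\cA_n))$ is an isomorphism, i.e.\ that $\cU_{\loc}$ commutes with countable products. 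But Remark~\ref{rmk:U_loc_of_inverse_limits} explicitly states that this commutation ``will be proved in \cite{E}'' -- it is deferred to a forthcoming paper and is \emph{not} among the tools you may use here. Invoking it as ``the analogue for $\cU_{\loc}$ of \cite[Proposition 2.11]{Cor}'' quietly assumes the hard point.

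The paper's proof avoids this entirely by working with parts \ref{Phi^cont_of_Nuc^CS_via_exact_triangle} and \ref{Phi^cont_of_lim^dual_exact_triangle} of the two theorems, which give cofiber sequences
\begin{equation*}
\cU_{\loc}^{\cont}(\Nuc^{CS}(R^{\wedge}_I))\to \cU_{\loc}(\Stab(\textstyle\prodd[n]\Proj^{\omega}\hy R_n))\xto{\id-\cU_{\loc}(F)}\cU_{\loc}(\Stab(\textstyle\prodd[n]\Proj^{\omega}\hy R_n)),
\end{equation*}
\begin{equation*}
\cU_{\loc}^{\cont}(\Nuc(R^{\wedge}_I))\to \cU_{\loc}(\textstyle\prodd[n]\Perf(R_n))\xto{\id-\cU_{\loc}(F)}\cU_{\loc}(\textstyle\prodd[n]\Perf(R_n)).
\end{equation*}
No commutation with products is needed; what is needed is only the \emph{weaker} statement \cite[Proposition 2.10]{Cor} (applied $R$-linearly), which says that the natural functor $\Stab(\prodd[n]\Proj^{\omega}\hy R_n)\to\prodd[n]\Perf(R_n)$ becomes an equivalence after applying $\cU_{\loc}$. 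This identifies the two cofiber sequences and hence the fibers, giving \eqref{eq:U_loc_of_Nuc^CS_and_Nuc}. If you want to salvage your approach, you should replace the product-commutation assumption with this isomorphism of localizing motives and compare the two cofiber presentations rather than trying to compute each side as a limit.
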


\begin{proof}
By the relative version of \cite[Proposition 2.10]{Cor} (over $R$), we have an isomorphism
\begin{equation*}
\cU_{\loc}(\Stab(\prodd[n\geq 1]\Proj^{\omega}\hy R_n))\xto{\sim}\cU_{\loc}(\prodd[n\geq 1]\Perf(R_n)).
\end{equation*}
The isomorphism \eqref{eq:U_loc_of_Nuc^CS_and_Nuc} now follows from Theorem \ref{th:loc_invar_of_Nuc_CS} and Theorem \ref{th:local_invar_of_inverse_limits} applied to the sequence $(D(R_n))_{n\geq 1}.$
\end{proof}

The proof of Theorem \ref{th:loc_invar_of_Nuc_CS} is a modification of the proof of Theorem \ref{th:local_invar_of_inverse_limits}. First, we introduce the following notation. We denote by 
\begin{equation*}
\prodd[n\geq 1]^{\bnd}\Perf(R_n)\simeq \Stab(\prodd[n\geq 1]\Proj^{\omega}\hy R_n)\subset \prodd[n\geq 1]\Perf(R_n)
\end{equation*}
the full subcategory formed by objects $(M_n)_{n\geq 1}$ such that the objects $M_n\in\Perf(R_n)$ have uniformly bounded projective amplitude. Similarly, we introduce the subcategory
\begin{equation*}
\prodd[n\geq 1]^{\bnd}D^b(\Proj^{\omega_1}\hy R_n)\simeq \Stab(\prodd[n\geq 1]\Proj^{\omega_1}\hy R_n)\subset \prodd[n\geq 1]D^b(\Proj^{\omega_1}\hy R_n).
\end{equation*}

For each $n\geq 1$ we denote by $\Calk_{\omega_1}^b(R_n)\subset \Calk_{\omega_1}(R_n)$ the idempotent completion of the essential image of $D^b(\Proj^{\omega_1}\hy R_n).$ We define the full subcategory
\begin{equation*}
\prodd[n\geq 1]^{\bnd}\Calk_{\omega_1}^b(R_n)\subset\prodd[n\geq 1]\Calk_{\omega_1}^b(R_n)
\end{equation*}
to be the idempotent completion of the essential image of $\prodd[n\geq 1]^{\bnd}D^b(\Proj^{\omega_1}\hy R_n).$

\begin{prop}\label{prop:ses_of_bounded_products}
We have a short exact sequence in $\Cat_R^{\perf}:$
\begin{equation*}
0\to \prodd[n\geq 1]^{\bnd}\Perf(R_n)\to \prodd[n\geq 1]^{\bnd}D^b(\Proj^{\omega_1}\hy R_n)\to \prodd[n\geq 1]^{\bnd}\Calk_{\omega_1}^b(R_n)\to 0.
\end{equation*}
\end{prop}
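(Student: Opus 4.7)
The plan is to deduce the bounded short exact sequence from its unbounded counterpart by restricting to bounded subcategories. For each $n\geq 1$, the very definition of $\Calk_{\omega_1}^b(R_n)$ as a Karoubi quotient gives the short exact sequence $0\to\Perf(R_n)\to D^b(\Proj^{\omega_1}\hy R_n)\to\Calk_{\omega_1}^b(R_n)\to 0$ in $\Cat^{\perf}$. Applying \cite[Lemma 5.2]{KW19}, which is invoked earlier in the paper as the fact that countable products preserve Karoubi short exact sequences in $\Cat^{\perf}$, yields
\begin{equation*}
0\to\prodd[n\geq 1]\Perf(R_n)\to\prodd[n\geq 1]D^b(\Proj^{\omega_1}\hy R_n)\to\prodd[n\geq 1]\Calk_{\omega_1}^b(R_n)\to 0.
\end{equation*}

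Next I would identify each of the three bounded categories as a full stable subcategory of the corresponding unbounded product, closed under retracts. For $\prodd[n]^{\bnd}\Perf(R_n)$ and $\prodd[n]^{\bnd}D^b(\Proj^{\omega_1}\hy R_n)$ this is immediate from their definition via the stabilization of a product of additive subcategories. For $\prodd[n]^{\bnd}\Calk_{\omega_1}^b(R_n)$, this uses that $\prodd[n]\Calk_{\omega_1}^b(R_n)$ is idempotent complete (being a product of idempotent complete categories), so the Karoubi completion of the essential image embeds fully faithfully. The first functor in the proposition is then fully faithful and the composition is zero; essential surjectivity of the second functor onto its Karoubi completion is built into the definition of the bounded Calkin.

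The kernel identification goes as follows: if $(M_n)\in\prodd[n]^{\bnd}D^b(\Proj^{\omega_1}\hy R_n)$ vanishes in $\prodd[n]^{\bnd}\Calk_{\omega_1}^b(R_n)$, then via the fully faithful embedding into the unbounded product and the unbounded SES, $(M_n)$ lies in $\prodd[n]\Perf(R_n)$, so each $M_n$ is perfect. Combined with uniform boundedness of projective amplitude in $\Proj^{\omega_1}\hy R_n$, the standard comparison between projective amplitude and $\Tor$-amplitude for perfect modules (recorded in the remark following Proposition \ref{prop:uniformly_bounded_amplitude}) gives a uniform bound on amplitude in $\Proj^{\omega}\hy R_n$, hence $(M_n)\in\prodd[n]^{\bnd}\Perf(R_n)$.

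The main obstacle is the Verdier quotient property: one must show that the induced functor $\prodd[n]^{\bnd}D^b(\Proj^{\omega_1}\hy R_n)/\prodd[n]^{\bnd}\Perf(R_n)\to\prodd[n]^{\bnd}\Calk_{\omega_1}^b(R_n)$ is fully faithful. Since morphism spectra in the bounded versions coincide with those in the unbounded ones, this reduces to the following statement: any morphism $(f_n):(M_n)\to(N_n)$ between objects in $\prodd[n]^{\bnd}D^b(\Proj^{\omega_1}\hy R_n)$ which factors through $\prodd[n]\Perf(R_n)$ must also factor through $\prodd[n]^{\bnd}\Perf(R_n)$. Concretely, starting from a factorization $M_n\to P_n\to N_n$ with each $P_n$ perfect but of a priori unbounded amplitude, I would construct perfect modules $P_n'$ whose amplitudes lie in a uniform range determined by those of $(M_n)$ and $(N_n)$, through which $f_n$ still factors, by a suitable truncation argument. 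This exploits that $R_n=\Kos(R;a_1^n,\dots,a_m^n)$ is a bounded Koszul DG algebra, so that appropriately defined truncations of perfect $R_n$-modules remain perfect.
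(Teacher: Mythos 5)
Your reduction of full faithfulness of the quotient functor to the statement that ``any morphism $(f_n)\colon (M_n)\to (N_n)$ between bounded objects which factors through $\prodd[n]\Perf(R_n)$ must factor through $\prodd[n]^{\bnd}\Perf(R_n)$'' is where the argument breaks. That condition controls only which morphisms of $\prodd[n]^{\bnd}D^b(\Proj^{\omega_1}\hy R_n)$ die in the quotient, i.e.\ it addresses faithfulness on $\pi_0$. It says nothing about fullness: a Verdier quotient $\cA/\cB$ has genuinely new morphisms given by roofs $M\xlto{s} M'\to N$ with $\Cone(s)\in\cB$, and these are not detected by asking which $f\in\Hom_\cA(M,N)$ factor through $\cB$. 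What full faithfulness of $\cA/\cB\to\cA'/\cB'$ actually requires (for $M,N\in\cA$, $\cB=\cB'\cap\cA$) is an isomorphism
\begin{equation*}
\indlim[B\in\cB_{/N}]\Hom_{\cA}(M,B)\;\xto{\sim}\;\indlim[B'\in\cB'_{/N}]\Hom_{\cA'}(M,B'),
\end{equation*}
which is guaranteed by cofinality of the inclusion of comma categories $\cB_{/N}\to\cB'_{/N}$. The paper verifies exactly this cofinality (after phrasing it as the commuting of a square involving right adjoints valued in $\Ind(\cB)$ and $\Ind(\cB')$): for $M\in\prodd[n]^{\bnd}D^b(\Proj^{\omega_1}\hy R_n)$, every map from an unbounded family of perfect complexes to $M$ factors through a uniformly bounded one.

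Your proposed mechanism for achieving bounded amplitude --- truncating the perfect complexes $P_n$ and appealing to ``$R_n$ being a bounded Koszul DG algebra, so that appropriately defined truncations of perfect $R_n$-modules remain perfect'' --- is not justified and is not what the paper does. Truncations of perfect complexes over $R_n$ (or over any ring which is not regular) are typically not perfect, and no precise definition or proof is offered. The paper sidesteps this entirely by observing that one may assume each $M_n\in\Proj^{\omega_1}\hy R_n$ (a direct summand of $\biggplus[\N] R_n$ placed in degree $0$), after which a map $P_n\to M_n$ from a perfect complex automatically factors through a finitely generated free summand by compactness. This makes $(\Proj^{\omega}\hy R_n)_{/M_n}\to(\Perf(R_n))_{/M_n}$ cofinal termwise, with all intermediates concentrated in degree $0$ (so uniform boundedness is free), and the conclusion follows because cofinal functors between filtered categories are closed under infinite products. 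Your opening steps (using \cite[Lemma 5.2]{KW19} for the unbounded SES, identifying the kernel, noting that the bounded categories are full subcategories) are fine and match the paper, but the core of the argument needs the cofinality criterion, not the ``factoring through'' criterion, and the elementary compactness/free-module argument rather than a truncation argument.
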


\begin{proof}
We have an ``unbounded'' version of this short exact sequence by \cite[Lemma 5.2]{KW19}. Hence, it suffices to prove that the functor
\begin{equation*}
(\prodd[n\geq 1]^{\bnd}D^b(\Proj^{\omega_1}\hy R_n))/(\prodd[n\geq 1]^{\bnd}\Perf(R_n))\to (\prodd[n\geq 1]D(R_n)^{\omega_1})/(\prodd[n\geq 1]\Perf(R_n))
\end{equation*}
is fully faithful. By the standard description of morphisms in a quotient category, it suffices to prove that the following square commutes:
\begin{equation*}
\begin{CD}
\prodd[n\geq 1]^{\bnd}D^b(\Proj^{\omega_1}\hy R_n) @>>> \prodd[n\geq 1]D(R_n)^{\omega_1}\\
@VVV @VVV\\
\Ind(\prodd[n\geq 1]^{\bnd}\Perf(R_n)) @>>> \Ind(\prodd[n\geq 1]\Perf(R_n)). 
\end{CD}
\end{equation*}
Here the vertical functors are the right adjoints to the natural functors. In other words, we need to show that for an object $M=(M_n)_{n\geq 1}\in \prodd[n\geq 1]^{\bnd}D^b(\Proj^{\omega_1}\hy R_n)$ the functor
\begin{equation}\label{eq:cofinal_functor_for_products}
(\prodd[n\geq 1]^{\bnd}\Perf(R_n))_{/M}\to (\prodd[n\geq 1]\Perf(R_n))_{/M}
\end{equation}
is cofinal. We may and will assume that $M_n\in \Proj^{\omega_1}\hy R_n$ for $n\geq 1.$ Then the functor
\begin{equation*}
(\Proj^{\omega}\hy R_n)_{/M_n}\to (\Perf(R_n))_{/M_n} 
\end{equation*}
is cofinal for $n\geq 1.$ The class of cofinal functors is closed under infinite products, hence the functor
\begin{equation*}
(\prodd[n\geq 1]\Proj^{\omega}\hy R_n)_{/M}\to (\prodd[n\geq 1]\Perf(R_n))_{/M}
\end{equation*}
is cofinal. It follows that the functor \eqref{eq:cofinal_functor_for_products} is also cofinal, as required.
\end{proof}

Next, we denote by 
\begin{equation*}
\prolim[n]^{\oplax,\bnd}\Perf(R_n)=\bigcup\limits_{k\geq 0}\prolim[n]^{\oplax}\Stab(\Proj^{\omega}\hy R_n)_{[-k,k]}\subset \prolim[n]^{\oplax}\Perf(R_n)
\end{equation*} 
the full subcategory formed by objects $(M_n;\varphi_n)_{n\geq 1},$ such that the objects $M_n\in\Perf(R_n)$ have uniformly bounded projective amplitude. Similarly, we introduce the full subcategory
\begin{equation*}
	\prolim[n]^{\oplax,\bnd}D^b(\Proj^{\omega_1}\hy R_n)=\bigcup\limits_{k\geq 0}\prolim[n]^{\oplax}\Stab(\Proj^{\omega_1}\hy R_n)_{[-k,k]}\subset \prolim[n]^{\oplax}D(R_n)^{\omega_1}.
\end{equation*} 

It will be convenient to consider the full subcategory of the bounded oplax limit formed by objects with projective components
\begin{equation*}
\prolim[n]^{\oplax}\Proj^{\omega_1}\hy R_n\subset 	\prolim[n]^{\oplax,\bnd}D^b(\Proj^{\omega_1}\hy R_n).
\end{equation*}

Our next observation is that this subcategory generates the bounded oplax limit as a stable subcategory.

\begin{prop}\label{prop:oplax_limit_of_proj_generates}
The category $\prolim[n]^{\oplax,\bnd}D^b(\Proj^{\omega_1}\hy R_n)$ is generated by the full subcategory $\prolim[n]^{\oplax}\Proj^{\omega_1}\hy R_n$ as a stable subcategory.
\end{prop}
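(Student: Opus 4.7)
My plan is to induct on the uniform projective amplitude of $(M_n;\varphi_n)\in\prolim[n]^{\oplax,\bnd}D^b(\Proj^{\omega_1}\hy R_n)$. After a shift we may assume each $M_n\in\Stab(\Proj^{\omega_1}\hy R_n)_{[0,b]}$ for a common $b\geq 0$. The base case $b=0$ is tautological: the object already lies in $\prolim[n]^{\oplax}\Proj^{\omega_1}\hy R_n$. For the inductive step I will construct a morphism $(f_n):(P_n;\psi_n)\to(M_n;\varphi_n)$ in the bounded oplax limit with $(P_n;\psi_n)\in\prolim[n]^{\oplax}\Proj^{\omega_1}\hy R_n$ and every $f_n$ surjective on $\pi_0$. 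Then the fiber $(K_n;\chi_n)$, computed levelwise, has uniform projective amplitude in $[0,b-1]$; the inductive hypothesis puts $(K_n;\chi_n)$ in the stable subcategory generated by $\prolim[n]^{\oplax}\Proj^{\omega_1}\hy R_n,$ and the cofiber sequence $(K_n)\to(P_n)\to(M_n)$ puts $(M_n;\varphi_n)$ there as well.

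\textbf{Construction of $(P_n;\psi_n)$.} I build $P_n=\biggplus[I_n]R_n$ together with $f_n$ by induction on $n$, with $I_n$ a countable index set. At stage $n=1$, pick $I_1$ indexing a generating family of $\pi_0(M_1)$ over $\pi_0(R_1)$, set $P_1=\biggplus[I_1]R_1,$ and lift the resulting surjection on $\pi_0$ to $f_1:P_1\to M_1$ by projectivity. Inductively, define $P_{n+1}^{(1)}=\biggplus[I_n]R_{n+1}$ so that the tautological isomorphism $P_{n+1}^{(1)}\otimes_{R_{n+1}}R_n\cong P_n$ provides the first piece of $\psi_n$. Lift the composite $\varphi_n\circ f_n:P_n\to M_{n+1}\otimes_{R_{n+1}}R_n$ through the natural map $M_{n+1}\to M_{n+1}\otimes_{R_{n+1}}R_n$ to a morphism $\tilde f:P_{n+1}^{(1)}\to M_{n+1}$ (the key step, addressed below). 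Then choose a countable set $J_{n+1}$ with a surjection $\biggplus[J_{n+1}]\pi_0(R_{n+1})\twoheadrightarrow\coker(\pi_0\tilde f),$ set $P_{n+1}^{(2)}=\biggplus[J_{n+1}]R_{n+1},$ and lift to $P_{n+1}^{(2)}\to M_{n+1}$. Put $P_{n+1}=P_{n+1}^{(1)}\oplus P_{n+1}^{(2)}$ and $I_{n+1}=I_n\sqcup J_{n+1}$, extend $\psi_n$ by zero on the second summand, and define $f_{n+1}$ as the sum. The oplax compatibility square commutes by construction of $\tilde f$, and $\pi_0 f_{n+1}$ is surjective by construction of $P_{n+1}^{(2)}$.

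\textbf{Main obstacle: the lifting step.} The heart of the argument is producing the lift $\tilde f$. Since $P_{n+1}^{(1)}$ is a countable direct sum of copies of $R_{n+1}$, the spectrum $\Hom_{R_{n+1}}(P_{n+1}^{(1)},X)$ computes as a countable product of copies of $X$, so the obstruction to the lift lies in $\pi_{-1}$ of $F:=\Fiber(M_{n+1}\to M_{n+1}\otimes_{R_{n+1}}R_n)\cong M_{n+1}\otimes_{R_{n+1}}\Fiber(R_{n+1}\to R_n).$ Because $R_{n+1}\to R_n$ is surjective on $\pi_0$, its fiber is connective; tensoring with the connective $M_{n+1}$ keeps $F$ connective, so $\pi_{-1}(F)=0$ and the lift exists. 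The uniform amplitude bound on $(K_n;\chi_n)$ follows from the long exact sequence of the fiber sequence: since $P_n$ is concentrated in degree $0$ and $f_n$ is surjective on $\pi_0$, the cofiber $\Cone(f_n)$ lies in $\Stab(\Proj^{\omega_1}\hy R_n)_{[1,b]},$ so $K_n\simeq\Cone(f_n)[-1]$ has projective amplitude in $[0,b-1]$ uniformly in $n$. Countability is preserved at every stage of the construction, so each $P_n\in\Proj^{\omega_1}\hy R_n$.
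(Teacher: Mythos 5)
Your argument is correct and follows the same overall scheme as the paper's proof---a dimension-shifting induction on the uniform projective amplitude, covering the bottom homology level-wise by a countably generated free module and passing to the fiber to drop the amplitude---but the construction of the covering object and of its transition maps is genuinely different. The paper takes $N_n=\biggplus[\N]R_n$ for every $n$, chooses maps $N_n[a]\to M_n$ surjective on $H_a$ independently at each level, and then produces $\psi_n\colon N_n\to N_{n+1}\tens{R_{n+1}}R_n$ by lifting the composite $N_n[a]\to M_n\to M_{n+1}\tens{R_{n+1}}R_n$ along $N_{n+1}[a]\tens{R_{n+1}}R_n\to M_{n+1}\tens{R_{n+1}}R_n$; this is a plain projective-cover lift using only that $N_n[a]$ is a shift of a free module and that $H_a$-surjectivity is preserved by the (non-derived) base change $\pi_0(R_{n+1})\to\pi_0(R_n)$. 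You instead force $P_{n+1}^{(1)}\otimes_{R_{n+1}}R_n\cong P_n$, lift $\varphi_n\circ f_n$ through $M_{n+1}\to M_{n+1}\otimes_{R_{n+1}}R_n$ by noting that $\Fiber(R_{n+1}\to R_n)$ is connective (i.e.\ $\pi_0$-surjectivity of the Koszul transition maps), and then pad with $P_{n+1}^{(2)}$ to restore $\pi_0$-surjectivity. Both lifting steps are sound. Yours has the minor advantage of producing split-monomorphic transition maps for the cover; the paper's has the minor advantage of never invoking $\pi_0$-surjectivity of $R_{n+1}\to R_n$, so it would carry over to inverse systems of connective rings without that hypothesis. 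For the Koszul system at hand the two routes are on equal footing.
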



\begin{proof}
Consider an object 
\begin{equation*}
M=(M_n;\varphi_n)_{n\geq 1}\in \prolim[n]^{\oplax}\Stab(\Proj^{\omega_1}\hy R)_{[a,b]},
\end{equation*}
where $a\leq b$ are integers.
We prove by induction on $(b-a)$ that $M$ is contained in the stable subcategory generated by $\prolim[n]^{\oplax}\Proj^{\omega_1}\hy R_n.$ For $a=b$ there is nothing to prove.

Suppose that $a<b.$ For each $n\geq 1$ we put $N_n=\biggplus[\N]R_n,$ and choose a map $N_n[a]\to M_n$ inducing an epimorphism on $H_a(-).$ Then for $n\geq 1$ the map $$H_a(N_{n+1}[a]\tens{R_{n+1}}R_n)\to H_a(M_{n+1}\tens{R_{n+1}}R_n)$$ is also an epimorphism, hence we can find a map $\psi_n:N_n\to N_{n+1}\tens{R_{n+1}}R_n$ such that the following square commutes:
\begin{equation*}
\begin{CD}
N_n[a] @>{\psi_n[a]}>> N_{n+1}[a]\tens{R_{n+1}}R_n\\
@VVV @VVV\\  
M_n @>>> M_{n+1}\tens{R_{n+1}}R_n.
\end{CD}
\end{equation*}     
We obtain an object $N=(N_n;\psi_n)_{n\geq 1}\in \prolim[n]^{\oplax}\Proj^{\omega_1}\hy R_n$ with a map $N[a]\to M$ such that
\begin{equation*}
\Cone(N[a]\to M)\in \prolim[n]^{\oplax}\Stab(\Proj^{\omega_1}\hy R)_{[a+1,b]}.
\end{equation*} 
By the induction hypothesis, this object is contained in the stable subcategory generated by $\prolim[n]^{\oplax}\Proj^{\omega_1}\hy R_n.$ This proves the proposition. 
\end{proof}

Next, we observe that a ``bounded'' version of Proposition \ref{prop:oplax_limit_K_equiv} holds. For $k\geq 1,$ denote by $\pi_k:\prolim[n]^{\oplax,\bnd}\Perf(R_n)\to\Perf(R_k)$ the functor, sending an an object $(M_n;\varphi_n)_{n\geq 1}$ to $M_k.$ We denote by $\pi:\prolim[n]^{\oplax,\bnd}\Perf(R_n)\to\prodd[n\geq 1]^{\bnd}\Perf(R_n)$ the functor with components $\pi_k,$ $k\geq 1.$

\begin{prop}\label{prop:oplax_limit_K_equiv_bounded}
The functor $\pi:\prolim[n]^{\oplax,\bnd}\Perf(R_n)\to\prodd[n\geq 1]^{\bnd}\Perf(R_n)$ is a $K$-equivalence over $R.$  
\end{prop}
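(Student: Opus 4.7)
I will mimic the strategy of Proposition \ref{prop:oplax_limit_K_equiv}, carefully verifying that each construction preserves the uniform bound on projective amplitude. The transition functors $F_{k,n}: \Perf(R_k)\to\Perf(R_n)$ between the Koszul DG $R$-algebras preserve the subcategories $\Stab(\Proj^{\omega}\hy R_\bullet)_{[a,b]}$, since they are induced by maps of DG $R$-algebras and send projectives to projectives. Consequently, the key functors appearing in the proof of Proposition \ref{prop:double_quotient_K_equiv} all restrict to the bounded subcategories: the projections $\pi_k: L\to\Perf(R_k)$, the ``finite-support extensions'' $\Theta_k: \Perf(R_k)\to L$ sending $x\mapsto (F_{k,1}(x),\dots,F_{k,k-1}(x),x,0,0,\dots)$ with the natural transition maps, the endofunctors $G_k: L\to L$ that replace the first $k$ components of $(x_n;\varphi_n)$ by $F_{k,\bullet}(x_k)$, and the fibre functor $\Psi_k((x_n;\varphi_n))=\Fiber(\varphi_k)$. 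Each $\Theta_k(x)$ has finite support and inherits the amplitude bounds of $x$, so indeed lies in $L$; similarly for $G_k$ and $\Psi_k$.

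\textbf{The main obstacle.} The telescoping Eilenberg-swindle argument of Proposition \ref{prop:double_quotient_K_equiv} formally requires forming the infinite direct sum of endofunctors $\biggplus[k\geq 1] G_k$. Evaluated at $(x_n;\varphi_n)\in L$, the position-$n$ value is $\biggplus[k\geq n] F_{k,n}(x_k)$, an infinite direct sum of perfect $R_n$-modules that does not lie in $\Perf(R_n)$. Hence the naive adaptation of the swindle does not land in $L$, and a separate mechanism is needed.

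\textbf{Overcoming the obstacle.} I would pass through the auxiliary $\omega_1$-compact categories $L^+ := \prolim[n]^{\oplax,\bnd}D^b(\Proj^{\omega_1}\hy R_n)$ and $P^+ := \prodd[n\geq 1]^{\bnd}D^b(\Proj^{\omega_1}\hy R_n)$, in which infinite direct sums of objects of uniformly bounded projective amplitude still lie in $D^b(\Proj^{\omega_1}\hy R_n)$. There the proof of Proposition \ref{prop:double_quotient_K_equiv} goes through verbatim, producing the $K$-equivalence $\pi^+: L^+\to P^+$. To return to $(L,P)$, I use Proposition \ref{prop:ses_of_bounded_products}, which supplies the short exact sequence $0\to P\to P^+\to \prodd[n\geq 1]^{\bnd}\Calk^b_{\omega_1}(R_n)\to 0$, and I would establish an analogous short exact sequence $0\to L\to L^+\to L^{\Calk}\to 0$ for oplax sections, with $L^{\Calk}$ the natural bounded oplax limit of Calkin categories. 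The same swindle then shows $L^{\Calk}\to\prodd[n\geq 1]^{\bnd}\Calk^b_{\omega_1}(R_n)$ is also a $K$-equivalence, and a $2$-out-of-$3$ argument applied to any localizing invariant forces $\pi: L\to P$ to be a $K$-equivalence as well. The hardest step is the construction of the sequence $0\to L\to L^+\to L^{\Calk}\to 0$: this requires lifting the cofinality argument of Proposition \ref{prop:ses_of_bounded_products} to oplax sections, so that bounded-projective approximations of an $\omega_1$-compact section can be chosen compatibly with the transition data (using a variant of Lemma \ref{lem:Beck_Chevalley_for_oplax} in the spirit of Proposition \ref{prop:quotients_of_oplax_limits}).
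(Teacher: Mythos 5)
Your proposal is built around an obstacle that does not in fact arise, because you have carried over the wrong Eilenberg swindle. Proposition \ref{prop:oplax_limit_K_equiv_bounded} is the bounded analogue of Proposition \ref{prop:oplax_limit_K_equiv} (i.e.\ of \cite[Proposition 4.29]{E24}), not of Proposition \ref{prop:double_quotient_K_equiv}: the functors $\Theta_k,G_k,\Psi$ you list belong to the latter, whose bounded analogue is the later Proposition \ref{prop:double_quotient_K_equiv_bounded}. The swindle appropriate to the statement at hand uses only the single-slot inclusions $\iota_k(M)=(0,\dots,0,M,0,\dots)$ and the \emph{truncation} endofunctors $\Psi_k((M_n;\varphi_n)_n)=(0,\dots,0,M_k,M_{k+1},\dots)$ of $\cA=\prolim[n]^{\oplax,\bnd}\Perf(R_n),$ together with the exact triangles $\Psi_{k+1}\to\Psi_k\to\iota_k\pi_k.$ The two infinite direct sums one must form stay inside $\cA$: at position $p,$ the object $\biggplus[n\geq 1]\iota_n(M_n)$ has value $M_p,$ and $\biggplus[k\geq 1]\Psi_k$ has value $\biggplus[1\leq k\leq p]M_p\cong M_p^{\oplus p},$ a \emph{finite} sum of the same projective amplitude as $M_p.$ No unbounded direct sum appears, and no detour through $D^b(\Proj^{\omega_1}\hy R_n)$ is needed.

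The unbounded sum $\biggplus[k>n]F_{k,n}(x_k)$ that you correctly spot is a genuine obstruction to running the $G_k$-swindle inside $\Perf,$ and that is exactly why Proposition \ref{prop:double_quotient_K_equiv_bounded} is carried out inside the categories $D^b(\Proj^{\omega_1}\hy R_n)$ and why the cofinality machinery you sketch (Propositions \ref{prop:ses_of_bounded_products} and \ref{prop:comparison_of_quotients_of_oplax_limits}) is developed --- but all of that serves that later proposition, not the present one. Note also that a two-out-of-three argument on localizing invariants would only show that $\Phi(\pi)$ is invertible for each $\Phi,$ which is weaker than the explicit $K$-equivalence of Definition \ref{def:K-equivalence} that the statement asserts; the direct swindle above produces the latter.
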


\begin{proof}
The proof is the same as in \cite[Proposition 4.29]{E24}. For completeness, we give the details. We put $\cA=\prolim[n]^{\oplax,\bnd}\Perf(R_n)$ and $\cB=\prodd[n\geq 1]^{\bnd}\Perf(R_n).$

For $k\geq 1,$ we denote by $\iota_k:\Perf(R_k)\to\cA$ the functor given by
\begin{equation*}
\iota_k(M)=(0,\dots,0,M,0,\dots).
\end{equation*} 
Consider the functor $\iota:\cB\to \cA,$ sending an object $(M_n)_{n\geq 1}$ to $\biggplus[n\geq 1]\iota_n(M_n)$ (this coproduct is well-defined). Then we have $\pi\circ\iota\cong \id.$ It remains to show that $[\iota\circ \pi]=[\id]$ in $K_0(\Fun_R(\cA,\cA)).$ 

We define the sequence of $R$-linear endofunctors $\Psi_k:\cA\to\cA,$ $k\geq 1,$ given by
\begin{equation*}
\Psi_k((M_n;\varphi_n)_{n\geq 1})=(0,\dots,0,M_k,M_{k+1},\dots),
\end{equation*} 
where the non-trivial transition maps in the right hand side are given by $\varphi_n,$ $n\geq k.$ Denote by $u_k:\Psi_{k+1}\to\Psi_k$ the natural map. Then we have an exact triangle in $\Fun_R(\cA,\cA):$
\begin{equation*}
\Psi_{k+1}\xto{u_k} \Psi_k\to \iota_k\pi_k,\quad k\geq 1. 
\end{equation*}
The direct sum of these exact triangles is well-defined, hence we obtain exact triangles
\begin{equation*}
\biggplus[k\geq 2]\Psi_k\xto{(u_k)_{k\geq 1}}\biggplus[k\geq 1]\Psi_k\to\iota\pi.
\end{equation*}
This gives the equalities in $K_0(\Fun_R(\cA,\cA)):$
\begin{equation*}
[\iota\circ\pi] = [\biggplus[k\geq 1]\Psi_k] - [\biggplus[k\geq 2]\Psi_k] = [\Psi_1] = [\id],
\end{equation*}
as required.
\end{proof}

We need the following comparison statement about the quotients of bounded and unbounded oplax limits.

\begin{prop}\label{prop:comparison_of_quotients_of_oplax_limits}
The natural functor
\begin{equation*}
(\prolim[n]^{\oplax,\bnd}D^b(\Proj^{\omega_1}\hy R_n))/(\prolim[n]^{\oplax,\bnd}\Perf(R_n))\to (\prolim[n]^{\oplax}D(R_n)^{\omega_1})/(\prolim[n]^{\oplax}\Perf(R_n)) 
\end{equation*}
is fully faithful.
\end{prop}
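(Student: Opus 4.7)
The argument follows the pattern of the proof of Proposition \ref{prop:ses_of_bounded_products}. By the standard description of morphisms in a quotient category, the fully faithfulness in question is equivalent to the commutativity of the square
\begin{equation*}
\begin{CD}
\prolim[n]^{\oplax,\bnd}D^b(\Proj^{\omega_1}\hy R_n) @>>> \prolim[n]^{\oplax}D(R_n)^{\omega_1}\\
@VVV @VVV\\
\Ind(\prolim[n]^{\oplax,\bnd}\Perf(R_n)) @>>> \Ind(\prolim[n]^{\oplax}\Perf(R_n)),
\end{CD}
\end{equation*}
where the vertical arrows are the right adjoints to the natural inclusions. Unwinding, this amounts to the assertion that for every $M\in\prolim[n]^{\oplax,\bnd}D^b(\Proj^{\omega_1}\hy R_n)$ the functor of comma categories
\begin{equation*}
(\prolim[n]^{\oplax,\bnd}\Perf(R_n))_{/M} \to (\prolim[n]^{\oplax}\Perf(R_n))_{/M}
\end{equation*}
is cofinal.

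Both compositions $\prolim[n]^{\oplax,\bnd}D^b(\Proj^{\omega_1}\hy R_n) \to \Ind(\prolim[n]^{\oplax}\Perf(R_n))$ in the square are exact functors between stable categories, so the subcategory of $M$ on which the two compositions agree is a stable subcategory. By Proposition \ref{prop:oplax_limit_of_proj_generates}, it then suffices to establish the cofinality claim for $M\in\prolim[n]^{\oplax}\Proj^{\omega_1}\hy R_n$.

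Fix such an $M=(M_n;\varphi_n)_{n\geq 1}$. The key observation is that tensoring with $R_{n-1}$ over $R_n$ sends summands of $\biggplus[\N]R_n$ to summands of $\biggplus[\N]R_{n-1}$, so $M_n\otimes^L_{R_n}R_{n-1}\in \Proj^{\omega_1}\hy R_{n-1}$. Using Lemma \ref{lem:lax_equalizer_of_filtered_properties} (applied to the comma sequences), we may describe both comma categories as lax equalizers:
\begin{equation*}
(\prolim[n]^{\oplax}\Perf(R_n))_{/M} \simeq \LEq\bigl(\prodd[n\geq 1](\Perf(R_n))_{/M_n}\toto \prodd[n\geq 2](\Perf(R_{n-1}))_{/M_n\otimes^L_{R_n}R_{n-1}}\bigr),
\end{equation*}
and similarly replacing $\Perf$ by $\Proj^{\omega}$ on the left gives a lax equalizer contained in $(\prolim[n]^{\oplax,\bnd}\Perf(R_n))_{/M}$. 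For any $N\in\Proj^{\omega_1}\hy S$ over a connective $\bE_1$-ring $S$, the inclusion $(\Proj^{\omega}\hy S)_{/N}\hookrightarrow(\Perf(S))_{/N}$ is cofinal: since $N$ is $\omega_1$-compact projective, it is a filtered colimit of finitely generated free summands, and perfect complexes are compact in $D(S)$, so any map from a perfect complex factors through a f.g.\ projective. Applying this to $M_n$ and to $M_n\otimes^L_{R_n}R_{n-1}$, we obtain fiberwise cofinal functors on both the $I_n$- and $J_n$-sequences in the lax equalizer description. Lemma \ref{lem:functor_between_oplax_equalizers} then yields that
\begin{equation*}
(\prolim[n]^{\oplax}\Proj^{\omega}\hy R_n)_{/M} \to (\prolim[n]^{\oplax}\Perf(R_n))_{/M}
\end{equation*}
is cofinal. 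Since the source factors through $(\prolim[n]^{\oplax,\bnd}\Perf(R_n))_{/M}$, the required cofinality follows.

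The main point of care is the reduction to $M$ with projective components: one must verify that the two compositions in the right-adjoint square are exact, so that matching them on the generating subcategory $\prolim[n]^{\oplax}\Proj^{\omega_1}\hy R_n$ provided by Proposition \ref{prop:oplax_limit_of_proj_generates} suffices to match them on all of $\prolim[n]^{\oplax,\bnd}D^b(\Proj^{\omega_1}\hy R_n)$. Once this reduction is in place, the structural Lemmas \ref{lem:lax_equalizer_of_filtered_properties} and \ref{lem:functor_between_oplax_equalizers} do the rest of the work.
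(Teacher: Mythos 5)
Your proposal is correct and follows essentially the same route as the paper: reduce to commutativity of the Beck--Chevalley square for the right adjoints; use Proposition~\ref{prop:oplax_limit_of_proj_generates} together with exactness of the two composites to reduce to $M$ with projective components; identify the comma categories with lax equalizers and apply Lemma~\ref{lem:functor_between_oplax_equalizers}. One minor imprecision: the identification of the comma categories with lax equalizers is tautological rather than an application of Lemma~\ref{lem:lax_equalizer_of_filtered_properties} (that lemma is what Lemma~\ref{lem:functor_between_oplax_equalizers} relies on internally), but this does not affect the argument.
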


\begin{proof}
As in the proof of Proposition \ref{prop:ses_of_bounded_products}, it suffices to prove that the following square commutes:
\begin{equation*}
\begin{CD}
\prolim[n]^{\oplax,\bnd} D^b(\Proj^{\omega_1}\hy R_n) @>>> \prolim[n]^{\oplax} D(R_n)\\
@VVV @VVV\\
\Ind(\prolim[n]^{\oplax,\bnd}\Perf(R_n)) @>>> \Ind(\prolim[n]^{\oplax}\Perf(R_n))
\end{CD}
\end{equation*}
Here the vertical functors are the right adjoints to the natural functors. Equivalently, we need to prove that for an object $M=(M_n;\varphi_n)_{n\geq 1}\in \prolim[n]^{\oplax,\bnd} D^b(\Proj^{\omega_1}\hy R_n)$ the functor
\begin{equation*}
(\prolim[n]^{\oplax,\bnd}\Perf(R_n))_{/M}\to (\prolim[n]^{\oplax}\Perf(R_n))_{/M}
\end{equation*}
is cofinal. By Proposition \ref{prop:oplax_limit_of_proj_generates} we may and will assume that $M_n\in\Proj^{\omega_1}\hy R_n$ for $n\geq 1.$ Then it will suffice to prove that the functor
\begin{equation*}
(\prolim[n]^{\oplax}\Proj^{\omega}\hy R_n)_{/M}\to (\prolim[n]^{\oplax}\Perf(R_n))_{/M}
\end{equation*}
is cofinal. This follows directly from Lemma \ref{lem:functor_between_oplax_equalizers}. Indeed, we have equivalences
\begin{equation*}
(\prolim[n]^{\oplax}\Proj^{\omega}\hy R_n)_{/M}\simeq \LEq(\prodd[n\geq 1](\Proj^{\omega}\hy R_n)_{/M_n}\toto \prodd[n\geq 2](\Proj^{\omega}\hy R_{n-1})_{/M_n\tens{R_n}R_{n-1}}),
\end{equation*}
\begin{equation*}
(\prolim[n]^{\oplax}\Perf(R_n))_{/M}\simeq \LEq(\prodd[n\geq 1](\Perf(R_n))_{/M_n}\toto \prodd[n\geq 2](\Perf(R_{n-1}))_{/M_n\tens{R_n}R_{n-1}}),
\end{equation*}
and the functors
\begin{equation*}
(\Proj^{\omega}\hy R_n)_{/M_n}\to (\Perf(R_n))_{/M_n}
\end{equation*}
are cofinal. This proves the proposition.
\end{proof}

We obtain the following statement, analogous to Proposition \ref{prop:quotients_of_oplax_limits}.

\begin{cor}\label{cor:fully_faithful_bounded_oplax_into_Calkins}
The natural functor 
\begin{equation}\label{eq:fully_faithful_embedding_for_bounded_oplax}
(\prolim[n]^{\oplax,\bnd}D^b(\Proj^{\omega_1}\hy R_n))/(\prolim[n]^{\oplax,\bnd}\Perf(R_n))\to\prolim[n]^{\oplax}\Calk_{\omega_1}(R_n)
\end{equation} is fully faithful.
\end{cor}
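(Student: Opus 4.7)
The plan is to obtain the statement by composing two fully faithful functors already established in the excerpt. Consider the natural factorization
\begin{equation*}
(\prolim[n]^{\oplax,\bnd}D^b(\Proj^{\omega_1}\hy R_n))/(\prolim[n]^{\oplax,\bnd}\Perf(R_n)) \xto{\alpha} (\prolim[n]^{\oplax}D(R_n)^{\omega_1})/(\prolim[n]^{\oplax}\Perf(R_n)) \xto{\beta} \prolim[n]^{\oplax}\Calk_{\omega_1}(R_n),
\end{equation*}
whose composition is the functor of interest.

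First, I would observe that $\alpha$ is fully faithful: this is exactly Proposition \ref{prop:comparison_of_quotients_of_oplax_limits}, which has already been proven by reducing to a cofinality statement for comma categories and invoking Lemma \ref{lem:functor_between_oplax_equalizers} (after using Proposition \ref{prop:oplax_limit_of_proj_generates} to reduce to objects with projective components).

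Second, I would apply Proposition \ref{prop:quotients_of_oplax_limits} to the inverse sequence $\cD_n = \Perf(R_n)$ with the choice $\kappa = \omega_1$. Since $\Ind^{\omega_1}(\Perf(R_n)) \simeq D(R_n)^{\omega_1}$, this proposition immediately yields that $\beta$ is fully faithful.

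The composition of two fully faithful functors is fully faithful, which gives the desired conclusion. There is no real obstacle here since all the work has been done in the preceding propositions; the corollary is a straightforward synthesis. The only minor point to verify is that the factorization commutes with the canonical functors, which is immediate from the construction of quotient categories in $\Cat^{\perf}$ and the fact that both inclusions $\prolim[n]^{\oplax,\bnd}\Perf(R_n) \subset \prolim[n]^{\oplax}\Perf(R_n)$ and $\prolim[n]^{\oplax,\bnd}D^b(\Proj^{\omega_1}\hy R_n) \subset \prolim[n]^{\oplax}D(R_n)^{\omega_1}$ are compatible with the projection to each $\Calk_{\omega_1}(R_n)$.
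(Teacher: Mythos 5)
Your proposal is correct and follows the paper's own proof almost verbatim: the paper also factors the functor through a fully faithful functor coming from Proposition \ref{prop:comparison_of_quotients_of_oplax_limits} followed by one coming from Proposition \ref{prop:quotients_of_oplax_limits} applied with $\cD_n = \Perf(R_n)$ and $\kappa = \omega_1$. (In fact your version names the intermediate category as $(\prolim[n]^{\oplax}D(R_n)^{\omega_1})/(\prolim[n]^{\oplax}\Perf(R_n))$, which matches the target of Proposition \ref{prop:comparison_of_quotients_of_oplax_limits} more precisely than the displayed composite in the paper's proof, which writes $D^b(\Proj^{\omega_1}\hy R_n)$ there.)
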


\begin{proof}
Indeed, by Propositions \ref{prop:comparison_of_quotients_of_oplax_limits} and \ref{prop:quotients_of_oplax_limits} the functor \eqref{eq:fully_faithful_embedding_for_bounded_oplax} is the composition of fully faithful functors
\begin{multline*}
(\prolim[n]^{\oplax,\bnd}D^b(\Proj^{\omega_1}\hy R_n))/(\prolim[n]^{\oplax,\bnd}\Perf(R_n))\to (\prolim[n]^{\oplax}D^b(\Proj^{\omega_1}\hy R_n))/(\prolim[n]^{\oplax}\Perf(R_n))\\
\to \prolim[n]^{\oplax}\Calk_{\omega_1}(R_n).\qedhere
\end{multline*}
\end{proof}

We obtain the following key statement, analogous to Corollary \ref{cor:diagonal_arrow}.

\begin{cor}\label{cor:diagonal_arrow_bounded} We use the notation of Corollary \ref{cor:Nuc^CS_resolution}. In particular, we put $P=(\biggplus[\N]R)_I^{\wedge},$ and $\Calk_R^{\topp}(P)=\End_{\Calk(R)}(P)_I^{\wedge}.$
Then there exists a unique $R$-linear functor $\Perf(\Calk_R^{\topp})\to (\prolim[n]^{\oplax,\bnd}D^b(\Proj^{\omega_1}\hy R_n))/(\prolim[n]^{\oplax,\bnd}\Perf(R_n))$ such that the following diagram commutes:
\begin{equation}\label{eq:diagonal_arrow_bounded}
\begin{tikzcd}
\prolim[n]D^b(\Proj^{\omega_1}\hy R_n)\ar[r]\ar[d] & \Perf(\Calk_R^{\topp}(P))\ar[d]\ar[ld]\\
(\prolim[n]^{\oplax,\bnd}D^b(\Proj^{\omega_1}\hy R_n))/(\prolim[n]^{\oplax,\bnd}\Perf(R_n))\ar[r] & \prolim[n]^{\oplax}\Calk_{\omega_1}(R_n).
\end{tikzcd}
\end{equation}
Moreover, in this diagram the diagonal arrow is fully faithful.
\end{cor}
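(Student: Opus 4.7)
The approach follows the same formal pattern as the proof of Corollary \ref{cor:diagonal_arrow}: given a commutative square in which the top horizontal arrow is a homological epimorphism, the bottom horizontal arrow is fully faithful, and the right vertical arrow is also fully faithful, the diagonal lift exists uniquely and is itself fully faithful. The only non-trivial work is to verify each of these three properties for the square in \eqref{eq:diagonal_arrow_bounded}, together with the commutativity of the square itself.

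For the upper horizontal arrow, I would first invoke the chain of equivalences
$$\prolim[n] D^b(\Proj^{\omega_1}\hy R_n) \simeq D^b_{I\hy\compl}(\Proj^{\omega_1}\hy R^{\wedge}_I) \simeq \Perf(\End_R(P))$$
from Subsection \ref{ssec:some_notation}, which identifies the compact generator with $P$. Under this identification the upper horizontal arrow is the functor induced on compact objects by the strongly continuous quotient $D(\End_R(P))\to D(\Calk_R^{\topp}(P))$ of Corollary \ref{cor:Nuc^CS_resolution}, hence is a homological epimorphism. The lower horizontal arrow is fully faithful by Corollary \ref{cor:fully_faithful_bounded_oplax_into_Calkins}. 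For the right vertical arrow I would factor it as
$$\Perf(\Calk_R^{\topp}(P)) \longrightarrow \prolim[n] \Calk_{\omega_1}(R_n) \hookrightarrow \prolim[n]^{\oplax}\Calk_{\omega_1}(R_n),$$
the second functor being fully faithful as the inclusion of cartesian sections into all sections, and the first being fully faithful because, by the very definition $\Calk_R^{\topp}(P) = \prolim[n]\End_{\Calk(R_n)}(R_n\otimes_R P)$, the generator $\Calk_R^{\topp}(P)$ is sent to the system $(R_n\otimes_R P)_{n\geq 1}$ whose endomorphism algebra in $\prolim[n]\Calk_{\omega_1}(R_n)$ recovers $\Calk_R^{\topp}(P)$ tautologically.

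With these three ingredients, commutativity of the square reduces to checking agreement on the generator $P$: both paths produce the system $(R_n \otimes_R P)_{n\geq 1}$ in $\prolim[n]^{\oplax}\Calk_{\omega_1}(R_n)$ with cartesian transition maps (the path via the right vertical arrow factors through the honest limit, while the path via the left vertical arrow takes an object of the actual limit $\prolim[n]D^b(\Proj^{\omega_1}\hy R_n)$ to a cartesian section of the oplax limit, before passing to the quotient and then to the oplax Calkin). The formal conclusion then runs exactly as in Corollary \ref{cor:diagonal_arrow}: the combination of a top homological epimorphism and a bottom fully faithful inclusion yields the diagonal functor uniquely, and fully faithfulness of the right vertical and lower horizontal arrows then forces the diagonal to be fully faithful. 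The main technical obstacle will be the commutativity check, which requires careful tracking of the equivalence $\prolim[n]D^b(\Proj^{\omega_1}\hy R_n) \simeq \Perf(\End_R(P))$ and of the compatibility of the Calkin-quotient constructions with the tensor products $R_n \otimes_R (-)$; once this bookkeeping is in place, the remainder of the argument is purely formal.
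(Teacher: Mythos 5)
Your proposal is correct and follows essentially the same approach as the paper's proof: identify $\prolim[n]D^b(\Proj^{\omega_1}\hy R_n)\simeq\Perf(\End_R(P))$ so that the upper horizontal arrow is a homological epimorphism via Corollary \ref{cor:Nuc^CS_resolution}, invoke Corollary \ref{cor:fully_faithful_bounded_oplax_into_Calkins} for the lower horizontal arrow, and conclude via the standard formal argument (every object of $\Perf(\Calk_R^{\topp}(P))$ is a retract of something coming from the source, so the factorization through the fully faithful image exists uniquely and is itself fully faithful). The paper is terser — it takes the commutativity of the outer square and the full faithfulness of the right vertical arrow as evident and does not spell out the factorization of the right vertical arrow through $\prolim[n]\Calk_{\omega_1}(R_n)$ or the generator-level check — so your additional bookkeeping is a harmless, arguably helpful, elaboration of the same argument.
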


\begin{proof}
This is analogous to Corollary \ref{cor:diagonal_arrow}. Recall that we have equivalences
\begin{equation*}
\prolim[n]D^b(\Proj^{\omega_1}\hy R_n) \simeq D^b_{I\hy\compl}(\Proj^{\omega_1}\hy R^{\wedge}_I)\simeq\Perf(\End_R(P)).
\end{equation*} Hence, the upper horizontal arrow in \eqref{eq:diagonal_arrow_bounded} is a homological epimorphism by Corollary \ref{cor:Nuc^CS_resolution}. The lower horizontal arrow is fully faithful by Corollary \ref{cor:fully_faithful_bounded_oplax_into_Calkins}. This implies the existence and uniqueness of the diagonal arrow making the diagram commutative. Moreover, the diagonal arrow is fully faithful, because the lower horizontal arrow and the right vertical arrows are fully faithful. 
\end{proof}

Now to prove Theorem \ref{th:loc_invar_of_Nuc_CS} we only need the following analogue of Proposition \ref{prop:double_quotient_K_equiv}. We use the same notation for the double quotient as in loc. cit.

\begin{prop}\label{prop:double_quotient_K_equiv_bounded}
The functor
\begin{equation*}
\Psi:\prolim[n]^{\oplax,\bnd}D^b(\Proj^{\omega_1}\hy R_n)\to \prodd[n\geq 1]^{\bnd}D^b(\Proj^{\omega_1}\hy R_n),
\end{equation*}
given by
\begin{equation*}
\quad \Psi((M_n;\varphi_n)_{n\geq 1}) = (\Fiber(\varphi_n:M_n\to M_{n+1}\tens{R_{n+1}}R_n))_{n\geq 1},
\end{equation*}
induces a $K$-equivalence over $R:$
\begin{equation*}
\bbar{\Psi}:(\prolim[n] D^b(\Proj^{\omega_1}\hy R_n))\backslash (\prolim[n]^{\oplax,\bnd}D^b(\Proj^{\omega_1}\hy R_n))/(\prolim[n]^{\oplax,\bnd}\Perf(R_n))\to \prodd[n\geq 1]^{\bnd}\Calk_{\omega_1}^b(R_n).
\end{equation*}
\end{prop}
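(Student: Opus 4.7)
The plan is to adapt the proof of Proposition \ref{prop:double_quotient_K_equiv} to the bounded setting, keeping the same overall structure: construct an explicit $R$-linear right inverse $\bbar\Theta$ to $\bbar\Psi$ and then show $\bbar\Theta\circ\bbar\Psi$ is $K_0$-equivalent to the identity via a telescope argument. Throughout, let $\cD$ denote the target double quotient of $\bbar\Psi$.

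First, I will define $\Theta_k(M)=(F_{k,1}(M),\dots,F_{k,k-1}(M),M,0,\dots)$ for $M\in\Perf(R_k)$ as the right adjoint to the $k$-th projection, and set $\Theta((M_n)_{n\geq 1})=\biggplus[k\geq 1]\Theta_k(M_k)$ on a sequence in $\prodd[n\geq 1]^{\bnd}D^b(\Proj^{\omega_1}\hy R_n)$. Since base change along the DG algebra maps $R_m\to R_k$ preserves $\Proj^{\omega_1}$ and therefore preserves projective amplitude, a uniform projective amplitude bound on the input gives a uniform bound on each component of $\Theta((M_n))$, so the countable coproduct is well-defined inside $\prolim[n]^{\oplax,\bnd}D^b(\Proj^{\omega_1}\hy R_n)$, and $\Psi\circ\Theta\cong\id$ follows from the same fiber computation as in the original proof. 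Next, I will show that $\Theta$ descends to $\bbar\Theta:\prodd[n\geq 1]^{\bnd}\Calk_{\omega_1}^b(R_n)\to\cD$: for $(M_n)\in\prodd[n\geq 1]^{\bnd}\Perf(R_n)$, the image of $\Theta((M_n))$ in $\prolim[n]^{\oplax}\Calk_{\omega_1}(R_n)$ lies in the essential image of the diagonal arrow of Corollary \ref{cor:diagonal_arrow_bounded}; since that diagonal factors through $\Perf(\Calk_R^{\topp}(P))$, which is the target of the homological epimorphism appearing as the upper horizontal arrow of \eqref{eq:diagonal_arrow_bounded}, the class of $\Theta((M_n))$ in the one-sided quotient $(\prolim[n]^{\oplax,\bnd}D^b(\Proj^{\omega_1}\hy R_n))/(\prolim[n]^{\oplax,\bnd}\Perf(R_n))$ lies in the image of $\prolim[n]D^b(\Proj^{\omega_1}\hy R_n)$, hence vanishes in $\cD$. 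Combined with Proposition \ref{prop:ses_of_bounded_products} this yields the well-defined $\bbar\Theta$, and $\bbar\Psi\circ\bbar\Theta\cong\id$ is automatic.

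Finally, I will establish $[\bbar\Theta\circ\bbar\Psi]=[\id]$ in $K_0(\Fun_R(\cD,\cD))$ using the endofunctors $G_k((M_n;\varphi_n))=(F_{k,1}(M_k),\dots,F_{k,k-1}(M_k),M_k,M_{k+1},\dots)$, noting $G_1=\id$. These preserve the bounded oplax limit (same projective amplitude bound), and the sum $\biggplus[k\geq 2]G_k$ defines an endofunctor because for each fixed $n$ its $n$-th component is a countable sum of base changes $F_{k,n}(M_k)$, all within the uniform amplitude range. Summing the exact triangles $G_k\to G_{k+1}\to\Theta_k(\Psi(-)_k)$ over $k\geq 1$ produces $\biggplus[k\geq 1]G_k\to\biggplus[k\geq 2]G_k\to\Theta\circ\Psi$ in the bounded endofunctor category, and after descending to $\cD$ this gives $[\bbar\Theta\circ\bbar\Psi]=[\id\oplus\bbar G]-[\bbar G]=[\id]$, where $\bbar G$ denotes the class of $\biggplus[k\geq 2]G_k$. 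The main obstacle, and the step most sensitive to the bounded restriction, is verifying that $\biggplus[k\geq 2]G_k$ preserves the stable subcategory generated by $\prolim[n]D^b(\Proj^{\omega_1}\hy R_n)$ and $\prolim[n]^{\oplax,\bnd}\Perf(R_n)$, so that $\bbar G$ makes sense: the diagonal part is immediate since $G_k(x)\cong x$ for $x\in\prolim[n]D^b(\Proj^{\omega_1}\hy R_n)$, while the perfect-oplax part is handled exactly as in the original argument, by constructing a compatible map $x\oplus\bbar G(x)\to\Theta((x_n))$ with cofiber in $\prolim[n]^{\oplax,\bnd}\Perf(R_n)$ and invoking the already-established vanishing of $\Theta$ on perfect sequences in $\cD$.
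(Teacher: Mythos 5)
Your overall scaffolding matches the paper's: define $\Theta$ as a sum of adjoints $\Theta_k$, verify $\Psi\circ\Theta\cong\id$, descend to $\bbar\Theta$, and then run the telescope argument with the $G_k$'s. The last part (the $G_k$ exact triangles and the $K_0$-calculation) is essentially identical to the paper, which itself just refers back to the proof of Proposition~\ref{prop:double_quotient_K_equiv}. Where you genuinely diverge — and where there is a gap — is in establishing that $\Theta$ kills $\prodd[n\geq 1]^{\bnd}\Perf(R_n)$ after passing to $\cD$.

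You try to transport the unbounded argument verbatim: observe that the image of $\Theta(M)$ in $\prolim[n]^{\oplax}\Calk_{\omega_1}(R_n)$ lands in the upper-right corner of the diagonal-arrow square and then invoke the homological epimorphism. In the unbounded setting this works because the upper-right corner of diagram~\eqref{eq:diagonal_arrow} is the full strict limit $\prolim[n]\Calk_{\kappa}^{\cont}(\cC_n)$, and membership there follows immediately from the fact that the transition maps of $\Theta(M)$ become isomorphisms in Calkin (the fibers are compact). In the bounded diagram~\eqref{eq:diagonal_arrow_bounded}, however, the upper-right corner is $\Perf(\Calk_R^{\topp}(P))$, which is the \emph{much smaller} idempotent-complete stable subcategory of $\prolim[n]\Calk_{\omega_1}(R_n)$ generated by the single object $P$. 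Knowing the image of $\Theta(M)$ lies in the strict limit of Calkins (which is all the soft argument gives) does not place it in $\Perf(\Calk_R^{\topp}(P))$; your assertion that it does — phrased as lying ``in the essential image of the diagonal arrow,'' which as written is even a type mismatch since the diagonal lands in the one-sided quotient — is exactly the claim that needs proof, and you supply none. (Relatedly, ``that diagonal factors through $\Perf(\Calk_R^{\topp}(P))$'' is backwards: the diagonal is a functor \emph{out of} $\Perf(\Calk_R^{\topp}(P))$.)

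The paper circumvents this entirely with a direct construction: reduce to $M_n\cong R_n^{\oplus k_n}$ free, exhibit the canonical map $u_n:(R^{\wedge}_I)^{\oplus k_n}\to\Theta_n(M_n)$ inside $\prolim[n]D^b(\Proj^{\omega_1}\hy R_n)\simeq D^b_{I\hy\compl}(\Proj^{\omega_1}\hy R^{\wedge}_I)$ whose cone lies in $\prolim[n]^{\oplax,\bnd}\Perf(R_n)$, and take the (well-defined, uniformly bounded) direct sum $u:(\biggplus[n\geq 1]R^{\oplus k_n})^{\wedge}_I\to\Theta(M)$. Since both $(\biggplus R^{\oplus k_n})^{\wedge}_I$ and $\Cone(u)$ are killed in $\cD$, so is $\Theta(M)$. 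This avoids the Calkin picture and the diagonal arrow altogether, and is precisely the extra work your version leaves implicit. Your conclusion is correct and the rest of your proof is sound, but this step as you wrote it does not constitute a proof.
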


\begin{proof}
The proof is almost completely analogous to the proof of Proposition \ref{prop:double_quotient_K_equiv}, with some changes which we explain below. We put
\begin{equation}\label{eq:D_for_the_double_quotient_bounded}
\cD = (\prolim[n] D^b(\Proj^{\omega_1}\hy R_n))\backslash (\prolim[n]^{\oplax,\bnd}D^b(\Proj^{\omega_1}\hy R_n))/(\prolim[n]^{\oplax,\bnd}\Perf(R_n)).
\end{equation}

For $k\geq 1$ consider the functor
\begin{equation*}
\Theta_k:D^b(\Proj^{\omega_1}\hy R_k)\to \prolim[n]^{\oplax,\bnd} D^b(\Proj^{\omega_1}\hy R_n),
\end{equation*}
given by
\begin{equation*}
\quad \Theta_k(M)=(M\tens{R_k}R_1,\dots,M\tens{R_k}R_{k-1},M,0,\dots).
\end{equation*}
Again, $\Theta_k$ is the right adjoint to the projection functor. We define the functor
\begin{equation*}
\Theta:\prodd[n\geq 1]^{\bnd}D^b(\Proj^{\omega_1}\hy R_n)\to \prolim[n]^{\oplax,\bnd} D^b(\Proj^{\omega_1}\hy R_n),\quad \Theta((M_n)_{n\geq 1})=\biggplus[n\geq 1]\Theta_n(M_n).
\end{equation*}
Here the infinite direct sum is well-defined. As in the proof of Proposition \ref{prop:double_quotient_K_equiv}, we have $\Psi\circ\Theta\cong \id.$ 

We need to show that for $M=(M_n)_{n\geq 1}\in\prodd[n\geq 1]^{\bnd}\Perf(R_n),$ the image of the object $\Theta(M)$ in $\cD$ is zero.
For this we may and will assume that each $M_n$ is a finitely generated free module, i.e. $M_n\cong R_n^{\oplus k_n},$ for some $k_n\geq 0.$ Identifying the category $\prolim[n]D^b(\Proj^{\omega_1}\hy R_n)$ with $D^b_{I\hy\compl}(\Proj^{\omega_1}\hy R^{\wedge}_I),$ consider the natural morphisms
\begin{equation*}
u_n:(R^{\wedge}_I)^{\oplus k_n}\to \Theta_n(M_n),\quad n\geq 1.
\end{equation*}
The first $n$ components of the object $\Cone(u_n)$ vanish, and we have $\Cone(u_n)\in \prolim[n]^{\oplax,\bnd}\Perf(R_n).$ Taking the direct sum of the morphisms $u_n$ (which is well-defined), we obtain the morphism
\begin{equation*}
u:(\biggplus[n\geq 1]R^{\oplus k_n})^{\wedge}_I\to \Theta(M),
\end{equation*}
such that $\Cone(u)\in  \prolim[n]^{\oplax,\bnd}\Perf(R_n).$ This proves the vanishing of the image of $\Theta(M)$ in $\cD,$ as required.

Using Proposition \ref{prop:ses_of_bounded_products}, we see that the functor $\Theta$ induces the functor
\begin{equation*}
\bbar{\Theta}:\prodd[n\geq 1]^{\bnd}\Calk_{\omega_1}^b(R_n)\to \cD,
\end{equation*}
and we have $\bbar{\Psi}\circ\bbar{\Theta}\cong\id.$

Now, it remains to prove that
\begin{equation*}
[\bbar{\Theta}\circ\bbar{\Psi}]=[\id]\text{ in }K_0(\Fun_R(\cD,\cD)).
\end{equation*}
For $k\geq 1$ consider the endofunctor
\begin{equation*}
G_k:\prolim[n]^{\oplax,\bnd} D^b(\Proj^{\omega_1}\hy R_n)\to \prolim[n]^{\oplax,\bnd} D^b(\Proj^{\omega_1}\hy R_n),
\end{equation*}
given by
\begin{equation*}
G_k(M_1,M_2,\dots) = (M_k\tens{R_k}R_1,\dots,M_k\tens{R_k}R_{k-1},M_k,M_{k+1},\dots).
\end{equation*}
As in the proof of Proposition \ref{prop:double_quotient_K_equiv}, the direct sum $\biggplus[k\geq 1]G_k$ is well-defined,
and we have an exact triangle
\begin{equation*}
\biggplus[k\geq 1]G_k\to \biggplus[k\geq 2] G_k\to \Theta\circ\Psi
\end{equation*}
in the category $\Fun_R(\prolim[n]^{\oplax,\bnd}D^b(\Proj^{\omega_1}\hy R_n),\prolim[n]^{\oplax,\bnd}D^b(\Proj^{\omega_1}\hy R_n)).$ 
The rest of the proof is the same as for Proposition \ref{prop:double_quotient_K_equiv}.


\end{proof}

\begin{proof}[Proof of Theorem \ref{th:loc_invar_of_Nuc_CS}] The proof is now completely analogous to the proof of Theorem \ref{th:local_invar_of_inverse_limits}.
By Corollary \ref{cor:Nuc^CS_resolution}, we have a short exact sequence
\begin{equation*}
0\to \Nuc^{CS}(R^{\wedge}_I)\to D(\End_R(P))\to D(\Calk_R^{\topp}(P))\to 0,
\end{equation*}
where $P=(\biggplus[\N]R)^{\wedge}_I.$ The category $\Perf(\End_R(P))\simeq D^b_{I\hy\compl}(\Proj^{\omega_1}\hy R^{\wedge}_I)$ has a countable coproduct of copies of any object, hence its additive invariants vanish. We obtain an isomorphism
\begin{equation*}
\Phi^{\cont}(\Nuc^{CS}(R^{\wedge}_I))\cong \Omega\Phi(\Perf(\Calk_R^{\topp}(P))).
\end{equation*}
From Corollary \ref{cor:diagonal_arrow_bounded} we obtain a short exact sequence
\begin{equation*}
0\to \Perf(\Calk_R^{\topp}(P))\to (\prolim[n]^{\oplax,\bnd} D^b(\Proj^{\omega_1}\hy R_n))/(\prolim[n]^{\oplax,\bnd} \Perf(R_n))\to \cD\to 0,
\end{equation*}
where $\cD$ is the double quotient from \eqref{eq:D_for_the_double_quotient_bounded}. Since the category $\prolim[n]^{\oplax,\bnd} D^b(\Proj^{\omega_1}\hy R_n)$ also has a countable coproduct of copies of any object, Proposition \ref{prop:oplax_limit_K_equiv_bounded}  implies the isomorphism \begin{equation*}
	\Phi((\prolim[n]^{\oplax,\bnd} D^b(\Proj^{\omega_1}\hy R_n))/(\prolim[n]^{\oplax,\bnd} \Perf(R_n)))\cong \Sigma\Phi(\prodd[n\geq 1]^{\bnd}\Perf(R_n)),
\end{equation*}
Using Propositions \ref{prop:ses_of_bounded_products} and \ref{prop:double_quotient_K_equiv_bounded}, we obtain the isomorphisms
\begin{equation*}
\Phi(\cD)\cong \Phi(\prodd[n\geq 1]^{\bnd}\Calk_{\omega_1}^b(R_n))\cong \Sigma\Phi(\prodd[n\geq 1]^{\bnd}\Perf(R_n)).
\end{equation*}
Here we use the fact that the category $\prodd[n\geq 1]^{\bnd}D^b(\Proj^{\omega_1}\hy R_n)$ also has a countable coproduct of copies of any object.

Now, the following square naturally commutes
\begin{equation*}
\begin{CD}
\Omega\Phi((\prolim[n]^{\oplax,\bnd} D^b(\Proj^{\omega_1}\hy R_n))/(\prolim[n]^{\oplax,\bnd} \Perf(R_n))) @>{\sim}>> \Phi(\prodd[n\geq 1]^{\bnd}\Perf(R_n))\\
@VVV @V{\id-\Phi(F)}VV\\
\Omega\Phi(\cD) @>{\sim}>>  \Phi(\prodd[n\geq 1]^{\bnd}\Perf(R_n)),
\end{CD}
\end{equation*}
where $F$ is as in the formulation of the theorem. Therefore, we obtain the isomorphisms
\begin{multline*}
\Phi^{\cont}(\Nuc^{CS}(R^{\wedge}_I))\cong \Omega\Phi(\Perf(\Calk_R^{\topp}(P)))\\
\cong \Fiber(\Phi(\prodd[n\geq 1]^{\bnd}\Perf(R_n))\to \Phi(\prodd[n\geq 1]^{\bnd}\Perf(R_n))).
\end{multline*}
This proves \ref{Phi^cont_of_Nuc^CS_via_exact_triangle}, which implies \ref{Phi^cont_of_Nuc^CS_as_a_limit}.
\end{proof}

\subsection{Applications to Hochschild homology}
\label{ssec:applications_to_HH}

We mention surprising applications to Hochschild homology, which can be deduced from Theorems \ref{th:local_invar_of_inverse_limits} and \ref{th:loc_invar_of_Nuc_CS}, using the results of C\'ordova Fedeli \cite{Cor}. Recall that for a connective $\bE_1$-ring $S$ there is a notion of an almost perfect right $S$-module \cite[Definition 7.2.4.10, Proposition 7.2.4.11]{Lur17}. One of the equivalent characterizations is the following: $M\in D(S)$ is almost perfect if $M$ is bounded below (homologically) and for any $n$ there exists a perfect $S$-module $N$ with a map $N\to M$ such that $\Cone(N\to M)\in D_{\geq n}(S).$ 

\begin{prop}\label{prop:THH_of_products_over_good_base}
Let $\mk$ be a connective $\bE_{\infty}$-ring such that $\mk$ is almost perfect over $\THH(\mk).$ Then for any family of $\mk$-linear additive $\infty$-categories $(\cA_j)_{j\in J},$ we have the isomorphisms
\begin{equation}\label{eq:THH_products_of_k_linear_additive}
\THH(\prodd[j]\Stab(\cA_j)/\mk)\cong\THH(\Stab(\prodd[j]\cA_j)/\mk)\cong \prodd[j]\THH(\Stab(\cA_j)/\mk).
\end{equation}
\end{prop}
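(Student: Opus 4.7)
The first isomorphism in \eqref{eq:THH_products_of_k_linear_additive} is already implicit in results cited in the paper. The relative version of \cite[Proposition 2.10]{Cor} over $\mk$, invoked in the proof of Corollary~\ref{cor:U_loc_of_Nuc^CS_and_Nuc}, asserts that the canonical comparison functor $\Stab(\prodd[j]\cA_j)\to\prodd[j]\Stab(\cA_j)$ in $\Cat_{\mk}^{\perf}$ induces an equivalence after applying the universal $\mk$-linear localizing invariant $\cU_{\loc}$. Applying the localizing invariant $\THH(-/\mk)$ to this comparison yields the first isomorphism.

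For the second isomorphism, I plan a base-change-to-the-sphere argument. For any $\mk$-linear dualizable category $\cC$, the simplicial/cyclic description of $\THH$ gives the standard formula
\[
\THH(\cC/\mk)\simeq \THH(\cC/\bS)\otimes_{\THH(\mk)}\mk,
\]
where $\THH(\mk)\to\mk$ is the canonical augmentation and the $\THH(\mk)$-module structure on $\THH(\cC/\bS)$ comes from $\mk$-linearity (this is the base-change identity for cyclic tensor powers $A^{\otimes_{\mk}S^{1}}\simeq A^{\otimes_{\bS}S^{1}}\otimes_{\mk^{\otimes_{\bS}S^{1}}}\mk^{\otimes_{\mk}S^{1}}$, combined with $\mk^{\otimes_{\mk}S^{1}}\simeq\mk$ and $\mk^{\otimes_{\bS}S^{1}}=\THH(\mk)$). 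Invoking \cite[Proposition 2.10]{Cor} once more, this time for the localizing invariant $\THH(-/\bS)$ over $\bS$, gives the commutation $\THH(\prodd[j]\Stab(\cA_j)/\bS)\simeq\prodd[j]\THH(\Stab(\cA_j)/\bS)$ for the specific products at hand. The task is thereby reduced to showing that $\mk\otimes_{\THH(\mk)}-$ commutes with the product $\prodd[j]\THH(\Stab(\cA_j)/\bS)$.

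This last step is where the hypothesis on $\mk$ enters, and it is the main obstacle. The ring $\THH(\mk)$ is connective since $\mk$ is, and each $\THH(\Stab(\cA_j)/\bS)$ is connective because $\Stab(\cA_j)$ admits a natural connective $t$-structure with $\cA_j$ as its heart; in particular, the product $\prodd[j]\THH(\Stab(\cA_j)/\bS)$ is connective. Since $\mk$ is almost perfect over $\THH(\mk)$, for every integer $n$ there exists a perfect $\THH(\mk)$-module $P_n$ together with a map $P_n\to\mk$ whose fiber lies in $D_{\geq n+1}(\THH(\mk))$. Tensoring with a perfect module commutes with arbitrary limits, so $P_n\otimes_{\THH(\mk)}-$ commutes with $\prodd[j](-)$; and on homotopy groups in the range $\leq n$, tensoring with $P_n$ agrees with tensoring with $\mk$, uniformly in $j$, by the uniform connectivity of the factors. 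Assembling these degree-wise identifications into an equivalence at the level of spectra is the delicate point, and it is precisely what the almost-perfect hypothesis allows one to do.
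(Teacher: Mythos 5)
Your proposal follows essentially the same route as the paper: the first isomorphism via the $\cU_{\loc}$-equivalence $\Stab(\prodd_j\cA_j)\to\prodd_j\Stab(\cA_j)$ of \cite[Proposition 2.10]{Cor}, and the second via the base-change formula $\THH(\cC/\mk)\cong\THH(\cC)\otimes_{\THH(\mk)}\mk$ together with the almost-perfect hypothesis. Two points should be tightened. First, the citation for the $\bS$-linear commutation is off: \cite[Proposition 2.10]{Cor}, applied to $\THH(-/\bS)$, only gives $\THH(\prodd_j\Stab(\cA_j)/\bS)\cong\THH(\Stab(\prodd_j\cA_j)/\bS)$; the actual identification $\THH(\Stab(\prodd_j\cA_j)/\bS)\cong\prodd_j\THH(\Stab(\cA_j)/\bS)$, which you need, is \cite[Proposition 2.15]{Cor} (this is precisely what the paper invokes at this stage). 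Second, your last paragraph sets up the truncation argument correctly (perfect $P_n\to\mk$ with $(n{+}1)$-connective fiber, connectivity of the factors) but ends by asserting rather than concluding; the missing sentence is that the canonical map $\bigl(\prodd_j N_j\bigr)\otimes_{\THH(\mk)}\mk\to\prodd_j\bigl(N_j\otimes_{\THH(\mk)}\mk\bigr)$ therefore induces an isomorphism on $\pi_i$ for every $i\leq n$ and every $n$ (comparing both sides with tensoring by $P_n$, using that both $\Cone(P_n\to\mk)$ is $(n{+}1)$-connective and products of $(n{+}1)$-connective spectra are $(n{+}1)$-connective), hence is an equivalence. With those repairs the argument matches the paper's.
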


\begin{proof}
Recall that for a small $\mk$-linear stable category $\cC$ we have an isomorphism
\begin{equation*}
\THH(\cC/\mk)\cong \THH(\cC)\tens{\THH(\mk)}\mk.
\end{equation*}
Now the first isomorphism in \eqref{eq:THH_products_of_k_linear_additive} follows from \cite[Proposition 2.10]{Cor}. The second isomorphism in the case $\mk=\bS$ is given by \cite[Proposition 2.15]{Cor}. In general, using the assumption that $\mk$ is almost perfect over $\THH(\mk)$ we obtain
\begin{multline*}
\THH(\Stab(\prodd[j]\cA_j)/\mk)\cong \THH(\Stab(\prodd[j]\cA_j))\tens{\THH(\mk)}\mk\cong (\prodd[j]\THH(\Stab(\cA_j)))\tens{\THH(\mk)}\mk\\
\cong \prodd[j](\THH(\Stab(\cA_j))\tens{\THH(\mk)}\mk)\cong \prodd[j]\THH(\Stab(\cA_j)/\mk).
\end{multline*}
This proves the proposition.
\end{proof}

Using this statement, we can compute the Hochschild homology of the categories of nuclear modules in many situations. In the following corollary we consider two examples. We write $\HH$ instead of $\HH^{\cont}$ for simplicity.

\begin{cor}\label{cor:HH_Nuc_examples}
\begin{enumerate}[label=(\roman*),ref=(\roman*)]
	\item For a prime $p$ we have
	\begin{equation}\label{eq:HH_Nuc_Z_p}
	\HH(\Nuc(\Z_p)/\Z)\cong \HH(\Nuc^{CS}(\Z_p)/\Z)\cong\Z_p.
	\end{equation} \label{HH_Nuc_Z_p}
	\item Let $\mk$ be a field such that either $cha \mk=p$ and $[\mk:\mk^p]<\infty,$ or $\cha \mk=0$ and $\trdeg(\mk/\Q)<\infty.$ Then we have
	\begin{equation}\label{eq:HH_Nuc_k[[x]]}
	\HH_n(\Nuc(\mk[[x]])/\mk)\cong \HH_n(\Nuc^{CS}(\mk[[x]])/\mk)\cong\begin{cases}
	\mk[[x]] & \text{for }n=0;\\
	\mk[[x]]\cdot dx & \text{for }n=1;\\
	0 & \text{else.}	
	\end{cases}
	\end{equation} \label{HH_Nuc_k[[x]]}
\end{enumerate}
\end{cor}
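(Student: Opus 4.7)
The plan is to reduce both parts to an inverse-limit computation of Hochschild homology of discrete quotient rings, and then evaluate this limit using a cotangent-complex / HKR-type argument. First, since $\HH(-/\mk):\Cat_{\mk}^{\perf}\to D(\mk)$ is a finitary localizing invariant, Corollary \ref{cor:U_loc_of_Nuc^CS_and_Nuc} immediately yields the isomorphism $\HH(\Nuc(R^{\wedge}_I)/\mk)\cong\HH(\Nuc^{CS}(R^{\wedge}_I)/\mk)$ in both cases, so it suffices to treat the $\Nuc^{CS}$ side.

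Next I would verify the hypothesis of Theorem \ref{th:loc_invar_of_Nuc_CS}(ii). By Proposition \ref{prop:THH_of_products_over_good_base}, the functor $\HH(-/\mk)$ commutes with the relevant infinite products as soon as $\mk$ is almost perfect over $\THH(\mk)$. For $\mk=\Z$ this follows from B\"okstedt's computation of $\pi_*\THH(\Z)$ (each homotopy group is finitely generated over $\Z$), and for a field $\mk$ satisfying the stated finiteness conditions, the analogous finiteness of $\pi_*\THH(\mk)$ is well-known. Applying Theorem \ref{th:loc_invar_of_Nuc_CS}(ii) together with Lemma \ref{lem:pro_equivalence_noetherian} (both $\Z_p$ and $\mk[[x]]$ are noetherian) then gives
\[
	\HH(\Nuc^{CS}(R^{\wedge}_I)/\mk)\cong\prolim[n]\HH(R/I^n/\mk)
\]
in each case.

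The remaining work lies in evaluating this pro-system. For case (i), I would use the identification $\Z/p^n\tens{\Z}^{\bL}\Z/p^n\cong\Lambda_{\Z/p^n}(\xi_n)$ with $\xi_n$ in degree $1$ the Koszul class of $p^n$; this exhibits $\HH(\Z/p^n/\Z)$ as $\Tor_*^{\Lambda_{\Z/p^n}(\xi_n)}(\Z/p^n,\Z/p^n)$, with $\pi_0=\Z/p^n$ and all higher homotopy groups $p^n$-torsion. The key structural observation is that the transition map $\Z/p^{n+1}\to\Z/p^n$ sends $\xi_{n+1}$ to $p\cdot\xi_n$ (because $p^{n+1}=p\cdot p^n$), hence the $k$-th weight piece of the HKR-type filtration transforms by a factor of $p^k$. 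A Mittag-Leffler argument then shows that for every $k\geq 1$, both $\prolim$ and $\bR^1\prolim$ of the $k$-th weight piece vanish, so the only surviving contribution is $\prolim[n]\Z/p^n=\Z_p$ in degree zero.

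Case (ii) is analogous: the cotangent complex $L_{(\mk[x]/x^n)/\mk}$ has a $dx$-part and a $[x^n]$-part connected by multiplication by $nx^{n-1}$, and the HKR-type filtration on $\HH(\mk[x]/x^n/\mk)$ is built from its derived symmetric powers. The transition maps multiply the class $[x^n]$ by $x$ (mirroring $x^{n+1}=x\cdot x^n$), so every summand involving at least one factor of $[x^n]$ (in any weight) is pro-zero, as is every pure-$dx$ summand of weight $\geq 2$. The only surviving pieces are the weight-zero contribution $\mk[[x]]$ in degree $0$ and the pure weight-one $dx$-contribution $\mk[[x]]\cdot dx$ in degree $1$, matching the stated answer. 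The principal technical obstacle throughout is making the pro-vanishing of these higher weight pieces precise and uniform; this amounts to careful bookkeeping with the multiplicative structure on $\HH$ and the transformation laws of the fundamental classes $[p^n]$ and $[x^n]$ under the transition maps.
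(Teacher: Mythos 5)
Your proposal follows essentially the same route as the paper: reduce to $\Nuc^{CS}$ via Corollary~\ref{cor:U_loc_of_Nuc^CS_and_Nuc}, check that $\mk$ is almost perfect over $\THH(\mk)$ so that Proposition~\ref{prop:THH_of_products_over_good_base} applies, invoke Theorem~\ref{th:loc_invar_of_Nuc_CS} together with Lemma~\ref{lem:pro_equivalence_noetherian} to reduce to $\prolim[n]\HH(R/I^n/\mk)$, and then evaluate the pro-system. The one local difference is in case (i): rather than your weight-by-weight $p^j$-scaling and Mittag-Leffler bookkeeping, the paper simply notes that the transition map $L_{(\Z/p^{2n})/\Z}\to L_{(\Z/p^n)/\Z}$ on cotangent complexes is zero (it sends the generator $dp^{2n}$ to $p^n\,dp^n=0$), so all positive-degree maps $\HH_k(\Z/p^{2n})\to\HH_k(\Z/p^n)$ vanish at once; both arguments are correct, and yours is a fine substitute for the ``elementary analysis'' the paper leaves implicit in case (ii).
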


\begin{proof}
Both in \eqref{eq:HH_Nuc_Z_p} and in \eqref{eq:HH_Nuc_k[[x]]} the first isomorphism follows from Corollary \ref{cor:U_loc_of_Nuc^CS_and_Nuc}.

We prove \ref{HH_Nuc_Z_p}. Recall that $\THH_0(\Z)=\Z$ and for $n>0$ the abelian group $\THH_n(\Z)$ is finite. It follows easily that $\Z$ is almost perfect over $\THH(\Z).$ Using Theorem \ref{th:loc_invar_of_Nuc_CS} and Proposition \ref{prop:THH_of_products_over_good_base} we obtain
\begin{equation*}
\HH(\Nuc^{CS}(\Z_p)/\Z)\cong\prolim[n]\HH((\Z/p^n)/\Z)\cong\Z_p.
\end{equation*}
The latter isomorphism follows from the fact that for $n,k>0$ the map $\HH_k(\Z/p^{2n})\to\HH_k(\Z/p^n)$ is zero, since the map between the cotangent complexes $L_{(\Z/p^{2n})/\Z}\to L_{(\Z/p^{n})/\Z}$ is zero.

Now we prove \ref{HH_Nuc_k[[x]]}. We claim that $\mk$ is almost perfect over $\THH(\mk).$ If $\cha \mk=0,$ then $\THH(\mk)\cong \HH(\mk/\Q),$ and our assumption on $\mk$ implies that $L_{\mk/\Q}\cong \Omega^1_{\mk/\Q}$ is a finite-dimensional $\mk$-vector space (of dimension $\trdeg(\mk/\Q)$). Hence, $\HH(\mk/\Q)\in\Perf(\mk)$ by Hochschild-Kostant-Rosenberg theorem. This easily implies that $\mk$ is almost perfect over $\HH(\mk/\Q).$

Suppose that $\cha\mk = p.$ By B\"okstedt's theorem, $\FF_p$ is perfect over $\THH(\FF_p),$ hence we only need to show that $\mk$ is almost perfect over $\HH(\mk/\FF_p).$ As above, this follows from the fact that $L_{\mk/\FF_p}\cong\Omega^1_{\mk/\FF_p}$ is a finite-dimensional $\mk$-vector space (of dimension $[\mk:\mk^p]$).

Applying Theorem \ref{th:loc_invar_of_Nuc_CS} and Proposition \ref{prop:THH_of_products_over_good_base}, we obtain
\begin{equation*}
\HH(\Nuc^{CS}(\mk[[x]])/\mk)\cong\prolim[n]\HH((\mk[x]/x^n)/\mk).
\end{equation*}
Again, an elementary analysis of the pro-system $(\HH((\mk[x]/x^n)/\mk))_n$ gives the second isomorphism in \eqref{eq:HH_Nuc_k[[x]]}.
\end{proof}

\begin{remark}
In the proof of Corollary \ref{cor:HH_Nuc_examples} we can avoid the computations with cotangent complexes. Namely, it follows from \ref{prop:derived_quotients_pro_equiv_to_functors_from_approx} and its proof that
\begin{multline*}
\prolim[n]\HH((\Z/p^n)/\Z)\cong \Hom(\indlim[n]\HH(D^b_{\coh}(\Z/p^n)/\Z),\Z)\cong \Hom(\HH(\Perf_{p\hy\tors}(\Z)),\Z)\\
\cong \Hom(\Q_p/\Z_p[-1],\Z)\cong \Z_p,
\end{multline*}
and similarly for the limit of $\HH((\mk[x]/x^n)/\mk).$
\end{remark}

Note that if in Corollary \ref{cor:HH_Nuc_examples} we replace the categories of nuclear modules with the usual derived categories $D(\Z_p)$ resp. $D(\mk[[x]]),$ then the corresponding Hochschild homology is much larger. For example, consider $\HH(\Z_p/\Z).$ Since $L_{\FF_p/\Z}\cong L_{\FF_p/\Z_p}\cong \FF_p[1],$ we see that $L_{\Z_p/\Z}\tens{\Z}\FF_p=0,$ hence  $L_{\Z_p/Z}\cong L_{\Q_p/\Q}\cong\Omega^1_{\Q_p/\Q}.$ The latter module of differentials is a $\Q_p$-vector space of dimension $2^{\aleph_0},$ with a basis $\{dx_i\}_{i\in I},$ where $\{x_i\}_{i\in I}$ is a transcendence basis of $\Q_p$ over $\Q.$ Hence, for $n>0$ the $\Z_p$-module $\HH_n(\Z_p/\Z)$ is also a $\Q_p$-vector space of dimension $2^{\aleph_0}.$ Miraculously, after adding sufficiently many ``extra objects'' to the category $D(\Z_p)$ the Hochschild homology becomes much smaller and more meaningful. 

\begin{remark}
Consider the object $P=(\biggplus[\N]\Z)^{\wedge}_p$ as a Banach $\Z_p$-module. Denote by $J\subset \End_{\Z}(P)$ the ideal of compact operators (i.e. operators such that the closure of the image is compact). An elementary computation shows that the non-unital ring $J$ is $\Tor$-unital in the sense of \cite{Sus95}. This means that $\Tor_i^{\Z\ltimes J}(\Z,\Z)=0$ for $i>0.$ More precisely, we have $J^2=J$ and $J$ is flat as a right module over $\Z\ltimes J,$ which implies the $\Tor$-unitality.

Using \cite[Theorem 26]{Tam} and Corollary \ref{cor:Nuc^CS_resolution}, we obtain the short exact sequence
\begin{equation*}
0\to\Nuc^{CS}(\Z_p)\to D(\Z\ltimes J)\to D(\Z)\to 0.
\end{equation*}
From Corollary \ref{cor:HH_Nuc_examples} \ref{HH_Nuc_Z_p} we obtain $\HH(J/\Z)\cong\Z_p.$ Inverting $p,$ we obtain a similar statement for the ideal of compact operators on a separable Banach $\Q_p$-vector space, i.e. $\HH(J[p^{-1}]/\Q)\cong \Q_p.$
\end{remark}

\appendix

\section{Algebra of limits and colimits}
\label{app:limits_colimits}

Given a cartesian resp. cocartesian fibration $p:\cC\to\cD,$ we denote by $p^{\vee}:\cC^{\vee}\to\cD^{op}$ the dual cocartesian resp. cartesian fibration. It is classified by the same functor $\cD^{op}\to\Cat_{\infty}$ resp. $\cD\to\Cat_{\infty}.$ Below we will only consider the (co)cartesian fibrations in which the source and the target are posets, hence the fibers are also posets.

We will need the following generalization of Proposition \ref{prop:seq_limits_of_filtered_colimits}.

\begin{prop}\label{prop:limit_of_colimits_for_fibrations}Let $p:\cI\to\N^{op}$ be a cocartesian fibration such that the fibers $\cI_n$ are directed posets, $n\in\N.$ Denote by $f_{m,n}:\cI_m\to\cI_n$ the transition maps, $m\geq n.$ Suppose that the maps $f_{n+1,n}$ are cofinal for all $n\geq 0.$ We write the objects of $\cI$ as pairs $(n,i_n),$ where $n\in\N,$ $i_n\in\cI_n.$
	
	Let $\cC$ be a presentable category which satisfies strong (AB5) (i.e. filtered colimits commutes with finite limits) and (AB6) for countable products (for example, $\cC$ can be any compactly assembled presentable category, such as $\cS$ or $\Sp$). Let $G:\cI\to\cC$ be a functor. Then we have the following isomorphism:
	$$\prolim[n]\indlim[i_n\in\cI_n]G(n,i_n)\cong \indlim[\varphi:\N\to\cI^{\vee}]\prolim[n\leq m]G(n,f_{m,n}(\varphi(m))).$$
	Here $\varphi$ runs through the directed poset of sections of the cartesian fibration $p^{\vee}:\cI^{\vee}\to\N.$\end{prop}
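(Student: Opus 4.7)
The plan is to mimic the proof of Proposition \ref{prop:seq_limits_of_filtered_colimits}, using the directed poset $\Gamma$ of sections of $p^\vee : \cI^\vee \to \N$ as a replacement for the factor $I$ in the product case $\cI = \N^{op}\times I$.

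First I check that $\Gamma$ is directed. Given sections $\varphi,\psi \in \Gamma$, I would build an upper bound $\chi$ inductively: having chosen $\chi(0),\dots,\chi(n)$ compatibly, I use the cofinality of $f_{n+1,n}:\cI_{n+1}\to\cI_n$ to find $y\in\cI_{n+1}$ with $f_{n+1,n}(y)\geq\chi(n)$, and then invoke the directedness of $\cI_{n+1}$ to pick $\chi(n+1)$ above $y,\varphi(n+1),\psi(n+1)$; the required morphism $\chi(n)\to f_{n+1,n}(\chi(n+1))$ exists since $\cI_n$ is a poset. A similar two-sided inductive argument would show that for every $k$ the evaluation functor $\mathrm{ev}_k:\Gamma\to\cI_k$, $\varphi\mapsto\varphi(k)$, is cofinal: given $i_k\in\cI_k$, define $\varphi(n):=f_{k,n}(i_k)$ for $n\leq k$ (so the section transitions in this range are identities) and extend to $n>k$ using the cofinality of each subsequent $f_{n+1,n}$.

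The cofinality of each $\mathrm{ev}_k$ provides a natural isomorphism of functors $\N^{op}\to\cC$,
\[
\indlim_{i_n\in\cI_n}G(n,i_n)\;\cong\;\indlim_{\varphi\in\Gamma}G(n,\varphi(n)),
\]
identifying the left-hand side of the proposition with $\prolim_n\indlim_{\varphi}G(n,\varphi(n))$. The comparison map $\alpha$ from the right-hand side to the left is then constructed, for each section $\varphi$ and each $n$, by projecting the limit $\prolim_{n\leq m}G(n,f_{m,n}(\varphi(m)))$ to its $(n,n)$-component $G(n,\varphi(n))$ and composing with the colimit inclusion into $\indlim_{i_n}G(n,i_n)$; compatibility as $n$ varies uses the cocartesian structure of $\cI$, and naturality in $\varphi$ passes to filtered colimits.

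The hard part is showing that $\alpha$ is an isomorphism. The strategy is to write $\prolim_n A_n$ (with $A_n=\indlim_{i_n}G(n,i_n)$) as the equalizer on $\prodd_n A_n$ of the identity and the shift map, apply (AB6) for countable products to identify $\prodd_n A_n\cong\indlim_{\vec{i}\in\prodd_n\cI_n}\prodd_n G(n,i_n)$, and use strong (AB5) to commute the finite equalizer past the filtered colimit; after cofinality analysis the equalizing data is indexed precisely by the poset $\Gamma$ of sections (in the product case, the corresponding cofinality $\varphi\mapsto(\vec\varphi,(\varphi_{n+1})_n)$ into the naive equalizing index is a direct check). Matching this reorganization with the ``doubled'' limit $\prolim_{n\leq m}$ on the right-hand side---which requires interchanging two filtered colimits (over $\varphi$ and over $(i_n)$) with two sequential limits (over $n$ and over $m$)---is the combinatorial content of Theorem \ref{th:pullback_square_quadrofunctors} from Appendix \ref{app:limits_colimits}. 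Verifying its hypotheses reduces to our (AB5) and (AB6) assumptions on $\cC$ together with the cofinality statements established in the first step.
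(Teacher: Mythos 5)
Your plan — rewrite the sequential limit as an equalizer on a countable product, use (AB6) to express the product as a filtered colimit over the sections $\varphi\in\Gamma := \Fun_{\N}(\N,\cI^{\vee})$, and then use strong (AB5) to pull the colimit out of the equalizer — is exactly the paper's strategy, and your verifications that $\Gamma$ is directed and that the evaluations $\ev_k$ are cofinal are correct. However, there are two places where the argument either detours or breaks.

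First, the cofinality that is actually needed is not that of the single evaluations $\ev_k:\Gamma\to\cI_k$ but of \emph{two} maps $\Gamma\to\prodd_{n}\cI_n$, namely $\varphi\mapsto(\varphi(n))_n$ and $\varphi\mapsto(f_{n+1,n}(\varphi(n+1)))_n$. The reason you need both is that the two parallel arrows of the equalizer presenting $\prolim_n A_n$ land in $\prodd_n A_n$ with a ``shift'' on one side, so after applying (AB6) you must reindex the target of each arrow by the \emph{same} filtered poset $\Gamma$ before (AB5) can pull the colimit past the finite limit. Cofinality of each $\ev_k$ separately does not directly give cofinality of the product map (and in any case does not give the shifted map); you allude to this with the parenthetical about ``$\varphi\mapsto(\vec\varphi,(\varphi_{n+1})_n)$'' but it is exactly this point that needs to be spelled out, and the rewriting of the left-hand side via $\ev_k$ is then an unnecessary detour.

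Second, and more seriously, the final step is wrong as written. Once (AB5) and (AB6) have produced
$\indlim_{\varphi}\Eq\bigl(\prodd_n G(n,\varphi(n))\toto\prodd_n G(n,f_{n+1,n}(\varphi(n+1)))\bigr)$,
the remaining identification of the inner equalizer with $\prolim_{n\leq m}G(n,f_{m,n}(\varphi(m)))$ is an elementary fact about the twisted arrow poset $T=\{(n,m):n\leq m\}\subset\N^{op}\times\N$: the zigzag subposet $\{(n,n)\}\cup\{(n,n+1)\}$ is initial in $T$, so the $T$-limit reduces to that equalizer. There is no interchange of ``two filtered colimits with two sequential limits'' left to do, so appealing to Theorem \ref{th:pullback_square_quadrofunctors} is misplaced. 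Worse, it is circular: Theorem \ref{th:pullback_square_quadrofunctors} is proved via Lemma \ref{lem:rewriting_X_ij}, whose proof invokes Proposition \ref{prop:limit_of_colimits_for_fibrations} (the statement you are trying to prove) several times. You should replace that paragraph with the direct initiality argument for the zigzag in $T$; with that fix the proof goes through and coincides with the paper's.
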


\begin{proof}Note that our cofinality assumption on the maps $f_{n+1,n}$ implies that the poset $\Fun_{\N}(\N,\cI^{\vee})$ is indeed directed. To see this, we first inductively construct a sequence of elements $(i_n\in \cI_n)_{n\geq 0},$ such that for all $n\geq 0$ we have $i_n\leq f_{n+1,n}(i_{n+1}).$ This sequence defines a section $\N\to\cI^{\vee},$ hence the set $\Fun_{\N}(\N,\cI^{\vee})$ is non-empty. Given two sections $\varphi_1,\varphi_2:\N\to \cI^{\vee},$ we can inductively construct a sequence $(i_n\in \cI_n)_{n\geq 0},$
	such that for all $n\geq 0$ we have $i_n\geq \varphi_1(n),$ $i_n\geq \varphi_2(n),$ $i_n\leq f_{n+1,n}(i_{n+1}).$ This sequence defines a section $\varphi:\N\to\cI^{\vee},$ such that $\varphi\geq \varphi_1,$ $\varphi\geq \varphi_2.$  
	
	The same argument shows that the map $$\Fun_{\N}(\N,\cI^{\vee})\to\prodd[n\geq 0]\cI_n,\quad \varphi\mapsto (\varphi(n))_n,$$ is cofinal. Similarly, the map
	$$\Fun_{\N}(\N,\cI^{\vee})\to\prodd[n\geq 0]\cI_n,\quad \varphi\mapsto (f_{n+1,n}(\varphi(n+1)))_n$$
	is cofinal. Thus, we obtain the following isomorphisms
	\begin{multline*}\prolim[n]\indlim[i_n\in\cI_n]G(n,i_n)\cong\Eq(\prodd[n]\indlim[i_n\in\cI_n]G(n,i_n)\toto \prodd[n]\indlim[i_n\in\cI_n]G(n,i_n))\cong\\ \Eq(\indlim[\varphi:\N\to\cI^{\vee}]\prodd[n]G(n,\varphi(n))\toto \indlim[\varphi:\N\to\cI^{\vee}]\prodd[n]G(n,f_{n+1,n}(\varphi(n+1))))\cong\\
		\indlim[\varphi:\N\to\cI^{\vee}]\Eq(\prodd[n]G(n,\varphi(n))\toto \prodd[n]G(n,f_{n+1,n}(\varphi(n+1))))\cong \indlim[\varphi:\N\to\cI^{\vee}]\prolim[n\leq m]G(n,f_{m,n}(\varphi(m))).\qedhere\end{multline*}\end{proof}

We prove the following statement, which directly implies Proposition \ref{prop:quadrofunctors}. For a future reference, we formulate it in a quite general form, namely for pairs of cocartesian fibrations over $\N^{op}.$

\begin{theo}\label{th:pullback_square_quadrofunctors} Let $p:\cI\to\N^{op},$ $q:\cJ\to\N^{op}$ be cocartesian fibrations, such that the fibers $\cI_n$ and $\cJ_k$ are directed, $n,k\in\N.$ We denote by $f_{mn}:\cI_m\to\cI_n$ and $g_{lk}:\cJ_l\to\cJ_k$ the transition maps, $m\geq n,$ $l\geq k.$ Suppose that the maps $f_{n+1,n}$ and $g_{k+1,k}$ are cofinal for all $n,k\in \N.$ We write the objects of $\cI\times\cJ$ as quadruples $(n,i_n,k,j_k),$ where $n,k\in\N,$ $i_n\in\cI_n,$ $j_k\in\cJ_k.$

Let $F:\cI\times\cJ\to\cC$ be a functor, where $\cC$ satisfies the assumptions of Proposition \ref{prop:limit_of_colimits_for_fibrations}. Consider the natural commutative square
\begin{equation}
\label{eq:key_pullback_square}
\begin{CD}
	X_{00}  @>>> X_{01}\\
	@VVV @VVV\\
	X_{10} @>>> X_{11},
\end{CD}	
\end{equation}
where
\begin{equation*}
X_{00} = \indlim[\varphi:\N\to\cI^{\vee}]\indlim[\psi:\N\to\cJ^{\vee}]\prolim[n\leq m]\prolim[k\leq l] F(n,f_{mn}(\varphi(m)),k,g_{lk}(\psi(l))),
\end{equation*}

\begin{equation*}
X_{01} = \prolim[n]\indlim[i_n]\prolim[k]\indlim[j_k]F(n,i_n,k,j_k),
\end{equation*}

\begin{equation*}
X_{10} = \prolim[k]\indlim[j_k]\prolim[n]\indlim[i_n]F(n,i_n,k,j_k),
\end{equation*}

\begin{equation*}
X_{11} = \prolim[n]\prolim[k]\indlim[i_n]\indlim[j_n]F(n,i_n,k,j_k).
\end{equation*}
Then \eqref{eq:key_pullback_square} is a pullback square.
\end{theo}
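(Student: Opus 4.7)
The plan is to apply Proposition \ref{prop:limit_of_colimits_for_fibrations} systematically to rewrite each of the four corners as a filtered colimit over sections of a suitable fibration of a countable sequential limit. Abbreviate $F'(n,m,k,l;\varphi,\psi):=F(n,f_{mn}(\varphi(m)),k,g_{lk}(\psi(l)))$, where $\varphi:\N\to\cI^{\vee}$ and $\psi:\N\to\cJ^{\vee}$ are sections. The corner $X_{00}$ is already in this form by definition. For $X_{01}=\prolim[n]\indlim[i_n]\prolim[k]\indlim[j_k]F$, apply Proposition \ref{prop:limit_of_colimits_for_fibrations} twice --- first to the inner $\prolim[k]\indlim[j_k]$ parameterized by $(n,i_n)$, then to the outer $\prolim[n]\indlim[i_n]$ --- to obtain $X_{01}\cong\indlim[\varphi]\prolim[n\leq m]\indlim[\psi]\prolim[k\leq l]F'$; symmetrically, $X_{10}\cong\indlim[\psi]\prolim[k\leq l]\indlim[\varphi]\prolim[n\leq m]F'$. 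For $X_{11}$, first use cofinality of the diagonal $\N\to\N\times\N$ to rewrite $X_{11}=\prolim[n]\indlim[(i_n,j_n)\in\cI_n\times\cJ_n]F(n,i_n,n,j_n)$, and then apply Proposition \ref{prop:limit_of_colimits_for_fibrations} to the fiber-product fibration $\cI\times_{\N^{op}}\cJ\to\N^{op}$, whose sections decompose canonically as pairs $(\varphi,\psi)$, yielding $X_{11}\cong\indlim[\varphi,\psi]\prolim[n\leq m]F'(n,m,n,m;\varphi,\psi)$.

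With all four corners expressed uniformly in terms of the section parameters $(\varphi,\psi)$, I would verify the pullback property by analyzing the induced map $X_{00}\to X_{01}\times_{X_{11}}X_{10}$. For fixed $(\varphi,\psi)$, the integrand of $X_{00}$ is a countable limit $\prolim[(n,m),(k,l)]F'$ over the product of the two posets $\{n\leq m\}$ and $\{k\leq l\}$, while the $X_{11}$-integrand is its restriction to the diagonal $(k,l)=(n,m)$; the fact that this diagonal is not cofinal (for limits) in the product poset is precisely what makes the theorem non-trivial. The intermediate corners $X_{01}$ and $X_{10}$ have the colimits $\indlim[\psi]$ and $\indlim[\varphi]$ interlaced between the two sequential limits, and this interlacing encodes the ``half-restriction'' of $F'$ to the diagonal in only one of the two factors. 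The computational heart is to resolve each sequential limit as an equalizer of countable products (via strong (AB5)) and then push the filtered colimits $\indlim[\varphi], \indlim[\psi]$ through the countable products using (AB6) for countable products applied to the products $\prodd[n] \cI_n$ and $\prodd[k] \cJ_k$, after which each corner becomes a filtered colimit over pairs of sections of an equalizer of countable products of values of $F'$.

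The main obstacle will be the careful bookkeeping of limit/colimit commutations: the non-commutation of $\prolim[n\leq m]$ with $\indlim[\psi]$ (and symmetrically of $\prolim[k\leq l]$ with $\indlim[\varphi]$) is the essential reason that $X_{01}$ and $X_{10}$ genuinely differ from $X_{11}$, and the pullback claim quantifies precisely how these two independent failures of commutation combine compatibly to recover $X_{00}$. After the rewriting described above, the square should become manifestly a pullback: strong (AB5) allows filtered colimits to commute with finite limits (equalizers), while (AB6) allows the colimit over pairs of sections to commute with the relevant countable products of values of $F'$; the resulting diagram of equalizers-inside-equalizers inside a single filtered colimit is then a pullback by general nonsense, completing the proof.
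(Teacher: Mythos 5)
Your proposal correctly identifies the two strategic moves used in the paper: rewrite the four corners as colimits over sections $\varphi:\N\to\cI^{\vee}$, $\psi:\N\to\cJ^{\vee}$ via Proposition~\ref{prop:limit_of_colimits_for_fibrations}, and exploit strong~(AB5) and~(AB6). However, there are two genuine gaps that the hand-waving does not resolve.

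First, your rewriting of $X_{01}$ stops at $\indlim[\varphi]\prolim[n\leq m]\indlim[\psi]\prolim[k\leq l]F'$, and this is \emph{not} of the uniform form $\indlim[\varphi]\indlim[\psi](\cdots)$ needed to ``fix $(\varphi,\psi)$'' in a final pullback check. The assertion that the four corners are ``expressed uniformly in terms of the section parameters $(\varphi,\psi)$'' is simply false at this stage: the interior $\indlim[\psi]$ is trapped under $\prolim[n\leq m]$. You cannot extract it by applying (AB6) to $\prodd[n]$, because (AB6) converts $\prodd[n]\indlim[\psi_n]$ into a colimit over \emph{independent} families $(\psi_n)_n$, and the diagonal $\psi\mapsto(\psi,\psi,\dots)$ is not cofinal in $\prodd[\N]\Fun_{\N}(\N,\cJ^\vee)$ (for the same reason the diagonal $\N\to\prodd[\N]\N$ fails to be cofinal). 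The paper handles this via an additional cofinality argument: applying Proposition~\ref{prop:limit_of_colimits_for_fibrations} a second time naturally produces a colimit over sections of a cartesian fibration $\cE\to\N$ with fibers the \emph{partially defined} sections $\Fun_{\N}(\N_{\geq n},\cJ^{\vee})$, and Lemma~\ref{lem:trick_with_fibrations} shows that the map from genuine sections of $\cJ^{\vee}$ into sections of $\cE$ is cofinal. Crucially, after making this substitution the inner limit acquires the constraint $m\leq k$ (Lemma~\ref{lem:trick_with_twisted_arrow_categories}), so one lands on $\prolim[(n,m,k,l)\in P_{01}]F'$ over the poset $P_{01}=\{n\leq m\leq k\leq l\}$, not $\prolim[n\leq m]\prolim[k\leq l]F'$.

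Second, even once all four corners are rewritten as $\indlim[\varphi]\indlim[\psi]\prolim[P_{ij}]F'$, the claim that the square is ``a pullback by general nonsense'' does not hold. After fixing $(\varphi,\psi)$, the pullback property is equivalent to a non-formal combinatorial fact about the sub-posets $P_{ij}\subset P=T\times T$: the left Kan extensions $H_{ij,!}(\ast)\in\Fun(P,\cS)$ of the constant diagrams are the characteristic functors $\chi_{U_{ij}}$ of explicit upward-closed sets satisfying $U_{01}\cup U_{10}=P$ and $U_{01}\cap U_{10}=U_{11}$, so that $H_{11,!}(\ast)\to H_{01,!}(\ast)\oplus H_{10,!}(\ast)\to\ast$ is a pushout. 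This is the content of Lemma~\ref{lem:left_Kan_extensions}, and it requires checking weak contractibility of several comma posets; it is not a consequence of strong~(AB5) or~(AB6). Only after this is established does the pullback for fixed $(\varphi,\psi)$ follow, and then the filtered colimit over $(\varphi,\psi)$ is applied (using strong~(AB5) to preserve the pullback).
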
 

\begin{remark}
The upper horizontal arrow in \eqref{eq:key_pullback_square} is given by the composition
\begin{multline*}
\indlim[\varphi:\N\to\cI^{\vee}]\indlim[\psi:\N\to\cJ^{\vee}]\prolim[n\leq m]\prolim[k\leq l] F(n,f_{mn}(\varphi(m)),k,g_{lk}(\psi(l)))\\
\to \indlim[\varphi:\N\to\cI^{\vee}]\prolim[n\leq m]\indlim[\psi:\N\to\cJ^{\vee}]\prolim[k\leq l] F(n,f_{mn}(\varphi(m)),k,g_{lk}(\psi(l)))\\
\cong \prolim[n]\indlim[i_n]\prolim[k]\indlim[j_k]F(n,i_n,k,j_k),
\end{multline*}
and similarly for the left vertical arrow.
\end{remark}

\begin{proof}[Proof of Proposition \ref{prop:quadrofunctors} assuming Theorem \ref{th:pullback_square_quadrofunctors}] 
We apply Theorem \ref{th:pullback_square_quadrofunctors} to the trivial cocartesian fibrations $\N^{op}\times I\to \N^{op}$ and $\N^{op}\times J\to\N^{op},$ and the functor $F:\N^{op}\times I\times \N^{op}\times J\to\cC.$ The assumptions \ref{assump1} and \ref{assump2} of the proposition exactly mean that in the associated pullback square \eqref{eq:key_pullback_square} the maps $X_{01}\to X_{11}$ and $X_{10}\to X_{11}$ are isomorphisms. Hence, the map $X_{00}\to X_{11}$ is also an isomorphism, as required.
\end{proof}

For a future reference we mention the following statement, which is essentially a special case of Theorem \ref{th:pullback_square_quadrofunctors}.

\begin{cor}\label{cor:pullback_square_products_and_colimits}
Let $(I_n)_{n\geq 0}$ and $(J_k)_{k\geq 0}$ be sequences of directed posets. Let $\cC$ be a presentable $\infty$-category, satisfying the assumptions of Proposition \ref{prop:limit_of_colimits_for_fibrations}. Suppose that for each $(n,k)\in\N\times\N$ we have a functor $F_{n,k}:I_n\times J_k\to\cC.$ Then we have a pullback square
\begin{equation}\label{eq:pullback_square_for_products}
\begin{tikzcd}
\indlim[(i_n)_n\in\prod\limits_{n} I_n]\indlim[(j_k)_k\in\prod\limits_{k} J_k]\prodd[n]\prodd[k]F_{n,k}(i_n,j_k)\ar[r]\ar[d] & \prodd[n]\indlim[i_n\in I_n]\prodd[k]\indlim[j_k\in J_k] F_{n,k}(i_n,j_k)\ar[d]\\
\prodd[k]\indlim[j_k\in J_k]\prodd[n]\indlim[i_n\in I_n] F_{n,k}(i_n,j_k)\ar[r] & \prodd[n]\prodd[k]\indlim[i_n\in I_n]\indlim[j_k\in J_k]F_{n,k}(i_n,j_k).
\end{tikzcd}
\end{equation}
\end{cor}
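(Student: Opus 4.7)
The plan is to deduce the corollary from Theorem~\ref{th:pullback_square_quadrofunctors} by encoding the data $(I_n)_{n\geq 0}$ and $(J_k)_{k\geq 0}$ as cocartesian fibrations over $\N^{op}$ whose fibers are finite products. Specifically, I would define $p:\cI\to \N^{op}$ to be the cocartesian fibration with fiber $\cI_n=\prodd[m\leq n] I_m$ and transition map $f_{n+1,n}:\cI_{n+1}\to\cI_n$ given by dropping the last coordinate, and analogously $q:\cJ\to\N^{op}$ with fibers $\cJ_k=\prodd[l\leq k] J_l$. Each fiber is directed as a finite product of directed posets, and each $f_{n+1,n}$ is cofinal: the comma category $(\cI_{n+1})_{\vec{\imath}/}$ is (isomorphic to) $I_{n+1}$, which is directed and hence weakly contractible. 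The functor $F:\cI\times\cJ\to\cC$ is then defined on objects by
\[
F\bigl(n,(i_0,\dots,i_n),k,(j_0,\dots,j_k)\bigr)=\prodd[m\leq n]\prodd[l\leq k] F_{m,l}(i_m,j_l),
\]
with transition morphisms the evident projections.

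Next I would apply Theorem~\ref{th:pullback_square_quadrofunctors} to this data and identify the four corners $X_{00},X_{01},X_{10},X_{11}$ with the corresponding corners of the square~\eqref{eq:pullback_square_for_products}. The three ingredients needed for these identifications are: (a) the tautology $\prolim[n]\prodd[m\leq n] X_m\cong \prodd[m\in\N]X_m$ for a tower of projections; (b) the (AB6) axiom on $\cC$, which allows interchanging $\indlim$ over a directed poset with a product (and, applied coordinatewise to $\prodd[m\leq n]I_m$, shows that $\indlim_{\vec{\imath}\in\cI_n}\prodd[m\leq n]G_m(i_m)\cong\prodd[m\leq n]\indlim_{i_m\in I_m}G_m(i_m)$); and (c) a cofinality argument reducing the indexing over sections $\varphi:\N\to\cI^\vee$ to the indexing over $\prodd[m]I_m$. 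For (c) I would observe that a section is a compatible family $(\varphi(n)\in\cI_n)$ with $\varphi(n)\leq f_{n+1,n}(\varphi(n+1))$, and that ``constant in time'' sections of the form $\varphi(n)=(i_0,\dots,i_n)$ for a fixed sequence $(i_m)\in\prodd[m]I_m$ are cofinal among all sections (given any $\varphi$, choose $i_m\geq\varphi(m)_m$ to dominate it).

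Carrying out these computations for each corner matches, for instance, $X_{01}=\prolim[n]\indlim[\vec{\imath}_n]\prolim[k]\indlim[\vec{\jmath}_k]F$ with $\prodd[n]\indlim[i_n]\prodd[k]\indlim[j_k]F_{n,k}(i_n,j_k)$, and analogously for $X_{00},X_{10},X_{11}$; the resulting identifications are natural in the diagram $F$, so the four comparison maps of the pullback square furnished by Theorem~\ref{th:pullback_square_quadrofunctors} agree with those in~\eqref{eq:pullback_square_for_products}.

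The hard part will essentially be bookkeeping: keeping track of the opposing orders in the indexing poset $\{(n,m):n\leq m\}\subset \N^{op}\times\N$ appearing in Theorem~\ref{th:pullback_square_quadrofunctors} and making sure that the direction of the transition morphisms in $F$ matches the projections on the fibers. Once the fibrations are set up correctly, there is no new analytic content beyond (AB6), the weakly contractible fibers argument for cofinality, and the routine verification that after restricting to cofinal subfamilies the formulas reduce to the desired products and iterated colimits.
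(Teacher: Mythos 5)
Your construction of the cocartesian fibrations $\cI,\cJ$ with fibers $\prodd[m\leq n]I_m,\prodd[l\leq k]J_l$ and the functor $F$ as a finite product of the $F_{m,l}$ is exactly the paper's argument, with the corner identifications filled in (which the paper leaves implicit as ``naturally isomorphic''). One small slip: the comma category $(\cI_{n+1})_{\vec{\imath}/}$ is not isomorphic to $I_{n+1}$ but to $\prodd[m\leq n](I_m)_{i_m/}\times I_{n+1}$; it is nonetheless a product of directed posets, hence directed and weakly contractible, so cofinality of $f_{n+1,n}$ still holds.
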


\begin{proof}
Consider the cocartesian fibrations $\cI\to\N^{op},$ $\cJ\to\N^{op},$ in which the fibers are given by
\begin{equation*}
\cI_n=\prodd[0\leq m\leq n] I_m,\quad \cJ_k=\prodd[0\leq l\leq k] J_k,
\end{equation*}
and the transition maps are the projections. Define the functor $F:\cI\times\cJ\to\cC$ as follows:
\begin{equation*}
F(n,(i_m)_{0\leq m\leq n},k,(j_l)_{0\leq l\leq k})=\prodd[0\leq m\leq n]\prodd[0\leq l\leq k]F_{m,l}(i_m,j_l).
\end{equation*}
Applying Theorem \ref{th:pullback_square_quadrofunctors}, we obtain a pullback square which is naturally isomorphic to \eqref{eq:pullback_square_for_products}.
\end{proof}

Now we prove Theorem \ref{th:pullback_square_quadrofunctors}. We introduce several relevant partially ordered sets. All the inclusions below are full, i.e. we consider the subposets with the induced partial order.
First, we denote by 
\begin{equation}\label{eq:twisted_arrow_of_N}
T = \{(n,m)\mid n\leq m\}\subset \N^{op}\times\N
\end{equation}
the twisted arrow category of $\N.$ Next, we put
\begin{equation*}
P = T\times T =\{(n,m,k,l)\mid n\leq m,\, k\leq l\}\subset\N^{op}\times\N\times\N^{op}\times\N.
\end{equation*}
We put $P_{00}=P,$ and
\begin{equation}\label{eq:poset_S_01}
P_{01} = \{(n,m,k,l)\mid n\leq m\leq k\leq l\}\subset P,
\end{equation} 

\begin{equation*}
P_{10} = \{(n,m,k,l)\mid k\leq l\leq n\leq m\}\subset P,
\end{equation*}

\begin{equation*}
P_{11} = \{(n,m,k,l)\mid n=k\leq m=l\}\subset P.
\end{equation*}

The following statement is crucial for the proof of Theorem \ref{th:pullback_square_quadrofunctors}.

\begin{lemma}\label{lem:rewriting_X_ij} Within the notation of Theorem \ref{th:pullback_square_quadrofunctors}, we have natural isomorphisms
\begin{equation}\label{eq:rewriting_X_ij}
X_{ij}\cong \indlim[\varphi:\N\to\cI^{\vee}]\indlim[\psi:\N\to\cJ^{\vee}]\prolim[(n,m,k,l)\in P_{ij}] F(n,f_{mn}(\varphi(m)),k,g_{lk}(\psi(l))),\quad i,j\in \{0,1\}.
\end{equation}
\end{lemma}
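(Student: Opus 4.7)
We treat the four cases separately; the bulk of the work is in the mixed cases $(0,1)$ and $(1,0)$, which are symmetric, so I only plan the case $(0,1)$.

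\emph{Case $(0,0)$.} This is essentially tautological. Apply Proposition~\ref{prop:limit_of_colimits_for_fibrations} independently to the outer pair $\prolim[n]\indlim[i_n]$ and to the inner pair $\prolim[k]\indlim[j_k]$ (the inner pair does not depend on the outer index). Since $\indlim$ commutes with $\indlim$ and $\prolim$ commutes with $\prolim$, the two twisted arrow factors $T$ combine into a single $\prolim[T\times T] = \prolim[P_{00}]$, giving $X_{00} \cong \indlim[\varphi,\psi]\prolim[P_{00}]F(\ldots)$.

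\emph{Case $(1,1)$.} Here I first observe that the diagonal $\Delta:\N^{op}\to \N^{op}\times\N^{op}$, $n \mapsto (n,n)$, is initial: for any $(n_0,k_0)$, the undercategory consists of $n \in \N$ with $n \leq \min(n_0,k_0)$, which is a non-empty directed poset. Hence $X_{11} = \prolim[(n,k)\in(\N^{op})^2]\indlim[(i_n,j_k)]F = \prolim[n]\indlim[(i_n,j_n)]F(n,i_n,n,j_n)$. I then apply Proposition~\ref{prop:limit_of_colimits_for_fibrations} to the cocartesian fibration $\cI\times_{\N^{op}}\cJ\to\N^{op}$ (fiber product, fibers $\cI_n\times\cJ_n$ directed, transitions $f_{n+1,n}\times g_{n+1,n}$ cofinal since products of cofinal functors between directed posets are cofinal). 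Sections of the dual cartesian fibration correspond to pairs $(\varphi,\psi)$, and under the identification $P_{11}\simeq T$, $(n,m)\leftrightarrow (n,m,n,m)$, this yields the claimed formula.

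\emph{Case $(0,1)$.} The strategy is to apply Proposition~\ref{prop:limit_of_colimits_for_fibrations} twice, then collapse a doubly-indexed colimit down to a singly-indexed one using a cofinality argument. First, apply Proposition~\ref{prop:limit_of_colimits_for_fibrations} to the inner $\prolim[k]\indlim[j_k]$ with $(n,i_n)$ fixed, and observe that $\indlim[i_n]\indlim[\psi]=\indlim[(i_n,\psi)\in\cI_n\times\Fun(\N,\cJ^\vee)]$. Viewing this as a cocartesian fibration $\cI\times\Fun(\N,\cJ^\vee)\to\N^{op}$ (trivial in the second factor), Proposition~\ref{prop:limit_of_colimits_for_fibrations} gives
\[
X_{01} = \indlim[(\varphi,\tilde\psi)]\prolim[(n,m)\in T]\prolim[(k,l)\in T] F(n,f_{mn}(\varphi(m)),k,g_{lk}(\tilde\psi(m)(l))),
\]
where $\tilde\psi:\N\to\Fun(\N,\cJ^\vee)$ is a non-decreasing sequence of sections.

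The key technical point is to construct a cofinal map $\Phi:\Fun(\N,\cJ^\vee)\to\Fun(\N,\Fun(\N,\cJ^\vee))$, $\psi\mapsto\tilde\psi_\psi$, defined by
\[
\tilde\psi_\psi(m)(l) = \begin{cases}\psi(l) & \text{if } l\geq m,\\ g_{m,l}(\psi(m)) & \text{if } l<m.\end{cases}
\]
The verification has three steps: (a) each $\tilde\psi_\psi(m)$ is a valid section of $\cJ^\vee$, via a direct check using the section condition of $\psi$ and order-preservation of the $g_{\bullet\bullet}$; (b) the sequence $m\mapsto\tilde\psi_\psi(m)$ is non-decreasing in $\Fun(\N,\cJ^\vee)$, again a pointwise check using the section condition; (c) $\Phi$ is cofinal, which I verify by constructing, for any $\tilde\psi$, its ``diagonal'' section $\psi_{\tilde\psi}(l)=\tilde\psi(l)(l)$ (a valid section by an analogous check) and noting that the comma category $\{\psi:\Phi(\psi)\geq\tilde\psi\}$ contains $\psi_{\tilde\psi}$ and is a principal filter in the directed poset $\Fun(\N,\cJ^\vee)$, hence directed.

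After replacing $\tilde\psi$ by $\tilde\psi_\psi$ via $\Phi$, I conclude by verifying that the resulting $\prolim[T\times T]$ collapses to $\prolim[P_{01}]$: on the subset where $l\geq m$ we have $\tilde\psi_\psi(m)(l)=\psi(l)$ outright, and the inclusion of the subposet $\{l\geq m\}$ in $T\times T$ is initial (non-empty directed undercategories), so the limit is unchanged by restricting to it. A final initiality argument identifies the limit over $\{l\geq m\}\cap T\times T$ with the limit over $P_{01}$ after exploiting the specific piecewise structure of $\tilde\psi_\psi$.

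The hard part is verifying step (c) and the final initiality argument matching $P_{01}$; the latter is the delicate point because $P_{01}\hookrightarrow\{n\leq m,\, k\leq l,\, l\geq m\}$ is not initial on the nose, and one must use the specific form of the integrand arising from $\tilde\psi_\psi$ to see that the restriction map becomes an isomorphism.
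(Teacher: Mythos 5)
Your overall strategy matches the paper's: rewrite $X_{01}$ as a colimit over sections via Proposition~\ref{prop:limit_of_colimits_for_fibrations}, trade the doubly-indexed sections $\tilde\psi:\N\to\Fun(\N,\cJ^\vee)$ for ordinary sections $\psi:\N\to\cJ^\vee$ via a cofinality argument, and then collapse the inner limit. Your construction of $\Phi$ (with $\tilde\psi_\psi(m)(l)=\psi(l)$ for $l\ge m$, $g_{ml}(\psi(m))$ for $l<m$) is exactly the right-Kan-extension formula \eqref{eq:right_Kan_extensions} used to define the transition maps of the fibration $\cE$ in Lemma~\ref{lem:trick_with_fibrations}, and your cofinality verification (producing the diagonal section $\psi_{\tilde\psi}(l)=\tilde\psi(l)(l)$) is essentially the paper's proof that $\beta$ is cofinal; both are fine.

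The genuine gap is in the collapsing step. You assert that the subposet $\{l\ge m\}\subset T\times T$ is initial and so the limit restricts without changing. This is false as stated: for an element $(n_0,m_0,k_0,l_0)\in T\times T$ with $n_0>l_0$, the overcategory $\{(n,m,k,l):l\ge m,\ (n,m,k,l)\to(n_0,m_0,k_0,l_0)\}$ requires $n_0\le n\le m\le l\le l_0$, hence $n_0\le l_0$, so it is empty and certainly not weakly contractible. No abstract initiality of posets is available here; the collapse works only because the integrand $G(k,l)=F(n,f_{mn}(\varphi(m)),k,g_{lk}(\tilde\psi_\psi(m)(l)))$ is constant in $l$ on the range $k\le l\le m$ (when $l<m$ one has $g_{lk}(g_{ml}(\psi(m)))=g_{mk}(\psi(m))$, and the same value at $l=m$). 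This is exactly what Lemma~\ref{lem:trick_with_twisted_arrow_categories} (the ``trick with twisted arrow categories'') makes precise via the explicit equalizer computation, yielding $\prolim_{k\le l}G(k,l)\cong\prolim_{m\le k\le l}G(k,l)$ directly. You flag ``a final initiality argument'' as the delicate point, but in fact you need the functoriality-specific trick already at the first reduction, not only at the end; without something like Lemma~\ref{lem:trick_with_twisted_arrow_categories} (or its equalizer-level argument), the plan does not close. Once this is inserted, the identification with $\prolim_{P_{01}}$ follows by assembling the iterated limit $\prolim_{(n,m)\in T}\prolim_{m\le k\le l}$ using the cartesian fibration $P_{01}\to T$ of Proposition~\ref{prop:projection_is_a_fibration}, rather than by a further initiality argument.
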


To prove Lemma \ref{lem:rewriting_X_ij} we will need several auxiliary statements.

\begin{lemma}\label{lem:trick_with_fibrations}
Within the notation of Theorem \ref{th:pullback_square_quadrofunctors}, 
consider the cartesian fibration $\cE\to\N,$ with the fibers given by $\cE_n=\Fun_{\N}(\N_{\geq n},\cJ^{\vee}),$ so each $\cE_n$ can be considered as a poset of partially defined sections of $q^{\vee}:\cJ^{\vee}\to\N.$ The transition maps $\alpha_{mn}:\cE_m\to\cE_n$ for $m\geq n$ are defined as the ``right Kan extensions'', i.e.
\begin{equation}\label{eq:right_Kan_extensions}
\alpha_{mn}(\psi_m)(k)=\begin{cases}
	\psi_m(k) & \text{for }k\geq m;\\
	g_{mk}(\psi_m(m)) & \text{for }n\leq k\leq m-1.\end{cases}
	\end{equation}
	Then the posets $\cE_n$ are directed, the maps $\alpha_{n+1,n}$ are cofinal, and the natural map \begin{equation}\label{eq:map_beta}\beta:\Fun_{\N}(\N,\cJ^{\vee})\to\Fun_{\N}(\N,\cE),\quad \beta(\psi)(n)=\psi_{\mid \N_{\geq n}},\end{equation}
	is cofinal. 
\end{lemma}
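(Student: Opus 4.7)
The plan is to verify all three claims by essentially the same kind of inductive construction, leveraging a simple criterion: for an order-preserving map $F:P\to Q$ between directed posets, cofinality is equivalent to the statement that every $q\in Q$ is dominated by some $F(p)$, because the comma category $P_{q/}$, being an upward-closed subposet of the directed poset $P$, is automatically directed (hence weakly contractible) as soon as it is non-empty. So each of the three cofinality/directedness claims reduces to the inductive construction of a suitable section of $q^{\vee}:\cJ^{\vee}\to\N$ (or of a partial section) that dominates a prescribed one, using only two ingredients at each step $k\to k+1$: the cofinality of $g_{k+1,k}$ (to control the transition) and the directedness of the fiber $\cJ_{k+1}$ (to take suprema of finitely many constraints).

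First I would show that $\cE_n$ is directed, which is just the first paragraph of the proof of Proposition \ref{prop:limit_of_colimits_for_fibrations} applied to the fibration $q^{\vee}$ restricted to $\N_{\geq n}$: given two partial sections $\psi_n,\psi'_n\in\cE_n$, build by induction a new partial section $\psi''_n(k)\in\cJ_k$ for $k\geq n$ dominating both, using cofinality of $g_{k+1,k}$ to guarantee the section condition $\psi''_n(k)\leq g_{k+1,k}(\psi''_n(k+1))$ and directedness of $\cJ_k$ to guarantee $\psi''_n(k)\geq \psi_n(k),\psi'_n(k)$. Next I would check that the formula \eqref{eq:right_Kan_extensions} really defines $\alpha_{mn}(\psi_m)$ as an element of $\cE_n$; the only non-trivial verification is at the indices $k<m-1$, where the required inequality $g_{mk}(\psi_m(m))\leq g_{k+1,k}(g_{m,k+1}(\psi_m(m)))$ reduces to the cocycle identity $g_{k+1,k}\circ g_{m,k+1}=g_{mk}.$ For cofinality of $\alpha_{n+1,n}$, given $\psi_n\in\cE_n$ I would first apply cofinality of $g_{n+1,n}$ to produce $j\in\cJ_{n+1}$ with $g_{n+1,n}(j)\geq\psi_n(n)$, and then build $\psi_{n+1}\in\cE_{n+1}$ inductively, defining $\psi_{n+1}(n+1)\geq j$ and $\psi_{n+1}(n+1)\geq\psi_n(n+1)$ (by directedness of $\cJ_{n+1}$) and, at each subsequent step, choosing $\psi_{n+1}(k+1)$ above $\psi_n(k+1)$ and above some lift of $\psi_{n+1}(k)$ along $g_{k+1,k}$.

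Finally I would address $\beta$. Well-definedness: for $\psi\in\Fun_\N(\N,\cJ^{\vee})$, the required compatibility $\alpha_{n+1,n}(\beta(\psi)(n+1))\geq\beta(\psi)(n)$ in $\cE_n$ is trivial for $k\geq n+1$ (both sides equal $\psi(k)$) and at $k=n$ reduces to $g_{n+1,n}(\psi(n+1))\geq\psi(n)$, which is the section condition for $\psi$. For cofinality, consider a section $\sigma=(\psi_n)_{n\geq 0}$ of $\cE\to\N$. The compatibility $\alpha_{n+1,n}(\psi_{n+1})\geq\psi_n$ gives, for $k\geq n+1$, $\psi_{n+1}(k)\geq\psi_n(k)$, so by induction $\psi_k(k)\geq\psi_n(k)$ for every $n\leq k$. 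Hence it suffices to construct $\psi\in\Fun_\N(\N,\cJ^{\vee})$ with $\psi(k)\geq\psi_k(k)$ for all $k$, which I would do inductively: choose $\psi(0)\geq\psi_0(0)$, and at each step pick $j'\in\cJ_{k+1}$ with $g_{k+1,k}(j')\geq\psi(k)$ (using cofinality of $g_{k+1,k}$) and then $\psi(k+1)\in\cJ_{k+1}$ with $\psi(k+1)\geq j'$ and $\psi(k+1)\geq\psi_{k+1}(k+1)$ (using directedness of $\cJ_{k+1}$).

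I do not expect any serious obstacle here; everything is a bookkeeping exercise in inductive constructions of sections. The only slightly delicate point is noticing that in part (3) the compatibility of the section $(\psi_n)$ of $\cE$ propagates forward so that dominating $\psi_k(k)$ at each $k$ is sufficient—without this observation, one would naively try to dominate $\psi_n(k)$ for all $n\leq k$ simultaneously, which is still possible (by directedness of $\cJ_k$) but unnecessary.
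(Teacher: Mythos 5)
Your proof is correct and follows essentially the same route as the paper's: directedness of $\cE_n$ and cofinality of $\alpha_{n+1,n}$ follow from Proposition \ref{prop:limit_of_colimits_for_fibrations} / cofinality of $g_{n+1,n}$, and cofinality of $\beta$ is proved by the same inductive construction of a dominating section $\psi(k)=j_k$ with $j_k\geq\psi_k(k)$ and $j_k\leq g_{k+1,k}(j_{k+1})$, using the observation that the section condition in $\cE$ propagates the inequalities so that dominating $\psi_k(k)$ suffices. You merely spell out the intermediate steps (the cofinality criterion for maps of directed posets, the explicit construction for $\alpha_{n+1,n}$) that the paper leaves implicit.
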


\begin{proof}
By Proposition \ref{prop:limit_of_colimits_for_fibrations}, the posets $\cE_n$ are directed. Cofinality of $g_{n+1,n}:\cJ_{n+1}\to\cJ_n$ implies that the maps $\alpha_{n+1,n}:\cE_{n+1}\to\cE_n$ are cofinal. Applying Proposition \ref{prop:limit_of_colimits_for_fibrations}, we see that the poset $\Fun_{\N}(\N,\cE)$ is directed.

To prove the cofinality of \eqref{eq:map_beta}, consider a section $s:\N\to\cE,$ given by $s(n)=\psi_n,$ $\psi_n:\N_{\geq n}\to \cJ^{\vee}.$ Using the cofinality of the maps $g_{n+1,n},$ we can inductively construct a sequence $(j_n\in\cJ_n)_{n\geq 0},$ such that $j_n\geq \psi_n(n)$ (hence also $j_n\geq \psi_k(n)$ for $k\leq n$) and $j_n\leq g_{n+1,n}(j_{n+1}).$
Then we obtain a section $\psi:\N\to\cJ^{\vee},$ given by $\psi(n)=j_n,$ such that $\beta(\psi)\geq s.$ This proves the cofinality of $\beta.$ 
\end{proof}

\begin{lemma}\label{lem:trick_with_twisted_arrow_categories}
Let $\cD$ be any $\infty$-category with countable limits. We fix a pair of natural numbers $n\leq m,$ and denote by $T_n$ the twisted arrow category of $\N_{\geq n},$ i.e. 
\begin{equation*}
T_n = \{(k,l)\mid n\leq k\leq l\}\subset \N^{op}\times\N.
\end{equation*}
Let $G:T_n\to\cD$ be a functor such that
\begin{equation*}G(k,k)\xto{\sim}G(k,k+1)\quad \text{for }n\leq k\leq m-1.\end{equation*}
Then we have an isomorphism
\begin{equation*}
\prolim[n\leq k\leq l]G(k,l)\xto{\sim}\prolim[m\leq k\leq l]G(k,l)
\end{equation*}
\end{lemma}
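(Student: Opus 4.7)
I proceed by induction on $r := m - n \geq 0$; the base case $r = 0$ is tautological. For the inductive step it suffices to establish the following single-step claim:

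\emph{If $G(n,n) \xto{\sim} G(n,n+1)$ is an isomorphism, then the restriction map $\prolim[T_n] G \to \prolim[T_{n+1}] G\vert_{T_{n+1}}$ is an isomorphism.}

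Iterating this statement (with the induction hypothesis applied to $G\vert_{T_{n+1}}$, which satisfies the analogous assumptions for the indices $n+1, \dots, m-1$) yields the lemma.

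To prove the single step, I factor the inclusion $T_{n+1} \hto T_n$ through the auxiliary subposet $T_n^* := T_n \setminus \{(n,n)\}$, and treat the two resulting inclusions by different mechanisms.

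\emph{Step 1: initiality of $T_{n+1} \hto T_n^*$.} For each $(k,l) \in T_n^*$ I exhibit a terminal object in the comma category $T_{n+1} \times_{T_n^*} (T_n^*)_{/(k,l)}$. If $k \geq n+1$, then $(k,l)$ itself is such a terminal object. If $k = n$, then necessarily $l \geq n+1$ (since $(n,n) \notin T_n^*$), and unwinding the slice yields $\{(k',l') : n+1 \leq k' \leq l' \leq l\}$, in which $(n+1, l)$ is terminal. By the cofinality criterion one concludes $\prolim[T_n^*] G\vert_{T_n^*} \cong \prolim[T_{n+1}] G\vert_{T_{n+1}}$.

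\emph{Step 2: right Kan extension at the omitted corner.} I verify that $G$ is right Kan extended from $T_n^*$ along $T_n^* \hto T_n$. Since $T_n$ differs from $T_n^*$ only at the single point $(n,n)$, it suffices to check the pointwise condition there: the coslice $(T_n^*)_{(n,n)/}$ equals $\{(n,l) : l \geq n+1\}$ and has initial object $(n, n+1)$, so the right Kan extension value at $(n,n)$ is $G(n, n+1)$, and the comparison map from $G(n,n)$ is precisely the hypothesized isomorphism. Hence $\prolim[T_n] G \cong \prolim[T_n^*] G\vert_{T_n^*}$.

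Composing the two isomorphisms completes the single-step reduction, and with it the induction.

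\emph{Main obstacle.} The delicate point is that neither simpler approach works directly: the inclusion $T_{n+1} \hto T_n$ fails to be initial precisely at the corner $(n,n)$, where the slice in $T_{n+1}$ is empty, and correspondingly $G$ cannot be right Kan extended from $T_{n+1}$ there (the Kan extension value would be a terminal object of $\cD$, unrelated to $G(n,n)$). The decomposition through $T_n^*$ isolates this pathology at one point, where the hypothesized isomorphism $G(n,n) \xto{\sim} G(n,n+1)$ supplies exactly the missing comparison.
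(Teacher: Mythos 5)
Your proof is correct, and it takes a genuinely different route from the paper's. The paper identifies $\prolim[n\leq k\leq l]G(k,l)$ with the equalizer $\Eq(\prodd[k\geq n]G(k,k)\toto \prodd[k\geq n]G(k,k+1))$ via the initial zigzag subposet of $T_n$, re-expresses this equalizer as an iterated pullback $G(n,n)\times_{G(n,n+1)}\cdots\times_{G(m-1,m)}\Eq(\prodd[k\geq m]G(k,k)\toto\prodd[k\geq m]G(k,k+1))$, and collapses the first $m-n$ fibred factors because the attaching maps are the hypothesized isomorphisms. You instead induct on $m-n$, reducing to a single-step claim, and handle that step by factoring $T_{n+1}\hto T_n$ through $T_n^*=T_n\setminus\{(n,n)\}$: the inclusion $T_{n+1}\hto T_n^*$ is initial (each relevant slice has a terminal object), and $G$ is pointwise right Kan extended from $T_n^*$ at the unique omitted corner $(n,n)$, with the comparison map being precisely the hypothesized $G(n,n)\to G(n,n+1)$. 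Both arguments are valid; the paper's calculation is terse and dovetails with the equalizer-of-products presentation used throughout Appendix~\ref{app:limits_colimits} (notably in Proposition~\ref{prop:limit_of_colimits_for_fibrations}), while your factoring is more conceptual and isolates exactly where the corner isomorphism is consumed. Your closing observation --- that $T_{n+1}\hto T_n$ fails to be initial at $(n,n)$, and that $G$ is not right Kan extended from $T_{n+1}$ there since the relevant slice is empty --- correctly explains why the auxiliary subposet $T_n^*$ is needed as an intermediary.
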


\begin{proof} We have
\begin{multline*}
\prolim[n\leq k\leq l]G(k,l)\cong \Eq(\prodd[k\geq n]G(k,k)\toto \prodd[k\geq n]G(k,k+1))\\
\cong G(n,n)\times_{G(n,n+1)}\cdots G(m-1,m-1)\times_{G(m-1,m)}\Eq(\prodd[k\geq m]G(k,k)\toto \prodd[k\geq m]G(k,k+1))\\
\cong\Eq(\prodd[k\geq m]G(k,k)\toto \prodd[k\geq m]G(k,k+1))\cong \prolim[m\leq k\leq l]G(k,l).
\end{multline*} 
Here the third isomorphism follows from the assumption that the maps $G(k,k)\to G(k,k+1)$ are isomorphisms for $n\leq k\leq m-1.$ This proves the lemma.
\end{proof}

\begin{prop}\label{prop:projection_is_a_fibration}
Recall the posets $T$ and $P_{01},$ introduced in \eqref{eq:twisted_arrow_of_N}, \eqref{eq:poset_S_01}. The natural map 
\begin{equation*}
P_{01}\to T,\quad (n,m,k,l)\mapsto (n,m),
\end{equation*}
is a cartesian fibration.
\end{prop}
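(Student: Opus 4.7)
The plan is to verify the cartesian fibration property directly from the definition, exploiting the fact that $P_{01}$ and $T$ are posets so that uniqueness of lifts is automatic. First I would unpack the orders: a morphism $(n,m)\to (n',m')$ in $T\subset \N^{op}\times\N$ exists iff $n\geq n'$ and $m\leq m'$, and a morphism $(n,m,k,l)\to (n',m',k',l')$ in $P_{01}$ exists iff additionally $k\geq k'$ and $l\leq l'$. Under the projection these orders are compatible in the obvious way.

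Fix $(n',m',k',l')\in P_{01}$ and a morphism $f:(n,m)\to (n',m')$ in $T$, i.e.\ $n\geq n'$ and $m\leq m'$. I would propose $(n,m,k',l')$ as the source of the cartesian lift, keeping the ``fiber coordinates'' $(k,l)$ unchanged. This quadruple lies in $P_{01}$ since $n\leq m$ (from $(n,m)\in T$), $m\leq m'\leq k'$ (from $f$ and from $(n',m',k',l')\in P_{01}$), and $k'\leq l'$, so the chain $n\leq m\leq k'\leq l'$ holds. The tautological inequality $(n,m,k',l')\leq (n',m',k',l')$ in $P_{01}$ thus provides the candidate cartesian arrow lifting~$f$.

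To check cartesianness I would take any morphism $(n'',m'',k'',l'')\to (n',m',k',l')$ in $P_{01}$ whose image in $T$ factors through $f$. The factorization in $T$ forces $n''\geq n$ and $m''\leq m$, while the given morphism in $P_{01}$ supplies $k''\geq k'$ and $l''\leq l'$. Together these inequalities say exactly that $(n'',m'',k'',l'')\leq (n,m,k',l')$ in $P_{01}$; since $P_{01}$ is a poset, the required factorization through $(n,m,k',l')$ exists and is unique, so the lift is cartesian.

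There is essentially no obstacle here: the proposition is a direct poset computation, and the only thing one must watch is that every naturally occurring tuple respects the chain $n\leq m\leq k\leq l$. The real content of the statement will only be visible later, when the cartesian fibration structure is combined with Lemmas \ref{lem:trick_with_fibrations} and \ref{lem:trick_with_twisted_arrow_categories} to rewrite the limit $\prolim[P_{01}]$ as an iterated limit along the base $T$ in the proof of Theorem \ref{th:pullback_square_quadrofunctors}.
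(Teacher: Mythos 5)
Your proof is correct and matches the paper's approach; the paper compresses the same observation into one line, noting that the fibers of $P_{01}\to T$ are the posets $\{(k,l)\mid m\le k\le l\}$ with transition maps the tautological inclusions, which is exactly what your cartesian lift $(n,m,k',l')\leftarrow(n',m',k',l')$ records.
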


\begin{proof}
This is clear: the transition maps on the fibers are the tautological inclusions.
\end{proof}

\begin{proof}[Proof of Lemma \ref{lem:rewriting_X_ij}]
The isomorphism \eqref{eq:rewriting_X_ij} for $X_{00}$ is tautological, and for $X_{11}$ it follows from Proposition \ref{prop:limit_of_colimits_for_fibrations}.
It remains to prove the isomorphism for $X_{01},$ and the case of $X_{10}$ would follow by symmetry.

Using Proposition \ref{prop:limit_of_colimits_for_fibrations}, we obtain the isomorphisms
\begin{multline*}
X_{01} = \prolim[n]\indlim[i_n]\prolim[k]\indlim[j_k]F(n,i_n,k,j_k)\cong \prolim[n]\indlim[i_n]\prolim[k\in\N_{\geq n}]\indlim[j_k]F(n,i_n,k,j_k)\\
\cong \prolim[n]\indlim[i_n]\indlim[\psi_n:\N_{\geq n}\to\cJ^{\vee}]\prolim[n\leq k\leq l]F(n,i_n,k,g_{lk}(\psi_n(l))).
\end{multline*}
Here in the last expression the inverse limit is taken over the twisted arrow category of $\N_{\geq n}.$ Now, recall the cartesian fibration $\cE\to\N$ from Lemma \ref{lem:trick_with_fibrations}: the fibers are $\cE_n=\Fun_{\N}(\N_{\geq n},\cJ^{\vee}),$ and the transition maps $\alpha_{mn}:\cE_m\to\cE_n$ are given by \eqref{eq:right_Kan_extensions}. Applying Proposition \ref{prop:limit_of_colimits_for_fibrations} to the cocartesian fibration $\cI\times_{\N^{op}}\cE^{\vee}\to \N^{op}$ and using Lemma \ref{lem:trick_with_fibrations}, we obtain
\begin{multline*}
\prolim[n]\indlim[i_n]\indlim[\psi_n:\N_{\geq n}\to\cJ^{\vee}]\prolim[n\leq k\leq l]F(n,i_n,k,g_{lk}(\psi_n(l)))\\
\cong \indlim[\varphi:\N\to\cI^{\vee}]\indlim[s:\N\to\cE]\prolim[n\leq m]\prolim[n\leq k\leq l]F(n,f_{mn}(\varphi(m)),k,g_{lk}(\alpha_{mn}(s(m))(l)))\\
\cong \indlim[\varphi:\N\to\cI^{\vee}]\indlim[\psi:\N\to\cJ^{\vee}]\prolim[n\leq m]\prolim[n\leq k\leq l]F(n,f_{mn}(\varphi(m)),k,g_{lk}(\alpha_{mn}(\psi_{\mid \N_{\geq m}})(l))).
\end{multline*} 
Next, we observe that for any pair $n\leq m$ and for any $k$ such that $n\leq k\leq m-1$ we have
\begin{multline*}
g_{k+1,k}(\alpha_{mn}(\psi_{\mid \N_{\geq m}})(k+1)) = g_{k+1,k}(g_{m,k+1}(\psi(m))) = g_{mk}(\psi(m))\\
= g_{kk}(\alpha_{mn}(\psi_{\mid \N_{\geq m}})(k)).
\end{multline*}
Therefore, applying Lemma \ref{lem:trick_with_twisted_arrow_categories} we obtain 
\begin{multline*}
\indlim[\varphi:\N\to\cI^{\vee}]\indlim[\psi:\N\to\cJ^{\vee}]\prolim[n\leq m]\prolim[n\leq k\leq l]F(n,f_{mn}(\varphi(m)),k,g_{lk}(\alpha_{mn}(\psi_{\mid \N_{\geq m}})(l)))\\
\cong \indlim[\varphi:\N\to\cI^{\vee}]\indlim[\psi:\N\to\cJ^{\vee}]\prolim[n\leq m]\prolim[m\leq k\leq l]F(n,f_{mn}(\varphi(m)),k,g_{lk}(\alpha_{mn}(\psi_{\mid \N_{\geq m}})(l)))\\
\cong \indlim[\varphi:\N\to\cI^{\vee}]\indlim[\psi:\N\to\cJ^{\vee}]\prolim[n\leq m]\prolim[m\leq k\leq l]F(n,f_{mn}(\varphi(m)),k,g_{lk}(\psi(l))).
\end{multline*}
Finally, using Proposition \ref{prop:projection_is_a_fibration} we obtain an isomorphism
\begin{multline*}
\indlim[\varphi:\N\to\cI^{\vee}]\indlim[\psi:\N\to\cJ^{\vee}]\prolim[n\leq m]\prolim[m\leq k\leq l]F(n,f_{mn}(\varphi(m)),k,g_{lk}(\psi(l)))\\
\cong \indlim[\varphi:\N\to\cI^{\vee}]\indlim[\psi:\N\to\cJ^{\vee}]\prolim[n\leq m\leq k\leq l]F(n,f_{mn}(\varphi(m)),k,g_{lk}(\psi(l))).
\end{multline*}
This proves the isomorphism \eqref{eq:rewriting_X_ij} for $X_{01}.$ The lemma is proved.
\end{proof}

To deduce Theorem \ref{th:pullback_square_quadrofunctors} from Lemma \ref{lem:rewriting_X_ij} we need to compute some left Kan extensions. Denote by $H_{ij}:P_{ij}\to P$ the tautological inclusions, and consider the left Kan extension functors 
\begin{equation*}
H_{ij,!}:\Fun(P_{ij},\cS)\to \Fun(P,\cS),\quad i,j\in \{0,1\}. 
\end{equation*}

In each of the functor categories we denote by $\ast$ the final object, i.e. the constant functor with value $\pt.$ For an upward closed subset $U\subset P$ we denote by $\chi_U$ the characteristic functor of $U,$ i.e. 
\begin{equation*}
\chi_U:P\to\cS,\quad \chi_U(x)=\begin{cases}
\pt & \text{for }x\in U;\\
\emptyset & \text{for }x\not\in U.
\end{cases}
\end{equation*}

\begin{lemma}\label{lem:left_Kan_extensions} Within the above notation, we have $H_{ij,!}(\ast)\cong \chi_{U_{ij}},$ $i,j\in\{0,1\},$ where the subsets $U_{ij}\subset P$ are given by $U_{00}=P,$
\begin{equation*}
U_{01}=\{(n,m,k,l)\mid n\leq m,\,k\leq l,\,n\leq l\}\subset P,	
\end{equation*}
\begin{equation*}
U_{10}=\{(n,m,k,l)\mid n\leq m,\,k\leq l,\,k\leq m\}\subset P,	
\end{equation*}
\begin{equation*}
U_{11} = \{(n,m,k,l)\mid \max(n,k)\leq \min(m,l)\}\subset P.
\end{equation*}
In particular, we have the following pushout square in $\Fun(P,\cS):$
\begin{equation}\label{eq:pushout_of_left_Kan_extensions}
\begin{CD}
	H_{11,!}(\ast) @>>> H_{01,!}(\ast)\\
	@VVV @VVV\\
	H_{10,!}(\ast) @>>> \ast.
\end{CD}
\end{equation}
\end{lemma}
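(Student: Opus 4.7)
First, I would recall that for a functor $H_{ij}\colon P_{ij}\to P$ between posets, the left Kan extension of the constant functor $\ast\colon P_{ij}\to \cS$ is computed pointwise by
\begin{equation*}
(H_{ij,!}(\ast))(x)\;\simeq\;\colim_{(P_{ij})_{/x}}\ast \;\simeq\;\lvert N((P_{ij})_{/x})\rvert,
\end{equation*}
the classifying space of the slice poset. Consequently, the claim $H_{ij,!}(\ast)\cong \chi_{U_{ij}}$ reduces to showing: (a) the slice $(P_{ij})_{/x}$ is \emph{empty} when $x\notin U_{ij}$, and (b) it is \emph{weakly contractible} when $x\in U_{ij}$. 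For a fixed $x=(n,m,k,l)\in P$, the slice $(P_{ij})_{/x}$ is the full sub-poset of $P_{ij}$ consisting of quadruples $(n',m',k',l')$ satisfying $n'\geq n$, $m'\leq m$, $k'\geq k$, $l'\leq l$ (recall the product order on $P$ inherited from $\N^{op}\times\N\times\N^{op}\times\N$), subject to the defining inequalities of $P_{ij}$.

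Next I would dispatch the easy cases. For $P_{00}=P$, the slice has $x$ itself as a maximum, hence is contractible, and $U_{00}=P$. For $P_{11}$, the slice is $\{(n',m',n',m'):\max(n,k)\leq n'\leq m'\leq \min(m,l)\}$, which is empty unless $\max(n,k)\leq\min(m,l)$ (giving $U_{11}$), and otherwise has the twisted-arrow maximum $(\max(n,k),\min(m,l),\max(n,k),\min(m,l))$, so it is contractible.

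The main step is $P_{01}$ (then $P_{10}$ follows by the obvious symmetry swapping the two $T$-factors). The slice is
\begin{equation*}
\{(n',m',k',l'):\,n\leq n'\leq m'\leq \min(m,k'),\;\max(k,m')\leq k'\leq l'\leq l\}.
\end{equation*}
Checking existence of such a quadruple, I would observe it is non-empty iff $n\leq l$ (if $n>l$, then $n'\leq l'$ forces $n'\geq n>l\geq l'$, a contradiction; conversely, $(n,n,\max(k,n),l)$ works when $n\leq l$), matching $U_{01}$. For contractibility when $n\leq l$, I would apply the standard ``natural transformation'' trick twice. First, the map $\rho(n',m',k',l')=(n,m',k',l)$ is a well-defined order-preserving self-map of the slice with $\id\leq \rho$, hence induces a homotopy equivalence onto its image $Q=\{(n,m',k',l)\,:\, n\leq m'\leq m,\;k\leq k'\leq l,\;m'\leq k'\}$. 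Second, on $Q$ the map $\sigma(n,m',k',l)=(n,n,k',l)$ satisfies $\sigma\leq \id$ and retracts $Q$ onto the totally ordered set $\{(n,n,k',l):\max(n,k)\leq k'\leq l\}$, which is contractible. The case of $P_{10}$ is identical after interchanging the roles of $(n,m)$ and $(k,l)$.

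Finally, for the pushout square \eqref{eq:pushout_of_left_Kan_extensions}, colimits in $\Fun(P,\cS)$ are computed pointwise. Since each $\chi_{U_{ij}}(x)\in\{\emptyset,\pt\}$, a pointwise computation shows the pushout equals $\chi_{U_{01}\cup U_{10}}$ provided $U_{11}=U_{01}\cap U_{10}$, which is immediate from the definitions. Thus everything reduces to verifying $U_{01}\cup U_{10}=P$: given $(n,m,k,l)\in P$ with $n\leq m$ and $k\leq l$, if both $n>l$ and $k>m$ held, then $n>l\geq k>m\geq n$, a contradiction. Hence the pushout is $\chi_P=\ast$, completing the proof. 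The most delicate step is the double deformation retract establishing contractibility of $(P_{01})_{/x}$; the rest is bookkeeping.
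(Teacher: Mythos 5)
Your proof is correct and takes essentially the same route as the paper: pointwise identification of $H_{ij,!}(\ast)(x)$ with the classifying space of the slice $(P_{ij})_{/x}$, verification of non-emptiness/contractibility case by case, with the key case $P_{01}$ handled by the retraction $(n',m',k',l')\mapsto (n,m',k',l)$ onto the subposet $Q$. The only (harmless) departure is that you finish off $Q$ with a second retraction onto a totally ordered chain, whereas the paper simply observes that $Q$ already has a smallest element $(n,n,l,l)$.
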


\begin{proof}We first compute the functor $H_{11,!}(\ast).$ Take some element $(n,m,k,l)\in P,$ and consider the poset 
\begin{equation*}
(P_{11})_{/(n,m,k,l)}\simeq \{(n',m')\mid \max(n,k)\leq n'\leq m'\leq \min(m,l)\}\subset\N^{op}\times \N		
\end{equation*} 
If $(n,m,k,l)\not\in U_{11},$ then this poset is empty. If $(n,m,k,l)\in U_{11},$ then the poset $(P_{11})_{/(n,m,k,l)}$ has the largest element, given by $(\max(n,k),\min(m,l)).$ In particular, in the latter case this poset is weakly contractible. This proves the isomorphism $H_{11,!}(\ast)\cong \chi_{U_{11}}.$

Next, we compute the functor $H_{01,!}(\ast).$ Again, take some element $(n,m,k,l)\in P,$ and consider the poset 
\begin{multline*}
(P_{01})_{/(n,m,k,l)}\simeq \{(n',m',k',l')\mid n\leq n'\leq m'\leq m,\,\,k\leq k'\leq l'\leq l,\,\,m'\leq k'\}\\
\subset\N^{op}\times \N\times\N^{op}\times\N.		
\end{multline*} 
If this poset is non-empty, then choosing some element $(n',m',k',l')$ we obtain $n\leq m'\leq k'\leq l,$ hence $(n,m,k,l)\in U_{01}.$ We need to show that in this case the poset $(P_{01})_{/(n,m,k,l)}$ is weakly contractible. Consider the subposet
\begin{equation*}
Q=\{(n',m',k',l')\in (P_{01})_{/(n,m,k,l)}\mid n'=n,\,\,l'=l \}\subset (P_{01})_{/(n,m,k,l)}.
\end{equation*}
Then the inclusion $Q\to (P_{01})_{/(n,m,k,l)}$ has a left adjoint, given by $(n',m',k',l')\mapsto (n,m',k',l).$ Hence, the inclusion $Q\to (P_{01})_{/(n,m,k,l)}$ is a weak homotopy equivalence. Now, the poset $Q$ has the smallest element, given by $(n,n,l,l).$ Hence, $Q$ is weakly contractible and so is $(P_{01})_{/(n,m,k,l)}.$ This proves the isomorphism $H_{01,!}(\ast)\cong \chi_{U_{01}}.$

By symmetry, we also obtain an isomorphism  $H_{10,!}(\ast)\cong \chi_{U_{10}}.$ Finally, to deduce the pushout square \eqref{eq:pushout_of_left_Kan_extensions} we simply observe that \begin{equation*}
U_{01}\cup U_{10} = P,\quad U_{01}\cap U_{10} = U_{11}.\qedhere
\end{equation*}
\end{proof}

\begin{proof}[Proof of Theorem \ref{th:pullback_square_quadrofunctors}]
For any sections $\varphi:\N\to\cI^{\vee},$ $\psi:\N\to\cJ^{\vee},$ it follows from Lemma \ref{lem:left_Kan_extensions} that we have a pullback square
\begin{equation}\label{eq:pullback_square_for_chosen_phi_psi}
\begin{CD}
	\prolim[(n,m,k,l)\in P_{00}]F(n,f_{mn}(\varphi(m)),k,g_{lk}(\psi(l))) @>>> \prolim[(n,m,k,l)\in P_{01}]F(n,f_{mn}(\varphi(m)),k,g_{lk}(\psi(l)))\\
	@VVV @VVV\\
	\prolim[(n,m,k,l)\in P_{10}]F(n,f_{mn}(\varphi(m)),k,g_{lk}(\psi(l))) @>>> \prolim[(n,m,k,l)\in P_{11}]F(n,f_{mn}(\varphi(m)),k,g_{lk}(\psi(l))).
\end{CD}
\end{equation}
Taking the colimit of \eqref{eq:pullback_square_for_chosen_phi_psi} over $\varphi:\N\to \cI^{\vee}$ and $\psi:\N\to\cJ^{\vee},$ we obtain a pullback square, which is identified with \eqref{eq:key_pullback_square} by Lemma \ref{lem:rewriting_X_ij}. This proves the theorem.
\end{proof}

\section{Proof of Lemma \ref{lem:lax_equalizer_of_filtered_properties} \ref{filtered}}
\label{app:proof_of_lemma_on_filtered}

The proof is quite direct, and goes as follows. First, take some pair of objects 
\begin{equation}\label{eq:pair_of_objects}
	x=(x_n;\varphi_n)_{n\geq 0},\,y=(y_n;\psi_n)_{n\geq 0}\in\LEq(\prodd[n\geq 0]I_n\toto\prodd[n\geq 1]J_n).
\end{equation}
We will construct an object $z\in\LEq(\prodd[n\geq 0]I_n\toto\prodd[n\geq 1]J_n)$ together with morphisms $x\to z,$ $y\to z.$ Since $I_0$ is filtered, we can find an object $z_0\in I_0$ and a pair of maps $t_0:x_0\to z_0,$ $s_0:y_0\to z_0.$ Since $J_1$ is filtered, we can find an object $w_1\in J_1$ together with the maps $\theta_0':F_0(z_0)\to w_1,$ $p_1:G_1(x_1)\to w_1,$ $q_1:G_1(y_1)\to w_1,$ which fit into commutative squares
	\begin{equation*}
		\begin{tikzcd}
			F_0(x_0) \ar[r, "\varphi_0"] \ar[d, "F_0(t_0)"] & G_1(x_1) \ar[d, "p_1"] & & F_0(y_0) \ar[r, "\psi_0"] \ar[d, "F_0(s_0)"] & G_1(y_1) \ar[d, "q_1"] \\
			F_0(z_0)\ar[r, "\theta_0'"] & w_1 & & F_0(z_0)\ar[r, "\theta_0'"] & w_1.
		\end{tikzcd}
	\end{equation*}
	Using the cofinality of $G_1,$ we find an object $z_1\in I_1$ with the maps $r_1:w_1\to G_1(z_1),$ $t_1:x_1\to z_1,$ $s_1:y_1\to z_1,$ together with homotopies $r_1\circ p_1\sim G_1(t_1),$ $r_1\circ q_1\sim G_1(s_1).$ Denoting by $\theta_0$ the composition $F_0(z_0)\xto{\theta_0'} w_1\xto{r_1} G_1(z_1),$ we obtain commutative squares
	\begin{equation*}
		\begin{tikzcd}
			F_0(x_0) \ar[r, "\varphi_0"] \ar[d, "F_0(t_0)"] & G_1(x_1) \ar[d, "G_1(t_1)"] & & F_0(y_0) \ar[r, "\psi_0"] \ar[d, "F_0(s_0)"] & G_1(x_1) \ar[d, "G_1(s_1)"] \\
			F_0(z_0)\ar[r, "\theta_0"] & G_1(z_1) & & F_0(z_0)\ar[r, "\theta_0"] & G_1(z_1).
		\end{tikzcd}
	\end{equation*}
	Proceeding inductively, we construct an object $(z_n;\theta_n)_{n\geq 0}$ of $\LEq(\prodd[n\geq 0]I_n\toto\prodd[n\geq 1]J_n)$ together with the morphisms $t:x\to z,$ $s:y\to z.$
	
	Next, consider again a pair of objects as in \eqref{eq:pair_of_objects}, and suppose that we are given with a map $u:S^k\to\Map(x,y)$ for some $k\geq 0.$ More precisely, $u$ is given by a sequence of maps $u_n:S^k\to \Map(x_n,y_n),$ $n\geq 0$ together with a sequence of homotopies between the compositions
	\begin{equation*}
		S^k\xto{u_n}\Map(x_n,y_n)\to\Map(F_n(x_n),F_n(y_n))\to\Map(F_n(x_n),G_{n+1}(y_{n+1})),
	\end{equation*} 
	\begin{equation*}
		S^k\xto{u_{n+1}}\Map(x_{n+1},y_{n+1})\to\Map(G_{n+1}(x_n),G_{n+1}(y_n))\to\Map(F_n(x_n),G_{n+1}(y_{n+1})),
	\end{equation*}
	for $n\geq 0.$
	
	Since $I_0$ is filtered, we can find an object $z_0\in I_0$ with a map $s_0:y_0\to z_0$ such that the composition
	\begin{equation*}
		S^k\xto{u_0}\Map(x_0,y_0)\xto{s_0\circ-}\Map(x_0,z_0)\end{equation*} 
	is null-homotopic. Arguing as above, we find an object $z_1\in I_1,$ a map $\theta_0:F_0(z_0)\to G_1(z_1)$ and a map $s_1:y_1\to z_1,$ together with a commutative square
	\begin{equation}\label{eq:structural_comm_squares}
		\begin{tikzcd}
			F_0(y_0) \ar[r, "\psi_0"] \ar[d, "F_0(s_0)"] & G_1(y_1) \ar[d, "G_1(s_1)"] \\
			F_0(z_0)\ar[r, "\theta_0"] & G_1(z_1),
		\end{tikzcd}
	\end{equation}
	such that the composition 
	\begin{equation*}
		S^k\xto{u_1}\Map(x_1,y_1)\xto{s_1\circ-}\Map(x_1,z_1)\end{equation*} 
	is null-homotopic. Proceeding inductively, we construct an object $z=(z_n;\theta_n)_{n\geq 0}$ in $\LEq(\prodd[n\geq 0]I_n\toto\prodd[n\geq 1]J_n)$ together with a morphism $s:y\to z$ such that all the compositions
	\begin{equation*}
		S^k\xto{u_n}\Map(x_n,y_n)\xto{s_n\circ-}\Map(x_n,z_n)\end{equation*}
	are null-homotopic. Therefore, we may and will assume that the maps $u_n$ are null-homotopic for $n\geq 0.$
	
	Choosing these null-homotopies, we obtain the maps 
	\begin{equation*}v_n:S^{k+1}\to \Map(F_n(x_n),G_{n+1}(y_{n+1})),\quad n\geq 0.
	\end{equation*}
	If all $v_n$ would be null-homotopic, then $u$ itself would be null-homotopic. Arguing as above, we construct a sequence of objects $z_n\in I_n,$ $n\geq 0,$ with $z_0=y_0,$ together with the maps $\theta_n:F_n(z_n)\to G_{n+1}(z_{n+1}),$ $s_n:y_n\to z_n$ and commutative squares
	\eqref{eq:structural_comm_squares}, such that the compositions
	\begin{equation*}
		S^{k+1}\xto{v_n}\Map(F_n(x_n),G_{n+1}(y_{n+1}))\xto{G_{n+1}(s_{n+1})\circ-}\Map(F_n(x_n),G_{n+1}(z_{n+1}))
	\end{equation*}
	are null-homotopic, $n\geq 0.$ This gives an object $z=(z_n;\theta_n)_{n\geq 0}$ in $\LEq(\prodd[n\geq 0]I_n\toto\prodd[n\geq 1]J_n)$ together with the morphism $s:y\to z$ such that the composition
	\begin{equation*}
		S^k\xto{u}\Map(x,y)\xto{s\circ-}\Map(x,z)
	\end{equation*}
	is null-homotopic. This proves that the category $\LEq(\prodd[n\geq 0]I_n\toto\prodd[n\geq 1]J_n)$ is filtered.

\end{document}